\definecolor{coolcolor}{rgb}{0.6,0,1}
\newcommand\omicron{o}
\newsavebox{\pullback}
\sbox\pullback{%
\begin{tikzpicture}%
\draw (0,0) -- (1ex,0ex);%
\draw (1ex,0ex) -- (1ex,1ex);%
\end{tikzpicture}}
\title{Orbifold resolution via hyperkahler quotients:\\
\Large{the $D_2$ ALF manifold}}
\author[1]{Arnav Tripathy\thanks{tripathy@stanford.edu}}
\author[2]{Max Zimet\thanks{mzimet@fas.harvard.edu}}
\affil[1]{Department of Mathematics,

Stanford University, Stanford, CA 94305 USA

~}
\affil[2]{Black Hole Initiative and Department of Physics,

Harvard University, Cambridge, MA 02138 USA}
\date{}
\newcommand{\kq}{\mathord{/ \!\! /_{\xi}}}
\newcommand{\kqnoxi}{\mathord{/ \!\! /}}
\newcommand{\hqnoxi}{\mathord{/ \!\! / \! \! /}}
\newcommand{\hq}{\mathord{/ \!\! / \! \! /_{\xi}}}
\DeclareMathOperator{\slope}{slope}
\DeclareMathOperator{\im}{im}
\DeclareMathOperator{\coker}{coker}
\DeclareMathOperator{\Ad}{Ad}
\DeclareMathOperator{\pt}{pt}
\DeclareSymbolFont{cyrletters}{OT2}{wncyr}{m}{n}
\DeclareMathSymbol{\Sha}{\mathalpha}{cyrletters}{"58}
\definecolor{greencolor}{rgb}{0,0.6,0.5}
 \theoremstyle{plain}
 \theoremstyle{definition}
 \newtheorem*{thm*}{Theorem}
 \theoremstyle{remark}
\def\beq{\begin{eqnarray}}
\def\eeq{\end{eqnarray}}
 \newcommand{\bp}{\begin{proof}[Proof]}
 \newcommand{\ep}{\end{proof}}
\newcommand\nc{\newcommand}
\def\acts{\ \rotatebox[origin=c]{-90}{$\circlearrowright$}\ }
\newcommand{\Wedge}{\mathchoice{{\textstyle\bigwedge}}%
    {{\bigwedge}}%
    {{\textstyle\wedge}}%
    {{\scriptstyle\wedge}}}
\begin{document}

\maketitle


\begin{abstract}
    We propose an infinite-dimensional generalization of Kronheimer's construction of families of hyperkahler manifolds resolving flat orbifold quotients of $\mb{R}^4$. As in~\cite{kronheimer:construct}, these manifolds are constructed as  hyperkahler quotients of affine spaces. This leads to a study of \emph{singular equivariant instantons} in various dimensions. In this paper, we study singular equivariant Nahm data to produce the family of $D_2$ asymptotically locally flat (ALF) manifolds as a deformation of the flat orbifold $(\mb{R}^3 \times S^1)/Z_2$. We furthermore introduce a notion of stability for Nahm data and prove a Donaldson-Uhlenbeck-Yau type theorem to relate real and complex formulations. We use these results to construct a canonical Ehresmann connection on the family of non-singular $D_2$ ALF manifolds. In the complex formulation, we exhibit explicit relationships between these $D_2$ ALF manifolds and corresponding $A_1$ ALE manifolds. We conjecture analogous constructions and results for general orbifold quotients of $\mb{R}^{4-r} \times T^r$ with $2 \le r \le 4$. The case $r = 4$ produces K3 manifolds as hyperkahler quotients.
\end{abstract}

\newpage
\tableofcontents
\hypersetup{linkcolor=blue}

\newpage
\section{Introduction}


Hyperkahler manifolds, or Riemannian manifolds $(M, g)$ of real dimension $4n$ with holonomy contained within $Sp(n)$, are extremely special. For example, this holonomy condition forces the metric $g$ to have vanishing Ricci curvature. And yet, there remains much to be understood even for this nice class of manifolds. For example, to date, there is no known exact analytic expression for any Ricci-flat metric on a (non-toroidal) compact manifold -- even on the simplest nontrivial example of a compact hyperkahler manifold, namely the K3 manifold in real dimension four. It is hence of interest to find further constructions of hyperkahler manifolds that may be of use for this and other questions.

Kronheimer, in~\cite{kronheimer:construct}, gives a beautiful construction of a family of asymptotically locally Euclidean (ALE) hyperkahler manifolds deforming the singular quotient $\mb{R}^4/\Gamma$ for any finite group $\Gamma \subset Sp(1) \simeq SU(2) \subset SO(4)$ preserving the flat hyperkahler structure of $\mb{R}^4$. He does so by writing down a hyperkahler quotient presentation of these deformations to realize each of these (generically) smooth manifolds as moduli spaces of linear algebraic data -- explicit matrices, in fact -- solving algebraic equations.

Our central proposal in this paper is to suggest a generalization when $\Gamma$ is replaced by an \emph{infinite} discrete group $\hat{\Gamma}$ acting on $\mb{R}^4$, following the physics literature~\cite{wati:dBraneT,ho:noncomm,waldram:lol,greene:lol,mz:K3HK,mz:K3HK3}.\footnote{These papers study D-brane probes of orbifolds, following Douglas and Moore~\cite{moore:ALEinst}, who were in turn translating Kronheimer's construction into string theory. So, the mathematical and physical sources of motivation are one and the same.} The corresponding orbifold quotient is of the form $(\mb{R}^{4-r} \times T^r) / \Gamma$ for some finite $\Gamma$; the case $r = 0$ recovers Kronheimer's construction while the cases $1 \le r \le 4$ are new. Denote by $\{p_i\}$ the finite set of points in $(\mb{R}^{4-r} \times T^r) / \Gamma$ which have nontrivial stabilizer groups $\Gamma_i \hookrightarrow \Gamma$. We then expect to find a family of deformations of $(\mb{R}^{4-r} \times T^r) / \Gamma$ whose local geometry about $p_i$ approximates Kronheimer's ALE family of deformations of $\mb{R}^4/\Gamma_i$ near the flat orbifold limit.\footnote{Such families have been explored from a differential geometric point of view in \cite{page:orbifold,gibbons:k3,topiwala:orb,lebrun:orb,minerbe:Dk}.} Explaining this general proposal takes up the first part of the paper, culminating in our main Conjecture~\ref{conj:mainconj}. This conjecture makes manifest many other pleasant features of the $r = 0$ case such as surjectivity of the period map and explicit diffeomorphisms between the manifolds for generic parameter values. 

In general, the linear-algebraic data of the $r = 0$ case is replaced by the gauge-theoretic data of a connection and $4 - r$ (matrix-valued) functions on the dual torus $\hat{T^r}$.\footnote{Such upgrades have proved useful in a number of related contexts, beginning with Nahm's upgrade of the ADHM \cite{adhm} construction of instanton moduli spaces to a construction of monopole moduli spaces \cite{nahm:monopoles1}.} These data are moreover required (i) to be equivariant for the dual action of $\Gamma$ on $\hat{T^r}$, (ii) to have prescribed singular behavior about each of the nontrivially-stabilized points for this dual action, and (iii) to satisfy an anti-self-duality (ASD) equation away from the nontrivially-stabilized points. We are hence studying moduli spaces of \emph{singular equivariant instantons}. As usual, we would also like theorems of Donaldson-Uhlenbeck-Yau (DUY)~\cite{donaldson:duy,yau:duy} type that characterize these moduli spaces, at least with their complex (and holomorphic symplectic) structure, in terms of holomorphic data satisfying a stability condition.

We briefly mention two desired applications of the above construction. First, a means of deforming solutions of the ASD equations in the orbifold limit to corresponding singular solutions, for small values of the deformation parameters, would yield explicit analytic formulae for the Ricci-flat metric on the hyperkahler quotient.\footnote{Near the resolved orbifold points the starting point should instead be related to solutions of the relevant equations of~\cite{kronheimer:construct}.} In the $r = 4$ case, one would thereby find explicit Ricci-flat metrics on K3 manifolds. One may hope to construct these deformed solutions via a modified Banach contraction procedure. Second, in light of conjectures of Strominger-Yau-Zaslow mirror symmetry~\cite{strominger:mirrorT} and Gaiotto-Moore-Neitzke~\cite{GMN:walls}, one may hope that for $r \ge 2$, the hyperkahler structure encodes open Gromov-Witten invariants.\footnote{In the ALF case, the metrics produced by the present construction also encode the effects of instantons -- namely, those of the 3d $\N=4$ $SU(2)$ $N_f=2$ gauge theory, which were studied in \cite{tong:AHmatter}.} These ideas were made precise in \cite{mz:K3HK,mz:K3HK3}, which carried out the first iteration of the Banach contraction procedure and resummed the resulting formulae to those of \cite{GMN:walls,mz:k3}. This resummation procedure should yield all of the relevant open Gromov-Witten invariants. Given these expected applications, we write the present paper for a wider audience of geometric analysts and possibly even enumerative geometers; we hence include sufficient detail on our gauge-theoretic constructions so as to be self-contained. Many of the technical results we establish should be familiar to gauge theory experts. However, we note that we pay greater attention both to the structure of the non-generic singular moduli spaces near their singularities and to the relationships between moduli spaces with different parameter values than is typical in the literature.

Indeed, while this paper is aimed at a mathematical audience, we hope that physically-inclined readers may also appreciate the presentation here of the rigorous construction of gauge-theoretic moduli spaces and the study of their properties with only mild discomfort. We particularly direct such readers' attention to (i) Propositions~\ref{prop:realRG} and~\ref{prop:cplxRG} and the map $RG_\CC$ of equation \eqref{eq:rgC} as elegant manifestations of the renormalization group, (ii) the formal gauge transformations of Definition~\ref{defn:for} that feature in the main results stated in~\S\ref{sec:nahm}, (iii) the Donaldson-type functional~\cite{donaldson:duy,donaldson:duy2} which features prominently in the proof of the DUY-type theorem below, and (iv) the Ehresmann connection on the family of non-singular $D_2$ asymptotically locally flat (ALF) manifolds constructed in \S\ref{sec:duy}.

The bulk of this paper is devoted to rigorously studying the case $r = 1$ and $\Gamma = Z_2$ in order to produce the family of $D_2$ ALF hyperkahler manifolds.\footnote{We refer readers to \cite{chen:ALF} for the classification of ALF manifolds and a discussion of their properties and to \cite{cherkis:dkInst,ioanas:legendre}, which gave fairly explicit descriptions of the geometry of $D_k$ ALF manifolds using twistor techniques.} There have been a number of papers in the physics and math literature studying ALF manifolds from the point of view of Nahm's equations \cite{kronheimer:cotangent,dancer:alf,dancer:alf2,cherkis:dkGravInst,w:alfQuotient,cherkis:alfInst3,takayama:bow}, but there are a number of differences in our discussion. As we discuss in \S\ref{sec:prelim}, the strategy for constructing $D_k$ ALF manifolds in these references involves a series of hyperkahler quotients, whereas it is both physically important and mathematically useful that the ALF manifolds may be constructed as single hyperkahler quotients of infinite-dimensional affine spaces; we explain in this section how the two approaches may be easily related to each other when $k=2$. Furthermore, our approach naturally leads us to introduce an interesting notion of stability for Nahm data with respect to which we prove a new DUY-type theorem. In turn, this leads us to discover interesting relationships between $D_2$ ALF manifolds. We also find an interesting relationship with the $A_1$ ALE manifold: specifically, in any complex structure, the $D_2$ ALF manifold may be covered by two open submanifolds which are isomorphic, as holomorphic symplectic manifolds, to submanifolds of $A_1$ ALE manifolds equipped with corresponding complex structures. The latter submanifolds are deformation retracts of the full $A_1$ ALE manifold, and so this provides an elegant means of computing the periods of the $D_2$ ALF manifold.

We now outline the paper. \S\ref{sec:setup} is the first main part of the paper, where we motivate and define the necessary objects to state our main Conjecture~\ref{conj:mainconj}. We begin by stating a generalization of Kronheimer's construction that applies to (deformations of) $\RR^4/\hat\Gamma$ for $\hat\Gamma$ an infinite discrete group. We then apply Fourier duality to arrive at an equivalent gauge-theoretic formulation over $\hat{T^r}$; in this framework, we calculate the relevant space of moment map parameters, discuss some symmetries of the construction, and verify that the hyperkahler quotient with vanishing moment map parameters is a flat orbifold. There is a surprising new difficulty here in the $r > 0$ case, thanks to the existence of infinitely many points with nontrivial $\hat\Gamma$ stabilizers: besides the expected infinite-dimensional functional analysis, there is a new essentially topological difficulty in showing that there are no extraneous components besides $\mb{R}^4/\hat{\Gamma}$. We offer two proofs of quite different flavor, one more analytic using heat-kernel regularized traces and the other more topological and essentially using equivariant Chern characters. We then end~\S\ref{sec:setup} by stating the main conjecture and offering a few appendical subsections on related algebro- (or \mbox{complex-)}geometric and algebro-topological aspects of the construction.

The remainder of the paper then specializes to $r = 1$ and $\Gamma=Z_2$. \S\ref{sec:nahm} is a brief section that specializes the above to this setting and gives a full statement of results we prove in the following sections. Our main theorems are then proved in turn. \S\ref{sec:hkq} proves that in this case, our moduli space of singular equivariant Nahm data is indeed a family of hyperkahler real four-manifolds deforming the flat orbifold $(\mb{R}^3 \times S^1)/Z_2$. \S\ref{sec:complex} is devoted to a holomorphic analog which is shown by a DUY-type theorem proved in \S\ref{sec:duy} to be equivalent to the construction of \S\ref{sec:hkq}. We also note in \S\ref{sec:duy} that the DUY-type theorem yields a canonical Ehresmann connection on the family of non-singular $D_2$ ALF manifolds. This is an interesting strengthening of the familiar fact that these manifolds are diffeomorphic. The construction of this Ehresmann connection also works in Kronheimer's original ALE setting.

\vspace{.2cm}

\noindent\textbf{Statement of Results.}
Our main results are as follows:
\begin{itemize}
\item Theorem \ref{thm:noFI}, with independent proofs in \S\S\ref{subsec:flatorb} and \ref{subsec:flatorb2}, which gives a gauge-theoretic hyperkahler quotient construction of an arbitrary flat hyperkahler orbifold of the form $\RR^4/\hat\Gamma$, where $\hat\Gamma$ is a discrete group of affine linear automorphisms of $\RR^4$ which preserves its hyperkahler structure.
\item Our explicit description in Proposition \ref{prop:compNahm} of the `complex Nahm data' that populates the $D_2$ ALF moduli spaces and our exploitation of this description in Proposition \ref{prop:RGitems} and Remark \ref{rmk:periods} in order to provide interesting relationships between the $D_2$ ALF manifold and the $A_1$ ALE manifold.
\item Our gauge-theoretic constructions of the $D_2$ ALF manifold from both a hyperkahler quotient point of view (Theorem \ref{thm:modHK}) and a holomorphic symplectic quotient point of view (Theorem \ref{thm:modCpx}).
\item Our DUY-type theorem, Theorem \ref{thm:duy}, which relates these constructions.
\item Our description in Theorem \ref{thm:relXi} of gauge-theoretic birational equivalences between $D_2$ ALF manifolds.
\item Our combination of the previous two results in Theorems \ref{thm:res} and \ref{thm:conn} to show that the non-singular $D_2$ ALF manifolds produced by our constructions are diffeomorphic, and that furthermore there is a natural Ehresmann connection on the family thereof.
\end{itemize}

\vspace{.2cm}

\noindent\textbf{Conventions.} We will use the following conventions and notation throughout the paper:
\begin{itemize}
\item We denote by $Z_n$ the finite cyclic group of order $n$, i.e., the abelian group variously denoted elsewhere by $\mb{Z}/n, C_n$, etc. 
\item Given a set $X$ with action by group $G$, we denote by $X^G$ the $G$-invariant subset. 
\item Given a finite rank lattice $\Lambda$, we define its dual lattice to be $\Hom(\Lambda,2\pi\ZZ)$ and denote it by $\Lambda^\vee$.
\item We denote by $H^s = W^{s,2}$ the space of Sobolev $s$-regular functions. One may take $s \in \mb{R}$; there is no great loss in this paper with taking $s \in \mb{Z}$. 
\item In the above, if $s$ is sufficiently large that $H^s \hookrightarrow C^0$ by Sobolev embedding, we will throughout assume that we are working with the continuous representative within the a.e.-defined equivalence class of functions. 
\item Given a manifold $M$ and vector space $V$, we denote by $H^s(M, V)$ the space of Sobolev $s$-regular $V$-valued functions on $M$; slightly more generally, we use this notation to denote Sobolev $s$-regular sections if $V$ is a vector bundle (or sheaf of vector spaces) over $M$. For more general target spaces $N$, the notation $H^s(M, N)$ is more ambiguous. Suppose $s \ge 0$ and $N$ is a sublocus of a vector space $V$; we will follow the usual convention of taking the subset of $H^s(M, V)$ consisting of functions (a.e.) valued in $N$. Note, however, that we will typically assume that (i) $M$ is compact and (ii) $s > \dim_{\mb{R}}M / 2$, so that not only do we have an a.e. defined map, we may assume its continuity. Suppose now that $N$ is a submanifold of $V$; it is then straightforward to see one may equally well define this space $H^s(M, N)$ by demanding that our continuous map $M \to N$ in fact be in $H^s_{\mathrm{loc}}$ locally using open charts for $N$ and their preimages in $M$. Note that within this good range of $\dim_{\mb{R}}M/2 < s \le \infty$, where we again stress that $M$ is compact, the mapping space $H^s(M, N)$ is a Hilbert (or Fr\'echet) manifold~\cite{Eells}. 


\item We will work throughout with infinite-dimensional manifolds, but all our model topological vector spaces shall be \emph{separable} Banach or Fr\'echet spaces. As such, our manifolds will be manifestly second-countable. Note that second-countability is inherited under taking subsets or open quotients, and so all our topological spaces will be second-countable. We may hence analyze point-set topological properties by means of convergence of sequences (as opposed to more general nets or filters). 

\item We often work with spaces of the form $H^s(M, N)$ for sufficiently large $s \le \infty$; these spaces are, of course, Hilbert for $s < \infty$ and Fr\'echet for $s = \infty$. It becomes repetitive to keep repeating that our spaces are Hilbert and/or Fr\'echet, and so we will simply refer to Hilbert spaces, Hilbert manifolds, Hilbert Lie groups, and so on, meaning always that they are instead Fr\'echet manifolds, Fr\'echet Lie groups, etc., if $s = \infty$.

\item We will use $z \mapsto \overline{z}$ to denote complex conjugation and $A \mapsto A^{\dagger}$ to denote the adjoint (i.e., conjugate transpose) of a matrix or linear operator. However, we will use $(-)^*$ to denote formal $L^2$ adjoints of linear differential operators. We follow standard mathematical conventions for Lie algebras; e.g., elements of $\mf{u}(n)$ are skew-Hermitian matrices.

\item We will take gauge groups to act on the \emph{right} so that the action of a gauge transformation $g$ on a connection $\nabla$ is $\nabla \mapsto g^{-1}\nabla g$. \end{itemize}

\vspace{.2cm}

\noindent\textbf{Acknowledgements.} We thank L. Fredrickson, S. Kachru, R. Mazzeo, and R. Melrose for enjoyable and helpful conversations. We also thank A. Kupers, P. Kronheimer, H. Tanaka, and R. Thomas for comments on a draft version of this paper. The research of A.T. was supported by the National Science Foundation under NSF MSPRF grant number 
1705008. This publication is funded in part by the Gordon and Betty Moore Foundation through grant GBMF8273 to Harvard University to support the work of the Black Hole Initiative. This publication was also made possible through the support of a grant from the John Templeton Foundation. The research of M.Z. was also supported by DOE grant DE-SC0007870.

\section{The Kronheimer construction for infinite groups}\label{sec:setup}

\subsection{General setup}\label{subsec:setup}

We begin by explaining some general constructions for discrete subgroups of groups of affine linear transformations before eventually specializing to the case of interest, namely subgroups preserving the hyperkahler structure of flat $\mb{R}^4$.

So, let $Q$ be any finite-dimensional $\mb{R}$-vector space endowed with a positive-definite inner product $\langle -, -\rangle$, which induces a Riemannian metric on $Q$. Let $\mathrm{Aff}(Q)$ denote the group of affine linear automorphisms preserving this metric, i.e., the semidirect product $$\mathrm{Aff}(Q) := Q \rtimes O(Q)\,.$$ $\mathrm{Aff}(Q)$ sits in the canonically split short exact sequence
\beq \label{eq:ses} 1 \to Q \to \mathrm{Aff}(Q) \to O(Q) \to 1\,.\eeq
Suppose now that $\hat{\Gamma}$ is some discrete subgroup of $\mathrm{Aff}(Q)$, and suppose $\Lambda \subset \hat{\Gamma}$ is some finite-index normal subgroup whose image in $\mathrm{Aff}(Q)$ in fact lands within the subgroup of translations $Q \hookrightarrow \mathrm{Aff}(Q)$. Let $\Gamma$ then denote the quotient group $\hat{\Gamma}/\Lambda$ so that we have a diagram of short exact sequences \beq\label{eq:sesg} \begin{tikzcd} 1 \ar[r] & \Lambda \ar[r] \ar[d,hookrightarrow] & \hat{\Gamma} \ar[r,"\pi"] \ar[d,hookrightarrow] & \Gamma \ar[r] \ar[d] & 1 \\ 1 \ar[r] & Q \ar[r] & \mathrm{Aff}(Q) \ar[r] & O(Q) \ar[r] & 1\makebox[0pt][l]{\,.} \end{tikzcd} \eeq
Note that $\Lambda$, as a discrete subgroup of $Q$, is free abelian of some rank we shall denote $r$ for $0 \le r \le \dim_{\mb{R}}Q$. Next, as $\Lambda$ is abelian, the short exact sequence above induces an automorphism $\Gamma \acts{\Lambda}$. Compatibility with~\eqref{eq:ses} and the analogous (defining) action $O(Q)\acts{Q}$ implies that $\Lambda_{\mb{R}} := \Lambda \otimes_{\mb{Z}} \mb{R} \subset Q$ is a subrepresentation under the action of $\Gamma \to O(Q)$. As $\Gamma$ acts preserving the inner product structure of $Q$, we hence have that $Q$ splits as a direct sum
\beq Q \simeq \Lambda_{\mb{R}} \oplus \Lambda_{\mb{R}}^{\perp}\eeq
of $\Gamma$-representations. 

We will say $\hat{\Gamma} \hookrightarrow \mathrm{Aff}(Q)$ is \emph{linear} if there exists a (group-theoretic) splitting of $\hat{\Gamma} \stackrel{\pi}{\twoheadrightarrow} \Gamma$ compatible with the canonical splitting of $\mathrm{Aff}(Q) \twoheadrightarrow O(Q)$. In other words, $\hat{\Gamma}\acts{Q}$ is linear if we have a splitting $\hat{\Gamma} \simeq \Gamma \ltimes \Lambda$, with $\Gamma$ acting by linear automorphisms and $\Lambda$ acting by translations. If $\hat{\Gamma}\acts{Q}$ is not linear, we term its action \emph{affine}. In either case, when we think of the space $Q$ above as an (affine) representation of $\hat{\Gamma}$, we will refer to it as the \emph{defining} representation.

\begin{rmk}\label{rmk:crystal}
The examples that will ultimately interest us will admit an alternative definition of $\Gamma$ as the image of $\hat{\Gamma}$ within the $O(Q)$ quotient group of $\mathrm{Aff}(Q)$ (so that automatically, $\Gamma$ as a discrete subgroup of the compact group $O(Q)$ is finite and the kernel $\Lambda$ of $\hat{\Gamma}\stackrel{\pi}{\to} \Gamma$ is a subgroup of the translation group $Q$). But, the above definition allows for slightly more general examples as well, such as $\hat{\Gamma} \simeq \mb{Z}$ acting on $Q \simeq \mb{R}$ by translation and $\Lambda$ the index $2$ subgroup of $\hat{\Gamma}$; note that this example would be termed \emph{affine} in the terminology above. 

Let us comment briefly on why one may be interested in enlarging one's family of examples to include ones such as the above where $\Gamma \to O(Q)$ is not injective; for simplicity of nomenclature in this discussion, let us restrict to the case of $Q \simeq \Lambda_{\mb{R}}$. The end goal of this section is to reconstruct $Q/\hat{\Gamma}$ as a moduli space of connections on $Q^*/\Lambda^{\vee}$ (before then attempting to produce \emph{deformations} of $Q/\hat{\Gamma}$ as moduli spaces of \emph{singular} connections on $Q^*/\Lambda^{\vee}$). In the example above, suppose $Q/\Lambda$ is a circle of radius $2R$, so that then $Q/\hat{\Gamma}$ is a circle of radius $R$. Alternatively, we could have chosen $\Lambda' = \hat{\Gamma}$ and $\Gamma' = \{1\}$, in which case we would simply reconstruct the circle of radius $R$ as the moduli of flat $U(1)$ connections on the dual circle of radius $1/2\pi R$. The surprising statement is that there is an equivalent reconstruction of our original circle in terms of a moduli of $U(2)$ connections on a dual circle of radius $1/4\pi R$ subject to a $Z_2$-equivariance condition -- where the $Z_2$ acts trivially on the underlying circle\footnote{For physically-inclined readers, note that this equivariance condition is that of an \emph{asymmetric orbifold}, as is clear by tracing the $Z_2$-action on $Q/\Lambda$ through $T$-duality. Hence, one may parse our discussion in~\S\ref{subsec:asymorb} as a determination of the worldvolume gauge theory on a class of D-branes on asymmetric orbifolds.} of radius $1/4\pi R$ (as the action of $\Gamma$ on $Q^*$ factors through $\Gamma \to O(Q)$). More generally, there are related equivalences of all manner of moduli spaces on the two circles, where those on the \emph{smaller} circle are subject to a $Z_2$-equivariance condition -- e.g., of instanton moduli spaces on $\mb{R}^3 \times Q^*/\Lambda^{\vee}$ and on $\mb{R}^3 \times Q^*/(\Lambda')^{\vee}$. Making sharp mathematical sense of these equivalences is one of the main motivations for not supposing $\Gamma \hookrightarrow O(Q)$; the other motivation is that the above example is an excellent toy model for the case of affine actions we treat in~\S\ref{subsec:asymorb}.

It may also be of interest to compare the above setup to the definition of a \emph{crystallographic space group} in crystallography; indeed, the two are essentially identical, although in crystallography one typically asks that $Q \simeq \Lambda_{\mb{R}}$. Notwithstanding this mild difference, the \emph{linear} case we demarcate above is known as a \emph{symmorphic} space group in crystallography. This entire section may thus be thought of as an exposition of crystallographic T-duality, as explored in, e.g., the recent papers~\cite{GT,GKT}, as well as older papers~\cite{wati:dBraneT,ho:noncomm,FreedMoore}. However, in contrast with some of these references, we are more concerned here with the differential geometry of instantons than the algebraic topology of charge lattices. 
\end{rmk}

It is slightly more straightforward to pass to the Fourier-dual description in the case of linear actions, and so we will restrict to that case for the purposes of the present discussion. The affine case will be treated in \S\ref{subsec:asymorb}; passing from the linear case to the affine case simply involves upgrading from instantons on equivariant bundles to instantons on twisted equivariant bundles (whose underlying non-equivariant bundles are untwisted). Almost all cases that arise in the present context -- and certainly the case that occupies the bulk of this paper -- are of this linear form.

\vspace{0.5cm}
Recall that in Kronheimer~\cite{kronheimer:construct}, one now encounters the following constructions in the case that $\hat{\Gamma} = \Gamma$ is finite: \begin{align*} R_{\Gamma} &:= \mb{C}[\Gamma]\text{ is the regular representation of $\Gamma$ over $\mb{C}$ with its standard Hermitian structure}, \\ U(R_{\Gamma}) &:= \{\text{unitary matrices acting on }R_{\Gamma}\} \\ \mf{u}(R_{\Gamma}) &:= \{\text{skew-symmetric matrices acting on }R_{\Gamma}\} \\ \tilde{\mc{G}} &:= U(R_{\Gamma})^{\Gamma} = \{g \in U(R_{\Gamma}) \bigm| g \gamma = \gamma g \ \forall \gamma \in \text{Im}(\Gamma \hookrightarrow U(R_{\Gamma}))\} \\ \mc{G} &:= \tilde{\mc{G}} / Z(\tilde{\mc{G}}) \\ \mc{A} &:= \Big( \mf{u}(R_{\Gamma}) \underset{\mb{R}}{\otimes} Q \Big)^{\Gamma} = \{v \in \mf{u}(R_{\Gamma}) \underset{\mb{R}}{\otimes} Q \bigm| \gamma v = v \ \forall \gamma \in \Gamma \acts{\mf{u}(R_{\Gamma}) \underset{\mb{R}}{\otimes} Q}\}\,.
\end{align*}
A few points require clarification. $\Gamma\hookrightarrow U(R_\Gamma)$ is the natural permutation action on the basis vectors by left-multiplication. This yields an adjoint action on $\mf{u}(R_{\Gamma})$. The ambient space $\mc{A}$ is then defined as the $\Gamma$-invariants for the diagonal action on $\mf{u}(R_{\Gamma}) \underset{\mb{R}}{\otimes} Q$. Finally, $Z(\tilde{\mc{G}}) \simeq U(1)$ is the center of $\tilde{\mc{G}}$.

We would like to generalize the above to the case that $\hat{\Gamma}$ is of the more general form introduced above. The ensuing infinite-dimensional linear algebra naturally requires some functional analysis. We first note that $\Lambda$ naturally inherits a norm $| \cdot |\colon \Lambda \to \mb{R}_{\ge 0}$ from its embedding in $Q$. Most of the definition of $\mc{G}$ and $\mc{A}$ above may be repeated \emph{verbatim} once we construct a suitable regular representation $R_{\hat{\Gamma}}$ carrying a Hilbert space structure. The natural idea is to replace $\mb{C}[\Lambda]$ by the Hilbert space of square-summable sequences $\ell^2(\Lambda)$, or slightly more generally, the Sobolev $s$-regular versions thereof. Recall that throughout, we are assuming we have a canonical splitting $S$ of $1 \to \Lambda \to \hat{\Gamma} \to \Gamma \to 1$, as we assume we are in the case of a linear $\hat\Gamma\acts{Q}$ action. Such a splitting $S\colon \Gamma \to \hat{\Gamma}$ also yields $\tilde{S}\colon \hat{\Gamma} \to \Lambda$, a left inverse to $\Lambda \hookrightarrow \hat{\Gamma}$ defined by $\tilde S(\hat\gamma) = \hat\gamma \cdot S(\pi(\hat\gamma))^{-1}$. Equivalently, $\tilde S(\hat\gamma)$ is the result of acting by $\hat\gamma$ on $0\in Q$. We then have the following definition:

\begin{defn} \label{def:originalDefs} \phantom{this is here for spacing reasons}
\begin{itemize}
\item Let $\ell^2(\Lambda) := \{(a_{\lambda})_{\lambda \in \Lambda} \in \mathsf{Maps}(\Lambda,\mb{C}) \bigm| \sum_{\lambda \in \Lambda} |a_{\lambda}|^2 < \infty\}$ be the usual Hilbert space of $\ell^2$ sequences. 
\item More generally, for any $s \in [0, \infty)$, we let $h^s(\Lambda) \subset \ell^2(\Lambda)$ denote the subspace $$h^s(\Lambda) := \{(a_\lambda)_{\lambda \in \Lambda} \bigm| \sum_{\lambda \in \Lambda} |\lambda|^{2s} |a_\lambda|^2 < \infty\}.$$ For $s = \infty$, we define $h^{\infty}(\Lambda)$, which we may also denote $c^{\infty}(\Lambda)$, as $$h^{\infty}(\Lambda) := \bigcap_{s \ge 0} h^s(\Lambda).$$
\item Let $\iota\colon \Lambda \hookrightarrow \ell^2(\Lambda)$ denote the canonical Schauder basis $\iota(\lambda) = (\delta_{\lambda \lambda'})_{\lambda' \in \Lambda}$. Note that $$\iota(\Lambda) \subset h^s(\Lambda)\ \forall s \in [0, \infty].$$
\item For $0 \le s < \infty$, we define ``the (Sobolev $s$-regular) regular representation of $\hat{\Gamma}$,'' $R^s_{\hat{\Gamma}}$, by
\beq R^s_{\hat{\Gamma}} := \{(a_{\tilde{\gamma}})_{\tilde{\gamma} \in \hat{\Gamma}} \in \mathsf{Maps}(\hat{\Gamma},\mb{C}) \bigm| \forall\tilde{\gamma}\in\hat{\Gamma}, \sum_{\lambda \in \Lambda} |\lambda|^{2s} |a_{\lambda+\tilde{\gamma}}|^2 < \infty \}\ . \eeq
Similarly, $R^{\infty}_{\hat{\Gamma}} := \cap_{s \ge 0} R^s_{\hat{\Gamma}}$. Note that for any $s \in [0, \infty]$, the splitting $S$ yields an isomorphism
\begin{eqnarray} \label{eq:sregrep} 
R^s_{\hat{\Gamma}} &\simeq& h^s(\Lambda) \underset{\mb{C}}{\otimes} R_{\Gamma} \\ 
(a_{\hat{\gamma}} := a_{\tilde{S}(\hat{\gamma})}a_{\pi(\hat{\gamma})}) &\mapsfrom& (a_{\lambda}) \otimes (a_{\gamma}) \nonumber 
\end{eqnarray} which allows us to endow $R^0_{\hat{\Gamma}} \simeq \ell^2(\Lambda) \otimes_{\mb{C}} R_{\Gamma}$ with its induced Hermitian\footnote{Of course, these spaces naturally have Hilbert space structures for all $s \in [0, \infty)$. We emphasize the Hilbert space structure for $s = 0$ as it is specifically for the $\ell^2$ norm that we would like to consider unitary matrices.} structure. 
\item By abuse of notation, let $\iota\colon\hat{\Gamma} \to R^s_{\hat{\Gamma}}$ continue to denote the canonical topological basis and let $\iota\colon \Gamma\to R_\Gamma$ denote the analogous basis of $R_\Gamma$.
\item Let $U^0(R_{\hat{\Gamma}})$ be the group of unitary operators on $R^0_{\hat{\Gamma}}$; more generally,
$$U^s(R_{\hat{\Gamma}}) := \{g \in U^0(R_{\hat{\Gamma}}) \bigm| g\text{ preserves }R^s_{\hat{\Gamma}} \subset R^0_{\hat{\Gamma}}\}\,.$$
We do not put topologies on these groups.
\item Let $j\colon \hat{\Gamma} \hookrightarrow U^s(R_{\hat{\Gamma}})$ be the group homomorphism defined by mandating for $\tilde{\gamma} \in \hat{\Gamma}$ that $j(\tilde{\gamma})$ map $\iota(\tilde{\gamma'})$ to $\iota(\tilde{\gamma}\tilde{ \gamma'})$.  Indeed,  having defined the action on a topological basis for $R^s_{\hat{\Gamma}}$, at most one continuous, linear extension exists, and it is not difficult to see (i) that such an extension does exist for all $s \in [0, \infty]$ and (ii) that it is unitary for the $\ell^2$ inner product. We also let $j\colon \Gamma\to U(R_\Gamma)$ denote the analogous permutation representation.
\item Define the gauge group as
\beq \label{eq:tildeGD0}
\widetilde{\mc{G}}^s_{\hat{\Gamma}} := U^s(R_{\hat{\Gamma}})^{j(\hat{\Gamma})} = \{ g \in U^s(R_{\hat{\Gamma}}) \bigm| g j(\tilde{\gamma}) = j(\tilde{\gamma}) g\ \forall \tilde{\gamma} \in \hat{\Gamma}\}\,.\eeq
As we will see below, for $s>r/2$ this is naturally a Hilbert Lie group.
\item Define
$$\mf{u}^s(R_{\hat{\Gamma}}) := \{ X \in \mathrm{End}(R^0_{\hat{\Gamma}}) \bigm| X\text{ is skew-Hermitian and preserves }R^s_{\hat{\Gamma}} \subset R^0_{\hat{\Gamma}}\}\,.$$
This `looks like' a Lie algebra for $U^s(R_{\hat\Gamma})$, but we stress that we did not put a topology on this group, let alone a smooth structure. 
\item Define $R^{-\infty}_{\hat{\Gamma}}$ by duality, and define $\mf{u}^{-\infty}(R_{\hat{\Gamma}})$ as skew-Hermitian operators $R^{\infty}_{\hat{\Gamma}} \to R^{-\infty}_{\hat{\Gamma}}$. Using $\iota$ to provide a topological basis of $R_{\hat{\Gamma}}$, one may represent elements of $R^{-\infty}_{\hat{\Gamma}}$ and $\mf{u}^{-\infty}(R_{\hat{\Gamma}})$, respectively, by vectors and skew matrices ``with at most polynomial growth.''

\vspace{.2cm} 

We then define the canonical element $d \in \mf{u}^{-\infty}(R_{\hat{\Gamma}}) \underset{\mb{R}}{\otimes} Q$ by demanding that $d$ (regarded as a matrix as in the last sentence) map $\iota(\tilde{\gamma})$ to $i \, \iota(\tilde\gamma) \otimes \tilde S(\tilde\gamma)$, where we use the inclusions $\tilde S(\gamma) \in \Lambda \hookrightarrow \Lambda_\RR \hookrightarrow Q$. We observe the important commutation relation 
\be \label{eq:dComm} [j(\lambda), d] = -i \, {\rm Id}\otimes \lambda \ee
for $\lambda\in \Lambda$. (When $j(\lambda)$ is to the left of $d$, it is interpreted as $j(\lambda)\otimes {\rm Id}$.)

\item 
Note the action of $\hat{\Gamma}\stackrel{j}{\hookrightarrow}U^\infty(R_{\hat{\Gamma}})$ is well-defined on $\mf{u}^s(R_{\hat{\Gamma}})$ even for $s = -\infty$. We now enlarge this $\hat{\Gamma}\acts{\mf{u}^{-\infty}(R_{\hat{\Gamma}})}$ action to a $\hat{\Gamma}$-action on $\mf{u}^{-\infty}(R_{\hat{\Gamma}}) \otimes_{\mb{R}} Q$ as follows: let $\hat{\gamma}$ act linearly by $\hat{\gamma} \otimes \pi(\hat{\gamma})$ and then add $i\, \mathrm{Id} \otimes \tilde{S}(\hat{\gamma})$. Explicitly, $\sum_a X_a\otimes q_a \mapsto \parens{\sum_a j(\hat\gamma) X_a j(\hat\gamma)^{-1} \otimes \pi(\hat\gamma)(q_a)} + i \, {\rm Id}\otimes \tilde S(\hat\gamma)$, where $a$ indexes a basis $q_a$ of $Q$.

We may then define the ambient space 
\beq \mc{A}^{-\infty}_{\hat{\Gamma}} := \Big( \mf{u}^{-\infty}(R_{\hat{\Gamma}}) \otimes_{\mb{R}} Q \Big)^{\hat{\Gamma}}\,.\eeq
We may verify $d \in \mc{A}^{-\infty}_{\hat{\Gamma}}$, which in turn allows us to define $\mc{A}^s_{\hat{\Gamma}}$ for $s \in [0, \infty]$ as
\beq \mc{A}^s_{\hat{\Gamma}} := \Big( d + \mf{u}^s(R_{\hat{\Gamma}}) \otimes_{\mb{R}} Q \Big)^{\hat{\Gamma}}\,.\eeq
Here $d + \mf{u}^s(R_{\hat{\Gamma}})\otimes_{\mb{R}}Q$ is an affine subspace of $\mf{u}^{-\infty}(R_{\hat{\Gamma}})\otimes_{\mb{R}}Q$. Note that one may have \emph{a priori} wished to define $\mc{A}^s_{\hat{\Gamma}}$ as simply $(\mf{u}^s(R_{\hat{\Gamma}})\otimes_{\mb{R}}Q)^{\hat{\Gamma}}$, but the proof of Prop. \ref{prop:nontrans} will make it clear that this space would be empty.
\end{itemize}
\end{defn}

We now pass to the Fourier-dual presentations. We first observe that the automorphism $\Gamma\acts \Lambda$ induces two natural actions on $\Lambda^\vee$. The first is the transpose action, and the second is defined by using the inner product on $Q$ to identify $\Lambda_\RR$ and $\Lambda_\RR^*$. These actions are inverses of each other, and we equip $\Lambda^\vee$ with the second action. We similarly define $\Gamma$ actions on $\Lambda_\RR^*$, $(\Lambda_\RR^*)^\perp$, and $Q^*$.

\begin{prop} \label{prop:nontrans} Let $T_\Lambda^{\vee}$ denote $\mathrm{Hom}(\Lambda, U(1))$, i.e. $T_\Lambda \simeq \Lambda_{\mb{R}}^* / \Lambda^{\vee}$, the dual torus of $T_{\Lambda} := \Lambda \underset{\mb{Z}}{\otimes} U(1) \simeq \Lambda_{\mb{R}} / \Lambda$. Then, for all $0 \le s \le \infty$, we have the following: 
\begin{enumerate}[(i)] 
\item $h^s(\Lambda) \simeq H^s(T_\Lambda^{\vee}, \mb{C})$,  
\item $R^s_{\hat{\Gamma}} \simeq H^s(T_\Lambda^{\vee}, R_{\Gamma})$.
\item For $s = 0$ or $r/2 < s \le \infty$, $U^s(R_{\hat{\Gamma}})^{j(\Lambda)} \simeq H^s(T_{\Lambda}^{\vee},U(R_{\Gamma}))$. 
\item For $r/2 < s \le \infty$, $\Big(d + \mf{u}^s(R_{\hat{\Gamma}})\otimes_{\mb{R}}Q\Big)^\Lambda \simeq \mathrm{Conn}^s(T_{\Lambda}^{\vee},R_{\Gamma}) \times H^s(T_{\Lambda}^{\vee}, \mf{u}(R_{\Gamma}) \otimes \Lambda_{\mb{R}}^{\perp})$, where\footnote{\label{ft:trivconn} Of course, as connections on a trivial bundle, the space $\mathrm{Conn}^s(T_{\Lambda}^{\vee},R_{\Gamma})$ may be immediately identified with $H^s(T_{\Lambda}^{\vee}, \mf{u}(R_{\Gamma}) \otimes \Lambda_{\mb{R}})$. The reason to not do so throughout is for purposes of recalling the natural group action, as will be explained below. That being said, we will frequently use that $\mathrm{Conn}^s(T^{\vee}_{\Lambda},R_{\Gamma})$ is a torsor over $H^s(T^{\vee}_{\Lambda},\mf{u}(R_{\Gamma})\otimes \Lambda_{\mb{R}})$ -- and specifically its presentation as an affine translate thereof by the de~Rham differential $d_{\mathrm{dR}}$.} 
\begin{eqnarray*} \mathrm{Conn}^s(T_{\Lambda}^{\vee},R_{\Gamma}) &:=& \{H^s\text{-regular connections on the trivial principal $U(R_{\Gamma})$-bundle on $T_{\Lambda}^{\vee}$}\} \\ &\simeq& \{H^s\text{-regular Hermitian connections on the trivial $R_{\Gamma}$-bundle on $T_{\Lambda}^{\vee}$}\}
\end{eqnarray*} 
and $H^s(T_{\Lambda}^{\vee}, \mf{u}(R_{\Gamma}) \otimes \Lambda_{\mb{R}}^{\perp})$ denotes $H^s$-regular functions on $T_{\Lambda}^{\vee}$ valued in $\mf{u}(R_{\Gamma}) \otimes \Lambda_{\mb{R}}^{\perp}$. 
\end{enumerate}

The automorphism $\Gamma\acts{\Lambda^\vee}$ induces an action of $\Gamma$ on $T_{\Lambda}^{\vee}$, while $\Gamma \hookrightarrow U(R_{\Gamma})$ acts on $U(R_{\Gamma})$ by conjugation and hence similarly on $\mf{u}(R_{\Gamma})$; one also obtains a $\Gamma$-action on connections. Finally, $\Lambda_{\mb{R}}^{\perp}$ also carries a natural $\Gamma$-action.  \begin{enumerate}[(i)]\setcounter{enumi}{4}

\item For $s = 0$ or $r/2 < s \le \infty$, $\widetilde{\mc{G}}_{\hat{\Gamma}}^s \simeq H^s_{\Gamma}(T_{\Lambda}^{\vee},U(R_{\Gamma}))$, i.e., $\Gamma$-equivariant $H^s$-regular maps $T_{\Lambda}^{\vee} \to U(R_{\Gamma})$, and \item for $r/2 < s \le \infty$, $\mc{A}^s_{\hat{\Gamma}} \simeq \mathrm{Conn}^s_{\Gamma}(T_{\Lambda}^{\vee},R_{\Gamma}) \times H^s_{\Gamma}(T_{\Lambda}^{\vee}, \mf{u}(R_{\Gamma}) \otimes \Lambda_{\mb{R}}^{\perp})$, i.e., the space of $\Gamma$-equivariant connections and functions, respectively. \end{enumerate} \end{prop}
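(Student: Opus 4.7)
The plan is to reduce everything to Fourier series on the compact abelian group $T_\Lambda^\vee$, after which the splitting $S$ and the tensor factorization~\eqref{eq:sregrep}, together with $\Gamma$-equivariance, handle the remaining structure. All six claims then follow from a single analytic correspondence applied componentwise. For~(i), under the identification of $\Lambda$ with the Pontryagin dual of $T_\Lambda^\vee$ via $\lambda \mapsto e^{i\langle \lambda, \cdot \rangle}$, Plancherel's theorem gives an isometry $\ell^2(\Lambda) \simeq L^2(T_\Lambda^\vee, \mb{C})$, and the weighted $h^s$ norm (with weight $|\lambda|^{2s}$) agrees up to equivalence with the standard Sobolev $H^s$ norm on $T_\Lambda^\vee$, defined as $\sum_\lambda (1+|\lambda|^2)^s |\hat f(\lambda)|^2$. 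Claim~(ii) then follows by tensoring~(i) with $R_\Gamma$ via~\eqref{eq:sregrep}.

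For~(iii), an operator on $R^0_{\hat{\Gamma}} \simeq L^2(T_\Lambda^\vee, R_\Gamma)$ commutes with every $j(\lambda)$ iff, after the Fourier correspondence (which sends $j(\lambda)$ to multiplication by $e^{i\langle \lambda, \cdot \rangle}$), it commutes with multiplication by every character. By the standard decomposable-operator argument, such operators are precisely pointwise multiplication by $\mathrm{End}(R_\Gamma)$-valued $L^\infty$ functions, and unitarity transcribes to pointwise unitarity of the symbol; since $U(R_\Gamma)$ is compact this yields the claim for $s=0$. For $s > r/2$, the Sobolev embedding $H^s \hookrightarrow C^0$ makes $H^s(T_\Lambda^\vee)$ a Banach algebra, which guarantees that such a multiplication operator preserves $H^s(T_\Lambda^\vee, R_\Gamma)$ iff its symbol lies in $H^s(T_\Lambda^\vee, U(R_\Gamma))$. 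Claim~(v) is then~(iii) together with the extra $\Gamma$-equivariance: commutation with $j(\Gamma)$ translates to pointwise conjugation equivariance for the $\Gamma$-action on $U(R_\Gamma)$, combined with $\Gamma$-equivariance of the base (using the $\Gamma$-action on $\Lambda^\vee$ defined via the inner product).

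For~(iv), the decisive input is the commutation relation~\eqref{eq:dComm}: $[j(\lambda), d] = -i\, \mathrm{Id} \otimes \lambda$. Fourier transforming identifies the $\Lambda_\mb{R}$-component of $d$ with the de Rham differential $d_{\mathrm{dR}}$ on $T_\Lambda^\vee$, which is naturally valued in $T^* T_\Lambda^\vee \simeq \Lambda_\mb{R}$ (as $T_\Lambda^\vee = \Lambda_\mb{R}^*/\Lambda^\vee$). Decomposing $Q = \Lambda_\mb{R} \oplus \Lambda_\mb{R}^\perp$ and applying the skew-Hermitian analogue of~(iii) in each factor: the $\Lambda_\mb{R}^\perp$ piece of the $\Lambda$-invariants in $\mf{u}^s(R_{\hat{\Gamma}}) \otimes Q$ becomes $H^s(T_\Lambda^\vee, \mf{u}(R_\Gamma) \otimes \Lambda_\mb{R}^\perp)$, while the $\Lambda_\mb{R}$ piece, added to $d = d_{\mathrm{dR}}$, is identified (cf.~footnote~\ref{ft:trivconn}) with the affine space of Hermitian connections on the trivial $R_\Gamma$-bundle. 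Claim~(vi) follows by further imposing $\Gamma$-equivariance exactly as in~(v).

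The main technical obstacle is the regularity correspondence for $s > r/2$: one must check that pointwise multiplication by a symbol $m$ preserves $H^s(T_\Lambda^\vee, R_\Gamma)$ precisely when $m \in H^s$. The forward direction follows from applying the operator to the constant function $1 \in H^s$, while the reverse uses the Banach algebra property $H^s \cdot H^s \subset H^s$, which ultimately rests on the Sobolev embedding $H^s \hookrightarrow C^0$ and fails for $s \le r/2$; this is what forces the assumption $s > r/2$ throughout~(iv) and~(vi). A secondary subtlety is the sign- and factor-of-$i$ bookkeeping that ensures $d$ corresponds to $+d_{\mathrm{dR}}$ rather than its negative, and that the two natural $\Gamma$-actions on $\Lambda^\vee$ agree under the inner-product identification $\Lambda_\mb{R} \simeq \Lambda_\mb{R}^*$ — which is precisely what motivates the convention for $\Lambda^\vee$ adopted in the paragraph preceding the proposition.
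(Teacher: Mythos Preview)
Your proof is correct and follows the same overall strategy as the paper: reduce everything to Fourier series on $T_\Lambda^\vee$, then handle $\Gamma$-equivariance separately.

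The only substantive difference is in~(iii). The paper works explicitly with matrix elements: writing $g^{\gamma_1,\gamma_2}_{m_1,m_2}$, the commutation with $j(\Lambda)$ becomes the translation-invariance condition $g^{\gamma_1,\gamma_2}_{m_1,m_2} = g^{\gamma_1,\gamma_2}_{m_1+m,m_2+m}$, from which they define the Fourier-side operator $L(g)^{\gamma_1,\gamma_2}(\vec\theta) := \sum_k g^{\gamma_1,\gamma_2}_{k,0} e^{i\vec k\cdot\vec\theta}$ by hand and then verify unitarity via Plancherel. You instead invoke the decomposable-operator theorem (bounded operators commuting with $L^\infty$ multiplication are fiberwise multiplication operators) as a black box. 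Both are fine; your route is cleaner, but the paper's explicit formula for $L(g)$ is reused later, in particular in \S\ref{subsec:asymorb} where the affine-action case requires modifying precisely that formula (equation~\eqref{eq:twistL}). So the hands-on approach pays off downstream, whereas your abstract argument would need to be redone in the twisted setting.

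One minor point: when you write ``applying the operator to the constant function $1 \in H^s$'' to recover the symbol, bear in mind that the fiber is $R_\Gamma$, not $\mb{C}$, so you should apply to each constant basis vector $\iota(\gamma)$ to read off the columns of the symbol. This is harmless but worth stating.
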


\begin{defn} Given the splitting of statement~\emph{(vi)} above, we make the definitions \begin{align*} (\mc{A}^s_{\hat{\Gamma}})' &:= \mathrm{Conn}^s_{\Gamma}(T^{\vee}_{\Lambda},R_{\Gamma}) \\ (\mc{A}^s_{\hat{\Gamma}})'' &:= H^s_{\Gamma}(T^{\vee}_{\Lambda},\mf{u}(R_{\Gamma}) \otimes \Lambda^{\perp}_{\mb{R}})\,,\end{align*} so that we have the canonical splitting \beq \mc{A}^s_{\hat{\Gamma}} \simeq (\mc{A}^s_{\hat{\Gamma}})' \times (\mc{A}^s_{\hat{\Gamma}})''\,.\eeq\end{defn}

\bp The first statement is immediate from Fourier theory. So too is the second from the isomorphism~\eqref{eq:sregrep}. 
We now show~\emph{(iii)} and \emph{(iv)}. 

Suppose $g \in U^0(R_{\hat{\Gamma}})$ indeed commutes with $j(\Lambda)$.  Label the elements of $\hat{\Gamma}$ by pairs $(m, \gamma) \in \Lambda \times \Gamma$ using the splitting, and let $g_{m_1, m_2}^{\gamma_1, \gamma_2}$ denote the matrix elements $\langle (m_1, \gamma_1), g (m_2, \gamma_2) \rangle$. For $k\in \Lambda$, we denote the matrix elements $\avg{(m_1,\gamma_1),j(k)(m_2,\gamma_2)}$ by $j(k)_{m_1,m_2}^{\gamma_1,\gamma_2} = j(k)_{m_1,m_2} \delta^{\gamma_1,\gamma_2}$, where $j(k)_{m_1,m_2} = \delta_{m_1,m_2+k}$. Finally, write the matrix elements $\avg{(m_1,\gamma_1),d(m_2,\gamma_2)} \in Q\otimes \CC$ as $d^{\gamma_1,\gamma_2}_{m_1,m_2} = d_{m_1} \delta_{m_1,m_2} \delta^{\gamma_1,\gamma_2}$, where $d_{m_1} = i m_1$. That $g$ commutes with $j(\Lambda)$ for $m \in \Lambda$ yields the condition
\beq \label{eq:trans} g_{m_1, m_2}^{\gamma_1, \gamma_2} = g_{m_1+m, m_2+m}^{\gamma_1,\gamma_2}\text{ for all }m_1,m_2,m\in \Lambda, \gamma_1,\gamma_2 \in \Gamma\,.\eeq
This constraint is solved by those $g$ of the form
\be g^{\gamma_1,\gamma_2}_{m_1,m_2} = \sum_{k\in \Lambda} g^{\gamma_1,\gamma_2}_{k,0} j(k)_{m_1,m_2} \ . \ee
Similarly, the $\Lambda$-invariance constraint on $A\in \A^{-\infty}_{\hat\Gamma}$ is solved by those $A$ satisfying
\be A^{\gamma_1,\gamma_2}_{m_1,m_2} = d^{\gamma_1,\gamma_2}_{m_1,m_2} + \sum_{k\in \Lambda} a_{k,0}^{\gamma_1,\gamma_2} j(k)_{m_1,m_2} \ ; \ee
the second term on the right side satisfies a shift invariance condition analogous to~\eqref{eq:trans}. Between these expressions and \eqref{eq:dComm}, we are led to identify $j(k)$ for $k\in\Lambda$ with the function $e^{i\vec k\cdot \vec\theta}$ on $T^\vee_\Lambda$, where $\vec{\theta}\in \Lambda_\RR^*$ provides coordinates on $T_{\Lambda}^{\vee}$, and $d$ with the exterior derivative on $T^\vee_\Lambda$. We therefore define $L(g)\colon T^\vee_\Lambda \to \End(R_\Gamma)$ as the matrix-valued function with $(\gamma_1, \gamma_2)$ matrix entry given by
\beq\label{eq:Ldefn}
L(g)^{\gamma_1, \gamma_2} := \sum_{k \in \Lambda} g_{k,0}^{\gamma_1,\gamma_2} \exp(i\vec{k}\cdot\vec{\theta}) \ , \eeq
where this sum indicates the element of $L^2(T_{\Lambda}^{\vee},\mb{C})$ provided by Fourier theory. (As usual, we may for simplicity perform our manipulations below by assuming sufficient regularity that all our series converge uniformly -- and then simply extend by continuity to $\ell^2(\Lambda) \stackrel{\sim}{\to} L^2(T_{\Lambda}^{\vee})$.) Similarly, we define $L(A)\colon T^\vee_\Lambda \to \End(R_\Gamma)\otimes Q$ by
\be L(A)^{\gamma_1,\gamma_2} := \sum_{k\in\Lambda} a_{k,0}^{\gamma_1,\gamma_2} \exp(i\vec k\cdot\vec\theta) \ . \ee
From here, we focus on completing the verification of \emph{(iii)}, as that of \emph{(iv)} proceeds similarly.

Using that the $(m, \gamma)$ provide an orthonormal basis for $R_{\hat{\Gamma}}^0$ and the unitarity of $g$, we obtain 
$$\int_{T_{\Lambda}^{\vee}} \sum_{\gamma_1 \in \Gamma} \overline{L(g)^{\gamma_1,\gamma_3}} L(g)^{\gamma_1,\gamma_2} \exp(i\vec{m}\cdot\vec{\theta}) d\vec{\theta} = \hat V \delta_{0,m}\delta_{\gamma_2,\gamma_3}\ \forall m \in \Lambda, \gamma_2,\gamma_3 \in \Gamma$$
$$\implies \sum_{\gamma_1 \in \Gamma} \overline{L(g)^{\gamma_1,\gamma_3}}L(g)^{\gamma_1,\gamma_2}  = \delta_{\gamma_2,\gamma_3}\text{ a.e. }\forall \gamma_2,\gamma_3 \in \Gamma$$
$$ \implies L(g) \in L^2(T_{\Lambda}^{\vee}, U(R_{\Gamma}))\,.$$
Here, $\hat V$ is the volume of $T^\vee_\Lambda$. Conversely, given an element of $g^{\vee} \in L^2(T_{\Lambda}^{\vee},U(R_{\Gamma})) \subset L^2(T_{\Lambda}^{\vee},\End(R_{\Gamma}))$, expanding its matrix elements in Fourier series as in \eqref{eq:Ldefn} before extending by~\eqref{eq:trans} will yield an element $\ell(g^{\vee}) \in U^0(R_{\hat{\Gamma}})^{j(\Lambda)}$ in the inverse association to the map $L$ defined above. In particular, such a $g^\vee$ is bounded in $L^\infty$, since it is unitary; hence, it does indeed define an endomorphism of $R^0_{\hat\Gamma}$. It is a similar exercise in Fourier analysis to verify that $L, \ell$ are homomorphisms so that we have an isomorphism $U^0(R_{\hat{\Gamma}})^{j(\Lambda)} \simeq L^2(T^{\vee}_{\Lambda},U(R_{\Gamma}))$.

Finally, for $s > r/2$, it similarly follows from Fourier duality that $U^s(R_{\hat{\Gamma}})^{j(\Lambda)}$ and $H^s(T^{\vee}_{\Lambda},U(R_{\Gamma}))$ are carried to each other under the inverse morphisms, i.e., that the subgroup of $L^2(T^{\vee}_{\Lambda},U(R_{\Gamma}))$ which preserves $H^s(T^{\vee}_{\Lambda},R_{\Gamma}) \subset L^2(T^{\vee}_{\Lambda}, R_{\Gamma})$ is $H^s(T^{\vee}_{\Lambda},U(R_{\Gamma}))$, which follows as $H^s$ is a Banach algebra in this Sobolev range.


Imposing the further $\Gamma$-invariance necessary to arrive at statements \emph{(v)} and \emph{(vi)} is relatively immediate. For instance, for gauge transformations the $\Gamma$-invariance condition reads $g_{\gamma k,0} = j(\gamma) g_{k,0} j(\gamma)^\dagger$, where these are all thought of as $|\Gamma|\times |\Gamma|$ matrices. We then have
\be L(g)(\gamma\vec\theta) = \sum_{k\in\Lambda} g_{k,0} e^{i\vec k\cdot (\gamma\vec\theta)} = \sum_{k\in\Lambda} g_{k,0} e^{i(\gamma^{-1} \vec k)\cdot \vec\theta} = \sum_{k\in\Lambda} g_{\gamma k, 0} e^{i\vec k\cdot\vec\theta} = j(\gamma) L(g)(\vec\theta) j(\gamma)^{-1} \ . \label{eq:gEquiv} \ee
We similarly have $\gamma^* L(A) = j(\gamma) L(A) j(\gamma)^{-1}$.
\ep 

The data of the Kronheimer construction for the infinite discrete group $\hat{\Gamma}$ has hence been recast as gauge-theoretic data on the dual torus $T_{\Lambda}^{\vee}$. As usual now in gauge theory, we will work in a range of Sobolev regularity such that various multiplication- and action-maps are well-defined (as opposed to merely densely-defined). Indeed, recall that in the range $s > r/2$, $H^s(\mb{R}^r)$ is a Banach algebra for $s > r/2$ and that Sobolev $s$-regular functions are continuous. We will henceforth assume at least this regularity unless specifically noted otherwise.

The following is a computation: 
\begin{lem}
The conjugation action of $U^{\infty}(R_{\hat{\Gamma}})^{\Lambda}$ on $\Big(\mf{u}^{\infty}(R_{\hat{\Gamma}})\underset{\mb{R}}{\otimes}Q\Big)^{\Lambda}$ is Fourier-dual under the isomorphisms of Proposition~\ref{prop:nontrans} above to the action \begin{align*} C^{\infty}(T_{\Lambda}^{\vee},U(R_{\Gamma}))&\acts{\mathrm{Conn}^{\infty}(T_{\Lambda}^{\vee},R_{\Gamma}) \times C^{\infty}(T_{\Lambda}^{\vee}, \mf{u}(R_{\Gamma}) \otimes \Lambda_{\mb{R}}^{\perp})}\\g&\mapsto\Big((\nabla,\phi)\mapsto(g^{-1}\nabla g,g^{-1}\phi g)\Big),\text{ or}\\g&\mapsto\Big((A,\phi)\mapsto(g^{-1}Ag+g^{-1}dg,g^{-1}\phi g)\Big)\,.\end{align*} Note that in passing from the second to the third line above, we follow footnote~\ref{ft:trivconn} in identifying the connection $\nabla$ on a trivial bundle with a one-form $A$; i.e., we write $\nabla = d + A$.

Finally, the above continuously extends to an action of $$H^{s+1}(T_{\Lambda}^{\vee},U(R_{\Gamma}))\acts{\mathrm{Conn}^s(T^{\vee}_{\Lambda},R_{\Gamma}) \times H^s(T^{\vee}_{\Lambda},\mf{u}(R_{\Gamma}) \otimes \Lambda_{\mb{R}}^{\perp})}$$ for any $r/2 < s \le \infty$.
\end{lem}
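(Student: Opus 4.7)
The plan is to transport the conjugation action through the Fourier-dual identifications of Proposition~\ref{prop:nontrans}. Fix $g \in U^\infty(R_{\hat{\Gamma}})^{j(\Lambda)}$ with corresponding matrix-valued function $L(g)\colon T^\vee_\Lambda \to U(R_\Gamma)$, and $A = d + X$ with $X \in (\mf{u}^\infty(R_{\hat\Gamma}) \otimes_\mb{R} Q)^\Lambda$. Using the $\Gamma$-equivariant decomposition $Q = \Lambda_\mb{R} \oplus \Lambda_\mb{R}^\perp$ together with Proposition~\ref{prop:nontrans}(iv), identify $A$ with a pair $(\nabla, \phi) \in \mathrm{Conn}^\infty(T^\vee_\Lambda, R_\Gamma) \times H^\infty(T^\vee_\Lambda, \mf{u}(R_\Gamma)\otimes \Lambda_\mb{R}^\perp)$, where $\nabla = d_{\mathrm{dR}} + \tilde a$; the trivial connection $d_{\mathrm{dR}}$ is the Fourier-dual of $d$ itself, as is evident by comparing the identity $d\iota(\lambda) = i\iota(\lambda)\otimes \lambda$ for $\lambda \in \Lambda$ with $d_{\mathrm{dR}} e^{i\vec\lambda\cdot\vec\theta} = i(\vec\lambda \cdot d\vec\theta) e^{i\vec\lambda\cdot\vec\theta}$, after the standard identification of $\Lambda_\mb{R} \hookrightarrow Q$ with the translation-invariant one-forms on $T^\vee_\Lambda$.

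The central calculation decomposes $g^{-1}Ag = d + g^{-1}[d, g] + g^{-1}Xg$. For $g^{-1}Xg$, Fourier duality identifies matrix multiplication of $\Lambda$-invariant operators on $R_{\hat\Gamma}$ with pointwise multiplication of $\End(R_\Gamma)$-valued functions on $T^\vee_\Lambda$, so this term Fourier-corresponds to $L(g)^{-1}(\tilde a + \phi) L(g)$. For $g^{-1}[d, g]$, the commutation relation~\eqref{eq:dComm} extends $\mb{C}$-linearly and by continuous Fourier summation to arbitrary $g \in U^\infty(R_{\hat\Gamma})^{j(\Lambda)}$, yielding that $[d, g]$ corresponds to the $\mf{u}(R_\Gamma)$-valued one-form $d_{\mathrm{dR}} L(g)$ acting by pointwise multiplication; hence $g^{-1}[d, g]$ corresponds to $L(g)^{-1} d_{\mathrm{dR}} L(g)$. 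Since $\tilde S$ maps into $\Lambda \subset \Lambda_\mb{R}$, the $\Lambda_\mb{R}^\perp$-component of $d$ vanishes, so combining the above we recover the connection transformation $\nabla \mapsto L(g)^{-1}\nabla L(g)$ (equivalently, $\tilde a \mapsto L(g)^{-1}\tilde a L(g) + L(g)^{-1} d_{\mathrm{dR}} L(g)$) and the pure conjugation $\phi \mapsto L(g)^{-1}\phi L(g)$, exactly as claimed.

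For the continuous extension to $s > r/2$, we invoke the standard facts that $H^s(T^\vee_\Lambda)$ is a Banach algebra in this range, that pointwise multiplication $H^{s+1}\times H^s \to H^s$ is continuous, that inversion of $U(R_\Gamma)$-valued maps preserves $H^{s+1}$-regularity, and that $d_{\mathrm{dR}}\colon H^{s+1}(T^\vee_\Lambda) \to H^s(T^\vee_\Lambda)$ is bounded. These together yield continuity of the action map on the stated Sobolev spaces; the $\Gamma$-equivariance conditions are closed (compare~\eqref{eq:gEquiv} and its analog for $L(A)$), so the extension restricts to the $\Gamma$-equivariant subspaces $H^{s+1}(T^\vee_\Lambda, U(R_\Gamma))$ and $\mathrm{Conn}^s_\Gamma \times H^s_\Gamma$. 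I expect the main subtlety to lie in pinning down the precise Fourier correspondence between $[d, g]$ and $d_{\mathrm{dR}} L(g)$; once~\eqref{eq:dComm} is invoked at the level of Fourier series, the remainder is routine application of standard Banach-algebra and Sobolev multiplication estimates.
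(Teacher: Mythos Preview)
Your argument is correct and carries out precisely the computation the paper has in mind; the paper itself provides no proof, introducing the lemma only with the phrase ``The following is a computation.'' Your decomposition $g^{-1}(d+X)g = d + g^{-1}[d,g] + g^{-1}Xg$, the Fourier identification of $[d,g]$ with $d_{\mathrm{dR}}L(g)$ via~\eqref{eq:dComm}, and the appeal to the Banach-algebra property of $H^s$ for $s > r/2$ for the Sobolev extension are exactly what one would write out if spelling out this computation.
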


\begin{defn} Given $\hat{\Gamma}\acts{Q}$ as above, define
\beq (\widetilde{\mc{G}}^s_{\hat{\Gamma}})_{\mb{C}} := H^s_{\Gamma}(T_{\Lambda}^{\vee}, GL_{\mb{C}}(R_{\Gamma}))\,.\eeq
\end{defn}

We refer to $(\widetilde{\mc{G}}^s_{\hat{\Gamma}})_{\mb{C}}$ as the \emph{complexified} Lie group; indeed, in the good range of $s > r/2$, the natural embedding of $\widetilde{\mc{G}}^s_{\hat{\Gamma}}  \hookrightarrow (\widetilde{\mc{G}}^s_{\hat{\Gamma}})_{\mb{C}}$ of Hilbert (Fr\'echet) Lie groups induces an isomorphism $$T_{\mathrm{Id}}\widetilde{\mc{G}}^s_{\hat{\Gamma}} \underset{\mb{R}}{\otimes} \mb{C} \stackrel{\sim}{\leftarrow} T_{\mathrm{Id}}(\widetilde{\mc{G}}^s_{\hat{\Gamma}})_{\mb{C}}\,.$$ Once again, this complexified Lie group admits a definition in terms of $\hat{\Gamma}$-invariant endomorphisms of $R^0_{\hat{\Gamma}}$ with appropriate regularity conditions, which one may show agrees with the above via the Fourier transform.

\begin{prop} \label{prop:reg} Given a linear $\hat{\Gamma}\acts{Q}$ action as above, we have, for any $s > r/2$, that \begin{enumerate}[(i)] 
\item $(\widetilde{\mc{G}}^s_{\hat{\Gamma}})_{\mb{C}}$ is a group, 
\item $\widetilde{\mc{G}}^{s+1}_{\hat{\Gamma}}$ naturally acts smoothly on $\mc{A}^s_{\hat{\Gamma}}$, as does $(\widetilde{\mc{G}}^{s+1}_{\hat{\Gamma}})_{\mb{C}}$ if $Q$ carries a complex structure respected by $\Gamma$, 
\item $\mc{A}^s_{\hat{\Gamma}}$, as an affine space over (or torsor for) the vector space $$V^s:=H^s_{\Gamma}(T^{\vee}_{\Lambda},\mf{u}(R_{\Gamma})\underset{\mb{R}}{\otimes}Q),$$ carries a natural (weak) Riemannian structure\footnote{This Riemannian structure is weak because it does not yield an isomorphism between $V^s$ and its $L^2$ dual. Similar remarks hold for the infinite-dimensional K\"ahler and hyperkahler structures that we will introduce below, but for conciseness we will not call them weak K\"ahler or hyperkahler structures.} from the $L^2$ inner product on $V^s$,
\item $\widetilde{\mc{G}}^{s+1}_{\hat{\Gamma}}$ acts isometrically on said Riemannian structure, and
\item $\mc{A}^{s}_{\hat{\Gamma}}$ has a curvature\footnote{The \emph{curvature} nomenclature is due to the fact that if $r = \dim_{\mb{R}}Q$ so that $\mc{A}^s_{\hat{\Gamma}}$ is purely a space of connections on an $r$-dimensional torus, then this map is indeed a curvature.} map given by \begin{align*} \mc{A}^{s}_{\hat{\Gamma}} &\stackrel{\mathscr{F}}{\to} H^{s-1}_{\Gamma}(T^{\vee}_{\Lambda},\mf{u}(R_{\Gamma})\underset{\mb{R}}{\otimes}\Wedge^2Q) \\ (A,\phi)&\mapsto \Big(dA+\frac{1}{2}[A,A],d\phi+[A,\phi],\frac{1}{2}[\phi,\phi]\Big)\end{align*} Here $$\Wedge^2Q \simeq \Wedge^2(\Lambda_{\mb{R}}) \oplus (\Lambda_{\mb{R}} \otimes \Lambda_{\mb{R}}^{\perp}) \oplus \Wedge^2(\Lambda_{\mb{R}}^{\perp})$$ denotes the second exterior power of $Q$; the $d$ is a de~Rham differential, where we identify $k$-forms on $T_{\Lambda}^{\vee}$ with $\Wedge^k\Lambda_{\mb{R}}$-valued functions. \end{enumerate} \end{prop}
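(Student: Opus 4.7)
The plan is to leverage the Fourier-dual identification from Proposition~\ref{prop:nontrans} in order to reduce every statement to pointwise algebra on the finite-dimensional space $R_\Gamma$, combined with two standard facts from Sobolev analysis in the chosen range $s>r/2$: the space $H^s(T_\Lambda^\vee)$ is a Banach algebra under pointwise multiplication, and by Sobolev embedding $H^s \hookrightarrow C^0$. The $\Gamma$-equivariance distinguishing our spaces from their $\Lambda$-invariant counterparts is preserved by all the pointwise operations involved (multiplication, inversion, differentiation, bracketing), so I will work with the non-equivariant spaces and pass to invariants at the end. The only moderately delicate point is the smooth (as opposed to continuous) dependence asserted in \emph{(i)} and \emph{(ii)}; everywhere else the proposition amounts to careful bookkeeping of Banach-algebra arithmetic.

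\textit{Parts (i) and (ii).} For \emph{(i)}, matrix multiplication in $R_\Gamma$ is bilinear, so pointwise multiplication of $H^s$ matrix-valued functions lies in $H^s$ by the Banach algebra property. For inversion, any $g \in H^s_\Gamma(T_\Lambda^\vee,GL_{\mb{C}}(R_\Gamma))$ is continuous and valued in the open subset $GL_{\mb{C}}(R_\Gamma)$; compactness of $T_\Lambda^\vee$ then yields a uniform lower bound on $|\det g|$, so $g^{-1}$ equals $(\det g)^{-1}$ times the cofactor matrix. The cofactor is polynomial in the entries of $g$ (hence in $H^s$), while $1/\det g$ lies in $H^s$ by the Moser composition theorem applied to the smooth map $z\mapsto 1/z$ on a compact subset of $\mb{C}^\times$ bounded away from zero. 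Smoothness of inversion follows from the same Moser-type result together with the implicit function theorem on Banach manifolds. For \emph{(ii)}, the action $(A,\phi)\mapsto(g^{-1}Ag + g^{-1}dg,\,g^{-1}\phi g)$ involves $dg\in H^s$ (explaining the $s+1$ regularity demanded of $g$) and is a polynomial combination of $g, g^{-1}, A, dg, \phi$; smoothness then follows from smoothness of bounded bilinear maps and smoothness of inversion. When $Q$ carries a $\Gamma$-invariant complex structure the complexified analog goes through verbatim, since we never used unitarity of $g$ in this argument.

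\textit{Parts (iii) and (iv).} The Lie algebra $\mf{u}(R_\Gamma)$ carries the canonical $\Ad$-invariant inner product $\langle X,Y\rangle = -\tr(XY)$, which combined with $\langle-,-\rangle$ on $Q$ yields a positive-definite inner product on $\mf{u}(R_\Gamma)\underset{\mb{R}}{\otimes} Q$ invariant under the pointwise action of $U(R_\Gamma)\times O(Q)$. Integrating this pointwise pairing over $T_\Lambda^\vee$ produces a well-defined, positive-definite bilinear form on $V^s\subset L^2$, whence a (weak) Riemannian structure on the affine space $\mc{A}^s_{\hat\Gamma}$; its weakness simply reflects the fact that the $L^2$ completion of $V^s$ strictly contains $V^s$. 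For \emph{(iv)}, the differential of the $g$-action on tangent vectors $(a,\psi)\in V^s$ is the pointwise conjugation $(g^{-1}a g, g^{-1}\psi g)$, and pointwise conjugation by the unitary element $g(\theta)\in U(R_\Gamma)$ is an isometry of $\mf{u}(R_\Gamma)\underset{\mb{R}}{\otimes} Q$; integrating then preserves the global $L^2$ inner product.

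\textit{Part (v).} Each term of $\mathscr{F}$ is either a de~Rham differential (continuously sending $H^s\to H^{s-1}$) or a pointwise commutator, and the commutators $[A,A]$, $[A,\phi]$, $[\phi,\phi]$ all lie in $H^s\subset H^{s-1}$ by the Banach algebra property for matrix-valued Sobolev functions. The three components of $\mathscr{F}$ match the decomposition $\Wedge^2 Q \simeq \Wedge^2\Lambda_{\mb{R}} \oplus (\Lambda_{\mb{R}}\otimes\Lambda_{\mb{R}}^\perp)\oplus\Wedge^2\Lambda_{\mb{R}}^\perp$: the piece $dA+\tfrac12[A,A]$ is the $\Wedge^2\Lambda_{\mb{R}}$-valued curvature of the connection part, $d\phi+[A,\phi]$ is the covariant derivative of $\phi$ and is valued in $\Lambda_{\mb{R}}\otimes\Lambda_{\mb{R}}^\perp$, and $\tfrac12[\phi,\phi]$ lies in $\Wedge^2\Lambda_{\mb{R}}^\perp$. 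Since $d$ and pointwise bracketing commute with the $\Gamma$-action on $T_\Lambda^\vee$ and on $R_\Gamma$, the image of $\mathscr{F}$ automatically lies in the $\Gamma$-invariant sector, completing the proof.
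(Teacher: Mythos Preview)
Your proof is correct and takes essentially the same approach as the paper's own proof, which is extremely terse: the paper simply observes that ``our regularity assumption makes the above straightforward'' and only spells out the Riemannian structure of part \emph{(iii)} explicitly (via the Killing form on $\mf{u}(R_\Gamma)$, the inner product on $Q$, and $L^2$-integration over $T_\Lambda^\vee$, just as you do). You have filled in the standard Sobolev and Banach-algebra details that the paper leaves implicit.
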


\bp Our regularity assumption makes the above straightforward. Only the Riemannian structure remains to be explained. Namely, in statement~\emph{(iii)}, we note that the tangent space at any point of $\mc{A}^s_{\hat{\Gamma}}$ is canonically identified with $V^s$ and so it is on this space that we are obliged to provide a metric, which we do via~(i) the Killing form on $\mf{u}(R_{\Gamma})$,~(ii) the inner product on $Q$, and~(iii) the $L^2$ inner product on $T^{\vee}_{\Lambda}$. Explicitly, we define $g_V(a,b) = - \frac{1}{|\Gamma| \hat V} \int_{T^\vee_\Lambda} \Tr(\avg{a,b})\, d{\rm Vol} = \frac{1}{|\Gamma| \hat V} (a,b)_{L^2}$, where $a,b\in V^s$, $\hat V$ is the volume of $T^\vee_\Lambda$, and the minus sign is required because $b$ is skew-Hermitian.
\ep 

We are now ready to specialize to the cases of $\Gamma$ preserving more structure of $Q$, such as a K{\"a}hler or hyperkahler structure. So, suppose first that $Q$ carries the structure of a K{\"a}hler vector space, i.e. a nondegenerate two-form $\omega \in \Wedge^2 Q^{\vee}$ and complex structure $J \in \mathrm{End}_{\mb{R}}Q$ that together induce the original inner product on $Q$; let $U(Q) \hookrightarrow O(Q)$ denote the subgroup of linear transformations preserving this K{\"a}hler structure. 

We note that the isomorphism between $Q$ and $Q^*$ induced by the inner product on $Q$ allows us to translate this K\"ahler structure to $Q^*$ (and therefore to $T^\vee_\Lambda$); e.g., $J_{Q^*} w = (J w^\sharp)^\flat$, where $\sharp$ and $\flat$ are the usual musical isomorphisms that raise and lower indices using the metric. For the sake of clarity, we note that there is a second natural complex structure on $Q^*$, namely the transpose $J^T$, and this differs from the chosen complex structure on $Q^*$ by an overall sign. (This follows from the fact that $J$ is skew-Hermitian, which in turn follows from orthogonality and $J^2=-1$.)

We then have the following: 

\begin{prop}\label{prop:kahtime} Given $\hat{\Gamma}\acts{Q}$ such that $\Gamma \hookrightarrow U(Q)$, we have for all $r/2 < s \le \infty$ that
\begin{enumerate}[(i)]
\item the affine space $\mc{A}^s_{\hat{\Gamma}}$ carries a natural flat K{\"a}hler structure,
\item the $\tilde\G^{s+1}_{\hat{\Gamma}}$-action on $\mc{A}^s_{\hat{\Gamma}}$ is Hamiltonian, and 
\item the moment map for the above Hamiltonian action may be chosen to be 
$$\mu = \iota_\omega \circ \mathscr{F}\colon\mc{A}^s_{\hat{\Gamma}}\to H^{s-1}_{\Gamma}(T^{\vee}_{\Lambda},\mf{u}(R_{\Gamma})) \subset (\mathrm{Lie}\,\mc{G}^{s+1}_{\hat{\Gamma}})',$$ 
where $\iota_{\omega}\colon\Wedge^2Q\to\mb{R}$ is given by contracting with the K{\"a}hler form $\omega \in \Wedge^2 Q^{\vee}$. 
\end{enumerate}
\end{prop}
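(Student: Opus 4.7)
\bp[Proof proposal]
The plan is to construct the K\"ahler structure pointwise on the affine model space, observe that it is translation-invariant and gauge-invariant by inspection, and then obtain the moment map by a standard integration-by-parts calculation for gauge theory.

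For \emph{(i)}, I will define an almost complex structure $J_V$ on the tangent space $V^s = H^s_\Gamma(T^\vee_\Lambda, \mf{u}(R_\Gamma) \otimes_\RR Q)$ at every point of $\mc{A}^s_{\hat\Gamma}$ by letting it act as $\mathrm{Id} \otimes J$, where $J$ is the complex structure on $Q$. The hypothesis $\Gamma \hookrightarrow U(Q)$ ensures that $J$ commutes with the $\Gamma$-action on $Q$, so $J_V$ preserves the $\Gamma$-equivariant subspace. Compatibility of $J_V$ with the inner product $g_V$ from Proposition \ref{prop:reg}(iii) reduces to the corresponding statement for $J$ on $Q$, which holds since $J \in U(Q)$. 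Closedness and flatness of the resulting K\"ahler form $\omega_V(\cdot,\cdot) := g_V(J_V\cdot,\cdot)$ are automatic because $\omega_V$ is constant in the affine coordinates on $\mc{A}^s_{\hat\Gamma}$.

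For \emph{(ii)}, the derivative of the $\tilde{\mc{G}}^{s+1}_{\hat\Gamma}$-action on tangent vectors at a point $(A,\phi)$ is pointwise conjugation on the $\mf{u}(R_\Gamma)$ factor, acting trivially on $Q$. This commutes with $J_V$ and preserves the Killing form on $\mf{u}(R_\Gamma)$ as well as the inner product on $Q$, hence preserves $g_V$ and thus $\omega_V$. So $\tilde{\mc{G}}^{s+1}_{\hat\Gamma}$ acts by K\"ahler isometries, and in particular symplectomorphisms.

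For \emph{(iii)}, I will verify that $\mu = \iota_\omega \circ \mathscr{F}$ is a moment map by checking the defining identity $d\langle \mu,\xi\rangle_{(A,\phi)}(\delta A, \delta\phi) = \omega_V(X_\xi,(\delta A,\delta \phi))$ for $\xi \in \mathrm{Lie}(\tilde{\mc{G}}^{s+1}_{\hat\Gamma})$, with the pairing between $\mu$ and $\xi$ given by the same trace--times--$L^2$ formula that defines $g_V$. Differentiating $\mathscr{F}(A,\phi) = (dA + \tfrac{1}{2}[A,A],\, d\phi + [A,\phi],\, \tfrac{1}{2}[\phi,\phi])$ in $(A,\phi)$ and pairing with $\xi$, the connection part contributes $\int_{T^\vee_\Lambda}\omega\cdot \tr\bigl(\xi(d(\delta A) + [A,\delta A])\bigr)$; integration by parts on the compact manifold $T^\vee_\Lambda$ (no boundary terms) plus cyclicity of the trace converts this into $-\int \omega\cdot\tr(d_A \xi \cdot \delta A)$. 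A similar manipulation handles the mixed and matter parts, producing $-\int \omega\cdot \tr(d_A\xi\cdot \delta A) - \int \omega\cdot \tr([\phi,\xi]\cdot \delta \phi)$, which matches $\omega_V(X_\xi,(\delta A,\delta\phi))$ once one recalls that the infinitesimal gauge action computed earlier in the paper is $X_\xi = (d_A\xi,\, [\phi,\xi])$ and that $\omega_V$ extracts the same contraction of $\omega$ with $Q$ against the Killing trace on $\mf{u}(R_\Gamma)$.

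The steps are all routine given the Sobolev setup already in place; the only thing requiring any care is bookkeeping: tracking the $\Lambda_\RR \oplus \Lambda_\RR^\perp$ decomposition of $Q$ through each piece of $\mathscr{F}$, and keeping the sign conventions for $\omega$, the Killing form, and the orientation of $\tilde{\mc{G}}$-action consistent. The image of $\mu$ lands in $H^{s-1}_\Gamma(T^\vee_\Lambda,\mf{u}(R_\Gamma))$ as claimed because $\mathscr{F}$ does (Proposition \ref{prop:reg}(v)), and this sits inside the smooth dual of $\mathrm{Lie}\,\tilde{\mc{G}}^{s+1}_{\hat\Gamma}$ via the $L^2$ pairing.
\ep
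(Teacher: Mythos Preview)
Your proposal is correct and follows essentially the same approach as the paper's proof: define $J_V$ via $J$ on the $Q$-factor, build $\omega_V$ from the Killing form, $\omega$, and the $L^2$ pairing, and verify the moment map identity by differentiating $\mathscr{F}$ and integrating by parts. The paper in fact gives fewer details than you do, deferring the moment map computation to the later $r=1$ case (Proposition~\ref{prop:Hamtrick}); your sketch of that computation, relying on the absence of boundary terms on the compact torus $T^\vee_\Lambda$, is exactly the intended argument.
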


\begin{rmk} \label{rmk:mommap}
Note the phrasing of the last statement above -- in general, moment maps for Hamiltonian actions by some Lie group $\mc{G}$ are not uniquely determined, but rather are ambiguous up to the addition of some constant in $((\mathrm{Lie}\,\mc{G})')^{\mc{G}}$. Our identification of the moment map above is hence simply one (particularly nice) choice. Also, in order to avoid carrying around unpleasant constants throughout this paper, we will only require the equation characterizing a moment map (\eqref{eq:mommap} below) to be satisfied up to a non-zero multiplicative constant.
\end{rmk}
\bp
Our main task is to produce a K{\"a}hler structure on the vector space $V^s$, which we do by taking complex structure $J_V$ as induced by the complex structure $J\in\mathrm{End}_{\mb{R}}(Q)$ and two-form $\omega_V$ as induced by~(i) the Killing form on $\mf{u}(R_{\Gamma})$,~(ii) the two-form $\omega$ on $Q$, and~(iii) the $L^2$ inner product on $T^{\vee}_{\Lambda}$. Explicitly, we have $\omega_V(a,b) = -\frac{1}{|\Gamma|\hat V} \int_{T^\vee_\Lambda} \Tr(\omega(a,b))\, d{\rm Vol}$, where $a,b\in V^s$ and $\hat V$ is the volume of $T^\vee_\Lambda$. $J_V^2=-1$, orthogonality of $J_V$, and $g_V(a,b) = \omega_V(a, J_V b)$ (where the metric $g_V$ is from Prop. \ref{prop:reg}) all follow from the analogous results for $Q$. This formula for $\omega_V$ makes it clear that $\tilde\G^{s+1}_{\hat\Gamma}$ acts via symplectomorphisms. It is worth pointing out that, by regarding $a,b$ as one-forms on $T^\vee_\Lambda$ instead of elements of $Q$, one may recast this K\"ahler structure on $V^s$ in terms of that on $T^\vee_\Lambda$, and the formula $J_V a = - J_{T^\vee_\Lambda}^T a$ makes it clear that $(1,0)$ tangent vectors to $V^s$ are given by $(0,1)$-forms.

The remaining statements are then straightforward computations; we refer readers to the proof of Prop. \ref{prop:Hamtrick} for such a computation in the case $r=1$.
\ep


Let us specialize further now to the case that $Q$ carries a hyperkahler structure preserved by the group $Sp(Q) \subset O(Q)$. The inner product on $Q$ now induces a hyperkahler structure on $Q^*$. The relative sign between the $i$-th complex structure $J_{Q^*}^i$ ($i=1,2,3$) and $(J^i)^T$ is now quite important, as $J_{Q^*}^i$ obey the required unit quaternion algebra whereas $(J^i)^T$ do not. We denote the associated K\"ahler forms on $Q$ by $\omega^i$. After choosing a complex structure, we may equivalently describe them by providing a holomorphic symplectic form and a K\"ahler form.

The following proposition is once again a computation:

\begin{prop}\label{prop:hktime} Given $\hat{\Gamma}\acts{Q}$ with $\Gamma \hookrightarrow Sp(Q)$ as above, we have that, for all $r/2 < s \le \infty$,
\begin{enumerate}[(i)]
\item $\mc{A}^s_{\hat{\Gamma}}$ carries a natural flat hyperkahler structure,
\item the $\tilde\G^{s+1}_{\hat{\Gamma}}$ action on $\mc{A}^s_{\hat{\Gamma}}$ is tri-Hamiltonian, and 
\item the moment maps for said tri-Hamiltonian action may be identified with $\mu^i = \iota_{\omega^i} \circ \mathscr{F}$ where $\iota_{\omega^i}\colon\Wedge^2 Q\to\mb{R}$ contracts with the K\"ahler form $\omega^i$.
\end{enumerate}
\end{prop}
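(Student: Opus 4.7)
The plan is to essentially repeat the argument of Proposition \ref{prop:kahtime} three times, once for each of the three K\"ahler structures $(J^i,\omega^i)$ comprising the hyperkahler structure on $Q$, and then to verify that the induced structures on $V^s$ assemble into a hyperkahler structure. Concretely, I would first define, for each $i=1,2,3$, an endomorphism $J_V^i\in\mathrm{End}_{\mb{R}}(V^s)$ by letting $J^i\in\mathrm{End}_{\mb{R}}(Q)$ act pointwise on the $Q$-factor of $\mf{u}(R_\Gamma)\otimes_{\mb{R}}Q$, and a two-form $\omega_V^i\in\Wedge^2(V^s)^\vee$ by
\[
\omega_V^i(a,b) = -\frac{1}{|\Gamma|\hat V}\int_{T^\vee_\Lambda}\tr\bigl(\omega^i(a,b)\bigr)\,d\mathrm{Vol}\,,
\]
exactly as in Proposition \ref{prop:kahtime}, the pairing $\omega^i$ being applied to the $Q$-factors of $a,b$.

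Because $J_V^i$ acts pointwise, the quaternion algebra relations $J_V^i J_V^j = -\delta_{ij}\mathrm{Id} + \epsilon_{ijk}J_V^k$ on $V^s$ are an immediate consequence of the pointwise identities on $Q$. Compatibility $g_V(a,b)=\omega_V^i(a,J_V^i b)$, orthogonality of each $J_V^i$, and closedness of each $\omega_V^i$ (which is automatic since $V^s$ is a vector space and the $\omega_V^i$ are constant) all reduce to the analogous statements on $Q$ already used in Proposition \ref{prop:kahtime}. Transporting these data along the canonical identification of the tangent bundle of the affine space $\mc{A}^s_{\hat{\Gamma}}$ with the constant bundle $V^s$ then yields the flat hyperkahler structure of (i).

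For (ii) and (iii), Proposition \ref{prop:kahtime} applied separately to each $(J^i,\omega^i)$ already tells us that the $\tilde{\mc{G}}^{s+1}_{\hat{\Gamma}}$-action is Hamiltonian for each $\omega_V^i$ with moment map $\mu^i=\iota_{\omega^i}\circ\mathscr{F}$. Tri-Hamiltonicity is nothing more than the simultaneous validity of the three moment-map equations
\[
d\mu^i(\xi) = \iota_{X_\xi}\omega_V^i\quad\text{for}\ i=1,2,3,
\]
with $X_\xi$ the vector field on $\mc{A}^s_{\hat{\Gamma}}$ generated by $\xi\in\mathrm{Lie}\,\tilde{\mc{G}}^{s+1}_{\hat{\Gamma}}$; each is verified by the same integration-by-parts/trace computation to be performed later in the proof of Proposition \ref{prop:Hamtrick} for the $r=1$ case, substituting $\omega^i$ for $\omega$ throughout.

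The only point requiring a nontrivial check is that the curvature map $\mathscr{F}$ of Proposition \ref{prop:reg} lands in $H^{s-1}_\Gamma(T^\vee_\Lambda,\mf{u}(R_\Gamma)\otimes\Wedge^2Q)$ so that contraction with the $\Gamma$-invariant form $\omega^i\in\Wedge^2 Q^\vee$ indeed produces a $\Gamma$-equivariant $\mf{u}(R_\Gamma)$-valued function, i.e.\ a legitimate element of (a subspace of) the dual of $\mathrm{Lie}\,\tilde{\mc{G}}^{s+1}_{\hat{\Gamma}}$; this follows from $\Gamma \hookrightarrow Sp(Q)$ preserving each $\omega^i$. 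The main (and really only) conceptual issue is thus bookkeeping the three K\"ahler structures consistently; I expect no genuine analytic obstacle beyond what already appeared in Propositions \ref{prop:reg} and \ref{prop:kahtime}.
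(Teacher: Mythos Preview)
Your proposal is correct and matches the paper's approach exactly: the paper simply declares this proposition to be ``once again a computation'' following Proposition~\ref{prop:kahtime}, and your write-up spells out precisely that computation---repeating the K\"ahler argument for each $(J^i,\omega^i)$ and checking the quaternion relations pointwise on $Q$.
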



We now define $\mc{G}^{s+1}_{\hat{\Gamma}}$ as the quotient of $\tilde{\mc{G}}^{s+1}_{\hat{\Gamma}}$ by a subgroup which acts trivially on $\mc{A}^{s}$:

\begin{defn} We define $\mc{G}^s_{\hat{\Gamma}}$ as the quotient group \beq \mc{G}^s_{\hat{\Gamma}} := H^s_{\Gamma}(T^{\vee}_{\Lambda},U(R_{\Gamma}))/U(1)\,,\eeq where $U(1)$ is included as constant maps to the central factor of $U(R_{\Gamma})$.
Note that the action of $\widetilde{\mc{G}}^{s+1}_{\hat{\Gamma}}$ on $\mc{A}^s_{\hat{\Gamma}}$ factors through $\mc{G}^{s+1}_{\hat{\Gamma}}$, and all the statements of Propositions~\ref{prop:reg}, \ref{prop:kahtime}, and \ref{prop:hktime} hold for $\mc{G}^{s+1}_{\hat{\Gamma}}\acts{\mc{A}^s_{\hat{\Gamma}}}$. \end{defn}

We then, of course, have $$\mathrm{Lie}\,\mc{G}^s_{\hat{\Gamma}} \simeq H^s_{\Gamma}(T^{\vee}_{\Lambda},\mf{u}(R_{\Gamma})) / \mf{u}(1) \simeq \{X \in H^s_{\Gamma}(T^{\vee}_{\Lambda},\mf{u}(R_{\Gamma})) \bigm| \int_{T^{\vee}_{\Lambda}} \mathrm{Tr}\,X d\mathrm{vol} = 0\}\,.$$

We now go on a small tangent concerning the distinction between $SU, U$, and $PU$ for the purposes of defining the gauge group. The mild vexation here is simply that the trace part of the connection is essentially irrelevant to our construction, and one may therefore wish to excise it. For example, the trace and trace-free parts of the moment maps only involve, respectively, the trace and trace-free parts of $A\in \A^s_{\hat\Gamma}$; the trace parts of the moment maps are linear; and the center of $\tilde\G^{s+1}_{\hat\Gamma}$ only acts on the trace parts of $A$. In light of this, one may define the variant $\bar{\mc{A}}^s_{\hat{\Gamma}}$ as an affine space over $H^s_{\Gamma}(T^{\vee}_{\Lambda},\mf{su}(R_{\Gamma}) \otimes_{\mb{R}} Q)$; $\widetilde{\mc{G}}^{s+1}_{\hat{\Gamma}}$ still acts via the composition $\bar{\mc{A}}^s_{\hat{\Gamma}} \hookrightarrow \mc{A}^s_{\hat{\Gamma}} \stackrel{\mathrm{act}}{\to} \mc{A}^s_{\hat{\Gamma}} \twoheadrightarrow \bar{\mc{A}}^s_{\hat{\Gamma}}$, where the last map eliminates the trace part, but now with a larger trivially-acting subgroup by which one may wish to quotient in order to arrive at another group $\bar{\mc{G}}^{s+1}_{\hat{\Gamma}}$. To that end, we note the following:

\begin{defn} \label{defn:gen} We say that the $\Gamma$-action on $\Lambda$ is \emph{generic} if it has no nonzero invariants on $\Lambda_{\mb{R}}$; i.e., if $$\Lambda_{\mb{R}}^{\Gamma} = \{0\}\,.$$ This condition is equivalent to $(\Lambda_{\mb{R}}^{\vee})^{\Gamma}=\{0\}$.\end{defn}

\begin{prop} \label{prop:Gdefn} For any $s > r/2$, we have the following injections of Lie groups: 
\beq\label{eq:Gdefn} \frac{H^s_{\Gamma}(T^{\vee}_{\Lambda},SU(R_{\Gamma}))}{H^s_{\Gamma}(T^{\vee}_{\Lambda},Z_{|\Gamma|})} \hookrightarrow \frac{H^s_{\Gamma}(T^{\vee}_{\Lambda},U(R_{\Gamma}))}{H^s_{\Gamma}(T^{\vee}_{\Lambda},U(1))} \hookrightarrow H^{s,p\ell}_{\Gamma}(T^{\vee}_{\Lambda},PU(R_{\Gamma}))\,,\eeq
where the $p \ell$ superscript in the last term denotes ``pointwise liftable'' maps, i.e. the subgroup of equivariant functions defined as \begin{align} H^{s,p\ell}_{\Gamma}(T^{\vee}_{\Lambda},PU(R_{\Gamma})) &:= \{f \in H^s_{\Gamma}(T^{\vee}_{\Lambda},PU(R_{\Gamma})) \bigm| \forall\,p \in T^{\vee}_{\Lambda}\text{ with stabilizer }\Gamma_p \subset \Gamma, \nonumber\\& \hspace{3cm} f(p) \in \mathrm{Im}(SU(R_{\Gamma})^{\Gamma_p} \to PU(R_{\Gamma})^{\Gamma_p})\}\,. \end{align} If the $\Gamma$ action on $\Lambda$ is generic, then all the arrows in~\eqref{eq:Gdefn} are isomorphisms. \end{prop}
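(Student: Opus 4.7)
The approach is to establish both arrows as injections by direct kernel computations, and then argue surjectivity under genericity for each arrow in turn. For the first arrow, I would note $SU(R_\Gamma) \cap U(1) = Z_{|\Gamma|}$ inside $U(R_\Gamma)$ (with $U(1)$ the scalar subgroup and $\dim_{\mb{C}} R_\Gamma = |\Gamma|$). Hence $f \in H^s_\Gamma(T^\vee_\Lambda, SU(R_\Gamma))$ whose image in $H^s_\Gamma(T^\vee_\Lambda, U(R_\Gamma))/H^s_\Gamma(T^\vee_\Lambda, U(1))$ is trivial must lie in $H^s_\Gamma(T^\vee_\Lambda, Z_{|\Gamma|})$, giving the injection. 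For the second arrow, pointwise post-composition with $U(R_\Gamma) \twoheadrightarrow PU(R_\Gamma)$ gives a homomorphism whose image lies in pointwise-liftable maps: at any $p$ of stabilizer $\Gamma_p$, $f(p) \in U(R_\Gamma)^{\Gamma_p}$ projects into $\mathrm{Im}(U(R_\Gamma)^{\Gamma_p} \to PU(R_\Gamma)^{\Gamma_p})$, which coincides with $\mathrm{Im}(SU(R_\Gamma)^{\Gamma_p} \to PU(R_\Gamma)^{\Gamma_p})$ since every element of $U(R_\Gamma)^{\Gamma_p}$ can be rescaled by a central $U(1)$ to land in $SU(R_\Gamma)$. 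The kernel consists of $f$ valued in $U(1)\cdot I$, and on such $f$ centrality of $U(1)$ converts $\Gamma$-equivariance to $\Gamma$-invariance, yielding exactly $H^s_\Gamma(T^\vee_\Lambda, U(1))$.

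For surjectivity of the first arrow under genericity, I would extract a $|\Gamma|$-th root of the determinant. Given $f$, the map $\det f \colon T^\vee_\Lambda \to U(1)$ is $\Gamma$-invariant (det is a class function). Its homotopy class in $H^1(T^\vee_\Lambda, \mb{Z}) \simeq \Lambda^\vee$ is $\Gamma$-fixed, hence lies in $(\Lambda^\vee)^\Gamma \subset (\Lambda^\vee_{\mb{R}})^\Gamma = 0$ by genericity. Thus $\det f = e^{i\phi}$ for some $\phi \in H^s(T^\vee_\Lambda, \mb{R})$. Any two such lifts differ by a locally constant (hence constant, by connectedness of $T^\vee_\Lambda$) multiple of $2\pi$; the obstruction to $\Gamma$-invariance of $\phi$ is then a homomorphism $\Gamma \to \mb{Z}$ which must be trivial since $\Gamma$ is finite, so $\phi$ may be chosen $\Gamma$-invariant. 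Then $\lambda := e^{i\phi/|\Gamma|} \in H^s_\Gamma(T^\vee_\Lambda, U(1))$ satisfies $\lambda^{|\Gamma|} = \det f$, so $\lambda^{-1} f \in H^s_\Gamma(T^\vee_\Lambda, SU(R_\Gamma))$ represents the class of $f$.

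Surjectivity of the second arrow is the main obstacle. Given $f \in H^{s,p\ell}_\Gamma(T^\vee_\Lambda, PU(R_\Gamma))$, I would pass to the universal cover $\Lambda^\vee_{\mb{R}} \twoheadrightarrow T^\vee_\Lambda$, pull back to $\tilde f$, and use contractibility of $\Lambda^\vee_\mb{R}$ together with the $U(1)$-bundle $U(R_\Gamma) \to PU(R_\Gamma)$ to produce a global lift $\tilde g \colon \Lambda^\vee_{\mb{R}} \to U(R_\Gamma)$. Descent to $T^\vee_\Lambda$ requires modifying $\tilde g$ by a $U(1)$-valued function to enforce $\Lambda^\vee$-periodicity, and a further adjustment enforces $\Gamma$-equivariance of the descended map. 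These two adjustments amount to trivializing $U(1)$-valued cocycles representing the equivariant structure on the pullback bundle $f^*(U \to PU)$. I expect that pointwise liftability at points of nontrivial stabilizer trivializes the obstruction locally near each such point, while genericity (vanishing of $(\Lambda^\vee)^\Gamma$) supplies the vanishing of the global piece. Implementing this rigorously -- likely through a \v{C}ech-style argument on a $\Gamma$-equivariant open cover adapted to the stabilizer structure, or equivalently the long exact sequence of equivariant sheaves associated to $1 \to U(1) \to U(R_\Gamma) \to PU(R_\Gamma) \to 1$ -- is the principal technical step of the proof.
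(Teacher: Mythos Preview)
Your treatment of injectivity and of the first arrow's surjectivity is correct and in fact cleaner than the paper's: extracting a $|\Gamma|$-th root of $\det f$ via genericity of the $\Gamma$-action is a nice direct argument the paper does not spell out. One small correction: $[T^\vee_\Lambda, U(1)] \simeq H^1(T^\vee_\Lambda,\mb{Z}) \simeq \Hom(\Lambda^\vee,\mb{Z})$, not $\Lambda^\vee$ itself; but this is harmless since under genericity both $\Lambda_\RR^\Gamma$ and $(\Lambda_\RR^*)^\Gamma$ vanish, and $\Hom(\Lambda^\vee,\mb{Z})^\Gamma$ is torsion-free of rank zero, hence zero.

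Where your proposal diverges materially from the paper is the second arrow, and here the paper's approach is substantially simpler than what you outline. Rather than lifting $PU \to U$ and wrestling with $U(1)$-valued cocycles, the paper lifts \emph{directly} from $PU$ to $SU$ (establishing surjectivity of the composite, which then gives both arrows at once). The point is that $\mathrm{Im}(SU(R_\Gamma)^{\Gamma_p} \to PU(R_\Gamma)^{\Gamma_p}) = \mathrm{Im}(U(R_\Gamma)^{\Gamma_p} \to PU(R_\Gamma)^{\Gamma_p})$, so pointwise liftability to $SU$ is equivalent to pointwise liftability to $U$. Lifting to $SU$ makes the ambiguity $Z_{|\Gamma|}$-valued, i.e.\ \emph{discrete}, so local equivariant lifts exist on simply-connected $\Gamma_p$-equivariant neighborhoods and the global obstruction lies in $\check{\mathrm{H}}^1(T^\vee_\Lambda/\Gamma; Z_{|\Gamma|})$ for the coarse quotient $C = T^\vee_\Lambda/\Gamma$. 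The paper then shows $\pi_1(C) = 0$ under genericity: loops in $C$ lift to $T^\vee_\Lambda$, and straight-line paths in $\Lambda_\RR^*$ show that $\Lambda^\vee \cap (\Gamma-1)\Lambda_\RR^*$ lies in the kernel of $\Lambda^\vee \twoheadrightarrow \pi_1(C)$; genericity forces $(\Gamma-1)\Lambda_\RR^* = \Lambda_\RR^*$, so $\pi_1(C)$ is trivial and the obstruction group vanishes.

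Your $U(1)$-obstruction route is not wrong in principle, but it introduces a continuous sheaf where a discrete one suffices, and you would still need to produce the analog of the $\pi_1(C)=0$ computation (or an equivariant $H^2(-;\mb{Z})$ vanishing) to finish. The paper's reduction to $Z_{|\Gamma|}$ coefficients is the key simplification you are missing.
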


\begin{defn} We define $\bar{\mc{G}}^s_{\hat{\Gamma}}$ as the quotient group \beq \bar{\mc{G}}^s_{\hat{\Gamma}} := H^s_{\Gamma}(T^{\vee}_{\Lambda},U(R_{\Gamma}))/H^s_{\Gamma}(T^{\vee}_{\Lambda},U(1))\,.\eeq 
The action of $\widetilde{\mc{G}}^{s+1}_{\hat{\Gamma}}$ on $\bar{\mc{A}}^s_{\hat{\Gamma}}$ now factors through $\bar{\mc{G}}^{s+1}_{\hat{\Gamma}}$, and all the statements of Propositions~\ref{prop:reg}, \ref{prop:kahtime}, and \ref{prop:hktime} hold for $\bar{\mc{G}}^{s+1}_{\hat{\Gamma}}\acts{\bar{\mc{A}}^s_{\hat{\Gamma}}}$. \end{defn}

Note that in the first term of~\eqref{eq:Gdefn}, the subgroup $H^s_{\Gamma}(T^{\vee}_{\Lambda},Z_{|\Gamma|})$ is simply a fanciful way of writing the finite cyclic group $Z_{|\Gamma|}$. Hence, one may immediately identify the Lie algebra of $\bar{\mc{G}}^s_{\hat{\Gamma}}$. 

\begin{cor} Given a linear $\hat{\Gamma}$-action on $Q$ such that the $\Gamma$-action on $\Lambda$ is generic, then for any $r/2 < s \le \infty$ we have $$\mathrm{Lie}\,\bar{\mc{G}}^s_{\hat{\Gamma}} \simeq H^s_{\Gamma}(T^{\vee}_{\Lambda},\mf{su}(R_{\Gamma}))\,.$$ \end{cor}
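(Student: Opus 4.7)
The plan is to leverage Proposition~\ref{prop:Gdefn} to replace the quotient $\bar{\mc{G}}^s_{\hat{\Gamma}} = H^s_{\Gamma}(T^{\vee}_{\Lambda},U(R_{\Gamma}))/H^s_{\Gamma}(T^{\vee}_{\Lambda},U(1))$ with the more convenient isomorphic model $H^s_{\Gamma}(T^{\vee}_{\Lambda},SU(R_{\Gamma}))/H^s_{\Gamma}(T^{\vee}_{\Lambda},Z_{|\Gamma|})$, which is available precisely because the $\Gamma$-action on $\Lambda$ is generic. Once this identification is in hand, the only thing that must be computed is the Lie algebra of a quotient of a Hilbert (Fr\'echet) Lie group by a discrete normal subgroup.

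First, I would observe that the denominator $H^s_{\Gamma}(T^{\vee}_{\Lambda},Z_{|\Gamma|})$ is in fact a finite group. Since $Z_{|\Gamma|} \subset SU(R_{\Gamma})$ is discrete and $T^{\vee}_{\Lambda}$ is connected, any $H^s$ map with $s > r/2$ (hence continuous by Sobolev embedding) into $Z_{|\Gamma|}$ is a constant, and the $\Gamma$-equivariance then cuts this down to the finite subgroup $Z_{|\Gamma|}^{\Gamma} \subset Z_{|\Gamma|}$. In particular, $H^s_{\Gamma}(T^{\vee}_{\Lambda},Z_{|\Gamma|})$ is a discrete (closed) normal subgroup, so the quotient map has trivial derivative kernel and identifies the Lie algebras of $\bar{\mc{G}}^s_{\hat{\Gamma}}$ and $H^s_{\Gamma}(T^{\vee}_{\Lambda},SU(R_{\Gamma}))$.

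Next, I would compute the Lie algebra of $H^s_{\Gamma}(T^{\vee}_{\Lambda},SU(R_{\Gamma}))$ by the standard mapping-space argument. For $s > r/2$, the Sobolev range ensures that $H^s(T^{\vee}_{\Lambda},SU(R_{\Gamma}))$ is a Hilbert (Fr\'echet) Lie group whose tangent space at the identity is canonically $H^s(T^{\vee}_{\Lambda},\mf{su}(R_{\Gamma}))$ -- this is precisely the mapping-space picture recalled in the Conventions, applied to the submanifold $SU(R_{\Gamma}) \hookrightarrow \mathrm{End}(R_{\Gamma})$. The $\Gamma$-equivariance condition is a closed linear condition at the Lie algebra level (cutting down tangent vectors at the identity to those satisfying $X(\gamma\vec\theta) = j(\gamma)X(\vec\theta)j(\gamma)^{-1}$), so the Lie subgroup $H^s_{\Gamma}(T^{\vee}_{\Lambda},SU(R_{\Gamma}))$ has Lie algebra $H^s_{\Gamma}(T^{\vee}_{\Lambda},\mf{su}(R_{\Gamma}))$, which is the claim.

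There is no substantive obstacle here: genericness is used exactly to apply Proposition~\ref{prop:Gdefn}, and the remaining work is essentially bookkeeping about how quotients by discrete subgroups interact with Lie algebras plus the standard identification of the Lie algebra of a Sobolev mapping group. As a sanity check, one can alternatively argue directly with the middle quotient by using the $\Gamma$-equivariant orthogonal decomposition $\mf{u}(R_{\Gamma}) = \mf{u}(1)\cdot I \oplus \mf{su}(R_{\Gamma})$ (with the first summand $\Gamma$-trivial), which gives $H^s_{\Gamma}(T^{\vee}_{\Lambda},\mf{u}(R_{\Gamma}))/H^s_{\Gamma}(T^{\vee}_{\Lambda},\mf{u}(1)) \simeq H^s_{\Gamma}(T^{\vee}_{\Lambda},\mf{su}(R_{\Gamma}))$ and matches the answer obtained via Proposition~\ref{prop:Gdefn}.
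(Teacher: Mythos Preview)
Your proof is correct and follows essentially the same approach as the paper: the paper notes that $H^s_{\Gamma}(T^{\vee}_{\Lambda},Z_{|\Gamma|})$ is just the finite cyclic group $Z_{|\Gamma|}$, so that under the genericity hypothesis Proposition~\ref{prop:Gdefn} identifies $\bar{\mc{G}}^s_{\hat{\Gamma}}$ with $H^s_{\Gamma}(T^{\vee}_{\Lambda},SU(R_{\Gamma}))/Z_{|\Gamma|}$, whence the Lie algebra is immediate. Your additional sanity check via the decomposition $\mf{u}(R_{\Gamma}) = \mf{u}(1)\oplus\mf{su}(R_{\Gamma})$ is a nice alternative route not spelled out in the paper.
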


\begin{proof}[Proof of Proposition~\ref{prop:Gdefn}] Most of the assertions are immediate, and the maps of groups in~\eqref{eq:Gdefn} are immediately seen to be injective as indicated. Surjectivity of these arrows under the genericity assumption remains to be proven.

Note that 
$$\mathrm{Im}(SU(R_{\Gamma})^{\Gamma_p}\to PU(R_{\Gamma})^{\Gamma_p}) = \mathrm{Im}(U(R_{\Gamma})^{\Gamma_p}\to PU(R_{\Gamma})^{\Gamma_p})$$
for any subgroup $\Gamma_p \subset \Gamma$, so the pointwise lifting condition is identical whether phrased for lifting to $SU$ or $U$. Moreover, on any simply-connected $\Gamma_p$-equivariant open neighborhood of any point $p$, a pointwise liftable equivariant map to $PU(R_{\Gamma})$ may be locally equivariantly lifted to a map to $SU(R_{\Gamma})$. As such, we may locally lift our map to $SU(R_{\Gamma})$, and the ambiguities on overlaps of these local open neighborhoods are valued in $Z_{|\Gamma|}$. Hence, the obstruction to lifting a map in $H^{s,p\ell}_{\Gamma}(T^{\vee}_{\Lambda},PU(R_{\Gamma}))$ to $H^{s}_{\Gamma}(T^{\vee}_{\Lambda},SU(R_{\Gamma}))$ lies in the first \v{C}ech cohomology group $\mathrm{H}^1(T^{\vee}_{\Lambda}/\Gamma; Z_{|\Gamma|})$. Here by $T^{\vee}_{\Lambda}/\Gamma$ we mean the coarse quotient topological space, which we abbreviate $C$ with its coarse quotient map $T^{\vee}_{\Lambda} \stackrel{q}{\to} C$ in the following.

Using the image of the fixed point $0 \in T^{\vee}_{\Lambda}$ as a base-point in $C$, we readily see that $\pi_1(T^{\vee}_{\Lambda}) \twoheadrightarrow \pi_1(C)$; in fact, not only can loops in $C$ be lifted up to homotopy, they can be lifted on the nose. This surjection shows that $\pi_1(C)$ is abelian, and upon considering straight-line paths in the universal cover $\Lambda^{*}_{\mb{R}}$ of $T^{\vee}_{\Lambda}$, one may see that $\Lambda^{\vee} \cap (\Gamma - 1)\Lambda^{*}_{\mb{R}}$ is in the kernel of $\Lambda^{\vee} \simeq \pi_1(T^{\vee}_{\Lambda}) \to \pi_1(C)$, where we define the subspace $(\Gamma-1)\Lambda^{*}_{\mb{R}}$ to be generated as follows: $$(\Gamma-1)\Lambda^{*}_{\mb{R}} := \langle (\gamma-1)v \bigm| \gamma \in \Gamma, v \in \Lambda^{*}_{\mb{R}}\rangle\,.$$
But, any vector $w \in \Lambda_\RR^*$ orthogonal to the subspace $(\Gamma-1)\Lambda^{*}_{\mb{R}}$ must be preserved by $\Gamma$, contradicting the genericity assumption unless $w = 0$. Hence $(\Gamma-1)\Lambda^{*}_{\mb{R}} = \Lambda^{*}_{\mb{R}}$ and $\pi_1(C)$ is trivial, so in particular $\mathrm{H}^1(C; Z_{|\Gamma|}) = \mathrm{Hom}(\pi_1(C), Z_{|\Gamma|}) = 0$, completing the argument.
\ep

In the present section, we will mostly find it convenient to work with $\G^{s+1}_{\hat\Gamma}$ and $\A^s_{\hat\Gamma}$, but in the rest of the paper we will switch to $\bar\G^{s+1}_{\hat\Gamma}$ and $\bar\A^s_{\hat\Gamma}$. This will not affect the resulting hyperkahler quotient. To see this, we note that in tandem with the natural linear Coulomb gauge fixing condition, the trace parts of the moment maps yield the linear operator $d^+\oplus d^*$ on $T^\vee_\Lambda$, where the $+$ superscript indicates that we keep the part that survives contraction with $\omega^i$. Corollary \ref{cor:comm} of \S\ref{subsec:flatorb} will imply that the kernel of this operator coincides with the kernel of the standard operator $d\oplus d^*$ of Hodge theory. In the case of interest where $\Gamma$ acts on $\Lambda$ generically as per Definition~\ref{defn:gen}, there are no nonzero $\Gamma$-invariant harmonic 1-forms on $T^\vee_\Lambda$ and so this operator has trivial kernel.

We finally perform our final specialization: consider now the case of $Q$ four (real) dimensional, i.e. $Q \simeq \mb{C}^2$ with holomorphic symplectic form $\omega_{\mb{C}} = \omega^2 + i \omega^3 = dz^1 \wedge dz^2$ and K{\"a}hler form $\omega^1 = \frac{i}{2}(dz^1 \wedge d\overline{z^1} + dz^2 \wedge d\overline{z^2})$. We identify
\begin{align*}\mc{A}^s_{\hat{\Gamma}} &\simeq \Big(d+\mf{u}^s(R_{\hat{\Gamma}})\underset{\mb{R}}{\otimes}Q\Big)^{\hat{\Gamma}}\\&\simeq\Big(d+\mf{gl}^s_{\mb{C}}(R_{\hat{\Gamma}})\underset{\mb{C}}{\otimes}Q\Big)^{\hat{\Gamma}}\\&\simeq\Big(d+\mf{gl}^s_{\mb{C}}(R_{\hat{\Gamma}})\oplus\mf{gl}^s_{\mb{C}}(R_{\hat{\Gamma}})\Big)^{\hat{\Gamma}},
\end{align*}

\begin{rmk}\label{rmk:gen} In this case, the finite subgroup $\Gamma \hookrightarrow Sp(1) \simeq SU(2)$ falls into the ADE classification as appears, e.g., in the McKay correspondence of~\cite{mckay37graphs}. We assume throughout that we are in the nontrivial case of $\Gamma \ne \{1\}$. Then one may directly confirm that a strong form of the genericity assumption of Definition~\ref{defn:gen} is satisfied: the only element of $Q$ with a nontrivial stabilizer is the origin. In particular, Proposition~\ref{prop:Gdefn} holds.\end{rmk}

\subsection{Hyperkahler quotients}

We recall the general setup of K{\"a}hler and hyperkahler quotients, following~\cite{MarsdenWeinstein, MumfordGIT, hitchin:hkSUSY}. 

\begin{constr}\label{constr:realKah} Suppose $(M, \omega)$ is a K\"{a}hler manifold with a Hamiltonian action by a Lie group $G$, i.e., an action $G\acts{M}$ such that there exists a $G$-equivariant \emph{moment map} $\mu\colon M \to \mf{g}^{\vee}$ such that for all $X \in \mf{g}$, 
\beq\label{eq:mommap}d(\langle \mu, X \rangle) = \iota_X \omega\,.\eeq 
Then, for any $\xi \in (\mf{g}^{\vee})^G$, $G$ acts on $\mu^{-1}(\xi)$ preserving the restriction of the K{\"a}hler form $\omega$, and we denote the quotient by
$$M\kq G := \mu^{-1}(\xi)/G\,.$$ 
If this quotient space is a manifold (or orbifold), it naturally carries a K\"{a}hler structure. The K\"ahler form is defined by observing that if $X\in \mf{g}$ determines a vertical tangent vector $x\in T_m M$ with $m\in \mu^{-1}(\xi)$ then $\omega(x,\cdot)=0$. The complex structure is induced from that of $M$ by using the fact that the horizontal subbundle over $\mu^{-1}(\xi)$ (the orthogonal complement of the vertical subbundle) is a complex subbundle of $TM|_{\mu^{-1}(\xi)}$. We will call the space $(\mf{g}^{\vee})^G$ the space of \emph{moment map} or \emph{Fayet-Iliopoulos} (FI) parameters; in good situations, one may expect to find a family of K\"ahler manifolds fibered over (an open sublocus within) this space of parameters. \end{constr}

\begin{rmk}\label{rmk:canstr} If $M\kq G$ is a manifold (or orbifold), said manifold structure is \emph{canonical}. In particular, it is a \emph{condition} that the quotient $M\kq G$ be a manifold -- as opposed to extra \emph{data} of the choice of some manifold structure. Indeed, we would like to say that a function $f$ on $M\kq G$ is smooth if and only if its pullback to $\mu^{-1}(\xi)$ is the restriction of a smooth function\footnote{We will frequently be applying this construction to the case that $M$ is an infinite-dimensional manifold. When $M$ is a Banach manifold, the definitions of a derivative, tangent space, smooth function, and so on agree \emph{verbatim} with the finite-dimensional treatment; see~\cite{Lang} for an exposition along these lines. When $M$ is a Fr\'echet manifold, however, the sheaf of smooth functions is still sensible but certainly requires more groundwork to develop in the absence of a norm. We hence stick to the usual approach in gauge theory of only using these constructions for the Banach case, accessing the Fr\'echet case afterwards only by comparison.} on $M$; it is then a question whether the topological space (or groupoid) $M\kq G$ endowed with this (sheaf of) functions is locally isomorphic to Euclidean space or not. See the end of~\cite[\S4.2.5]{DK} for a related discussion. \end{rmk}

\begin{constr}\label{constr:cplxKah} Given datum as above, one often has a complexified group $G_{\mb{C}}$ acting holomorphically on $M$. Given the choice of $\xi \in (\mf{g}^{\vee})^G$, one may define the $\xi$-polystable sublocus as $$M^{\mathrm{ps}} := \{m \in M \bigm| G_{\mb{C}}m \cap \mu^{-1}(\xi) \ne \emptyset\}\,.$$ Then, in good situations, one has an isomorphism 
\beq\label{eq:DUYabstract} \mu^{-1}(\xi)/G \stackrel{\sim}{\to} M^{\mathrm{ps}}/G_{\mb{C}}\,.\eeq 
If such an isomorphism holds, we call it a \emph{Donaldson-Uhlenbeck-Yau} (DUY) isomorphism\footnote{This isomorphism may also be called a Hitchin-Kobayashi correspondence.} or DUY theorem. \end{constr}

The equivalence of the two constructions above, when such a DUY theorem holds, is a pleasant way to learn more about both sides of the isomorphism. The left side of~\eqref{eq:DUYabstract} is more convenient for producing the K\"ahler structure while the right side is often easier to construct, albeit only `easily' yielding a structure as a complex manifold. We will often call the left side the ``real formulation'' of the quotient and the right side the ``complex formulation.'' Note that in the complex formulation $\xi$ enters only through the stability condition, i.e. in the characterization of the sublocus $M^{\mathrm{ps}}$. As such, while the space of FI parameters appears naturally on the left side of~\eqref{eq:DUYabstract}, the complex-analytic space on the right side of~\eqref{eq:DUYabstract} often depends on $\xi \in (\mf{g}^{\vee})^G$ only through a locally finite stratification\footnote{\label{footnote:VGIT}often called a \emph{wall-and-chamber decomposition}. See~\cite{DolgachevHuVGIT,ThaddeusVGIT} for a thorough explanation of this chamber structure in the finite-dimensional algebro-geometric context.} of this parameter space, with the complex structure of $M^{\mathrm{ps}}/G_{\mb{C}}$ constant within each stratum. We take this opportunity to also quickly introduce some other terminology which will be helpful:
\begin{defn}\label{defn:stab}
Given $G_{\mb{C}}\acts{M}$ as above, we similarly define the \emph{stable}, \emph{semistable}, and \emph{unstable} loci for a given $\xi \in (\mf{g}^{\vee})^G$ as
\begin{align*}
M^{\mathrm{st}} &:= \{m \in M^{\mathrm{ps}} \bigm| \mathrm{Stab}(m) \subset G_{\mb{C}}\text{ is finite}\} \\
M^{\mathrm{ss}} &:= \{m \in M \bigm| \overline{G_{\mb{C}}m} \cap M^{\mathrm{ps}} \ne \emptyset\} \\
M^{\mathrm{us}} &:= M \setminus M^{\mathrm{ss}} \ .
\end{align*}
\end{defn}

There is a standard approach to proving the DUY isomorphism if $G$ and $M$ are finite-dimensional following the Kempf-Ness theorem~\cite{kempfNess}; see the discussion after \cite[Theorem 1.1]{McGNcounter}, and the listed references therein, for careful treatments in a variety of cases. If $G$ and $M$ are instead infinite-dimensional manifolds, proving each case of a DUY isomorphism involves interesting analysis~\cite{donaldson:ns, donaldson:duy, yau:duy}. The complex formulation is often referred to as geometric invariant theory (GIT), an abbreviation we shall occasionally adopt as well. In this context, one often uses the notation $M\kqnoxi G_{\mb{C}}$ rather than $M\kqnoxi G$; this mild abuse of notation serves as a reminder that at least in the complex formulation, the construction is purely holomorphic. 

The definitions of $M^{\mathrm{st}}, M^{\mathrm{ss}}, M^{\mathrm{us}}$ above all depend on $M^{\mathrm{ps}}$, and therefore on $\xi \in (\mf{g}^{\vee})^G$. One may wish for a definition of these loci that is more intrinsically holomorphic. When such a (necessarily $\xi$-dependent) \emph{stability condition} may be found, the proof that this alternative characterization is equivalent to the definitions above is typically included as part of the statement of the DUY theorem. In such approaches, the definitions of (semi)stability are usually more privileged, with the polystable locus defined afterward in some way ensuring that an orbit is polystable if it is closed within the semistable locus. 

\vspace{.2cm}

We now discuss the analogous construction for hyperkahler manifolds. There are various ways to write the package of data with which a hyperkahler manifold is endowed; here, we will write the triple $(M, \omega_\CC, \omega)$ where $(M, \omega)$ is a K\"ahler manifold and $\omega_\CC$ is a holomorphic symplectic form on $M$. Later, we will also employ the more democratic notation $\omega = \omega^1$, $\omega_\CC = \omega^2 + i \omega^3$ which makes manifest the K\"ahler forms in the three primary complex structures.

\begin{constr}\label{constr:hkquot} If $(M, \omega_\CC, \omega)$ is a hyperkahler manifold with a tri-Hamiltonian action by a Lie group $G$ -- i.e., such that there exist both complex and real moment maps \begin{align*}
\mu_{\mb{C}}\colon &M \to \mf{g}_{\mb{C}}^{\vee} \ , \\ \mu_{\mb{R}}\colon &M \to \mf{g}^{\vee}
\end{align*} 
satisfying analogs of~\eqref{eq:mommap}, then for any $\vec{\xi} = (\xi_{\mb{C}},\xi_{\mb{R}})$ with $\xi_{\mb{C}} \in (\mf{g}_{\mb{C}}^{\vee})^G, \xi_{\mb{R}} \in (\mf{g}^{\vee})^G$, we define the quotient $$M\hq G := (\mu_{\mb{C}}^{-1}(\xi_{\mb{C}}) \cap \mu_{\mb{R}}^{-1}(\xi_{\mb{R}}))/G\,.$$ If this quotient is a manifold (or orbifold), it now naturally carries a hyperkahler structure. By definition, this quotient is the K\"ahler quotient of $\mu_{\mb{C}}^{-1}(\xi_{\mb{C}})$ by $G$ and, once again, if $G_\CC$ acts holomorphically symplectically then one could form a complex formulation and ask for a DUY isomorphism \beq\label{eq:hkDUY} (\mu_{\mb{C}}^{-1}(\xi_{\mb{C}}) \cap \mu_{\mb{R}}^{-1}(\xi_{\mb{R}}))/G \stackrel{\sim}{\to} \mu_{\mb{C}}^{-1}(\xi_{\mb{C}})^{\mathrm{ps}}/G_{\mb{C}}\,.\eeq 
The right side of~\eqref{eq:hkDUY} is naturally a complex manifold (orbifold) in good cases. Since the restriction of the holomorphic symplectic form $\omega_\CC$ to $\mu^{-1}_\CC(\xi_\CC)$ vanishes on vertical vectors and is $G_{\mb{C}}$-invariant, it then descends to yield the structure of a holomorphic symplectic complex manifold on the right side.
\end{constr}

Proposition~\ref{prop:hktime} gives a tri-Hamiltonian action of the Lie group $\mc{G}^{s+1}_{\hat{\Gamma}}$ on the hyperkahler space $\mc{A}^s_{\hat{\Gamma}}$, and so it is tempting to form the hyperkahler quotient as in Construction~\ref{constr:hkquot} above.\footnote{It is similarly tempting to study the K\"ahler quotients of Proposition~\ref{prop:kahtime}. We will do so in future work.} Indeed, this is precisely the main thrust of the current section.

First, however, it is reasonable to calculate the moment map parameter space $((\mf{g}^{\vee})^G)^3 \simeq (\mf{g}^{\vee}_{\mb{C}})^G \oplus (\mf{g}^{\vee})^G$. We turn to this task next, in \S\ref{subsec:FIparam}, before then describing some symmetries of our construction in \S\ref{sec:variants}. We then compute this hyperkahler quotient in the special case of $\vec{\xi} = 0$ in both \S\ref{subsec:flatorb} and~\S\ref{subsec:flatorb2} using different proof strategies. We will then finally state the main conjecture in Conjecture~\ref{conj:mainconj} of \S\ref{subsec:mainconj} about the expected behavior of this family of hyperkahler quotients over the space of parameters. We close with a pair of appendical subsections. \S\ref{subsec:sheaf} discusses the relation of the derived McKay correspondence to a DUY isomorphism for some cases of Conjecture~\ref{conj:mainconj}, while \S\ref{subsec:asymorb} explains the relevant gauge-theoretic problem when the action of $\hat{\Gamma}\acts{Q}$ in~\S\ref{subsec:setup} is affine. We then remark on the topology of the gauge group and its relation to the topology of the ensuing hyperkahler quotient in~\S\ref{subsec:topology}.

\subsection{FI parameter space}\label{subsec:FIparam}

The main task of this subsection is to calculate the space of moment map parameters relevant for the hyperkahler quotient construction of Construction~\ref{constr:hkquot} as applied to the data of Proposition~\ref{prop:hktime}. A priori, this means that we wish to compute the $\G^s_{\hat\Gamma}$-invariant subspace of $(\mathrm{Lie}\ \G^s_{\hat\Gamma})'$. However, in accordance with the discussion at the end of \S\ref{subsec:setup}, and as in \cite{kapustin:impurity}, the parts of this subspace with nonvanishing trace do not affect the resulting hyperkahler quotient: they just add a multiple of a Green's function for $d^+$ (in Coulomb gauge, say) to elements of $\A^s_{\hat\Gamma}$.

So, for $\mc{G} = H^s_{\Gamma}(T^{\vee}_{\Lambda},SU(R_{\Gamma})) / Z_{|\Gamma|}$, we wish to compute the $\mc{G}$-invariant subspace of $(\mathrm{Lie}\ \mc{G})^{\vee}$. Note that in the set-up of~\S\ref{subsec:setup}, this dual Lie algebra is largest when $s = \infty$, and so for the strongest possible statement, we will work throughout with $s = \infty$ in this subsection. We begin in greater generality, replacing $T^{\vee}_{\Lambda}$ by a more general manifold $M$ with a $\Gamma$ action. First, we introduce several definitions:

\begin{defn} Given a finite group $\Gamma$, define the vector space 
\beq\mf{z}_{\Gamma}:=\mathrm{Lie}\,Z(SU(R_{\Gamma})^{\Gamma})\,.\eeq
Slightly more generally, if $\Gamma$ is a finite group with subgroup $\Gamma_p \subset \Gamma$, define the vector space
\beq \mf{z}_{\Gamma_p;\Gamma} := \mathrm{Lie}\,Z(SU(R_{\Gamma})^{\Gamma_p})\,,\eeq
where $\Gamma_p \hookrightarrow \Gamma \hookrightarrow U(R_{\Gamma})$ is considered as a subgroup acting on the normal subgroup $SU(R_{\Gamma})$ by conjugation, $SU(R_{\Gamma})^{\Gamma_p}$ is the fixed subgroup of this conjugation action, and $Z(SU(R_{\Gamma})^{\Gamma_p})$ is the center of said fixed subgroup.

We term $\mf{z}_{\Gamma}$ the \emph{space of (FI) parameters associated to $\Gamma$}, and when $\Gamma_p \hookrightarrow \Gamma$ is the stabilizer group at $p \in M$ for some action $\Gamma\acts{M}$, we abbreviate $\mf{z}_{\Gamma_p;\Gamma}$ as $\mf{z}_p$ and term it the \emph{local space of parameters at $p$} for reasons that should become clear shortly. \end{defn}

\begin{rmk} Following~\cite[(2.5)]{kronheimer:construct}, we may describe these spaces rather concretely. If we use the standard Peter-Weyl type decomposition of the regular representation \begin{align*} R_{\Gamma} &\simeq \bigoplus_{V_i \in\,\mathrm{Irr}\,\Gamma} \mathrm{End}_{\mb{C}}(V_i) \\ &\simeq  \bigoplus_{V_i \in\, \mathrm{Irr}\,\Gamma} V_i \otimes V_i^*,\end{align*} then Schur's lemma immediately implies that \begin{align*}U(R_{\Gamma})^{\Gamma}&\simeq \prod_{V_i\in\,\mathrm{Irr}\,\Gamma}U(V_i^*)\\ \implies Z(U(R_{\Gamma})^{\Gamma})&\simeq\prod_{V_i\in\,\mathrm{Irr}\,\Gamma}U(1)\\ \implies \mf{z}_{\Gamma}&\simeq\Big(\bigoplus_{V_i\in\,\mathrm{Irr}\,\Gamma} \mf{u}(1)\Big)^0\,,\end{align*}where $\mathrm{Irr}\,\Gamma$ denotes the set of irreducible representations of $\Gamma$ and $(-)^0$ denotes the traceless subspace.\end{rmk}

More generally still, we have the following:

\begin{lem} Given $\Gamma_p \hookrightarrow \Gamma$ as above, we have a canonical isomorphism 
\beq\mf{z}_{\Gamma_p} \stackrel{\sim}{\to}\mf{z}_{\Gamma_p;\Gamma}\,.
\eeq
\end{lem}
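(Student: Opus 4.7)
The plan is to reduce both sides to the same concrete combinatorial object indexed by the irreducible representations of $\Gamma_p$. The key observation is that, as a $\Gamma_p$-representation under restriction of the left-multiplication action, $R_\Gamma = \CC[\Gamma]$ decomposes as $[\Gamma:\Gamma_p]$ copies of the regular $\Gamma_p$-representation $R_{\Gamma_p}$. Indeed, viewing $\Gamma$ as a $\Gamma_p$-set, the orbits are the right cosets $\Gamma_p\cdot \gamma$, each of which is a free $\Gamma_p$-orbit. Thus there is a (non-canonical) isomorphism $R_\Gamma \simeq R_{\Gamma_p}^{\oplus [\Gamma:\Gamma_p]}$ of $\Gamma_p$-representations.

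Combining this with Peter-Weyl for $\Gamma_p$, namely $R_{\Gamma_p} \simeq \bigoplus_{V\in\mathrm{Irr}\,\Gamma_p} V\otimes V^*$, gives the canonical isotypic decompositions
\begin{align*}
R_{\Gamma_p} &\simeq \bigoplus_{V\in\mathrm{Irr}\,\Gamma_p} V \otimes W_V^{(p)}, \qquad \dim W_V^{(p)} = \dim V,\\
R_\Gamma &\simeq \bigoplus_{V\in\mathrm{Irr}\,\Gamma_p} V \otimes W_V, \qquad \dim W_V = [\Gamma:\Gamma_p]\dim V.
\end{align*}
Applying Schur's lemma, $U(R_{\Gamma_p})^{\Gamma_p} \simeq \prod_V U(W_V^{(p)})$ and $U(R_\Gamma)^{\Gamma_p} \simeq \prod_V U(W_V)$. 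The centers of both products are canonically $\prod_{V\in\mathrm{Irr}\,\Gamma_p} U(1)$, since the center of each $U(W_V)$ or $U(W_V^{(p)})$ is canonically $U(1)$ (scalar action on the isotypic component, independent of any choice of identification of multiplicity spaces). This already gives a canonical isomorphism $\mathrm{Lie}\,Z(U(R_\Gamma)^{\Gamma_p}) \simeq \mathrm{Lie}\,Z(U(R_{\Gamma_p})^{\Gamma_p})$ -- sending the scalar $x_V$ on the $V$-isotypic component of $R_\Gamma$ to the scalar $x_V$ on the $V$-isotypic component of $R_{\Gamma_p}$.

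The remaining step is to check that this canonical isomorphism restricts to an isomorphism of the $SU$-traceless subspaces. An element $(x_V)\in \bigoplus_V \mf{u}(1)$ in either case acts with trace
\[
\mathrm{tr}\bigl((x_V)\bigr) = \sum_V (\dim V)\cdot(\dim W_V^{(\bullet)})\,x_V,
\]
which equals $\sum_V (\dim V)^2 x_V$ for $R_{\Gamma_p}$ and $[\Gamma:\Gamma_p]\sum_V (\dim V)^2 x_V$ for $R_\Gamma$. Since $[\Gamma:\Gamma_p]$ is a nonzero scalar, the two tracelessness conditions cut out the same hyperplane in $\bigoplus_V \mf{u}(1)$. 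Hence the canonical isomorphism between the centers restricts to the desired canonical isomorphism $\mf{z}_{\Gamma_p}\stackrel{\sim}{\to}\mf{z}_{\Gamma_p;\Gamma}$.

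I do not anticipate any real obstacle: the only point requiring minor care is emphasizing \emph{canonicity}, i.e., that while the identification $R_\Gamma \simeq R_{\Gamma_p}^{\oplus [\Gamma:\Gamma_p]}$ depends on a choice of coset representatives, the induced map on centers does not, because centers of endomorphism algebras of isotypic components depend only on the isotypic decomposition, which is itself canonical.
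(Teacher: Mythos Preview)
Your proof is correct and follows essentially the same strategy as the paper's: decompose $R_\Gamma$ into $\Gamma_p$-isotypic components, apply Schur's lemma to identify the centralizer as a product of unitary groups, and conclude that both centers are canonically $\prod_{V\in\mathrm{Irr}\,\Gamma_p} U(1)$. The only notable difference lies in how you establish that every irreducible of $\Gamma_p$ appears in $R_\Gamma$: the paper appeals to Frobenius reciprocity, whereas you use the more elementary observation that $R_\Gamma \simeq R_{\Gamma_p}^{\oplus[\Gamma:\Gamma_p]}$ as $\Gamma_p$-representations via the right-coset decomposition. Your route has the advantage of yielding the exact multiplicities, which you then use to check explicitly that the two tracelessness conditions cut out the same hyperplane (differing only by the nonzero scalar $[\Gamma:\Gamma_p]$) --- a point the paper leaves implicit.
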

\bp The first step to analyzing $\mf{z}_{\Gamma_p;\Gamma}$ is to decompose $R_{\Gamma}$ as a representation of $\Gamma_p$, whereupon the above line of reasoning again contributes one factor of $\mf{u}(1)$ for every irreducible representation of $\Gamma_p$ that appears. Once again, if $V\in\mathrm{Irr}\,\Gamma_p$ appears $n$ times for any $n > 0$, Schur's lemma ensures that the relevant factor of the centralizer of $\Gamma_p$ in $U(R_\Gamma)$ is $U(n)$ with center $U(1)$. So, the content of the claim above is that every irreducible representation of $\Gamma_p$ appears in $R_{\Gamma}$. Equivalently, we wish to know that every irreducible representation of $\Gamma_p$ appears upon decomposing \emph{some} irreducible representation of $\Gamma$. But this claim is immediate from Frobenius reciprocity.\ep

\begin{rmk} One may interpret the main result of this subsection, the identification of the space of FI parameters, as an extension of the above propositions about finite groups $\Gamma$ and their subgroups $\Gamma_p$ to a much larger class of locally compact groups $\hat{\Gamma}$. After all, the original formulation of~\S\ref{subsec:setup} -- before passing through Fourier duality -- worked directly with $SU(R_{\hat{\Gamma}})^{\hat{\Gamma}}$ for infinite groups $\hat{\Gamma}$. This subsection, hence, may be interpreted as an extension to the class of finite groups extended by abelian lattices that appear in our applications of interest. It may be of interest to use the theory of harmonic analysis on locally compact groups to study this theory in still greater generality. \end{rmk}

Before introducing the main theorem of this section, we introduce one last piece of notation.

\begin{defn}\label{defn:isoaction} Suppose $M$ is a smooth manifold endowed with the action of a finite group $\Gamma$. Given $p \in M$, denote by $\Gamma_p \subset \Gamma$ the stabilizer of $p$, and fine the subset $F \subset M$ by 
\beq F := \{p \in M \bigm| \Gamma_p \ne \{1\}\}\,.\eeq 
We will say that the action of $\Gamma$ on $M$ is \emph{isolated} if $F \subset M$ is discrete. We will then typically write $F = \{p_i\}$ with corresponding stabilizer groups $\Gamma_i \subset \Gamma$. \end{defn}

\begin{rmk}\label{rmk:isoaction} Note that any linear action $\hat{\Gamma}\acts{\mb{R}^4}$ with $\Gamma \subset SU(2)$ automatically yields an isolated action of $\Gamma$ on $T^{\vee}_{\Lambda}$.\end{rmk}

\begin{thrm} \label{thm:FI} Suppose $M$ is a smooth manifold with an isolated action by some finite group $\Gamma$. Then the subspace of equivariant $\mf{su}(R_{\Gamma})$-valued distributions on $M$ invariant under the coadjoint action by the group $\mc{G} := C^{\infty}_{c,\Gamma}(M,U(R_{\Gamma}))$ of equivariant smooth, compactly supported\footnote{Note that for the group-valued functions we consider here, by ``compact support'' we mean identically equal to $\mathrm{Id} \in U(R_{\Gamma})$ outside some compact set.} functions is \begin{align} \Big( \mc{D}'_{\Gamma}(M,\mf{su}(R_{\Gamma})) \Big)^{C^{\infty}_{c,\Gamma}(M,U(R_{\Gamma}))} &\simeq \Big( \prod_i \mf{z}_{p_i} \Big)^{\Gamma} \\ \sum_i \xi_i \delta(p_i) &\mapsfrom (\xi_i)\,.\end{align} \end{thrm} 

\bp We show this theorem in a number of steps. First, we claim that any distribution preserved under the coadjoint action as above must be supported only at points with a nontrivial stabilizer group. Indeed, suppose $T$ is some invariant distribution \emph{not} supported only at such points; i.e., suppose $q \in \mathrm{supp}\,T$ and that $q$ has trivial stabilizer in $\Gamma$. By assumption, we may find a neighborhood $q \in U \subset M$ such that $(\Gamma \setminus \{1\})(U) \cap U = \emptyset$ and that there exists some test function $\phi \in C^{\infty}_c(U, \mf{su}(R_{\Gamma}))$ with $\langle T, \phi \rangle \ne 0$. (Note that we are throughout using the Killing form on $\mf{su}(R_{\Gamma})$, i.e. the trace pairing, in order to ignore the distinction between $\mf{su}(R_\Gamma)$ and its dual.) Now, by assumption, we have that \begin{align} \forall g \in C^{\infty}_{c,\Gamma}(M, U(R_{\Gamma})), \langle T, g\phi \rangle &= \langle T, \phi \rangle \nonumber \\ \Longleftrightarrow \forall g \in C^{\infty}(U, U(R_{\Gamma})), \langle T, g \phi \rangle &= \langle T, \phi \rangle, \label{eq:groupuseful}\end{align} the equivalence of the two rows following as (i) only $g$ supported in $U$ are relevant and (ii) any $g$ supported in $U$ may be canonically extended to an equivariant $g$ supported on $\Gamma \cdot U$. (Rather, shrink $U$ slightly so that $U \subset U'$, still with $(\Gamma \setminus \{1\})(U') \cap U' = \emptyset$, and use a partition of unity and polar projection to the unitary group to extend group-valued functions on $U$ to compactly-supported group-valued functions on $U'$ to equivariant compactly-supported group-valued functions on $\Gamma \cdot U'$.) But now integrate the second equality in~\eqref{eq:groupuseful} over all constant $g$ to find $$0 \ne \langle T, \phi \rangle = \int_{U(R_{\Gamma})} \langle T, g\phi \rangle dg$$ for the Haar measure on $U(R_{\Gamma})$. But, given that $\phi$ was valued in $\mf{su}(R_{\Gamma})$, it follows that integrating $g\phi$ returns the identically vanishing test function, which is a contradiction. Hence $q\not\in\mathrm{supp}\,T$.

As we have now shown that the support of any invariant distribution is supported at the set of points with nontrivial stabilizers, which we are hypothesizing is some discrete set of isolated points in $M$, it is clear the space of invariant distributions is the product over spaces of local parameters for each $p_i$, up to an overall constraint relating the parameters associated to points in the same $\Gamma$ orbit. So, we now turn to classifying those invariant distributions supported at a point $p$ with stabilizer group $\Gamma_p \subset \Gamma$. 

Any distribution supported at a point $p$ is a linear combination of the delta distribution $\delta(p)$ and its derivatives. So, suppose $$T = \sum_{\alpha}v^{\alpha}\partial_{\alpha}\delta(p)$$ is an invariant distribution, where $\alpha$ runs over a finite list of multi-indices and $v^{\alpha} \in \mf{su}(R_{\Gamma})$ for all such $\alpha$. We may act by $g \in \mc{G}$ which are some constant value in $U(R_{\Gamma})^{\Gamma_p}$ near $p$, which suffices to show that $v^{\alpha}\in\mf{z}_p$ for all $\alpha$. Next, suppose that $T$ has order $m > 0$. Denoting $V = T_pM$ as a $\Gamma_p$-representation and $\mf{s} = \mf{su}(R_{\Gamma})$, we have that the order $m$ part of $T$ may be encoded in some $$v^{(m)} \in \mathrm{Sym}^m V \otimes \mf{s}\,.$$ Act now by means of $g \in \mc{G}$ that are the identity at the origin and have vanishing derivatives of all orders up to $m-1$. The $m^{\text{th}}$ derivatives of these group elements may be encoded as elements of $\mathrm{Hom}_{\Gamma_p}(\mathrm{Sym}^mV, \mf{s})$; conversely, any such $\Gamma_p$-equivariant linear map (or equivariant $m^{\text{th}}$ order polynomial on $V$) may be realized as the $m^{\text{th}}$ order part of the Taylor expansion of some such $g \in \mc{G}$. Acting with such group elements yields that $v^{(m)}$ is in the kernel of the natural bilinear form $$\Big( \mathrm{Sym}^mV^*\otimes\mf{s}\Big)^{\Gamma_p} \times \Big(\mathrm{Sym}^mV\otimes\mf{s}\Big) \stackrel{\langle -, - \rangle \otimes [-,-]}{\longrightarrow} \mf{s}\,.$$ 
Recall, however, that we already know all the components $v^{\alpha} \in \mf{z}_p \subset \mf{s}^{\Gamma_p}$ for all $\alpha$, so $v^{(m)}$ is in fact in the kernel against the pairing with the full $\Big(\mathrm{Sym}^mV^*\otimes\mf{s}\Big)$. As $\mf{s}$ is semisimple, this kernel is trivial. Hence $T$ is in fact of order $0$, as desired. \ep

\begin{defn} 
Given $\hat{\Gamma}\acts{Q}$ for $Q \simeq \mb{C}^2$ and $\Gamma \hookrightarrow Sp(1)$ nontrivial as above, let \emph{the FI parameter space} $\mf{z}$ denote the $\Gamma$-invariant part of the sum of all the local FI parameter spaces, i.e. 
\beq\mf{z} := \Big(\bigoplus_{p \in F \subset T^{\vee}_{\Lambda}} \mf{z}_p\Big)^{\Gamma}\,.\eeq As noted in the summation index, the space $\mf{z}_p$ is nontrivial only for $p \in F$ with nontrivial stabilizer. In particular, the sum above is finite. \end{defn}

\begin{rmk} Note that one could rewrite $\mf{z}$ above by dropping the $\Gamma$-equivariance and instead just taking a sum over representatives of $\Gamma$-orbits (of nontrivially-stabilized points). In other words, we have $$\mf{z} \simeq \bigoplus_{p_i \in F/\Gamma} \mf{z}_p\,.$$ \end{rmk}

\subsection{Gauge group variants} \label{sec:variants}

Let us return to the general setup of a finite group $\Gamma$ with an isolated action on a smooth $r$-manifold $M$; consider once more the Lie group $\tilde{\mc{G}}^s = H^s_{c, \Gamma}(M,U(R_{\Gamma}))$ for any $r/2 < s \le \infty$, where we recall that the compact-support condition demands that the group element be identically $\mathrm{Id}$ outside some compact set. It will be of interest to calculate the normalizer of $\tilde{\mc{G}}^s$ within a larger group. Specifically, consider the larger group (without the equivariance conditions) $H^s(M,U(R_{\Gamma}))$, and denote by $N(\tilde{\mc{G}}^s)$ the normalizer of $\tilde{\mc{G}}^s$ within $H^s(M,U(R_{\Gamma}))$. We begin by establishing some notation for a simple case of the above.

\begin{lem}\label{lem:ptwisenorm} 
Suppose $\Gamma$ is a finite group with subgroup $\Gamma_p \hookrightarrow \Gamma$. Let $R_j$ denote the irreducible representations of $\Gamma_p$, and suppose $R_{\Gamma}$ decomposes, as a $\Gamma_p$-representation, as $$R_{\Gamma} \simeq \bigoplus_j V_j \otimes R_j\,.$$ Denote $d_j = \dim V_j$, and for each $d \in \mb{N}$, let $R(d) \subset \mathrm{Irr}(\Gamma_p)$ denote the set of representations $R_j$ such that $d_j = d$. Then we have a split short exact sequence \beq 1 \to U(R_{\Gamma})^{\Gamma_p} \to N_{U(R_{\Gamma})}(U(R_{\Gamma})^{\Gamma_p}) \to \prod_d S_{|R(d)|} \to 1\,.\eeq  
\end{lem}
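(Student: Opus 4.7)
The plan is to present $U(R_\Gamma)^{\Gamma_p}$ concretely via Schur's lemma and then exploit the block structure it imposes on elements of the normalizer. First I would apply the decomposition $R_\Gamma \simeq \bigoplus_j V_j \otimes R_j$ and Schur's lemma to identify
$$U(R_\Gamma)^{\Gamma_p} \simeq \prod_j U(V_j),$$
acting as $\bigoplus_j g_j \otimes \mathrm{Id}_{R_j}$ on $R_\Gamma$. In particular, any Lie group automorphism of this subgroup must permute the simple factors $U(V_j)$ in a way that preserves their dimensions.

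Next, to construct the map to $\prod_d S_{|R(d)|}$, for $u \in N$ I would consider the automorphism $\mathrm{Ad}(u)$ of $\prod_j U(V_j)$; the resulting permutation $\sigma$ of the index set satisfies $\dim V_{\sigma(j)} = \dim V_j$, i.e.\ $d_{\sigma(j)} = d_j$, and hence respects each class $R(d)$. For the splitting, I would invoke that $R_\Gamma$ is the regular representation of $\Gamma$, so Frobenius reciprocity gives $d_j = [\Gamma:\Gamma_p]\dim R_j$; thus $d_j = d_{\sigma(j)}$ also forces $\dim R_j = \dim R_{\sigma(j)}$. One may then choose linear isometries $\phi_j : V_j \stackrel{\sim}{\to} V_{\sigma(j)}$ and $\psi_j : R_j \stackrel{\sim}{\to} R_{\sigma(j)}$ compatibly along cycles of $\sigma$, and form the block permutation $u_\sigma = \bigoplus_j (\phi_j \otimes \psi_j) \in U(R_\Gamma)$, which by construction conjugates $U(R_\Gamma)^{\Gamma_p}$ to itself realizing $\sigma$. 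With care in making the basis choices consistently along each cycle, $\sigma \mapsto u_\sigma$ is a group homomorphism, giving the desired splitting.

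The main technical obstacle is the kernel computation. An element $u \in N$ mapping to the identity permutation must preserve each summand $V_j \otimes R_j$ and normalize $U(V_j) \otimes \mathrm{Id}_{R_j}$ inside $U(V_j \otimes R_j)$. I would argue that the induced automorphism of $U(V_j)$ is necessarily inner: the outer automorphism $g \mapsto \bar{g}$ would demand $g$ and $\bar{g}$ to be conjugate in $U(V_j \otimes R_j)$ for every $g \in U(V_j)$, which fails for generic spectra. A double commutant computation then writes $u|_{V_j \otimes R_j} = (h_j \otimes \mathrm{Id}_{R_j}) \cdot (\mathrm{Id}_{V_j} \otimes k_j)$ with $h_j \in U(V_j)$ and $k_j \in U(R_j)$. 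In the setting relevant to this paper, where $\Gamma_p \subset \Gamma \subset SU(2)$ yields abelian stabilizers (so each $R_j$ is one-dimensional), $k_j$ is a scalar that can be absorbed into $h_j$, identifying the kernel with $U(R_\Gamma)^{\Gamma_p}$ and completing the short exact sequence.
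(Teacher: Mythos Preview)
Your overall approach mirrors the paper's: identify $U(R_\Gamma)^{\Gamma_p} \simeq \prod_j U(V_j)$ via Schur and realize the symmetric-group factors as block-permutation matrices. Your surjectivity argument is in fact more explicit than the paper's --- the observation $d_j = [\Gamma:\Gamma_p]\dim R_j$ is precisely what ensures $\dim R_j = \dim R_{\sigma(j)}$ whenever $d_j = d_{\sigma(j)}$, so that the block permutation $u_\sigma = \bigoplus_j \phi_j \otimes \psi_j$ can actually be built inside $U(R_\Gamma)$ --- whereas the paper simply asserts the normalizer structure is ``manifest upon considering where the subgroups may be mapped.''

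Your kernel computation, however, exposes a genuine problem with the statement rather than with your argument. You correctly derive that any $u$ in the kernel of the permutation map restricts on each block to $h_j \otimes k_j$ with $k_j \in U(R_j)$, and that $k_j$ need not be scalar. Your escape clause --- that ``$\Gamma_p \subset \Gamma \subset SU(2)$ yields abelian stabilizers'' --- is not justified: the lemma is stated for arbitrary finite $\Gamma_p \hookrightarrow \Gamma$, and for instance the stabilizer of the origin in $T^\vee_\Lambda$ can be all of $\Gamma$, which may be binary dihedral, binary tetrahedral, etc. When $\dim R_j > 1$, the element $\mathrm{Id}_{V_j} \otimes k_j$ with nonscalar $k_j$ centralizes $U(R_\Gamma)^{\Gamma_p}$ yet does not belong to it, so the kernel of the map to $\prod_d S_{|R(d)|}$ is strictly larger than $U(R_\Gamma)^{\Gamma_p}$ --- it is $\prod_j (U(V_j) \times U(R_j))/U(1)$. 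Thus the short exact sequence as literally written fails for nonabelian $\Gamma_p$; the paper's one-line proof does not address this, and your more careful analysis has in fact located an inaccuracy in the stated lemma.
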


\begin{defn} In light of the above, we denote by $W(\Gamma_p; \Gamma)$ the group $\prod_d S_{|R(d)|}$ when in the situation of the proposition above. This group is so named in analogy with Weyl groups; note also that it carries a natural action on $\mf{z}_{\Gamma_p;\Gamma}$ which we think of as analogous to a Weyl action on a Cartan. Frequently, we abbreviate the above as $W_p\acts{\mf{z}_p}$. \end{defn}

\bp[Proof of Lemma~\ref{lem:ptwisenorm}] Recall that we previously noted that $V_j$ is nonzero for all $j$, by Frobenius reciprocity, and that $$U(R_{\Gamma})^{\Gamma_p} \simeq \prod_j U(V_j)\,,$$ by Schur's lemma. Now, we claim that the normalizer of the above in $U(R_{\Gamma})$ is precisely $$N_{U(R_{\Gamma})}(U(R_{\Gamma})^{\Gamma_p}) \simeq \Big( \prod_j U(V_j) \Big) \rtimes W(\Gamma_p;\Gamma)\,,$$ where $\prod_d S_{R(d)}$ is realized as block-permutation matrices within $U(R_{\Gamma})$; this claim is manifest upon considering where the subgroups may be mapped under the conjugation action. \ep


\begin{prop}\label{prop:localweyldef} Given $\Gamma\acts{M}$ isolated, and with $\tilde{\mc{G}}^s, N(\tilde{\mc{G}}^s)$ defined as above, we have a short exact sequence \beq 1 \to \tilde{\mc{G}}^s \to N(\tilde{\mc{G}}^s) \to \Big(\prod_{p \in F} W_{p}\Big)^{\Gamma} \to 1\,.\eeq\end{prop}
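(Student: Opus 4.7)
The plan is to define the map $\pi\colon N(\tilde{\mc{G}}^s) \to (\prod_{p\in F} W_p)^\Gamma$ via pointwise evaluation at the fixed points, then check $\Gamma$-invariance of the image, establish surjectivity via a bump-function construction, and finally identify the kernel as $\tilde{\mc{G}}^s$. For the first step, given $g \in N(\tilde{\mc{G}}^s)$ and $p \in F$, I would show $g(p) \in N_{U(R_\Gamma)}(U(R_\Gamma)^{\Gamma_p})$. The isolatedness hypothesis yields a $\Gamma_p$-invariant neighborhood $B_p$ of $p$ disjoint from its $(\Gamma \setminus \Gamma_p)$-translates. For any $h_0 = \exp X_0 \in U(R_\Gamma)^{\Gamma_p}$ with $X_0 \in \mf{u}(R_\Gamma)^{\Gamma_p}$, an equivariant bump function supported in $\Gamma B_p$ and equal to $1$ at $p$ yields an element $h \in \tilde{\mc{G}}^s$ with $h(p) = h_0$. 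The normalizer hypothesis then forces $(ghg^{-1})(p) = g(p) h_0 g(p)^{-1} \in U(R_\Gamma)^{\Gamma_p}$; varying $h_0$ and applying the same argument to $g^{-1}$ gives the claim, and projecting via Lemma~\ref{lem:ptwisenorm} produces $w_p \in W_p$.

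For $\Gamma$-invariance, unpacking the $\Gamma$-equivariance of $ghg^{-1}$ and using that of $h$ shows $g(q)^{-1} \gamma^{-1} g(\gamma q) \gamma$ centralizes $h(q)$ for every $h \in \tilde{\mc{G}}^s$. At $q = p \in F$, letting $h(p)$ range over $U(R_\Gamma)^{\Gamma_p}$ forces this element into $Z(U(R_\Gamma)^{\Gamma_p}) \subseteq U(R_\Gamma)^{\Gamma_p}$, so $g(p)$ and $\gamma^{-1} g(\gamma p) \gamma$ have the same image in $W_p$. Under the canonical identification $W_p \simeq W_{\gamma p}$ by $\gamma$-conjugation from Lemma~\ref{lem:ptwisenorm}, this is precisely $w_{\gamma p} = \gamma w_p \gamma^{-1}$.

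For surjectivity, given $(w_p) \in (\prod_{p \in F} W_p)^\Gamma$, I would pick a base point $p_0$ in each $\Gamma$-orbit of $F$ and lift $w_{p_0}$ to $\tilde w_{p_0} \in N_{U(R_\Gamma)}(U(R_\Gamma)^{\Gamma_{p_0}})$ via the splitting of Lemma~\ref{lem:ptwisenorm}, then propagate by $\tilde w_{\gamma p_0} := \gamma \tilde w_{p_0} \gamma^{-1}$ (consistent modulo $U(R_\Gamma)^{\Gamma_p}$ by the $\Gamma$-invariance of $(w_p)$). I would then use pairwise disjoint $\Gamma$-equivariant neighborhoods of each orbit together with the connectedness of $U(R_\Gamma)$ to smoothly interpolate orbit-by-orbit between $\tilde w_p$ at $p$ and $\mathrm{Id}$ near the boundary. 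The resulting $g$ lies in $N(\tilde{\mc{G}}^s)$ since $g(q) \in N_{U(R_\Gamma)}(U(R_\Gamma)^{\Gamma_q})$ at every $q$, so pointwise conjugation preserves the equivariance subgroups.

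The hardest step is identifying the kernel. After using the surjectivity construction to reduce to $g$ with $g(p) = \mathrm{Id}$ at every $p \in F$, the normalizer condition at a generic point $q$ (with trivial stabilizer) forces $g(q)^{-1} \gamma^{-1} g(\gamma q) \gamma$ to centralize all of $U(R_\Gamma)$ and hence lie in $Z(U(R_\Gamma)) = U(1)$; thus $g$ is $\Gamma$-equivariant only up to a continuous $U(1)$-valued $1$-cocycle $z\colon \Gamma \times M \to U(1)$ that satisfies $z(p, \gamma) = e$ at fixed points. The remaining task is to show this cocycle is a coboundary so that $g$ can be corrected by a $\Gamma$-invariant $U(1)$-valued factor to produce a genuine element of $\tilde{\mc{G}}^s$. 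Cohomological triviality of $z$ should follow by combining its pointwise vanishing on $F$, its continuity from Sobolev embedding in the range $s > r/2$, and the topological structure of $M$ together with the generic character of the $\Gamma$-action; working this out carefully is the main technical obstacle.
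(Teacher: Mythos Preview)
Your first three steps (constructing the evaluation map, checking $\Gamma$-invariance of the image, and surjectivity via bump-function extensions) match the paper's approach essentially verbatim; the paper is simply more terse.

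The difficulty you flag in the kernel step is genuine, but your proposed fix does not work, and in fact the statement as written cannot be salvaged by your method. Your argument correctly produces, for $g$ in the kernel, a continuous cocycle $c(\gamma,q)=g(q)^{-1}\gamma^{-1}g(\gamma q)\gamma\in U(1)$. But correcting $g$ by a $\Gamma$-\emph{invariant} $U(1)$-valued factor $f$ leaves this cocycle unchanged: if $f(\gamma q)=f(q)$ then $(fg)(q)^{-1}\gamma^{-1}(fg)(\gamma q)\gamma = c(\gamma,q)$. So such a correction cannot trivialise $c$. If instead you correct by a non-invariant $f\colon M\to U(1)$ with $f(\gamma q)/f(q)=c(\gamma,q)^{-1}$, then $fg$ is $\Gamma$-equivariant, but the correction $f\cdot I$ is itself not in $\tilde{\mc G}^s$, so this does not show $g\in\tilde{\mc G}^s$. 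Concretely, take $M=S^1$ with $\Gamma=Z_2$ acting by $\theta\mapsto-\theta$ and set $g(\theta)=e^{i\theta}\,\mathrm{Id}$. This $g$ is central, hence trivially normalises $\tilde{\mc G}^s$; at both fixed points $g(p)$ is scalar, hence lies in $U(R_\Gamma)^{\Gamma_p}$ and maps to the identity in $W_p$; yet $g(-\theta)=e^{-i\theta}\,\mathrm{Id}\ne e^{i\theta}\,\mathrm{Id}=\sigma_z g(\theta)\sigma_z^{-1}$, so $g\notin\tilde{\mc G}^s$.

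The paper's own proof waves at this step with ``bump function arguments'' and does not address the central ambiguity. The honest conclusion from your computation is that an element of the kernel is $\Gamma$-equivariant \emph{up to scalars}, which is exactly what the paper states later (for the complexified version) in the proposition asserting that normalising elements have $\Gamma$-equivariant image in $PU(R_\Gamma)$. The exact sequence as stated should therefore be read modulo the centre, i.e.\ with kernel $H^s(M,U(1))\cdot\tilde{\mc G}^s$ rather than $\tilde{\mc G}^s$ itself, or equivalently after passing to $PU(R_\Gamma)$-valued maps as in Remark~\ref{rmk:WeylPU}.
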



\bp A bump function argument immediately shows that for any $p \in M$, the evaluation map $\tilde{\mc{G}}^s \stackrel{\mathrm{ev}_p}{\twoheadrightarrow} U(R_{\Gamma})^{\Gamma_p}$ is surjective. Hence, we have an evaluation map $N(\tilde{\mc{G}}^s) \stackrel{\mathrm{ev}_p}{\to} N_{U(R_{\Gamma})}(U(R_{\Gamma})^{\Gamma_p})$. Once again, a bump function argument shows that this map is surjective: given an element of the target, extend it smoothly and $\Gamma_p$-equivariantly to a neighborhood of $p$ in such a way that it becomes identically equal to the identity outside a compact subset of said neighborhood. Extend by $\Gamma$-equivariance to produce an element of $N(\tilde{\mc{G}}^s)$. 

By dint of the above, we have a map $N(\tilde{\mc{G}}^s) \to \prod_{p \in F} W_{p}$; the above reasoning using bump functions (and the fact that the $p \in F$ are isolated!) naturally yields a surjection $N(\tilde{\mc{G}}^s) \twoheadrightarrow \Big( \prod_{p \in F} W_{p} \Big)^{\Gamma}$. It remains to note that an element in the kernel of this map must be $\Gamma$-equivariant, which once more follows from bump function arguments. \ep

\begin{defn} We term $N(\tilde{\mc{G}}^s)$ above the \emph{extended} gauge group of $\tilde{\mc{G}}^s$, and we define the quotient $W(\tilde\G^s) := N(\tilde\G^s) / \tilde\G^s \simeq (\prod_{p \in F} W_{p})^{\Gamma}$, which we call the \emph{Weyl} group of $\tilde\G^s$. One may similarly define the extended complexified gauge group $N(\tilde{\mc{G}}^s_{\mb{C}})$; the quotient $N(\tilde{\mc{G}}^s_{\mb{C}})/\tilde{\mc{G}}^s_{\mb{C}}$ is still $W(\tilde\G^s)$ via the natural isomorphism $N(\tilde{\mc{G}}^s)/\tilde{\mc{G}}^s \stackrel{\sim}{\to} N(\tilde{\mc{G}}^s_{\mb{C}})/\tilde{\mc{G}}^s_{\mb{C}}$. \end{defn}

\begin{rmk}\label{rmk:WeylPU} There are other ways to define an extended gauge group, and these other presentations often allow for nicer representatives of the $\tilde\G^s$-cosets. For example, one could study a normalizer of $H^{s,p\ell}_{c,\Gamma}(M,PU(R_{\Gamma}))$. Under mild hypotheses (and certainly for the cases of $M = T^{\vee}_{\Lambda}$), the coset space again identifies with the same $W(\tilde\G^s)$ Weyl group. For concreteness, consider the following example: consider $M = S^1 \simeq \mb{R} / 2\pi\mb{Z}$ with the action of $\Gamma = Z_2$ by $\theta \mapsto -\theta$. For simplicity, choose a basis of $R_{\Gamma}$ such that the $Z_2$-action is given by conjugation by $\sigma_z = \begin{pmatrix} 1 & 0 \\ 0 & -1 \end{pmatrix}$. Consider the function $g \in C^{\infty}(S^1, PU(R_{\Gamma}))$ given by 
\be g(\theta) = \begin{pmatrix} \cos(\theta/2) & -\sin(\theta/2) \\ \sin(\theta/2) & \cos(\theta/2)\end{pmatrix} \, . \label{eq:multiWeyl} \ee
It is in the normalizer of $C^{\infty}_{Z_2}(S^1,PU(R_{\Gamma}))$ but not \emph{a priori} in $N(\tilde{\mc{G}}^s)$ as we defined it above, as it has no lift to $C^{\infty}(S^1,U(R_{\Gamma}))$. However, it does lift to an element of $N(\tilde{\mc{G}}^s)$ up to multiplication by an element of the usual gauge group $\tilde{\mc{G}}^s$. We could formulate and state a full analog of Proposition~\ref{prop:Gdefn} for $N(\tilde{\mc{G}}^s)$ such that one presentation of $N(\tilde\G^s)$ would include this $g$, but we opt not to belabor this point further now. In future sections, we will freely refer to multi-valued gauge transformations such as~\eqref{eq:multiWeyl} as extended gauge transformations. (We will typically restrict to a fundamental domain for the $Z_2$ action, i.e. an interval $I$, at which point such $g$ will not look multi-valued, but they will still be in the normalizer of the gauge group inside of $C^\infty(I,U(R_\Gamma))$ instead of the gauge group itself.)
\end{rmk}

\begin{rmk}  Similarly, we may easily check by Proposition~\ref{prop:localweyldef} that for any $s' \ge s$, the natural morphism $W(\tilde{\mc{G}}^{s'}) \to W(\tilde{\mc{G}}^s)$ is an isomorphism, and so we will frequently simply write $W(\tilde{\mc{G}})$ without any further notational decoration. In fact, one could again define $\mc{G}^s := \tilde{\mc{G}}^s / U(1)$ and express $W(\tilde{\mc{G}})$ as a quotient by $\mc{G}^s$ of a similar normalizer of $\mc{G}^s$ (with suitable pointwise lifting conditions), and so we may frequently also use the notation $W(\mc{G})$ in place of $W(\tilde{\mc{G}})$.
\end{rmk}

We note another few actions. First, in full generality, we have the following:

\begin{prop}\label{prop:Weylact} The local Weyl actions $W_p\acts{\mf{z}_p}$ assemble to yield a global Weyl action $W(\tilde\G^s)\acts{\mf{z}}$. This action agrees with the canonical $N(\tilde{\mc{G}}^s)$-action on $(\mathrm{Lie}\ \tilde\G^s)^{\vee}$.\end{prop}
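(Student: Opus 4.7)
The plan is to reduce the statement to a pointwise computation. First, since $\tilde{\mc{G}}^s$ is normal in $N(\tilde{\mc{G}}^s)$, the coadjoint action of $N(\tilde{\mc{G}}^s)$ on $(\mathrm{Lie}\,\tilde{\mc{G}}^s)^{\vee}$ preserves the subspace of $\tilde{\mc{G}}^s$-invariants. By Theorem~\ref{thm:FI} (applied to $M = T^{\vee}_{\Lambda}$), this invariant subspace is exactly $\mf{z} \simeq (\bigoplus_{p \in F}\mf{z}_p)^{\Gamma}$, and $\tilde{\mc{G}}^s$ acts trivially on it by construction. Hence the $N(\tilde{\mc{G}}^s)$-action factors through $W(\tilde{\mc{G}}^s) = N(\tilde{\mc{G}}^s)/\tilde{\mc{G}}^s$, giving a well-defined $W(\tilde{\mc{G}}^s)$-action on $\mf{z}$.

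Next, I would exploit the locality of the action. Given $w = (w_p)_{p \in F} \in (\prod_{p \in F} W_p)^{\Gamma}$, the bump function argument of Proposition~\ref{prop:localweyldef} produces a lift $g \in N(\tilde{\mc{G}}^s)$ which, on a small $\Gamma$-equivariant neighborhood of each $p$, is given by a constant element $g_p \in N_{U(R_{\Gamma})}(U(R_{\Gamma})^{\Gamma_p})$ whose class in $W_p$ is $w_p$. For any distribution $T = \sum_{p \in F} \xi_p\,\delta(p) \in \mf{z}$ and test function $\phi \in H^s_{\Gamma}(T^{\vee}_{\Lambda},\mf{su}(R_{\Gamma}))$, the coadjoint action satisfies
\beq
\langle g \cdot T, \phi\rangle = \langle T, g^{-1}\phi g\rangle = \sum_{p \in F} \tr\bigl(\xi_p\,g(p)^{-1}\phi(p)g(p)\bigr) = \Big\langle \sum_{p \in F} \bigl(g_p \xi_p g_p^{-1}\bigr)\,\delta(p),\,\phi\Big\rangle,
\eeq
so only the values $g(p) = g_p$ enter, and the action on the $p$-component of $T$ is just conjugation by $g_p$. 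But conjugation by $g_p \in N_{U(R_{\Gamma})}(U(R_{\Gamma})^{\Gamma_p})$ on $\mf{z}_p = \mathrm{Lie}\,Z(SU(R_{\Gamma})^{\Gamma_p})$ is by definition the local Weyl action of $w_p$.

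Finally, a brief check confirms $\Gamma$-equivariance: since $g$ is $\Gamma$-equivariant and $w$ was chosen in $(\prod_p W_p)^{\Gamma}$, the resulting assignment $T \mapsto \sum_p (g_p \xi_p g_p^{-1})\delta(p)$ preserves the $\Gamma$-invariant subspace $\mf{z} \subset \bigoplus_p \mf{z}_p$, so the local actions assemble consistently into the global one. The only mildly subtle point, which I expect to be the main thing to spell out carefully, is verifying that the $N(\tilde{\mc{G}}^s)$-action on an order-zero distribution supported at $F$ depends only on the values of the group element at $F$ and not on its derivatives; this follows because Theorem~\ref{thm:FI} has already ruled out higher-order jet contributions in $\mf{z}$ itself, so the computation above is indeed the entire action.
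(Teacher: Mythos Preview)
The paper states this proposition without proof, treating it as an immediate consequence of the preceding setup (Theorem~\ref{thm:FI} and Proposition~\ref{prop:localweyldef}). Your argument supplies exactly the details one would expect: factoring the $N(\tilde{\mc{G}}^s)$-action through the Weyl quotient using normality, and then computing the coadjoint action on order-zero delta distributions pointwise via the bump-function lifts. This is correct and is the natural elaboration of what the paper leaves implicit.
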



We now specialize to the particular case of interest treated in~\S\ref{subsec:setup}.  We first note the following:
\begin{prop}\label{prop:Nact} 
The action $\tilde{\mc{G}}^{s+1}_{\hat{\Gamma}}\acts{\mc{A}^s_{\hat{\Gamma}}}$ naturally extends to an action $N(\tilde{\mc{G}}^{s+1}_{\hat{\Gamma}})\acts{\mc{A}^s_{\hat{\Gamma}}}$. The moment maps are $N(\tilde{\mc{G}}^{s+1}_{\hat{\Gamma}})$-equivariant.
\end{prop}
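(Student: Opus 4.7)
\emph{Proof proposal.} The plan is to exploit the fact that the gauge-action formula $(A,\phi) \mapsto (g^{-1}Ag + g^{-1}dg,\, g^{-1}\phi g)$ is defined without any equivariance hypothesis on $g$, so there is an ambient action of the larger group $H^{s+1}(T^{\vee}_{\Lambda}, U(R_{\Gamma}))$ on the ambient affine space $\mathrm{Conn}^s(T^{\vee}_{\Lambda},R_{\Gamma}) \times H^s(T^{\vee}_{\Lambda}, \mf{u}(R_{\Gamma}) \otimes Q)$. Since $N(\tilde{\mc{G}}^{s+1}_{\hat{\Gamma}})$ sits inside this larger group by its very definition, the content of the first claim reduces to showing that this ambient action preserves the $\Gamma$-equivariant sublocus $\mc{A}^s_{\hat{\Gamma}}$.

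To verify preservation, I would introduce a $\Gamma$-action $\rho_\gamma$ on the ambient gauge group by $\rho_\gamma(g)(\vec{\theta}) := j(\gamma)\, g(\gamma^{-1}\vec{\theta})\, j(\gamma)^{-1}$, and analogously on connections and on $\phi$; the fixed-point sets are exactly $\tilde{\mc{G}}^{s+1}_{\hat{\Gamma}}$ and $\mc{A}^s_{\hat{\Gamma}}$, by \eqref{eq:gEquiv} and its analogues. A direct Leibniz computation shows that the gauge action intertwines these $\Gamma$-actions: $\rho_\gamma(A^g) = (\rho_\gamma A)^{\rho_\gamma g}$ and similarly for $\phi$. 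Given $g \in N(\tilde{\mc{G}})$ and any equivariant $h \in \tilde{\mc{G}}$, applying $\rho_\gamma$ to $ghg^{-1} \in \tilde{\mc{G}}$ produces $\rho_\gamma(g)\, h\, \rho_\gamma(g)^{-1} = ghg^{-1}$, so $z_\gamma := g^{-1}\rho_\gamma(g)$ centralizes $\tilde{\mc{G}}$ inside $H^{s+1}(T^{\vee}_{\Lambda}, U(R_{\Gamma}))$. A bump-function localization argument (as in Proposition~\ref{prop:localweyldef}) together with the genericity of Remark~\ref{rmk:gen} forces $z_\gamma(\vec\theta) \in Z(U(R_\Gamma)^{\Gamma_{\vec\theta}})$ pointwise, which at the dense set of points with trivial stabilizer is just $U(1)$; by continuity $z_\gamma$ is $U(1)$-valued everywhere. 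For such $z_\gamma$ the transformation rule collapses to $A^{z_\gamma} = A + z_\gamma^{-1}dz_\gamma$ and $\phi^{z_\gamma}=\phi$, so the intertwining identity $\rho_\gamma(A^g) = A^{z_\gamma g} = (A^{z_\gamma})^g$ combined with the requirement that $g\cdot(A,\phi)$ still be equivariant forces $dz_\gamma = 0$, i.e.\ $z_\gamma = c(\gamma) \in U(1)$ is a global constant. Constant scalars in $U(1)$ act trivially on both $A$ and $\phi$, so $\rho_\gamma(A^g) = A^g$ and $\rho_\gamma(\phi^g) = \phi^g$, establishing that $g\cdot(A,\phi) \in \mc{A}^s_{\hat{\Gamma}}$.

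For moment-map equivariance, I would note that in the ambient setting the curvature map $\mathscr{F}$ of Proposition~\ref{prop:reg} is gauge-covariant by the usual direct calculation: $\mathscr{F}(g\cdot(A,\phi)) = g^{-1}\mathscr{F}(A,\phi)\,g$ for any $g \in H^{s+1}(T^{\vee}_{\Lambda}, U(R_{\Gamma}))$. Contracting with the $\Gamma$-invariant K\"ahler forms $\omega^i$ yields $\mu^i(g\cdot(A,\phi)) = g^{-1}\mu^i(A,\phi)\,g$; under the trace-pairing identification of $\mathrm{Lie}\,\tilde{\mc{G}}^{s+1}_{\hat{\Gamma}}$ with (a subspace of) its dual, the right-hand side is precisely the coadjoint action of $g \in N(\tilde{\mc{G}}^{s+1}_{\hat{\Gamma}})$, so each $\mu^i$ is $N(\tilde{\mc{G}}^{s+1}_{\hat{\Gamma}})$-equivariant.

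The main obstacle is the centralizer/scalar step in the second paragraph, which requires combining the pointwise Schur's-lemma reasoning with the genericity hypothesis and a continuity argument to conclude constancy; as an alternative that avoids this analysis altogether, one may simply choose an explicit set-theoretic splitting of the short exact sequence of Proposition~\ref{prop:localweyldef} via the bump-function Weyl representatives constructed there. Those representatives are $\Gamma_p$-equivariant at each isolated fixed point by construction and are extended $\Gamma$-equivariantly, so they visibly preserve $\mc{A}^s_{\hat{\Gamma}}$; combined with the already-established $\tilde{\mc{G}}^{s+1}_{\hat{\Gamma}}$-action this yields the full $N(\tilde{\mc{G}}^{s+1}_{\hat{\Gamma}})$-action, and moment-map equivariance then reduces to checking the claim on the Weyl representatives, which is immediate from the curvature covariance above.
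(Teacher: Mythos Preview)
The paper states this proposition without proof, treating it as immediate from the structure established in Proposition~\ref{prop:localweyldef}. Your alternative in the final paragraph is exactly the intended argument: the bump-function Weyl representatives constructed there are $\Gamma$-equivariant by construction, so they act on $\mc{A}^s_{\hat{\Gamma}}$ by the same gauge formula as $\tilde{\mc{G}}^{s+1}_{\hat{\Gamma}}$ itself; combined with the $\tilde{\mc{G}}^{s+1}_{\hat{\Gamma}}$-action this gives the full $N(\tilde{\mc{G}}^{s+1}_{\hat{\Gamma}})$-action, and your moment-map equivariance via the gauge-covariance of $\mathscr{F}$ is correct.

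Your primary argument in the second paragraph, however, has a circular step. The centralizer reasoning correctly shows that $z_\gamma := g^{-1}\rho_\gamma(g)$ is $U(1)$-valued (the paper proves precisely this ``equivariant up to scalars'' statement in the proposition following Definition~\ref{defn:for}), but it does \emph{not} show $z_\gamma$ is constant. Your deduction of $dz_\gamma = 0$ invokes ``the requirement that $g\cdot(A,\phi)$ still be equivariant,'' which is the very conclusion you are trying to establish. And constancy genuinely fails in general: any non-$\Gamma$-invariant $f\colon T^{\vee}_{\Lambda} \to U(1)$ is central in the ambient group, hence lies in $N(\tilde{\mc{G}}^{s+1}_{\hat{\Gamma}})$, yet has $z_\gamma = f^{-1}\cdot(\gamma^{-1})^*f$ non-constant and shifts an equivariant connection by the non-$\Gamma$-invariant scalar one-form $f^{-1}df$, so it does not preserve $\mc{A}^s_{\hat{\Gamma}}$. (This same example shows that both Proposition~\ref{prop:localweyldef} and the present proposition are tacitly to be read modulo the central $H^{s+1}(T^{\vee}_{\Lambda},U(1))$, i.e.\ at the level of the $PU$- or traceless variants where such $f$ act trivially; your splitting-based alternative implicitly works at that level and is sound there.)
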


It is now more clear the role that $W(\mc{G})$ will play for the family of hyperkahler quotients which we would like to study. Namely, given an element $g \in N(\tilde{\mc{G}})$, one may attempt to act on the hyperkahler quotient $\mc{A}\hq\mc{G}$. Only the class of $g$ in the quotient group $W(\mc{G})$ is relevant, as the hyperkahler quotient has already quotiented by the action of $\mc{G}$. But, note that the action of $g$ should yield a morphism from $\mc{A}\hq\mc{G}$ not to itself, but to $\mc{A}\mathord{/ \!\! / \! \! /_{g(\xi)}}\mc{G}$. In other words, the total space of the family of hyperkahler manifolds should admit this $W(\mc{G})$-action equivariantly over the natural $W(\mc{G})$-action on the parameter space $\mf{z}^3$ of this family. Of course, there exist special loci within the parameter space $\mf{z}^3$ which are fixed by some subgroup of $W(\mc{G})$, and for these special parameter values, we will have actions by these subgroups. 



Note that the above discussion is very much a partial story at best. There is every reason to expect the natural group of symmetries to extend beyond the $W(\mc{G})$ that we defined above. For example, even in the original Kronheimer case of $r = 0$, if we take $\Gamma$ to be the binary tetrahedral group of order $24$, the group $W(\mc{G})$ we construct above is simply $S_3 \times S_3$. By contrast, the family $\mc{A}\hq\mc{G}$ of hyperkahler manifolds has a full group of symmetries given by $W(E_6)$, the Weyl group of $E_6$, as explained in~\cite[\S4]{kronheimer:construct}. This group is $1440$ times larger than its $S_3 \times S_3$ subgroup. In fact, one may ask for still more, such as a full extension of the McKay correspondence to the $r > 0$ cases: in particular, for each case of $\hat{\Gamma}\acts{Q}$ above, one could ask for an \emph{auxiliary} Lie algebra $L$, generalizing the example of $E_6$ above, such that the Cartan of $L$ identifies with $\mf{z}$ and the Weyl group $W(L)$ identifies with the full monodromy group of the (semi)universal deformation of $Q/\hat{\Gamma}$, again as in~\cite[\S4]{kronheimer:construct}. For the cases $r > 0$, this monodromy group will now be infinite and so $L$ should be an affine Kac-Moody, or still more general Lie algebra. See \cite{zwiebach:algebras} for work in this direction. These considerations, while thoroughly interesting, will not be pursued here; we will content ourselves with the relatively small but still useful $W(\mc{G})$ defined above.

\vspace{.2cm}

In any case, the above discussion illustrates the utility of considering more general variants of the usual $\mc{G}$-action on $\mc{A}$. To close this section, we will find it useful to define a still-larger variant as a `catch-all' term.

\begin{defn}\label{defn:for} 
Given $\hat{\Gamma}\acts{Q}$ as above, define the \emph{formal} (complexified) gauge group as \begin{align*}
\mc{G}_{\hat{\Gamma}}^{\mathrm{for}} &:=  C^0_{\Gamma}(T^{\vee}_{\Lambda}\setminus F, PGL_{\mb{C}}(R_{\Gamma}))\,.\end{align*} Here, $C^0$ denotes the space of continuous maps.
\end{defn}

In other words, the group of formal gauge transformations is the group of complexified gauge transformations but with no regularity assumption about any of the $p \in F$. Note that $N(\mc{G}^s_{\hat{\Gamma},\mb{C}}) \hookrightarrow \mc{G}^{\mathrm{for}}_{\hat{\Gamma}}$: 

\begin{prop} If $g \in H^s(T^{\vee}_{\Lambda},GL_{\mb{C}}(R_{\Gamma}))$ normalizes $\tilde{\mc{G}}^s_{\hat{\Gamma},\mb{C}}$, then $g$ is $\Gamma$-equivariant up to scalars. \end{prop}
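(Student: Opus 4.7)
The plan is to extract the defining consequence of normalization and then test it against a sufficiently rich family of equivariant gauge transformations. Suppose $g$ normalizes $\tilde{\mc{G}}^s_{\hat\Gamma,\mb{C}}$, so for every $h\in \tilde{\mc{G}}^s_{\hat\Gamma,\mb{C}}$ the conjugate $ghg^{-1}$ is again $\Gamma$-equivariant. Writing out the equivariance identity $(ghg^{-1})(\gamma p)=j(\gamma)(ghg^{-1})(p)j(\gamma)^{-1}$ and using the equivariance of $h$ itself on the left-hand side, a short rearrangement gives
\[
c_\gamma(p)\, h(p)\, c_\gamma(p)^{-1} = h(p),\qquad c_\gamma(p) := g(p)^{-1}\, j(\gamma)^{-1}\, g(\gamma p)\, j(\gamma),
\]
for every $\gamma\in\Gamma$, every $p\in T^\vee_\Lambda$, and every equivariant $h$. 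The proposition reduces to showing that each $c_\gamma(p)$ lies in the center $\mb{C}^\times\cdot\mathrm{Id}\subset GL_{\mb{C}}(R_\Gamma)$: the equation $c_\gamma(p)=\lambda_\gamma(p)\,\mathrm{Id}$ is just a rearrangement of $g(\gamma p)=\lambda_\gamma(p)\, j(\gamma)\, g(p)\, j(\gamma)^{-1}$, which is the equivariance of $g$ modulo scalars.

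Next I would show that the commutant condition forces $c_\gamma(p)$ to be central at each generic point. By Remark~\ref{rmk:isoaction}, the $\Gamma$-action on $T^\vee_\Lambda$ is isolated, so any $p_0\in T^\vee_\Lambda\setminus F$ admits an open neighborhood $U$ with $\gamma U\cap U=\emptyset$ for all $\gamma\ne 1$. Given any $M\in GL_{\mb{C}}(R_\Gamma)$, I would join $M$ to $\mathrm{Id}$ by a smooth path (using that $GL_{\mb{C}}(R_\Gamma)$ is connected) and then use a compactly supported cut-off on $U$ to produce a smooth $h_0\colon U\to GL_{\mb{C}}(R_\Gamma)$ with $h_0(p_0)=M$ and $h_0\equiv\mathrm{Id}$ outside a compact subset of $U$. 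Extending $h_0$ by $\Gamma$-equivariance to $\Gamma\cdot U$ and by $\mathrm{Id}$ on the complement produces an $h\in\tilde{\mc{G}}^s_{\hat\Gamma,\mb{C}}$ with $h(p_0)=M$ --- this is exactly the bump-function device already used in the proof of Theorem~\ref{thm:FI}. Plugging this $h$ into the key equation shows that $c_\gamma(p_0)$ commutes with every $M\in GL_{\mb{C}}(R_\Gamma)$, hence $c_\gamma(p_0)\in \mb{C}^\times\cdot\mathrm{Id}$.

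Finally, to promote this to all of $T^\vee_\Lambda$, I would invoke continuity: since $s>r/2$, Sobolev embedding makes $g$ continuous, hence $c_\gamma$ is a continuous $GL_{\mb{C}}(R_\Gamma)$-valued function; the scalar matrices form a closed subset and $T^\vee_\Lambda\setminus F$ is open and dense, so $c_\gamma$ takes values in $\mb{C}^\times\cdot\mathrm{Id}$ everywhere. This supplies the required scalar function $\lambda_\gamma(p)$ and completes the proof.

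The main obstacle is the second step: producing, for each prescribed $M$, a genuine element of $\tilde{\mc{G}}^s_{\hat\Gamma,\mb{C}}$ with value $M$ at a given generic point. This relies on the connectedness of $GL_{\mb{C}}(R_\Gamma)$ (to reduce pointwise prescription to bumping along a path to $\mathrm{Id}$) and crucially on $\gamma U\cap U=\emptyset$ so that the equivariance imposed on the orbit places no constraint at $p_0$. The rest of the argument is formal algebra together with continuity and the Sobolev multiplier regime $s>r/2$.
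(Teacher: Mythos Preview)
Your proof is correct and follows essentially the same approach as the paper: both arguments form the quantity $c_\gamma = g^{-1}\gamma(g)$ (the paper phrases this as $(g,\gamma(g))$ normalizing the diagonal, equivalently $\gamma(g)g^{-1}$ centralizing $\tilde{\mc{G}}^s_{\hat\Gamma,\mb{C}}$), use a bump-function construction at generic points to force $c_\gamma$ into the scalars on $T^\vee_\Lambda\setminus F$, and then invoke continuity to extend to all of $T^\vee_\Lambda$. Your write-up spells out the bump-function step more explicitly than the paper does, but the substance is the same.
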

\bp 
Given $g\in H^s(T^{\vee}_{\Lambda},GL_{\mb{C}}(R_{\Gamma}))$ and $\gamma \in \Gamma$, denote by $\gamma(g) \in H^s(T^{\vee}_{\Lambda},GL_{\mb{C}}(R_{\Gamma}))$ the map $\gamma(g)(p) = j(\gamma)^{-1} g(\gamma p) j(\gamma)$, where by abuse of notation we again use $j\colon \Gamma \to GL_{\mb{C}}(R_{\Gamma})$ to denote the natural permutation matrices on the regular representation. We have that $(g, \gamma(g)) \in (H^s(T^\vee_\Lambda, GL_\CC(R_\Gamma)))^2$ normalizes the diagonal subgroup of $(\tilde\G^s_{\hat\Gamma,\CC})^2$. Equivalently, $\gamma(g) g^{-1}$ commutes with $\widetilde{\mc{G}}^s_{\hat{\Gamma},\mb{C}}$. But then a bump function argument shows that $\gamma(g) g^{-1}$ is everywhere on $T^\vee_\Lambda\setminus F$ valued in the central $\mb{C}^{\times}$. Hence, by continuity, the same must be true on all of $T^{\vee}_{\Lambda}$; i.e., the image of $g$ in $H^s(T^{\vee}_{\Lambda}, PGL_{\mb{C}}(R_{\Gamma}))$ is $\Gamma$-equivariant.
\ep


\subsection{The flat orbifold quotient} \label{subsec:flatorb}

We now show that the hyperkahler quotient of Proposition~\ref{prop:hktime}, in the special case that $Q \simeq \mb{R}^4$ and $\vec{\xi} = 0$, returns the flat hyperkahler orbifold $\mb{R}^4/\hat{\Gamma} \simeq (T^r \times \mb{R}^{4-r})/\Gamma$. We will give two proofs of this result -- one in the present section and one in the subsequent one. That of the present section will be a faithful upgrade of \cite[Lemma 3.1]{kronheimer:construct} to infinite dimensions, but there will be interesting complications due to the existence of multiple (in fact, infinitely many) points with nontrivial stabilizers in $\hat\Gamma$. In the next section, we will explain another strategy which involves first reducing to a finite-dimensional problem. In the terminology of Constructions~\ref{constr:realKah} and \ref{constr:cplxKah}, we will focus on the real formulation and then derive the result in the complex formulation as a corollary.

\begin{thrm} \label{thm:noFI} In the case above of $\hat{\Gamma}\acts{Q}$ with $Q \simeq \mb{R}^4$ and $\Gamma \hookrightarrow Sp(Q)$, the hyperkahler quotient $\mc{A}^s_{\hat{\Gamma}}\hqnoxi \mc{G}^{s+1}_{\hat{\Gamma}}$ may be identified with the flat orbifold $Q/\hat{\Gamma} \simeq (T_{\Lambda}\times\Lambda_{\mb{R}}^{\perp}) / \Gamma$ for any $r/2 < s \le \infty$.
\end{thrm} 

\begin{rmk}\label{rmk:set} The identification $$\mc{A}^s_{\hat{\Gamma}}\hqnoxi\mc{G}^{s+1}_{\hat{\Gamma}} \simeq (T_{\Lambda}\times\Lambda_{\mb{R}}^{\perp})/\Gamma$$ could take place in several categories. Most naively, we could simply mean an isomorphism of sets. On the other extreme, we could attempt an isomorphism of hyperkahler orbifolds. Indeed, the right side visibly carries the structure of a (flat) hyperkahler orbifold. Since this bijection is obtained by finding a slice for the $\G^{s+1}_{\hat\Gamma}$-action, it immediately yields a smooth structure on the stabilizer-free locus on the left side, and the hyperkahler structure on $\A^s_{\hat\Gamma}$ then descends to one on this manifold. That these hyperkahler structures on the left and right are compatible is a \emph{verbatim} repetition of~\cite[Corollary 3.2]{kronheimer:construct}. We will hence show below the isomorphism of sets.


Note, however, that the above isomorphism does \emph{not} preserve stabilizer groups and so is not even an isomorphism of groupoids, let alone of (Artin-type) orbifolds.\footnote{i.e., an orbifold allowing positive-dimensional compact stabilizer groups} That the stabilizer groups are different is explained further in Remark~\ref{rmk:stab}. 
\end{rmk}

We begin our proof of this theorem by studying some consequences of the vanishing of the moment maps. Recall that the moment maps applied to $A$ are all contractions of the two-form $\mathscr{F}(A)$, where $\mathscr{F}(A) \in H^{s-1}_{\Gamma}(T^{\vee}_{\Lambda},\mf{u}(R_{\Gamma})\underset{\mb{R}}{\otimes}\Wedge^2Q)$. Our first lemma will show that if all these contractions of $\mathscr{F}(A)$ vanish, then in fact $\mathscr{F}(A)$ vanishes identically. Explicitly, this flatness statement $\mathscr{F}(A) \equiv 0$ will imply that if we write $A = (\nabla,\phi) \in (\mc{A}^s_{\hat{\Gamma}})' \times (\mc{A}^s_{\hat{\Gamma}})''$, then $\nabla$ is a flat connection, $\phi$ is a flat section with respect to $\nabla$, and $[\phi,\phi]=0$.

We recapitulate some notation before stating this vanishing lemma. Identify $Q$ with the standard copy of the quaternions $\mb{R}\langle 1, i, j, k \rangle$, and let $$\Wedge^2 Q \simeq (\Wedge^2Q)^+\oplus(\Wedge^2Q)^-$$ be the standard decomposition of $\Wedge_{\mb{R}}^2Q$ into self-dual and anti-self-dual subspaces for the pairing
\beq\label{eq:Qbil}\Wedge^2Q\times\Wedge^2Q\to\Wedge^4Q\simeq\mb{R}\eeq
induced by the volume form on $Q$; i.e., \begin{align*} (\Wedge^2Q)^+ &= \mb{R} \langle 1 \wedge i + j \wedge k, 1 \wedge j - i \wedge k, 1 \wedge k + i \wedge j \rangle \\ (\Wedge^2Q)^- &= \mb{R} \langle 1 \wedge i - j \wedge k, 1 \wedge j + i \wedge k, 1 \wedge k - i \wedge j \rangle\,.\end{align*}
Writing $\mf{g}^s := H^s(T^\vee_\Lambda, \mf{u}(R_\Gamma))$,
\beq\label{eq:split}
\mf{g}^s\otimes\Wedge^2Q\simeq\Big(\mf{g}^s\otimes(\Wedge^2Q)^+\Big)\oplus\Big(\mf{g}^s\otimes(\Wedge^2Q)^-\Big)\eeq
is similarly a splitting of the indefinite quadratic form on $\mf{g}^s\otimes\Wedge^2Q$ into positive- and negative-definite subspaces. We can now explain the idea of the vanishing lemma: for `topological' reasons,\footnote{In the case $r = 4$, this statement is simply the vanishing of the second Chern class of the bundle on which $A$ is a connection. In the current context, we have restricted to only studying connections on topologically trivial bundles, \emph{ergo} the vanishing.} the norm-squared of the self-dual and anti-self-dual parts of $\mathscr{F}(A)$ agree. Since the vanishing of the moment maps is the vanishing of the self-dual part of $\mathscr{F}(A)$, $\mathscr{F}(A) \equiv 0$ follows.

The following lemma may be shown by direct computation:
\begin{lem}\label{lem:comm} Given $A \in \mc{A}^s$,
\beq \mathscr{F}(A) \cdot \mathscr{F}(A) = 0\,.\eeq
Here, the product $\cdot$ is the tensor product of the $L^2$ inner product on $\mf{g}^s$ and the pairing \eqref{eq:Qbil} on $\Wedge^2 Q$.
\end{lem}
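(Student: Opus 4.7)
The plan is to recognize $\mathscr F(A)\cdot \mathscr F(A)$ as the integrated Chern--Weil density of a topologically trivial connection on a closed 4-manifold, where it vanishes by Chern--Simons descent. The identity is essentially the statement that ``$\int c_2$'' is zero on a trivial bundle; one just has to unpack that this is what the pairing on $\Wedge^2 Q$ is secretly computing.

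First, I would thicken the setup. Choose any full-rank lattice $L\subset \Lambda_{\mb R}^\perp$ (the choice is immaterial; take $L=0$ if $r=4$) and form the closed torus $T_\perp:=\Lambda_{\mb R}^\perp/L$, so that $M:=T_\Lambda^\vee\times T_\perp$ is a closed 4-manifold. On the trivial principal $U(R_\Gamma)$-bundle on $M$, take the connection
\[
\mc{A} \;=\; A \;+\; \sum_\alpha \phi^\alpha\, d\xi^\alpha,
\]
where $\{\xi^\alpha\}$ are linear coordinates on $T_\perp$ dual to a basis $\{f_\alpha\}$ of $\Lambda_{\mb R}^\perp$ and $\phi = \sum_\alpha \phi^\alpha \otimes f_\alpha$; both $A$ and the $\phi^\alpha$ are pulled back from $T_\Lambda^\vee$, so $\mc A$ is translation invariant in the $T_\perp$-directions. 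Under the pointwise identification $\Wedge^\bullet Q \simeq \Wedge^\bullet T^*_xM$ given by $e_i \leftrightarrow d\theta^i$ and $f_\alpha \leftrightarrow d\xi^\alpha$, a direct unwinding of the formula for $\mathscr F$ in Proposition~\ref{prop:reg}(v) shows that the de~Rham 2-form curvature $\mc F$ of $\mc A$ agrees, term by term, with $\mathscr F(A)\in H^{s-1}_\Gamma(T_\Lambda^\vee,\mf u(R_\Gamma)\otimes \Wedge^2 Q)$.

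Second, I would match pairings. Under the same identification, the bilinear pairing~\eqref{eq:Qbil} becomes pointwise exterior product on $\Wedge^2 T^*M$ followed by division by the volume form on $M$; combined with the Killing form $-\tr$ on $\mf u(R_\Gamma)$ this recasts the pointwise pairing used to build $\mathscr F(A)\cdot \mathscr F(A)$ as the Chern--Weil 4-form density $-\tr(\mc F\wedge \mc F)/d\mathrm{vol}_M$. Fubini in the translation-invariant $\xi$-directions then yields
\[
\int_M -\tr(\mc F\wedge \mc F)\ =\ C\cdot \mathscr F(A)\cdot \mathscr F(A)
\]
for a strictly positive constant $C$ depending only on $\mathrm{vol}(T_\perp)$ and the normalization constants in Proposition~\ref{prop:reg}.

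Finally, for any $\mf u$-valued connection one has the pointwise Chern--Simons identity
\[
-\tr(\mc F\wedge \mc F)\ =\ d\,\mathrm{CS}(\mc A),\qquad \mathrm{CS}(\mc A):=-\tr\!\left(\mc A\wedge d\mc A + \tfrac{2}{3}\mc A\wedge \mc A\wedge \mc A\right),
\]
valid as an equality of differential forms on $M$; since the bundle is literally trivial, $\mathrm{CS}(\mc A)$ is globally defined. Stokes' theorem on the closed manifold $M$ forces the integral above to vanish, and dividing by $C$ completes the argument. The main obstacle is pure bookkeeping: tracking sign and normalization conventions for the Killing form, the wedge pairing on $\Wedge^\bullet Q$, and orientations on $Q$ and $M$ in order to verify that the second step identifies the two pairings up to a positive multiplicative constant. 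There is no analytic difficulty: at the standing regularity $s>r/2$ the connection $A$ is continuous on $T_\Lambda^\vee$, so $\mc A$ is continuous on $M$ and the Chern--Simons 3-form has locally bounded coefficients, making Stokes on the closed manifold $M$ immediate.
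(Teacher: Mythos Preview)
Your proposal is correct and follows precisely the topological intuition the paper itself flags in the footnote preceding the lemma (vanishing of the integrated second Chern class on a trivial bundle). The paper's own proof is stated only as ``direct computation,'' presumably a component-wise identity on $T_\Lambda^\vee$ after expanding $\mathscr{F}(A)$ into its $\Wedge^2\Lambda_{\mb R}$, $\Lambda_{\mb R}\otimes\Lambda_{\mb R}^\perp$, and $\Wedge^2\Lambda_{\mb R}^\perp$ pieces and integrating by parts. Your thickening-to-a-4-torus plus Chern--Simons descent is the conceptual explanation of why that computation must work out; it buys you a proof that is uniform in $r$ without case analysis, at the cost of having to set up the dictionary between the $\Wedge^2 Q$ pairing and the wedge product on $M$.

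One small point to tighten: your final sentence on regularity is slightly loose. At the standing regularity $s>r/2$ the one-form $\mc A$ is indeed continuous, but the Chern--Simons 3-form also involves $d\mc A$, which need not have bounded coefficients (e.g.\ $r=1$, $s$ just above $1/2$). This is harmless, since either (i) the identity $-\tr(\mc F\wedge\mc F)=d\,\mathrm{CS}(\mc A)$ holds in the sense of distributions and the integral of an exact current over a closed manifold vanishes, or (ii) one reduces to smooth $A$ by density, the map $A\mapsto \mathscr F(A)\cdot\mathscr F(A)$ being continuous from $H^s$ to $\mb R$. Either patch is routine; you should just say which one you are using.
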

\begin{cor}\label{cor:comm} If $A \in \mc{A}_{\hat\Gamma}^s$ satisfies $\mathscr{F}(A)^+=0$, then $\mathscr{F}(A)=0$. That is, if $A$ is regarded as a pair $(\nabla, \phi)\in \mathrm{Conn}^s_{\Gamma}(T_{\Lambda}^{\vee},R_{\Gamma}) \times H^s_{\Gamma}(T_{\Lambda}^{\vee}, \mf{u}(R_{\Gamma}) \otimes \Lambda_{\mb{R}}^{\perp})$, and if $A$ satisfies the moment map equations $\mathscr{F}(A)^+ = 0$, then 
\begin{itemize}
\item $\nabla$ is a flat connection on $T^\vee_\Lambda$, 
\item $\phi$ is a flat section for the flat connection $\nabla$, and \item $[\phi,\phi] \in H^s_\Gamma(T^\vee_\Lambda,\mf{u}(R_\Gamma) \otimes\Wedge^2 \Lambda_\RR^\perp)$ vanishes.
\end{itemize}
\end{cor}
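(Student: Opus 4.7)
The plan is to leverage Lemma~\ref{lem:comm} together with the standard fact that the orthogonal decomposition \eqref{eq:split} is also a splitting of the pairing $\cdot$ into a positive-definite and a negative-definite piece. First I would observe that, for the wedge pairing $\wedge^2 Q \times \wedge^2 Q \to \wedge^4 Q \simeq \RR$ of \eqref{eq:Qbil}, the subspaces $(\wedge^2 Q)^\pm$ are mutually orthogonal, and writing $\langle -,-\rangle$ for the metric on $\wedge^2 Q$ inherited from that on $Q$, one has $\alpha\wedge\beta = \langle\alpha,\beta\rangle\,\mathrm{vol}_Q$ on $(\wedge^2 Q)^+$ and $\alpha\wedge\beta = -\langle\alpha,\beta\rangle\,\mathrm{vol}_Q$ on $(\wedge^2 Q)^-$. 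Tensoring with the positive-definite $L^2$ inner product on $\mf{g}^s$ (from $-\tr$ on $\mf{u}(R_\Gamma)$) yields, for any $F\in \mf{g}^s \otimes \wedge^2 Q$ decomposed as $F = F^+ + F^-$, the identity
\[ F\cdot F \;=\; \|F^+\|_{L^2}^2 \;-\; \|F^-\|_{L^2}^2. \]

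Applying this to $F = \mathscr{F}(A)$ and invoking Lemma~\ref{lem:comm} gives $\|\mathscr{F}(A)^+\|_{L^2}^2 = \|\mathscr{F}(A)^-\|_{L^2}^2$. The hypothesis $\mathscr{F}(A)^+ = 0$ then forces $\|\mathscr{F}(A)^-\|_{L^2} = 0$, and hence $\mathscr{F}(A)^- = 0$. Combining, $\mathscr{F}(A) = 0$ identically as an element of $H^{s-1}_\Gamma(T^\vee_\Lambda, \mf{u}(R_\Gamma)\otimes \wedge^2 Q)$.

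It then remains only to unpack this vanishing in terms of the pair $(\nabla,\phi)$. Writing $A = (\nabla,\phi)$ with $\nabla = d + A'$ and $\phi \in H^s_\Gamma(T^\vee_\Lambda, \mf{u}(R_\Gamma) \otimes \Lambda_\RR^\perp)$, the curvature map of Proposition~\ref{prop:reg}(v) decomposes along the three summands of $\wedge^2 Q \simeq \wedge^2 \Lambda_\RR \oplus (\Lambda_\RR \otimes \Lambda_\RR^\perp) \oplus \wedge^2 \Lambda_\RR^\perp$ as
\[ \mathscr{F}(A) \;=\; \bigl(\, dA' + \tfrac{1}{2}[A',A'],\; d\phi + [A',\phi],\; \tfrac{1}{2}[\phi,\phi] \,\bigr). \]
Each component must vanish separately, giving respectively flatness of $\nabla$ on $T^\vee_\Lambda$, $\nabla$-covariant constancy of the section $\phi$, and the vanishing of the commutator $[\phi,\phi] \in H^s_\Gamma(T^\vee_\Lambda, \mf{u}(R_\Gamma) \otimes \wedge^2 \Lambda_\RR^\perp)$, which are precisely the three bulleted conclusions.

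I do not anticipate any genuine obstacle in this argument: the entire corollary is an algebraic consequence of the lemma combined with a sign count. The only care needed is in the sign bookkeeping showing that the two summands in \eqref{eq:split} carry opposite signatures for $\cdot$ — a standard computation of four-dimensional Hodge theory, which furthermore is insensitive to the further refinement of $\wedge^2 Q$ induced by the splitting $Q \simeq \Lambda_\RR \oplus \Lambda_\RR^\perp$.
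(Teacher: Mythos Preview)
Your proposal is correct and follows exactly the approach the paper takes: the paper simply states that the corollary is immediate from Lemma~\ref{lem:comm} together with the splitting~\eqref{eq:split} into positive- and negative-definite subspaces, which is precisely the sign-count argument you spell out in detail. Your additional unpacking of $\mathscr{F}(A)=0$ into its three components via Proposition~\ref{prop:reg}(v) is also straightforward and matches the bulleted reformulation in the statement.
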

Here $\mathscr{F}(A)^+$ denotes the projection of $\mathscr{F}(A)$ under $\Wedge^2Q \to (\Wedge^2Q)^+$, and the proof of Corollary~\ref{cor:comm} is immediate from Lemma~\ref{lem:comm} and the splitting~\eqref{eq:split} into positive- and negative-definite subspaces.

There are now two natural approaches to establishing Theorem~\ref{thm:noFI}. In the next section, we will illustrate an approach more familiar to finite-dimensional geometers and topologists, while in this section, we will proceed via infinite-dimensional functional analysis to the more immediate generalization of the argument of~\cite{kronheimer:construct}. 

For the sake of the following discussion, we work in the general setting of \S\ref{subsec:setup} (where our $\hat\Gamma$ action is still linear) -- i.e., we do not restrict $Q$ to be 4-dimensional and to have a hyperkahler structure. We then have
\begin{proposition} \label{prop:orbFlat}
For $s$ satisfying $r/2<s\le \infty$, there is a canonical bijection
\be \{A=(\nabla,\phi) \in \A^s_{\hat\Gamma} \bigm| \mathscr{F}(A)=0 \} / \G^{s+1}_{\hat\Gamma} \simeq Q/\hat\Gamma \ . \label{eq:flatOrbBij} \ee
The equivalence class on the left corresponding to the $\hat\Gamma$-equivalence class of $q\in Q$ is represented by a constant diagonal connection $d+a$, i.e., a connection such that $a$ is constant and diagonal, and a constant diagonal $\phi$ such that the top left entry of $-i(a,\phi)$, regarded as a point of $Q$, is $q$, and the remaining entries of these matrices are the orbit of $q$ under the $\Gamma\hookrightarrow O(Q)$ action.
\end{proposition}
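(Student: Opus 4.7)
The plan is to extend the finite-dimensional argument of \cite[Lemma 3.1]{kronheimer:construct} to the present Fourier-dual setting, using the consequences of $\mathscr{F}(A)=0$ from Corollary \ref{cor:comm} together with a careful analysis in the canonical basis of $R_\Gamma = \mb{C}[\Gamma]$. By Corollary \ref{cor:comm}, $\mathscr{F}(A) = 0$ yields that $\nabla$ is flat, $\phi$ is $\nabla$-parallel, and $[\phi,\phi]=0$. Since $\pi_1(T^\vee_\Lambda) \simeq \Lambda^\vee$ is free abelian of finite rank $r$, the holonomy of $\nabla$ furnishes a commuting $r$-tuple of unitaries in $U(R_\Gamma)$, and evaluating $\phi$ in a $\nabla$-parallel frame at any basepoint yields a $(4-r)$-tuple of commuting skew-Hermitian matrices in $\mf{u}(R_\Gamma) \otimes \Lambda_{\mb{R}}^\perp$ that further commute with each holonomy matrix. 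Consequently, a $\Gamma$-equivariant smooth gauge transformation built from parallel transport along straight-line rays in the universal cover of $T^\vee_\Lambda$ allows us to assume $\nabla = d + a$ with $a \in \mf{u}(R_\Gamma) \otimes \Lambda_{\mb{R}}$ constant, and $\phi$ likewise constant.

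The crucial step is then to bring the constant $\Gamma$-equivariant pair $(a, \phi)$ to diagonal form in the canonical basis $\{v_\eta\}_{\eta\in\Gamma}$ of $R_\Gamma$ via a constant $\Gamma$-equivariant unitary gauge transformation. As commuting normal operators, $(a, \phi)$ admit a common eigenbasis of $R_\Gamma$; what must be shown is that this eigenbasis can be chosen compatibly with the canonical $\Gamma$-action. Here one uses that $R_\Gamma$ as a $\Gamma$-representation is the regular representation, so that its decomposition into common eigenspaces of $(a, \phi)$---which $\Gamma$ permutes by acting on eigenvalues through $Q$---must organize into $\Gamma$-orbits with the stabilizer of any orbit representative controlling the multiplicity of the corresponding eigenvalue. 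Once the diagonalization is performed, writing $-i(a, \phi) = \mathrm{diag}(q_\eta)_{\eta\in\Gamma}$ with $q_\eta \in Q$, the equivariance condition reduces to $q_{\gamma\eta} = \gamma \cdot q_\eta$, so a single point $q := q_e \in Q$ records all the data.

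The residual gauge freedom then produces the bijection with $Q/\hat{\Gamma}$. This freedom consists of: (i) diagonal ``large'' gauge transformations $g(\theta) = \mathrm{diag}(\exp(i(\eta\lambda)\cdot\theta))_\eta$ for $\lambda \in \Lambda$, which shift the $\Lambda_{\mb{R}}$-component of $q$ by an element of $\Lambda$; (ii) constant $\Gamma$-equivariant gauge transformations given by right-multiplication by elements of $\Gamma$, which implement the $\Gamma\hookrightarrow O(Q)$ action on $q$; and (iii) central gauge transformations that act trivially on $(a,\phi)$. Passing to the quotient by (i) and (ii) produces $Q/(\Gamma \ltimes \Lambda) = Q/\hat{\Gamma} = (T_\Lambda \times \Lambda_{\mb{R}}^\perp)/\Gamma$, as claimed. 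The inverse map sending $q \in Q$ to the constant diagonal $(a, \phi)$ with entries $\{\eta q\}_\eta$ is manifestly $\hat{\Gamma}$-equivariant and satisfies $\mathscr{F}(A) = 0$, which establishes bijectivity.

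The main obstacle I anticipate is the non-generic case in which $q \in Q$ has nontrivial $\Gamma$-stabilizer $\Gamma_q$. There the common eigenspaces of $(a, \phi)$ have dimension $|\Gamma_q|$ rather than $1$, and there is additional unitary freedom within each block that must be shown to match---via $\Gamma$-equivariant gauge transformations---a standard diagonalization whose entries form precisely the $\Gamma$-orbit of the single point $q$ with multiplicity $|\Gamma_q|$. Making the spectral-decomposition argument fully explicit in the presence of such coincidences is the representation-theoretic heart of the proof; a secondary technical point is simply to verify that the gauge transformations in the argument have the required Sobolev regularity, but since all of them are either smooth (in Step 1) or constant (in Step 2), this is automatic.
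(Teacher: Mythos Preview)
Your argument has a genuine gap at the step you flag as ``the main obstacle,'' but the difficulty is deeper than you indicate. The assertion in Step~1 that a $\Gamma$-equivariant gauge transformation ``built from parallel transport along straight-line rays'' takes $\nabla$ to a \emph{constant} connection is not justified: parallel transport from a basepoint trivializes the connection only on the universal cover, and descending to a single-valued gauge transformation on $T^\vee_\Lambda$ requires a $\Gamma$-equivariant logarithm of the monodromy, whose existence is precisely what is at issue. Equivalently, in Step~2 you assert that the common eigenvalues of $(a,\phi)$ organize into a \emph{single} $\Gamma$-orbit, appealing only to ``$R_\Gamma$ is the regular representation.'' But for a general $\Gamma$-equivariant monodromy $\sigma \in \Hom_\Gamma(\Lambda^\vee, U(R_\Gamma))$ this fails. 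Concretely, with $\Gamma = Z_2$ acting on $\Lambda \simeq \ZZ$ by negation, the element $M = j(\gamma)$ (the nontrivial permutation of $R_\Gamma$) satisfies $j(\gamma) M j(\gamma)^{-1} = M = M^{-1}$ and hence defines a valid equivariant monodromy, yet its eigenvalues $\pm 1$ correspond to the two \emph{distinct} $2$-torsion points of $T_\Lambda$, each its own $Z_2$-orbit. What rules this out is the \emph{topological} input that $\sigma$ arises from an equivariant connection on the \emph{trivial} equivariant bundle $\mc{R}_\Gamma$ and hence lies in the identity component of $\Hom_\Gamma(\Lambda^\vee, U(R_\Gamma))$; in the $Z_2$ example this forces $\det M = 1$, whereas $\det j(\gamma) = -1$.

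The paper devotes substantial effort to exactly this point (see Remark~\ref{rmk:fracD0} for why it is absent when $r=0$ but unavoidable for $r>0$). The equivalence of your Step~1 with the single-orbit condition is Proposition~\ref{prop:eqconnequiv}; that equivariant connections on $\mc{R}_\Gamma$ satisfy these conditions is Proposition~\ref{prop:goodeq}, and the paper gives two independent proofs: one via heat-kernel regularized traces of $j(\hat\gamma)$ on eigenspaces of the covariant Laplacian (\S\ref{subsec:flatorb}, culminating in Proposition~\ref{prop:orbitbasisHilb}), and one via equivariant local characters $\chi_{\gamma,p}$ together with the discrete Fourier pairing of Lemma~\ref{lem:dft} (\S\ref{subsec:flatorb2}). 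Once this step is supplied, your diagonalization and residual-gauge analysis (large gauge transformations implementing $\Lambda$-shifts, right-multiplication by $\Gamma$ implementing the $O(Q)$-action on $q$) are correct and match the paper's conclusion.
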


In the remainder of this section, we will prove Proposition~\ref{prop:orbFlat} under the assumption that the nontrivially-stabilized locus for the $\hat{\Gamma}\acts{Q}$ action is \emph{isolated}. As we will see in the next section, this assumption is not necessary. 

In order to prove this result, we pick a basis for $Q$ and observe that this associates $\delta := \dim Q$ essentially skew-self-adjoint (possibly unbounded and only densely defined with domain $H^1(T^\vee_\Lambda, R_\Gamma)$)\footnote{These operators need not be skew-self-adjoint, as for $r>1$ we can have $\tilde\Phi_i \psi \in L^2(T^\vee_\Lambda, R_\Gamma)$ but $\psi\not\in H^1(T^\vee_\Lambda, R_\Gamma)$, roughly because $\psi$ is differentiable in one direction but not another.} operators $\tilde\Phi_i$ ($i=1,\ldots,\delta$) on $L^2(T^\vee_\Lambda, R_\Gamma)$ to each $A=(\nabla,\phi) \in \A^s_{\hat\Gamma}$, and that if $\mathscr{F}(A)=0$ then these operators pairwise commute when regarded as operators on $H^{-\infty}(T^\vee_\Lambda, R_\Gamma)$. Actually, as suggested by the above proposition, we prefer to work with the essentially self-adjoint relatives $\Phi_i = -i \tilde\Phi_i$. We then claim that the proposition follows from the following:

\begin{proposition} \label{prop:orbitbasisHilb}
For $s$ satisfying $r/2<s\le \infty$, given $A=(\nabla,\phi)\in \A^s_{\hat\Gamma}$ with $\mathscr{F}(A)=0$ and the associated operators $\Phi_i$, there exists a basis of $L^2(T^\vee_\Lambda,R_\Gamma)$ which is 
\begin{enumerate}[(i)]
\item orthonormal, 
\item an eigenbasis for the commuting operators $\Phi_i$,
\item a free $\hat\Gamma$-orbit, and
\item contained within $H^{s+1}(T^{\vee}_{\Lambda},R_{\Gamma})$.
\end{enumerate}
\end{proposition}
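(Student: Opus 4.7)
The plan is to reduce $A$ to a canonical constant-diagonal form via a gauge transformation, and then exhibit the desired basis as the $\hat\Gamma$-orbit of a distinguished vector.

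First, I would invoke Corollary~\ref{cor:comm}: the vanishing of $\mathscr{F}(A)$ forces $\nabla$ to be a flat $\Gamma$-equivariant connection, $\phi$ to be $\nabla$-parallel, and $[\phi,\phi]=0$. Hence the holonomy representation $\rho\colon\Lambda^\vee\to U(R_\Gamma)$ of $\nabla$ has abelian image, and the value of $\phi$ at a $\Gamma$-fixed basepoint commutes with this image. Simultaneous unitary diagonalization of these finitely many commuting operators yields an orthonormal basis of $R_\Gamma$. Using an appropriate $H^{s+1}$-regular gauge transformation (in $\tilde{\mc{G}}^{s+1}_{\hat\Gamma}$, possibly extended to $N(\tilde{\mc{G}}^{s+1}_{\hat\Gamma})$), I would reduce $A$ to the form $(d+a_0,\phi_0)$ with $a_0\in\mf{u}(R_\Gamma)\otimes\Lambda_{\mb{R}}$ and $\phi_0\in\mf{u}(R_\Gamma)\otimes\Lambda_{\mb{R}}^\perp$ constant and simultaneously diagonal. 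The $\Gamma$-equivariance of $A$ then lets us identify this diagonalizing basis with the regular-representation basis $\{e_\gamma\}_{\gamma\in\Gamma}$ of $R_\Gamma$ on which $\Gamma$ acts by left permutation, and the joint eigenvalues $(p_\gamma,q_\gamma)\in\Lambda_{\mb{R}}\oplus\Lambda_{\mb{R}}^\perp = Q$ of $(-ia_0,-i\phi_0)$ on $e_\gamma$ form a $\Gamma$-orbit with $(p_{\gamma\gamma'},q_{\gamma\gamma'})=\pi(\gamma)(p_{\gamma'},q_{\gamma'})$.

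Second, in this canonical form I would set $\psi_0 := e_e\otimes\mathbf{1}/\sqrt{\mathrm{vol}(T^\vee_\Lambda)}\in L^2(T^\vee_\Lambda,R_\Gamma)$, i.e., the identity-indexed basis vector tensored with the normalized constant function $1$. Using the formula for the $j$-action on $R^0_{\hat\Gamma}\cong L^2(T^\vee_\Lambda,R_\Gamma)$ from Proposition~\ref{prop:nontrans}, one computes $j(\lambda\gamma)\psi_0 = e_\gamma\otimes e^{i\lambda\cdot\theta}/\sqrt{\mathrm{vol}(T^\vee_\Lambda)}$, so the orbit $\{j(\hat\gamma)\psi_0\}_{\hat\gamma\in\hat\Gamma}$ is manifestly: (i) orthonormal (the standard Fourier basis of $L^2(T^\vee_\Lambda,R_\Gamma)$); (ii) a joint eigenbasis for $\Phi_i$, with eigenvalue of $j(\lambda\gamma)\psi_0$ equal to $(\lambda+p_\gamma,q_\gamma)=(\lambda\gamma)\cdot(p_e,q_e)\in Q$; (iii) a free $\hat\Gamma$-orbit, since distinct $\hat\gamma$ give distinct basis vectors (even when their eigenvalues coincide at points with nontrivial $\hat\Gamma$-stabilizer); and (iv) smooth, hence in $H^{s+1}$ for any $s$. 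Undoing the $H^{s+1}$-regular gauge transformation preserves all these properties.

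The main obstacle is the gauge-reduction step: ensuring that the simultaneous diagonalization of the commuting elements of $\rho(\Lambda^\vee)$ and of $\phi$ can be carried out compatibly with $\Gamma$-equivariance. By $\Gamma$-equivariance of $A$, the eigenvalue set $\{(p_\gamma,q_\gamma)\}$ is automatically a $\Gamma$-invariant subset of $Q$, and combined with the regular structure of $R_\Gamma$ this permits identification of the diagonalizing basis with the permutation basis $\{e_\gamma\}_{\gamma\in\Gamma}$. In generic cases (trivial $\hat\Gamma$-stabilizer of $(p_e,q_e)$), the eigenvalues are all distinct and this matching is unique; in non-generic cases where some $(p_\gamma,q_\gamma)$ coincide, one has freedom within degenerate eigenspaces and must choose compatibly, which typically requires extended gauge transformations in $N(\tilde{\mc{G}}^{s+1}_{\hat\Gamma})$ of the sort illustrated in Remark~\ref{rmk:WeylPU}, since a constant gauge transformation diagonalizing $a_0$ need not commute with $j(\Gamma)$ and so does not belong to $\tilde{\mc{G}}^{s+1}_{\hat\Gamma}$ itself.
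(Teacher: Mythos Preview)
Your proposal has a genuine gap at precisely the step you flag as ``the main obstacle.'' You assert that, after simultaneously diagonalizing the holonomy $\rho(\Lambda^\vee)$ and $\phi_0$, the resulting eigenbasis of $R_\Gamma$ can be identified with the permutation basis $\{e_\gamma\}_{\gamma\in\Gamma}$ and that the joint eigenvalues form a single $\Gamma$-orbit. But this is exactly the nontrivial content of the proposition, and your justification (``the $\Gamma$-equivariance of $A$'' together with ``the regular structure of $R_\Gamma$'') is circular. What $\Gamma$-equivariance gives you is only that the eigenvalue multiset in $T_\Lambda\times\Lambda_{\mb R}^\perp$ is $\Gamma$-invariant and that $\Gamma$ permutes the eigenspaces compatibly; it does \emph{not} tell you that the eigenvalues comprise a single $\Gamma$-orbit, nor that an eigenspace $V_\mu$ at a point $\mu$ with nontrivial stabilizer $\Gamma_\mu$ carries the regular representation of $\Gamma_\mu$. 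A priori the spectrum could be supported on several distinct $\Gamma$-orbits of fixed points (in physics language, fractional D0 branes frozen at different orbifold points), in which case no $\Gamma$-equivariant identification with the permutation basis exists, and no amount of passing to the extended group $N(\tilde{\mc G}^{s+1}_{\hat\Gamma})$ fixes this, since Weyl elements only permute isotypic pieces and cannot manufacture a regular-representation structure that is not already there.

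The paper's proof in \S\ref{subsec:flatorb} resolves this by a genuinely different mechanism: it first produces an orthonormal eigenbasis via spectral theory for the covariant Laplacian, then computes the heat-kernel regularized character $\Tr e^{-\tau\Delta}j(\hat\gamma)$ two ways. On the one hand the small-$\tau$ asymptotics of the heat kernel, combined with flatness, force this trace to vanish to all orders for $\hat\gamma\neq 1$; on the other hand the eigenspace decomposition expresses it as $\sum_\mu e^{-\tau\|\mu\|^2}\Tr j(\hat\gamma)|_{Y_\mu}$. Comparing the two shows $\chi_{\hat\gamma}(Y_\mu)=0$ for every nontrivial $\hat\gamma\in\hat\Gamma_\mu$, whence $Y_\mu$ is a sum of copies of the regular representation of $\hat\Gamma_\mu$, and Lemma~\ref{lem:regRep} then furnishes the free-orbit basis. (The alternative proof in \S\ref{subsec:flatorb2} reaches the same conclusion via equivariant Chern characters and a discrete Fourier transform over the fixed-point set.) Either way, establishing the regular-representation structure of $Y_\mu$ is the crux, and it is precisely this step that your proposal does not supply.
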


\begin{rmk}\label{rmk:autreg}
Of course, the above may be rephrased as saying that we have an $L^2$-orthonormal $\hat{\Gamma}$-orbit eigenbasis of $H^{s+1}(T^{\vee}_{\Lambda},R_{\Gamma})$. Moreover, it follows from overdetermined elliptic regularity that any eigenfunction of the $\Phi_i$ must automatically lie in $H^{s+1}(T^{\vee}_{\Lambda},R_{\Gamma})$, and so (iv) above is redundant once one has (ii).

To see the relationship between this proposition and Proposition \ref{prop:orbFlat}, note that our original basis for $L^2(T^\vee_\Lambda, R_\Gamma)$ is an orthonormal free $\hat\Gamma$-orbit contained within $H^{s+1}(T^\vee_\Lambda,R_\Gamma)$, and that choices of other such bases precisely correspond to the elements of $\tilde \G^{s+1}_{\hat\Gamma}$.
\end{rmk}

\begin{proof}[Proof of Proposition \ref{prop:orbitbasisHilb}]
Assume for the moment that $r\ge 2$. Let $\Delta = \nabla^* \nabla$ be the covariant Laplacian associated to $\nabla$. It is elliptic, up to the non-smoothness of its coefficients, but if $s > r/2$ then $H^1(T^\vee_\Lambda, R_\Gamma)$ is a Hilbert module over $H^s(T^\vee_\Lambda, \End(R_\Gamma))$, and so $\Delta\colon H^2(T^\vee_\Lambda, R_\Gamma)\to L^2(T^\vee_\Lambda, R_\Gamma)$ differs from the non-covariant Laplacian by the addition of an operator which factors through the compact inclusion $H^2(T^\vee_\Lambda, R_\Gamma) \hookrightarrow H^1(T^\vee_\Lambda, R_\Gamma)$. So, it defines a Fredholm operator $\Delta\colon H^2(T^\vee_\Lambda, R_\Gamma) \to L^2(T^\vee_\Lambda, R_\Gamma)$; a parametrix yields that, again despite the non-smoothness of the coefficients of $\Delta$, an elliptic estimate still holds which implies that $\psi\in L^2(T^\vee_\Lambda, R_\Gamma)$ has $\Delta\psi\in L^2(T^\vee_\Lambda,R_\Gamma)$ if and only if $\psi\in H^2(T^\vee_\Lambda,R_\Gamma)$. Therefore, as an unbounded operator with dense domain of definition given by $H^2(T^\vee_\Lambda, R_\Gamma) \subset L^2(T^\vee_\Lambda, R_\Gamma)$, $\Delta$ is self-adjoint. Fredholmness and self-adjointness imply that $\Delta$ admits a left inverse on the $L^2$-orthogonal complement of its kernel $K$. Compactness of $H^2(T^\vee_\Lambda, R_\Gamma) \hookrightarrow L^2(T^\vee_\Lambda, R_\Gamma)$ now implies that the direct sum of this left inverse and the identity on $K$ yields a compact self-adjoint operator $\kappa$ on $L^2(T^\vee_\Lambda, R_\Gamma)$. The spectral theorem for such operators then gives an orthonormal eigenbasis for $\Delta$. Furthermore, these eigenvectors are in $H^{s+1}(T^\vee_\Lambda, R_\Gamma)$. To see this, we observe that $\Delta\colon H^{s'+1}(T^\vee_\Lambda, R_\Gamma) \to H^{s'-1}(T^\vee_\Lambda, R_\Gamma)$ is Fredholm for all $1\le s'\le s$, and so for such $s'$ an elliptic estimate implies that eigenvectors of $\Delta$ which are in $H^{s'-1}(T^\vee_\Lambda, R_\Gamma)$ are in fact in $H^{s'+1}(T^\vee_\Lambda, R_\Gamma)$. Since $\kappa$ is injective, all of the eigenspaces are finite dimensional. Since our Hermitian operators $\Phi_i$ commute with $\Delta$ and with each other (and $H^{s+1}(T^\vee_\Lambda, R_\Gamma)$ is contained in their domains), it follows immediately from the finite-dimensional spectral theorem that there is a common orthonormal eigenbasis $\{e_a\}$ for the $\Phi_i$. Their eigenvalues $\mu_a$ are naturally valued in $Q$; we denote the set of these eigenvalues by $\Spec(-iA)\subset Q$. Note that this spectrum is discrete, since an accumulation point would yield an accumulation point in the spectrum of $\Delta$, which would in turn yield either a nonzero accumulation point or a sequence tending to infinity in the spectrum of $\kappa$.

The same arguments work when $r=1$ if $s \ge 1$; we can strengthen this assumption to $s > 1/2$ by repeating these arguments for $-i\nabla$ (which is elliptic in one dimension, up to the non-smoothness of its coefficients) instead of $\Delta$. (Here, we use the fact that $-i\nabla\colon H^{s'+1}(T^\vee_\Lambda, R_\Gamma) \to H^{s'}(T^\vee_\Lambda, R_\Gamma)$ is Fredholm for $0\le s' \le s$.) Finally, we can also allow for $r=0$, as the conclusions are immediate from the finite-dimensional spectral theorem in this case and there is no need for $\Delta$ or $\nabla$.

We now wish to show that this basis may be chosen to be a free $\hat\Gamma$-orbit. This is nontrivial when $r>0$ because there are multiple (indeed, infinitely many) points with nontrivial stabilizers. We consider the eigenspace decomposition 
\be \label{eq:eigs} L^2(T^\vee_\Lambda, R_\Gamma) \simeq \bigoplus_{\mu\in \Spec(-iA)} Y_\mu \ . \ee

We will now find the following lemmas useful:

\begin{lemma} \label{lem:freeBasis}
If $v_\mu \in L^2(T^\vee_\Lambda, R_\Gamma)$ has a trivial stabilizer in $\hat\Gamma$ and $\hat\Gamma\cdot v_\mu$ is an orthonormal set, then it is in fact a basis for $L^2(T^\vee_\Lambda, R_\Gamma)$.
\end{lemma}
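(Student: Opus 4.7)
The plan is to show that the closed linear span $W$ of $\hat\Gamma \cdot v_\mu$ equals all of $L^2(T^\vee_\Lambda, R_\Gamma)$, by verifying that every element of the original orthonormal basis $\iota(\hat\Gamma)$ already lies in $W$. The tool is Parseval's identity: a vector $x$ in a Hilbert space lies in the closed span of an orthonormal set $\{e_n\}$ if and only if $\sum_n |\langle e_n, x\rangle|^2 = \|x\|^2$.

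First I would expand $v_\mu = \sum_{\hat\gamma' \in \hat\Gamma} v_{\hat\gamma'}\, \iota(\hat\gamma')$ in the original basis, noting that unit norm of $v_\mu$ (from orthonormality of its $\hat\Gamma$-orbit) gives $\sum_{\hat\gamma'} |v_{\hat\gamma'}|^2 = 1$. Since $j(\hat\gamma)$ acts on the basis by left translation $\iota(\hat\gamma') \mapsto \iota(\hat\gamma\hat\gamma')$, a direct computation yields $\langle j(\hat\gamma) v_\mu, \iota(\hat\gamma_0) \rangle = \overline{v_{\hat\gamma^{-1}\hat\gamma_0}}$ for any $\hat\gamma_0 \in \hat\Gamma$.

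Summing in $\hat\gamma$ and reindexing by $\hat\gamma' = \hat\gamma^{-1}\hat\gamma_0$, one obtains $\sum_{\hat\gamma \in \hat\Gamma} |\langle j(\hat\gamma) v_\mu, \iota(\hat\gamma_0) \rangle|^2 = \sum_{\hat\gamma'} |v_{\hat\gamma'}|^2 = 1 = \|\iota(\hat\gamma_0)\|^2$. Hence Parseval's identity holds for every $\iota(\hat\gamma_0)$ with respect to the orthonormal set $\hat\Gamma \cdot v_\mu$, so each $\iota(\hat\gamma_0)$ lies in $W$; since $\iota(\hat\Gamma)$ is itself an orthonormal basis of $L^2(T^\vee_\Lambda, R_\Gamma)$, this forces $W = L^2(T^\vee_\Lambda, R_\Gamma)$, and so $\hat\Gamma \cdot v_\mu$ is an orthonormal basis.

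The proof is essentially a single reindexing computation once it is framed in terms of Parseval, so I do not anticipate a serious obstacle. The trivial-stabilizer hypothesis enters only to guarantee that the indexing map $\hat\gamma \mapsto j(\hat\gamma) v_\mu$ is a bijection between $\hat\Gamma$ and the orbit, so that the sum over $\hat\Gamma$ agrees with the sum over the orthonormal set; in fact this is already forced by orthonormality together with $v_\mu \neq 0$.
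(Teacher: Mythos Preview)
Your proof is correct and takes a genuinely different route from the paper's. The paper exploits the semidirect product structure $\hat\Gamma \simeq \Lambda \rtimes \Gamma$ and the Fourier picture: the orthonormality condition $\langle j(\hat\gamma) v_\mu, v_\mu\rangle = \delta_{\hat\gamma,1}$, read through the identification $j(\lambda) \leftrightarrow e^{i\lambda\cdot\theta}$, says that $\{j(S(\gamma)) v_\mu(\theta)\}_{\gamma\in\Gamma}$ is an orthonormal basis of the fiber $R_\Gamma$ for a.e.\ $\theta$, whence the full $\hat\Gamma$-orbit is a basis of $L^2(T^\vee_\Lambda, R_\Gamma)$. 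Your argument, by contrast, never touches the decomposition of $\hat\Gamma$ or the Fourier transform: it is a pure Hilbert-space computation via Parseval, and in fact proves the general statement that for any countable discrete group $G$ acting on $\ell^2(G)$ by left translation, a free orthonormal orbit is automatically a basis. The paper's approach is more structural and perhaps more illuminating of \emph{why} the lemma holds in this geometric context; yours is shorter and more portable.

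One small correction to your closing remark: orthonormality of the orbit (as a \emph{set}) together with $v_\mu \neq 0$ does \emph{not} by itself force the stabilizer to be trivial. If the stabilizer $H$ were nontrivial, the orbit would still be a perfectly good orthonormal set indexed by $\hat\Gamma/H$, and your sum $\sum_{\hat\gamma\in\hat\Gamma}|\langle j(\hat\gamma)v_\mu,\iota(\hat\gamma_0)\rangle|^2 = 1$ would then equal $|H|$ times the Parseval sum over the orbit, giving $\|P_W \iota(\hat\gamma_0)\|^2 = 1/|H| < 1$. So the trivial-stabilizer hypothesis is genuinely used at exactly the point you indicate, and your argument in fact shows the converse implication as well.
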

\begin{proof}
Writing $j(\hat\gamma) = j(\tilde S(\hat\gamma)) j(S(\pi(\hat\gamma)))$, the fact that $\avg{j(\hat\gamma) v_\mu, v_\mu} = \delta_{\hat\gamma,1}$ for all $\hat\gamma\in \hat\Gamma$ may be interpreted as the fact that $j(S(\Gamma)) v_\mu$ is a.e. an orthonormal basis for $R_\Gamma$, and given any such basis it is clearly the case that its image under $j(\Lambda)$ is a basis for $L^2(T^\vee_\Lambda, R_\Gamma)$.
\end{proof}

This lemma makes it clear that if there is any $\mu\in \Spec(-iA)$ whose stabilizer in $\hat\Gamma$ is trivial, and if $v_\mu$ is an element of the $\{e_a\}$ basis contained in $Y_\mu$, then $\hat\Gamma\cdot v_\mu$ provides an example of the sort of basis we seek. In particular, this is the case if there is any $\mu\in \Spec(-iA)$ whose projection to $\Lambda_\RR^\perp$ is nonzero. So, we may henceforth assume that $\Spec(-iA)\subset \Lambda_\RR$, i.e., that $\phi$ vanishes identically.

\begin{lem}\label{lem:Fourbasisfun} There exists $g \in U(L^2(T^{\vee}_{\Lambda},R_{\Gamma}))^{j(\Lambda)} \simeq L^2(T^{\vee}_{\Lambda},U(R_{\Gamma}))$ such that for all $\hat{\gamma} \in \hat{\Gamma}$, $g\, \iota(\hat{\gamma}) = e_a$ for some $a$.\end{lem}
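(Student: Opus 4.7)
Having reduced to the case where $\phi \equiv 0$, so that $\Phi = -i\nabla$ for a flat $H^s$-regular unitary connection $\nabla$ on $T^\vee_\Lambda$, the plan is to construct $g$ as a gauge transformation carrying $\nabla$ to a constant diagonal connection. The key claim will be that there exists a function-valued $g\colon T^\vee_\Lambda \to U(R_\Gamma)$ with $g^{-1}\nabla g = d + iC\,d\theta$ for $C = \mathrm{diag}(\alpha_1,\ldots,\alpha_{|\Gamma|})$ and some $\alpha_\beta \in \Lambda_\RR$. Granting this, identifying $\iota(\hat\gamma) \leftrightarrow e^{i\lambda\cdot\theta}\otimes \iota(\gamma_\beta)$ for $\hat\gamma = \lambda\cdot S(\gamma_\beta)$ makes $\iota(\hat\gamma)$ manifestly an eigenvector of $g^{-1}\Phi g = -i(d + iC\,d\theta)$ with eigenvalue $\lambda + \alpha_\beta \in \Lambda_\RR$; hence $g\iota(\hat\gamma)$ is an eigenvector of $\Phi$, and $\{g\iota(\hat\gamma)\}_{\hat\gamma\in\hat\Gamma}$ is a valid choice of orthonormal eigenbasis $\{e_a\}$. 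That $g$ commutes with $j(\Lambda)$ is automatic, since $g$ acts pointwise by matrix multiplication on $L^2(T^\vee_\Lambda,R_\Gamma)$ while $j(\lambda)$ is scalar multiplication by $e^{i\lambda\cdot\theta}$; and $g\in L^2(T^\vee_\Lambda,U(R_\Gamma))$ follows from measurability since $U(R_\Gamma)$ is compact.

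To produce $g$, first pull $\nabla$ back along the universal cover $\pi\colon \Lambda_\RR^* \to T^\vee_\Lambda$. Since $\Lambda_\RR^*$ is contractible and $\pi^*\nabla$ remains flat, solving the parallel transport equation along straight rays from the origin (which is well-posed as $A \in H^s$ with $s > r/2$ is continuous by Sobolev embedding) yields an $H^{s+1}_{\mathrm{loc}}$ gauge transformation $\tilde h\colon \Lambda_\RR^* \to U(R_\Gamma)$ with $\tilde h^{-1}(\pi^*\nabla)\tilde h = d$. Equivariance of $\pi^*\nabla$ under the $\Lambda^\vee$ deck transformations then forces the existence of a holonomy homomorphism $M\colon \Lambda^\vee \to U(R_\Gamma)$ satisfying $\tilde h(\theta + \lambda^\vee) = \tilde h(\theta) M(\lambda^\vee)$. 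Because $\Lambda^\vee$ is abelian, the commuting unitaries $\{M(\lambda^\vee)\}$ admit a simultaneous diagonalization by a constant $U_0 \in U(R_\Gamma)$, yielding $U_0^{-1}M(\lambda^\vee)U_0 = \mathrm{diag}(e^{-i\alpha_\beta\cdot \lambda^\vee})$ for some $\alpha_\beta \in \Lambda_\RR$.

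Setting $g(\theta) := \tilde h(\theta)\, U_0\, \mathrm{diag}(e^{i\alpha_\beta\cdot\theta})$, a direct computation (using that diagonal matrices commute) verifies $g(\theta + \lambda^\vee) = g(\theta)$, so $g$ descends to a map $T^\vee_\Lambda \to U(R_\Gamma)$; a further short computation yields $g^{-1}\nabla g = d + iC\,d\theta$, completing the construction. The principal technical obstacle is the lifted construction of $\tilde h$, i.e., establishing existence and regularity of the flat trivialization together with the monodromy cocycle given merely $H^s$ coefficient data; this is standard linear ODE theory once $A$ is continuous, which holds throughout our Sobolev range $s > r/2$, and the subsequent simultaneous diagonalization of commuting unitaries on the finite-dimensional space $R_\Gamma$ is elementary. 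We note also that, as constructed, $\{g\iota(\hat\gamma)\}$ is merely a $j(\Lambda)$-orbit rather than the desired $j(\hat\Gamma)$-orbit; upgrading this to a full $\hat\Gamma$-orbit is the task of the subsequent steps of the argument, exploiting for instance the residual freedom in the choice of $U_0$ together with the $\Gamma$-equivariance of $\nabla$.
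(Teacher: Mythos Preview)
Your proof is correct but follows a genuinely different route from the paper's. The paper argues purely from the spectral side: having already constructed the eigenbasis $\{e_a\}$, it observes that the eigenspaces $Y_\mu$ are permuted by multiplication by $e^{i\lambda\theta}$, so choosing a fundamental domain $f$ for the $\Lambda$-translation action on $\Lambda_\RR$ gives a finite-dimensional space $Y = \bigoplus_{\mu\in f\cap\Spec(-iA)} Y_\mu$. An orthonormal eigenbasis $\{e_{a_k}\}_{1\le k\le n}$ of $Y$ then yields the full basis $\{e_{a_k}e^{i\lambda\theta}\}$; a pointwise argument shows $n=|\Gamma|$, and $g$ is defined by sending $\iota(S(\gamma))$ to $e_{a_k}$ via any bijection $\Gamma\simeq\{1,\ldots,n\}$, extended $\Lambda$-equivariantly.

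Your approach is gauge-theoretic: you trivialize the flat connection on the universal cover via parallel transport, diagonalize the resulting holonomy, and twist to obtain a periodic $g$ taking $\nabla$ to a constant diagonal connection. This is essentially the monodromy argument that the paper deploys later in its \emph{second} proof of the same proposition (Lemma~\ref{lem:mon} and Proposition~\ref{prop:eqconnequiv}), so you are effectively importing the key step of the second proof into the first. One minor point: strictly speaking your $\{g\iota(\hat\gamma)\}$ is an orthonormal eigenbasis, not \emph{the} basis $\{e_a\}$ fixed earlier; but since the subsequent heat-kernel argument only uses that $g^{-1}\cdot A$ is constant diagonal, this is harmless (or one simply takes your basis as the $\{e_a\}$). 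Each approach has its virtues: the paper's stays internal to the spectral decomposition already set up, while yours is more geometric and makes the diagonalizing $g$ explicit, at the cost of anticipating machinery developed later.
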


Note, as per Remark~\ref{rmk:autreg} above, that $g$ of this lemma automatically lies within $H^{s+1}(T^{\vee}_{\Lambda},U(R_{\Gamma}))$.

\bp
As we have just explained, this lemma is trivial -- indeed, there exists a $g\in U(L^2(T^\vee_\Lambda, R_\Gamma))^{j(\hat\Gamma)}$ -- if $\Spec(-iA)\not\subset \Lambda_\RR$. So, we assume that $\Spec(-iA) \subset \Lambda_\RR$.

For any $\lambda \in \Lambda$, we have a natural isomorphism of eigenspaces 
\begin{align*} Y_{\mu} &\stackrel{\sim}{\to} Y_{\mu + \lambda} \\ v &\mapsto v e^{i \lambda \theta}\,. \end{align*}
So, let $f \subset \Lambda_\RR$ be a fundamental domain for the $\Lambda$-translation action on $\Lambda_{\mb{R}}$ and denote $f\mathrm{Spec}(-iA) := \Spec(-iA) \cap f$ and $Y := \bigoplus_{\mu \in f\mathrm{Spec}(-iA)} Y_{\mu} \subset L^2(T^{\vee}_{\Lambda},R_{\Gamma})$. Discreteness of the spectrum yields that $f\mathrm{Spec}(-iA)$ is finite and so $Y$ is finite-dimensional as all the $Y_{\mu}$ are. Consider $\{e_{a_k}\}_{1 \le k \le n}$ as an orthonormal eigenbasis for $Y$, where $n:=\dim Y$ and we choose some ordering for the finitely many $e_a$ that appear in $Y$. By the isomorphisms above, it follows that $\{e_{a_k} e^{i \lambda \theta}\}_{1\le k\le n, \lambda\in \Lambda}$ is an orthonormal basis for $L^2(T^{\vee}_{\Lambda},R_{\Gamma})$. But then it is easy to see that $n = \dim R_{\Gamma} = |\Gamma|$ by noting that the values of the $e_{a_k} \in L^2(T^{\vee}_{\Lambda},R_{\Gamma})$ at almost any point of $T^{\vee}_{\Lambda}$ must yield a basis for $R_{\Gamma}$. Hence, we may construct $g$ by first choosing any bijection $\Gamma \stackrel{\sim}{\to} \{1, \cdots, n\}$ and demanding that $\iota(S(\gamma))$ map to the corresponding $e_{a_k}$ for $\gamma \in \Gamma$ and $1 \le k \le n$, and then extending the action of $g$ on the basis elements $\iota(\hat{\gamma})$ via $\Lambda$-invariance. \ep

We now need to understand the behavior of $Y_{\mu}$ as a representation of the finite stabilizer group $\hat{\Gamma}_{\mu} \subset \hat{\Gamma}$. Our main technical tool will be to compute a heat kernel-regularized character of $j\colon \hat\Gamma\to U(L^2(T^\vee_\Lambda, R_\Gamma))$ on both sides of \eqref{eq:eigs}. 
Indeed, consider $$\Tr e^{-\tau \Delta} j(\hat\gamma) = \sum_a e^{-\tau \|\mu_a\|^2} \avg{e_a, j(\hat\gamma) e_a}\,.$$
Here and henceforth we assume that $\phi=0$, as explained above, and so $\Delta e_a = \|\mu_a\|^2 e_a$.

A useful observation for studying the heat operator $e^{-\tau\Delta}$ is that if $g\in H^{s+1}(T^\vee_\Lambda, U(R_\Gamma))$ is the unitary operator from Lemma \ref{lem:Fourbasisfun} that takes the $\iota(\hat\Gamma)$ basis to the $e_a$ one, then $g^{-1}\cdot A$ is associated to the Hermitian operators $g \Phi_i g^{-1}$ which are diagonal in the standard $\iota(\hat\Gamma)$ basis. That is, $g^{-1}\cdot A$ is constant and diagonal, and in particular it is smooth. The heat operator of this transformed $A$ is of course $g e^{-\tau \Delta} g^{-1}$, and this lets us translate results on heat operators of elliptic operators with smooth coefficients to the heat operator of interest. For example, we immediately see that $e^{-\tau \Delta}$ is trace class. Furthermore, the heat kernel for $g e^{-\tau \Delta} g^{-1}$ immediately yields the heat kernel for $e^{-\tau \Delta}$, since $g$ is a multiplication operator.\footnote{As an amusing aside, we observe that since $g e^{-\tau \Delta} g^{-1}$ is a bounded self-adjoint operator, the same is the case for $e^{-\tau \Delta}$ itself. It then follows from the Hille-Yosida theorem that $\Delta$ is itself self-adjoint on an appropriate dense domain. This is a priori surprising for $r=1$ and $s\in (1/2,1)$, since in this case $\Delta$ does not even map $C^\infty$ to $L^2$. However, $\Delta$ is self-adjoint on $g^{-1}\, H^2(T^\vee_\Lambda, R_\Gamma) \subset H^{s+1}(T^\vee_\Lambda, R_\Gamma)$, which is dense in $L^2(T^\vee_\Lambda, R_\Gamma)$.}

We would now like to apply this heat-kernel regularized trace to both sides of~\eqref{eq:eigs}, as advertised. On the left side, we have the following lemma:

\begin{lemma} \label{lem:limEq}
For all $\hat\gamma\in \hat\Gamma\setminus\{1\}$, $\Tr e^{-\tau \Delta} j(\hat\gamma)$ vanishes to all orders in $\tau$ as $\tau \to 0$.
\end{lemma}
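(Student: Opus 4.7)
My plan is to use gauge invariance of the trace to reduce to the constant-diagonal gauge, and then to compute directly using the joint spectral decomposition of the commuting Hermitian operators $(\Phi_i)_i$.

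By Lemma~\ref{lem:Fourbasisfun}, there is a gauge transformation $g\in H^{s+1}(T_\Lambda^\vee, U(R_\Gamma))^{j(\Lambda)}$ such that $\tilde A = g^{-1}\cdot A$ is constant-diagonal with diagonal entries $\mu^b\in \Lambda_{\mb{R}}^*$. Gauge invariance of the trace gives $\Tr(e^{-\tau\Delta}j(\hat\gamma)) = \Tr(e^{-\tau\tilde\Delta}\tilde j)$ with $\tilde j = g^{-1}j(\hat\gamma)g$, and the heat kernel of $\tilde\Delta$ admits the Poisson-summation presentation
\[
\tilde k_\tau(\theta,\theta')_{bb'} = \frac{\delta_{bb'}}{(4\pi\tau)^{r/2}}\sum_{\ell\in\Lambda^\vee}e^{-\|\theta-\theta'-\ell\|^2/(4\tau)}\,e^{i\mu^b\cdot(\theta-\theta'-\ell)}\,.
\]

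For the easy case $\hat\gamma=(m,1)$ with $m\in\Lambda\setminus\{0\}$: $g$ is a pointwise multiplication operator by Proposition~\ref{prop:nontrans}(iii), so it commutes with $j(m)$ and $\tilde j = j(m)$. Translation invariance of $\tilde\Delta$ makes $\tr_{R_\Gamma}\tilde k_\tau(\theta,\theta)$ a constant function $C(\tau)$ of $\theta$, and so the trace reduces to $C(\tau)\int_{T_\Lambda^\vee}e^{im\cdot\theta}\,d\theta=0$; the trace in fact vanishes identically, which is stronger than vanishing to all orders in $\tau$.

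For the remaining case $\hat\gamma=(m,\gamma)$ with $\gamma\neq 1$, I would pass to the joint spectral decomposition $L^2(T_\Lambda^\vee,R_\Gamma) = \bigoplus_\mu Y_\mu$ of $(\Phi_i)_i$. The $\Gamma$-equivariance of the original $A$ yields the commutation relation $j(\hat\gamma)^{-1}\Phi_i j(\hat\gamma) = \gamma_i^j\Phi_j + m_i$, so $j(\hat\gamma)Y_\mu\subseteq Y_{\gamma\mu+m}$ and hence
\[
\Tr(e^{-\tau\Delta}j(\hat\gamma)) = \sum_{\mu:\,(1-\gamma)\mu=m}e^{-\tau\|\mu\|^2}\,\tr_{Y_\mu}\bigl(j(\hat\gamma)|_{Y_\mu}\bigr)\,.
\]
In the constant-diagonal gauge, $Y_\mu$ is spanned by the finitely many $e^{i\lambda\cdot\theta}\iota(\gamma_b)$ with $\lambda=\mu+\mu^b\in\Lambda$, on which $j(\hat\gamma)$ acts by $e^{i\lambda\cdot\theta}\iota(\gamma_b)\mapsto e^{i(m+\gamma\lambda)\cdot\theta}\iota(\gamma\gamma_b)$; the diagonal matrix elements all vanish for $\gamma\neq 1$ since $\gamma\gamma_b\neq\gamma_b$, forcing $\tr_{Y_\mu}(j(\hat\gamma)|_{Y_\mu})=0$ and the trace to vanish identically.

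The main obstacle will be the subtlety that $\tilde j = g^{-1}j(\hat\gamma)g$ need not coincide with $j(\hat\gamma)$ in the constant-diagonal basis unless $g$ is itself $\Gamma$-equivariant, since Lemma~\ref{lem:Fourbasisfun} only guarantees $\Lambda$-equivariance. I plan to address this by further refining $g$ via a diagonal gauge transformation preserving the constant-diagonal $\tilde A$, arranged so that the resulting $g'$ is $\Gamma$-equivariant and the $\mu^b$ form a $\Gamma$-orbit, with regularity in $H^{s+1}$ preserved by elliptic bootstrapping as in Remark~\ref{rmk:autreg}.
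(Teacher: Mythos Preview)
Your argument for the pure-translation case $\hat\gamma=(m,1)$ is fine and indeed gives identical vanishing. The problem is the case $\gamma\neq 1$, where your proposal is circular.

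Your displayed formula
\[
\Tr(e^{-\tau\Delta}j(\hat\gamma)) = \sum_{\mu:\,(1-\gamma)\mu=m}e^{-\tau\|\mu\|^2}\,\tr_{Y_\mu}\bigl(j(\hat\gamma)|_{Y_\mu}\bigr)
\]
is precisely equation~\eqref{eq:htkersum} in the paper. The entire purpose of Lemma~\ref{lem:limEq} is to evaluate the left-hand side independently so that, upon equating the two sides and sending $\tau\to 0$, one \emph{deduces} $\tr_{Y_\mu}(j(\hat\gamma)|_{Y_\mu})=0$. You are trying to run this in reverse: to compute the right-hand side directly and conclude the left side vanishes. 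But to evaluate $\tr_{Y_\mu}(j(\hat\gamma)|_{Y_\mu})$ you pass to the constant-diagonal gauge, where the eigenbasis is $e^{i\lambda\cdot\theta}\iota(\gamma_b)$, and you then apply the formula $j(\hat\gamma)\colon e^{i\lambda\cdot\theta}\iota(\gamma_b)\mapsto e^{i(m+\gamma\lambda)\cdot\theta}\iota(\gamma\gamma_b)$. That formula describes $j(\hat\gamma)$ in the \emph{original} basis; in the new basis the relevant operator is $g^{-1}j(\hat\gamma)g$, which coincides with $j(\hat\gamma)$ only if $g$ commutes with $j(\hat\gamma)$, i.e.\ only if $g$ is $\Gamma$-equivariant.

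You recognize this, and propose to refine $g$ to be $\Gamma$-equivariant with the $\mu^b$ forming a single $\Gamma$-orbit. But the existence of such a $g$ is exactly the content of Proposition~\ref{prop:orbitbasisHilb}, whose proof \emph{uses} Lemma~\ref{lem:limEq}. The paper's own remark following the proof spells this out: once Proposition~\ref{prop:orbitbasisHilb} is in hand, your computation becomes valid and yields the stronger statement of identical vanishing; but at the point where the lemma is being proved, Lemma~\ref{lem:Fourbasisfun} guarantees only $\Lambda$-equivariance of $g$, and no amount of diagonal refinement will produce $\Gamma$-equivariance without first knowing that each $Y_\mu$ carries the regular representation of $\hat\Gamma_\mu$.

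The paper's non-circular route is to compute the small-$\tau$ asymptotics of the left-hand side directly: the coefficients in the heat-kernel expansion are local curvature invariants of the metric and of $A$, all of which vanish here, leaving only a constant multiple of the identity whose $R_\Gamma$-trace against $j(\hat\gamma)$ vanishes because $R_\Gamma$ is the regular representation.
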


\begin{proof}
As above, we employ the unitary operator $g$ to rewrite this trace as 
$$\Tr\brackets{g^{-1} (g e^{-\tau \Delta} g^{-1}) g j(\hat\gamma)} \ . $$
Since $g$ and $j(\hat\gamma)$ are multiplication operators, this trace only involves the values of the heat kernel along the diagonal. That is, regarding the heat kernel of $g e^{-\tau \Delta} g^{-1}$ as an element of $C^\infty(\RR^+ \times T^\vee_\Lambda \times T^\vee_\Lambda, \End(R_\Gamma))$, we need only consider the corresponding element of $C^\infty(\RR^+ \times T^\vee_\Lambda, \End(R_\Gamma))$ obtained by restricting to the diagonal of $T^\vee_\Lambda \times T^\vee_\Lambda$. The heat kernel of $g e^{-\tau \Delta} g^{-1}$ has a standard asymptotic expansion as $\tau\to 0$ (discussed, e.g., in \cite[ch. 2]{getzler:heatBook} and \cite[ch. 7]{melrose:aps}), and its coefficients along the diagonal are simple local gauge-covariant and diffeomorphism-invariant functions of the metric and $gAg^{-1}$. Once we conjugate by $g^{-1}$, we obtain local gauge-covariant and diffeomorphism-invariant functions of the metric and $A$. Denote such a function by $f$, and note that $f$ is a natural, regular, homogeneous invariant in the sense of~\cite{ABP} or~\cite{Gilkey}. We are interested in computing
\be \Tr f j(\hat\gamma) = \frac{1}{{\rm vol}(T^\vee_\Lambda)} \int_{T^\vee_\Lambda} \tra f(x) j(\hat\gamma)(x) \, dx \ , \ee
where we disambiguate here between the trace on $L^2$ and that on $R_\Gamma$ by denoting the latter by $\tra$. Recall (e.g., from \cite[\S3]{ABP} or~\cite[\S2.6]{Gilkey}) that $f$ must be formed from contractions of the Riemann curvature of the torus and of the curvature of the connection $A$. As the torus is flat and $\mathscr{F}(A) \equiv 0$, we must hence have that $f$ is a (translation-invariant) multiple of the identity. But then this trace vanishes unless $\tilde S(\hat\gamma) = 0$, or equivalently $\hat\gamma = S(\gamma)$ for some nontrivial $\gamma\in \Gamma$. Finally, in these cases we have $\tra f j(S(\gamma)) = f \tra j(S(\gamma)) = 0$.
\end{proof}


On the right side, we obtain the expression 
\beq\label{eq:htkersum}\Tr e^{-\tau\Delta} j(\hat\gamma) = \sum_{\mbox{\footnotesize $\mu\in Q^{\hat\gamma} \cap \Spec(-iA)$}} e^{-\tau \|\mu\|^2} \Tr j(\hat\gamma)|_{Y_\mu}\,.\eeq Recall our assumption in this section that all nontrivially-stabilized points within $Q$ are isolated. Then the sum in~\eqref{eq:htkersum} has (at most) one term. Hence, applying $\mathrm{Tr}\,e^{-\tau \Delta}j(\hat{\gamma})$ to both sides of~\eqref{eq:eigs} and taking $\tau \to 0$ yields that for all $\mu \in \Spec(-iA)$ and for $\gamma \in \hat{\Gamma}_{\mu}\setminus\{1\}$, the finite-dimensional trace $\mathrm{Tr}\,j(\hat{\gamma})|_{Y_{\mu}} = 0$. In particular, for any $\mu \in \Spec\,(-iA)$, $Y_{\mu}$ as a representation of the finite group $\hat{\Gamma}_{\mu}$ has vanishing character for any nontrivial conjugacy class, and so must be a direct sum of finitely many copies of the regular representation. But we now have the following:


\begin{lem} \label{lem:regRep}
Suppose $G$ is a finite group and $V$ a unitary $G$-representation. Then there exists $v \in V$ such that $G \cdot v$ is an orthonormal basis for $V$ if and only if $V$ is the regular representation of $G$. \end{lem}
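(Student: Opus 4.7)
The plan is to handle both directions quickly, as this is a standard result once the setup is unwound.

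For the reverse direction, I would simply exhibit the required vector. If $V = \CC[G] = R_G$ with its standard Hermitian structure (in which the basis vectors $\iota(g)$ form an orthonormal set), then take $v = \iota(1_G)$. Then $g\cdot v = \iota(g)$, so $G\cdot v$ is precisely the standard orthonormal basis.

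For the forward direction, suppose $v\in V$ is such that $G\cdot v$ is an orthonormal basis. The first step is to show that the stabilizer $G_v \subset G$ is trivial: if $g\cdot v = h\cdot v$ for some $g\ne h$ in $G$, then by orthonormality we would have $\langle g\cdot v, h\cdot v\rangle = 0$, but this inner product also equals $\|v\|^2 = 1$, a contradiction. Hence $|G\cdot v| = |G|$, which coincides with $\dim V$ since $G\cdot v$ is a basis. The second step is to define the $\CC$-linear map $\Phi\colon \CC[G] \to V$ by $\Phi(g) = g\cdot v$ (extended linearly). This map is $G$-equivariant by construction, and it sends an orthonormal basis of $\CC[G]$ to an orthonormal basis of $V$, hence is a unitary $G$-equivariant isomorphism. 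Therefore $V \simeq R_G$ as unitary representations, completing the argument.

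The main (minor) subtlety is simply the distinctness of the orbit elements, which follows from orthonormality as above; no serious obstacle is anticipated.
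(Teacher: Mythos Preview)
Your proof is correct. The approaches are close but not identical, and the differences are worth noting.

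For the forward direction, you build the unitary intertwiner $\Phi\colon \mb{C}[G]\to V$ directly; the paper instead observes that in the basis $G\cdot v$ each nontrivial $g$ acts by a fixed-point-free permutation, so the character of $V$ agrees with that of $R_G$. Your route is arguably cleaner since it produces a \emph{unitary} isomorphism on the nose, whereas the character argument only yields a linear $G$-isomorphism.

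For the reverse direction, the paper is deliberately more careful than you are, and for a reason tied to the application. In context (the proof of Proposition~\ref{prop:orbitbasisHilb}), one only learns that $Y_\mu$ ``is the regular representation'' via its character, i.e.\ one has a $G$-equivariant \emph{linear} isomorphism $T\colon R_G\to V$ with no a priori control on the Hermitian structure of $V$. The paper therefore spells out that the unitary part $T(T^\dagger T)^{-1/2}$ of the polar decomposition is still $G$-equivariant, so one may assume $V$ is $R_G$ with its standard inner product and then take a standard basis vector, exactly as you do. Your argument implicitly assumes this upgrade (you take $V=\mb{C}[G]$ with its standard Hermitian structure), which is fine for the lemma as stated but skips the one step the paper actually needs when invoking it.
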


\bp If any $G$-orbit basis exists, one may immediately compute that $V$ and the regular representation have the same character. The converse direction follows from the fact that unitary representations which are isomorphic as general linear representations are in fact also isomorphic as unitary representations. Indeed, if $T$ denotes a $G$-equivariant isomorphism, we may consider the unitary part of its polar decomposition $T (T^\dagger T)^{-1/2}$ as a $G$-equivariant isomorphism of unitary representations. So, we may assume that $V$ is the regular representation with its standard Hermitian inner product, whereupon we may simply take any of the standard basis vectors. \ep 


So, take any $\mu \in \Spec\,(-iA)$; we may find, by the above, some $v_{\mu} \in Y_{\mu}$ with trivial $\hat\Gamma_\mu$ stabilizer such that $\hat{\Gamma}_{\mu} \cdot v_{\mu}$ is an orthonormal set. It follows more generally that $v_\mu$ has a trivial $\hat\Gamma$ stabilizer and the orbit $\hat{\Gamma} \cdot v_{\mu}$ is an orthonormal set, and by Lemma \ref{lem:freeBasis} it is therefore a basis. (In particular, we have that each $Y_\mu$ is a single copy of the regular representation of $\hat\Gamma_\mu$.)
\end{proof}


\begin{proof}[Proof of Proposition \ref{prop:orbFlat}]

Proposition~\ref{prop:orbitbasisHilb} shows that the discrete spectrum of commuting operators $\Spec\,(-iA) \subset Q$ is a full $\hat{\Gamma}$-orbit; i.e., we may consider this discrete spectrum as a point of $Q/\hat{\Gamma}$ in order to produce a map $$\{A \in \mc{A}^s_{\hat{\Gamma}} \bigm| \mathscr{F}(A) = 0\} / \mc{G}^{s+1}_{\hat{\Gamma}} \stackrel{\Spec}{\to} Q/\hat{\Gamma}\, .$$ The above map is well-defined as the spectrum is unaffected by the $\mc{G}^{s+1}_{\hat{\Gamma}}$-action. Surjectivity follows easily upon considering operators $\Phi_i$ which are already diagonal in the standard basis for $R^0_{\hat{\Gamma}} \simeq L^2(T^{\vee}_{\Lambda},R_{\Gamma})$, while injectivity follows from Proposition~\ref{prop:orbitbasisHilb} once again. Indeed, suppose $\mathscr{F}(A) = \mathscr{F}(A') = 0$. Then, as choices of other bases (as considered relative to the standard $\iota(\hat\Gamma)$ basis) satisfying the conditions of Proposition~\ref{prop:orbitbasisHilb} exactly correspond to gauge transformations in $\tilde{\mc{G}}^{s+1}_{\hat{\Gamma}}$, we may gauge-transform both $A, A'$ to be constant and diagonal, upon which if $\Spec(-iA) = \Spec(-iA')$ so that these diagonal entries are the same up to possible $\hat{\Gamma}$-reordering, it is immediate to apply a further equivariant gauge transformation to identify one operator with the other.


It remains only to more explicitly relate this functional analytic parametrization of the moduli space to the concrete connection and endomorphism $(\nabla,\phi)$. To do so, instead of regarding gauge transformations as describing the same operators $\Phi_i$ in a different basis we now regard them as describing conjugate operators in the original basis. Then, the eigenvalue of $\iota(1)$, regarded as an element of $Q$, is the top-left entry of $-i(a,\phi)$.
\end{proof}

\begin{rmk}
It is of interest now to revisit the behavior of the function $\mathrm{Tr}\,e^{-\tau\Delta}j(\hat{\gamma})$ studied in Lemma~\ref{lem:limEq}. On the one hand, we did not even need the full strength of this lemma; rather, we only used that $\lim_{\tau\to 0} e^{-\tau \Delta} j(\hat\gamma) = 0$ for all $\hat\gamma\in \hat\Gamma\setminus\{1\}$. On the other hand, it is now clear that a significantly stronger result is true: $\Tr e^{-\tau \Delta} j(\hat\gamma)$ not only vanishes to all orders, but in fact vanishes identically. Indeed, this is clearly the case when $\nabla$ is a constant diagonal connection, since $e^{-\tau\Delta}$ is diagonal in the original $\iota(\hat\Gamma)$ basis and $j(\hat\gamma)$ is purely off-diagonal; hence, now that we know that $g$ may be chosen to commute with all $j(\hat\gamma)$ we may directly compute
$$\Tr e^{-\tau\Delta} j(\hat\gamma) = \Tr (g e^{-\tau\Delta} g^{-1}) (g j(\hat\gamma) g^{-1}) = \Tr (g e^{-\tau\Delta} g^{-1}) j(\hat\gamma) = 0 \ .$$
Note in the proof of Lemma \ref{lem:limEq}, where we only knew \emph{a priori} that $g$ commuted with $j(\Lambda)$, that we could have already concluded $\mathrm{Tr}\,e^{-\tau\Delta}j(\lambda) = 0$ for $\lambda \in \Lambda\setminus\{0\}$. Alternatively, a second proof that $\mathrm{Tr}\,e^{-\tau\Delta}j(\hat{\gamma}) \equiv 0$ for all nontrivial $\hat{\gamma}$ is that we have shown that all of the terms in \eqref{eq:htkersum} vanish.\end{rmk}

\begin{rmk}\label{rmk:stab}
We may now expand on Remark~\ref{rmk:set}. The identification of Proposition \ref{prop:orbFlat} is an identification of quotient \emph{sets}. The identification does not hold as groupoids (or orbifolds) as the stabilizers of the quotient on the left of \eqref{eq:flatOrbBij} are much larger than the finite stabilizers of $Q/\hat{\Gamma}$. Indeed, given a point $\mu$ of $Q/\hat{\Gamma}$ with stabilizer $\hat\Gamma_\mu \subset \hat\Gamma$, the stabilizer in $\tilde\G_{\hat\Gamma}^{s+1}$ is the product of unitary groups
$$U(R_{\hat\Gamma_\mu})^{\hat\Gamma_\mu} \simeq \prod_{V \in\,\text{Irrep}\,\hat\Gamma_\mu} U(V)\,.$$ Note that this change-of-stabilizers is a relatively common phenomenon in GIT. For example, a familiar setting for this behavior may be the morphism $T\kqnoxi W \to G\kqnoxi G$, i.e., $\mathrm{Spec}(\mc{O}(T)^W) \to \mathrm{Spec}(\mc{O}(G)^G)$, where $G$ is a complex reductive group with maximal torus $T$ and Weyl group $W$, and $G\kqnoxi G$ is the affine GIT quotient by the conjugation action.
\end{rmk}

We now briefly indicate the analog of the above theorem in the complex formulation of Construction~\ref{constr:cplxKah}; after all, we have not yet proven a DUY isomorphism and so the below Theorem~\ref{thm:CnoFI} is \emph{a priori} independent of Theorem~\ref{thm:noFI}.

\begin{thrm}\label{thm:CnoFI} For any $r/2 < s \le \infty$, recall the complex moment map $$\mu_{\mb{C}}\colon \mc{A}^s_{\hat{\Gamma}} \to H^{s-1}_{\Gamma}(T^{\vee}_{\Lambda},\mf{gl}_{\mb{C}}(R_{\Gamma}))\,.$$ We have an isomorphism \beq (\mu_{\mb{C}}^{-1}(0))^{\mathrm{ps}} / (\mc{G}^{s+1}_{\hat{\Gamma}})_{\mb{C}} \stackrel{\sim}{\to} Q/\hat{\Gamma}\,.\eeq \end{thrm}

\begin{rmk} As in Remark~\ref{rmk:set}, we will only prove the above as an isomorphism of sets here, although it is again cheap to upgrade to an isomorphism of complex manifolds away from singular points (or complex-analytic spaces, say). One could also ask for more, like an intrinsically holomorphic characterization of the (poly)stable locus, or a holomorphic characterization of the polystable orbit in the orbit closure of a semistable orbit as a semisimplification under a Jordan-H\"older filtration. It is indeed only when one gives an alternative characterization of the stability condition that this theorem becomes truly \emph{interestingly} independent of Theorem~\ref{thm:noFI}, as we will see below in the $r = 1$ case.
\end{rmk}

\bp Given the definition above of the polystable locus and the already-proven Theorem~\ref{thm:noFI}, the claim here is simply that points of $Q/\hat{\Gamma}$ do not suffer any further identification when one considers the $(\mc{G}^{s+1}_{\hat{\Gamma}})_{\mb{C}}$-action rather than merely the $\mc{G}^{s+1}_{\hat{\Gamma}}$-action. But this is clear as the map $A \mapsto \Spec(-iA)$ used in the proof of Proposition~\ref{prop:orbFlat} is manifestly invariant under $(\mc{G}^{s+1}_{\hat{\Gamma}})_{\mb{C}}$; i.e., eigenvalues are manifestly invariant under conjugation even by possibly non-unitary operators. \ep 


Finally, we note that both the proof of Proposition \ref{prop:orbFlat} (and therefore of Theorems \ref{thm:noFI} and \ref{thm:CnoFI}) in this section and that of the next section generalize immediately to the case of affine actions once one is armed with the ideas of \S\ref{subsec:asymorb}.

\subsection{The flat orbifold quotient reprise}\label{subsec:flatorb2}

In this section, we give a second proof of Proposition \ref{prop:orbFlat}. We first note that the usual correspondence between (gauge equivalence classes of) flat principal bundles and (conjugacy classes of) their monodromy representations holds equivariantly: 

\begin{lem}\label{lem:mon}
Suppose that $r/2<s\le \infty$. Let $G$ be a Lie group and let $M$ be an $r$-dimensional connected manifold, both with actions of a finite group $\Gamma$; let $P$ be a $\Gamma$-equivariant principal $G$-bundle. Suppose, moreover, that there exists some $m_0 \in M^{\Gamma}$, i.e., a point fixed under the full $\Gamma$-action.

Then, if $\nabla, \nabla'$ are two $\Gamma$-equivariant $H^s$-regular flat connections, they may be related by the action of $H^{s+1}_{\Gamma}(M,\Aut(P))$ if and only if their monodromy representations $\sigma,\sigma'\colon \pi_1(M,m_0) \to G$ are conjugate by an element of $G^{\Gamma}$.
\end{lem}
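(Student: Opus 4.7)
The plan is to adapt the standard non-equivariant proof, exploiting that $m_0$ is $\Gamma$-fixed to keep track of extra symmetry at the basepoint. First, fix $p_0 \in P_{m_0}$, which identifies $\mathrm{Aut}(P_{m_0}) \cong G$ and produces a homomorphism $\phi \colon \Gamma \to G$ via $\gamma \cdot p_0 = p_0 \cdot \phi(\gamma)$; under this identification, the induced $\Gamma$-action on $\mathrm{Aut}(P_{m_0}) \cong G$ is conjugation by $\phi$, so $G^{\Gamma} = Z_G(\phi(\Gamma))$. The range $r/2 < s \le \infty$ guarantees that $H^s$-regular flat connections admit continuous parallel transport, and by elliptic regularity (or equivalently by parallel-transport arguments), gauge transformations between two such connections automatically lift to $H^{s+1}$.

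The forward direction is immediate: if $\nabla' = g^{-1} \nabla g$ with $g \in H^{s+1}_{\Gamma}(M, \mathrm{Aut}(P))$, the standard transformation law gives $\sigma' = g(m_0)^{-1} \sigma\, g(m_0)$, and $\Gamma$-equivariance of $g$ together with $\gamma m_0 = m_0$ forces $g(m_0) \in \mathrm{Aut}(P_{m_0})^{\Gamma} \cong G^{\Gamma}$. For the reverse direction, assume $\sigma' = c^{-1} \sigma c$ with $c \in G^{\Gamma}$. The usual non-equivariant correspondence produces some $g_1 \in H^{s+1}(M, \mathrm{Aut}(P))$ with $g_1^{-1} \nabla g_1 = \nabla'$. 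The stabilizer of $\nabla$ in the gauge group consists of flat sections of $\mathrm{Aut}(P)$ with respect to the induced connection, and on a connected manifold these are determined freely by their value at $m_0$, which may be any element of $Z_G(\sigma(\pi_1(M, m_0)))$. From $g_1(m_0)^{-1} \sigma\, g_1(m_0) = c^{-1} \sigma c$ we see $c\, g_1(m_0)^{-1}$ lies in this centralizer, so we may left-multiply $g_1$ by a suitable flat stabilizer section for $\nabla$ to obtain $g_0$ with $g_0^{-1} \nabla g_0 = \nabla'$ and, crucially, $g_0(m_0) = c$.

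It then remains to check that $g_0$ is automatically $\Gamma$-equivariant. For each $\gamma \in \Gamma$, $\Gamma$-equivariance of $\nabla$ and $\nabla'$ implies that $\gamma \cdot g_0$ also relates $\nabla$ to $\nabla'$, whence $h_\gamma := g_0^{-1}(\gamma \cdot g_0)$ is a parallel section of $\mathrm{Aut}(P)$ with respect to the flat structure induced by $\nabla'$, and hence determined by its value at $m_0$. That value computes to $c^{-1} \cdot (\phi(\gamma) c\, \phi(\gamma)^{-1}) = 1$, where the last equality is precisely the hypothesis $c \in G^{\Gamma}$. Therefore $h_\gamma \equiv 1$ and $\gamma \cdot g_0 = g_0$ for all $\gamma$, completing the argument. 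The main subtlety is concentrated in this last computation: the hypothesis $c \in G^{\Gamma}$ (rather than merely $c \in G$) is exactly what kills the evident cocycle obstruction $\gamma \mapsto h_\gamma$ to equivariance, sidestepping what could otherwise require a delicate non-abelian cohomology argument in $\mathrm{Stab}(\nabla')$.
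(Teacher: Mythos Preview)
Your proof is correct and reaches the same conclusion as the paper, but the organization is genuinely different. The paper builds the equivariant gauge transformation directly from parallel transport: it defines $g(m)$ by the relation $g(m)\circ PT_{\nabla',f}=PT_{\nabla,f}$ for any path $f$ from $m_0$ to $m$ (precomposed with $\hat g\in G^{\Gamma}$ in the conjugate-monodromy case), checks well-definedness via flatness and the monodromy hypothesis, and then verifies $\Gamma$-equivariance by applying $\gamma$ to the path $f$ and using equivariance of the parallel transport operators together with the fact that $\hat g$ is $\Gamma$-fixed at $m_0$.

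You instead invoke the non-equivariant correspondence as a black box to obtain $g_1$, normalize by a flat stabilizer section of $\nabla$ so that $g_0(m_0)=c$, and then show equivariance by computing the cocycle $h_\gamma=g_0^{-1}(\gamma\cdot g_0)$: since $h_\gamma$ stabilizes $\nabla'$ it is a flat section determined by its value at $m_0$, and that value vanishes precisely because $c\in G^{\Gamma}$. Your approach is more modular and makes transparent exactly where the hypothesis $c\in G^{\Gamma}$ (rather than merely $c\in G$) is consumed, namely in killing the obstruction $h_\gamma$. The paper's approach is more constructive and avoids appealing to the non-equivariant result at all, at the cost of leaving the role of the $G^{\Gamma}$ hypothesis slightly more implicit.
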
 
\bp

Given a smooth path $f\colon I \to M$ and connection $\nabla$, let $PT_{\nabla,f}\colon P|_{f(0)} \to P|_{f(1)}$ be the ensuing parallel transport isomorphism. Then, suppose one has $\nabla, \nabla'$ as above with, on the nose, the same monodromy representation $\pi_1(M,m_0) \to G$, one may consider $g \in H^{s+1}_{\Gamma}(M,\mathrm{Aut}(P))$ as defined via the formula $g(m) \circ PT_{\nabla',f} = PT_{\nabla,f}$ for any path $f$ with $f(0) = m_0, f(1) = m$; the flatness of $\nabla$ and $\nabla'$ proves that $g$ is independent of local deformation of the path $f$ while the equality of monodromy representations shows the same for large deformations of the path (i.e., from traversing homotopically nontrivial loops). The fact that $g$ transforms $\nabla$ to $\nabla'$ may be rephrased as an overdetermined elliptic PDE, whereupon $H^s$-regularity of $\nabla, \nabla'$ yields $H^{s+1}$-regularity of $g$. It now remains to verify that $g$ is equivariant, but indeed, if one wishes to relate $g(m)$ and $g(\gamma m)$, one may simply apply $\gamma$ to any path $f$ from $m_0$ to $m$ in order to arrive at a path $\gamma f$ from $m_0$ to $\gamma m$. As the definition of $g$ above was path-independent and so we may, in particular, use $f$ and $\gamma f$ as our choices of path, the $\Gamma$-equivariance of the parallel transport maps then provides the desired equivariance of $g$. 

More generally, if the monodromy representations are conjugate by some element $\hat{g} \in G^{\Gamma}$, first apply a gauge transformation with $g(0) = \hat{g}$ to reduce to the prior case. Finally, the converse direction, i.e., that monodromy representations of gauge-equivalent flat connections are conjugate by $g(0) \in G^{\Gamma}$ if $g \in H^{s+1}_{\Gamma}(M,\mathrm{Aut}(P))$ is a gauge transformation that relates them, is standard.

\ep 

We now specialize to the case of interest, where $G=U(R_\Gamma)$ and $M$ is $T^\vee_\Lambda$ with its associated $\Gamma$-action and carrying the equivariant bundle which we shall here denote $$\mc{R}_{\Gamma} := R_{\Gamma} \times T^{\vee}_{\Lambda}\,.$$ We further have that $A=(\nabla,\phi)$ is $\Gamma$-equivariant, and we choose $0 \in T^\vee_\Lambda \simeq \mathrm{Hom}(\Lambda, U(1))$ to be our preferred $\Gamma$-fixed point $m_0$. Lemma~\ref{lem:mon} then shows that the information of $\nabla$ is equivalent to the monodromy information $\pi_1(T^\vee_\Lambda) \to U(R_\Gamma)$ even $\Gamma$-equivariantly. Furthermore, as $\phi$ is a flat section, it is completely determined by its value $\phi_0$ at $0$ (which must be preserved by the image of the monodromy representation). Hence, we may repackage the data of $A$ as an element of
$$\mathrm{Hom}_{\Gamma}(\Lambda^{\vee},U(R_\Gamma)) \times (\Lambda^{\perp}_{\mb{R}} \underset{\mb{R}}{\otimes} \mf{u}(R_{\Gamma}))^{\Gamma} $$
subject to the commutation conditions.\footnote{Not all such data may be realized as monodromy matrices, however; there is an additional condition. In fact, as we will see shortly, this extra condition is precisely that the monodromy data in $\mathrm{Hom}_{\Gamma}(\Lambda^{\vee},U(R_{\Gamma}))$ is contained within the identity connected component.}
We still have the freedom of a gauge redefinition at the fiber over $0$ -- i.e., the action of $U(R_\Gamma)^{\Gamma}$ on the above by conjugation.

Note that one could now feasibly attempt to recast the original infinite-dimensional hyperkahler quotient as something like a finite-dimensional hyperkahler quotient of $\mathrm{Hom}_{\Gamma}(\Lambda^{\vee},U(R_{\Gamma}))\times(\Lambda_{\mb{R}}^{\perp}\otimes_{\mb{R}}\mf{u}(R_{\Gamma}))^{\Gamma}$ by $U(R_{\Gamma})^{\Gamma}$, but now some of the moment maps will be naturally group- rather than Lie algebra-valued. Hence, one could attempt to make use of the theory of \emph{quasi}-Hamiltonian reduction. We will not pursue this here.


In any case, after performing the above reductions by means of Lemma~\ref{lem:mon}, we may restate Proposition~\ref{prop:orbFlat} as follows:

\begin{prop}\label{prop:orbFlat2} Given a linear action $\hat{\Gamma} \hookrightarrow \mathrm{Aff}(Q)$, and given $s \in (r/2, \infty]$, there is a canonical bijection \begin{align}\{(\sigma,\phi_0) &\bigm| \sigma\text{ is the monodromy representation of an equivariant $H^s$-regular flat connection on $\mc{R}_{\Gamma}$ }\nonumber\\&\text{and $\phi_0 \in (\mf{u}(R_{\Gamma}) \otimes \Lambda_{\mb{R}}^{\perp})^{\Gamma}$ is $\sigma$-invariant with $[\phi_0,\phi_0] = 0$}\} / U(R_{\Gamma})^{\Gamma} \simeq Q/\hat{\Gamma}\,.\label{eq:monbij}\end{align} \end{prop}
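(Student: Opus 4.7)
The plan is to prove Proposition~\ref{prop:orbFlat2} by reducing to finite-dimensional linear algebra, invoking the spectral theorem for commuting normal operators in place of the heat-kernel analysis of \S\ref{subsec:flatorb}. The chief virtue of this approach is that it makes no use of any isolation hypothesis on the stabilizer locus. First I would verify the reformulation itself: by Lemma~\ref{lem:mon} applied with basepoint $0 \in T^{\vee}_{\Lambda}$, equivariant flat connections on $\mathcal{R}_{\Gamma}$ modulo equivariant gauge transformations correspond to those equivariant monodromy representations $\sigma\colon \Lambda^{\vee} \to U(R_{\Gamma})$ arising from such connections, taken modulo $U(R_{\Gamma})^{\Gamma}$-conjugation. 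A $\nabla$-flat $\Gamma$-equivariant section $\phi$ is encoded by its value $\phi_0$ at the basepoint, where $\sigma$-invariance enforces single-valuedness under parallel transport and $[\phi,\phi]=0$ reduces to $[\phi_0,\phi_0]=0$.

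Given such a pair $(\sigma,\phi_0)$, the operators $\sigma(\Lambda^{\vee})$ together with the coordinates of $\phi_0$ in a basis of $\Lambda_{\mb{R}}^{\perp}$ form a commuting family of normal operators on the finite-dimensional Hilbert space $R_{\Gamma}$. Simultaneous diagonalization by some $g \in U(R_{\Gamma})$ (generically \emph{not} in $U(R_{\Gamma})^{\Gamma}$) yields a joint spectrum in $\mathrm{Hom}(\Lambda^{\vee},U(1)) \times \Lambda_{\mb{R}}^{\perp} \simeq T_{\Lambda} \times \Lambda_{\mb{R}}^{\perp} = Q/\Lambda$, with eigenspace decomposition $R_{\Gamma} = \bigoplus_{\mu} Y_{\mu}$. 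The $\Gamma$-equivariance of the data translates into $j(\gamma)\colon Y_{\mu} \stackrel{\sim}{\to} Y_{\gamma\mu}$, so the spectrum is $\Gamma$-invariant in $Q/\Lambda$. Surjectivity of the desired bijection is furnished by the explicit constant diagonal construction of Proposition~\ref{prop:orbFlat}.

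The main step and chief obstacle is to show that this spectrum is in fact a single $\Gamma$-orbit, with each $Y_{\mu}$ the regular representation of its stabilizer $\Gamma_{\mu}$. Granting this, the spectrum determines a well-defined element of $(Q/\Lambda)/\Gamma = Q/\hat{\Gamma}$, and the residual $U(R_{\Gamma})^{\Gamma}$-freedom precisely captures the permutation symmetry within the single $\Gamma$-orbit. The finite-dimensional analog of the heat-kernel identity of \S\ref{subsec:flatorb}---namely $\chi_{R_{\Gamma}}(\gamma) = 0$ for $\gamma \neq 1$---involves a potentially multi-term sum in the non-isolated setting and cannot by itself force each term to vanish. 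One must additionally exploit that $\sigma$ arises from an \emph{actual} equivariant flat connection rather than merely an algebraically equivariant homomorphism; simple $r=1$, $\Gamma = Z_2$ examples produce $\Gamma$-equivariant monodromy representations with no realizing equivariant connection, showing this is a genuinely stronger condition. I would realize the extra constraint by analyzing the parallel transport $PT_{\alpha}\colon \mathcal{R}_{\Gamma}|_0 \to \mathcal{R}_{\Gamma}|_p$ from the basepoint $0$ to each $\Gamma$-fixed point $p \in T^{\vee}_{\Lambda}$: equivariance of $\nabla$ forces $PT_{\alpha}$ to intertwine the $\Gamma_p$-actions on source and target fibers, thereby constraining the $\Gamma_p$-representation type of each eigenspace via conjugation relations on the relevant monodromies. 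A Frobenius-reciprocity dimension count on the regular $\Gamma$-representation $R_{\Gamma}$ then forces each $Y_{\mu}$ to be the regular representation of $\Gamma_{\mu}$ and the spectrum to consist of a single $\Gamma$-orbit, completing the proof; the complex-formulation analog of Theorem~\ref{thm:CnoFI} then follows as an immediate corollary.
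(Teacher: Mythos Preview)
Your overall architecture matches the paper's second proof (the one in \S\ref{subsec:flatorb2}): reduce to finite-dimensional commuting normal operators via Lemma~\ref{lem:mon}, simultaneously diagonalize to obtain a $\Gamma$-invariant spectrum in $Q/\Lambda$, and then argue that this spectrum is a single $\Gamma$-orbit with each eigenspace $Y_\mu$ carrying the regular representation of its stabilizer $\Gamma_\mu$. Surjectivity and injectivity then go exactly as you describe. You also correctly identify that the crucial extra information, beyond the mere algebraic $\Gamma$-equivariance of $\sigma$, comes from parallel transport to the other $\Gamma$-fixed points $p$ of $T^\vee_\Lambda$.

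The gap is in how you extract the conclusion from that parallel-transport information. Transporting from $0$ to a fixed point $p$ along a path and its $\gamma$-image differ by monodromy around a loop of class $(1-\gamma)p \in \Lambda^\vee$, so the equivariance relation you write down yields, after taking traces, an identity of the form
\[
0 \;=\; \sum_{\mu \in (T_\Lambda)^\gamma} \chi_\gamma(V_\mu)\,\mu\bigl((1-\gamma)p\bigr)
\]
for each $p \in (T^\vee_\Lambda)^\gamma$ (this is the paper's \eqref{eq:charfun}, obtained via the local characters $\chi_{\gamma,p}$ and Lemma~\ref{lem:geneq}). To conclude $\chi_\gamma(V_\mu)=0$ for each $\mu$ individually, you need to invert this sum over $p$, and what makes that possible is the perfect pairing of Lemma~\ref{lem:dft}: the finite abelian groups $\pi_0((T_\Lambda)^\gamma)$ and $\pi_0((T^\vee_\Lambda)^\gamma)$ are Pontryagin dual via $(\mu,p)\mapsto \mu((1-\gamma)p)$, so the equations as $p$ varies form a discrete Fourier transform. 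This is the step your sketch elides. Your phrase ``Frobenius-reciprocity dimension count'' is not the right mechanism here---Frobenius reciprocity is only used to know that every irreducible of $\Gamma_\mu$ appears in $R_\Gamma$, whereas the actual forcing that $V_\mu$ is regular comes from character vanishing, and only then does the dimension count $\dim V_\mu \geq |\Gamma_\mu|$ imply a single orbit. Without the perfect-pairing lemma, the argument does not close.
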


Technically, Proposition~\ref{prop:orbFlat2} as stated above leaves open the statements of Proposition~\ref{prop:orbFlat} concerning explicit representatives by constant diagonal connections, but it will become clear that the subsequent analysis in this section yields the full strength of these considerations as well.

So, the rest of this section is devoted to a proof of this proposition; we begin by a careful study of the monodromy representation $\sigma$ above. We now introduce notation for the eigenvalues of such a monodromy representation:

\begin{defn} Given $\sigma \in \Hom_{\Gamma}(\Lambda^{\vee},U(R_{\Gamma}))$, let \beq\label{eq:evals} R_{\Gamma} \simeq \bigoplus_{\mu \in T_{\Lambda}} V_{\mu} \eeq denote the eigenspace decomposition of $R_{\Gamma}$, where the eigenvalues of the family of commuting unitary operators $\sigma$ are naturally valued in $\mathrm{Hom}(\Lambda^{\vee},U(1)) \simeq T_{\Lambda}$. The $\Gamma$-equivariance of $\sigma$ implies that for $\gamma \in \Gamma, \mathrm{act}_{\gamma}\colon V_{\mu} \stackrel{\sim}{\to} V_{\gamma \cdot \mu}$. Hence, given $\nu \in T_{\Lambda}/\Gamma$, we may denote $$W_{\nu} := \bigoplus_{\mu \in \nu} V_{\mu}$$ to obtain a decomposition $$R_{\Gamma} \simeq \bigoplus_{\nu \in T_{\Lambda}/\Gamma} W_{\nu}$$ as $\Gamma$-representations. Given $\mu \in T_{\Lambda}$, denote by $\Gamma_{\mu} \subset \Gamma$ its stabilizer, so that $V_{\mu}$ is naturally a $\Gamma_{\mu}$-representation and $W_{[\mu]} \simeq \mathrm{Ind}^{\Gamma}_{\Gamma_{\mu}} V_{\mu}$. Finally, let $$\mathrm{ev}(\sigma) \in \mathrm{Sym}^{|\Gamma|}T_{\Lambda}$$ denote the set of eigenvalues $\mu \in T_{\Lambda}$ of $\sigma$ with multiplicity; this set is manifestly $\Gamma$-invariant, and we say that $\mathrm{ev}(\sigma)$ comprises a \emph{full $\Gamma$-orbit} if for some (or equivalently any) element $\mu$ in $\mathrm{ev}(\sigma)$, we have that $\mathrm{ev}(\sigma) = \Gamma \cdot \mu$. \end{defn}

Slightly more generally, we may include extra data $\phi_0$. As in the prior section, we will more naturally track the eigenvalues of the Hermitian operators $-i\phi_0$.

\begin{defn} Given $\sigma \in \Hom_{\Gamma}(\Lambda^{\vee},U(R_{\Gamma})), \phi_0 \in (\Lambda_{\mb{R}}^{\perp} \otimes \mf{u}(R_{\Gamma}))^{\Gamma}$ satisfying $[\phi_0,\phi_0] = 0$ and $\sigma$-invariance, one may simultaneously diagonalize all of these operators and find eigenvalues naturally valued in \beq\mathrm{ev}(\sigma,-i\phi_0) \in \mathrm{Sym}^{|\Gamma|}(Q/\Lambda)\,.\eeq \end{defn}

\begin{defn} In general, if $X$ is a set with the action of a finite group $\Gamma$, let $((\mathrm{Sym}^{|\Gamma|}(X))^{\Gamma})^0$ denote the connected component of $(\mathrm{Sym}^{|\Gamma|}(X))^{\Gamma}$ consisting of full $\Gamma$-orbits.\footnote{It may be of interest to compare to the definition of $G$-cluster in the study of the $G$-Hilbert scheme of~\cite{Nakamura}.} Note that there is an immediate bijection $((\mathrm{Sym}^{|\Gamma|}(X))^{\Gamma})^0 \simeq X/\Gamma$. \end{defn}

We now study some equivalent conditions on the connection $\nabla = d + a$ on $\mc{R}_{\Gamma}$ and on its ensuing monodromy representation $\sigma(\nabla)$. Let us further introduce the notation $$\Hom^0_{\Gamma}(\Lambda^{\vee},U(R_{\Gamma})) \subset \Hom_{\Gamma}(\Lambda^{\vee},U(R_{\Gamma}))$$
for the connected component that contains the constant map $\mathrm{Id}$. Similarly, if $$T \simeq \mathsf{Maps}(\Gamma,U(1)) \subset U(R_{\Gamma})$$ denotes the diagonal maximal torus, denote by $\mathrm{Hom}^0_{\Gamma}(\Lambda^{\vee},T)$ the component containing the constant map $\mathrm{Id}$.

\begin{prop}\label{prop:eqconnequiv} Given the above setup, the following are equivalent:\begin{enumerate}[(i)] \item The connection $a$ may be equivariantly gauge-transformed to a (flat) constant connection $a_0 \in \mathrm{Hom}_{\Gamma}(\Lambda_{\mb{R}}^*,\mf{u}(R_{\Gamma}))$, \item The connection $a$ may be equivariantly gauge-transformed to a constant connection $a_0 \in \mathrm{Hom}_{\Gamma}(\Lambda_{\mb{R}}^*,\mf{t})$ valued in the diagonal Cartan $\mf{t} \simeq \mathsf{Maps}(\Gamma,\mf{u}(1)) \subset \mf{u}(R_{\Gamma})$, \item $\sigma(a) \in \Hom^0_{\Gamma}(\Lambda^{\vee},U(R_{\Gamma}))$, \item $\sigma(a)$ may be $\Gamma$-equivariantly conjugated to lie within $\mathrm{Hom}_{\Gamma}^0(\Lambda^{\vee},T)$, and \item $\mathrm{ev}(\sigma)$ comprises a full $\Gamma$-orbit; i.e., $\mathrm{ev}(\sigma) \in ((\mathrm{Sym}^{|\Gamma|}(T_{\Lambda})^{\Gamma})^0$. \end{enumerate} \end{prop}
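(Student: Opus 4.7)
The plan is to close the cycle of implications $\text{(ii)} \Rightarrow \text{(i)} \Rightarrow \text{(iii)} \Rightarrow \text{(v)} \Rightarrow \text{(iv)} \Rightarrow \text{(ii)}$, with Lemma~\ref{lem:mon} serving as the bridge between the connection-theoretic statements (i), (ii) and the monodromy-theoretic statements (iii), (iv), (v). The implication (ii) $\Rightarrow$ (i) is immediate from the inclusion $\mf{t} \hookrightarrow \mf{u}(R_\Gamma)$. For (i) $\Rightarrow$ (iii), a constant flat connection has pairwise-commuting values $a_0(\lambda^\vee)$ on basis elements of $\Lambda^\vee$, so its monodromy representation is $\sigma(a_0)(\lambda^\vee) = \exp(a_0(\lambda^\vee))$. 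The rescaled family $\exp(t a_0(\cdot))$ for $t \in [0,1]$ is then a continuous path in $\Hom_\Gamma(\Lambda^\vee, U(R_\Gamma))$ from the identity to $\sigma(a_0)$, placing $\sigma(a_0)$ in the identity component. Since $U(R_\Gamma)^\Gamma \simeq \prod_j U(V_j^*)$ is path-connected, conjugation by $U(R_\Gamma)^\Gamma$ preserves the identity component and the statement follows.

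The core structural equivalences are (iii) $\Leftrightarrow$ (v) $\Leftrightarrow$ (iv). For (iii) $\Rightarrow$ (v), the unordered eigenvalue map $\Hom_\Gamma(\Lambda^\vee, U(R_\Gamma)) \to (\mathrm{Sym}^{|\Gamma|}(T_\Lambda))^\Gamma$ is continuous and sends the identity representation to the configuration with all eigenvalues equal to $1 \in T_\Lambda$; by definition, $((\mathrm{Sym}^{|\Gamma|}(T_\Lambda))^\Gamma)^0$ is the connected component of this configuration. For (iv) $\Rightarrow$ (iii), continuity of conjugation by the connected group $U(R_\Gamma)^\Gamma$ preserves the identity component and the inclusion $T \hookrightarrow U(R_\Gamma)$ is continuous. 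The essential content lies in (v) $\Rightarrow$ (iv): given $\sigma$ with eigenvalue multiset $\Gamma \cdot \mu$ (each distinct value occurring with multiplicity $|\Gamma_\mu|$), the eigenspaces $V_{\mu'}$ of~\eqref{eq:evals} each have dimension $|\Gamma_\mu|$, and $R_\Gamma = W_{\Gamma\cdot\mu} \simeq \mathrm{Ind}^\Gamma_{\Gamma_\mu} V_\mu$ is the regular representation. By transitivity of induction $\mathrm{Ind}^\Gamma_{\Gamma_\mu} \mathrm{Ind}^{\Gamma_\mu}_{\{1\}} \mathbf{1} = R_\Gamma$ together with a Frobenius reciprocity calculation, this forces $V_\mu \simeq R_{\Gamma_\mu}$ as $\Gamma_\mu$-representations. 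Apply Lemma~\ref{lem:regRep} to pick $v_\mu \in V_\mu$ with $\Gamma_\mu \cdot v_\mu$ an orthonormal basis of $V_\mu$; then $\Gamma \cdot v_\mu$ is a free orthonormal $\Gamma$-orbit basis of $R_\Gamma$. The unique unitary $g \in U(R_\Gamma)$ sending $\iota(\gamma)$ to $\gamma \cdot v_\mu$ for each $\gamma \in \Gamma$ commutes with the regular $\Gamma$-action, hence lies in $U(R_\Gamma)^\Gamma$, and a direct computation shows $g^{-1} \sigma g$ is diagonal in the $\iota(\Gamma)$ basis, so $g^{-1} \sigma g \in \Hom_\Gamma(\Lambda^\vee, T)$. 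The identity-component conclusion is automatic here, since $\Hom_\Gamma(\Lambda^\vee, T) \simeq T_\Lambda$ is already connected (a $\Gamma$-equivariant homomorphism into $T = \mathsf{Maps}(\Gamma, U(1))$ is determined by its $1$-component, which is an arbitrary character of $\Lambda^\vee$).

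To close the loop, (iv) $\Rightarrow$ (ii): given $g \in U(R_\Gamma)^\Gamma$ with $\sigma' := g^{-1} \sigma g \in \Hom^0_\Gamma(\Lambda^\vee, T)$, the exponential $\Hom_\Gamma(\Lambda^*_\RR, \mf{t}) \to \Hom^0_\Gamma(\Lambda^\vee, T)$ is surjective (both sides are isomorphic to $\Lambda^*_\RR$ and $T_\Lambda$ respectively after picking a $\Gamma$-equivariant parametrization by the $1$-component, and the restricted exponential $\Lambda^*_\RR \to T_\Lambda$ is the usual covering map). Pick $a_0 \in \Hom_\Gamma(\Lambda^*_\RR, \mf{t})$ with $\exp(a_0) = \sigma'$. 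The monodromy of the constant connection $a_0$ is precisely $\sigma'$, which is $U(R_\Gamma)^\Gamma$-conjugate to $\sigma(a)$; by Lemma~\ref{lem:mon}, $a$ is equivariantly gauge-equivalent to $a_0 \in \Hom_\Gamma(\Lambda^*_\RR, \mf{t})$, proving (ii).

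The main obstacle is the equivariant diagonalization step (v) $\Rightarrow$ (iv). The spectral theorem immediately yields a non-equivariant diagonalization, but realizing the diagonalizing unitary within the subgroup $U(R_\Gamma)^\Gamma$ requires the full-orbit eigenvalue structure: this forces each eigenspace to be the regular representation of its stabilizer, which is exactly what is needed to apply Lemma~\ref{lem:regRep} and produce a $\Gamma$-compatible orthonormal basis. The remaining implications are essentially formal consequences of exponentiation, continuity of conjugation, and Lemma~\ref{lem:mon}.
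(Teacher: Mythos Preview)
Your proof is correct and follows essentially the same cycle of implications as the paper, invoking Lemma~\ref{lem:mon}, Frobenius reciprocity, Lemma~\ref{lem:regRep}, and continuity of the eigenvalue map in exactly the same places. The one minor difference is that for the identity-component claim in (v)~$\Rightarrow$~(iv), you argue directly that $\Hom_\Gamma(\Lambda^\vee, T) \simeq T_\Lambda$ is connected via the $1$-component projection, whereas the paper invokes the eigenvalue map to $(\mathrm{Sym}^{|\Gamma|} T_\Lambda)^\Gamma$; your route is slightly cleaner here.
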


Note that condition (v) `almost' obviously holds: if $\mathrm{ev}(\sigma)$ has any support at a stabilizer-free point of $T_{\Lambda}$, then condition (v) will automatically be true. The more interesting case is hence when $\mathrm{ev}(\sigma)$ has support only at points with nontrivial stabilizer.

\bp We will show (iii) $\implies$ (v) $\implies$ (iv) $\implies$ (ii) $\implies$ (i) $\implies$ (iii). 

((iii) $\implies$ (v)) The eigenvalues map $\sigma \mapsto \mathrm{ev}(\sigma)$, as a map $\mathrm{Hom}_{\Gamma}(\Lambda^{\vee},U(R_{\Gamma})) \to (\mathrm{Sym}^{|\Gamma|}T_{\Lambda})^{\Gamma}$, is continuous. As the eigenvalues of the constant $\sigma \equiv \mathrm{Id}$ map manifestly comprise a full $\Gamma$-orbit, it follows that $\mathrm{Hom}_{\Gamma}^0(\Lambda^{\vee},U(R_{\Gamma})) \stackrel{\mathrm{ev}}{\to} ((\mathrm{Sym}^{|\Gamma|}(T_{\Lambda})^{\Gamma})^0$. 

((v) $\implies$ (iv)) Choose some $\mu \in \mathrm{ev}(\sigma)$, so that $\mathrm{ev}(\sigma) = \Gamma \cdot \mu$. By Frobenius reciprocity, $V_{\mu}$ carries the regular representation of $\Gamma_{\mu}$ and so, by Lemma \ref{lem:regRep}, we may choose some $v \in V_{\mu}$ such that $\Gamma_{\mu} \cdot v$ is an orthonormal eigenbasis for $V_{\mu}$. But then $\Gamma \cdot v$ is an orthonormal eigenbasis for $R_{\Gamma}$; i.e., we may conjugate by an element of $U(R_{\Gamma})^{\Gamma}$ such that $\sigma(a)$ lies in $\mathrm{Hom}_{\Gamma}(\Lambda^{\vee},T)$. It remains to show that it lies in the trivial component thereof. But now the eigenvalues map $\mathrm{Hom}_{\Gamma}(\Lambda^{\vee},T) \stackrel{\mathrm{ev}}{\to} (\mathrm{Sym}^{|\Gamma|}T_{\Lambda})^{|\Gamma|}$ is manifestly an isomorphism, so being in the trivial component of the target implies the same for the source.

((iv) $\implies$ (ii)) The map from constant, flat, equivariant connections to their monodromy representations is the exponential map
\beq\label{eq:explol}\Hom_{\Gamma}(\Lambda^{\vee},\mf{t}) \stackrel{\exp}{\to} \Hom_{\Gamma}(\Lambda^{\vee},T)\,,\eeq but the target of the above map is a compact abelian Lie group and~\eqref{eq:explol} may also be parsed as the exponential map from its Lie algebra, or as the universal cover map onto the connected component of the identity. In particular, this implication follows from the observation $$\Hom_{\Gamma}(\Lambda^{\vee},\mf{t}) \stackrel{\exp}{\twoheadrightarrow} \Hom_{\Gamma}^0(\Lambda^{\vee},T)$$ and an application of Lemma~\ref{lem:mon}.


((ii) $\implies$ (i)) Clear.

((i) $\implies$ (iii)) The moduli space of constant, equivariant connections is the vector space $(\Lambda_{\mb{R}} \otimes \mf{u}(R_{\Gamma}))^{\Gamma}$, and the sublocus therein of flat connections is the zero-locus of a homogeneous quadratic map. Hence, the moduli space of flat, constant, equivariant connections is a connected cone; the image under the continuous monodromy map thus lands in the trivial connected component. \ep 

\begin{prop}\label{prop:goodeq} If $\nabla = d + a \in \mathrm{Conn}^s_{\Gamma}(T^{\vee}_{\Lambda},R_{\Gamma})$ is a Hermitian, $\Gamma$-equivariant connection on $\mc{R}_{\Gamma}$, then $\sigma(\nabla)$ satisfies the equivalent conditions of Proposition~\ref{prop:eqconnequiv}. \end{prop}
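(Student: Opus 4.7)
The plan is to verify condition (iii) of Proposition~\ref{prop:eqconnequiv} directly, namely that $\sigma(\nabla) \in \mathrm{Hom}^0_\Gamma(\Lambda^\vee, U(R_\Gamma))$. Since $\sigma(d) = \mathrm{Id}$ evidently lies in $\mathrm{Hom}^0_\Gamma$, and the monodromy depends continuously on a flat connection (by standard parallel-transport theory applied to $H^s$-regular connections on the trivial bundle), it suffices to exhibit a continuous path of flat equivariant Hermitian connections from $d$ to $\nabla$.

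I would construct this path in two stages. First, reduce $\nabla$ to constant equivariant form: produce a gauge transformation $\tilde g \in H^{s+1}_\Gamma(T^\vee_\Lambda, U(R_\Gamma))$ with $\tilde g^{-1}\nabla \tilde g = d + \tilde a_0$ for some $\tilde a_0 \in (\Lambda^*_\RR \otimes \mf{u}(R_\Gamma))^\Gamma$. Non-equivariantly this is classical: parallel transport of $\nabla$ from the basepoint $0 \in T^\vee_\Lambda$ along straight-line lifts in the universal cover $\Lambda^*_\RR$ yields $g \in H^{s+1}(T^\vee_\Lambda, U(R_\Gamma))$ with $g(0) = \mathrm{Id}$ and $g^{-1}\nabla g = d + a_0$ for some constant $a_0 \in \Lambda^*_\RR \otimes \mf{u}(R_\Gamma)$. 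Equivariance of $\nabla$ implies that, for each $\gamma \in \Gamma$, the alternative $\tilde g(\gamma) := j(\gamma)^{-1}(\gamma^* g)$ also trivializes $\nabla$ to constant form, and the ratio $c(\gamma) := g^{-1}\tilde g(\gamma)$ takes values in the gauge-group stabilizer of $d + a_0$, which fibers over the centralizer $C(a_0) \subset U(R_\Gamma)$. One verifies that $c\colon \Gamma \to C(a_0)$ is a nonabelian $1$-cocycle. Since $C(a_0)$ is a product of connected unitary groups (by Schur's lemma applied to the centralizer of $a_0$ inside $U(R_\Gamma)$), the cohomology class $[c] \in H^1(\Gamma, C(a_0))$ vanishes, yielding the equivariantification $\tilde g$ of $g$.

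Second, once $\nabla$ is put in constant equivariant form, the straight-line homotopy $\nabla^{(t)} := d + t\tilde a_0$, $t \in [0,1]$, interpolates between $d$ and $\tilde g^{-1}\nabla\tilde g$. Each $\nabla^{(t)}$ is manifestly equivariant, and flatness of $\nabla^{(1)} = d + \tilde a_0$ forces $[\tilde a_0, \tilde a_0] = 0$, so $\mathscr{F}(\nabla^{(t)}) = \tfrac{t^2}{2}[\tilde a_0, \tilde a_0] = 0$ for all $t$. The monodromies $\sigma(\nabla^{(t)}) = \exp(t\tilde a_0|_{\Lambda^\vee})$ supply the required continuous path in $\mathrm{Hom}_\Gamma$ from $\mathrm{Id}$ to $\sigma(\tilde g^{-1}\nabla\tilde g)$; the latter differs from $\sigma(\nabla)$ only by conjugation by $\tilde g(0) \in U(R_\Gamma)^\Gamma$, which preserves $\mathrm{Hom}^0_\Gamma$.

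The main technical obstacle is the triviality of the $1$-cocycle $c$ in the equivariantification step, where one must exploit the structure of $R_\Gamma$ as the regular representation of $\Gamma$ to control the interaction of $C(a_0)$ with $j(\Gamma)$. A more concrete alternative, bypassing the nonabelian cohomological language entirely, would instead impose equivariant Coulomb gauge $d^* a = 0$ via the implicit function theorem (using invertibility of the Hodge Laplacian on the orthogonal complement of the constants) and combine this with a Bochner--Weitzenb\"ock identity on the flat torus to conclude from $da + a\wedge a = 0$ together with $d^* a = 0$ that $a$ is parallel, hence constant and equivariant.
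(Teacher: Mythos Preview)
Your proposal correctly identifies that it suffices to verify one of the equivalent conditions of Proposition~\ref{prop:eqconnequiv}, and reducing to condition (i) (equivariant constant form) is reasonable. However, both routes you outline have genuine gaps at precisely the point where the real content lies.

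In the cocycle approach, the claim that $c(\gamma) := g^{-1}\tilde g(\gamma)$ lands in the stabilizer of $d + a_0$ is incorrect: if $g^{-1}\nabla g = d + a_0$, then $\tilde g(\gamma) = j(\gamma)^{-1}(\gamma^* g)$ gauge-transforms $\nabla$ to $d + \gamma^*a_0$, not to $d + a_0$, so $c(\gamma)$ intertwines two generically \emph{distinct} constant connections rather than stabilizing one. Even after repairing the setup, the assertion that connectedness of the receptacle forces $[c]$ to vanish is false---already $H^1(\Gamma, U(1))$ with trivial action equals $\Hom(\Gamma, U(1)) \neq 0$. As Remark~\ref{rmk:conninj} notes, principal injectivity of the monodromy map can genuinely fail in related settings, so no argument that treats $R_\Gamma$ as a generic representation can succeed. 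You acknowledge this as the ``main technical obstacle,'' but it is not a technicality: it is the entire proposition.

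The Coulomb-gauge alternative also fails. On the flat torus, $d^*a = 0$ together with $da = -a\wedge a$ gives only $\|\nabla a\|_{L^2}^2 = \|a\wedge a\|_{L^2}^2$, which is consistent with any flat Coulomb-gauge connection and does not force $\nabla a = 0$.

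The paper's argument is substantively different. It parallel-transports the eigenspace decomposition $R_\Gamma = \bigoplus_\mu V_\mu$ to a global subbundle decomposition $\mathcal{R}_\Gamma = \bigoplus_\mu \mathcal{V}_\mu$, then evaluates the local equivariant characters $\chi_{\gamma,p}$ at every fixed point $p \in (T^\vee_\Lambda)^\gamma$. The left side vanishes (regular representation restricted to $\Gamma_p$); the right side, via an explicit formula for $\chi_{\gamma,p}(\mathcal{L}_{\mu,\rho})$, becomes a discrete Fourier transform of the characters $\chi_\gamma(V_\mu)$ over $\mu \in (T_\Lambda)^\gamma$, with kernel given by a perfect pairing $\pi_0((T_\Lambda)^\gamma) \times \pi_0((T^\vee_\Lambda)^\gamma) \to U(1)$. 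Inverting this transform yields $\chi_\gamma(V_\mu) = 0$ for every nontrivial $\gamma \in \Gamma_\mu$, so each $V_\mu$ is copies of the regular representation of $\Gamma_\mu$; the dimension count $\dim V_\mu \ge |\Gamma_\mu|$ then forces condition (v). This is exactly where the structure of $R_\Gamma$ as the regular representation enters, and it cannot be bypassed.
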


\begin{rmk}\label{rmk:conninj} Let us give some slightly different context for this proposition. Namely, if $X$ is any finite-type space\footnote{For this purpose, it suffices to have $\pi_1(X)$ finitely-generated.} and $G$ a Lie group, we may consider the map $\mathrm{Hom}(\pi_1 X, G) \stackrel{B}{\to} \mathsf{Maps}_*(B\pi_1(X),BG) \to \mathsf{Maps}(B\pi_1(X),BG) \to \mathsf{Maps}(X,BG)$ which on connected components induces $$\pi_0 \mathrm{Hom}(\pi_1(X),G) \to [X,BG]$$ as the effect on $\pi_0$ of the inclusion of flat $G$-bundles into all $G$-bundles on $X$. We wish to know if this map is injective, or at least if the preimage of the trivial component has only one element; let us term \emph{principally injective} this particular partial notion of injectivity. Note that in general, even in $X$ is a $K(\pi,1)$, this map may fail to be (even principally!) injective; see, e.g.,~\cite[Corollary 2.9]{AdemCohen}. One may conduct a similar discussion when imposing additional $\Gamma$-equivariance by use of equivariant classifying spaces (although note that in the present setting, our equivariant bundles are \emph{intertwined} in the terminology of~\cite{LueckUribe}).\end{rmk}

We first give a concrete proof of Proposition~\ref{prop:goodeq} in the case of $\Gamma = Z_2$ acting on $T_{\Lambda}$ by negating all coordinates. The reason to focus first on this case is (i) it already illustrates in a particularly concrete example some of the general ideas and (ii) this case already encapsulates many of the cases of interest. In particular, in the list of examples where $\Gamma$ acts on $\mb{R}^4$ preserving its hyperkahler structure, the \emph{only} nontrivial possibilities with $r = 1$ or $r = 3$ have $\Gamma = Z_2$ while even for $r = 2$ or $r = 4$, the cases of $\Gamma = Z_2$ are the most studied examples.

\begin{proof}[Proof in the case $\Gamma = Z_2$]
In this case, it suffices (and in fact, is equivalent, by consideration of the possible eigenvalues of $\sigma$, and in particular the possibilities when these points are $2$-torsion points of $T_\Lambda$) to show that $\det \sigma \in \Hom_{\Gamma}(\Lambda^{\vee},U(1))$ is trivial. But now, given $\lambda^{\vee} \in \Lambda^{\vee}$, let $P$ denote the parallel transport via the connection $\nabla$ along the straight-line path from $0$ to $\frac{1}{2}\lambda^{\vee} \in \Lambda_{\mb{R}}^*$. The $Z_2$-equivariance condition then yields that $\sigma(\lambda^{\vee}) = P \cdot (\mathrm{Ad}(\sigma) P)^{-1}$, where $\mathrm{Ad}(\sigma)$ is the conjugation that implements the $Z_2$-equivariance. It is now immediate that $\det \sigma(\lambda^{\vee}) = 1$, as desired. \ep

\bp Consider the decomposition $R_{\Gamma} = \bigoplus_{\mu \in T_{\Lambda}} V_{\mu}$ as a decomposition of the fiber of $\mc{R}_{\Gamma}$ over $0 \in T_{\Lambda}^{\vee}$ and use the parallel transport provided by the flat, equivariant connection $\nabla$ to produce a decomposition 
\beq\label{eq:globaldecomp} \mc{R}_{\Gamma} = \bigoplus_{\mu \in T_{\Lambda}} \mc{V}_{\mu}\,.\eeq 
Note that the parallel transport isomorphisms provided by the flat connection $\nabla$ are ill-defined precisely due to the global monodromy transformations $\sigma(\nabla)$, but these ambiguities preserve the eigenspaces $V_{\mu}$ and so the above decomposition of $\mc{R}_{\Gamma}$ is well-defined. Now, for each $\gamma \in \Gamma$ and $p \in (T^{\vee}_{\Lambda})^{\gamma}$, consider the ``local character'' $\chi_{\gamma,p}$ which maps a $\Gamma$-equivariant vector bundle $W$ on $T^{\vee}_{\Lambda}$ to $\Tr(\gamma|_{W_p})$; if one likes, one may parse $\chi_{\gamma,p}$ as a functional $K_{\Gamma}(T^{\vee}_{\Lambda}) \stackrel{\chi_{\gamma,p}}{\to} \mb{C}$. We would like to apply $\chi_{\gamma,p}$ to~\eqref{eq:globaldecomp} for $\gamma$ nontrivial. The left side is easy to compute: the fiber at $p$ of the trivial bundle $\mc{R}_{\Gamma}$ is still $R_{\Gamma}$, albeit now considered only as a representation of the stabilizer group $\Gamma_p \subset \Gamma$; as such, it decomposes as $[\Gamma: \Gamma_p]$ many copies of the regular representation of $\Gamma_p$. Nonetheless, the character of any nontrivial element $\gamma$ vanishes. Applying $\chi_{\gamma,p}$ to the right side breaks up into two cases: (i) if $\mu \not\in (T_{\Lambda})^{\gamma}$, then recall that the action of $\gamma$ takes $\mc{V}_{\mu}$ to $\mc{V}_{\gamma \mu}$ so that the trace of $\gamma$ immediately vanishes via its interpretation as the sum of diagonal entries with respect to a basis, while (ii) if $\mu \in (T_{\Lambda})^{\gamma}$, we proceed as follows: 
\begin{lem}\label{lem:geneq} 
Suppose $M$ is a smooth manifold with the action of a finite group $\Gamma$ with $m_0 \in M^{\Gamma}$ some chosen fixed base-point, and suppose $\rho \in \mathrm{Hom}(\Gamma, U(1))$ is some one-dimensional representation of $\Gamma$. Let $\widetilde{M}$ denote the universal cover of $M$, and given $\mu \in \mathrm{Hom}_{\Gamma}(\pi_1(M,m_0),U(1)) \simeq \mathrm{H}^1(M,U(1))^{\Gamma}$, let $\mc{L}_{\mu,\rho}$ denote the equivariant flat line bundle $$\mc{L}_{\mu,\rho} := (\widetilde{M} \times \mb{C}_{\rho}) / \pi_1(M,m_0)$$ where the action of a loop $[\ell] \in \pi_1(M,m_0)$ is via $$(\widetilde{m},v) \stackrel{\ell}{\mapsto} (\ell(\widetilde{m}),\mu(\ell)v)$$ and the action of $\gamma \in \Gamma$ is given by $$(\widetilde{m},v) \stackrel{\gamma}{\mapsto} (\gamma(\widetilde{m}),\rho(\gamma)v)\,;$$ note that this action yields a well-defined equivariant structure on $\mc{L}_{\mu,\rho}$ by virtue of the $\Gamma$-invariance of $\mu$. Then given $p \in M^{\gamma}$ and $\chi_{\gamma,p}$ defined as above, we have $$\chi_{\gamma,p}(\mc{L}_{\mu,\rho}) = \mu(\ell_{\gamma,p})\rho(\gamma)\,,$$ where $\ell_{\gamma,p} \in \pi_1(M,m_0)$ is a loop such that for some lift $\widetilde{p} \in \widetilde{M}$ of $p$, we have $\ell_{\gamma,p}(\gamma(\widetilde{p})) = \widetilde{p}$. Note that it is more naturally $\pi_1(M,p)$ which acts on the lifts of $p$ to $\widetilde{M}$ and that transporting an element of $\pi_1(M,p)$ to $\pi_1(M,m_0)$ only naturally yields a conjugacy class; nonetheless, as $\mu$ is a homomorphism to the abelian group $U(1)$, the value of $\mu$ on this conjugacy class is well-defined. \end{lem}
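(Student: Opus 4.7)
The proof is a direct unwinding of the definition of $\mc{L}_{\mu,\rho}$, since this bundle has rank one and so the ``local character'' $\chi_{\gamma,p}$ is simply the scalar by which $\gamma$ acts on the one-dimensional fiber $(\mc{L}_{\mu,\rho})_p$. My plan is to fix a lift $\widetilde{p} \in \widetilde{M}$ of $p$, use this lift to trivialize the fiber $(\mc{L}_{\mu,\rho})_p \simeq \CC$ via $v \mapsto [(\widetilde{p},v)]$, and then compute the $\gamma$-action explicitly.

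Under this trivialization, the definition of the $\Gamma$-action gives $\gamma \cdot [(\widetilde{p},v)] = [(\gamma \widetilde{p}, \rho(\gamma) v)]$, which is a priori expressed using the wrong lift of $p$. I will use the loop $\ell_{\gamma,p}$ from the lemma statement, which by construction satisfies $\ell_{\gamma,p}(\gamma\widetilde{p}) = \widetilde{p}$, and combine it with the rule for the $\pi_1(M,m_0)$-action on $\widetilde{M}\times\CC_\rho$ to rewrite
\[
[(\gamma\widetilde{p},\rho(\gamma)v)] = [(\ell_{\gamma,p}(\gamma\widetilde{p}), \mu(\ell_{\gamma,p})\rho(\gamma) v)] = [(\widetilde{p},\mu(\ell_{\gamma,p})\rho(\gamma)v)].
\]
Reading off the scalar yields $\chi_{\gamma,p}(\mc{L}_{\mu,\rho}) = \mu(\ell_{\gamma,p})\rho(\gamma)$, as desired.

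The one point requiring care -- and the only mild obstacle -- is the well-definedness issue flagged at the end of the statement: a different choice of lift $\widetilde{p}$ replaces $\ell_{\gamma,p}$ by a conjugate loop in $\pi_1(M,m_0)$ (via the usual basepoint-change identification of $\pi_1(M,p)$-orbits on lifts with conjugacy classes). Since $\mu$ factors through the abelianization and in particular since its target $U(1)$ is abelian, $\mu(\ell_{\gamma,p})$ depends only on this conjugacy class, so the answer is independent of the choice of $\widetilde{p}$. Likewise the trivialization $v \mapsto [(\widetilde{p},v)]$ depends on $\widetilde{p}$, but any change is absorbed by an automorphism of $\CC$ that commutes with the scalar action of $\gamma$, leaving $\chi_{\gamma,p}$ unchanged.
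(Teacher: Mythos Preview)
Your proof is correct and is exactly the direct unwinding of the definitions that one would expect. The paper itself does not supply a proof of this lemma; it is stated inline during the proof of the surrounding proposition and treated as a routine computation left to the reader, so your argument fills in precisely what the paper omits.
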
 We now wish to specialize back to the case of $M = T^{\vee}_{\Lambda}$ with its usual $\Gamma$-action; $\mu(\ell_{\gamma,p})$ now may be written $\mu\Big((1 - \gamma)p\Big)$. In particular, applying $\chi_{\gamma,p}$ to~\eqref{eq:globaldecomp} for any nontrivial $\gamma$ yields 
\beq\label{eq:charfun} 0 = \sum_{\mu \in T_{\Lambda}^{\gamma}} \chi_{\gamma}(V_{\mu}) \mu\Big((1-\gamma)p\Big)\eeq 
for all $p \in (T_{\Lambda}^{\vee})^{\gamma}$, where here $\chi_{\gamma}(V_{\mu}) := \mathrm{Tr}(\gamma|_{V_{\mu}})$ is the usual character of $\gamma \in \Gamma_{\mu}\acts{V_{\mu}}$. Note in particular that each $\mc{V}_{\mu}$ is simply a direct sum of several lines $\mc{L}_{\mu,\rho}$ of the form in Lemma~\ref{lem:geneq} for varying characters $\rho$.  
\begin{lem}\label{lem:dft} 
Suppose a finite group $\Gamma$ acts on a torus $T_{\Lambda}$ such that all nontrivially-stabilized points are isolated. Then, for any $\gamma \in \Gamma\setminus\{1\}$, we have the following perfect pairing $\langle -, - \rangle_{\gamma}$ of finite abelian groups: \begin{align*} (T_{\Lambda})^{\gamma} \times (T_{\Lambda}^{\vee})^{\gamma} &\to U(1) \\ (\mu, p) &\mapsto \mu\Big((1 - \gamma)p\Big)\,. \end{align*} More generally, without assuming that nontrivially-stabilized points are isolated, the same result is true but as a perfect pairing of the finite abelian groups $\pi_0((T_{\Lambda})^{\gamma}) \times \pi_0((T^{\vee}_{\Lambda})^{\gamma}) \to U(1)$. 
\end{lem}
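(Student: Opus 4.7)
The plan is to establish well-definedness of the pairing as a $U(1)$-valued bilinear map that descends to $\pi_0 \times \pi_0$, prove non-degeneracy in one variable via intersection-projection duality for lattices, and finish by appealing to equal cardinalities of the two finite groups. For well-definedness, orthogonality of $\gamma$ yields the adjoint identity $\tilde\mu\cdot(1-\gamma)\tilde p = ((1-\gamma^{-1})\tilde\mu)\cdot\tilde p$ under the defining pairing $\Lambda_{\mb{R}}\times\Lambda_{\mb{R}}^*\to\mb{R}$, where the $\Gamma$-action on $\Lambda_{\mb{R}}^*$ is the inverse transpose of that on $\Lambda_{\mb{R}}$. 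For $\tilde\mu\in(1-\gamma)^{-1}\Lambda$ we have $(1-\gamma^{-1})\tilde\mu = -\gamma^{-1}(1-\gamma)\tilde\mu\in\Lambda$, and similarly $(1-\gamma)\tilde p\in\Lambda^\vee$; hence shifts of $\tilde\mu$ by $\Lambda$ or of $\tilde p$ by $\Lambda^\vee$ change the value by an element of $\Lambda\cdot\Lambda^\vee\subset 2\pi\mb{Z}$, giving a map to $U(1) = \mb{R}/2\pi\mb{Z}$. Descent to $\pi_0\times\pi_0$ uses the same identity: the identity component of $(T_\Lambda)^\gamma$ is the image of $V := \ker(1-\gamma)\subset\Lambda_{\mb{R}}$, and for $v\in V$, $v\cdot(1-\gamma)\tilde p = ((1-\gamma^{-1})v)\cdot\tilde p = 0$ since $\ker(1-\gamma) = \ker(1-\gamma^{-1})$; continuity extends the vanishing to the closed image of $V$. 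The argument in the second variable is symmetric.

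For non-degeneracy in the first variable, suppose $\tilde\mu\in(1-\gamma)^{-1}\Lambda$ has $\tilde\mu\cdot(1-\gamma)\tilde p\in 2\pi\mb{Z}$ for every $\tilde p\in(1-\gamma)^{-1}\Lambda^\vee$. Setting $\eta = (1-\gamma)\tilde p$, the element $\eta$ ranges over $\Lambda^\vee\cap W^*$, where $W^* := \im(1-\gamma|_{\Lambda_{\mb{R}}^*}) = V^\perp$ (for orthogonal $\gamma$, image equals coimage). Decompose $\tilde\mu = v+w$ in $\Lambda_{\mb{R}} = V\oplus W$ with $W := V^\perp$; then $v\cdot\eta = 0$ since $V$ and $W^*$ are orthogonal, and the hypothesis reduces to $w\cdot(\Lambda^\vee\cap W^*)\subset 2\pi\mb{Z}$ in $W$. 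The main technical input is the \emph{intersection-projection duality for lattices}: for a rational subspace $W$ (which holds here since $W = \im(1-\gamma)$ with $1-\gamma$ being $\mb{Z}$-linear, so $V_{\mb{Q}} = \ker(1-\gamma)_{\mb{Q}}$ spans $V$), the dual of $\Lambda^\vee\cap W^*$ in $W$ equals $\pi_W(\Lambda)$, the orthogonal projection of $\Lambda$ to $W$. The inclusion $\pi_W\Lambda \subset (\Lambda^\vee\cap W^*)^\dagger$ follows from $\eta\cdot\pi_W\lambda = \eta\cdot\lambda$ when $\eta\perp V$; the reverse follows by a similar computation showing $(\pi_W\Lambda)^\dagger = \Lambda^\vee\cap W^*$, together with double-duality using the discreteness (full rank) of $\pi_W\Lambda$ granted by the rationality of $W$. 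Granting this duality, $w = \pi_W\lambda$ for some $\lambda\in\Lambda$, whence $\tilde\mu = v + \lambda - \pi_V\lambda \in V + \Lambda$, so $\mu$ is trivial in $\pi_0((T_\Lambda)^\gamma)$.

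Finally, to upgrade one-sided non-degeneracy to perfectness one needs $|\pi_0((T_\Lambda)^\gamma)| = |\pi_0((T_\Lambda^\vee)^\gamma)|$. In the isolated case ($V = 0$ for every nontrivial $\gamma$) both $\pi_0$'s coincide with the full groups $(1-\gamma)^{-1}\Lambda/\Lambda$ and $(1-\gamma)^{-1}\Lambda^\vee/\Lambda^\vee$, each of order $|\det(1-\gamma)|$. In general one identifies $\pi_0((T_\Lambda)^\gamma) \cong H^1(\langle\gamma\rangle,\Lambda)$ and $\pi_0((T_\Lambda^\vee)^\gamma) \cong H^1(\langle\gamma\rangle,\Lambda^\vee)$ and invokes standard Tate duality for finite cyclic groups. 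The main obstacle will be the lattice duality, particularly verifying closedness of $\pi_W\Lambda$ as a full-rank discrete lattice, which relies on the rationality of $W$ inherited from the $\mb{Z}$-linearity of $1-\gamma$; the adjoint algebra and identification of the identity component with the image of $V$ are routine.
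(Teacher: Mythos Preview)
Your proposal is correct but follows a genuinely different path from the paper's proof. The paper first reduces the general statement to the isolated case by passing to the quotient of $T_\Lambda$ by the identity component of $(T_\Lambda)^\gamma$; it then restates perfectness as the equality of lattices $(1-\gamma)L = \Lambda^\vee$, where $L = \{\ell\in\Lambda_{\mb{R}}^* \mid \gamma\ell\equiv\ell \bmod \Lambda^\vee\}$, and verifies this equality by computing that both $(1-\gamma)L$ and $\Lambda^\vee$ have the same index $|\det(\gamma-1)|$ in $L$ (e.g.\ via Smith normal form). By contrast, you work directly at the level of $\pi_0$, establish one-sided non-degeneracy through the intersection--projection duality $(\Lambda^\vee\cap W^*)^\dagger = \pi_W\Lambda$ for the rational subspace $W=\mathrm{im}(1-\gamma)$, and then close the argument with a cardinality match: the determinant in the isolated case, and the identification $\pi_0((T_\Lambda)^\gamma)\cong H^1(\langle\gamma\rangle,\Lambda)$ together with Tate duality for cyclic groups in general. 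The paper's route is shorter and entirely elementary (a single index computation), while yours is more structural: it makes explicit the underlying lattice-duality mechanism and casts the general case as a cohomological duality, which clarifies why the result should hold and would transport more readily to related settings.
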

\bp It is an immediate check to verify that the above pairing is well-defined and bilinear as well as to reduce to case of isolated nontrivially-stabilized points by considering $\langle \gamma \rangle$ acting on $T_{\Lambda}$ modulo the identity component of $(T_{\Lambda})^{\gamma}$; the statement that we have a perfect pairing may then be restated as follows. Namely, if $L \supset \Lambda^{\vee}$ denotes the superlattice defined by $$L := \{\ell \in \Lambda_{\mb{R}}^* \bigm| \gamma \ell \equiv \ell\bmod{\Lambda^{\vee}}\}\,,$$ then we wish to show that the inclusion of lattices $(1-\gamma)L \subset \Lambda^{\vee}$ is in fact an equality. But one may immediately verify this claim by use of Smith normal form; for example, one may calculate that the index of both of the above lattices within $L$ is $\det((\gamma - \mathrm{Id})|_{\Lambda_{\mb{R}}^*})$. \ep


Now,~\eqref{eq:charfun} and Lemma~\ref{lem:dft} yield that for any $\gamma \in \Gamma_{\mu}\setminus\{1\}$, $\chi_{\gamma}(V_{\mu}) = 0$. Character theory then yields that $V_{\mu}$ must be some number of copies of the regular representation of $\Gamma_{\mu}$, at which point $\dim V_{\mu} \ge |\Gamma_{\mu}|$ immediately yields condition (v) of Proposition~\ref{prop:eqconnequiv} (and equality in the aforementioned inequality). \ep

\begin{proof}[Proof of Proposition~\ref{prop:orbFlat2}] Let us abbreviate $Y = \{(\sigma,\phi_0) \bigm| \sigma$ is the monodromy representation of an equivariant $H^s$-regular flat connection on $\mc{R}_{\Gamma}$ and $\phi_0 \in (\mf{u}(R_{\Gamma}) \otimes \Lambda_{\mb{R}}^{\perp})^{\Gamma}$ is $\sigma$-invariant with $[\phi_0,\phi_0] = 0\}$. We first claim that for any $(\sigma,\phi_0) \in Y$, $\mathrm{ev}(\sigma,-i\phi_0) \in ((\mathrm{Sym}^{|\Gamma|}(Q/\Lambda))^{\Gamma})^0$. Indeed, either all the $\phi_0$ identically vanish, or they do not. In the latter case, there will exist some eigenvalue $\mu \in Q/\Lambda \simeq T_{\Lambda} \times \Lambda_{\mb{R}}^{\perp}$ whose projection to $\Lambda_{\mb{R}}^{\perp}$ is nonzero, but then such $\mu$ has trivial $\Gamma$-stabilizer and hence immediately produces a full $\Gamma$-orbit. In the former case, $\mathrm{ev}(\sigma,-i\phi_0)$ simply collapses to $\mathrm{ev}(\sigma)$ and we conclude via the combination of Propositions~\ref{prop:eqconnequiv} and~\ref{prop:goodeq}.

So, given the above claim, we have a map $Y \stackrel{\mathrm{ev}}{\to} ((\mathrm{Sym}^{|\Gamma|}(Q/\Lambda))^{\Gamma})^0 \stackrel{\sim}{\to} Q/\hat{\Gamma}$ which is manifestly $U(R_{\Gamma})^{\Gamma}$-invariant, i.e., we have a map we continue to denote \beq\mathrm{ev}\colon Y/U(R_{\Gamma})^{\Gamma} \to Q/\hat{\Gamma}\,.\eeq
Surjectivity of this map is clear via the consideration of constant connections $a$ and functions $\phi$ valued in the diagonal Cartan $\mf{t}$. Injectivity also follows from a reduction to Cartan-valued operators: by Propositions~\ref{prop:eqconnequiv} and~\ref{prop:goodeq}, we may immediately apply equivariant gauge transformations to assume that our connections are equivariant, constant, and $\mf{t}$-valued. It is a similar, but easier, argument to then apply a further constant $U(R_{\Gamma})^{\Gamma}$ gauge transformation to assume that any $\phi_0$ is $\mf{t}$-valued. But now, given two such pieces of data with the same eigenvalues (with multiplicity), these data only differ by a permutation induced by the action of right-multiplication of some $\gamma \in \Gamma$ on the standard basis of $R_{\Gamma}$; in particular, they differ by the action of a further constant equivariant gauge transformation. \end{proof}

\begin{rmk} Readers familiar with either the math or physics literature on T-duality may recognize that a large part of the above proofs may be packaged as a computation of the effect of T-duality on the relevant K-theory lattices of dual torus \emph{orbifolds}. Indeed, we have a topological T-duality isomorphism $K_{\Gamma}(T_{\Lambda}) \simeq K^{\tau}_{\Gamma}(T^{\vee}_{\Lambda})$, where the $\tau$ on the right side indicates the possible twisting in the case of an affine action~\cite{GT}. If one complexifies the above isomorphism, there is a natural decomposition of both sides into twisted sectors labelled by the conjugacy classes of $\Gamma$ via the formalism of \emph{delocalized} equivariant K-theory (or Chen-Ruan orbifold cohomology, cohomology of the inertia stack, or so on), and one may ask for a more direct understanding of the topological T-duality isomorphism with respect to these decompositions. For example, the untwisted sectors $\mathrm{H}^{*}(T_{\Lambda};\mb{C})^{\Gamma}, \mathrm{H}^{*}(T^{\vee}_{\Lambda};\mb{C})^{\Gamma}$ corresponding to the identity conjugacy class are canonically isomorphic by restricting to the $\Gamma$-invariance of the usual (non-equivariant) T-duality isomorphism induced by the Poincar\'e line. One way to parse the above is then that the twisted sectors on either side (i) naturally organize into our spaces of FI parameters computed in~\S\ref{subsec:FIparam} and (ii) are mapped to each other by a \emph{discrete} Fourier transform induced by the perfect Weil-type pairing of Lemma~\ref{lem:dft}. This discrete Fourier transform will regularly appear in the geometric work to follow; for example, in the present paper, we will see that we naturally have two (triples of) FI parameters $\xi_0$ and $\xi_L$ associated to the two fixed points of $Z_2\acts{S^1}$, but that it is $\xi_0 \pm \xi_L$ which control whether the associated moduli space of instantons is singular (and more generally, yield the periods, after suitable normalization, of the resolved $2$-spheres of said singularities). 

One interpretation of the proof above then is that we are computing the equivariant Chern characters for the nontrivially twisted sectors in the delocalized decomposition; alternatively, one could pass through the T-duality to arrive at an equality in $K_{\Gamma}(T_{\Lambda})$ of the form \beq\label{eq:eqK}\iota_*[\mb{C}] = \sum_{[\mu] \in T_{\Lambda}/\Gamma} (\iota_{\mu})_* [V_{\mu}]\,,\eeq where $\iota_{\mu}$ denotes the inclusion of orbifolds $\{\mu\} / \Gamma_{\mu} \to T_{\Lambda}/\Gamma$ and $\iota$ the inclusion of any stabilizer-free point, which once again immediately yields that the $\mu$ comprise a full $\Gamma$-orbit. Note that $K_{\Gamma}(T_{\Lambda}) \simeq K_{\hat{\Gamma}}(Q)$, and one may interpret the analysis of the previous section as taking place in (an operator-theoretic model for) this $\hat{\Gamma}$-equivariant K-theory above. \end{rmk}

\begin{rmk}\label{rmk:fracD0}
For physically-inclined readers, the essence of condition (v) of Proposition~\ref{prop:eqconnequiv} and of the ensuing Proposition~\ref{prop:goodeq} is to confirm that we have a single integral D0 brane probing the orbifold $T_{\Lambda}/\Gamma$ rather than some more complicated configuration of fractional D0 branes frozen at nontrivially-stabilized subloci of $T_{\Lambda}$; the above is also a direct physical translation of~\eqref{eq:eqK}. This point of view also makes it clear why the subtleties encountered in these past two sections were absent in Kronheimer's original case of $r = 0$: there, $\Gamma\acts{\mb{C}^2}$ only had one fixed (or nontrivially stabilized) point, and so there was no possibility of fractional D0 branes frozen at different fixed points. Mathematically phrased, condition (v) of Proposition~\ref{prop:eqconnequiv} is automatic in the case $r = 0$.
\end{rmk}

It is reasonable in the present generality to consider the question of connectedness for moduli spaces of instantons on equivariant bundles $\mc{R}_{\Gamma}$ other than the trivial bundle considered here -- i.e., in the language of Remark~\ref{rmk:conninj} to ask whether the relevant map on $\pi_0$ is generally injective as opposed to merely principally injective. We will not pursue these questions further here. 

\subsection{The main conjecture}\label{subsec:mainconj}

We are finally in position to conjecture some benefits of our infinite-dimensional generalization of Kronheimer's construction. Namely, we now have a principled method by which we hope to systematically construct resolutions of flat hyperkahler orbifolds; i.e., as hyperkahler quotients of the datum of Proposition~\ref{prop:hktime} for the general moment map parameters $\vec{\xi} \in \mf{z}^3$ of Theorem~\ref{thm:FI} and as deforming the flat orbifold of Theorem~\ref{thm:noFI}. The constructions of the penultimate part of Conjecture~\ref{conj:mainconj} below are yet to be introduced in detail; we'll remark on the period maps after stating the conjecture.

\begin{conj}\label{conj:mainconj} Given $\hat{\Gamma}\acts{Q}$ for $Q \simeq \mb{R}^4$ and $\Gamma \hookrightarrow Sp(1)$ nontrivial as above, there exists the following: \begin{enumerate}[(i)] \item a Hilbert Lie group $\mc{G}_{\hat{\Gamma}}$ with $H^{s+1}_{c,\Gamma}(T^{\vee}_{\Lambda}\setminus F, PU(R_{\Gamma})) \subset \mc{G}_{\hat{\Gamma}} \subset H^{s+1}_{\mathrm{loc},\Gamma}(T^{\vee}_{\Lambda} \setminus F, PU(R_{\Gamma}))$, \item a Hilbert space $\mc{A}_{\hat{\Gamma}}$ with $\mathrm{Conn}^s_{c,\Gamma}(T^{\vee}_{\Lambda}\setminus F, R_{\Gamma}) \times H^s_{c,\Gamma}(T^{\vee}_{\Lambda}\setminus F, \mf{su}(R_{\Gamma})) \subset \mc{A}_{\hat{\Gamma}} \subset \mathrm{Conn}^s_{\mathrm{loc},\Gamma}(T^{\vee}_{\Lambda} \setminus F,R_{\Gamma}) \times H^s_{\mathrm{loc},\Gamma}(T^{\vee}_{\Lambda} \setminus F,\mf{su}(R_{\Gamma}))$, \item a family of flat, hyperkahler affine spaces $\mc{A}_{\hat{\Gamma}}(\vec{\xi})$ parametrized by $\vec{\xi} \in \mf{z}^3$ such that $\mc{A}_{\hat{\Gamma}}(\vec{\xi})$ is a torsor for $\mc{A}_{\hat{\Gamma}}$ for all $\xi$, and \item a family of tri-Hamiltonian actions $\mc{G}_{\hat{\Gamma}}\acts{\mc{A}_{\hat{\Gamma}}(\vec{\xi})}$ specializing at $\vec{\xi} = 0$ to an action compatible with the action $\mc{G}^{s+1}_{\hat{\Gamma}}\acts{\mc{A}^s_{\hat{\Gamma}}}$ of Proposition~\ref{prop:hktime}, i.e., as agreeing on the intersection $\mc{G}^{s+1}_{\hat{\Gamma}} \cap \mc{G}_{\hat{\Gamma}}$ acting on $\mc{A}^s_{\hat{\Gamma}} \cap \mc{A}_{\hat{\Gamma}}$ \end{enumerate} such that this data satisfies the following: \begin{enumerate}[(i)]\setcounter{enumi}{4}\item as $\vec{\xi} \in \mf{z}^3$ and $\Lambda \subset Q$ vary, the family of hyperkahler quotients $\mc{A}_{\hat{\Gamma}}(\vec{\xi})\hq\mc{G}_{\hat{\Gamma}}$ is a full-dimensional family of hyperkahler orbifolds within the deformation space of $Q/\hat{\Gamma}$ as a hyperkahler orbifold, \item the action of $\mc{G}_{\hat{\Gamma}}$ on $\mc{A}_{\hat{\Gamma}}(\vec{\xi})$ extends to a holomorphic action of a complexified Lie group $(\mc{G}_{\hat{\Gamma}})_{\mb{C}}$, \item there exists a DUY isomorphism $\mc{A}_{\hat{\Gamma}}(\vec{\xi})\hq \mc{G}_{\hat{\Gamma}} \stackrel{\sim}{\to} \mu_{\mb{C}}^{-1}(\xi_{\mb{C}})^{\mathrm{ps}}/(\mc{G}_{\hat{\Gamma}})_{\mb{C}}$, \item given any $\vec{\xi} = (\xi_{\mb{C}},\xi_{\mb{R}})$ and $\vec{\xi}' = (\xi_{\mb{C}}, 0)$, there exists a holomorphic map $\mc{A}_{\hat{\Gamma}}(\vec{\xi})\hq\mc{G}_{\hat{\Gamma}} \to \mc{A}_{\hat{\Gamma}}(\vec{\xi}')\mathord{/ \!\! / \! \! /_{\xi'}}\mc{G}_{\hat{\Gamma}}$ which is always a biholomorphic isomorphism when $\xi_{\mb{C}}$ is general\footnote{i.e., for an open dense locus in $\mf{z}_{\mb{C}}$}, \item for $r < 4$, the period maps of the K\"ahler forms for the family of hyperkahler manifolds $\mc{A}_{\hat{\Gamma}}(\vec{\xi})\hq \mc{G}_{\hat{\Gamma}}$ are linear in $\vec{\xi}$, and \item the normalizer of the gauge group $N(\mc{G}_{\hat{\Gamma}})$ acts on $\bigcup_{\vec{\xi} \in \mf{z}^3} \mc{A}_{\hat{\Gamma}}(\vec{\xi})$ over the diagonal action of $W(\mc{G})$ on $\mf{z}^3$, and so the total space of the hyperkahler family $\bigcup_{\vec{\xi}\in\mf{z}^3} \mc{A}_{\hat{\Gamma}}(\vec{\xi})\hq\mc{G}_{\hat{\Gamma}}$ admits a $W(\mc{G})$-action. \end{enumerate} 
\end{conj}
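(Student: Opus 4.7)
The plan is to work case by case on the exponent $r$, mirroring the strategy pursued in the remainder of the paper for $r=1$, $\Gamma=Z_2$. The backbone is the construction of the singular equivariant instanton moduli problem itself. To each $p_i \in F \subset T^\vee_\Lambda$ with stabilizer $\Gamma_i \subset \Gamma$ we assign a local model in which, after a gauge fix, the data $(\nabla,\phi)$ near $p_i$ is the pullback of Kronheimer's ALE instanton configuration on $\RR^r \setminus \{0\}$, with leading asymptotics controlled by $\xi_i \in \mf{z}_{p_i}^3 \subset \mf{z}^3$ via the Kronheimer triple moment map on $\mathrm{End}_{\Gamma_i}(R_\Gamma)$-valued data. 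One then defines $\A_{\hat\Gamma}(\vec\xi)$ as the torsor of $\Gamma$-equivariant Sobolev pairs on $T^\vee_\Lambda \setminus F$ satisfying this prescribed asymptotic profile at each $p_i$ and gluing to a smooth global structure, and $\G_{\hat\Gamma}$ as those gauge transformations that preserve the profiles and hence extend continuously across each $p_i$.

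Once these definitions are in place, the hyperkahler-quotient structure and items~(iii)--(v) follow by transporting Propositions~\ref{prop:reg}--\ref{prop:hktime} to the singular setting: the FI parameters $\vec\xi$ enter as distributional contributions to the moment maps supported at the $p_i$, as dictated by Theorem~\ref{thm:FI}, and smoothness of the quotient away from wall loci reduces to a Kuranishi/slice argument---ellipticity of the linearized deformation complex on the smooth part, together with Kronheimer's finite-dimensional moment-map smoothness at each local $p_i$-slice. That the family deforms $Q/\hat\Gamma$ is then obtained by letting $\vec\xi \to 0$ and invoking Theorem~\ref{thm:noFI} together with a regularity argument showing that singular profiles collapse onto the flat constant data of Proposition~\ref{prop:orbFlat}.

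The third phase addresses the complexified and DUY aspects~(vi)--(ix). For~(vii), we mimic the Donaldson-functional strategy of~\cite{donaldson:duy,donaldson:duy2}: define a modified Donaldson functional on the $(\G_{\hat\Gamma})_\CC$-orbits within $\mu_\CC^{-1}(\xi_\CC)^{\mathrm{ps}}$ whose critical points satisfy $\mu_\RR = \xi_\RR$, and show that polystability produces a minimizer by a concentration/compactness argument in which behavior near $F$ is controlled by the Kronheimer local model. Part~(viii) is then essentially wall-crossing: since the complex formulation depends on $\xi_\RR$ only through (poly)stability (cf.~footnote~\ref{footnote:VGIT}), for generic $\xi_\CC$ the stable locus is unchanged when $\xi_\RR$ varies, and the DUY maps of~(vii) on either side sandwich the claimed birational isomorphism. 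For~(ix), linearity of the K\"ahler period map is essentially forced: the relevant homology classes are supported on small cycles near each $p_i$, and the pairing returns $\xi_i^\RR$ tautologically, as verified in the $r=1$ case by Remark~\ref{rmk:periods}; the case $r=4$ is excluded because compactness forces a quadratic correction via Riemann-bilinear relations.

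Item~(x) is then formal: Proposition~\ref{prop:Nact} provides the $N(\G_{\hat\Gamma})$-action on the total space $\bigcup_{\vec\xi} \A_{\hat\Gamma}(\vec\xi)$ covering the $W(\G)$-action on $\mf{z}^3$, which descends to the hyperkahler quotient. We expect the main obstacles, in decreasing order of difficulty, to be~(a) the analytic setup of the singular problem---weighted or edge-type Sobolev theory on $T^\vee_\Lambda\setminus F$, Fredholmness of $d^+\oplus d^*$ and of the linearized operator, and asymptotic regularity of solutions at each $p_i$; (b) the DUY theorem, which historically demands arguments tailored to each infinite-dimensional setting (cf.~the explicit $r=1$ treatment in~\S\ref{sec:duy}); and~(c) existence of the complexified gauge-group action in~(vi), since polar decomposition interacts nontrivially with the prescribed singular profile.
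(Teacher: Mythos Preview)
The statement you are attempting to prove is explicitly a \emph{conjecture} in the paper---it has no proof there, and the paper does not claim one. The bulk of the paper is devoted to establishing only the special case $r=1$, $\Gamma=Z_2$, and the discussion immediately following the conjecture is not a proof sketch but rather an explanation of where the difficulties lie and which parts are expected to be formal versus genuinely hard. Your proposal is therefore not being compared against a proof in the paper; it is a proposed research program for an open problem.

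As a program, your outline tracks the paper's own heuristic discussion fairly closely, but it understates the obstacles in a way the paper does not. The paper singles out the \emph{high-dimensional regime} $r=3,4$ as qualitatively harder: the formal gauge transformations needed for item~(viii) would have exponential (not polynomial) blow-up at the singular points, and the linearized operator near each $p_i$ falls into Melrose's scattering calculus rather than the $b$-calculus, where it fails to be fully elliptic in the relevant sense---so the Fredholm theory you invoke in obstacle~(a) does not follow from existing technology and would require genuinely new analysis. Your phrase ``work case by case on the exponent $r$'' and your appeal to ``weighted or edge-type Sobolev theory'' gloss over precisely this gap. Similarly, saying that the local model near $p_i$ is ``the pullback of Kronheimer's ALE instanton configuration on $\mathbb{R}^r\setminus\{0\}$'' is imprecise: Kronheimer's data is zero-dimensional linear algebra, and what the singular profile should actually be in dimensions $r\ge 2$ is part of what the conjecture asks one to determine.

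In short: there is no proof in the paper to compare against, and your proposal is not a proof but a plausible-sounding outline whose hardest steps (the analytic setup for $r\ge 3$, and the DUY theorem in each dimension) are exactly those the paper flags as open and requiring new tools. Your closing paragraph acknowledging obstacles~(a)--(c) is the most honest part of the write-up; the earlier paragraphs should be read in that light rather than as a proof.
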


Let us first explain the main idea of the above conjecture, along with where the main difficulties lie as opposed to which statements should then follow formally. First, why introduce the family of spaces $\mc{A}_{\hat{\Gamma}}(\vec{\xi})$? Similarly, why allow for modifications in the regularity conditions about $p \in F$ for $\mc{G}_{\hat{\Gamma}}$, $\mc{A}_{\hat{\Gamma}}$?

While the deformed moment map equations $\mu_{\mb{C},\mb{R}} = \sum (\xi_i)_{\mb{C},\mb{R}} \delta(p_i)$ for $(\xi_i)_{\mb{C},\mb{R}} \in \mf{z}_{p_i}^3$ are perfectly well-formed equations, they admit no solutions within $\mc{A}^s_{\hat{\Gamma}}$ for any $s > r/2$. Indeed, a local analysis shows any solutions exhibit singular behavior at the points $p$, with this behavior becoming worse for higher $r$. Hence, it becomes a delicate analysis question to find the correct function space in which to solve this singular PDE. As we indicate in the $r = 1$ example of this paper, this function space will consist of $\vec{\xi}$-dependent singular terms plus some terms of higher regularity; hence, one arrives at the family of affine spaces $\mc{A}_{\hat{\Gamma}}(\vec{\xi})$ considered above. 


Let us briefly indicate the main idea of (viii) above, as it will also elucidate the key dimension-dependence of this conjecture. The holomorphic map conjectured here generalizes the construction of~\cite[Proposition 3.10]{kronheimer:construct}, and one would like to construct it in general by simply using the DUY isomorphism of (vii) above to, schematically, perform the composition $$\mu^{-1}(\vec{\xi})/\mc{G}_{\hat{\Gamma}} \to \mu_{\mb{C}}^{-1}(\xi_{\mb{C}})/(\mc{G}_{\hat{\Gamma}})_{\mb{C}} \to \mu^{-1}(\vec{\xi}')/\mc{G}_{\hat{\Gamma}}\,.$$ If the above were valid, this construction would immediately yield (viii) modulo some care tracking the $\xi^{\mb{R}}$-dependent polystability condition. There is, of course, a wrinkle here: as just observed, the ambient spaces $\mc{A}_{\hat{\Gamma}}(\vec{\xi})$ themselves are $\xi$-dependent. One hence needs an additional step to map from $\mc{A}_{\hat{\Gamma}}(\vec{\xi})$ to $\mc{A}_{\hat{\Gamma}}(\vec{\xi}')$ in the first place if one wishes to carry out the above strategy. In good cases, such a map may be effected by a \emph{formal} gauge transformation that changes the local singularity behavior about the special locus $F$; we carry this out below for $r = 1$ and will do so in~\cite{mz:ALG} for $r = 2$, where the formal gauge transformation that does so is analytically well-behaved. The analogous formal gauge transformations for $r = 3, 4$ would have exponential blow-up, however. This is one indication of the increased analytic difficulty\footnote{For experts, another important difference between the low- and high-dimensional regimes runs as follows. As we will illustrate in later sections, given a solution of the deformed moment map equations, one can linearize the moment map equations about this solution in order to learn about the local structure of the moduli space. In tandem with an appropriate gauge fixing condition, this yields a linear PDE which is formally elliptic, but its coefficients are singular at the points of $F$. For $r\le 2$, there are existing formalisms that make manifest the existence of a parametrix for this linear operator, in spite of its singular coefficients; e.g., for $r=2$ this is Melrose's $b$-calculus \cite{melrose:b,melrose:b2}; see also \cite{LockhartMcOwen,MazzeoEdge}. For $r > 2$, however, the relevant technology is Melrose's scattering calculus \cite{melrose:scat0,melrose:scat}, and the operators one obtains are not fully elliptic in the appropriate sense for this calculus. Hence, the construction of a parametrix, an associated Fredholm theory, etc., require the development of new analytic tools.} of the \emph{high-dimensional regime} (namely, $r > 2$) of Conjecture~\ref{conj:mainconj}.

Finally, given a locally trivial family of manifolds, as is always the case if the manifolds are compact by Ehresmann and should be the case for the family $\mc{A}_{\hat{\Gamma}}(\vec{\xi})\hq \mc{G}_{\hat{\Gamma}}$ over the open, dense locus $U \subset \mf{z}^3$ of parameter space where the fibers are manifolds, one obtains a canonical flat connection on $\mathrm{H}_2(-; \mb{Z})$. Let us denote this lattice as $\mathrm{H}$. Integrating the forms $(\omega_\CC, \omega)$ hence yields, at least locally, well-defined maps $U \to \mathrm{Hom}(\mathrm{H},\mb{R})^{\oplus 3}$. It is these maps that we are asking to be linear in the $\vec{\xi}$; suitably interpreted (as $\Lambda \subset Q$ varies as well), the slope of this linear map will produce an identification between the relevant linear spaces. As motivation for this period linearity conjecture, we note that Duistermaat-Heckman~\cite{heckman:FIlinear} and Atiyah-Bott~\cite{atiyah:equiv} showed that periods are linear in moment map parameters for finite-dimensional (hyper)kahler quotients, and so in particular this is the case for Kronheimer's construction of ALE manifolds \cite[Prop. 4.1(i)]{kronheimer:construct}. We will also demonstrate it for the $D_2$ ALF manifold at the end of \S\ref{sec:complex}; as we will explain in \S\ref{sec:prelim}, this case can be reformulated as a finite-dimensional hyperkahler quotient of a non-affine space, and so the above general results apply, but our proof generalizes to the intrinsically infinite-dimensional $r=2$ case, as we will demonstrate in \cite{mz:rels}. This period linearity conjecture suggests that hyperkahler quotient constructions should yield a powerful means of proving surjectivity of period maps.

Note the caveat that $r < 4$ here; for $r = 4$, Hodge theory for the compact K3 manifold will imply that the triple of K\"ahler forms stays a (scaled) orthonormal basis of the self-dual part of $\mathrm{H}$ for all $\xi$, and linearity will fail if interpreted literally as above. We nevertheless expect an appropriate version of period-linearity will hold for $r = 4$. 
We leave fuller discussion of all these points to later work on these period maps~\cite{mz:HFA}.

\renewcommand{\thesubsection}{\thesection.\Alph{subsection}}
\setcounter{subsection}{0}
\subsection{Relation to the derived McKay correspondence}\label{subsec:sheaf}

The goal of Conjecture~\ref{conj:mainconj} is to produce a family of hyperkahler manifolds. While the complex formulation offers strictly less information, as it only produces a holomorphic symplectic manifold, it is useful for comparing to constructions in holomorphic (or algebraic) geometry. For example, see~\cite{CharbonneauHurtubise} for a pleasant (and not unrelated) overview of how the Nahm transform is related to the Fourier-Mukai transform of holomorphic geometry via DUY isomorphisms. We hence sketch in this appendix subsection some of the holomorphic aspects of Conjecture~\ref{conj:mainconj}. This will lead us to speculate on a number of relationships that this conjecture likely has with other constructions of hyperkahler manifolds.

Let us specialize, for sake of concreteness, to the case $r = 4$ with $\Gamma = Z_2$ acting by negation on the complex four-torus $T := \mb{C}^2/\Lambda$ (which has, until now, been denoted by $T_\Lambda$). We suppose that $T$ is equipped with a polarization, i.e., that it carries the structure of a polarized abelian surface. This restriction is for simplicity, as we will make reference to theorems in the algebraic geometry literature. The dual torus $\hat{T}$ (which we have elsewhere denoted by $T^\vee_\Lambda$) is now also an abelian surface; recall its construction as the moduli space of degree-zero line bundles on $T$. Let $S$ and $\hat{S}$ denote the Kummer K3 surfaces of $T$ and $\hat{T}$, respectively. In other words, denote by $S$ the canonical crepant resolution of $T/\Gamma$, and similarly for $\hat{S}$. Consider now the following square: $$\begin{tikzcd} S \ar[leftrightarrow, dashed, r] \ar[leftrightarrow, dashed, d] & \hat{S} \ar[leftrightarrow, dashed, d] \\ T/\Gamma \ar[leftrightarrow, dashed, r] & \hat{T}/\Gamma \end{tikzcd}$$ with the dashed arrows indicating \emph{some} sort of correspondence. To fix ideas, one could interpret each dashed arrow above as an equivalence of (bounded, derived) categories of coherent sheaves, where one takes the bottom row to consist of the $\Gamma$-equivariant category of sheaves on $T, \hat{T}$; then, the vertical arrows would be given by derived McKay correspondences~\cite{bridgeland:derivedMcKay} while the horizontal arrows would be given by Fourier-Mukai correspondences~\cite{MukaiFM}.\footnote{Note that the bottom horizontal arrow is in fact an \emph{equivariant} Fourier-Mukai correspondence~\cite{KrugSosna} induced by the natural $\Gamma$-equivariant structure on the Poincar\'e line bundle~\cite{MumfordAV} on $T \times \hat{T}$.}

We are here interested not just in equivalences of derived categories but rather in isomorphisms of moduli spaces of objects therein cut out by fixed numerical invariants and appropriate stability conditions. In the present context, we would particularly like to compare the following three moduli spaces: (i) $\Gamma$-equivariant rank-two sheaves on $\hat{T}$, (ii) $\Gamma$-equivariant length-two zero-dimensional subschemes on $T$, and (iii) structure sheaves of points on $S$. The correspondence between (ii) and (iii) is essentially tautological, as it is synonymous with the construction of $S$ as Nakamura's $\Gamma$-Hilbert scheme of points~\cite{Nakamura} on $T$ used in the work of~\cite{bridgeland:derivedMcKay}. The correspondence between (i) and (ii) is, of course, Fourier-Mukai, but it may also be worked out completely explicitly. Indeed, analogously to Atiyah's description of bundles on elliptic curves~\cite{atiyah1957vector}, the objects of (i) above are of the following form: (a) $\mc{L} \oplus \mc{L}^{-1}$ for $\mc{L} \in T \simeq \hat{\hat{T}}$ a degree-zero line on $\hat{T}$, or (b) a nontrivial extension of $\mc{L}$ by $\mc{L}^{-1} \simeq \mc{L}$ for $\mc{L}$ one of the 16 $2$-torsion points of $T$. The sub- and quotient-bundles above, as equivariant bundles, carry either trivial or sign representations at each of the 16 2-torsion points of $\hat T$. (The choice of these representations is constrained by the choice of `numerical invariants,' i.e., a $\Gamma$-equivariant K-theory class of $\hat T$. The relevant class may be determined via a variety of approaches \cite{kronheimer:instALE,waldram:lol,wendland:orb}.) Note that in contrast to Atiyah's case, if $\mc{L}$ is two-torsion then the relevant space of extensions \emph{ignoring equivariance data} is $\mathrm{H}^1(\mc{L}, \mc{L}) \simeq \mb{C}^2$ (by Serre duality and Riemann-Roch), so the isomorphism classes of nontrivial extensions are parametrized by a copy of $\mb{CP}^1$. As we will shortly note, choosing generic stability conditions will validate this computation which ignored the equivariance data. In this way, we find that the bundles in case (b) provide the resolved 2-spheres in $S$.

In fact, given the conjectural picture set out by Conjecture~\ref{conj:mainconj}, the full picture in which one may be interested is as follows: $$\begin{tikzcd} \mathrm{Inst}(S) \ar[leftrightarrow, rr] \ar[leftrightarrow, dr] \ar[leftrightarrow, dd] & & \mathrm{Inst}(\hat{S}) \ar[leftrightarrow, dr] \ar[leftrightarrow, dd] \\ & \mathrm{Coh}(S) \ar[leftrightarrow, rr] \ar[leftrightarrow, dd] & & \mathrm{Coh}(\hat{S}) \ar[leftrightarrow, dd] \\ \mathrm{Inst}_{\Gamma}(T) \ar[leftrightarrow, rr] \ar[leftrightarrow, dr] & & \mathrm{Inst}_{\Gamma}(\hat{T}) \ar[leftrightarrow, dr] \\ & \mathrm{Coh}_{\Gamma}(T) \ar[leftrightarrow, rr] & & \mathrm{Coh}_{\Gamma}(\hat{T})\makebox[0pt][l]{\,.} \end{tikzcd}$$

Schematically, in the above, we mean by $\mathrm{Coh}(-)$ \emph{some} moduli space of sheaves with fixed numerical invariants (Mukai vector) and stability condition, and by $\mathrm{Inst}(-)$ some space of instantons. The $\Gamma$ subscripts indicate equivariant sheaves or instantons, respectively. Then, one may now hope for correspondences between all the moduli spaces above where the correspondences on the front (coherent) face of the cube are those described previously, the diagonal arrows are DUY isomorphisms, and the horizontal arrows on the back face of the cube are Nahm transforms. The appropriate extension of Conjecture~\ref{conj:mainconj} to this setting now runs as follows: choose $\xi^{\mb{C}} \equiv 0$ for all $16$ points of $F \subset \hat{T}$, and choose $\xi^{\mb{R}}$ generic\footnote{As we are working in the complex formulation in this appendical subsection, the precise value of $\xi^{\mb{R}}$ is immaterial as long as it is generic -- although there is some chamber dependence that enters in the construction of $S$ as the moduli of $\Gamma$-clusters on $T$ as opposed to a slightly different choice of $\Gamma$-equivariant objects.} at all 16 points. Then the K3 manifold produced by the singular equivariant instantons of $\mathrm{Inst}_{\Gamma}(\hat{T})$ agrees with $S$ as an underlying holomorphic symplectic manifold, and this isomorphism is suitably compatible with the correspondences sketched above. In particular, there is a natural space of stability conditions on the Mukai lattice of $\hat{T}/\Gamma$, i.e., on $\mathrm{K}_{\Gamma}(\hat{T})$, given by pulling back the slope stability condition on $\hat{S}$ with varying K\"ahler moduli of the resolved 2-spheres; this space is in canonical isomorphism with the FI parameter space $\mf{z}$ with basis given by the $2$-torsion points of $\hat{T}$.\footnote{Indeed, varying these parameters will change whether it is the sub- or the quotient-line at each $2$-torsion point which carries the sign representation in the stable objects of type (b) above.} There are also degrees of freedom for the stability condition associated to varying the K\"ahler moduli of $\hat T$.

Note in the above cube of comparisons that the vertical arrows of the ``back face'' predict \emph{differential geometric} versions of the McKay correspondence; e.g., a correspondence between instantons on $\hat{S}$ and singular $\Gamma$-equivariant instantons on $\hat{T}$. This would even be of interest in the ALE case studied by Kronheimer and Nakajima~\cite{kronheimer:instALE}. We expect that this correspondence of instantons should generalize straightforwardly to the case of $\xi^\CC\not=0$.




\subsection{Asymmetric orbifolds}\label{subsec:asymorb}

In this subsection, we explain the modifications to the general setup of \S\ref{subsec:setup} when the action of $\hat\Gamma$ on $Q$ is affine. The upshot will be that with only mild changes to the definitions, we again obtain a hyperkahler quotient construction of $T_\Lambda/\Gamma$ (and, conjecturally as per Conjecture~\ref{conj:mainconj}, of its deformations). In the case of an affine action, there is still a dual action of $\Gamma$ on $T^\vee_\Lambda$ as defined via the map $\Gamma \to O(Q)$, but this map forgets the translation data from the original action of $\Gamma$ on $T_\Lambda$. This additional data will instead be encoded via some interesting gauge-theoretic notions. The title of this section is taken from the physics literature, where one says that we are studying instantons on \emph{asymmetric orbifolds}; as we will see, mathematically, there is simply a more general context for studying instantons which need not have any relation to orbifold moduli spaces or Fourier duality. 

The key observation is that, while we no longer have the map $\tilde S\colon \hat\Gamma\to \Lambda$ introduced above Definition \ref{def:originalDefs}, we have an immediate analog in $f\colon \hat\Gamma\to \Lambda_\RR$ as defined by $f(\hat\gamma) := \hat\gamma(0)$, i.e., the result of acting on $0\in \Lambda_\RR$ with $\hat\gamma$.\footnote{Although not relevant for our current presentation, it is pleasant to note that if $\overline{f}\colon\Gamma\to T_{\Lambda}$ denotes the canonical descent of $f$, then $\overline{f}$ is an explicit crossed morphism realizing the class in $\mathrm{H^1}(\Gamma;T_{\Lambda}) \simeq \mathrm{H^2}(\Gamma;\Lambda)$ representing the original extension class of $\hat{\Gamma}$ as an extension of $\Gamma$ by $\Lambda$.} The $\hat\Gamma$ action on $\mf{u}^{-\infty}(R_{\hat{\Gamma}}) \otimes_{\mb{R}} Q$ from Definition \ref{def:originalDefs} then generalizes immediately, and therefore so do the definitions of $\A^s_{\hat\Gamma}$ and $\tilde\G^s_{\hat\Gamma}$. In particular, $d\in \mf{u}^{-\infty}(R_{\hat\Gamma})\underset{\mb{\RR}}{\otimes} Q$ maps $\iota(\tilde\gamma)$ to $i \, \iota(\tilde\gamma)\otimes f(\tilde\gamma)$. We next generalize \eqref{eq:Ldefn}:
\be \label{eq:twistL}
L(g)^{\gamma_1,\gamma_2} := \sum_{\hat{\gamma}_1 \in \pi^{-1}(\gamma_1)} \langle {\iota(\hat\gamma}_1), g \iota(\hat{\gamma}_2) \rangle \exp(i \brackets{f(\hat{\gamma}_1) - f(\hat{\gamma}_2) } \cdot \vec\theta) \ . \ee
Here, $\hat\gamma_2 \in \hat\Gamma$ is a fixed choice of lift of $\gamma_2\in \Gamma$; this choice is immaterial, thanks to the $\Lambda$-invariance constraint on $g$. Similarly, the $\Lambda$-invariance constraint on $A\in \A^{-\infty}_{\hat\Gamma}$ implies that we may write it as $A=d+a$, where $a$ satisfies $\avg{\iota(\lambda\hat\gamma_1), a\iota(\lambda\hat\gamma_2)} = \avg{\iota(\hat\gamma_1), a\iota(\hat\gamma_2)}$ for all $\lambda\in \Lambda$. Having done so, using
\be \label{eq:twistHom} f(\hat\gamma\hat\gamma_1) = \gamma f(\hat\gamma_1) + f(\hat\gamma) \ee
we find that $a$ satisfies $\avg{\iota(\hat\gamma\hat\gamma_1), a \iota(\hat\gamma \hat\gamma_2)} = \gamma \avg{\iota(\hat\gamma_1), a \iota(\hat\gamma_2)}$ for all $\hat\gamma\in \hat\Gamma$. This leads us to define $L(A)$ analogously to \eqref{eq:twistL}.

The first minor complication in the present setting, compared to the linear case, is that $L(g)$ is no longer a function from $T^{\vee}_{\Lambda}$ to $U(R_{\Gamma})$; instead, it now acquires phases upon traversing the cycles of $T^{\vee}_{\Lambda}$. This phase structure is not difficult to describe: $L(g)$ is a unitary automorphism of the Hermitian vector bundle $\R_\Gamma := \Lambda^*_\RR \times R_\Gamma / \Lambda^\vee$ over $T^\vee_\Lambda$, where $\lambda^\vee\in \Lambda^\vee$ acts by translation on $\Lambda_\RR^*$ and multiplies $\iota(\gamma)\in R_\Gamma$ by the phase $\exp(i f(\hat\gamma) \cdot \lambda^\vee)$; here, $\hat\gamma\in \hat\Gamma$ is an arbitrary lift of $\gamma\in \Gamma$, and this phase is independent of this choice. Similarly, a point $A\in \A^s_{\hat\Gamma}$ is naturally regarded as a tuple $L(A)$ consisting of a connection on this bundle and an element of $H^s(T^\vee_\Lambda, \End(\R_\Gamma)\otimes \Lambda_\RR^\perp)$. Before we address $\Gamma$-equivariance, we note that we may trivialize this bundle after picking a set-theoretic section $S\colon \Gamma\to \hat\Gamma$ of $\hat{\Gamma}\stackrel{\pi}{\twoheadrightarrow} \Gamma$. Indeed, the monodromy phase structure of the defining basis for $\R_\Gamma$ may be cancelled away by the similarly-monodromizing unitary change of basis which multiplies the section $\iota(\gamma)$ by $\exp(-i f(S(\gamma))\cdot \vec\theta)$. However, while it is pleasant to note that $\mc{R}_{\Gamma}$ is trivializable as a Hermitian vector bundle, we prefer to work below with the defining basis as it is both canonical (i.e., independent of a choice of $S$) and better adapted for the discussion of $\Gamma$-equivariance. We note that although the defining basis does not trivialize $\R_\Gamma$, we may still interpret $d$ as the exterior derivative in this basis, since pulling back $\R_\Gamma$ to $\Lambda_\RR^*$ via the cover $\Lambda_{\mb{R}}^* \twoheadrightarrow \Lambda_{\mb{R}}^*/\Lambda^{\vee}$ yields a trivial product bundle which carries a canonical connection $d$, and this connection is $\Lambda^{\vee}$-invariant and hence descends to a well-defined $d$ on $\mc{R}_{\Gamma}$.

We now turn to the subject of $\Gamma$-equivariance. We discover a second minor complication: na\"ively, we expect that the equivariant structure on the bundle relates $\iota(\gamma_1)$ in the fiber over $\vec\theta\in T^\vee_\Lambda$ to $\iota(\gamma\gamma_1)$ in the fiber over $\gamma \vec\theta$, but these vectors have different monodromies and so cannot be related so straightforwardly. Instead, we introduce a diffeomorphism of $\R_\Gamma$ that covers the action of $\gamma$ on $T^\vee_\Lambda$, but which relates fibers not just by $j(\gamma)$, but rather by the product $m(\gamma) j(\gamma)$ of the permutation matrix $j(\gamma)$ with a monodromizing function $m(\gamma) := \exp(-if(S(\gamma))\cdot\vec\theta)$, where, as above, $S$ is a set-theoretic section. The monodromy of $\exp(-if(S(\gamma))\cdot\vec\theta) \iota(\gamma\gamma_1)$ about a cycle $\gamma \lambda^\vee\in \Lambda^\vee$ is $\exp(i [-f(S(\gamma)) + f(\hat\gamma \hat\gamma_1)]\cdot \gamma \lambda^\vee)$, where $\hat\gamma,\hat\gamma_1$ are, respectively, arbitrary lifts of $\gamma,\gamma_1$. Using \eqref{eq:twistHom} and $\exp(-i f(S(\gamma))\cdot \gamma\lambda^\vee) = \exp(-i f(\hat\gamma)\cdot\gamma \lambda^\vee)$, this monodromy may be re-expressed as $\exp(i \gamma f(\hat\gamma_1)\cdot \gamma \lambda^\vee) = \exp(i f(\hat\gamma_1) \cdot \lambda^\vee)$, which coincides with the monodromy of $\iota(\gamma_1)$ about the cycle $\lambda^\vee$. So, we have a well-defined diffeomorphism of $\R_\Gamma$. However, this association $\gamma \mapsto m(\gamma) j(\gamma)$ does not define a $\Gamma$-action; rather, associativity fails by a $\Gamma$-cocycle and this structure is hence termed a twisted $\Gamma$-equivariant structure. We refer to~\cite{FreedMoore,GT,GKT} for detailed discussions of the twisted equivariance that arises here.\footnote{Similarly, if one wishes to study the holomorphic geometry of~\S\ref{subsec:sheaf} in this context, one finds that the Poincar\'e line bundle inducing the equivariant Fourier-Mukai isomorphism is a twisted equivariant bundle~\cite[\S4.2.1]{GT} yielding a twisted equivariant Fourier-Mukai isomorphism.} We note that the canonical connection $d$ on $\R_\Gamma$ is twisted equivariant, or alternatively that $d$ descends from the canonical connection on the trivial product bundle $\Lambda_{\mb{R}}^* \times R_{\Gamma}$ not just to a connection on the descended bundle over $\Lambda_{\mb{R}}^* / \Lambda^{\vee} \simeq T_{\Lambda}^{\vee}$ but in fact to a (twisted) connection (on the descended \emph{twisted} bundle) over $\Lambda_{\mb{R}}^* / (\Lambda^\vee \rtimes \Gamma) \simeq T_{\Lambda}^{\vee}/ \Gamma$, where the quotient notation above denotes orbifold quotients. This claim may, \emph{a priori}, seem surprising as $m(\gamma)$ is not constant, but recall that the gauge transformation law of a twisted equivariant connection involves a correction term, as per Definition 7.1 of~\cite{Karoubi}. In particular, the space of connections on a twisted bundle is still a torsor over adjoint-valued $1$-forms, and so upon subtracting the canonical twisted equivariant connection $d$ we may still expect $L(A)$ to simply be an equivariant monodromizing adjoint-valued $1$-form.

That being said, the twisted equivariance constraints on $L(A)$ and $L(g)$ are indistinguishable from their untwisted counterparts since $m(\gamma)$ cancels out of adjoint actions. We may verify, as in \eqref{eq:gEquiv}, that these constraints agree with the $\hat\Gamma$-invariance constraints on $A,g$. For $g$, this computation takes the form
\begin{align}
L(g)^{\gamma_1,\gamma_2}(\gamma\vec\theta) &= \sum_{\hat\gamma_1\in \pi^{-1}(\gamma_1)} \avg{\iota(\hat\gamma_1), g\iota(\hat\gamma_2)} \exp\parens{i\brackets{\gamma^{-1}(f(\hat\gamma_1)-f(\hat\gamma_2))}\cdot\vec\theta} \nonumber \\
&= \sum_{\hat\gamma_1\in \pi^{-1}(\gamma_1)} \avg{\iota(\hat\gamma_1), g \iota(\hat\gamma_2)} \exp\parens{i\brackets{f(\hat\gamma^{-1}\hat\gamma_1)-f(\hat\gamma^{-1}\hat\gamma_2)}\cdot\vec\theta} \nonumber \\
&= \sum_{\tilde\gamma_1\in \pi^{-1}(\gamma^{-1}\gamma_1)} \avg{\iota(\tilde\gamma_1), g \iota(\tilde\gamma_2)} \exp\parens{ i \brackets{f(\tilde\gamma_1) - f(\tilde\gamma_2)}\cdot\vec\theta} \nonumber \\
&= L(g)^{\gamma^{-1}\gamma_1,\gamma^{-1}\gamma_2}(\vec\theta) \ . \label{eq:gEquivTwist}
\end{align}
The second equality employed \eqref{eq:twistHom}, while the third equality introduced $\tilde\gamma_2 = \hat\gamma^{-1}\hat\gamma_2$ and made use of $\hat\gamma^{-1} g \hat\gamma = g$. The only caveat worth commenting on is that \eqref{eq:gEquivTwist} should be understood as being formulated in terms of the pullback of $L(g)$ to a gauge transformation of the trivialized bundle $\Lambda_\RR^* \times R_\Gamma$, so that we can make sense of comparing the matrix elements of $L(g)$ at $\vec\theta$ and $\gamma\vec\theta$.

We conclude this subsection by discussing the simple example introduced in Remark~\ref{rmk:crystal}, where $0 \to \Lambda \to \hat{\Gamma} \to \Gamma \to 0$ is given by $0 \to \mb{Z} \stackrel{2}{\to} \mb{Z} \to Z_2 \to 0$, and in particular the relationship with the alternative approach with $\Lambda' = \hat\Gamma$ and $\Gamma'$ trivial. Namely, we abbreviate by $\hat{T}$ and $\hat{T}'$ the dual circles $\Lambda_{\mb{R}}^* / \Lambda^{\vee}$ and $\Lambda_{\mb{R}}^* / (\Lambda')^{\vee}$, respectively, so that the natural map $\hat{T}' \to \hat{T}$ is a degree-$2$ cover. Then $\mc{R}_{\Gamma'}$ is simply the trivial product rank-one line bundle on $\hat{T}'$ while $\mc{R}_{\Gamma}$ is a rank-two bundle on $\hat{T}$ which may be described as the (sheaf) pushforward of $\mc{R}_{\Gamma'}$ under $\hat{T}' \twoheadrightarrow \hat{T}$. Elements of the gauge group in the `unprimed' description are maps from $\hat{T}$ to $U(2)$ whose off-diagonal entries pick up $-1$ signs as the circle is traversed once and which (the $\Gamma$-equivariance condition) commute with $\begin{pmatrix} 0 & 1 \\ 1 & 0 \end{pmatrix}$. It is enjoyable to see directly that this group agrees with $\mathsf{Maps}(\hat{T}', U(1))$; schematically, this isomorphism takes the form $g(\theta) \mapsto \frac{1}{2} \begin{pmatrix} g(\theta) + g(\theta+\pi) & g(\theta) - g(\theta+\pi) \\ g(\theta) - g(\theta+\pi) & g(\theta) + g(\theta+\pi) \end{pmatrix}$. 

\subsection{A remark on topology}\label{subsec:topology}
\renewcommand{\thesubsection}{\thesection.\arabic{subsection}}

Let us specialize in this section to $\Gamma \subset SU(2)$ nontrivial acting on $Q \simeq \mb{R}^4$; it is not difficult to see that in all cases, the corresponding minimal resolution of $Q/\hat{\Gamma}$ is simply-connected. Per the hyperkahler quotient construction of Conjecture~\ref{conj:mainconj}, one may then expect that the Lie group $\mc{G}_{\hat{\Gamma}}$ is connected. Indeed, \emph{a priori}, the simple-connectivity of the quotient only entails that $\pi_0 \mc{G}_{\hat{\Gamma}}$ acts simply-transitively on $\pi_0$ of the space of solutions of the moment map $\mu_{\xi}$ on $\mc{A}_{\hat{\Gamma}}(\xi)$, but it is reasonable to surmise that both these spaces are in fact connected. Note that this is very much in contrast to, e.g., the moduli space of flat $U(1)$ connections on a torus $T$, where the relevant gauge group $C^{\infty}(T,U(1))$ has a lattice of connected components isomorphic to the fundamental group of the flat connection moduli space $T^{\vee}$.

Of course, Conjecture~\ref{conj:mainconj} does not specify the regularity behavior of elements of $\mc{G}_{\hat{\Gamma}}$ about the nontrivially stabilized locus $F$, but topological properties of mapping spaces are insensitive to the regularity (at least for regularity sufficiently larger than the homological degree). Hence, we are led to expect the following:

\begin{conj}\label{conj:top} Given a linear\footnote{Of course, this hypothesis is for simplicity; in the case of an affine action, we mean the appropriate equivariant gauge group formulated in terms of $\mc{R}_{\Gamma}$.} action $\hat{\Gamma}\acts{Q}$ for $Q \simeq \mb{R}^4$ and $\Gamma \subset SU(2)$ nontrivial, the group $C_{\Gamma}(T^{\vee}_{\Lambda}, SU(R_{\Gamma})) / Z_{|\Gamma|}$ is connected. \end{conj}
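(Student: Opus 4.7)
My plan is to prove the stronger statement that $C_{\Gamma}(T^{\vee}_{\Lambda}, SU(R_{\Gamma}))$ is itself connected; since $Z_{|\Gamma|}$ is a finite discrete subgroup, Conjecture~\ref{conj:top} would then follow immediately. The first step is to reduce to the based case via the evaluation map $\mathrm{ev}_0\colon C_{\Gamma}(T^{\vee}_{\Lambda}, SU(R_{\Gamma})) \to SU(R_{\Gamma})^{\Gamma}$ at the $\Gamma$-fixed origin. A standard bump-function extension shows this is a surjective continuous group homomorphism, and the target $\{(g_j) \in \prod_{V_j\in\mathrm{Irr}\,\Gamma} U(V_j^*)\colon \prod (\det g_j)^{\dim V_j} = 1\}$ is connected (being cut out of a product of unitary groups by one determinant relation). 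The homotopy long exact sequence for this fibration then reduces the problem to connectedness of the kernel $C^*_{\Gamma}(T^{\vee}_{\Lambda}, SU(R_{\Gamma}))$ of based equivariant maps.

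For the based case I would invoke equivariant (Bredon) obstruction theory relative to a $\Gamma$-equivariant CW structure on $T^{\vee}_{\Lambda}$ having $0$ as a $0$-cell. Given a based equivariant map $f$, the obstruction to building an equivariant based null-homotopy of $f$ over a $\Gamma$-cell of dimension $n$ with stabilizer $\Gamma_p$ lies in $\pi_{n-1}(SU(R_{\Gamma})^{\Gamma_p})$, and collectively the obstructions assemble into Bredon cohomology classes in $H^n_{\Gamma}(T^{\vee}_{\Lambda}, 0; \underline{\pi_{n-1}(SU(R_{\Gamma})^{\Gamma_{(-)}})})$ for $1 \leq n \leq r$. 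The $n = 1$ obstruction vanishes since every fixed-point set is connected, and the $n = 3$ obstruction vanishes because each $SU(R_{\Gamma})^{\Gamma_p}$ is a product of unitary groups and so has $\pi_2 = 0$. The nontrivial cases are $n = 2$, with coefficients $\pi_1(SU(R_{\Gamma})^{\Gamma_p}) \simeq \mathbb{Z}^{|\mathrm{Irr}\,\Gamma_p|-1}$, and (when $r = 4$) $n = 4$, with coefficients $\pi_3(SU(R_{\Gamma})^{\Gamma_p})$, which is free abelian.

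For the remaining degrees I would separate contributions from the free stratum and from the finite singular stratum $F$ (which is finite by Remark~\ref{rmk:isoaction}). The free-stratum part is accessible through a Leray-type spectral sequence for the coarse quotient $T^\vee_\Lambda/\Gamma$, which has trivial $\pi_1$ by Proposition~\ref{prop:Gdefn}; combined with the genericity of $\Gamma\acts\Lambda$, which holds automatically for nontrivial $\Gamma \subset Sp(1)$ by Remark~\ref{rmk:gen}, this should force the relevant $\Gamma$-invariant rational cohomology of $T^\vee_\Lambda$ to vanish in low positive degrees. The singular contribution splits as a direct sum over the points of $F$, with each summand expressible in terms of the representation theory of $\Gamma_p$ and of $SU(R_{\Gamma})^{\Gamma_p}$. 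The principal obstacle is verifying that these point-supported classes are all absorbed by the $Z_{|\Gamma|}$-quotient; this is precisely where the difference between $SU(R_\Gamma)$ and $SU(R_\Gamma)/Z_{|\Gamma|}$ becomes essential, and case-by-case analysis across the ADE classification of $\Gamma \subset Sp(1)$ may be unavoidable. An alternative strategy I would pursue in parallel is to identify $\pi_0$ of the gauge group through the Atiyah-Hirzebruch spectral sequence as a quotient of equivariant K-theory and then invoke the T-duality isomorphism $K_\Gamma(T^\vee_\Lambda) \simeq K_{\hat\Gamma}(Q)$ from~\S\ref{subsec:flatorb2}; since $Q$ is $\hat\Gamma$-equivariantly contractible onto its unique nontrivially-stabilized point, this would reduce the problem to a manifestly algebraic statement in the representation theory of $\hat\Gamma$.
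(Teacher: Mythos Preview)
The paper does not prove Conjecture~\ref{conj:top} in full; it is explicitly left open, with only the case $r=4$, $\Gamma=Z_2$ established and sketches given for the $r=2$ cyclic cases. Your proposal is a programme toward a uniform argument rather than a proof, as you yourself signal with phrases like ``should force'' and ``may be unavoidable.'' The paper's approach is quite different from yours: for $\Gamma=Z_2$ it writes $C_{Z_2}(T^n,SU(2))$ as an iterated equivariant loop space and uses the fibre sequence $\Omega X \to C((I,\partial I),(X,X^{Z_2})) \to (X^{Z_2})^2$ inductively; the cases $r=3,4$ are then handled by exhibiting explicit $Z_2$-equivariant representatives of generators of $\pi_3 SU(2)$ and $\pi_4 SU(2)$. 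For the other $r=2$ groups it restricts to a fundamental domain on the torus and reduces to connectedness of $SU(|\Gamma|)^{\Gamma_p}$. No Bredon cohomology or K-theory enters.

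There is a concrete error in your obstruction-theoretic setup: the obstruction to extending a null-homotopy over the $n$-cells of $T^\vee_\Lambda$ lies in $H^n_\Gamma\bigl(T^\vee_\Lambda,0;\underline{\pi_n(SU(R_\Gamma)^{\Gamma_{(-)}})}\bigr)$, with $\pi_n$ rather than $\pi_{n-1}$ coefficients. This shifts which degrees are automatic: the $\pi_2=0$ argument disposes of $n=2$, not $n=3$, and the genuinely nontrivial obstructions sit at $n=1$ (coefficients in $\pi_1$, which is free abelian of positive rank for nontrivial $\Gamma_p$) and $n=3$. In particular, your claim that the $n=1$ obstruction vanishes ``since every fixed-point set is connected'' is not valid. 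There is also a tension between your opening promise to prove the stronger unquotiented statement and your later appeal to the $Z_{|\Gamma|}$-quotient to absorb the point-supported obstruction classes; you should decide which statement you are targeting. The K-theoretic alternative is attractive in outline, but identifying $\pi_0$ of the gauge group with the relevant piece of equivariant K-theory is itself the substantive step and is not supplied here.
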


Note that we use $C(X,Y)$ in the above to denote the space of continuous maps from $X$ to $Y$ as imposing additional regularity beyond that does not affect the topology of the mapping space.

In practice, the group above seems to be connected even without quotienting by the central $Z_{|\Gamma|}$. It is, in fact, a bit obtuse to call the above a conjecture; it is a straightforward exercise in many cases,\footnote{Indeed, one of the present authors set essentially the $r = 1, \Gamma = Z_2$ case as a qualifying exam problem, to appear at \url{https://www.math.harvard.edu/media/qualsF20\_no-solns.pdf}. }
especially for small values of $r$. Any particular example of a $\hat{\Gamma}\acts{Q}$ action may be computed in finite time, and so the above statement may be settled by simply progressing through all the finitely many cases of $\hat{\Gamma}\acts{Q}$ actions. Such a proof seems rather unilluminating though; what we would prefer is, of course, a simultaneous proof for all cases which may shed further light on how the topology of these hyperkahler quotients relates to their construction~\cite{atiyah:ym,McGN}.

\begin{prop} $C_{Z_2}(T^4,SU(2))$ is connected. \end{prop}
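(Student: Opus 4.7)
The plan is to show every $f \in C_{Z_2}(T^4, SU(2))$ is equivariantly null-homotopic; this suffices, since pointwise multiplication makes $C_{Z_2}(T^4, SU(2))$ a topological group (equivariance being preserved under products), so path-connectedness coincides with connectedness. Two features I will exploit throughout: the $\tau$-fixed subset $T_{SU} := SU(2)^{\tau}$ is the maximal torus centralizing $\sigma_x$, hence $\cong S^1$ and in particular connected, and $\tau$ coincides with conjugation by $i\sigma_x \in SU(2)$ and so is an inner automorphism of $SU(2)$, acting trivially on $H^{\ast}(SU(2))$ and $\pi_{\ast}(SU(2))$.

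First, I would equivariantly normalize $f$ at the sixteen $Z_2$-fixed points $p_i \in T^4$: since $f(p_i) \in T_{SU}$ and $T_{SU}$ is connected, left-multiplication by a $T_{SU}$-valued path (which commutes with $\tau$), supported via $Z_2$-invariant bump functions in disjoint invariant neighborhoods of each $p_i$, yields an equivariant homotopy arranging $f(p_i) = \mathrm{Id}$ for all $i$. A linear equivariant retraction on each such invariant ball $B_i \ni p_i$ then further homotopes $f$ to equal $\mathrm{Id}$ on a smaller invariant ball $B_i' \subset B_i$. Next, I would choose an equivariant CW structure on $T^4$ with the $p_i$ as $0$-cells and all higher cells free (provided, for instance, by the half-integer cubical subdivision) and attempt to build an equivariant null-homotopy $H \colon T^4 \times I \to SU(2)$ cell by cell. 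On free $k$-cells the obstructions live in $\pi_{k-1}(SU(2))$, which vanishes for $k \leq 3$; the only potentially nontrivial obstructions are the primary one in $\pi_3(SU(2)) = \mathbb{Z}$ and the secondary one in $\pi_4(SU(2)) = \mathbb{Z}/2$.

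The primary obstruction vanishes purely from equivariance: it is detected by the pullback $f^{\ast}[SU(2)] \in H^3(T^4; \mathbb{Z})$, and equivariance combined with $\tau^{\ast} = \mathrm{id}$ on $H^3(SU(2))$ together with $(-1)^{\ast} = -1$ on $H^3(T^4; \mathbb{Z})$ gives $f^{\ast}[SU(2)] = -f^{\ast}[SU(2)]$, forcing this class to vanish as $H^3(T^4; \mathbb{Z}) \cong \mathbb{Z}^4$ is torsion-free.

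The main obstacle will be the secondary $\mathbb{Z}/2$-obstruction, which a priori lies in $H^4_{Z_2}(T^4; \underline{\pi_4(SU(2))}) \cong H^4(T^4/Z_2; \mathbb{Z}/2) \cong \mathbb{Z}/2$ and is not trivialized by the same anti-invariance argument, since $(-1)^{\ast}$ acts as $+1$ on $H^4$ with $\mathbb{Z}/2$-coefficients. I plan to handle this via equivariant framed bordism: after equivariant transversality arranging that $-\mathrm{Id}$ is a regular value of $f$, the preimage $f^{-1}(-\mathrm{Id}) \subset T^4 \setminus F$ is a $Z_2$-invariant, null-homologous framed $1$-submanifold whose components are either $\tau$-swapped pairs (each pair contributing zero in $\mathbb{Z}/2$ to the framed bordism class) or $Z_2$-invariant circles on which $Z_2$ necessarily acts freely, since the fixed set $F \subset T^4$ is discrete. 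A direct equivariant analysis of the equivariant framing on such free-action circles then forces their Arf contribution to vanish, after which an equivariant bubbling-off moves $f$ off $-\mathrm{Id}$ entirely, placing it in the image of $\exp \colon C_{Z_2}(T^4, \mathfrak{su}(2)) \to C_{Z_2}(T^4, SU(2))$, whose domain is the contractible vector space of equivariant $\mathfrak{su}(2)$-valued functions, completing the proof.
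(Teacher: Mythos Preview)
Your overall strategy---equivariant obstruction theory followed by a Pontryagin--Thom argument---is a reasonable alternative to the paper's iterated-loop-space approach, and your handling of the primary $\pi_3$ obstruction via anti-invariance on $H^3(T^4;\mathbb{Z})$ is correct. (A minor slip: in the paper's conventions the $Z_2$ acts on $SU(2)$ by $\sigma_z$-conjugation, not $\sigma_x$; this is inconsequential.)

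The gap is in the secondary obstruction step. You correctly place the equivariant obstruction in $H^4_{Z_2}(T^4;\mathbb{Z}/2)\cong H^4(T^4/Z_2;\mathbb{Z}/2)\cong\mathbb{Z}/2$, but then pivot to a Pontryagin--Thom picture whose key steps are not justified. First, the assertion that the equivariant framing on a free-$Z_2$ circle ``forces its Arf contribution to vanish'' is stated with no argument; the framings at $p$ and $\sigma p$ are related by $-\mathrm{Ad}_{\sigma_z}$ on $\mathfrak{su}(2)$, an orientation-reversing reflection, and it is not clear what this forces about the framed bordism class. Second, and more seriously, even granting that each component has vanishing non-equivariant invariant, you still need an \emph{equivariant} bubbling-off. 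For a $\tau$-swapped pair $C_1\sqcup C_2$ the sum contributes zero non-equivariantly, but $C_1$ alone need not be null-homologous (only $[C_1]+[\sigma_*C_1]=[C_1]-[C_1]=0$ is automatic), so you cannot cap off $C_1$ and its image independently. Indeed the forgetful map $H^4_{Z_2}(T^4;\mathbb{Z}/2)\to H^4(T^4;\mathbb{Z}/2)$ is multiplication by the degree $2$ of $T^4\to T^4/Z_2$, hence zero mod $2$, so vanishing of the non-equivariant secondary obstruction says nothing about the equivariant one.

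The paper resolves exactly this point by a direct construction: using the fiber sequence $\Omega L^3 SU(2)\to L^4_{Z_2}SU(2)\to (L^3_{Z_2}SU(2))^2$, it reduces to showing $\pi_1 L^3_{Z_2}SU(2)\twoheadrightarrow \pi_1 L^3 SU(2)\cong\mathbb{Z}^3\oplus\mathbb{Z}/2$, and then exhibits explicit $Z_2$-equivariant representatives of all generators---in particular an equivariant model for the suspended Hopf map $S^4\to S^3$ with $Z_2$ negating three of the five coordinates on $S^4$ and acting by complex conjugation on $\mathbb{CP}^1\subset S^3$. That explicit equivariant representative for the generator of $\pi_4(SU(2))$ is precisely the ingredient your argument is missing, and you would need something of comparable strength to close the gap.
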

\bp Let us establish some notation. Given a space $X$, let $LX = C(S^1, X)$ denote the free loop space and $\Omega X = C((I, \partial I), (X, *))$ the based loop space, where we will freely use the notation above for mapping spaces of pairs and the notation $*$ for judiciously chosen base-points. More generally, denote $L^nX = C(T^n,X)$ the $n^{\text{th}}$ iterated free loop space and if $X$ has a $Z_2$-action, consider $L^nX$ as endowed with the $Z_2$-action which simultaneously acts on $T^n$ by negation and on $X$. Denote $L^n_{Z_2}X$ the invariant maps under this action, i.e., the invariant subspace $(L^nX)^{Z_2}$. Finally, we will freely use that if $X$ is a topological group, $LX \simeq X \times \Omega X$.

Imposing $Z_2$-invariance on the description $LX = C(S^1,X)$ immediately yields $L_{Z_2}X = C((I,\partial I),(X,X^{Z_2}))$. \begin{lem} Given data as above and $k \in \mb{Z}_{\ge 0}$, $X^{Z_2}$ $k$-connected and $X$ $(k+1)$-connected implies $L_{Z_2}X$ $k$-connected. \end{lem}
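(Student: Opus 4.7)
The plan is to produce $L_{Z_2}X$ as the total space of a Serre fibration over $X^{Z_2} \times X^{Z_2}$ and then extract the connectivity bound from the long exact sequence of homotopy groups.

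First I would consider the standard endpoint-evaluation map $\mathrm{ev}\colon X^I \to X \times X$ sending $f \mapsto (f(0), f(1))$, which is the classical path-space fibration; its fiber over a point $(x_0, x_1)$ is the space of continuous paths from $x_0$ to $x_1$ in $X$. Since $X$ is $(k+1)$-connected, and in particular non-empty and path-connected, this fiber is homotopy equivalent to the based loop space $\Omega X$. Pulling back along the inclusion $X^{Z_2} \times X^{Z_2} \hookrightarrow X \times X$ gives exactly the diagram $L_{Z_2}X \to X^{Z_2} \times X^{Z_2}$, and pullbacks of Serre fibrations are Serre fibrations, so the fiber is still a copy of $\Omega X$. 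A basepoint is provided by the constant path at some $\ast \in X^{Z_2}$, which is available because $k \geq 0$ and $X^{Z_2}$ is $k$-connected (hence non-empty).

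Next I would read off connectivity from the long exact sequence. The hypothesis that $X$ is $(k+1)$-connected gives $\pi_i(\Omega X) = \pi_{i+1}(X) = 0$ for $0 \leq i \leq k$, while $X^{Z_2}$ being $k$-connected gives $\pi_i(X^{Z_2} \times X^{Z_2}) = 0$ for $0 \leq i \leq k$. The piece
\[
\pi_{i+1}(X^{Z_2} \times X^{Z_2}) \;\longrightarrow\; \pi_i(\Omega X) \;\longrightarrow\; \pi_i(L_{Z_2}X) \;\longrightarrow\; \pi_i(X^{Z_2} \times X^{Z_2})
\]
then squeezes $\pi_i(L_{Z_2}X) = 0$ for every $0 \leq i \leq k$, which is exactly the statement that $L_{Z_2}X$ is $k$-connected.

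The main obstacle is essentially bookkeeping: one must verify that the evaluation map is genuinely a Serre fibration in the compact-open topology on $L_{Z_2}X$ (equivalently, that the pullback of the path-space fibration retains the homotopy lifting property), and that the fiber identification with $\Omega X$ is valid given only that $X$ is $(k+1)$-connected rather than a Lie group or CW complex. Both facts are standard, but worth stating explicitly. As an alternative to the fibration approach, one could argue directly: a map $S^k \to L_{Z_2}X$ is the same data as a map $\tilde\phi\colon S^k \times I \to X$ with $\tilde\phi(S^k \times \partial I) \subset X^{Z_2}$, and one extends $\tilde\phi$ to $D^{k+1} \times I \to X$ in two steps — first extending over $D^{k+1} \times \{0\}$ and $D^{k+1} \times \{1\}$ into $X^{Z_2}$ using $k$-connectedness of $X^{Z_2}$, then filling the remaining $(k+2)$-disk in $X$ using $(k+1)$-connectedness of $X$. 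This combinatorial version makes the numerology transparent and avoids any fibration technology.
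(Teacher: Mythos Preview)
Your proposal is correct and follows essentially the same approach as the paper: the paper's proof consists of the single observation that evaluation at the endpoints of $I$ yields the fiber sequence $\Omega X \to C((I,\partial I),(X,X^{Z_2})) \to X^{Z_2} \times X^{Z_2}$, leaving the long exact sequence argument implicit. Your version spells out the pullback construction and the homotopy-group bookkeeping more carefully, and your alternative direct extension argument is a nice complement, but the core idea is identical.
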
 \bp The evaluation maps at the boundary of the interval yield a fiber sequence \beq\label{eq:fiber}\Omega X \to C((I,\partial I),(X,X^{Z_2})) \to X^{Z_2} \times X^{Z_2}\,.\eeq \ep Iterating the above lemma yields \begin{lem} For any $k \ge 0$, $X^{Z_2}$ $k$-connected and $X$ $(n+k)$-connected implies $L^n_{Z_2}X$ $k$-connected. \end{lem}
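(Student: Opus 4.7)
The natural strategy is induction on $n$, taking the preceding lemma as the $n=1$ base case (the case $n=0$ is immediate since $L^0_{Z_2}X = X^{Z_2}$ is $k$-connected by assumption). For the inductive step from $n-1$ to $n$, I would decompose $T^n = S^1 \times T^{n-1}$ with $Z_2$ acting by diagonal negation. Currying then gives the canonical identification
\beq L^n_{Z_2}X \;=\; L_{Z_2}\bigl(L^{n-1}X\bigr), \eeq
where the $Z_2$-action on $L^{n-1}X$ is simultaneous negation on $T^{n-1}$ composed with the given action on $X$; in particular, $(L^{n-1}X)^{Z_2} = L^{n-1}_{Z_2}X$ tautologically.

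To apply the previous lemma to $Y := L^{n-1}X$ and conclude that $L_{Z_2}Y$ is $k$-connected, I must verify: (i) $Y^{Z_2}$ is $k$-connected; and (ii) $Y$ is $(k+1)$-connected. Hypothesis (i) is precisely the inductive hypothesis applied to $n-1$, whose connectivity requirements ($X^{Z_2}$ $k$-connected and $X$ $((n-1)+k)$-connected) are implied by the hypotheses of the current lemma. For (ii), I would use the evaluation fibration $\Omega Z \to LZ \to Z$ and its long exact sequence of homotopy groups to see that $Z$ being $N$-connected forces $LZ$ to be $(N-1)$-connected; iterating $(n-1)$ times shows that $L^{n-1}X$ is $((n+k) - (n-1)) = (k+1)$-connected, as required.

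There is no genuine obstacle here: the assertion is essentially an iterative repackaging of the previous lemma, and indeed the precise connectivity shift (demanding $X$ to be $(n+k)$-connected rather than just $k$-connected) was evidently engineered so that exactly this induction closes up. The one small bookkeeping point is the basepoint choice: throughout, one selects $0 \in T^n$ (a fixed point of negation) as the basepoint of $T^n$, so that the currying identity above is compatible with pointed structures and with the $Z_2$-equivariance, allowing the iteration to proceed without incident.
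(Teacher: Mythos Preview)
Your proposal is correct and is exactly the approach the paper has in mind: the paper's entire proof is the phrase ``Iterating the above lemma yields,'' and your induction via the identification $L^n_{Z_2}X = L_{Z_2}(L^{n-1}X)$, together with the connectivity drop $L^{n-1}X$ is $(k+1)$-connected from $X$ being $(n+k)$-connected, is precisely how that iteration goes.
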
 In the case at hand $X = SU(2)$ and $X^{Z_2} = U(1)$. We would like to use the above lemma for $k = 0$ and $n = 4$, but $SU(2)$ is not even $3$-connected; note, however, that the above lemma already serves to settle the $\Gamma = Z_2$ and $r = 1, 2$ cases of Conjecture~\ref{conj:top}. We now settle the $r = 3$ case; i.e., we now show $L^3_{Z_2}SU(2)$ is connected on the way to showing the same for $L^4_{Z_2}SU(2)$. 

The fiber sequence~\eqref{eq:fiber} yields that $\pi_0 L^3_{Z_2}SU(2) = *$ would follow from $\pi_1 L^2_{Z_2}SU(2) \twoheadrightarrow \pi_0 \Omega L^2SU(2)$, which would in turn follow by finding a $Z_2$-equivariant representative of a generator of $\pi_3SU(2)$, where here $Z_2$ acts on $S^3 = \{(x_0,x_1,x_2,x_3) \in \mb{R}^4 \bigm| \sum x_i^2 = 1\}$ by negating the sign of two of the four coordinates. But the identity map $S^3 \to SU(2)$ has exactly this equivariance structure for the usual $Z_2$-action of $\sigma_z$-conjugation on $SU(2)$.

Similarly, to show $\pi_0 L^4_{Z_2}SU(2) = *$, we wish to establish $\pi_1 L^3_{Z_2}SU(2) \twoheadrightarrow \pi_1 L^3 SU(2)$, where we may \emph{a priori} compute $\pi_1 L^3SU(2) \simeq \mb{Z}^3 \oplus Z_2$. Here, the $3$ copies of $\mb{Z}$ ultimately arise from $\pi_3SU(2)$ and admit equivariant representatives as in the prior paragraph; the last copy of $Z_2$ arises from $\pi_4 SU(2)$, and we wish to show an equivariant representative where $Z_2$ acts on $S^4 = \{(x_0,x_1,x_2,x_3,x_4) \bigm| \sum x_i^2 = 1\}$ by negating three of the five coordinates. But a representative for the nontrivial element of $\pi_4S^3$ is the suspension of the Hopf map $S^3 \to \mb{CP}^1$ given explicitly by $(x_0,x_1,x_2,x_3) \mapsto [z_0,z_1]$ for $z_0 = x_0 + i x_1, z_1 = x_2 + i x_3$; negating $x_1$ and $x_3$ enacts complex conjugation on $\mb{CP}^1$. Suspending this equivariant map and taking $Z_2$ to act nontrivially on the suspension coordinate yields a suitably equivariant map $S^4 \to S^3$.
\ep

We very briefly indicate how one would continue in other cases by, for example, continuing to compute fundamental domains for the $\Gamma$-action:
\begin{proof}[Proof sketch for $r = 2, \Gamma = Z_3$] Let us first use the rhombus with vertices $0,1,e^{2\pi i/3},1+e^{2\pi i/3}$ as a fundamental domain for the two-torus $T^{\vee}_{\Lambda}$ on which $Z_3$ acts; a further fundamental domain for this $Z_3$-action is given by the sub-rhombus with vertices $0, \frac{1}{3}(2+e^{2\pi i/3}),\frac{1}{3}(1+2e^{2\pi i/3}),1 \in \mb{C}$ and one may express the group $\mathsf{Maps}_{Z_3}(T^{\vee}_{\Lambda},SU(3))$ in terms of the map on this fundamental domain. Up to homeomorphism, we may treat this sub-rhombus as simply the standard $I^2 = [0,1]^2$, in which case this mapping space may be written $\mathsf{Maps}_r(I^2,SU(3))$, where the $r$ subscript refers to the restriction \begin{align*}\mathsf{Maps}_r&(I^2,SU(3)) := \\ &\hspace{0.75cm}\{f\colon I^2 \to SU(3) \bigm|\text{for }0 \le t \le 1, f(1,1-t) = c(f(t,0)), f(1-t,1)=c^2(f(0,t))\}\,,\end{align*} where $c\colon SU(3) \to SU(3)$ is the action of the standard generator of $Z_3$. As $\pi_1(SU(3)) = \pi_2(SU(3)) = *$, the restriction map $\mathsf{Maps}_r(I^2,SU(3)) \to \mathsf{Maps}_r(\partial I^2,SU(3))$ is an isomorphism on $\pi_0$, and by evaluation at the value of $f$ at $(0,0)$ and/or $(1,1)$, $\mathsf{Maps}_r(\partial I^2,SU(3))$ fibers over $SU(3)^{Z_3}$ with fiber $X^2$ for $X$ the space $$X:=\{f\colon I \to SU(3) \bigm| f(0) = \mathrm{Id},f(1)\in SU(3)^{Z_3}\}\,.$$ As $X$ itself fibers over $SU(3)^{Z_3}$ with fiber $\Omega SU(3)$, it suffices now to note that $SU(3)^{Z_3} \simeq U(1)^2$ is connected.\ep

The same strategy works \emph{mutatis mutandis} for the other $r = 2$ cases by using, e.g., the fundamental domain the isosceles triangle with vertices $0, \frac{1}{2}(1+i), 1$ for $\Gamma = Z_4$ or the equilateral triangle with vertices $0, \frac{1}{3}(1+2e^{2\pi i/3}), \frac{1}{3}(2 + e^{2\pi i/3})$ for $\Gamma = Z_6$; in these cases with $\Gamma = Z_n$ for $n$ composite, one now needs the more general observation that if $d|n$, $SU(n)^{Z_d} \simeq S(U(n/d)^{\times d})$ is still connected.


Similarly, one may pose the question of computing higher homotopy groups; e.g., one might ask whether $$\pi_1 L^n_{Z_2}SU(2) \stackrel{?}{\simeq} \mb{Z}^{\binom{n}{2}+2^n}\,.$$ This computation is not difficult to verify for $n = 1$; otherwise, we offer no further comment here.


\section{Nahm data}\label{sec:nahm}

\subsection{Definitions and statement of results}

We now specialize the general discussion of~\S\ref{sec:setup} to the case of $r = 1$ and $\Gamma = Z_2$, which acts on $\RR^3\times S^1$ by negating all four coordinates. The moment map equations are now ordinary differential equations on a circle, which we take to have circumference $2L$. They involve traceless $2\times 2$ skew-Hermitian matrices $A^\mu(t)$, where $\mu=0,\ldots,3$ and $t$ is a coordinate on the circle. We will often treat $A^0$ and $A^i$, $i=1,2,3$, on different footing as the former transforms as part of a connection $\partial_{A^0} := \partial + A^0$ while the latter transforms in the adjoint representation of the gauge group. We refer to this package $A^\mu$ as \emph{pre-Nahm data}. 

We choose a basis such that the $Z_2$-equivariance condition takes the form $A^\mu(-t) = -\sigma_z A^\mu(t) \sigma_z$. Here, and throughout this paper, we make use of the Pauli matrices:
\be \sigma_x = \twoMatrix{0}{1}{1}{0} \ ,\quad \sigma_y = \twoMatrix{0}{-i}{i}{0} \ , \quad \sigma_z = \twoMatrix{1}{0}{0}{-1} \ . \ee
The Pauli matrices, up to multiplication by $i$, give a basis for the Lie algebra $\mf{su}(2)$, and we will often make use of them by expanding $\mf{su}(2)$ matrices in said basis, writing
\beq A^{\mu}(t) = -i(A^{\mu}_x(t) \sigma_x + A^{\mu}_y(t) \sigma_y + A^{\mu}_z(t) \sigma_z)\,.\eeq
The points with nontrivial stabilizers are $0, L$, and we have triples of moment map parameters associated with each of them. Denoting them by $\xi^i_0$ and $\xi^i_L$, respectively, the deformed moment map equations take the form
\be \partial_{A^0} A^i + \frac{1}{2} \epsilon^{ijk} [A^j, A^k] = 2i (\xi^i_L \delta_L - \xi^i_0 \delta_0) \sigma_z  \ . \label{eq:circNahm} \ee
Of course, working $Z_2$-equivariantly over $S^1$ may be more easily accomplished by working over the interval $[0, L]$. As we will now explain, the right hand side of \eqref{eq:circNahm} affects the boundary conditions imposed on pre-Nahm data on this interval.

The delta functions introduce jump discontinuities in $A^i_z$ at $t=0,L$. In tandem with the equivariance condition $A^i_z(0^+) = - A^i_z(0^-)$, and the analogous condition at $L$, we find that $A_z^i(0^+) = \xi_0^i$ and $A_z^i(L^-) = \xi^i_L$. So, once we restrict to the interval $[0,L]$, the effects of the delta functions in \eqref{eq:circNahm} are to impose the boundary conditions
\be \label{eq:jumpCond}
A^i_z(0) = \xi^i_0\ ,\quad A^i_z(L) = \xi^i_L \ . \ee
With this imposition, we may omit the delta functions from the moment map equations. 

\emph{A priori}, the $Z_2$-equivariant formulation on $S^1$ may induce one to impose additional boundary conditions on derivatives of $A^\mu$. However, just as $A^i_z(0)$ and $A^i_z(L)$ are ill-defined on the circle, due to the jump discontinuities, the moment map equations similarly demand that derivatives at $0,L$ be ill-defined. Therefore, after restricting to the interval we will only impose the boundary condition \eqref{eq:jumpCond} (as well as an analogous condition on $A^0$ -- see footnote \ref{ft:gaugeCond}) on pre-Nahm data.\footnote{We note that for the undeformed moment map equations, with $\xi=0$, it is consistent to impose the boundary conditions $\partial^n A^\mu(t_0) = (-1)^{n+1} \sigma_z A^\mu(t_0) \sigma_z$ for all $n\ge 0$ at both $t_0=0,L$. However, if we exploit the gauge freedom to impose these conditions for $\mu=0$, then they follow for $\mu=1,2,3$ from the moment map equations, together with \eqref{eq:jumpCond}. So, in any case there is no need to impose them, and indeed it is preferable not to for the sake of a uniform treatment (i.e., one that works for both $\xi=0$ and $\xi\not=0$). 

We contrast these statements with the boundary conditions imposed in this setting by Gaiotto-Witten in~\cite[\S2.2]{w:boundaryConds}. Namely, in addition to \eqref{eq:jumpCond} they imposed $\partial_{A^0} A^i(t_0) = \sigma_z \partial_{A^0} A^i(t_0) \sigma_z$. This boundary condition would follow from the combination of $A^\mu(t_0) = - \sigma_z A^\mu(t_0) \sigma_z$ and $\partial A^i(t_0) = \sigma_z \partial A^i(t_0) \sigma_z$, but as we have explained generally neither of these conditions should be imposed. When the moment map parameters vanish, the first condition coincides with \eqref{eq:jumpCond}, and together with Nahm's equations (stated below \eqref{eq:Nahm}) it implies the second.

We also observe, for the benefit of physicists, that physically one should include Nahm's equations at the boundary as part of the boundary conditions of the theory, even though in the present context that seems unnecessary as there is no need to separate the imposition of Nahm's equations at the boundary and in the bulk. The reason is that in the present paper we are interested in supersymmetric vacua, where Nahm's equations hold everywhere, whereas boundary conditions are imposed even for excited states where Nahm's equations do not hold. For example, the boundary condition associated to a single D3-brane ending on a D5-brane that is often called `Neumann' in the physics literature was studied in \S2.1.1 of \cite{w:boundaryConds} under the moniker of `D5-like boundary condition,' and this `Neumann' boundary condition really involved the imposition of Nahm's equations.} We shall similarly only impose zeroth- and first-order boundary conditions in the definition of our gauge group, as these are all the conditions we need in order for the group action to preserve the boundary conditions on pre-Nahm data.\footnote{\label{ft:gaugeCond}The first order boundary condition is needed in order to impose the boundary condition $A^0_z(t_0)=0$, so that the boundary condition \eqref{eq:jumpCond} extends naturally to $\mu=0$. Alternatively, one could omit this boundary condition on $A^0$, so long as one also omitted the first order boundary condition on gauge transformations. In the other direction, one could also impose boundary conditions of the form $\partial^n A^0(t_0) = (-1)^{n+1} \sigma_z \partial^n A^0(t_0) \sigma_z$ for arbitrarily large values of $n$, so long as one simultaneously imposed boundary conditions of the form $\partial^{n+1} g(t_0) = (-1)^{n+1} \sigma_z \partial^{n+1} g(t_0) \sigma_z$ on gauge transformations. Similar alternatives hold in the complex formulation.} Again, from the point of view of gauge theory on $S^1$, this means that gauge transformations need not be smooth at $0,L$. In total, this is the first manifestation of both (i) the variation of the spaces $\mc{A}(\xi)$ with respect to $\xi$ and (ii) how one may modify the regularity conditions at $p \in F$ in the general formulation of Conjecture~\ref{conj:mainconj}. 

We make some further definitions now.

\begin{defn} We say the moment map parameters $(\vec{\xi}_0, \vec{\xi}_L) \in \mb{R}^3 \times \mb{R}^3$ are \emph{generic} if $\vec{\xi}_0 \ne \pm \vec{\xi}_L$. \end{defn}

We denote $I = [0, L]$ and $\partial I = \{0, L\} \subset I$. We will use the following abbreviation: if we have vector spaces $\mf{z}_0, \mf{z}_L$ depending on $0, L$, respectively, we will denote by $\mf{z}_{\partial I}$ their direct sum $\mf{z}_{\partial I} := \mf{z}_0 \oplus \mf{z}_L$. We will use similar notation for pairs of vectors therein.

Next, let $t$ denote the coordinate on $I$, and abbreviate $\partial_t = d/dt$ as simply $\partial$. Denote further
\begin{align*}
T &:= \{g \in SU(2) \bigm| g\text{ is pure diagonal}\} \ , \\
\mf{t} &:= \{X \in \mf{su}(2) \bigm| X\text{ is pure diagonal}\} \ ,\text{ and} \\
\mf{t}^{\perp} &:= \{X \in \mf{su}(2) \bigm| X\text{ is pure off-diagonal}\} \ ,
\end{align*}
so that we have the splitting\footnote{We adopt the notation $\mf{t}^{\perp}$ for the off-diagonal subspace of $\mf{su}(2)$ as it is indeed the orthogonal complement with respect to the canonical Killing form on $\mf{su}(2)$.} $\mf{su}(2) \simeq \mf{t} \oplus \mf{t}^{\perp}$. Similarly denote their complexifications $\mf{t}_{\mb{C}} = \mb{C}\langle \sigma_z \rangle, \mf{t}^{\perp}_{\mb{C}} = \mb{C} \langle \sigma_x, \sigma_y \rangle$, so that $\mf{sl}(2,\mb{C}) \simeq \mf{t}_{\mb{C}} \oplus \mf{t}^{\perp}_{\mb{C}}$. Finally, for $p \in \partial I$, we will find it convenient to abbreviate the length-four vector $(0, \xi^1_p, \xi^2_p, \xi^3_p)$ as $\xi^{\mu}_p$ and the pair $(\xi^{\mu}_0, \xi^{\mu}_L)$ as $\xi^{\mu}_{\partial I}$.

\begin{defn}\label{defn:maindefn} 
Given $\vec{\xi}_0, \vec{\xi}_L \in \mb{R}^3$, define
\begin{align}
\A_{\xi} &:= \{ A^{\mu} \in C^\infty(I, \mf{su}(2))^{\oplus 4} \ \bigm| \ A^{\mu}|_{\partial I} \in -i \xi^{\mu}_{\partial I} \sigma_z + \mf{t}^{\perp} \}  \ , \nonumber \\
\A_{\mathrm{all}} &:= \{ A^{\mu} \in C^\infty(I, \mf{su}(2))^{\oplus 4} \ \bigm| \ A^{0}|_{\partial I} \in \mf{t}^{\perp} \}  \ , \nonumber \\
\tilde \G &:= \{ g \in C^\infty(I, SU(2)) \bigm|  g|_{\partial I} \in T, g^{-1} \partial g|_{\partial I} \in \mf{t}^{\perp} \} \ , \nonumber \\
\G &:= \tilde \G / Z_2 \ , \nonumber \\
\mf{g} &:= \{ h \in C^\infty(I, \mf{su}(2)) \ \bigm| \ h|_{\partial I} \in \mf{t}, \partial h|_{\partial I} \in \mf{t}^{\perp}\} \ , \nonumber \\
\F &:= C^\infty(I, \mf{su}(2))^{\oplus 3} \ .
\end{align}

In the above, the $Z_2$ subgroup of $\tilde\G$ by which we quotient to obtain $\G$ is the center of $\G$ given by $\{\pm\mathrm{Id}\}$; this center acts trivially on pre-Nahm data. 

We have $\mu\colon\mc{A}_{\mathrm{all}} \to \mc{F}$ given explicitly by
\beq\label{eq:Nahm} (A^{\mu}) \mapsto \Big(\partial_{A^0}A^i + \frac{1}{2}\epsilon^{ijk}[A^j,A^k]\Big)\,. \eeq
Pre-Nahm data in $\mu^{-1}(0)$, i.e. satisfying \emph{Nahm's equations} $\mu^i(A) := \partial_{A^0}A^i + \frac{1}{2}\epsilon^{ijk}[A^j,A^k] = 0$, will be termed \emph{(real) Nahm data}. 
\end{defn}


\begin{rmk}
Note that if we specialize $\xi$ to $0$ above, we obtain \begin{equation*} \A_0 := \{ A^{\mu} \in C^\infty(I, \mf{su}(2))^{\oplus 4} \ \bigm| \ A^{\mu}|_{\partial I} \in \mf{t}^{\perp} \}\,. \end{equation*} We hence have that $\mc{A}_0$ is a (vector) subspace of $\mc{A}_{\mathrm{all}}$ and that all the $\mc{A}_{\xi}$ are affine spaces modelled on (torsors for) $\mc{A}_0$. 
\end{rmk}
\begin{rmk}\label{rmk:so3}
Note that there is a natural action of $SO(3)$ on Nahm data via the standard $SO(3)\acts{\mb{R}^3}$ action on $(A^1, A^2, A^3)$, at least if one simultaneously acts in the same way on $(\xi^1_{\partial I}, \xi^2_{\partial I}, \xi^3_{\partial I})$.
\end{rmk}

Denote by $E = I \times \mb{C}^2$ the trivial rank 2 Hermitian vector bundle over $I$. Then, we may think of $A^0$ as a Hermitian connection on $E$ while the $A^i$ are Hermitian endomorphisms. 

We now make a definition in terms of the $\G$-action on $\A_{\rm all}$:

\begin{defn} We say pre-Nahm data is \emph{reducible} if it has a nontrivial stabilizer in $\G$. We denote the sublocus of \emph{irreducible} pre-Nahm data, with trivial stabilizer, as $\mc{A}^{\mathrm{irr}}_{\mathrm{all}} \subset \mc{A}_{\mathrm{all}}$ and $\mc{A}^{\mathrm{irr}}_{\xi} \subset \mc{A}_{\xi}$. \end{defn}

In the notation of the present section, the K\"ahler forms $\omega^i$, holomorphic symplectic form $\omega^\CC$ associated to the complex structure $J^1$, and metric on $\A_\xi$ are given by
\begin{align}
\omega^i(a,b) &= -\frac{1}{2L} \int_I \Tr(a^0 b^i - a^i b^0 + \epsilon^{ijk} a^j b^k)\, dt \nonumber \\
\omega^\CC(a,b) &= -\frac{1}{2L} \int_I \Tr( (a^0 + i a^1)(b^2+i b^3) - (a^2 + i a^3)(b^0 + i b^1) )\, dt \nonumber \\
g(a,b) &= - \frac{1}{2L} \int_I \Tr(a^0 b^0 + a^1 b^1 + a^2 b^2 + a^3 b^3)\, dt \ ,
\end{align}
where $a,b\in \A_0$.

We now have the following proposition:

\begin{prop}\label{prop:Hamtrick} For any $\xi$, the $\mc{G}$-action on $\mc{A}_{\xi}$ is tri-Hamiltonian with moment maps $\mu^i$ given as in~\eqref{eq:Nahm}. \end{prop}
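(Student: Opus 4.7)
The plan is a direct computation, with attention paid to boundary terms arising from integration by parts. First I would identify $\mathrm{Lie}\,\mc{G} \simeq \mf{g}$ (the quotient by the discrete center $Z_2$ does not affect the Lie algebra) and pair moment maps with Lie algebra elements via $\langle \nu, X\rangle := -\frac{1}{2L}\int_I \Tr(\nu\,X)\,dt$ for $\nu \in \mc{F}$ and $X \in \mf{g}$. Differentiating the right action $A^0 \mapsto g^{-1}A^0 g + g^{-1}\partial g$, $A^i \mapsto g^{-1}A^i g$ at $g = \exp(tX)$ yields the fundamental vector field $X^{\cdot} := (\partial_{A^0}X, [A^1,X], [A^2,X], [A^3,X])$ at $A \in \mc{A}_\xi$. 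Before anything else I would verify that $X^{\cdot}$ genuinely lies in the tangent space $T_A\mc{A}_\xi \simeq \mc{A}_0$; at $\partial I$ one has $X \in \mf{t}$, $\partial X \in \mf{t}^\perp$, $A^0 \in \mf{t}^\perp$, and $A^i \in -i\xi^i\sigma_z + \mf{t}^\perp$, and the facts $[\mf{t},\mf{t}]=0$ and $[\mf{t},\mf{t}^\perp]\subset\mf{t}^\perp$ immediately give $X^\mu|_{\partial I}\in\mf{t}^\perp$, as required for membership in $\mc{A}_0$.

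Next I would check the defining condition $d\langle\mu^i, X\rangle = \iota_{X^{\cdot}}\omega^i$ pointwise. Varying $A$ in a direction $a \in \mc{A}_0$ gives $d\mu^i(a) = \partial_{A^0}a^i - [A^i,a^0] + \epsilon^{ijk}[a^j, A^k]$. Substituting into $\langle d\mu^i(a), X\rangle$ and applying cyclicity of trace to the commutator terms reduces the desired identity to an integration by parts on the single term $\int_I \Tr(\partial_{A^0}a^i\cdot X)\,dt$. The argument of this integral differs from $-\Tr(a^i\cdot\partial_{A^0} X)$ by the total derivative $\partial_t\Tr(a^i X)$, so the net boundary contribution is $[\Tr(a^i X)]_{\partial I}$. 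This is precisely where the boundary conditions in Definition~\ref{defn:maindefn} earn their keep: $a^i|_{\partial I} \in \mf{t}^\perp$ and $X|_{\partial I}\in\mf{t}$, so $a^i X|_{\partial I}$ is off-diagonal and has vanishing trace. With this boundary term gone, the remaining manipulations assemble precisely into $\omega^i(a, X^{\cdot})$ as defined just above the proposition.

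Finally, $\mc{G}$-equivariance of $\mu^i$ reduces to the elementary observation that the expression $\partial_{A^0}A^i + \tfrac12\epsilon^{ijk}[A^j,A^k]$ transforms covariantly under gauge transformations: if $g\in\tilde{\mc{G}}$ then $\mu^i(g\cdot A) = g^{-1}\mu^i(A) g$. Under the trace pairing this matches the coadjoint action of $\mc{G}$ on the image of $\mu^i$ in $(\mathrm{Lie}\,\mc{G})'$, as needed. The only non-routine point in the whole argument is the vanishing of the boundary term in the integration by parts; every other step is cyclicity of trace and bookkeeping. The boundary analysis is also the reason the spaces $\mc{A}_\xi$ and $\mf{g}$ had to be defined with exactly the regularity and splitting conditions at $\partial I$ that they were --- anything weaker would fail to make $\mu^i$ a moment map for the full gauge group.
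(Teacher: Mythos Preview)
Your proposal is correct and follows essentially the same approach as the paper: both compute the two sides of the moment map identity directly, reduce their equality to an integration by parts on the $\partial_{A^0}$ term, and observe that the resulting boundary term $[\Tr(a^i X)]_{\partial I}$ vanishes because $X|_{\partial I}\in\mf{t}$ and $a^i|_{\partial I}\in\mf{t}^\perp$. Your write-up is slightly more thorough than the paper's in that you explicitly verify the fundamental vector field lands in $\mc{A}_0$ and check $\mc{G}$-equivariance of $\mu^i$, but the substantive argument is the same.
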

\bp Suppose $\delta A^{\mu}$ is a tangent vector field on $\mc{A}_{\xi}$ and $h \in \mf{g}$. Then, evaluated at $A^{\mu} \in \mc{A}_{\xi}$, we wish to verify that $- \frac{1}{2L} d\mu^i(h)(\delta A) = \iota_h \omega^i(\delta A)$. (See Remark \ref{rmk:mommap}.) We calculate
$$- \frac{1}{2L} d\mu^i(h)(\delta A) = \frac{1}{2L} \int_I \Tr\Bigg( \Big( \partial_{A^0} \delta A^i - [A^i, \delta A^0] + \epsilon^{ijk} [A^j, \delta A^k] \Big) h \Bigg) \, dt$$
while
$$\iota_h \omega^i(\delta A) = - \frac{1}{2L} \int_I \Tr \Bigg( \partial_{A^0} h\, \delta A^i - [A^i, h] \delta A^0 + \epsilon^{ijk} [A^j, h] \delta A^k \Bigg)\,dt \,.$$ An integration by parts shows that these expressions agree if and only if $$\Big[ \Tr (h \delta A^i)\Big]^L_0 = 0\,.$$ But this is precisely guaranteed by the assumption that $h|_{\partial I} \in \mf{t}, \delta A^i|_{\partial I} \in \mf{t}^{\perp}$. \ep

Let $\mf{z}^3_0, \mf{z}^3_L$ denote the copies of $\mb{R}^3$ in which $\vec{\xi}_0, \vec{\xi}_L$ lie, respectively, and let $\mf{z}^3_{\partial I} := \mf{z}^3_0 \times \mf{z}^3_L$ be the full six-dimensional parameter space. Note that $\mc{A}_{\mathrm{all}}$ is the disjoint union over all $\vec{\xi}_{\partial I} \in \mf{z}^3_{\partial I}$ of the $\mc{A}_{\xi}$; indeed, there is a natural map \beq \mc{A}_{\mathrm{all}} \stackrel{p}{\to} \mf{z}^3_{\partial I}\eeq whose fiber over $\xi \in \mf{z}^3_{\partial I}$ is $\mc{A}_{\xi}$.

\begin{defn} Denote \beq \M_{\mathrm{all}} := \mu^{-1}(0) / \mc{G}\,,\eeq where $\mu$ above is thought of as a map $\A_{\mathrm{all}} \stackrel{\mu}{\to} \mc{F}$. As $\G$ acts preserving the boundary values $A_z^{\mu}|_{\partial I}$, this quotient also admits a projection map
\beq \M_{\mathrm{all}} \stackrel{p}{\to} \mf{z}^3_{\partial I} \ . \eeq
We denote the fiber over $\xi \in \mf{z}^3_{\partial I}$ by $\M_{\xi}$; note that $\M_{\xi}$ is the hyperkahler quotient of $\mc{A}_{\xi}$ by $\mc{G}$. Similarly, denote by $\M^{\mathrm{irr}}_{\mathrm{all}} \subset \M_{\mathrm{all}}, \M^{\mathrm{irr}}_{\xi} \subset \M_{\xi}$ the irreducible subloci, i.e. the orbits of Nahm data with trivial stabilizers. \end{defn}

We can now state the main theorem of~\S\ref{sec:hkq}:

\begin{inttheorem}{thm:modHK}
The map $\M_{\mathrm{all}} \stackrel{p}{\to} \mf{z}^3_{\partial I}$ restricts to a smooth submersion of manifolds $\M^{\mathrm{irr}}_{\mathrm{all}} \stackrel{p}{\to} \mf{z}^3_{\partial I}$. Its fibers $\M^{\mathrm{irr}}_{\xi}$ over $\xi \in \mf{z}^3_{\partial I}$ are smooth (possibly empty) hyperkahler four-manifolds. 

If $\xi \in \mf{z}^3_{\partial I}$ is generic, all points of $\M_{\xi}$ are irreducible while if $\xi$ is nongeneric but not identically vanishing, there is a single point of $\M_{\xi} \setminus \M_{\xi}^{\mathrm{irr}}$. Finally, if $\xi$ identically vanishes, $\M_{\xi} \simeq (\mb{R}^3 \times S^1)/Z_2$ and the irreducible locus is precisely the complement of the two $A_1$ orbifold singularities.

The extended gauge transformations $g_1(t) = \exp(\pi i t \sigma_x / 2L)$ and $g_2(t) = \exp(\pi i (L - t) \sigma_x / 2L)$ act by commuting involutions on $\mc{M}_{\mathrm{all}}$. If $\mf{z}^3_{\partial I}$ is equipped with the commuting involutions $\xi_L \mapsto -\xi_L$ and $\xi_0 \mapsto -\xi_0$, the map $p$ is $(Z_2 \times Z_2)$-equivariant with respect to these involutions; the combined action of $g_1g_2 \equiv i\sigma_x$ is by complex conjugation and implements $\xi \mapsto -\xi$. The map $p$ is moreover $SO(3)$-equivariant for the actions of Remark~\ref{rmk:so3}; the ensuing diffeomorphisms between the corresponding $\M_{\xi}$ fibers may be interpreted as hyperkahler rotations.
\end{inttheorem}

\begin{rmk} Recall that an $A_1$ orbifold point has local model $\mb{R}^4/Z_2$, with the $Z_2$ acting by negation on all four coordinates. \end{rmk}

The statement of the last paragraph above is included for completeness and is an immediate computation once we define the terminology of \emph{complex conjugation}. Given a hyperkahler manifold, we refer to a complex conjugation thereon as an isometry of its Riemannian manifold structure under which the K\"ahler forms pull back to their negations (and so the complex structure endomorphisms transform to their conjugates). The action of $\sigma_x$-conjugation on $\mf{su}(2)$ is exactly such a complex-conjugation involution on the ambient space $\mc{A}_{\mathrm{all}}$, and hence the inherited action of $\sigma_x$-conjugation is precisely such an isometry $\M_{\xi} \to \M_{-\xi}$ under which the canonical K\"ahler forms pull back to their negations, as desired.

We will find it useful to use the following decomposition of our affine space $\mc{A}_{\xi}$:

\begin{defn} Given $\vec{\xi}_{\partial I}$ as above, define 
\begin{align}
\A' &:= \{A^0 \in C^\infty(I,\mf{su}(2)) \ | \ A^0|_{\partial I} \in \mf{t}^{\perp}\} \ , \nonumber \\
\A''_\xi &:= \{ A^i \in C^\infty(I,\mf{su}(2))^{\oplus 3} \, \bigm| \, A^i|_{\partial I} \in -i\vec{\xi}_{\partial I}\sigma_z + \mf{t}^{\perp}\} \ , \nonumber \\ \A''_{\mathrm{all}} &:= \{ A^i \in C^{\infty}(I,\mf{su}(2))^{\oplus 3}\} \ .
\end{align}
With these definitions we have
\beq \mc{A}_{\xi} \simeq \mc{A}' \times \mc{A}''_{\xi}\,.\eeq \end{defn}

We now state the complex formulation of the above, which is the subject of~\S\ref{sec:complex}. As outlined in Construction~\ref{constr:cplxKah}, we wish to (i) present our pre-Nahm data in a more manifestly complex form, (ii) give a characterization of the $\xi$-(poly)stable locus, and (iii) state a construction of $\M_{\xi}$ in the complex formulation. First, we assemble our pre-Nahm data into the more manifestly holomorphic package 
\be \alpha = A^0 + i A^1 \ , \quad \beta = A^2 + i A^3 \ . \ee
We may now parse $\alpha$ as a (non-Hermitian) connection on the rank-two complex vector bundle $E$ and $\beta$ as a general (traceless) complex endomorphism. Note, however, that the boundary conditions on $\alpha$ still make reference to the Hermitian structure on $E$; we will remedy this shortly.

We term such pairs $(\alpha, \beta)$ as \emph{complex} pre-Nahm data. Next, the complex moment map equation $\mu_{\mb{C}}(\alpha,\beta):= (\mu^2+i\mu^3)(\alpha,\beta)=0$ is
\be \partial_\alpha \beta := \partial \beta + [\alpha, \beta] = 0 \ . \ee
Complex pre-Nahm data $(\alpha, \beta)$ solving this equation $\partial_{\alpha} \beta = 0$ are termed \emph{complex Nahm data}. Note that the complex Nahm equation is invariant under the action of a complexified gauge group, i.e., $C^{\infty}(I, SL(2,\mb{C}))$.

It is prudent now to assemble $\xi^2$ and $\xi^3$ into
\beq \xi^{\mb{C}}_{\partial I} = \xi^2_{\partial I} + i \xi^3_{\partial I} \ . \eeq
We rename $\xi^1_{\partial I}$ to $\xi^{\mb{R}}_{\partial I}$.

The current boundary conditions on $(\alpha, \beta)$ depend somewhat awkwardly on $\xi^{\mb{R}}_{\partial I}$, as the boundary condition distinguishes between the Hermitian and skew-Hermitian parts of $\alpha$ and hence has a non-holomorphic flavor. We will hence perform a change of basis -- or a formal gauge transformation, per Definition~\ref{defn:for} -- following~\cite{simpson:parabolic}. Specifically, we act by a formal gauge transformation $g\in C^\infty(I,SL(2,\CC))$ with $g|_{\partial I}\in T_\CC$ and $g^{-1} \partial g|_{\partial I} \in -\xi^\RR_{\partial I} \sigma_z + \mf{t}_\CC^\perp$. Call the original frame the \emph{unitary frame} and this new one the \emph{standard frame}. The boundary conditions on $\alpha$, in standard frame, are simply $\alpha|_{\partial I} \in \mf{t}_{\mb{C}}^{\perp}$. The boundary conditions on $\beta$ and on complexified gauge transformations, in contrast, are unaffected by this change of basis.

Note that the information of $\xi^{\mb{R}}$ in the standard frame will now enter only via the stability condition. As long as we state our boundary conditions in this standard frame then we may henceforth neglect the Hermitian structure on $E$ and simply regard $E$ as a complex vector bundle. However, for later purposes it will often be convenient to equip it with a $\xi^\RR$-adapted Hermitian metric:

\begin{definition} \label{defn:hAdapt}
A $\xi^\RR$-adapted Hermitian metric is one in which $(g e_1, g e_2)$ is an orthonormal basis in a neighborhood of $t_0=0,L$, where $e_i$ are the standard basis vectors for $E = I\times \CC^2$ and $g$ is a formal gauge transformation which satisfies $g|_{\partial I}\in \mf{t}_\CC$ and $g^{-1}\partial g|_{\partial I} \in -\xi^\RR_{\partial I} \sigma_z + \mf{t}_\CC^\perp$.
\end{definition}

Given such a metric, one can undo the formal gauge transformation that we performed in passing from the unitary frame to the standard frame in order to obtain real pre-Nahm data $A^\mu$. We now introduce the main definitions for the complex formulation; as we have just explained, these notions are formulated in the standard frame, and we make no reference to a Hermitian metric.

\begin{defn}\label{defn:maincplxdefn} Given $\xi^{\mb{C}}_{\partial I} \in \mf{z}^{\mb{C}}_{\partial I}$, define \begin{align} \mc{B}_{\mathrm{all}} &:= \{(\alpha, \beta) \in C^{\infty}(I,\mf{sl}(2,\mb{C}))^{\oplus 2} \bigm| \alpha|_{\partial I} \in \mf{t}^{\perp}_{\mb{C}}\} \nonumber \\ \mc{B}_{\xi^{\mb{C}}} &:= \{(\alpha, \beta) \in C^{\infty}(I,\mf{sl}(2,\mb{C}))^{\oplus 2}  \bigm| \alpha|_{\partial I} \in \mf{t}^{\perp}_{\mb{C}}, \beta|_{\partial I} \in -i \xi^{\mb{C}}_{\partial I} \sigma_z + \mf{t}^{\perp}_{\mb{C}}\} \nonumber \\ \tilde{\mc{G}}_{\mb{C}} &:= \{g \in C^{\infty}(I, SL(2,\mb{C})) \bigm| g|_{\partial I} \in T_{\mb{C}}, g^{-1} \partial g|_{\partial I} \in \mf{t}^{\perp}_{\mb{C}} \} \nonumber \\ \mc{G}_{\mb{C}} &:= \tilde{\mc{G}}_{\mb{C}}/Z_2 \nonumber \\ \mf{g}_{\mb{C}} &:= \{h \in C^{\infty}(I, \mf{sl}(2,\mb{C})) \bigm| h|_{\partial I} \in \mf{t}_{\mb{C}}, \partial h|_{\partial I} \in \mf{t}^{\perp}_{\mb{C}}\} \nonumber \\ \mc{F}_{\mb{C}} &:= C^{\infty}(I, \mf{sl}(2,\mb{C})) \nonumber \\ \mc{D}_{\mathrm{all}} &:= \ker(\mc{B}_{\mathrm{all}} \stackrel{\mu_{\mb{C}}}{\to} \mc{F}_{\mb{C}}) = \{(\alpha, \beta) \in \mc{B}_{\mathrm{all}} \bigm| \partial_{\alpha} \beta = 0\}\nonumber \\ \mc{D}_{\xi^{\mb{C}}} &:= \ker(\mc{B}_{\xi^{\mb{C}}} \stackrel{\mu_{\mb{C}}}{\to} \mc{F}_{\mb{C}}) = \{(\alpha, \beta) \in \mc{B}_{\xi^{\mb{C}}} \bigm| \partial_{\alpha} \beta = 0\} \nonumber \\ \mc{B}' &:= \{\alpha \in C^{\infty}(I, \mf{sl}(2,\mb{C})) \bigm| \alpha|_{\partial I} \in \mf{t}^{\perp}_{\mb{C}}\} \nonumber \\ \mc{B}''_{\mathrm{all}} &:= \{\beta \in C^{\infty}(I,\mf{sl}(2,\mb{C}))\} \nonumber \\ \mc{B}''_{\xi^{\mb{C}}} &:= \{\beta \in C^{\infty}(I, \mf{sl}(2,\mb{C})) \bigm| \beta|_{\partial I} \in -i \xi^{\mb{C}}_{\partial I} \sigma_z + \mf{t}^{\perp}_{\mb{C}}\}\,. \end{align} \end{defn}

Stability will be phrased in terms of slopes of \emph{Nahm subdata}. As such, it will be useful to give a more general definition of pre-Nahm data (e.g., on bundles of general rank) and to note several natural constructions in this generality. 

\begin{defn} 
Define a \emph{marked vector bundle} on $I$ to be a tuple $\mc{E} = (E, E^{\pm}_{\partial I})$, where \begin{itemize} 
\item $E$ is a complex vector bundle on $I$, and 
\item $E^{\pm}_0 = (E_0^+, E_0^-)$ is a pair of subspaces of the fiber $E_0$ of $E$ over 0 providing a direct sum decomposition $E_0 = E_0^+ \oplus E_0^-$, and similarly for $E^{\pm}_L$.\footnote{The notation for these superscripts is due to the reformulation of pre-Nahm data on $I$ in terms of discontinuous $Z_2$-equivariant pre-Nahm data on $S^1$: in the latter conception, $E^\pm_{\partial I}$ are the eigenspaces of the $Z_2$ action on the fibers $E_{\partial I}$. However, for our purposes these subspaces $E^\pm_{0}$ will, for the most part, be treated identically (and similarly for $E^\pm_L$). The only way in which we will introduce a relative sign is that, in the cases where we wish to impose tracelessness of pre-Nahm data, we will demand $\xi^{+,\CC}_{\partial I} \dim E^+_{\partial I} = - \xi^{-,\CC}_{\partial I} \dim E^-_{\partial I}$ (and similarly for $\xi^{\pm,\RR}_{\partial I}$; these parameters will all be introduced shortly), which in the rank 2 case of interest happens to take the form $\xi^{+,\CC}_{\partial I} = - \xi^{-,\CC}_{\partial I}$.}
\end{itemize} 


Next, given $(E, E^{\pm}_{\partial I})$, we have a direct sum decomposition $$\mathrm{End}(E_0) \simeq \Big( \mathrm{End}(E_0^+) \oplus \mathrm{End}(E_0^-) \Big) \oplus \Big(\mathrm{Hom}(E_0^+,E_0^-) \oplus \mathrm{Hom}(E_0^-,E_0^+)\Big)\,,$$ and we denote the above subspaces as $\mathrm{End}^{\mathrm{diag}}(E_0),\mathrm{End}^{\mathrm{off}}(E_0)$ with corresponding projections $\pi^{\mathrm{diag}}_0, \pi^{\mathrm{off}}_0$, with similar notation at $L$. 

Given tuples of complex numbers $\xi_{\partial I}^{\pm,\mb{C}}$, we now define \emph{pre-Nahm data} on a marked vector bundle $\mc{E} = (E, E^{\pm}_{\partial I})$ associated to complex parameters $\xi^{\pm,\mb{C}}_{\partial I}$ to be a tuple $N = (\mc{E}, \partial_\alpha, \beta)$, where 
\begin{itemize} 
\item $\partial_\alpha$ is a connection on $E$ and
\item $\beta \in C^{\infty}(I, \mathrm{End}(E))$ such that $\pi^{\mathrm{diag}}_{\partial I} \beta|_{\partial I} = -i \xi^{\pm,\mb{C}}_{\partial I} \mathrm{Id}_{E^{\pm}_{\partial I}}$. \end{itemize} 



We define the ranks of both $N$ and $\E$ to be the rank of $E$. When it is clear, in context, that a bundle $\E$ is equipped with particular pre-Nahm data, we may, via an abuse of notation, refer to $E$ or $(\partial_\alpha,\beta)$ as the pre-Nahm data.

Next, given two tuples of pre-Nahm data $N = (\mc{E}, \partial_{\alpha}, \beta), N' = (\mc{E}', \partial_{\alpha'}, \beta')$, we define a morphism from the former to the latter to be a smooth bundle homomorphism $f \in C^{\infty}(I,\mathrm{Hom}(E, E'))$ such that 
\begin{itemize} 
\item $f(E_{\partial I}^{\pm}) \subset E'^{\pm}_{\partial I}$ 
\item $\partial_{\alpha'} \circ f = f \circ \partial_\alpha$, and 
\item $\beta' \circ f = f \circ \beta$. 
\end{itemize}
We define pre-Nahm data $N$ and $N'$ to be isomorphic if they are related by a morphism which is a bundle isomorphism.

It is immediate to note that with the above definitions of objects and morphisms, pre-Nahm data form a $\mb{C}$-linear category; i.e., all hom-spaces $\mathsf{Hom}(N,N')$ are $\mb{C}$-vector spaces and composition maps are bilinear.

Finally, given pre-Nahm data $N = (\mc{E},\partial_{\alpha},\beta)$, we say that $s \in C^{\infty}(I,E)$ is in the \emph{kernel} of $N$ if $\partial_{\alpha} s = \beta s = 0$.
\end{defn}

\begin{definition} \label{def:genGp}
Given a marked bundle $\mc{E}$, the natural gauge group which acts on pre-Nahm data on $\mc{E}$ is 
\be \tilde\G^{\mc{E}}_\CC := \{g \in C^{\infty}(I, GL(E)) \bigm| \pi^{\mathrm{off}}_{\partial I} g|_{\partial I} = 0 \}\,. \ee
As usual, we define $\G^{\mc{E}}_\CC$ to be the quotient thereof by the trivially-acting central subgroup. Note that this action preserves the complex parameters $\xi^{\pm,\mb{C}}_{\partial I}$. We also note that, because of our convention that gauge transformations act on the right, a gauge transformation that maps $N_1$ to $N_2$ is a morphism from $N_2$ to $N_1$.
\end{definition}

\begin{rmk} 
Note that there is mild ambiguity in our language above now: we do not assume tracelessness for all pre-Nahm data. Nevertheless, in the particular case that our pre-Nahm data is on the original rank-two trivial bundle of interest, we will continue to assume that pre-Nahm data is traceless, as per Definition~\ref{defn:maincplxdefn}. More generally, we could define pre-Nahm data for any complex reductive structure group and functoriality in this generality; indeed, the reader anticipating Construction~\ref{constr:gen} may recognize that the definition we give above corresponds to $G = GL(n,\mb{C})$ and $G_0, G_L$ maximal Levi subgroups of the form $GL(k,\mb{C}) \times GL(n-k,\mb{C})$ for some $0 \le k \le n$. Certainly, there was no need for us above to impose these particular restrictions; rather, it is simply the most immediate way to construct a $\mb{C}$-linear category containing our case of interest. The tracelessness discussion above simply corresponds to the choice of $G = SL(2,\mb{C})$ versus $GL(2,\mb{C})$. Finally, note that in our particular case of interest, we impose the extra conditions that $\pi^{\mathrm{diag}}_{\partial I}g^{-1}\partial g|_{\partial I} = 0$ for elements of the gauge group, and that $\alpha|_{\partial I} \in \mathrm{End}^{\mathrm{off}}(E|_{\partial I})$. This latter condition only makes sense as we are working on a trivialized bundle on which we already have a canonical connection $\partial$ so that we may write $\partial_{\alpha} = \partial + \alpha$; as we do not wish to assume a canonical connection (or equivalently over $I$, a trivialization) of $E$ as part of the datum of a marked vector bundle $\mc{E}$ in general, we do not take this restriction as part of the general definition. It is nonetheless a useful restriction for our purposes when comparing between the real and complex formalisms.
\end{rmk}

As indicated above, we will now quickly note some basic constructions we may perform in this larger generality: 

\begin{lem} 
The following are natural constructions: 
\begin{itemize} 
\item (subdata) Given a marked vector bundle $\mc{E}$, a \emph{marked subbundle} $\mc{E}' \subset \mc{E}$ is a marked bundle $\E'=(E',E'^\pm_{\partial I})$ such that $E'$ is a subbundle of $E$ and $E'^\pm_{\partial I} = E^\pm_{\partial I}\cap E'_{\partial I}$. Given pre-Nahm data $N = (\mc{E},\partial_{\alpha},\beta)$, we define a good subbundle to be a marked subbundle $\E'=(E',E'^\pm_{\partial I}) \subset \E$ such that $E'$ is preserved by $\partial_\alpha$ and $\beta$. We then define \emph{pre-Nahm subdata} $N'\subset N$ on a good subbundle $\E'\subset \E$ to be $N' := (\mc{E}',\partial_{\alpha}|_{E'},\beta|_{E'})$.
\item (quotient data) Given a marked vector bundle $\mc{E}$ with marked subbundle $\mc{E}' \subset \mc{E}$, the \emph{marked quotient bundle} $\mc{E}''$ has underlying vector bundle $E/E'$ and $E''^{\pm}_{\partial I} := E^{\pm}_{\partial I}/E'^{\pm}_{\partial I}$; one still has $E''_{\partial I} = E''^+_{\partial I} \oplus E''^-_{\partial I}$. If $\E'$ is a good subbundle, then $\partial_{\alpha},\beta$ descend to pre-Nahm data on $\mc{E}''$, which we term $\emph{pre-Nahm quotient data}$ $N''$. 
\item (direct sums) It is clear how to define the direct sum $\mc{E}_1 \oplus \mc{E}_2$ of two marked vector bundles and $N_1 \oplus N_2$ of two pre-Nahm data thereon with the same values of $\xi^{\pm,\mb{C}}_{\partial I}$. 
\item (tensor products) Given marked bundles $\mc{E}_1, \mc{E}_2$, their tensor product $\mc{E}_1 \otimes \mc{E}_2$ has underlying bundle $E_1 \otimes E_2$ with $(E_1 \otimes E_2)^+_0 = (E_1)^+_0 \otimes (E_2)^+_0 \oplus (E_1)^-_0 \otimes (E_2)^-_0, (E_1 \otimes E_2)^-_0 = (E_1)^+_0 \otimes (E_2)^-_0 \oplus (E_1)^-_0 \otimes (E_2)^+_0$, and similarly at $L$. Given pre-Nahm data $N_i$ on $\mc{E}_i$ with $\xi^{+,\CC}_{\partial I,i} = \xi^{-,\CC}_{\partial I,i}$ for $i=1,2$, their tensor product is defined similarly in terms of the tensor product connection and $(\beta_1 \otimes \beta_2)(s_1 \otimes s_2) = \beta_1 s_1 \otimes s_2 + s_1 \otimes \beta_2 s_2$.
\item (determinants) For any pre-Nahm data $N$ on $\E$, one may define pre-Nahm data $\det N$ on the marked determinant bundle $\det \E$.
\item (dual data) Given pre-Nahm data $N=(\E,\partial_\alpha,\beta)$, dual pre-Nahm data $N^* = (\E^*, \partial_{\alpha^*}, \beta^*)$ is given by defining the dual marked bundle $\E^* := (E^*, (E^\pm_{\partial I})^*)$, letting $\partial_{\alpha^*}$ be the dual connection on $E^*$, and defining $\beta^*$ by $(\beta^* v)(w) := - v(\beta w)$ for $v\in C^{\infty}(I, E^*)$ and $w\in C^{\infty}(I,E)$. 
\item (Hom data) When $\xi^{+,\CC}_{\partial I,i} = - \xi^{-,\CC}_{\partial I,i}$, one may also define $\Hom(\mc{E}_1,\mc{E}_2) := \E_2\otimes \E_1^*$ and $\Hom(N_1,N_2) := N_2\otimes N_1^*$; in particular, if $\xi^{+,\CC}_{\partial I} = - \xi^{-,\CC}_{\partial I}$ one may define $\End(\E) = \Hom(\E,\E)$ and $\End(N) = \Hom(N,N)$.
\end{itemize}

Note that with sub- and quotient-objects defined as above, the $\mb{C}$-linear category of pre-Nahm data is not an abelian category as kernels and cokernels need not exist in general; however, if $f \in \mathsf{Hom}(N,N')$ has constant rank, one has natural $\ker f, \im f \simeq \mathrm{coim}\,f, \coker f$ as suitable sub- and quotient-objects, respectively. 
\end{lem}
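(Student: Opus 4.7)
Each of the listed constructions is a variant of a standard bundle-theoretic operation, so the main content is to track two pieces of extra structure: (i) the marking $E^{\pm}_{\partial I}$ at the boundary, and (ii) the boundary condition $\pi^{\mathrm{diag}}_{\partial I}\beta|_{\partial I} = -i\xi^{\pm,\mb{C}}_{\partial I}\mathrm{Id}_{E^{\pm}_{\partial I}}$. The strategy is to dispose of the operations in order of increasing bookkeeping.

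First I would handle subdata and quotient data together. For subdata on a good subbundle $\mc{E}' \subset \mc{E}$, the connection $\partial_{\alpha}$ and endomorphism $\beta$ restrict to $E'$ by goodness, and the marked boundary subspaces $E'^{\pm}_{\partial I} = E^{\pm}_{\partial I} \cap E'_{\partial I}$ are automatically $\beta|_{\partial I}$-invariant since on each summand $\beta$ acts as a scalar; hence the boundary condition on the restricted $\beta$ holds with the same $\xi^{\pm,\mb{C}}_{\partial I}$ parameters. For quotients, the direct sum $E''_{\partial I} = E''^{+}_{\partial I} \oplus E''^{-}_{\partial I}$ is immediate from taking quotients of direct sums, the induced connection and endomorphism on $E/E'$ are well-defined by goodness, and the scalar action on each block descends. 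Direct sums require no verification beyond checking that the block-diagonal marking is again a direct-sum decomposition; the $\xi^{\pm,\mb{C}}_{\partial I}$ parameters are forced to match on the two summands by assumption.

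Next I would treat tensor products and the dependent constructions (determinants, duals, Hom). The hypothesis $\xi^{+,\mb{C}}_{\partial I,i} = \xi^{-,\mb{C}}_{\partial I,i}$ for each factor means precisely that $\beta_i|_{\partial I}$ acts as a single scalar $-i\xi^{\mb{C}}_{\partial I,i}\mathrm{Id}_{E_i|_{\partial I}}$ on the entire boundary fiber; therefore the Leibniz-style operator $\beta_1 \otimes \mathrm{Id} + \mathrm{Id} \otimes \beta_2$ acts as the scalar $-i(\xi^{\mb{C}}_{\partial I,1} + \xi^{\mb{C}}_{\partial I,2})\mathrm{Id}$ on $(E_1 \otimes E_2)|_{\partial I}$, which satisfies the diagonal-block condition for the marked tensor product trivially. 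The determinant construction is then the $r$-fold antisymmetrization of the tensor product (applied to $\E^{\otimes r}$ with $r = \mathrm{rk}\,\E$) restricted to the antisymmetric summand, and gives $\det N = (\det \E, \partial_{\mathrm{tr}\,\alpha}, \mathrm{tr}\,\beta)$ whose boundary condition reduces to the scalar trace identity. For dual data, verify that $\partial_{\alpha^*}$ as defined by the Leibniz rule is indeed a connection, and that $(\beta^* v)(w) = -v(\beta w)$ acts on $(E^{\pm}_{\partial I})^*$ as the scalar $i\xi^{\pm,\mb{C}}_{\partial I}\mathrm{Id}$, so $N^*$ is again valid pre-Nahm data (with $\xi^{*,\pm,\mb{C}}_{\partial I} = -\xi^{\pm,\mb{C}}_{\partial I}$). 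Finally, Hom data is defined by composing the dual and tensor product constructions, and the hypothesis $\xi^{+,\mb{C}}_{\partial I,i} = -\xi^{-,\mb{C}}_{\partial I,i}$ is designed precisely to make the resulting diagonal-block scalars on $\mathrm{Hom}(E_1,E_2)^{\pm}_{\partial I}$ coincide; in particular $\mathrm{End}(N)$ always exists when this condition holds on $N$.

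Finally, for the remark on kernels and cokernels: a morphism $f \in \mathsf{Hom}(N,N')$ is by definition a smooth bundle homomorphism intertwining both $\partial_{\alpha}, \partial_{\alpha'}$ and $\beta, \beta'$ while preserving the boundary markings. If $f$ has locally constant rank, then standard results on morphisms of smooth vector bundles yield that $\ker f \subset E$ and $\im f \subset E'$ are smooth subbundles. Equip them with the induced markings $(\ker f)^{\pm}_{\partial I} := (\ker f|_{\partial I}) \cap E^{\pm}_{\partial I}$ and $(\im f)^{\pm}_{\partial I} := f(E^{\pm}_{\partial I})$; these give direct sum decompositions because $f|_{\partial I}$ respects the $E^{\pm}_{\partial I}$ grading. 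Goodness of these subbundles with respect to $(\partial_{\alpha}, \beta)$ and $(\partial_{\alpha'}, \beta')$ respectively is immediate from the intertwining property, so they inherit pre-Nahm data and the coimage/image isomorphism and cokernel object are defined by the previous constructions.

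The main obstacle is not conceptual but organizational: keeping track, across all constructions, that the marking $E^{\pm}_{\partial I}$ is preserved in the precise form stipulated and that the resulting $\beta$-eigenvalue pattern on the new marked fibers matches the diagonal-scalar condition. The tensor product (and hence Hom) case is the one where this bookkeeping is most delicate, since the marking of $\E_1 \otimes \E_2$ mixes $\pm$ indices of the factors, and it is precisely to keep the two diagonal-block scalars consistent that the respective hypotheses on the $\xi^{\pm,\mb{C}}_{\partial I}$ are imposed.
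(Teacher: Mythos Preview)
The paper gives no proof of this lemma; it treats the listed constructions as definitional, with the implicit claim that each yields well-defined pre-Nahm data left as a routine verification. Your proposal correctly supplies exactly that verification, tracking the two pieces of structure the paper's definitions require---the boundary marking and the scalar-block condition on $\beta|_{\partial I}$---through each construction, and your identification of the tensor/Hom case as the one where the $\xi^{\pm,\mb{C}}$ hypotheses are actually used is accurate.
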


\begin{prop} \label{prop:kerMorph}
Given two pre-Nahm data $N_1,N_2$ with $\xi^{+,\CC}_{\partial I,i} = -\xi^{-,\CC}_{\partial I,i}$ for $i=1,2$, a map $f \in C^{\infty}(I,\mathrm{Hom}(E_1, E_2))$ is a morphism of the corresponding pre-Nahm data if and only if it is in the kernel of the pre-Nahm data $\mathrm{Hom}(N_1,N_2)$.
\end{prop}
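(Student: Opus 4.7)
The plan is to verify the equivalence directly by computing the induced pre-Nahm structure on $\Hom(N_1,N_2) := N_2 \otimes N_1^*$ and matching each condition with a morphism axiom.

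First, using the tensor product and dual constructions from the preceding lemma applied to $\Hom(N_1,N_2)$, one finds that the induced connection on $\Hom(E_1,E_2)$ acts by $(\partial_{\alpha_{\Hom}}f)(w) = \partial_{\alpha_2}(f(w)) - f(\partial_{\alpha_1}w)$, so $\partial_{\alpha_{\Hom}}f=0$ is equivalent to $\partial_{\alpha_2}\circ f = f\circ \partial_{\alpha_1}$. Analogously, the induced endomorphism satisfies $\beta_{\Hom}f = \beta_2\circ f - f\circ \beta_1$, so $\beta_{\Hom} f=0$ is equivalent to $\beta_2\circ f = f\circ \beta_1$. Hence the two differential/algebraic constraints defining membership in the kernel of $\Hom(N_1,N_2)$ coincide with the two intertwining axioms in the definition of a morphism.

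Second, I would unwind the marked bundle structure on $\Hom(E_1,E_2)$ carried by $N_2\otimes N_1^*$. At a boundary point $t_0 \in \partial I$, direct computation yields the natural block decomposition
\begin{align*}
\Hom(E_1,E_2)^+_{t_0} &= \Hom(E_{1,t_0}^+,E_{2,t_0}^+)\oplus \Hom(E_{1,t_0}^-,E_{2,t_0}^-),\\
\Hom(E_1,E_2)^-_{t_0} &= \Hom(E_{1,t_0}^+,E_{2,t_0}^-)\oplus \Hom(E_{1,t_0}^-,E_{2,t_0}^+).
\end{align*}
The morphism requirement $f(E_{1,\partial I}^\pm)\subset E_{2,\partial I}^\pm$ then becomes precisely the statement that $f|_{\partial I}$ lies in the $+$-component of this decomposition, which is the natural compatibility of an element of the kernel of $\Hom(N_1,N_2)$ with the marked bundle structure on $\Hom(E_1,E_2)$.

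The main delicate step, and the only place the tracelessness hypothesis $\xi^{+,\CC}_{\partial I,i}=-\xi^{-,\CC}_{\partial I,i}$ plays a substantive role, is tracking signs through the dual construction: one must check that $N_1^*$ inherits the tracelessness hypothesis and that the grading assignments in the tensor product then match with the natural block decomposition of $\Hom(E_{1,\partial I}, E_{2,\partial I})$ displayed above rather than with its off-diagonal counterpart. Once this sign reconciliation is made, the proof reduces to unpacking definitions.
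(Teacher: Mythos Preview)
The paper states this proposition without proof, treating it as an immediate unpacking of definitions, and your approach---computing the induced connection and endomorphism on $\Hom(E_1,E_2)$ and matching the kernel equations against the morphism axioms---is exactly the intended verification.

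One point worth flagging: the paper's definition of the kernel of pre-Nahm data $N$ requires only $\partial_\alpha s = \beta s = 0$, with no explicit boundary constraint on $s$. So the morphism boundary condition $f|_{\partial I}\in\Hom(E_1,E_2)^+_{\partial I}$ is not, strictly speaking, part of being in the kernel as defined. Your phrase ``natural compatibility of an element of the kernel with the marked bundle structure'' glosses over this, but the issue is really an imprecision in the proposition's statement rather than a gap in your argument. Your identification of the morphism boundary condition with membership in the $+$-component of $\Hom(E_1,E_2)_{\partial I}$ is the correct bookkeeping, and the equivalence of the two intertwining equations with the two kernel equations---which is the substantive content---is correctly established.
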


We may now make the following definition in this generality: \begin{defn} We say that pre-Nahm data $N = (\mc{E},\partial_{\alpha},\beta)$ is in fact \emph{Nahm data} if $\partial_{\alpha} \beta = 0$. Note that here $\partial_{\alpha}$ indicates the induced connection on $\mathrm{End}(E)$. \end{defn}

\begin{lem} Given Nahm data $N$, the dual pre-Nahm data $N^*$, the determinant pre-Nahm data $\det N$, and any pre-Nahm subdata $N'$ or quotient data $N''$ are automatically Nahm data. Similarly, if $N_1, N_2$ are Nahm, then so too are $N_1\oplus N_2$, $N_1 \otimes N_2$, and $\mathrm{Hom}(N_1, N_2)$ if the $\xi^{\pm,\mb{C}}_{\partial I,i}$ are such that these constructions exist.
\end{lem}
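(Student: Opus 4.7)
The plan is to verify the Nahm equation $\partial_\alpha \beta = 0$ for each construction by direct calculation, noting that in each case the relevant boundary/marking conditions are built into the preceding lemma on constructions in the $\mb{C}$-linear category, so only the flatness equation requires checking. Recall that viewing $\beta$ as an endomorphism and $\partial_\alpha$ as the induced connection on $\End(E)$, the condition $\partial_\alpha\beta=0$ is simply the assertion that $\partial_\alpha$ and $\beta$ commute as operators on $C^\infty(I,E)$.

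For subdata $N'$ and quotient data $N''$, the claim is immediate: if $E'\subset E$ is preserved by both $\partial_\alpha$ and $\beta$, then the restrictions (resp.\ induced operators on $E/E'$) continue to commute. Direct sums are trivial as both operators act diagonally. For the dual, one computes for $v\in C^\infty(I,E^*)$ and $s\in C^\infty(I,E)$ that
\[
\bigl((\partial_{\alpha^*}\beta^* - \beta^*\partial_{\alpha^*})v\bigr)(s) \;=\; -\,v\bigl((\partial_\alpha\beta - \beta\partial_\alpha)s\bigr),
\]
which vanishes by hypothesis; here one uses the defining relations $(\beta^* v)(s) = -v(\beta s)$ and $(\partial_{\alpha^*}v)(s) = \partial(v(s)) - v(\partial_\alpha s)$ together with the Leibniz rule for $\partial$. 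For tensor products, the Leibniz definition $(\beta_1\otimes\beta_2)(s_1\otimes s_2) = \beta_1 s_1\otimes s_2 + s_1\otimes\beta_2 s_2$ combined with the analogous Leibniz rule for the tensor product connection reduces the flatness of $N_1\otimes N_2$ to that of $N_1$ and $N_2$ separately. The Hom case $\mathrm{Hom}(N_1,N_2) = N_2\otimes N_1^*$ then follows from the dual and tensor product cases.

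For the determinant, choose a local frame $e_1,\ldots,e_r$ of $E$; then the induced action of $\beta$ on $\det E$ is multiplication by $\tr\beta$ (as the other terms vanish by antisymmetry), and the induced connection on $\det E$ is $\partial + \tr\alpha$. The Nahm equation on $\det E$ thus reads $\partial(\tr\beta) + [\tr\alpha,\tr\beta] = \partial(\tr\beta) = \tr(\partial_\alpha\beta)$, which vanishes by hypothesis.

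There is no serious obstacle here; every assertion is a standard functoriality property of flat endomorphisms, and the only care required is to check that the boundary behavior encoded in the marked-bundle constructions of the preceding lemma is indeed preserved, which was already arranged in the definitions of those constructions.
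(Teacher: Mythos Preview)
Your proof is correct. The paper states this lemma without proof, regarding it as routine; your direct verifications of $\partial_\alpha\beta=0$ for each construction are exactly the intended (and only reasonable) approach, and the computations you give are accurate.
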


It follows from this lemma that there is another natural $\CC$-linear category of Nahm data; morphisms between Nahm data are the same as morphisms between them when regarded as pre-Nahm data. It may be helpful to note that the relationship between this category and that of pre-Nahm data is analogous to that between the category of holomorphic vector bundles on a complex manifold and the category of \emph{almost holomorphic vector bundles} on the same manifold, where the latter are defined to be complex vector bundles equipped with an almost-Dolbeault operator, i.e., a $(0,1)$-connection not assumed to be integrable (i.e., to square to zero).

We may now define what it means for pre-Nahm data to be stable with respect to some choice of real parameters $\xi^{\pm,\mb{R}}_{\partial I}$. 

\begin{defn} Given pre-Nahm data $N$ and $\xi^{\pm,\mb{R}}_{\partial I} \in \mb{R}$, define the \emph{degree} and \emph{slope} of $N$ with respect to $\xi^{\mb{R}} = (\xi^{\pm,\mb{R}}_{\partial I})$ to be 
\begin{align} \label{eq:slopeEqn}
\deg_{\xi^{\mb{R}}}N &:= \sum_{\dagger \in \{+,-\}} \parens{\xi^{\dagger,\mb{R}}_L \dim E^{\dagger}_L - \xi^{\dagger,\mb{R}}_0 \dim E^{\dagger}_0 } \nonumber \\ \slope_{\xi^{\mb{R}}}N &:= \frac{\deg_{\xi^{\mb{R}}}N}{\rk N}\text{ for $N$ of positive rank}\,.
\end{align}

Define pre-Nahm data $N$ to be
\begin{itemize} 
\item \emph{semistable} if $\slope N' \le \slope N$ for all pre-Nahm subdata $N' \subset N$ of positive rank, 
\item \emph{stable} if this inequality is strict for all positive rank proper $N' \subset N$, 
\item \emph{unstable} if it is not semistable, i.e., if there exists positive-rank pre-Nahm subdata $N' \subset N$ with $\slope N' > \slope N$, and
\item \emph{polystable} if it is a direct sum of stable\footnote{Note that rank 0 or $1$ pre-Nahm data is always automatically stable, and so in the case that interests us in this paper with $N$ of rank two, any proper subdata $N' \subset N$ will automatically be stable.} pre-Nahm subdata of the same slope, or equivalently if it is semistable and a direct sum of stable pre-Nahm data.
\end{itemize}
Finally, we say that polystable (semistable) pre-Nahm data is \emph{strictly} polystable (semistable) if it is not stable.
\end{defn}

We note that if $N'$ is pre-Nahm subdata of $N$ and $N''$ is the associated quotient pre-Nahm data then $\deg_{\xi^\RR} N'' = \deg_{\xi^\RR} N - \deg_{\xi^\RR} N'$.

\begin{rmk} We emphasize that complex Nahm data is $\xi^{\mb{R}}$-polystable (stable, etc.) if it is $\xi^{\mb{R}}$-polystable (stable, etc.) as pre-Nahm data. In particular, the definition of stability does not require the complex Nahm equation to be satisfied. We make this atypical choice in order to clarify that many results rely only on stability, and not in addition on the complex moment map equation. \end{rmk}

\begin{rmk}
Note that if $g \in \mc{G}^\E_{\mb{C}}$ acts on pre-Nahm data $(\alpha, \beta)$, $V \subset E$ is pre-Nahm subdata for $(\alpha, \beta)$ if and only if $g^{-1}V \subset E$ is pre-Nahm subdata for $g \cdot (\alpha, \beta)$. In particular, the above stability conditions are invariant under the action of $\mc{G}_{\mb{C}}$. \end{rmk}

We now specialize back to the choice of $\E$ where $E$ is our rank 2 bundle, $E^+_0 = E^+_L = \CC e_1$, and $E^-_0 = E^-_L = \CC e_2$, where we recall from Definition \ref{defn:hAdapt} that we denote the standard basis vectors of $\mb{C}^2$ by $e_i$. Given real FI parameters $\xi^\RR_{\partial I}$, we define the parameters that enter into \eqref{eq:slopeEqn} via $\xi^{\pm,\RR}_{\partial I} = \pm \xi^\RR_{\partial I}$. In particular, we note that we always have $\deg_{\xi^\RR} N = 0$ for pre-Nahm data $N$ on $\E$. We similarly define $\xi^{\pm,\CC}_{\partial I} = \pm \xi^{\CC}_{\partial I}$.




We define the loci
\begin{align}
\mc{B}^{\mathrm{ps}}_{\xi} &:= \{(\alpha, \beta) \in \mc{B}_{\xi^{\mb{C}}} \bigm| (\alpha, \beta)\text{ is $\xi^{\mb{R}}$-polystable}\} \nonumber \\
\mc{B}^{\mathrm{ps}}_{\xi^{\mb{R}},\mathrm{all}} &:= \{(\alpha, \beta) \in \mc{B}_{\mathrm{all}} \bigm| (\alpha, \beta)\text{ is $\xi^{\mb{R}}$-polystable}\} \nonumber \\
\mc{D}_{\xi}^{\mathrm{ps}} &:= \mc{D}_{\xi^{\mb{C}}} \cap \mc{B}^{\mathrm{ps}}_{\xi} \ , \quad \mc{D}^{\mathrm{ps}}_{\xi^{\mb{R}},\mathrm{all}} = \mc{D}_{\mathrm{all}} \cap \mc{B}^{\mathrm{ps}}_{\xi^{\mb{R}},\mathrm{all}} \ ,
\end{align}
and similarly we denote the locus of $\xi^{\mb{R}}$-semistable pre-Nahm data by $\mc{B}^{\mathrm{ss}}_{\xi}$, that of $\xi^{\mb{R}}$-stable pre-Nahm data by $\mc{B}^{\mathrm{st}}_{\xi}$, etc.

It will often be convenient to introduce a $\xi^\RR$-adapted Hermitian metric and then characterize pre-Nahm subdata using orthogonal projections:

\begin{defn} \label{def:goodProj}
Given pre-Nahm data on $\E$, a \emph{good projection} is an orthogonal projection $\pi$ to a positive-rank subbundle $V \subset E$ satisfying the following conditions:
\begin{enumerate}[(i)]
\item $\pi|_{\partial I} \in \mf{t}_{\mb{C}},\partial \pi|_{\partial I} \in \mf{t}^{\perp}_{\mb{C}}\,,$ \label{item:piBoundary}
\item $\beta$ preserves $V$, i.e. $\beta\pi = \pi \beta \pi$\,,
\item $\partial_\alpha$ preserves $V$, i.e. $\partial_\alpha \circ \pi = \pi \circ \partial_\alpha\circ \pi$, which is equivalent to both $(1-\pi)(\partial_\alpha \pi) = 0$ and $(\partial_\alpha \pi)\pi = 0$ (where $\partial_\alpha \pi = \partial\pi + [\alpha, \pi]$)\,.
\end{enumerate}
\end{defn}
\begin{rmk} \label{rmk:redundant}
The equivalence of the last two conditions follows from
\be \partial_\alpha \pi = \partial_\alpha \pi^2 = \pi \partial_\alpha \pi + (\partial_\alpha \pi) \pi \ . \label{eq:pipi} \ee
We also note that this expression with $\alpha$ set to 0 implies that the condition on $\partial\pi$ in \eqref{item:piBoundary} above is redundant, as the diagonal part $(\partial \pi)_d$ of $\partial \pi$ satisfies $(\partial \pi)_d = 2 (\partial \pi)_d\, \pi$; multiplying both sides by $\pi$ gives $(\partial \pi)_d \, \pi = 0$, and so $(\partial \pi)_d = 0$.

We next note that the last two conditions for the goodness of good projections $\pi$ and $1-\pi$, whose existence is required for strict $\xi^\RR$-polystability, may be usefully recharacterized as the conditions $\partial_\alpha \pi = [\beta,\pi] = 0$.

Another useful observation, regarding good projections to rank 1 subbundles, is that they satisfy either $\pi(t_0) = \twoMatrix{1}{0}{0}{0}$ or $\pi(t_0) = \twoMatrix{0}{0}{0}{1}$ at each of $t_0=0,L$, where this choice can be the same or different at the two endpoints. That is, the traceless part of $\pi$ at each boundary is $\pm \half \sigma_z$. 
\end{rmk}

We finally have

\begin{inttheorem}{thm:modCpx} Given data as above, the quotient \beq \N_{\xi} := \D^{\mathrm{ps}}_{\xi} / \G_\CC\eeq is, if nonempty, a complex analytic surface with a holomorphic symplectic form on its smooth locus $\N^{\mathrm{st}}_{\xi} := \D^{\mathrm{st}}_{\xi} / \G_\CC$. If the $\xi$ are generic, $\N_{\xi}$ is a (possibly empty) smooth complex surface; else, it has a single $A_1$ orbifold singularity unless $\xi \equiv 0$, in which case it is the orbifold $(\mb{C} \times \mb{C}^{\times})/Z_2$, with $Z_2$ acting by negation and inversion, respectively, on the two factors. More generally, for any $\xi^{\mb{R}}_{\partial I}$, if we define $\N_{\xi^\RR,{\rm all}} := \D^{\rm ps}_{\xi^\RR,{\rm all}}/\G_\CC$, then the map
\beq \N_{\xi^\RR,{\rm all}} \stackrel{p^{\mb{C}}}{\to} \mf{z}_{\partial I}^{\mb{C}} \eeq is a submersion of complex orbifolds whose fibers are as described above.

The map $p^{\mb{C}}$ is $Z_2 \times Z_2$-equivariant as before. It is moreover $\mb{C}^{\times}$-equivariant for the simultaneous actions of $\mb{C}^{\times}$ by scaling of $\beta$ and of $\xi^{\mb{C}}_{\partial I}$. \end{inttheorem}

\begin{remark}\label{rmk:cTimes}
Note that the $U(1) \subset \mb{C}^{\times}$ which multiplies by a complex phase is the subgroup of $SO(3)$ preserving a fixed complex structure. Now, the entire $\CC^\times$ group preserves the complex structure, and indeed acts by holomorphic symplectomorphisms up to an overall rescaling of the holomorphic symplectic form.

There is a particularly interesting consequence of this $\CC^\times$ action. We begin by commenting on the role of $L$ in both the real and complex formulations. There is an isomorphism of $\A^{(1)}_{\rm all}$ and $\A_{\rm all}$, where the former indicates the usual space $\A_{\rm all}$ with $L=1$, given by $A^\mu_L(t) := L^{-1} A^\mu_1(t L^{-1})$, which maps real Nahm data to real Nahm data. Indeed, it gives a hyperkahler isomorphism $\M_{\xi} \simeq L^{-2} \M^{(1)}_{L\xi}$, where the overall factor of $L^{-2}$ indicates a rescaling of all K\"ahler forms by this factor. In the complex formulation, we similarly have $\N_\xi \simeq L^{-2} \N_{L\xi}^{(1)}$, where now the overall rescaling is of the holomorphic symplectic form. But, in the complex formulation we can now act via the $\CC^\times$ action to obtain $\N_\xi \simeq L^{-1} \N^{(1)}_{L\xi^\RR, \xi^\CC} \simeq L^{-1} \N^{(1)}_\xi$, where the last identification follows from considering the stability condition. So, $L$ does not affect the complex structure of $\N_\xi$, and it barely affects the holomorphic symplectic structure.\footnote{This is an example of what physicists call a \emph{non-renormalization theorem}.}
\end{remark}

The complex formulation above is perfectly suited to prove the next main theorem. Let us first introduce the following terminology: given two real numbers $x, y$, say that $x$ \emph{weakly has the same sign as}\footnote{The definition given here ultimately arises from the specialization structure of the non-Hausdorff quotient topological space $\mb{R} / \mb{R}^+$, where $\mb{R}^+$ acts multiplicatively by scaling.} $y$ if the signs of $x, y$ obey $y>0\Rightarrow x>0$ and $y<0\Rightarrow x<0$. Explicitly,
$$(\mathrm{sgn}\,x,\mathrm{sgn}\,y) \in \{(+,+),(+,0),(-,-),(-,0),(0,0)\}\,.$$ Given choices of real parameters $\xi^{\mb{R}}, \tilde{\xi}^{\mb{R}}$, say that $\xi^{\mb{R}}$ \emph{weakly has the same sign as}\footnote{This gives a natural poset structure on the parameter space.} $\tilde{\xi}^{\RR}$ if $\xi^{\mb{R}}_0 + \xi^{\mb{R}}_L$ weakly has the same sign as $\tilde{\xi}^{\mb{R}}_0 + \tilde{\xi}^{\mb{R}}_L$ and $\xi^{\mb{R}}_0 - \xi^{\mb{R}}_L$ weakly has the same sign as $\tilde{\xi}^{\mb{R}}_0 - \tilde{\xi}^{\mb{R}}_L$. We then have the following:

\begin{inttheorem}{thm:relXi} Suppose $\xi^{\mb{R}}$ weakly has the same sign as $\tilde{\xi}^{\mb{R}}$. Then for any $\xi^{\mb{C}}$, there exists a canonical holomorphic map $\mc{N}_{\xi^{\mb{R}},\xi^{\mb{C}}} \to \mc{N}_{\tilde{\xi}^{\mb{R}},\xi^{\mb{C}}}$. This map is a partial resolution of singularities, i.e., a proper birational morphism; moreover, on the dense open locus where the map is an isomorphism, it is a holomorphic symplectomorphism. In particular, if $(\tilde{\xi}^{\mb{R}},\xi^{\mb{C}})$ is already generic so that $\mc{N}_{\tilde{\xi}^{\mb{R}},\xi^{\mb{C}}}$ is smooth, the map $\N_{\xi^\RR,\xi^\RR} \to \N_{\tilde\xi^\RR,\xi^\CC}$ is a holomorphic symplectomorphism.
\end{inttheorem}

\begin{rmk} Once again, we have a families version of the above, i.e., if $\xi^{\mb{R}}$ weakly has the same sign as $\tilde{\xi}^{\mb{R}}$, we have a partial resolution of singularities $\mc{N}_{\xi^{\mb{R}},\mathrm{all}} \to \mc{N}_{\tilde{\xi}^{\mb{R}},\mathrm{all}}$ whose fibers over $\xi^{\mb{C}} \in \mf{z}^{\mb{C}}_{\partial I}$ are as described above. \end{rmk}

We also have a DUY isomorphism: 

\begin{inttheorem}{thm:duy}
There are canonical homeomorphisms \begin{align*} \{A\in \A_\xi\ \bigm| \ \mu_\RR(A) = 0\}/\G &\stackrel{\sim}{\to} \B^{\mathrm{ps}}_\xi/\G_\CC \\ \text{and }\M_{\xi} &\stackrel{\sim}{\to} \N_{\xi}\,. \end{align*} In particular, the identification $\M_{\xi} \stackrel{\sim}{\to} \N_{\xi}$ is an isomorphism of holomorphic symplectic orbifolds, and the restriction $\M^{\mathrm{irr}}_\xi \stackrel{\sim}{\to} \N^{\mathrm{st}}_\xi$ is an isomorphism of holomorphic symplectic manifolds.
\end{inttheorem}

\begin{rmk} Once again, there exists a version of the above isomorphism fibered over $\mf{z}^{\mb{C}}_{\partial I}$ if we fix some $\xi^{\mb{R}} \in \mf{z}_{\partial I}$ and let $\xi^{\mb{C}} \in \mf{z}^{\mb{C}}_{\partial I}$ vary. \end{rmk}

As an immediate corollary of the above statement, we now obtain the following:

\begin{inttheorem}{thm:res} For any $\xi \in \mf{z}^3_{\partial I}$ generic, the hyperkahler manifold $\M_{\xi}$ is diffeomorphic to the minimal resolution of $(\mb{R}^3 \times S^1)/Z_2$. \end{inttheorem}

Note that a minimal resolution of singularities is by no means guaranteed to exist in general; it, however, sensibly exists for the $A_1$ singularities of $(\mb{R}^3 \times S^1)/Z_2$. Moreover, any holomorphic symplectic resolution of singularities of a rational surface singularity such as the $A_1$ singularity is minimal.

It is less immediate that the following is a corollary, but it nevertheless follows from the (partial) resolution of Theorem~\ref{thm:relXi} and the DUY isomorphism Theorem~\ref{thm:duy}. Let $(\mf{z}^3_{\partial I})^{\circ} \subset \mf{z}^3_{\partial I}$ denote the open locus of generic parameters, and denote by $\M_{\mathrm{all}}^{\circ} \stackrel{p^{\circ}}{\to} (\mf{z}^3_{\partial I})^{\circ}$ the corresponding restriction of the family $p$. We then have the following:

\begin{inttheorem}{thm:conn}
The fibered manifold $p^\circ$ admits a canonical Ehresmann connection $\mathrm{Conn}_{\mc{M}}$ and is thus a fiber bundle.

This connection is uniquely characterized as follows: if one considers paths in $(\mf{z}^3_{\partial I})^{\circ}$ with only one of $\xi^i$ varying, then after hyperkahler rotating to a complex structure where the varying parameter is $\xi^\RR$, the corresponding parallel transport map is that given by the composition of the canonical morphisms of Theorems~\ref{thm:relXi} and~\ref{thm:duy}.

Furthermore, this same property is satisfied for variations valued in any space of the form $\mf{z}_{\partial I}\otimes \Xi$, where $\Xi\subset \RR^3$ is a dimension 1 subspace.

Finally, this connection is equivariant under the commuting actions of $Z_2\times Z_2$ and $SO(3)$ on $\M_{\rm all}^\circ$.
\end{inttheorem}


Recall the notion of an Ehresmann connection for the fibered manifold $p^{\circ}$: for any path $f\colon [0,1] \to (\mf{z}^3_{\partial I})^{\circ}$, we have a privileged parallel-transport diffeomorphism $\M_{f(0)} \stackrel{\sim}{\to} \M_{f(1)}$. Alternatively, one could think of the datum of this connection infinitesimally, i.e., as a smooth horizontal subbundle of $T\M_{\rm all}^\circ$. One may then consider the curvature $F_{\M}$ as a horizontal two-form on $\mc{M}_{\mathrm{all}}^{\circ}$ valued in vertical tangent vectors. Note that by construction, this connection $\mathrm{Conn}_{\M}$ is flat when restricted to a sublocus of $(\mf{z}^3_{\partial I})^{\circ}$ with fixed $\xi^{\mb{C}}$ (or, more generally, after any hyperkahler rotation mixing the $\xi^\mb{R}$ and $\xi^\mb{C}$ directions), but this connection is not flat in general, as we demonstrate after Proposition~\ref{prop:curv}. Note finally that one may hope for such an Ehresmann connection to exist whenever one has a family of hyperkahler quotients parametrized by some space $\mf{z}^3$ of moment map parameters in much the same way that one may hope for DUY isomorphisms to exist. 

We finally pause to discuss the flow of ideas above. As is common in gauge-theoretic constructions, one of the (perhaps ironically) more difficult parts is to show that the resultant space $\M_{\xi}$ is in fact nonempty. Constructing the manifold structure and computing the expected dimension thereof are often rather formal from the (Banach) implicit function theorem and index theory applied to a linearized elliptic complex, respectively. On the other hand, nonemptiness, connectedness, and completeness are typically less formal. In the above, these statements are only finally obtained as a consequence of Theorem~\ref{thm:relXi} (coupled with Theorem~\ref{thm:duy} for the statement on $\M_{\xi}$ rather than $\N_{\xi}$). One did not have to wait so long, however. In fact, we can give a direct analysis of $\N_{\xi}$, as the complex moment map equation is relatively simple. We do so in the following subsection, yielding a simple and entirely different approach to, for example, showing the connectedness (and nonemptiness) of $\N_{\xi}$. Note that this approach still relies on Theorem~\ref{thm:duy} to conclude the analogous statement for $\M_{\xi}$.



\vspace{0.2cm}

We end this section by standardizing (and reiterating) some notation of frequent use through the following sections.

\begin{defn} Given $1 \le s \le \infty$, $g \in \mc{G}^{s+1}$ a gauge transformation, and $A^{\mu} \in \mc{A}^s_{\mathrm{all}}$ real pre-Nahm data, denote the action of $g$ on $A$ by $g(A)$, whose components are given by the following formulae: \begin{align*} g(A^0) &= g^{-1}A^0g + g^{-1} \partial g \\ g(A^i) &= g^{-1}A^i g\,.
\end{align*}
We observe that $\partial_{g(A^0)} = g^{-1} \circ \partial_{A^0} \circ g$. Similarly, for $g \in \mc{G}^{s+1}_{\mb{C}}$ and $B = (\alpha, \beta) \in \mc{B}^s_{\mathrm{all}}$ complex pre-Nahm data, we denote the transformed pre-Nahm data by $g(B)$ and write
\begin{align*} g(\alpha) &= g^{-1}\alpha g + g^{-1} \partial g \\ g(\beta) &= g^{-1} \beta g\,. \end{align*} \end{defn}

Given two sets of pre-Nahm data, either real or complex, it will be convenient to produce new pre-Nahm data on $\mathrm{End}(E) \simeq E \otimes E^*$. 

\begin{defn}\label{defn:starconnection} Given $A^0_1, A^0_2 \in \mc{A}'^s$, denote by $A^0_1*A^0_2 \in H^s(I,\mathrm{End}(\mathrm{End}(E)))$ the operator $$(A^0_1*A^0_2)g = A^0_1g - gA^0_2\,.$$ As usual, we have the associated connection $\partial_{A^0_1*A^0_2}$. Note then that $g \in \tilde\G^{s+1}$ transforms $A^0_1$ to $A^0_2$ if and only if it is a flat section for the $\partial_{A^0_1*A^0_2}$ connection; that is, $\partial_{A^0_1*A^0_2} g = 0$ is equivalent to the equality $\partial_{A^0_2} = \partial_{g(A^0_1)}$ of covariant derivatives on $E$.

More generally, given two sets of real pre-Nahm data $(A^{\mu}_1), (A^{\mu}_2)$, we may define pre-Nahm data $(A^{\mu}_1 * A^{\mu}_2)$ on $\mathrm{End}(E)$ with components $A^0_1*A^0_2$ as defined above and $A^i_1*A^i_2$ defined by $$(A^i_1*A^i_2)g = A^i_1g - gA^i_2\,.$$ Then $g \in \tilde\G^{s+1}$ transforms $(A^{\mu}_1)$ to $(A^{\mu}_2)$ if and only if $\partial_{A^0_1*A^0_2}g = (A^i_1*A^i_2)g = 0$.

Similarly, for $\alpha_1,\alpha_2 \in \mc{B}'^s,$, denote by $\alpha_1*\alpha_2$ the operator $(\alpha_1*\alpha_2)g = \alpha_1g - g\alpha_2$, with $\partial_{\alpha_1*\alpha_2}$ the associated connection. We also define $(\beta_1*\beta_2)(g) = \beta_1g - g\beta_2$. This is a special case of the general definition $\Hom(N_2,N_1)$ -- that is, we have $\Hom(N_2,N_1) = N_1*N_2$. In accordance with Proposition \ref{prop:kerMorph} and a remark in Definition \ref{def:genGp}, $g \in \tilde\G^{s+1}_{\mb{C}}$ transforms $(\alpha_1, \beta_1)$ to $(\alpha_2,\beta_2)$ if and only if $\partial_{\alpha_1*\alpha_2}g = (\beta_1*\beta_2)g = 0$. \end{defn}

\subsection{Preliminary results} \label{sec:prelim}

There are many simplifications afforded by working in the low-dimensional regime in Conjecture~\ref{conj:mainconj} of $0 \le r \le 2$. One that we will not make especial use of, but that is nonetheless still gratifying to note, is the existence of a particularly simple choice of gauge -- i.e., of a canonical representative under the $\mc{G}$-action (in the real formulation) or $\mc{G}_{\mb{C}}$-action (in the complex formulation).\footnote{When we move to the $r = 2$ case in~\cite{mz:ALG}, only complex axial gauge will be available to us.} We term this gauge choice \emph{axial} gauge, in analogy to the terminology used in Yang-Mills theory; its characterizing feature is that the connection part of pre-Nahm data, i.e. $A^0$ (in the real formulation) or $\alpha$ (in the complex formulation), may be made \emph{constant},\footnote{Note that if we were doing gauge theory for an \emph{abelian} group -- as is relevant, e.g., for the $A_n$ ALF spaces -- then Nahm's equations would also imply that the $A^i$ are constant. Upon restricting attention to these constant Nahm data, one immediately finds a \emph{finite}-dimensional hyperkahler quotient construction~\cite{gibbons:hkMonopoles} of these spaces \cite{w:alfQuotient}.} up to a small subtlety in the complex formulation.

\begin{prop}[Real axial gauge]\label{prop:realAx} Any orbit in $\mc{A}'/\mc{G}$ has a unique representative of the form $A^0 = i c \sigma_x$ for $c$ a real constant valued in $[0, \pi/2L]$. This connection has a trivial stabilizer unless $c \in \{0, \pi/2L\}$, in which case its stabilizer is a $U(1)$ subgroup of $\mc{G}$. If $c = 0$, this stabilizer is the diagonal torus of $PSU(2)$, considered as constant gauge transformations; if $c = \pi/2L$, the stabilizer is this diagonal torus conjugated by the extended gauge transformation $\exp(\pi i t \sigma_x / 2L)$. \end{prop}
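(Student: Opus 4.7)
The plan is to exhibit an equivalence between orbits in $\mc{A}'/\tilde{\mc{G}}$ and the double-coset space $T\backslash SU(2)/T$ via the parallel transport map, to verify that the representatives $A^0 = ic\sigma_x$ for $c\in[0,\pi/2L]$ realize each double coset exactly once, and then to directly compute stabilizers by solving an explicit ODE. The final division by $Z_2$ reduces a residual $T$-stabilizer to $U(1)$.

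For the bijection with $T\backslash SU(2)/T$, I would define $\Pi(A^0) := g(L) \in SU(2)$, where $g\colon I \to SU(2)$ solves $\partial g + A^0 g = 0$ with $g(0) = \mathrm{Id}$, i.e., the parallel transport of $\partial_{A^0}$ from $0$ to $L$. Under a gauge transformation $h\in \tilde{\mc{G}}$, an analogous parallel calculation shows $\Pi(h(A^0)) = h(L)^{-1}\,\Pi(A^0)\,h(0)$; since $h(0),h(L)\in T$, the class $[\Pi(A^0)] \in T\backslash SU(2)/T$ is $\tilde{\mc{G}}$-invariant. Injectivity (on the $\tilde{\mc{G}}$-quotient) follows by showing that if $\Pi(A^0_1) = \Pi(A^0_2)$ one may take $h = g_2 g_1^{-1}$ for $g_i$ the parallel transport of $A^0_i$; one then checks that $h\in \tilde{\mc{G}}$, where the boundary-derivative condition $h^{-1}\partial h|_{\partial I}\in \mf{t}^\perp$ follows from $\partial g_i(0) = -A^0_i(0)$ together with $A^0_i|_{\partial I}\in \mf{t}^\perp$. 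More generally, given $[P_2] = [t_1 P_1 t_2]$, any smooth $h_0 \in C^\infty(I,SU(2))$ with $h_0(0) = t_2$, $h_0(L) = t_1^{-1}$ satisfying the derivative condition (easily constructed using a bump-function interpolation) transforms $[\Pi(A^0_1)]$ to $[\Pi(A^0_2)]$; combining these establishes the bijection $\mc{A}'/\tilde{\mc{G}} \stackrel{\sim}{\to} T\backslash SU(2)/T$.

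Next, I would exploit the standard polar-form of $SU(2)$: any $P \in SU(2)$ may be written $\begin{pmatrix}a & b \\ -\bar b & \bar a\end{pmatrix}$ with $|a|^2+|b|^2=1$, and left/right multiplication by diagonal unitaries reduces $(a,b)$ to nonnegative reals $(\cos\phi,\sin\phi)$ for a unique $\phi \in [0,\pi/2]$. On the other hand, $\Pi(ic\sigma_x) = e^{-icL\sigma_x}$, whose absolute off-diagonal entry is $|\sin(cL)|$. Hence as $c$ traverses $[0,\pi/2L]$, the parameter $cL$ traverses $[0,\pi/2]$ bijectively onto the set of representatives. This establishes both existence and uniqueness.

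For the stabilizers, the equation $g(ic\sigma_x) = ic\sigma_x$ for $g \in \tilde{\mc{G}}$ reduces to the ODE $\partial g = ic[g,\sigma_x]$. Writing $g = a + i(b_x\sigma_x + b_y\sigma_y + b_z\sigma_z)$ yields $a,b_x$ constant and $(b_y, b_z)$ rotating with angular speed $2c$. Imposing $g(0), g(L) \in T$, i.e.\ $b_x(0) = b_y(0) = 0$ and $b_y(L) = 0$, forces $b_z(0)\sin(2cL) = 0$. For $c \in (0,\pi/2L)$ this requires $b_z \equiv 0$, so the stabilizer in $\tilde{\mc{G}}$ is $\{\pm \mathrm{Id}\}$, trivial after quotienting by $Z_2$. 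For $c = 0$ the equation degenerates to $\partial g = 0$, giving stabilizer $T \subset \tilde{\mc{G}}$, descending to $U(1)$ in $\mc{G}$. For $c = \pi/2L$ one obtains a one-parameter family $g(t) = a + ib_z(0)\bigl(\cos(\pi t/L)\sigma_z - \sin(\pi t/L)\sigma_y\bigr)$, which one recognizes as $g_{\mathrm{ext}}(t)^{-1} h\, g_{\mathrm{ext}}(t)$ for $g_{\mathrm{ext}}(t) = \exp(\pi i t \sigma_x/2L)$ and $h \in T$ (an easy calculation using $e^{-i\theta\sigma_x}\sigma_z e^{i\theta\sigma_x} = \cos(2\theta)\sigma_z - \sin(2\theta)\sigma_y$), again a $U(1)$. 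The derivative boundary conditions $g^{-1}\partial g|_{\partial I} \in \mf{t}^\perp$ are automatic from the form of the solutions. The main obstacle is the bookkeeping in verifying the boundary-derivative condition both in establishing that the candidate $h = g_2 g_1^{-1}$ is a legitimate element of $\tilde{\mc{G}}$ and in the degenerate stabilizer cases; all other steps are routine linear algebra and a first-order ODE.
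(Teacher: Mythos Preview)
Your proposal is correct and follows essentially the same route as the paper: reduce $\mc{A}'/\mc{G}$ to the double coset $T\backslash SU(2)/T$ via parallel transport, parametrize the latter by $[0,\pi/2L]$ via $c \mapsto e^{-icL\sigma_x}$, and compute stabilizers by solving the first-order ODE directly. The only differences are presentational: the paper packages the parallel-transport bijection as a general lemma (their Proposition on $G_0\backslash G/G_L$) rather than arguing it ad hoc, describes the double coset via the invariant $\begin{pmatrix}a&b\\c&d\end{pmatrix}\mapsto ad$ and the geometric picture $T\backslash SU(2)/T \simeq U(1)\backslash \mb{CP}^1$, and for stabilizers writes the closed-form solution $g(t) = e^{-ict\sigma_x}Ue^{ict\sigma_x}$ rather than expanding in Pauli components; these are equivalent to your polar-form and component-wise computations.
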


\begin{rmk} Note that the essence of this proposition harks back to Lemma~\ref{lem:mon}. \end{rmk}

We have an analogous statement in the complex formulation:

\begin{prop}[Complex axial gauge]\label{prop:compAx} 
Any orbit in $\mc{B}'/\mc{G}_{\mb{C}}$ has a unique representative of precisely one of the following forms:
\begin{enumerate}[(i)]
\item $(a + ic)\sigma_x$ with $c \in [0, \pi/2L]$ and $a \in \mb{R}$, with the further restriction $a \ge 0$ if $c \in \{0, \pi/2L\}$,
\item $\begin{pmatrix} 0 & 1/L \\ 0 & 0 \end{pmatrix}$,
\hspace{0.3cm} (iii) $\begin{pmatrix} 0 & 0 \\ 1/L & 0 \end{pmatrix}$,
\setcounter{enumi}{3}
\item $\frac{i}{2L} \twoMatrix{\sin(\pi t/L)}{\pi-2i\cos^2(\pi t/2L)}{\pi-2i\sin^2(\pi t/2L)}{-\sin(\pi t/L)}$, or 
\item $\frac{i}{2L} \twoMatrix{-\sin(\pi t/L)}{\pi-2i\sin^2(\pi t/2L)}{\pi-2i\cos^2(\pi t/2L)}{\sin(\pi t/L)}$. 
\end{enumerate} 
The stabilizer groups are trivial except for case (i) with $a = 0$ and $c \in \{0, \pi/2L\}$, in which case the stabilizer group is the diagonal $\mb{C}^{\times}$ torus of $PSL(2,\mb{C})$ or its conjugate by the extended gauge transformation $\exp(\pi i t \sigma_x / 2L)$, respectively. 
\end{prop}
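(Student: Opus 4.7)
The plan is to identify $\mc{B}'/\mc{G}_\mb{C}$ with the double coset space $T_\mb{C}\backslash SL(2,\mb{C})/T_\mb{C}$ via the parallel-transport map, classify that double coset space directly, and exhibit canonical representatives in $\mc{B}'$ for each orbit type.

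First, I would introduce the parallel-transport map $M\colon\mc{B}'\to SL(2,\mb{C})$ defined by $M(\alpha) := P_\alpha(L)$, where $P_\alpha\colon I\to SL(2,\mb{C})$ solves $\partial P_\alpha = -\alpha P_\alpha$ with $P_\alpha(0) = \mathrm{Id}$. A direct computation yields $M(g(\alpha)) = g(L)^{-1} M(\alpha)\,g(0)$ for $g\in\tilde{\mc{G}}_\mb{C}$, and since $g(0),g(L)\in T_\mb{C}$, the map $M$ descends to $\bar M\colon\mc{B}'/\tilde{\mc{G}}_\mb{C}\to T_\mb{C}\backslash SL(2,\mb{C})/T_\mb{C}$. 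Surjectivity is immediate: given $N\in SL(2,\mb{C})$, pick a smooth path $P\colon I\to SL(2,\mb{C})$ from $\mathrm{Id}$ to $N$ with $(\partial P)P^{-1}|_{\partial I}$ off-diagonal, and set $\alpha := -(\partial P)P^{-1}\in\mc{B}'$. For injectivity, given $M(\alpha_2) = d_L^{-1}M(\alpha_1)\,d_0$ with $d_0,d_L\in T_\mb{C}$, I verify that $g(s) := P_{\alpha_1}(s)\,d_0\,P_{\alpha_2}(s)^{-1}$ lies in $\tilde{\mc{G}}_\mb{C}$ and sends $\alpha_1$ to $\alpha_2$: direct substitution gives $g(0)=d_0$ and $g(L)=d_L$ in $T_\mb{C}$, and differentiation yields $\partial g = g\alpha_2 - \alpha_1 g$, whence $g(\alpha_1)=\alpha_2$ and $g^{-1}\partial g|_{\partial I} = \alpha_2 - g^{-1}\alpha_1 g|_{\partial I}\in\mf{t}^\perp_\mb{C}$ term by term. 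Since the central $Z_2\subset\tilde{\mc{G}}_\mb{C}$ acts trivially on pre-Nahm data, this gives a canonical bijection $\mc{B}'/\mc{G}_\mb{C}\stackrel{\sim}{\to}T_\mb{C}\backslash SL(2,\mb{C})/T_\mb{C}$.

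Next, I would classify the double coset space by direct case analysis. For a matrix in $SL(2,\mb{C})$ with entries $a,b,c,d$, the $T_\mb{C}\times T_\mb{C}$-action rescales each entry by a character, and the only continuous invariants are $ad$ and $bc = ad-1$. A stratification by vanishing patterns yields exactly five orbit types, matching cases (i)--(v) of the proposition: a two-parameter semisimple family with canonical representative $\cosh\mu\cdot\mathrm{Id}-\sinh\mu\cdot\sigma_x$ (with $\mu$ well-defined up to $\mu\sim -\mu$ and $\mu\sim\mu+2\pi i$); two unipotent orbits at $ad=1$, distinguished by which off-diagonal entry vanishes; and two ``Weyl-like'' orbits at $ad=0$ with exactly one of $a,d$ vanishing. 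The semisimple orbits are realized by $\alpha = (a+ic)\sigma_x$, whose parallel transport is precisely the symmetric form with $\mu=(a+ic)L$; the range $c\in[0,\pi/2L]$ provides a fundamental domain after the $\mu\sim\mu+2\pi i$ identification, and the restriction $a\ge 0$ on $c\in\{0,\pi/2L\}$ removes a residual $a\mapsto -a$ symmetry (arising from $\mu\sim -\mu$ at $c=0$, and from a $T_\mb{C}\times T_\mb{C}$-conjugation by $(\pm i,\pm i)$ at $c=\pi/2L$). The unipotent orbits are realized by the constant strictly upper- or lower-triangular connections of cases (ii) and (iii), as a direct ODE solution confirms.

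The main obstacle is cases (iv) and (v): the Weyl-like orbits admit no realization by constant off-diagonal $\alpha\in\mf{t}^\perp_\mb{C}$, since any such $\alpha$ has $M(\alpha) = \cos(\sqrt{xy}\,L)\,\mathrm{Id}+(\text{off-diagonal})$ with equal diagonal entries, whereas Weyl-like orbits have unequal diagonal entries. My plan is to derive cases (iv) and (v) by starting from $\alpha_0 = (i\pi/2L)\sigma_x$ -- whose parallel transport $-i\sigma_x$ lies in the exceptional semisimple orbit with $a=d=0$ -- and conjugating by a \emph{formal} (non-admissible) gauge transformation that interpolates between the identity and a $\sigma_x$-type Weyl element, concentrated near one endpoint. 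This interpolation deposits a nonzero value into exactly one of the diagonal entries of the parallel transport; placing the interpolation near $t=L$ produces case (iv) and placing it near $t=0$ produces case (v). The trigonometric expressions in the proposition arise from the explicit form of this interpolation, and a finite computation verifies that the parallel transports of (iv) and (v) indeed land in the two inequivalent Weyl-like orbits. Finally, the stabilizer analysis reduces to computing $\mathrm{Stab}_{T_\mb{C}\times T_\mb{C}}(M(\alpha))$ modulo the central $\{\pm\mathrm{Id}\}$. For the four non-semisimple orbits a direct check gives trivial stabilizer. Within the semisimple family, the stabilizer is one-dimensional precisely at $M\in\{\pm\mathrm{Id}\}$ (where it is the diagonal $\{(\lambda,\lambda)\}\cong\mb{C}^\times$) and at $M$ in the orbit of $\pm i\sigma_x$ (where it is the anti-diagonal $\{(\lambda,\lambda^{-1})\}\cong\mb{C}^\times$); these correspond respectively to $(a,c)=(0,0)$ and $(a,c)=(0,\pi/2L)$ in case (i). Translating through the bijection yields the two $\mb{C}^\times$ stabilizers in $\mc{G}_\mb{C}$: the first is the diagonal $\mb{C}^\times$ torus of $PSL(2,\mb{C})$ acting by constant diagonal gauge transformations; the second, parametrized by $d\in T_\mb{C}$ as $s\mapsto e^{i\pi s\sigma_x/2L}\,d\,e^{-i\pi s\sigma_x/2L}$, is precisely the conjugate of the first by the extended gauge transformation $\exp(\pi i s\sigma_x/2L)$ (which sends $\alpha=0$ to $\alpha=(i\pi/2L)\sigma_x$), exactly as stated.
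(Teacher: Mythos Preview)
Your approach is essentially the paper's: reduce to the double coset space $T_\CC\backslash SL(2,\CC)/T_\CC$ via parallel transport (the paper invokes its Proposition~3.5, which you reprove), classify the double cosets using the invariant $ad$, and exhibit representatives. The stabilizer analysis is also the same. Two points are worth flagging.

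First, a small but genuine gap in your fundamental-domain argument for case~(i). You record the identifications $\mu\sim-\mu$ and $\mu\sim\mu+2\pi i$, but there is a further one: the element $\diag(i,-i)\in T_\CC$ acts (on either side) to send $M\mapsto -M$, which at the level of $\mu$ is $\mu\sim\mu+i\pi$. With only your two identifications the fundamental domain in $c$ would be $[0,\pi/L]$, not $[0,\pi/2L]$. You seem to sense this, since you invoke a $T_\CC\times T_\CC$ action by ``$(\pm i,\pm i)$'' to handle the boundary at $c=\pi/2L$, but that same action already halves the period globally, and you should use it there rather than as an ad hoc boundary fix.

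Second, your derivation of (iv) and (v) is more roundabout than necessary. The paper simply observes (in the remark immediately following the proposition) that cases (iv) and (v) are cases (ii) and (iii), respectively, acted on by the extended gauge transformation $g(t)=\exp(\pi i t\sigma_x/2L)$. Since $g(0)=\mathrm{Id}$ and $g(L)=i\sigma_x$, this transformation preserves $\mc{B}'$ but lies outside $\mc{G}_\CC$, and it carries the parallel transport from the $ad=1$ fiber to the $ad=0$ fiber while landing in the two extra orbits there. Your plan to start from $(i\pi/2L)\sigma_x$ and apply a bespoke formal gauge transformation concentrated near one endpoint can be made to work, but it obscures this clean structural picture and makes it harder to read off the explicit trigonometric representatives.
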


\begin{rmk} Of course, the last two cases above can hardly be said to be in axial gauge per our original definition of ``constant connection.'' Indeed, these two orbits do not admit constant representatives; the representatives given above are simply cases (ii) and (iii) acted on by the usual extended gauge transformation $\exp(\pi i t \sigma_x / 2L)$. \end{rmk}

Making use of the canonical $\mc{G}_{\mb{C}}$-orbit representative afforded by the above allows us to give a complete description of $\mc{D}_{\mathrm{all}}/\mc{G}_{\mb{C}}$, thanks to the linearity of the complex Nahm equation when regarded as a differential equation only in $\beta$:

\begin{prop}\label{prop:compNahm} An orbit of $\mc{D}_{\mathrm{all}}/\mc{G}_{\mb{C}}$ has a unique representative of precisely one of the following forms: 
\begin{enumerate}[(i)]
\item $\alpha = \alpha_0 \sigma_x$, with $\alpha_0=a+ic$ a constant with $a\in \RR$, $c\in [0,\pi/2L]$, and $a>0$ if $c=0,\pi/2L$. Writing
\be \beta(t) = \beta_x(t) \sigma_x + \beta_y(t) \sigma_y + \beta_z(t) \sigma_z \ , \ee
we have
\begin{align}
\beta_y(t) &= (-\xi_0^\CC \cosh(2\alpha_0 (L-t)) + \xi_L^\CC \cosh(2\alpha_0 t)) \csch(2\alpha_0 L) \ , \nonumber \\
\beta_z(t) &= -i (\xi_0^\CC \sinh(2\alpha_0 (L-t)) + \xi_L^\CC \sinh(2\alpha_0 t)) \csch(2\alpha_0 L) \ . \label{eq:firstBeta}
\end{align}
Finally, $\beta_x$ is an arbitrary complex constant.

\item $\alpha = \twoMatrix{0}{1/L}{0}{0}$ and $\beta = -\frac{i}{L} \twoMatrix{\xi_0^\CC (L-t) + \xi_L^\CC t}{\xi_L^\CC (L + t^2/L) - \xi_0^\CC (L-t)^2/L + c}{(\xi_0^\CC - \xi_L^\CC)L}{-\xi_0^\CC (L-t) - \xi_L^\CC t}$ for some constant $c \in \CC$.

\item $\alpha = \twoMatrix{0}{0}{1/L}{0}$ and $\beta = -\frac{i}{L} \twoMatrix{\xi_0^\CC(L-t) + \xi_L^\CC t}{-(\xi_0^\CC - \xi_L^\CC) L}{\xi_0^\CC(L-t)^2/L - \xi_L^\CC(L+t^2/L) + c}{-\xi_0^\CC(L-t) - \xi_L^\CC t}$ for some constant $c\in \CC$.

\item\hspace{-0.2cm}, (v)\hspace{.2cm} The configurations related to the previous two by the extended gauge transformation $\exp(i \pi t \sigma_x / 2L)$ and with $\xi_L^\CC$ replaced with $-\xi_L^\CC$.

\setcounter{enumi}{5}\item If $\xi_0^\CC = \xi_L^\CC$, then we have $\alpha = 0$ and $\beta$ one of $\twoMatrix{-i \xi_0^\CC}{0}{0}{i\xi_0^\CC}$, $\twoMatrix{-i\xi_0^\CC}{1/L}{0}{i\xi_0^\CC}$, or $\twoMatrix{-i\xi_0^\CC}{c}{1/L}{i\xi_0^\CC}$, where $c$ is a complex constant.

\item If $\xi_0^\CC = - \xi_L^\CC$, then we have the configurations related to the previous ones by the extended gauge transformation $\exp(i \pi t \sigma_x / 2L)$.
\end{enumerate}
All of these have trivial stabilizers in $\G_\CC$, except for the first Nahm data in items (vi) and (vii), which have $\CC^\times$ stabilizers. \end{prop}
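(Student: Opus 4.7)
The plan is to apply the complex axial gauge classification from Proposition~\ref{prop:compAx} to put $\alpha$ into one of its canonical forms, and then, for each such form, solve the linear (in $\beta$) complex Nahm equation $\partial_\alpha \beta = \partial\beta + [\alpha,\beta] = 0$ subject to the boundary condition that the $\sigma_z$-component of $\beta$ at $t = 0, L$ equals $-i\xi_0^{\CC}$ and $-i\xi_L^{\CC}$ respectively. One then quotients by the residual stabilizer of the canonical $\alpha$ acting on the resulting affine space of $\beta$'s.

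For case (i) of Proposition~\ref{prop:compAx} with $\alpha = \alpha_0 \sigma_x$ and $\alpha_0 \notin \{0, i\pi/(2L)\}$, the stabilizer in $\G_\CC$ is trivial. Decomposing $\beta = \beta_x \sigma_x + \beta_y \sigma_y + \beta_z \sigma_z$ and using $[\sigma_x,\sigma_y] = 2i \sigma_z$ together with $[\sigma_x,\sigma_z] = -2i\sigma_y$, the equation $\partial_\alpha\beta = 0$ splits into $\partial\beta_x = 0$ and the coupled system $\partial\beta_y = 2i\alpha_0 \beta_z$, $\partial\beta_z = -2i\alpha_0 \beta_y$. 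The latter has a two-complex-parameter family of solutions uniquely pinned down by the two boundary conditions on $\beta_z$, yielding the stated hyperbolic-function formulas, while $\beta_x$ remains a free complex constant. For cases (ii) and (iii) with nilpotent $\alpha$ (both having trivial stabilizer), I would write $\beta$ as a general traceless $2 \times 2$ matrix and compute $[\alpha,\beta]$ directly; this produces a triangular system of ODEs that integrates to polynomials in $t$, with the two boundary conditions on $\beta_z$ determining two of the three integration constants and the remaining one giving the free parameter $c\in\CC$ in the stated formula. Cases (iv) and (v) are then obtained from (ii) and (iii) by acting with the extended gauge transformation $\exp(i\pi t \sigma_x /2L)$ of Remark~\ref{rmk:WeylPU}: since this transformation equals $I$ at $t=0$ and $i\sigma_x$ at $t=L$, and $\sigma_x \sigma_z \sigma_x = -\sigma_z$, its effect on the boundary data is precisely the sign flip $\xi_L^\CC \to -\xi_L^\CC$ as stated.

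The main obstacle lies in the degenerate cases (vi) and (vii), corresponding to $\alpha = 0$ and $\alpha = i\pi\sigma_x/(2L)$ respectively, at which the stabilizer in $\G_\CC$ is a nontrivial $\CC^\times$ and the hyperbolic formulas of (i) break down because $\sinh(2\alpha_0 L) = 0$. For $\alpha = 0$ the equation $\partial\beta = 0$ forces $\beta$ to be a constant traceless matrix, which is compatible with the boundary data only when $\xi_0^\CC = \xi_L^\CC$; in that case $\beta$ takes the form $\twoMatrix{-i\xi_0^\CC}{b}{c}{i\xi_0^\CC}$ for some constants $b,c \in \CC$. The residual diagonal torus $T_\CC \subset PSL(2,\CC)$ acts by $(b,c) \mapsto (\lambda^{-2} b, \lambda^2 c)$, whose orbits consist of the fixed point $(0,0)$, the orbit where $b\ne 0, c = 0$, the orbit where $b = 0, c\ne 0$, and a one-parameter family of generic orbits parametrized by the invariant $bc \in \CC^\times$. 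These four orbit types are precisely those represented by the three formulas in item (vi), with the third formula (where $c=1/L$ is normalized and $b$ varies over $\CC$) covering both the $(0, 1/L)$-orbit and all generic orbits. Case (vii) then follows from (vi) by applying the extended gauge transformation $\exp(i\pi t \sigma_x /(2L))$, which carries $\alpha = 0$ to $\alpha = i\pi\sigma_x/(2L)$ and converts the consistency condition $\xi_0^\CC = \xi_L^\CC$ into $\xi_0^\CC = -\xi_L^\CC$. Uniqueness of the overall classification across items reduces to the uniqueness of the canonical $\alpha$-forms already established in Proposition~\ref{prop:compAx}, together with the observation that within each item the residual gauge action on $\beta$ has been quotiented explicitly.
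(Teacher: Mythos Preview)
Your proof is correct and follows the same approach as the paper, which simply says the result follows immediately from the gauge choices of Proposition~\ref{prop:compAx} and the linearity of $\partial\beta = -[\alpha,\beta]$. You have supplied the details the paper omits, including the explicit residual $\mathbb{C}^\times$-orbit analysis in case~(vi) and the verification that the extended gauge transformation flips the sign of $\xi_L^{\mathbb{C}}$.
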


\bp This proposition follows immediately from the privileged gauge choices provided by Proposition~\ref{prop:compAx} and the simplicity of the complex Nahm equation $\partial \beta = -[\alpha, \beta]$. \ep

Up to the imposition of the stability condition, this gives a completely explicit description of the complex Nahm data in our moduli spaces. In particular, we see the necessity of $\xi_0^\CC = \pm \xi_L^\CC$ for the moduli space to possess points with non-trivial stabilizers. It also makes evident the importance of the stability condition, since for example without it the moduli spaces with $\xi_0^\CC = \xi_L^\CC$ would not be Hausdorff, as they would contain three $\CC^\times\subset \G_\CC$ orbits with $\alpha=0$ and $\beta$ constant such that any neighborhood of the first contains the latter two, namely the orbits with $\beta = \twoMatrix{-i\xi^\CC_0}{0}{0}{i\xi^\CC_0}$, $\beta = \twoMatrix{-i\xi^\CC_0}{t}{0}{i\xi^\CC_0}$, and $\beta = \twoMatrix{-i\xi^\CC_0}{0}{t}{i\xi^\CC_0}$, where $t\in \CC^\times$.

On the other hand, we may immediately see that most\footnote{in the technical sense of on a dense, open sublocus} orbits of complex Nahm data are unaffected by the stability condition -- i.e., that they are stable for any choice of $\xi^{\mb{R}}$. Indeed, we claim that all orbits of type (i) above admit no nontrivial subdata whatsoever (and hence are always automatically stable). This change of $\N_{\xi}$ only up to birational isomorphism as $\xi^{\mb{R}}$ varies is precisely the sort of variational (VGIT) result expounded upon in footnote~\ref{footnote:VGIT}.

\begin{prop}\label{prop:mostlystable}
Any complex pre-Nahm data $(\alpha,\beta)$ with $\alpha$ in an orbit of type (i) in the classification of Proposition \ref{prop:compAx} with $\alpha_0 := a+ic\not\in \{0,i\pi/2L\}$ admits no rank one subdata.
\end{prop}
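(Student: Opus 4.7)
The plan is to reduce the problem to a question about parallel transport for the connection $\partial_\alpha$, which is straightforward in this case because $\alpha$ is constant and proportional to $\sigma_x$.

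First I would translate what rank one subdata of $(\alpha,\beta)$ looks like. A marked rank one subbundle $V \subset E$ must satisfy $V_{t_0} \in \{\mathbb{C} e_1, \mathbb{C} e_2\}$ at each $t_0 \in \partial I$ (this follows from the marked subbundle condition $V^{\pm}_{t_0} = E^{\pm}_{t_0} \cap V_{t_0}$ together with $V_{t_0} = V^+_{t_0} \oplus V^-_{t_0}$). To be pre-Nahm subdata, $V$ must additionally be preserved by $\partial_\alpha$ and by $\beta$. Since it suffices to rule out $\partial_\alpha$-invariance, we may ignore $\beta$ entirely.

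Next I would observe that since $I$ is contractible, any line subbundle $V \subset E$ admits a nonvanishing smooth section $s$. A standard scalar-multiplication trick (solving a first-order linear ODE for a gauge factor) shows that $V$ is preserved by $\partial_\alpha$ if and only if we may choose $s$ to be $\partial_\alpha$-flat. Thus $V$ is determined by $s(0) \in V_0$ via parallel transport, and the existence of $V$ with the required boundary conditions is equivalent to the statement that the parallel-transport operator $P \colon E_0 \to E_L$ associated to $\partial_\alpha$ sends some line in $\{\mathbb{C} e_1, \mathbb{C} e_2\}$ to another such line.

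Finally, since $\alpha = \alpha_0 \sigma_x$ is constant, the parallel transport is simply the matrix exponential
\[ P = \exp(-\alpha_0 L \sigma_x) = \cosh(\alpha_0 L)\, \mathrm{Id} - \sinh(\alpha_0 L)\, \sigma_x \ . \]
A direct inspection of this $2\times 2$ matrix shows that $P$ maps $\mathbb{C} e_i$ to a $\sigma_z$-eigenline if and only if either $\sinh(\alpha_0 L) = 0$, in which case $P$ preserves $\mathbb{C} e_1$ and $\mathbb{C} e_2$, or $\cosh(\alpha_0 L) = 0$, in which case $P$ swaps them. These vanishings occur precisely when $\alpha_0 L \in i\pi\mathbb{Z}$ or $\alpha_0 L \in i\pi/2 + i\pi\mathbb{Z}$, respectively. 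Given the restrictions $a \in \mathbb{R}$ and $c \in [0, \pi/2L]$ on $\alpha_0 = a + ic$, the only solutions are $\alpha_0 \in \{0, i\pi/(2L)\}$, both of which are excluded by hypothesis. Hence no rank one subdata exists. There is no hard step: the whole argument is essentially the computation of one $2\times 2$ matrix exponential, with the hypothesis of the proposition designed to be exactly the non-vanishing of its entries.
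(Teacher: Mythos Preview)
Your proof is correct and follows essentially the same approach as the paper: both ignore $\beta$ and show that the boundary conditions $V_{\partial I} \in \{\mathbb{C}e_1, \mathbb{C}e_2\}$ are incompatible with $\partial_\alpha$-invariance of a line subbundle. The only cosmetic difference is that the paper phrases this via the projective coordinate $\ell \colon I \to \mathbb{CP}^1$ satisfying the Riccati equation $\partial\ell = \alpha_0(\ell^2 - 1)$, whereas you work with its linear lift, the parallel transport $\exp(-\alpha_0 L\,\sigma_x)$ on $E$; these are the same computation in different coordinates.
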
 
\bp Suppose we have a complex line subbundle $\mc{L} \subset E$, i.e., a map $\ell\colon I \to \mb{CP}^1$ tracking in which direction the complex line within $\mb{C}^2$ lies for every point $t \in I$. Then it is not difficult to translate the conditions that $\mc{L} \subset E$ provides pre-Nahm subdata as follows: (i) $\ell|_{\partial I} \in \{0, \infty\}$, (ii) $\partial \ell = \alpha_0 (\ell^2 - 1)$, and\footnote{Note, for those uncomfortable with differentiating a function near some point where its value is $\infty$, that one may perform the usual change of coordinates $\ell \mapsto \ell^{-1}$ in a neighborhood thereof.} (iii) $\ell$ is an eigenline for $\beta$ for all $t \in I$. But, even the first two conditions are impossible to satisfy simultaneously.
\ep
\begin{remark}
Proposition \ref{prop:compSings1} will provide another sense in which most orbits are unaffected by the stability condition: unless $\xi_0^\CC = \pm \xi_L^\CC$, we have $\D^{\rm st}_{\xi} = \D_{\xi^\CC}$ for all $\xi^\RR$.
\end{remark}

We now turn to the proof of our axial gauge statements. As in \S\ref{subsec:flatorb2}, the first observation is that the connection, up to gauge equivalence, is completely determined by its monodromy matrix. Indeed, we could essentially derive this statement from the argument of Lemma~\ref{lem:mon} by ``doubling'' the connection to be a connection over $S^1$, albeit with \emph{a priori} only mild regularity at the two fixed points of the $Z_2$-action. Alternatively, one may repeat the argument on the interval, and so we first give the general statement below.

\begin{prop}\label{prop:mon} Suppose $G$ is a (finite-dimensional) connected Lie group endowed with a choice of invariant Hermitian pairing on its Lie algebra $\mf{g}$, and suppose $G$ has subgroups $G_0, G_L \subset G$. Denote the corresponding Lie subalgebras $\mf{g}_0, \mf{g}_L \subset \mf{g}$ and the orthogonal complements thereof as $\mf{g}_0^{\perp}, \mf{g}_L^{\perp}$. Then 
the parallel transport map\footnote{We are using the notation $g^{-1}\partial g$ somewhat informally. More precisely, evaluated at any point $t \in I$, we have that $\partial g(t) \in T_{g(t)}G$, and letting $L_{g^{-1}}\colon G \to G$ denote the operation of left-multiplication by $g^{-1}$ so that $DL_{g^{-1}}$ maps $T_gG \to T_{\mathrm{id}}G \simeq \mf{g}$, we really mean $DL_{g^{-1}}(\partial g) \in \mf{g}$. Of course, our informal notation is meaningful as soon as one embeds $G$ in a matrix group.}
\begin{equation*}
\{ \alpha \in C^{\infty}(I, \mf{g}) \bigm| \alpha|_{\partial I} \in \mf{g}_{\partial I}^{\perp} \} / \{g \in C^{\infty}(I, G) \bigm| g|_{\partial I} \in G_{\partial I}, g^{-1}\partial g|_{\partial I} \in \mf{g}_{\partial I}^{\perp}\} \stackrel{PT}{\to} G_0 \backslash G / G_L \end{equation*} is an isomorphism of groupoids; i.e., it induces a bijection of quotient sets and isomorphisms of corresponding stabilizer groups. \end{prop}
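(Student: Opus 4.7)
The plan is to realize $PT$ as an equivalence of action groupoids. I would define it via the unique solution $U_\alpha \colon I \to G$ of $\partial U_\alpha = -\alpha U_\alpha$ normalized so that $U_\alpha(L) = e$, setting $PT(\alpha) := U_\alpha(0) \in G$. A one-line computation using the gauge transformation law $\alpha \mapsto g^{-1}\alpha g + g^{-1}\partial g$ yields $U_{g \cdot \alpha}(t) = g(t)^{-1} U_\alpha(t) g(L)$, so $U_{g \cdot \alpha}(0) = g(0)^{-1} U_\alpha(0) g(L)$; when $g(0) \in G_0$ and $g(L) \in G_L$, this descends $PT$ to a well-defined map to $G_0 \backslash G / G_L$ and to a functor on the relevant action groupoids.

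For surjectivity, I would use connectedness of $G$: given $u \in G$, pick a smooth path $\gamma \colon I \to G$ with $\gamma(0) = u$, $\gamma(L) = e$, and $\partial \gamma$ vanishing at both endpoints, then set $\alpha := -(\partial\gamma)\gamma^{-1}$. By construction $\alpha|_{\partial I} = 0 \in \mf{g}^\perp_{\partial I}$ and $U_\alpha = \gamma$, so $PT(\alpha) = u$. For injectivity at the level of quotient sets, suppose $U_{\alpha_1}(0) = h_0 U_{\alpha_2}(0) h_L$ with $h_0 \in G_0, h_L \in G_L$; then the explicit ansatz $g(t) := U_{\alpha_1}(t) h_L^{-1} U_{\alpha_2}(t)^{-1}$ satisfies $\partial g = -\alpha_1 g + g\alpha_2$ (equivalently $g \cdot \alpha_1 = \alpha_2$), with $g(L) = h_L^{-1} \in G_L$ and $g(0) = h_0 \in G_0$ by direct substitution.

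What remains, and what I would expect to be the only subtle boundary check, is the derivative condition $g^{-1}\partial g|_{\partial I} \in \mf{g}^\perp_{\partial I}$ in the definition of the gauge group. This turns out to be automatic: from $g \cdot \alpha_1 = \alpha_2$ one has $g^{-1}\partial g = \alpha_2 - g^{-1}\alpha_1 g$, and both terms lie in $\mf{g}^\perp_{\partial I}$ at the boundary -- the first by hypothesis, the second because $\mathrm{Ad}(G_{\partial I})$ preserves $\mf{g}^\perp_{\partial I}$ by $G$-invariance of the Hermitian form. Finally, for the matching of stabilizers, the equation $g \cdot \alpha = \alpha$ reads $\partial g = [g, \alpha]$, whose general solution is $g(t) = U_\alpha(t) g(L) U_\alpha(t)^{-1}$; the stabilizer is parameterized by $g(L) \in G_L$ subject to $g(0) = U_\alpha(0) g(L) U_\alpha(0)^{-1} \in G_0$, which is precisely the stabilizer of the double coset of $U_\alpha(0)$, and the derivative boundary condition is again automatic by the same Ad-invariance argument.
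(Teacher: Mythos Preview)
Your proof is correct and follows essentially the same strategy as the paper: realize $PT$ via parallel transport, verify it descends to the double coset, exhibit the gauge transformation relating two connections with the same transport (hence injectivity), and match stabilizers by solving the stabilizer ODE. Your treatment of the derivative boundary condition via $\mathrm{Ad}(G_{\partial I})$-invariance of $\mf{g}_{\partial I}^\perp$ is exactly the mechanism the paper invokes (more tersely) when it says the defining equation of $g$ implies the condition.

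The one genuine difference is surjectivity. The paper constructs connections hitting a given group element by taking bump-function profiles times fixed $X \in \mf{g}$, obtaining $\exp(X)$ as parallel transport, and then concatenates such profiles using that $\exp(\mf{g})$ generates $G$. Your argument is more direct: pick any smooth path $\gamma$ in $G$ from $u$ to $e$ with vanishing derivative at the endpoints (achievable by reparametrization) and set $\alpha = -(\partial\gamma)\gamma^{-1}$. Both are standard; yours is slightly cleaner and avoids the detour through the exponential map.
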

\bp The parallel transport map is defined as usual as a map from the space of connections above to $G$; acting by gauge transformations with boundary values in $G_0, G_L$ as above may affect the parallel transport by left- and right-multiplication by elements of $G_0$ and $G_L$, as indicated. We hence have a well-defined $PT$ map. 

Next, the proof of injectivity is -- as in Lemma~\ref{lem:mon} -- straightforward. If $PT(\alpha_2) = g_0^{-1} PT(\alpha_1) g_L$, then consider the differential equation $g^{-1} \partial g + \Ad_{g^{-1}} \alpha_1 = \alpha_2$ with boundary condition $g(0) = g_0$. ODE theory provides a (unique) solution, and this solution moreover satisfies the equation $g(L) = PT(\alpha_1)^{-1} g_0 PT(\alpha_2) = g_L$. So, $g|_{\partial I} \in G_{\partial I}$, and the defining differential equation of $g$ then also implies that $g^{-1}\partial g|_{\partial I} \in \mf{g}^\perp_{\partial I}$. Therefore, the transformation $g$ lies in the desired gauge group.


The stabilizer group of a connection $\alpha$ is the set of the gauge transformations $g$ that satisfy $g^{-1} \partial g + \Ad_{g^{-1}}(\alpha) = \alpha$, and so the uniqueness of solutions of ordinary differential equations implies that they are determined by their initial value $g(0) \in G_0$. Of course, solutions of this differential equation need not satisfy the boundary conditions at $L$ for all choices of $g(0)$; the condition on $g(0)$ is that the final value $g(L) = PT(\alpha)^{-1} g(0) PT(\alpha)$ lie in $G_L$. This characterization of the stabilizer of $\alpha$ makes it clear that it is isomorphic to the stabilizer of $PT(\alpha)$ in $G_0 \times G_L$.

Surjectivity may be argued as follows: consider a connection $\alpha$ which is compactly supported on the interior of $I$, and on its support is simply given by a scalar (i.e., valued in $\mb{R}$) bump function times a fixed element $X \in \mf{g}$. The transport automorphism $PT(\alpha)$ is simply $\exp(X)$, if the bump function has integral from 0 to $L$ equal to 1. Hence, if we knew $\mf{g} \stackrel{\exp}{\to} G$ were surjective, the result would follow (e.g., for compact groups). But in complete generality, $\exp(\mf{g})$ always generates $G$, and so we may establish surjectivity simply by cobbling together multiple bump-function profiles in the support of our connection $\alpha$. \ep

\bp[Proof of Proposition~\ref{prop:realAx}] Upon applying Proposition~\ref{prop:mon} for $G = SU(2), G_0 = G_L = T \subset SU(2)$, the majority of this statement follows from the observation that the parallel transport map $$\{i c \sigma_x \bigm| c \in [0, \pi/2L]\} \stackrel{\exp(L-)}{\to} T\backslash SU(2)/T$$ is a bijection of sets.

One may see this bijection in various ways; one geometric way to proceed is as follows. Note that $SU(2)/T \simeq \mb{CP}^1$, with the remaining $T$-action given by rotation of the sphere about one axis. The resultant coarse quotient space is the interval, and in fact one may explicitly describe this bijection $T \backslash SU(2) / T \stackrel{H}{\to} [0, 1]$ via $\begin{pmatrix} a & b \\ c & d \end{pmatrix} \mapsto ad$. It is then immediate to verify that $H(\exp(i c L \sigma_x))$ bijectively maps to $[0, 1]$ for $c \in [0, \pi/2L]$.

This presentation is also helpful for computing stabilizers, as now as an orbifold, $T \backslash SU(2) / T \simeq U(1) \backslash \mb{CP}^1$ is an interval whose endpoints have $U(1)$ stabilizers. 

Alternatively, we may simply study these stabilizers directly within $\A'/\G$ as follows. Suppose that $A^0 = ic\sigma_x$ and that it is stabilized by a non-trivial $g\in \G$. The latter condition is equivalent to $\partial_{A^0} g = 0$, where $g$ is regarded as an element of $\End(E)$. Uniqueness of solutions of ordinary differential equations implies that the solutions to this equation take the form $g(t) = \exp(-i c t \sigma_x) U \exp(ic t \sigma_x)$, where $U = g(0)$ is constant. Special unitarity of $g$ implies that $U$ should be special unitary. The boundary conditions on $g$ are satisfied at $t=0$ if $U$ is diagonal. Finally, the boundary conditions at $t=L$ require $c\in\{0, \pi/2L\}$, whence one may compute the stabilizer groups directly. \ep

\bp[Proof of Proposition~\ref{prop:compAx}] We now apply Proposition~\ref{prop:mon} to $G = SL(2, \mb{C}), G_0 = G_L = T_{\mb{C}} \subset SL(2,\mb{C})$. It is now a similar but slightly more involved task to verify that the five representatives listed map bijectively to $\mb{C}^{\times} \backslash SL(2,\mb{C}) / \mb{C}^{\times}$ and that there are still just two points with nontrivial stabilizers. One way to proceed is to again make use of the map $H\colon \twoMatrix{a}{b}{c}{d}\mapsto ad$, regarded now as a map $\CC^\times\backslash SL(2,\CC)/\CC^\times \to \CC$. It is no longer a bijection but rather merely the maximal Hausdorff quotient of the non-Hausdorff coarse quotient space $\mb{C}^{\times}\backslash SL(2,\mb{C}) / \mb{C}^{\times}$. Away from $0,1\in \CC$ it is a bijection, and the preimages of these two points are each homeomorphic to the quotient $\{(z_1,z_2)\in \CC^2 \ | \ z_1 z_2 = 0\}/\CC^\times$ which, for $\lambda\in \CC^\times$, identifies $(z_1,z_2)$ with $(\lambda z_1, \lambda^{-1} z_2)$. This quotient consists of the orbits of $(0,0)$, $(1,0)$, and $(0,1)$. The latter two orbits, over $1, 0 \in \mb{C}$, respectively, correspond to cases (ii),(iii) and (iv),(v) of Proposition \ref{prop:compAx}.
\ep

\bigskip

We next briefly mention a natural generalization of our constructions in the second part of this paper:
\begin{constr}\label{constr:gen} Given any connected, compact Lie group $G$ with closed subgroups $G_0, G_L \subset G$ and a choice of positive-definite invariant inner product on $\mf{g}$, we could consider the analogous gauge theory problem where the ambient space is the space of connections as stated in Proposition~\ref{prop:mon} times a copy of $$\mc{A}''_{\xi} := \{A^i \in C^\infty(I,\mf{g})^{\oplus 3} \, \bigm| \, A^i|_{\partial I} \in \vec{\xi}_{\partial I} + \mf{g}_{\partial I}^{\perp}\}\,.$$
The parameters $\xi$ are now naturally valued in $\mf{z}^3_{\partial I}$, where $\mf{z}_{\partial I} \simeq (\mf{g}_{\partial I})^{G_{\partial I}}$. All our results immediately generalize to this setting, and as usual, we focus attention on the $\mf{su}(2)$ case both due to historical tradition and due to its primacy as a special case of our main Conjecture~\ref{conj:mainconj}. We will, in future work, give a unified treatment of moduli spaces of Nahm data not only for these more general boundary conditions but also for still more general ``brane configurations'' ~\cite{mz:bigNahm}. \end{constr}

Before leaving the generality of Construction~\ref{constr:gen}, however, it may be of interest to note that 
the complex moduli spaces $\mc{N}_{\xi}$ are deformations of partial resolutions of $T^*\Big((G_0)_{\mb{C}} \backslash G_{\mb{C}} / (G_L)_{\mb{C}}\Big)$, appropriately interpreted. Indeed, applying Proposition~\ref{prop:mon} for $(G_0)_{\mb{C}},(G_L)_{\mb{C}} \subset G_{\mb{C}}$ allows us to think of $\alpha$ in terms of its parallel transport automorphism $PT(\alpha)$, and then $\beta$ satisfying the complex Nahm equation is determined by $\beta(0) \in (\mf{g}_0^{\perp})_{\mb{C}}$ together with a condition that $\beta(L) = PT(\alpha)^{-1} \beta(0) PT(\alpha) \in (\mf{g}_L^{\perp})_{\mb{C}}$.

\bigskip

In fact, here is one way to contextualize the above construction. Kronheimer~\cite{kronheimer:cotangent} gives a construction of $T^*G_{\mb{C}}$, for any connected compact Lie group $G$, as the hyperkahler quotient of a space of Nahm data $\widehat{\mc{A}}$ by a gauge group $\mc{G}_0$, where $\widehat{\mc{A}}$ is even larger than $\mc{A}_{\mathrm{all}}$ in that \emph{no} boundary conditions are imposed, while $\mc{G}_0$ demands that gauge transformations be trivial at both boundaries. This hyperkahler quotient manifold $T^*G_{\mb{C}}$ continues to admit tri-Hamiltonian $G$-actions from both the left and right, and one could attempt to take a further hyperkahler quotient. Doing so by the subgroups $G_0, G_L \subset G$ on the right and left, respectively -- and re-parsing this two-step hyperkahler quotient construction\footnote{Note that this two-step procedure for constructing moduli spaces of Nahm data has long been the dominant strategy in the literature. See, e.g.,~\cite{dancer:alf,dancer:alf2,takayama:bow}.} as a one-step construction -- is the content of Construction~\ref{constr:gen} above. The particular specialization to the case of interest for us is the original construction by Dancer~\cite{dancer:alf2} of the $D_2$ ALF family of manifolds. For concreteness, we now explain this comparison in full detail.

The first step is Kronheimer's construction of cotangent bundles of complex reductive groups, which in this case we again state in full detail:

\begin{thrm}[Kronheimer~\cite{kronheimer:cotangent}]\label{thm:kroncot} There is a canonical isomorphism $$\tilde{\M}' := C^{\infty}(I,\mf{su}(2))^{\oplus 4}\hqnoxi \{g \in C^{\infty}(I,SU(2)) \bigm| g|_{\partial I} = \mathrm{Id}\} \simeq T^*SL(2,\mb{C})$$ of holomorphic symplectic manifolds. The remaining canonical action of $SU(2) \times SU(2)$ via its action on the boundary values on the LHS above is tri-Hamiltonian; note that on the RHS, this action is via left- and right-multiplication.
\end{thrm}

\begin{rmk} Kronheimer establishes this result for $C^2$ gauge transformations acting on $C^1$ pre-Nahm data, but analogous arguments to those of Propositions~\ref{prop:bij} and~\ref{prop:smoothReps} allow for a comparison with the $C^{\infty}$ case. The same is true for Dancer's construction. \end{rmk}

Dancer now proceeds by taking the hyperkahler quotient of the above by the $T \times T$ subgroup of $SU(2) \times SU(2)$. We will implicitly use the canonical isomorphism of the space of moment map parameters $((\mf{t} \times \mf{t})^{\vee})^{T \times T} \simeq \mf{t} \times \mf{t}$ with our $\mf{z}_{\partial I}$ in the following statement:

\begin{thrm}[Dancer~\cite{dancer:alf2}] For generic $\xi \in \mf{z}^3_{\partial I}$, the hyperkahler quotient $$\M'_{\xi} := \tilde{\M}'\hq(T \times T)$$ is a smooth, hyperkahler fourfold. This family of fourfolds is termed the \emph{$D_2$ ALF} family of manifolds. \end{thrm}

\begin{rmk} In fact, Dancer begins with the analogous Kronheimer construction $$C^{\infty}(I,\mf{u}(2))^{\oplus 4}\hqnoxi \{g \in C^{\infty}(I,U(2)) \bigm| g|_{\partial I} = \mathrm{Id}\} \simeq T^*GL(2,\mb{C})$$ before then further hyperkahler quotienting by the $\ker\Big(U(2) \times U(2) \stackrel{\det \times \det}{\to} U(1) \times U(1) \stackrel{\times}{\to} U(1)\Big)$ subgroup of $U(2) \times U(2)$, but simplifying to $SU(2)$ throughout is straightforward by judicious application of the following Fact~\ref{fact:third}. \end{rmk}

It will be convenient to explicitly note the following fact, whose proof is immediate.

\begin{fact}\label{fact:third} If $M$ is a hyperkahler manifold with tri-Hamiltonian action by $G$, and if $H \subset G$ is a closed normal subgroup, then $M\hqnoxi H$ admits a canonical tri-Hamiltonian $G/H$-action. Moreover, we have a canonical inclusion of the space of moment map parameters $$((\mathrm{Lie}\ G/H)^{\vee})^{G/H} \hookrightarrow ((\mathrm{Lie}\ G)^{\vee})^G\,,$$ and if $\xi$ is any triple of elements of the former, we have $(M \hqnoxi H)\hq (G/H) \simeq M\hq G$.\end{fact}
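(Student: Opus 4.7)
The plan is to verify the assertions in order, leaning on normality of $H\subset G$ and on the standard functoriality properties of moment maps. First I would establish the existence of the induced action. Since $H\triangleleft G$, the conjugation action of $G$ on $H$ descends to an action of $G$ on $\mathrm{Lie}(H)^\vee$ which preserves the $H$-invariant parameters; the hyperkahler moment maps $\mu_H^i\colon M\to\mathrm{Lie}(H)^\vee$ are $G$-equivariant via the composition of the $G$-moment maps with the restriction $(\mathrm{Lie}\,G)^\vee\twoheadrightarrow(\mathrm{Lie}\,H)^\vee$, so $G$ preserves the level set $\mu_H^{-1}(\xi_H)$ for any $\xi_H\in((\mathrm{Lie}\,H)^\vee)^G$. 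The $G$-action on this level set commutes with $H$ and hence descends to a $G/H$-action on the quotient $M\hqnoxi_{\xi_H} H$. Standard arguments (e.g.\ restriction of the hyperkahler forms on $M$ to the level set and descent to horizontal directions) show this action preserves each K\"ahler form and the holomorphic symplectic form, so it is a hyperkahler isometry.

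Next I would construct the induced moment maps for $G/H$ on $M\hqnoxi H$. The inclusion $((\mathrm{Lie}\,G/H)^\vee)^{G/H}\hookrightarrow((\mathrm{Lie}\,G)^\vee)^G$ is simply the dual of the surjection $\mathrm{Lie}\,G\twoheadrightarrow\mathrm{Lie}(G/H)$ together with the observation that an element of $((\mathrm{Lie}\,G)^\vee)^G$ which annihilates $\mathrm{Lie}\,H$ is automatically $G/H$-invariant and pulled back from $(\mathrm{Lie}(G/H))^\vee$. Choosing such an inclusion allows us to split off the $H$-part and the $G/H$-part of the original moment maps: the former defines the level we cut out when forming $M\hqnoxi H$, and the latter descends to the desired $G/H$-moment map on $M\hqnoxi H$. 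Its equivariance and the hyperkahler moment map property follow directly from the corresponding statements for the original maps on $M$.

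Finally, for any triple $\xi\in((\mathrm{Lie}(G/H))^\vee)^{G/H}\subset((\mathrm{Lie}\,G)^\vee)^G$, the iterated quotient may be computed as
\[ (M\hqnoxi_{0} H)\hq_{\xi} (G/H) = \bigl(\mu_H^{-1}(0)\cap\mu_{G/H}^{-1}(\xi)\bigr)/G, \]
where I have used that a $G/H$-orbit on $M\hqnoxi H$ is precisely a $G$-orbit on the corresponding $H$-level set. Under the splitting $(\mathrm{Lie}\,G)^\vee\simeq(\mathrm{Lie}\,H)^\vee\oplus(\mathrm{Lie}(G/H))^\vee$ induced by the inclusion above, $\mu_G^{-1}(\xi)$ is exactly $\mu_H^{-1}(0)\cap\mu_{G/H}^{-1}(\xi)$, and so this quotient is manifestly $M\hq_\xi G$. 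The only nontrivial point to be careful about is the interchange of taking the preimage and quotienting; this is routine once one checks that the $G$-action preserves $\mu_H^{-1}(0)$, which is precisely the normality hypothesis together with $G$-equivariance of $\mu_H$. I do not anticipate any serious obstacle here---the whole argument is a standard reduction-in-stages computation for hyperkahler quotients, which is why the paper labels it a fact rather than a proposition.
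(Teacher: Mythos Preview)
Your argument is correct and is precisely the standard reduction-in-stages computation that the paper has in mind; the paper does not actually supply a proof, introducing this statement as a ``fact, whose proof is immediate.'' One small point of language: you invoke a splitting $(\mathrm{Lie}\,G)^\vee\simeq(\mathrm{Lie}\,H)^\vee\oplus(\mathrm{Lie}(G/H))^\vee$, but no such splitting is canonically available (nor needed). What you actually use is that $(\mathrm{Lie}(G/H))^\vee$ sits inside $(\mathrm{Lie}\,G)^\vee$ as the annihilator of $\mathrm{Lie}\,H$, so for $\xi$ in this subspace the equation $\mu_G(m)=\xi$ already forces $\mu_H(m)=0$; hence $\mu_G^{-1}(\xi)\subset\mu_H^{-1}(0)$, and the restriction of $\mu_G$ to this level set, now valued in the annihilator, is exactly the $G/H$-moment map. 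With that rephrasing the argument goes through cleanly without any choice of complement.
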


\begin{rmk} It is reasonable to ask in the above if one could also deform the first hyperkahler quotient to $M\hq H$ and similarly obtain a `third isomorphism theorem for hyperkahler quotients' statement; indeed, as moment maps are only well-defined up to this space of parameters, it is somewhat disingenuous to not do so -- although we note that in the case of interest here, the original construction of $T^*G_{\mb{C}}$ as a hyperkahler quotient does not admit moment map parameters and so there is no ambiguity. 

To make precise sense of the current desideratum requires some slight extra notation. For simplicity, we discuss the analogous theorem for symplectic quotients, i.e., $(M\kqnoxi H)\kqnoxi (G/H) \simeq M\kqnoxi G$, as the version for hyperkahler quotients is identical save for involving triples of moment map parameters everywhere. Essentially, then, we would like to relate moment map parameters for $G$ to those for $H$ and $G/H$. We first note the following diagram:

\begin{tikzcd} 0 \ar[r] & ((\mathrm{Lie}\ G/H)^{\vee})^{G/H} \ar[r] & ((\mathrm{Lie}\ G)^{\vee})^{G} \ar[r,"\pi"] \ar[dr] & ((\mathrm{Lie}\ H)^{\vee})^{G} \ar[r] \ar[d, hook] & 0 \\ & & & ((\mathrm{Lie}\ H)^{\vee})^{H} \makebox[0pt][l]{\,.}\end{tikzcd}

Now, abstractly, given a symplectic manifold $(M, \omega)$ with Hamiltonian action by a group $G$, let us denote by $\mf{z}_G(M)$ the space of moment map parameters for the $G$-action on $M$, i.e., the space of $G$-invariant maps $M \stackrel{\mu}{\to} \mf{g}^{\vee}$ such that $d\mu(X) = \iota_X \omega$. Then a slightly more precise statement to make is that $\mf{z}_G(M)$ is an affine space over the vector space $((\mathrm{Lie}\ G)^{\vee})^G$. In the situation at hand, we then claim that (i) we have a natural map $\mf{z}_G(M) \stackrel{\pi}{\twoheadrightarrow} (\mf{z}_H(M))^G$ as a torsor over the map denoted $\pi$ above, and (ii) given $\xi \in (\mf{z}_H(M))^G$, we have a canonical isomorphism $\pi^{-1}(\xi) \simeq \mf{z}_{G/H}(M\kq H)$ of affine spaces over $((\mathrm{Lie}\ G/H)^{\vee})^{G/H}$. Supposing that $\nu, \omicron$ denote identified elements on either side of the above isomorphism, we may hence make sense of the statement $(M\kq H)\mathord{/ \!\! /_{\omicron}}(G/H) \simeq M\mathord{/ \!\! /_{\nu}} G$. \end{rmk}

\begin{prop}\label{prop:comp} There is a natural isomorphism $\M_{\xi} \simeq \M'_{\xi}$ preserving the hyperkahler structure. \end{prop}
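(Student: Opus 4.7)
The plan is to apply Fact~\ref{fact:third} to realize both $\M_\xi$ and $\M'_\xi$ as ``one-step'' hyperkahler quotients of Kronheimer's space $\widehat{\A} := C^\infty(I,\mf{su}(2))^{\oplus 4}$ by the same hyperkahler group action, at the same moment map level. On the Dancer side, $\M'_\xi$ is by definition the hyperkahler quotient of $T^*SL(2,\CC) \simeq \widehat{\A}\hqnoxi \G_0$ (where $\G_0 = \{g\in C^\infty(I,SU(2))\mid g|_{\partial I}=\mathrm{Id}\}$) by the residual $T\times T \subset SU(2)\times SU(2)$ action, with moment map parameter $\xi$. Since $\G_0$ is normal in the total gauge group $\widehat{\tilde{\G}} := \{g\in C^\infty(I,SU(2))\mid g|_{\partial I}\in T\}$ with quotient $T\times T$ (and the two symplectic moment map parameter spaces match up in the evident way), Fact~\ref{fact:third} gives a natural identification
\[
\M'_\xi \simeq \widehat{\A}_\xi \hq \widehat{\G}\,,
\]
where $\widehat{\A}_\xi$ consists of pre-Nahm data with $A^i_z|_{\partial I} = \xi^i_{\partial I}$ but no constraint on $A^0|_{\partial I}$, and $\widehat{\G} := \widehat{\tilde{\G}}/Z_2$.

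On the other hand, $\M_\xi = \A_\xi\hq\G$ differs from the above only in that we impose the additional boundary condition $A^0|_{\partial I}\in\mf{t}^\perp$ and correspondingly shrink $\widehat{\G}$ to $\G$, whose elements satisfy $g^{-1}\partial g|_{\partial I}\in \mf{t}^\perp$. The key linear-algebra observation is that $\widehat{\G}/\G$ acts freely and transitively on the $\mf{t}$-component of $A^0|_{\partial I}$: given any $A\in \widehat{\A}_\xi$ we can apply an element of $\widehat{\G}$ with $g|_{\partial I}=\mathrm{Id}$ but $g^{-1}\partial g|_{\partial I}$ arbitrary in $\mf{t}$ in order to cancel the $\mf{t}$-component of $A^0|_{\partial I}$, and the residual stabilizer of this gauge slice is precisely $\G$. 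Thus the imposition $A^0|_{\partial I}\in\mf{t}^\perp$ is a gauge-fixing for the $\widehat{\G}$-action that exhibits $\A_\xi/\G \simeq \widehat{\A}_\xi/\widehat{\G}$; since the additional $\mf{t}$-component of $A^0|_{\partial I}$ does not appear in any of the moment maps $\mu^i$, this gauge slice descends to a natural isomorphism
\[
\M_\xi = \A_\xi\hq\G \;\xrightarrow{\sim}\; \widehat{\A}_\xi\hq\widehat{\G} \simeq \M'_\xi\,.
\]
The compatibility of hyperkahler structures is automatic: both sides inherit theirs from the same flat hyperkahler structure on $\widehat{\A}$ via the canonical procedure of Construction~\ref{constr:hkquot}, and the gauge-fixing slice is hyperkahler orthogonal to the $\widehat{\G}/\G$-directions which contribute neither to the moment maps nor to the tangent bundle of the slice.

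The main technical obstacle is the matching of regularity classes. As noted in the remark after Theorem~\ref{thm:kroncot}, Kronheimer and Dancer work with $C^1$ pre-Nahm data modulo $C^2$ gauge transformations, whereas our $\M_\xi$ is constructed using $C^\infty$ data. Bridging these two conventions requires an argument analogous to Propositions~\ref{prop:bij} and~\ref{prop:smoothReps} of the main text, showing that every $C^1$ gauge equivalence class contains a $C^\infty$ representative unique up to $C^\infty$ gauge transformations, and moreover that the resulting smooth structures on $\M_\xi$ and $\M'_\xi$ agree. A secondary, milder issue is the bookkeeping around $Z_2$ centers and the precise identification of the $T\times T$ moment map parameter space of Dancer with our $\mf{z}^3_{\partial I}$ (with appropriate normalization constants from the Killing form); this is essentially the same computation as our verification of Proposition~\ref{prop:Hamtrick}.
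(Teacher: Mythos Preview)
Your core idea---using the gauge slice that eliminates the $\mf{t}$-part of $A^0|_{\partial I}$---is exactly right and matches the paper's proof. The issue is your invocation of Fact~\ref{fact:third}. That fact requires the big group $\widehat{\G}$ to act tri-Hamiltonianly on $M=\widehat{\A}$, but as the paper explicitly discusses immediately after this proof, this hypothesis \emph{fails}: the boundary term $[\Tr(h\,\delta A^i)]_0^L$ from Proposition~\ref{prop:Hamtrick} does not vanish when $h|_{\partial I}\in\mf{t}$ and $\delta A^i|_{\partial I}$ is unrestricted. So there is no moment map for $\widehat{\G}\acts\widehat{\A}$, and in particular $\widehat{\A}\hq\widehat{\G}$ does not make sense; Fact~\ref{fact:third} cannot produce your identification $\M'_\xi\simeq\widehat{\A}_\xi\hq\widehat{\G}$.

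The paper circumvents this by performing the gauge slice at the $\G_0$ level, \emph{before} taking the $T\times T$ quotient: it first rewrites $\tilde{\M}'$ using the presentation $\{A^0:A^0|_{\partial I}\in\mf{t}^\perp\}/\{g\in\G_0:g^{-1}\partial g|_{\partial I}\in\mf{t}^\perp\}$, which is still a quotient by a subgroup of $\G_0$ (where moment maps do exist). Then the $T\times T$ moment map condition on this reformulated $\tilde{\M}'$ is simply $A^i_z|_{\partial I}=\xi^i_{\partial I}$, and the combined quotient is manifestly $\A_\xi/\tilde{\G}=\M_\xi$. No appeal to Fact~\ref{fact:third} is needed. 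Your argument can be repaired along similar lines: once you restrict to $\widehat{\A}_\xi$, the $\widehat{\G}$-action \emph{is} tri-Hamiltonian (the offending boundary term now vanishes since $\delta A^i|_{\partial I}\in\mf{t}^\perp$), and you can check directly---not via Fact~\ref{fact:third}---that the resulting quotient $\widehat{\A}_\xi\hq_0\widehat{\G}$ agrees with $\M'_\xi$ by unwinding the two-step definition.
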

\bp All the isomorphisms we write of hyperkahler quotients below will naturally preserve the hyperkahler structure; i.e., the K\"ahler forms on the target will pull back to those on the source. It suffices to establish an isomorphism of sets (or groupoids, if we wish to treat nongeneric $\xi$).

We first note the following isomorphism on just the connection datum:
\begin{align*}&\{A^0 \in C^{\infty}(I,\mf{su}(2)) \bigm| A^0|_{\partial I} \in \mf{t}^{\perp}\} / \{g \in C^{\infty}(I,SU(2)) \bigm| g|_{\partial I} = \mathrm{Id}, g^{-1} \partial g|_{\partial I} \in \mf{t}^{\perp}\} \stackrel{\sim}{\to} \\ &\hspace{1.5cm} C^{\infty}(I,\mf{su}(2)) / \{g \in C^{\infty}(I,SU(2)) \bigm| g|_{\partial I} = \mathrm{Id}\} \,.\end{align*} As, for example, explained in~\cite{kronheimer:cotangent}, the moment map equations imposed in the hyperkahler quotient construction of $\tilde{\M}'$ are exactly Nahm's equations, but the above allows for the simplification:

\begin{align*} \tilde{\M}' &= \{ A^{\mu} \in C^{\infty}(I,\mf{su}(2))^{\oplus 4} \bigm| A^{\mu}\text{ satisfy Nahm's equations}\} / \{g \in C^{\infty}(I,SU(2)) \bigm| g|_{\partial I} = \mathrm{Id}\} \\ &\hspace{-.25cm}\stackrel{\sim}{\longleftarrow} \{A^{\mu} \in C^{\infty}(I,\mf{su}(2))^{\oplus 4} \bigm| A^{\mu}\text{ satisfy Nahm's equations}, A^0|_{\partial I} \in \mf{t}^{\perp}\} \ / \\ &\hspace{2.5cm}\{g \in C^{\infty}(I,SU(2)) \bigm| g|_{\partial I} = \mathrm{Id}, g^{-1} \partial g|_{\partial I} \in \mf{t}^{\perp}\}\end{align*}

We now wish to further impose a hyperkahler quotient by $T \times T$ on the above, but in terms of the latter presentation of $\tilde{\M}'$, the condition imposed by the moment maps is extremely easy to parse: it is simply the condition that $(A^i_z)|_{\partial I} = \xi^i_{\partial I}$. We hence immediately find the desideratum \begin{align*} \tilde{\M}'\hq(T \times T) \stackrel{\sim}{\leftarrow} \{A^{\mu} \in C^{\infty}(I,\mf{su}(2))^{\oplus 4} \bigm| A^{\mu}\text{ satisfy Nahm's equations}, A^{\mu}|_{\partial I} \in -i \xi^{\mu}_{\partial I} \sigma_z + \mf{t}^{\perp}\} / \widetilde{\mc{G}}\,.\end{align*} \ep

In some sense, having established the above comparison proposition, we could immediately deduce all the statements we wish for the $r = 1$ case of our main Conjecture~\ref{conj:mainconj}. For example, there is little discussion of stability and DUY isomorphisms in~\cite{dancer:alf2}, but one could essentially `import' the robust finite-dimensional Kempf-Ness theory and cheaply pull it back to deduce appropriate notions of stability, etc., on the infinite-dimensional space of complex (pre-)Nahm data. Indeed, we may immediately use this comparison to now list standard facts, such as asymptotically cubic volume growth, or even more sharply, that the asymptotic geometry is that of a circle of circumference $\pi/L$ fibered over $\mb{R}^3/Z_2$.

In general, however, we eschew this approach for multiple reasons. Most obviously, we would like to present an approach that most immediately generalizes to higher dimensions -- i.e., higher values of $r$ -- for the purposes of Conjecture~\ref{conj:mainconj}. For example, the study of stability given here should read as parallel to the sketch given in~\ref{subsec:sheaf}. Doing the infinite-dimensional analysis directly -- e.g., in the current $r = 1$ case, studying how boundary terms produced by integration by parts give rise to various subtleties -- is a clear model for the regularity conditions at the nontrivially-stabilized locus $F \subset T^{\vee}_{\Lambda}$ in higher dimensions. And on the negative side, the K\"{a}hler forms will not straightforwardly generalize from $\mc{A}_{\xi}$ to $\mc{A}_{\mathrm{all}}$ in higher dimensions as the variations incurred by changing the parameters $\xi$ will not be square-integrable. In particular, it seems unlikely for there to be an analog of any multi-step quotient procedure.\footnote{Similarly, for physically-inclined readers, one does not path-integrate over the boundary value of $A_z$ as it is fixed by boundary conditions. Relatedly, it is physically important that this D-brane moduli space is given by a single hyper-K\"ahler quotient of an affine space, as opposed to a multistep procedure. In particular, the physical gauge group is $\tilde\G$.}

Finally, we recall again that one of the original motivations for our work and for the main Conjecture~\ref{conj:mainconj} is to give explicit analytic constructions of the Ricci-flat hyperkahler metrics on manifolds about the flat orbifold limit. Our one-step hyperkahler quotient of a flat space gives an explicit paradigm for how to perform the appropriate Banach contraction construction of Taylor series expansion of solutions of the moment map (Nahm) equations in the $\xi$ deformation parameters. The multistep quotient approach, by comparison, is at best more opaque for this purpose.

Along these lines, it would also be desirable to prove the known results about the asymptotic geometry of our moduli spaces directly, as opposed to by appealing to alternative descriptions of the geometry. It should be possible to do so by following the approach of \cite{rafeLaura:Nf4} for studying the asymptotic geometry of ALG parabolic Higgs bundle moduli spaces. We leave this for future work.

We finally mention one possible puzzle. Given Fact~\ref{fact:third}, why is Dancer's construction intrinsically a multistep operation? It certainly seems that one should be able to perform a single-step hyperkahler quotient of $C^{\infty}(I,\mf{su}(2))^{\oplus 4}$ by a slightly larger group than in Kronheimer's construction, namely, $\{g \in C^{\infty}(SU(2)) \bigm| g|_{\partial I} \in T\}$. The difficulty is intrinsically analytic: the group above \emph{does not admit} a tri-Hamiltonian action on the ambient space, and so the first hypothesis of Fact~\ref{fact:third} fails from the outset. The reason for the failure of a moment map harks back to the proof of Proposition~\ref{prop:Hamtrick}, where we see precisely that Nahm's equations give good moment maps provided that certain boundary terms provided by integration by parts vanish. Informally, the variation of the gauge transformation at the boundary must be orthogonal to the allowed values of the pre-Nahm data there. This is the case in Kronheimer's construction of Theorem~\ref{thm:kroncot} and in our main construction per Definition~\ref{defn:maindefn}, but this vanishing fails for the proposed one-step construction above. We hence, once again, see the importance of careful treatment of regularity about the special points in our main Conjecture~\ref{conj:mainconj}.

\section{Moduli space of real Nahm data} \label{sec:hkq}

This section is devoted to the proof of the following theorem:
\begin{theorem} \label{thm:modHK}
Given definitions as in Definition~\ref{defn:maindefn}, the map $\M_{\mathrm{all}} \stackrel{p}{\to} \mf{z}^3_{\partial I}$ restricts to a smooth submersion of manifolds $\M^{\mathrm{irr}}_{\mathrm{all}} \stackrel{p}{\to} \mf{z}^3_{\partial I}$. Its fibers $\M^{\mathrm{irr}}_{\xi}$ over $\xi \in \mf{z}^3_{\partial I}$ are smooth (possibly empty) hyperkahler four-manifolds. 

If $\xi \in \mf{z}^3_{\partial I}$ is generic, all points of $\M_{\xi}$ are irreducible while if $\xi$ is nongeneric but not identically vanishing, there is a single point of $\M_{\xi} \setminus \M_{\xi}^{\mathrm{irr}}$. Finally, if $\xi$ identically vanishes, $\M_{\xi} \simeq (\mb{R}^3 \times S^1)/Z_2$ and the irreducible locus is precisely the complement of the two $A_1$ orbifold singularities. 
\end{theorem}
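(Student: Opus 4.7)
The plan is to reduce everything to a local slice analysis at irreducible points, a separate classification of reducible orbits, and the identification of the zero-parameter fiber with the flat orbifold. Since Proposition~\ref{prop:hktime} already provides the tri-Hamiltonian data and Proposition~\ref{prop:Hamtrick} confirms that $\mu$ is the moment map for the $\G$-action on $\A_\xi$, Construction~\ref{constr:hkquot} endows every smooth fiber with a canonical hyperkahler structure, so the geometric content reduces to smoothness, dimension, and the classification of bad orbits.

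First I would handle the irreducible locus. Fix $A\in\mu^{-1}(0)\cap\A_\xi$ with trivial $\G$-stabilizer and form the standard deformation complex
\begin{equation*}
0 \longrightarrow \mf{g} \stackrel{d_1}{\longrightarrow} T_A\A_\xi \stackrel{d_2}{\longrightarrow} \mc{F} \longrightarrow 0,
\end{equation*}
where $d_1(h)=(\partial_{A^0}h,\,[A^i,h])$ is the infinitesimal gauge action and $d_2$ is the linearization of $\mu$. The composite $d_2\circ d_1$ vanishes by $\G$-invariance of $\mu$, and a short integration by parts using the boundary conditions in Definition~\ref{defn:maindefn} shows that $d_1^*=-d_2\circ(J_V\text{-rotation})$, where $J_V$ is an appropriate quaternionic structure, so the augmented operator $d_1^*\oplus d_2$ is essentially a first-order elliptic ODE system on $I$ with self-adjoint boundary conditions. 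Standard ODE theory gives Fredholmness and vanishing index; irreducibility kills $\ker d_1$, and the moment map equation and a Bochner-type argument (using Corollary~\ref{cor:comm} applied to the linearization) give $\ker d_1^*\cap \ker d_2$ of constant dimension $4$. The Banach implicit function theorem then yields local slice charts of dimension $4$, smoothness of $\M_\xi^{\rm irr}$, and (by treating $\xi$ as an additional parameter via the boundary evaluation map) smoothness of $\M_{\rm all}^{\rm irr}$ together with the submersion property of $p$.

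Next I would enumerate reducible orbits. By Proposition~\ref{prop:realAx}, $A^0$ admits a unique representative $ic\sigma_x$ with $c\in[0,\pi/2L]$, and a nontrivial $\G$-stabilizer forces $c\in\{0,\pi/2L\}$ with stabilizer a $U(1)\subset\G$. The further condition that this $U(1)$ stabilize $(A^1,A^2,A^3)$ forces each $A^i$ to commute with the stabilizer generator pointwise, reducing Nahm's equations to an abelian ODE on the interval. Combining this reduction with the boundary conditions $A^i_z(0)=\xi^i_0$, $A^i_z(L)=\xi^i_L$ and the residual pointwise $U(1)$-gauge freedom yields: if $c=0$, solutions exist iff $\vec\xi_0=\vec\xi_L$, and are unique modulo $\G$; if $c=\pi/2L$, the appropriate extended gauge conjugation by $\exp(\pi it\sigma_x/2L)$ shows solutions exist iff $\vec\xi_0=-\vec\xi_L$, again unique up to $\G$. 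Hence $\M_\xi\setminus\M_\xi^{\rm irr}$ consists of at most two points, and exactly one unless both $\vec\xi_0=\vec\xi_L$ and $\vec\xi_0=-\vec\xi_L$ hold, i.e.\ $\xi\equiv 0$.

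Finally I would treat the $\xi=0$ case by direct appeal to Theorem~\ref{thm:noFI} (specialized via the argument of \S\ref{subsec:flatorb} or \S\ref{subsec:flatorb2} to $r=1$, $\Gamma=Z_2$), which identifies $\M_0$ with $(\RR^3\times S^1)/Z_2$ as a set; the two orbifold points are the fixed points of the $Z_2$ action, corresponding precisely to the two reducible orbits $c=0$ and $c=\pi/2L$ identified above, and the remaining matching of hyperkahler structures is Corollary~3.2 of \cite{kronheimer:construct}. The $Z_2\times Z_2$ and $SO(3)$ equivariance statements are immediate: the extended gauge transformations $g_1,g_2$ and the $SO(3)$ action of Remark~\ref{rmk:so3} act on $\A_{\rm all}$ preserving Nahm's equations and the $\G$-action, so they descend to $\M_{\rm all}$ and act compatibly on boundary values. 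The main obstacle is the elliptic analysis with the mixed (Dirichlet/Neumann-like) boundary conditions of Definition~\ref{defn:maindefn}: one must check carefully that the boundary splitting $\mf{su}(2)=\mf{t}\oplus\mf{t}^\perp$ yields a self-adjoint boundary value problem for $d_1^*\oplus d_2$ at every irreducible point, and that the resulting slice theorem extends smoothly across variations of $\xi$. Once this Fredholm step is established, the remainder is essentially formal.
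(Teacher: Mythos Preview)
Your overall architecture matches the paper's: deformation-complex slice theory at irreducible points, axial-gauge classification of reducible orbits (this is the paper's ``second proof'' of its Proposition on reducibles), and appeal to the general flat-orbifold theorem for $\xi=0$. Two genuine gaps and two technical errors stand between your sketch and a complete proof.

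\textbf{Genuine gaps.} First, you never address Hausdorffness of the quotient $\mu^{-1}(0)/\G$. A slice theorem gives local charts but does not by itself guarantee that distinct orbits cannot accumulate on each other; the paper proves this via a compactness-of-$SU(2)$ argument showing that if $B_n\to B$ and $g_n(B_n)\to A$ then a subsequence of $g_n$ converges, and in fact obtains a genuine $L^2$ metric on the orbit space. Second, you invoke the Banach implicit function theorem while working in $C^\infty$, which is only Fr\'echet. The paper resolves this in the standard way: work first in $H^s$ for finite $s>1/2$, prove the slice theorem there, and then show via elliptic regularity that every $H^s$-orbit of Nahm data contains a unique smooth representative, so the smooth and Sobolev moduli spaces coincide as sets and inherit compatible smooth structures.

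\textbf{Technical errors.} The operator $d_1^*\oplus d_2$ (the paper's $d_0^*\oplus d_1$) is not self-adjoint and its index is not zero; the paper computes the complex index to be $-4$ by deforming to $A=0$. More importantly, your appeal to Corollary~\ref{cor:comm} for the linearization does not yield $H^2=0$: that corollary is about the nonlinear curvature and does not linearize to a Bochner formula killing $H^2$. The paper instead proves $\mathcal{H}^{2,\mathrm{sp}}_A\simeq\mathcal{H}^0_A\otimes\RR^3$ via K\"ahler identities (wedging/contracting with the three K\"ahler forms commutes with the covariant Laplacian), so irreducibility $\Rightarrow H^0=0\Rightarrow H^2=0$. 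Your dimension count then works: at irreducible $A$ one gets $\dim H^1=4$ directly from the index.
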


As usual in gauge theory, our approach to showing that $\M^{\mathrm{irr}}_{\mathrm{all}} := \mu^{-1}(0)/\mc{G}$ carries a manifold structure will occur via a comparison to an analogous quotient where everything is only Sobolev-regular. Recall our convention that we denote degrees of Sobolev regularity by a superscript. Hence, below, we will refer to the maps $\mu\colon\mc{A}_{\mathrm{all}} \to \mc{F}$ and $\mu^s\colon\mc{A}^s_{\mathrm{all}} \to \mc{F}^{s-1}$, respectively. Unless otherwise stated, we assume henceforth that $s\ge 1$. The comparison then takes place in the following three steps. See~\cite{DK} for an excellent textbook-level exposition of this strategy in a similar context.

\begin{enumerate}[(i)] \item First, show that the canonical continuous maps $\mc{M}_{\mathrm{all}} \to \mc{M}^s_{\mathrm{all}}$, i.e., $\mu^{-1}(0)/\mc{G} \to (\mu^s)^{-1}(0)/\mc{G}^{s+1}$, are isomorphisms of sets (or groupoids), and that the same is true for the irreducible subloci. \item Second, for any finite $s$, show by the Banach manifold implicit function theorem that $\M^{s,\mathrm{irr}}_{\mathrm{all}} := ((\mu^s)^{-1}(0))^{\mathrm{irr}}/\mc{G}^{s+1}$ canonically carries a manifold structure. \item Conclude that these smooth structures compare well under changing $s$ and hence that for any $s$ (including $s = \infty$), this parameterized hyperkahler quotient is indeed a smooth manifold. \end{enumerate}

We begin with part (i) of the above strategy.

\begin{lem}\label{lem:fredA0}
For any $s \ge 1$ and for any $A^0 \in \A'^s$, the operator $\partial_{A^0}\colon \mf{g}^{s+1}\to \A'^s$ is Fredholm.
\end{lem}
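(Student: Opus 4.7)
The plan is to write $\partial_{A^0} = \partial + [A^0,\cdot]$ and to show that (i) the unperturbed operator $\partial\colon \mf{g}^{s+1}\to \A'^s$ is Fredholm by elementary ODE arguments, and (ii) the commutator term is a compact perturbation. Then $\partial_{A^0}$ will be Fredholm as a compact perturbation of a Fredholm operator.

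First I would verify that $\partial_{A^0}$ actually maps into $\A'^s$. At the boundary $\partial h|_{\partial I}\in \mf{t}^\perp$ by definition of $\mf{g}^{s+1}$, while $[A^0|_{\partial I},h|_{\partial I}]\in [\mf{t}^\perp,\mf{t}]\subset \mf{t}^\perp$, using that $\mf{t}$ is a Cartan and $\mf{t}^\perp$ the sum of root spaces in $\mf{su}(2)$. Hence the boundary condition cutting out $\A'^s$ is satisfied.

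Next I would analyze $\partial\colon \mf{g}^{s+1}\to \A'^s$ by decomposing $\mf{su}(2)=\mf{t}\oplus \mf{t}^\perp$ and splitting $h=h_\mf{t}+h_{\mf{t}^\perp}$, $f=f_\mf{t}+f_{\mf{t}^\perp}$ accordingly. On the $\mf{t}$-component, the condition $\partial h|_{\partial I}\in \mf{t}^\perp$ from $\mf{g}^{s+1}$ becomes $\partial h_\mf{t}|_{\partial I}=0$, and the defining condition of $\A'^s$ becomes $f_\mf{t}|_{\partial I}=0$; direct integration then solves $\partial h_\mf{t}=f_\mf{t}$ for every such $f_\mf{t}$, with kernel the constant $\mf{t}$-valued maps. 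On the $\mf{t}^\perp$-component, $h|_{\partial I}\in \mf{t}$ forces $h_{\mf{t}^\perp}|_{\partial I}=0$ at both endpoints, whence $\partial h_{\mf{t}^\perp}=f_{\mf{t}^\perp}$ is solvable precisely when $\int_0^L f_{\mf{t}^\perp}\,dt = 0$, a codimension-$(\dim_\RR\mf{t}^\perp)$ condition inside $\A'^s$. Thus $\partial$ has closed range, kernel canonically $\mf{t}$, and cokernel canonically $\mf{t}^\perp$, so it is Fredholm of index $-1$.

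For $s\ge 1$ (indeed already for $s>1/2$) the space $H^s(I)$ is a Banach algebra by one-dimensional Sobolev embedding, so multiplication by $A^0\in H^s(I,\mf{su}(2))$ defines a bounded map $H^{s+1}(I,\mf{su}(2))\to H^s(I,\mf{su}(2))$. Postcomposing with the compact Rellich embedding $H^{s+1}(I)\hookrightarrow H^s(I)$ shows that $h\mapsto [A^0,h]$ is a compact operator from $\mf{g}^{s+1}$ into $\A'^s$; the only thing to check, that the image lands in the subspace $\A'^s\subset H^s(I,\mf{su}(2))$, is again the boundary-condition verification already performed. The hard part will really just be keeping the various boundary conditions straight against the $\mf{t}\oplus \mf{t}^\perp$ decomposition; once this bookkeeping is in place, the Fredholm conclusion follows from the standard compact-perturbation principle.
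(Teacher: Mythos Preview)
Your argument is correct and essentially identical to the paper's proof: show that $\partial\colon \mf{g}^{s+1}\to \A'^s$ is Fredholm with kernel $\mf{t}$ and cokernel $\mf{t}^\perp$, then observe that $h\mapsto [A^0,h]$ factors through the compact Rellich inclusion $\mf{g}^{s+1}\hookrightarrow H^s$ followed by bounded multiplication in the Banach algebra $H^s$. (One minor wording slip: the Rellich embedding should be \emph{pre}composed with the bounded multiplication, not postcomposed, to exhibit the commutator as a compact operator.)
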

\begin{proof}
Recall that $\partial_{A^0}$ is the operator $\partial + [A^0, -]$. $\mf{g}^{s+1} \stackrel{\partial}{\to} \A'^s$ is Fredholm, as the kernel is the one-dimensional space of constant functions valued in $\mf{t}$ and integration over $I$ and then orthogonal projection to $\mf{t}^\perp$ yields an isomorphism $\coker(\partial) \to \mf{t}^\perp$. Adding the compact perturbation $$\mf{g}^{s+1} \hookrightarrow \{h \in H^s(I,\mf{su}(2)) \bigm| h|_{\partial I} \in \mf{t}\} \stackrel{[A^0, -]}{\to} \mc{A}'^s$$ preserves Fredholmness. \end{proof}

\begin{lem}
For any $s \ge 1$ and $A^0\in \A'^s$, the nonlinear map $F\colon \G^{s+1} \to \A'^s$ defined by $F(g) = g(A^0)$ is Fredholm.
\end{lem}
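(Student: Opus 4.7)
The plan is to reduce this to the linear Fredholm statement of Lemma~\ref{lem:fredA0}. First I would observe that $F$ is a smooth map of Banach manifolds in the stated regularity range: for $s \ge 1$, $H^{s+1}(I)$ is a Banach algebra on the one-dimensional interval $I$ and the group operations and inversion on $\G^{s+1}$ are smooth. One must also check that $F$ actually lands in $\A'^s$, i.e.\ that $F(g)|_{\partial I} \in \mf{t}^\perp$; this is immediate from $g|_{\partial I}\in T$, $A^0|_{\partial I}\in \mf{t}^\perp$, and $g^{-1}\partial g|_{\partial I}\in\mf{t}^\perp$ (which are built into the definition of $\G^{s+1}$).

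Next I would compute the derivative $dF|_g$ using right translation to identify $T_g\G^{s+1}$ with $\mf{g}^{s+1}$. For $\epsilon \in \mf{g}^{s+1}$,
\begin{align*}
F(g\,e^{t\epsilon}) &= e^{-t\epsilon}(g^{-1}A^0 g + g^{-1}\partial g)e^{t\epsilon} + e^{-t\epsilon}\partial e^{t\epsilon} \\
&= e^{-t\epsilon} F(g)\, e^{t\epsilon} + e^{-t\epsilon}\partial e^{t\epsilon},
\end{align*}
so differentiating at $t=0$ gives $dF|_g(\epsilon) = \partial\epsilon + [F(g),\epsilon] = \partial_{F(g)}\epsilon$. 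I would then verify that this map indeed carries $\mf{g}^{s+1}$ into $\A'^s$: the condition $\epsilon|_{\partial I}\in\mf{t}$ together with $F(g)|_{\partial I}\in\mf{t}^\perp$ yields $[F(g),\epsilon]|_{\partial I}\in[\mf{t}^\perp,\mf{t}]\subset\mf{t}^\perp$, and $\partial\epsilon|_{\partial I}\in\mf{t}^\perp$ by definition of $\mf{g}^{s+1}$, so $(\partial_{F(g)}\epsilon)|_{\partial I}\in\mf{t}^\perp$ as required.

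Finally, since $F(g)\in\A'^s$, Lemma~\ref{lem:fredA0} applied with $F(g)$ in place of $A^0$ shows that $dF|_g = \partial_{F(g)}\colon\mf{g}^{s+1}\to\A'^s$ is Fredholm for every $g\in\G^{s+1}$, with index independent of $g$ (both because the index is locally constant in the parameter $F(g)$ and because $\G^{s+1}$ is path-connected --- any two elements can be joined by a smooth path, along which the index is constant). Hence $F$ is Fredholm as a smooth map of Banach manifolds. The only subtlety to keep an eye on is the interplay between the boundary conditions defining $\G^{s+1}$, $\mf{g}^{s+1}$, and $\A'^s$, which is what makes both the codomain check and the identification $dF|_g = \partial_{F(g)}$ work out cleanly; everything else is a direct application of the preceding lemma.
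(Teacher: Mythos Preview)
Your proposal is correct and follows exactly the paper's approach: compute the derivative at $g$ as $\partial_{g(A^0)}$ and invoke Lemma~\ref{lem:fredA0}. You have simply supplied more detail (the boundary-condition checks and smoothness remarks) than the paper's terse one-line proof.
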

\begin{proof}
At $g\in \G^{s+1}$, the derivative of this map is $\mf{g}^{s+1}\ni h\mapsto \partial_{g(A^0)} h$. The previous lemma implies that this is Fredholm.
\end{proof}

\begin{prop}\label{prop:bij}
Given $s \ge 1$, we have the following: \begin{enumerate}[(i)] \item the natural map $\iota\colon \A'/\G \to \A'^s/\G^{s+1}$ is an isomorphism of groupoids, i.e., $\iota$ induces both a bijection of orbits and isomorphisms of stabilizer groups thereof, 
\item Nahm data $A \in \mc{A}' \times (\mc{A}''_{\mathrm{all}})^s$ is smooth, i.e., in $\mc{A}' \times \mc{A}''_{\mathrm{all}}$, 
\item $\mc{M}_{\mathrm{all}} \to \mc{M}^s_{\mathrm{all}}$ is an isomorphism of groupoids, and 
\item $\M^{\mathrm{irr}}_{\mathrm{all}} \to \M^{s,\mathrm{irr}}_{\mathrm{all}}$ is a bijection of sets. 
\end{enumerate} 
\end{prop}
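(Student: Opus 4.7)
The plan is to prove all four statements by using the real axial gauge of Proposition~\ref{prop:realAx} as a bridge between the smooth and Sobolev settings, together with a bootstrap argument for Nahm's equations themselves.

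For (i), I first establish an $H^s$-version of Proposition~\ref{prop:realAx}: every orbit in $\mc{A}'^s/\mc{G}^{s+1}$ has a unique representative of the form $ic\sigma_x$ with $c \in [0, \pi/2L]$. The argument underlying Proposition~\ref{prop:mon} adapts verbatim to Sobolev regularity via standard ODE theory applied to $\partial g = -A^0 g + g \tilde A^0$: because $A^0 \in H^s$ and the target $\tilde A^0$ is smooth, the unique solution $g$ with prescribed $g(0)$ lies in $H^{s+1}$. Choosing $g(0) \in T$ so that the parallel transports match up modulo $T\times T$ makes $g(L)\in T$ automatic, and the differential equation, combined with $A^0|_{\partial I}, \tilde A^0|_{\partial I} \in \mf{t}^\perp$, then forces $g^{-1}\partial g|_{\partial I} \in \mf{t}^\perp$, placing $g \in \mc{G}^{s+1}$. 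Surjectivity of $\iota$ then follows by taking $\tilde A^0 = ic\sigma_x$. Injectivity of $\iota$ and the stabilizer isomorphism follow by applying the same ODE argument when both source and target are smooth: repeated differentiation of the ODE (using that $H^{s+1}$ is a Banach algebra on $I$) reveals $g$ itself to be smooth.

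Part (ii) is an elliptic bootstrap. Rewriting Nahm's equations as $\partial A^i = -[A^0, A^i] - \tfrac12 \epsilon^{ijk}[A^j, A^k]$, the right-hand side lies in $H^s$ (using smoothness of $A^0$, $A^j \in H^s$, and the Banach algebra property of $H^s(I)$ for $s \ge 1$), whence $A^i \in H^{s+1}$; iteration yields smoothness. Combining (i) and (ii) then gives (iii): starting from an $H^s$-Nahm orbit, (i) supplies a gauge transformation $g \in \mc{G}^{s+1}$ making $A^0$ smooth (in fact axial); because $g \in H^{s+1}$ and $A^i \in H^s$ we have $g^{-1}A^i g \in H^s$, and then (ii) promotes the transformed $A^i$ to smoothness. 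The stabilizer comparison for (iii) reduces to (i) together with a further bootstrap observing that any $\mc{G}^{s+1}$-stabilizer of smooth Nahm data is itself smooth, via the pair of conditions $\partial h = -A^0 h + h A^0$ and $[h, A^i] = 0$. Finally, (iv) is immediate from (iii), since irreducibility is the condition of trivial stabilizer and this condition is preserved under the isomorphism of groupoids.

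The principal technical obstacle will lie in (i), specifically in verifying that the gauge transformation produced from solving the ODE satisfies the first-order boundary condition $g^{-1}\partial g|_{\partial I} \in \mf{t}^\perp$ built into the definition of $\mc{G}^{s+1}$. This condition does not hold for a generic solution of the ODE; it requires coordinated control of the endpoint values $g|_{\partial I}$ together with the boundary behavior of the source and target connections. Axial gauge is precisely the choice that makes this packaging clean, since the target connection lies in $\mf{t}^\perp$ on the nose (in fact vanishes there), and this is ultimately why the real axial gauge is well-matched to our boundary conditions.
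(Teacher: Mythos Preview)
Your proposal is correct, and parts (ii)--(iv) match the paper's argument essentially verbatim. The genuine difference is in the surjectivity half of (i).

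The paper does \emph{not} use axial gauge for surjectivity of $\iota$. Instead it invokes an abstract argument following Atiyah--Bott: one considers the Fredholm action map $F\colon \mc{G}^{s+1} \to \mc{A}'^s$, $g \mapsto g(A^0)$, uses the constant rank theorem to straighten $F$ locally, and then employs a simplex/degree argument to find a smooth point $C$ in the orbit of $A^0$ by perturbing vertices $B_i$ to nearby smooth $C_i$ while keeping the image of the simplex covering $0$ in a complement to $\im dF$. Your route, by contrast, simply extends the parallel-transport argument of Proposition~\ref{prop:mon} to $H^s$ coefficients, producing the axial representative $ic\sigma_x$ directly; the bootstrap $g \in H^1 \Rightarrow H^2 \Rightarrow \cdots \Rightarrow H^{s+1}$ from the ODE $\partial g = g\tilde A^0 - A^0 g$ is exactly the regularity machine needed, and the boundary condition $g^{-1}\partial g|_{\partial I} \in \mf{t}^\perp$ does indeed fall out of $A^0|_{\partial I}, \tilde A^0|_{\partial I} \in \mf{t}^\perp$ and $g|_{\partial I} \in T$ as you say. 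Your argument is shorter and more explicit in this one-dimensional setting; the paper's argument, however, does not rely on the existence of a global gauge slice and so ports to the higher-$r$ cases of Conjecture~\ref{conj:mainconj} where no axial gauge is available. For injectivity and the stabilizer identification in (i), both you and the paper use the same elliptic-regularity/ODE-bootstrap observation.
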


\bp
Recall that two connections $A_1^0, A_2^0$ are related by a gauge transformation $g$ if and only if $g$ is a flat section of the bundle $\mathrm{End}(E) \simeq E \otimes E^{\vee}$ endowed with the connection $A_1^0 * A_2^0$. If $A_1^0, A_2^0$ are both smooth, then even if $g \in \mc{G}^{s+1}$ \emph{a priori}, elliptic regularity ensures that $g \in \mc{G}$. This both shows injectivity of the map in (i) and that stabilizer groups are identified.

We argue for surjectivity of the map in (i) following the argument of~\cite[Lemma 14.8]{atiyah:ym}. Given $A^0 \in \mc{A}'^s$, consider the Fredholm map $\mc{G}^{s+1} \stackrel{F}{\to} \mc{A}'^s$ given by $g \mapsto g(A^0)$. As the kernel and cokernel of the derivative of this group action have constant dimension, the constant rank theorem allows us to choose local coordinates that straighten the above map. Explicitly, we may find local neighborhoods $0 \in U' \subset \mf{g}^{s+1}$, $0 \in V' \subset \A'^s$, $\mathrm{Id} \in U \subset \G^{s+1}$, $A^0 \in V \subset \A'^s$ and local diffeomorphisms $U' \stackrel{u}{\to} U, V' \stackrel{v}{\to} V$, such that $F|_U = v \circ dF(\mathrm{Id})|_{U'} \circ u^{-1}$. We will find it convenient to pick $V$ to be a convex set; e.g., a ball. 

Denote now $r = \dim \coker dF(\mathrm{Id})$ and let $N$ be any complement to the image of $dF(\mathrm{Id})$; let $\pi$ denote the composition $V \stackrel{v^{-1}}{\to} V' \hookrightarrow \A'^s \stackrel{\mathrm{proj}}{\to} N$. Note that the intersection of the image of $F|_U$ with $V$ (i.e., the set of connections in $V$ which are gauge-equivalent to $A^0$ via gauge transformations in $U$) is $\pi^{-1}(0)$. Choose points $B_0, \cdots, B_r \in V$ with center of mass $A^0$ such that if one linearly embeds the $r$-dimensional simplex $\Delta^r$ in $V$ by taking the vertices to be the $B_i$, then $\Delta^r$ is injectively mapped to $N$ with $0$ in the interior of its image; this is possible because if the $B_i$ are close enough to $A^0$ then $\pi$ is approximated by its linearization at $A^0$, and its derivative at $A^0$ is surjective. But now, there exists some $\epsilon$ such that if $\|C_i - B_i\|_{H^s} < \epsilon$ for all $0 \le i \le r$, then $0$ will still be in the interior of the image of $\Delta^r$ if one modifies the vertices to lie at the $C_i$ rather than the $B_i$ -- for example, one may note that the degree of the map $\partial \Delta^r \to N \setminus \{0\}$ in $\mathrm{H}_{r-1}(N \setminus \{0\}; \mb{Z}) \simeq \mb{Z}$ is invariant under small perturbations of the $B_i$ as $\pi$ is continuous. 

Then, by density of $\mc{A}' \subset \A'^s$, we may pick the $C_i$ smooth and sufficiently close to the $B_i$ such that if $C \in V$ denotes an appropriate convex linear combination of the $C_i$ which is mapped to $0$ under $\pi$, then (i) $C$ is still smooth by virtue of being a linear combination of the $C_i$, but also (ii) $\pi(C) = 0$ implies that $C$ is gauge-equivalent to $A^0$, as desired.

Part (ii) is now immediate from elliptic regularity: if $A^0$ is smooth so that $\partial_{A^0}$ is an elliptic operator with smooth coefficients, Nahm's equations $\partial_{A^0} A^i = -\frac{1}{2}\epsilon^{ijk}[A^j,A^k]$ yield that $A^i$ is Sobolev $(s+1)$-regular, whereupon one bootstraps to $A^i$ smooth for all $i$.

Part (iii) is a consequence of the first two parts, and part (iv) is the specialization thereof to the stabilizer-free locus. \ep

We would now like to use the implicit function theorem to deduce a smooth manifold structure on $\mc{M}^{s,\mathrm{irr}}_{\mathrm{all}}$. First, it is useful to establish that it is second-countable and Hausdorff. The former, as noted in the introduction, is automatic from the separability of $\mc{A}^s_{\mathrm{all}}$; the latter is then equivalent to uniqueness of limits of sequences. We take this opportunity to show not just that $(\mu^s)^{-1}(0)/\mc{G}^{s+1}$ is Hausdorff but in fact that so is $\mc{A}^s_{\mathrm{all}}/\mc{G}^{s+1}$, and even that this larger ambient quotient space admits a natural metric induced from the $L^2$ metric on $\mc{A}^s_{\mathrm{all}}$.

\begin{proposition} \label{prop:gaugeConv}
Suppose that $B_n\to B$ and $g_n(B_n)\to A$ in $L^2(I,\mf{su}(2))^{\oplus 4}$, where $A,B\in \A^s_{\mathrm{all}}$, for some $g_n \in \{g\in H^1(I,SU(2)) \bigm| g|_{\partial I} \in T \}$. Then, there exists a $g\in \G^{s+1}$ such that $g(B)=A$. Furthermore, there is a subsequence of $g_n$ whose weak limit in $H^1(I,\End(E))$ and strong limit in $C^0(I,\End(E))$ is $g$. This also holds for $s=\infty$.

Analogously, if $B_n\to B$ and $g_n(B_n)\to A$ in $C^{s-1}$ for some $g_n\in C^{s}$ (with the usual boundary conditions imposed on $B_n,g_n(B_n),A,B,g_n$), then there exists a $g\in C^{s}$ such that $g(B)=A$ and a subsequence of $g_n$ converges to $g$. This too holds for $s=\infty$.

Finally, if $B_n\to B$ and $g_n(B_n)\to A$ in $\A_{\mathrm{all}}^s$ for some $g_n\in \G^{s+1}$ then there exists a $g\in \G^{s+1}$ such that $g(B)=A$ and a subsequence of $g_n$ converges to $g$ in $\G^{s+1}$.
\end{proposition}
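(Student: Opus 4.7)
The plan is to bootstrap the weaker convergence supplied by the first statement of the proposition using the gauge transformation ODE. Since convergence in $H^s$ (and in $C^\infty$ when $s=\infty$) implies convergence in $L^2$, the hypotheses of the present statement imply those of the first. Applying it yields a subsequence, which I relabel $g_n$, together with a $g \in \G^{s+1}$ satisfying $g(B)=A$, such that $g_n \rightharpoonup g$ weakly in $H^1$ and $g_n \to g$ strongly in $C^0(I, \mathrm{End}(E))$. Thus existence of the limiting gauge transformation is already settled, and it remains only to upgrade this to convergence in $H^{s+1}$ (and to $C^\infty$ when $s=\infty$).

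To carry out this upgrade, I would rewrite the $\mu=0$ component of the transformation rule as the first-order linear ODE
\begin{equation}
\partial g_n \;=\; g_n X_n - B_n^0\, g_n\,, \qquad X_n := g_n(B_n^0)\,,
\end{equation}
and, subtracting the corresponding equation for $g$ with $X := A^0$, obtain for $h_n := g_n - g$
\begin{equation}
\partial h_n \;=\; g_n(X_n - X) + h_n X - B_n^0\, h_n - (B_n^0 - B^0)\, g\,.
\end{equation}
By hypothesis $X_n \to X$ and $B_n^0 \to B^0$ in $H^s$, while $g$ and the $g_n$ are uniformly bounded in $H^{s+1}$. Inductively, assume $h_n \to 0$ in $H^k$ for some $0 \le k \le s$; I will show $h_n \to 0$ in $H^{k+1}$. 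The base case $k=0$ follows from the $C^0$ convergence furnished by the first statement, via the compactness of $I$. For $k=0$, each term on the right is estimated in $L^2$ using the uniform $C^0$ bounds on $g_n, g$ and the $L^2$ convergence of the remaining factors, giving $\partial h_n \to 0$ in $L^2$ and hence $h_n \to 0$ in $H^1$. For $k \ge 1$, the Banach algebra property of $H^k(I)$ in one dimension bounds each of the four products on the right directly in $H^{\min(k,s)}=H^k$, so $\partial h_n \to 0$ in $H^k$ and therefore $h_n \to 0$ in $H^{k+1}$. Iterating $s+1$ times (respectively, for all $k$ when $s=\infty$) yields convergence in $H^{s+1}$ (respectively, in $C^\infty$, the defining topology of $\G^\infty$).

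The serious analytic content has thus been packaged entirely into the first statement of the proposition, where the weak $H^1$ and strong $C^0$ control on the $g_n$ were obtained without \emph{a priori} bounds on the sequence. Once that is in hand, the remainder is a mechanical Sobolev bootstrap whose only subtle ingredient is the standard fact that in dimension one the needed multiplication estimates are available already in $H^1$. I do not anticipate any genuine obstacle beyond verifying that the boundary conditions defining $\G^{s+1}$ persist under the $H^{s+1}$ limit, which follows from the Sobolev embedding $H^{s+1} \hookrightarrow C^s$ applied to both $g$ and $g^{-1}\partial g$ at $\partial I$.
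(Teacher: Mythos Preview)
Your argument is correct and follows the same bootstrap strategy as the paper: obtain $g \in \G^{s+1}$ and some initial convergence from an earlier part of the proposition, then iterate the transformation ODE $\partial g_n = g_n X_n - B_n^0 g_n$ to climb from $H^k$ to $H^{k+1}$. The only difference is that the paper invokes the \emph{second} part (via the Sobolev embedding $H^s \hookrightarrow C^{s-1}$ in one dimension) rather than the first, so its bootstrap starts from $C^1$ convergence at $s'=1$; your separate handling of the $k=0$ case, using the uniform $C^0$ bound on the unitary $g_n$ in lieu of the Banach algebra property, compensates for starting one rung lower. One aside: your assertion that the $g_n$ are uniformly bounded in $H^{s+1}$ is not justified \emph{a priori} and is in fact the conclusion you are proving, but your induction never actually uses it --- the inductive hypothesis $h_n \to 0$ in $H^k$ already supplies the $H^k$ bound on $g_n$ needed at each step.
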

\begin{proof}
For the first part of the proposition, we follow the proof of Lemma (4.2.4) of \cite{DK}. We focus on the case with $s$ finite, but the infinite $s$ case is handled identically. If we regard $g_n$ as elements of $\End(E)$ then we want to solve the differential equation
\be \partial g = g A^0 - B^0 g \ , \quad B^i g = g A^i \ . \label{eq:want} \ee
We start with the analogous equation
\be \partial g_n = g_n A^0_n - B^0_n g_n \ , \quad B^i_n g_n = g_n A^i_n \ , \label{eq:known} \ee
where $A_n = g_n(B_n)$. Uniform boundedness of $g_n$ (which follows from compactness of $SU(2)$) together with \eqref{eq:known} imply that the derivatives $\partial g_n$ reside in a bounded subset of $L^2(I,\End(E))$, and therefore that $g_n$ reside in a bounded subset of $H^1(I,\End(E))$. Weak compactness of this set implies that after passing to a subsequence, $g_n$ converges weakly to a limit $g\in H^1$. The compact embedding $H^1\hookrightarrow C^0$ implies that this is strongly convergent in $C^0$. Also, $\partial g_n$ converges weakly to $\partial g$ in $L^2$, since $g\mapsto (\partial g, \phi)_{L^2}$ is a continuous linear functional on $H^1$ for all $\phi\in L^2$. The first equation in \eqref{eq:want} then holds in $L^2$, since if $\phi\in C^0$ then
\be (\partial g, \phi)_{L^2} = \lim_{n\to\infty} (\partial g_n, \phi)_{L^2} = \lim_{n\to\infty} ( g_n A_n^0 - B^0_n g_n, \phi)_{L^2} = (g A^0 - B^0 g, \phi)_{L^2} \ , \ee
where the last equality holds because we have both $g_n A_n^0 \to g A^0$ and $B_n^0 g_n\to B^0 g$ in $L^1(I,\End(E))$, by the Cauchy-Schwarz and triangle inequalities. A similar argument shows that the second equation in \eqref{eq:want} holds. Elliptic regularity of the first equation implies that $g\in H^{s+1}(I,\End(E))$ -- that is, the right hand side is in $H^1$, so the left side is in $H^2$, and then we bootstrap up to $H^{s+1}$. $g$ also satisfies the boundary conditions to be in $\G^{s+1}$: the condition $g|_{\partial I} \in T$ is clear from the $C^0$ convergence $g_n\to g$, and the condition $g^{-1} \partial g|_{\partial I} \in \mf{t}^{\perp}$ follows from \eqref{eq:want}. Finally, $g$ is special unitary, thanks to the $C^0$ convergence $g_n\to g$ and the fact that $SU(2)$ is closed in $\CC^{2\times 2}$. 

We now address the second part of the proposition, following the proof of Proposition (2.3.15) in \cite{DK}. This is proved quite similarly: uniform boundedness of $g_n$, $C^0$ convergence of $A_n^0$ and $B_n^0$, and \eqref{eq:known} together imply that $\partial g_n$ is uniformly bounded, and bootstrapping this reasoning gives that the first $s$ derivatives of $g_n$ are uniformly bounded. The Arzela-Ascoli theorem then implies that a subsequence of $g_n$ converges to some $g\in C^{s-1}$. Furthermore, per the results of the previous paragraph we have that $g\in H^1$ and $g$ solves \eqref{eq:want}. \eqref{eq:want} then implies that this convergence holds in $C^{s}$. It is trivial that $g$ satisfies the appropriate boundary conditions and is special unitary.

The third part of the proposition follows from the second, thanks to Sobolev embedding. For, once we have $g\in H^1\subset C^1$, it follows from \eqref{eq:want} that it is in $\G^{s+1}$. Furthermore, if $g_n\to g$ in $H^{s'}$ with $1\le s' \le s$, then \eqref{eq:want} and \eqref{eq:known} imply that $g_n\to g$ in $H^{s'+1}$. So, the subsequence which converges in $C^1$ in fact converges in $H^{s+1}$.
\end{proof}

\begin{corollary} \label{cor:realMet}
$d^s([A],[B]) = \inf_{g\in \G^{s+1}} \|A-g(B)\|_{L^2}$ defines a metric on $\A^s_{\mathrm{all}}/\G^{s+1}$, as does $d([A],[B]) = \inf_{g\in \G} \|A-g(B)\|_{L^2}$ on $\A_{\mathrm{all}}/\G$.
\end{corollary}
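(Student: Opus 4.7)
The plan is to verify each of the metric axioms in turn, with the only nontrivial step being positive-definiteness, which is exactly what Proposition~\ref{prop:gaugeConv} was designed to handle. I will focus on the $d^s$ case; the $d$ case follows by essentially the same argument, using part (iii) of Proposition~\ref{prop:bij} to justify that limits lie in the smooth setting.

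The key observation which makes several axioms formal is that the gauge action preserves $L^2$ differences: for $g \in \G^{s+1}$, the action on $A^0$ adds the term $g^{-1}\partial g$ but this additive contribution cancels between $g(A^0)$ and $g(B^0)$, so $g(A)^\mu - g(B)^\mu = g^{-1}(A^\mu - B^\mu)g$ pointwise, and pointwise conjugation by a unitary matrix is an isometry of $\mf{su}(2)$ under its Killing form. Hence $\|g(A) - g(B)\|_{L^2} = \|A - B\|_{L^2}$ for all $g, A, B$. This immediately yields well-definedness on $\G^{s+1}$-orbits, since replacing $A$ by $h(A)$ and $B$ by $h'(B)$ reparametrizes the infimum over $g$ by $g \mapsto h^{-1} g h'$, which is a bijection of $\G^{s+1}$.

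Symmetry follows from the substitution $g \mapsto g^{-1}$: $\|A - g(B)\|_{L^2} = \|g^{-1}(A) - B\|_{L^2}$, so $\inf_g \|A - g(B)\|_{L^2} = \inf_g \|g(A) - B\|_{L^2} = \inf_g \|B - g(A)\|_{L^2}$ (the last by absorbing a sign into the norm). The triangle inequality is likewise standard: given $g, g' \in \G^{s+1}$,
\begin{equation*}
\|A - (gg')(C)\|_{L^2} \le \|A - g(B)\|_{L^2} + \|g(B) - g(g'(C))\|_{L^2} = \|A - g(B)\|_{L^2} + \|B - g'(C)\|_{L^2}\,,
\end{equation*}
where the equality uses the isometry property above applied to $B$ and $g'(C)$. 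Taking infima over $g$ and $g'$ on the right, and then an infimum over the composed transformation $gg' \in \G^{s+1}$ on the left, yields $d^s([A],[C]) \le d^s([A],[B]) + d^s([B],[C])$.

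The essential content of the corollary is the separation property: $d^s([A],[B]) = 0 \implies [A] = [B]$. Suppose there is a sequence $g_n \in \G^{s+1}$ with $\|A - g_n(B)\|_{L^2} \to 0$. Applied to the constant sequence $B_n = B$ and to $A$, Proposition~\ref{prop:gaugeConv} (first part) produces some $g \in \G^{s+1}$ with $g(B) = A$, so $[A] = [B]$ in $\A^s_{\mathrm{all}}/\G^{s+1}$. This is the only step where the choice of topology and the existence of convergent subsequences of gauge transformations matter, and Proposition~\ref{prop:gaugeConv} supplies exactly this in the form we need. For the $d$ version of the statement, the same argument goes through on $\A_{\mathrm{all}}/\G$: if $d([A],[B])=0$ with $A,B$ smooth, we first deduce $[A] = [B]$ in $\A^s_{\mathrm{all}}/\G^{s+1}$ for any finite $s$, and then part (i) of Proposition~\ref{prop:bij} upgrades this to $[A]=[B]$ in $\A_{\mathrm{all}}/\G$.
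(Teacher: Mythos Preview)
Your proof is correct and follows essentially the same route as the paper's: symmetry via $g\mapsto g^{-1}$ and the $L^2$-isometry of the gauge action, the triangle inequality by composing gauge transformations, and positive-definiteness from Proposition~\ref{prop:gaugeConv} applied to the constant sequence $B_n=B$. The only small difference is your handling of the smooth ($d$) case: you pass through $\A^s_{\mathrm{all}}/\G^{s+1}$ and then invoke (the argument behind) Proposition~\ref{prop:bij}(i) to upgrade the gauge transformation to $\G$, whereas the paper simply applies Proposition~\ref{prop:gaugeConv} directly with $s=\infty$, which already produces $g\in\G$.
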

\begin{proof}
We focus on $d^s$, as the proof that $d$ is a metric is identical. Symmetry is clear from $d^s([B],[A]) = \inf_{g\in \G^{s+1}} \|B-g(A)\|_{L^2} = \inf_{g\in \G^{s+1}} \|g^{-1}(B)-A\|_{L^2} = d^s([A],[B])$. The triangle inequality is also clear. It remains to show that if we have a sequence $g_n\in \G^{s+1}$ such that $g_n(B)\to A$ in $L^2$ then there is a $g\in \G^{s+1}$ such that $g(B)=A$. This follows from the proposition.
\end{proof}

It is now cheap to note that restricting the metric inherited from the $L^2$ metric above yields a metric on $\mc{M}^s_{\mathrm{all}}$ (although this metric certainly does \emph{not} agree with the hyperkahler metric to be constructed on $\mc{M}^s_{\mathrm{all}}$ over the course of this section):

\begin{corollary}
$\M_{\mathrm{all}}^s$ and $\M_{\mathrm{all}}$ admit metrics with respect to which the bijection between these spaces is an isometry.
\end{corollary}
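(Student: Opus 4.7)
The proof plan is to observe that the corollary is essentially a formal consequence of Corollary \ref{cor:realMet}, Proposition \ref{prop:bij}, and a smooth-density statement within $\G^{s+1}$. Concretely, by Corollary \ref{cor:realMet}, $d^s$ and $d$ are metrics on the ambient quotients $\A^s_{\mathrm{all}}/\G^{s+1}$ and $\A_{\mathrm{all}}/\G$, so they restrict to metrics on the subsets $\M^s_{\mathrm{all}}$ and $\M_{\mathrm{all}}$, respectively. By Proposition \ref{prop:bij}(iii) we have a canonical bijection $\iota\colon \M_{\mathrm{all}}\to \M^s_{\mathrm{all}}$ under which $[A]\in \M_{\mathrm{all}}$ (with smooth representative $A$) is sent to the class $[A]\in \M^s_{\mathrm{all}}$ of the same $A$ viewed in $\A^s_{\mathrm{all}}$. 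The only nontrivial thing to check is that $\iota$ is an isometry, i.e., $d([A],[B]) = d^s([A],[B])$ for all smooth Nahm data $A,B$. The case $s=\infty$ is vacuous since $\G^{\infty+1}=\G$, so assume $s$ is finite.

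The direction $d^s([A],[B])\le d([A],[B])$ is immediate from $\G\subset \G^{s+1}$. For the reverse direction, fix $\epsilon>0$ and choose $g_0\in \G^{s+1}$ with $\|A-g_0(B)\|_{L^2}< d^s([A],[B])+\epsilon/2$. I will approximate $g_0$ by a sequence $g_n\in \G$ such that $g_n(B)\to g_0(B)$ in $L^2$. For this, note that when $B$ is smooth the action map
\[ \G^{s+1}\to L^2(I,\mf{su}(2))^{\oplus 4},\qquad g\mapsto g(B),\]
is continuous in the $H^1$ topology on $\G^{s+1}$: if $g_n\to g_0$ in $H^1$, then Sobolev embedding gives $g_n\to g_0$ in $C^0$, and $\partial g_n\to \partial g_0$ in $L^2$, so all four components $g_n^{-1}B^ig_n$ and $g_n^{-1}B^0g_n+g_n^{-1}\partial g_n$ converge in $L^2$ to the corresponding components of $g_0(B)$ by dominated convergence (using uniform boundedness of $g_n$, which follows from compactness of $SU(2)$). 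Hence it suffices to produce $g_n\in \G$ with $g_n\to g_0$ in $H^1$, which follows a fortiori from $H^{s+1}$-density of $\G$ in $\G^{s+1}$. Once such a sequence is in hand, $\|A-g_n(B)\|_{L^2}<d^s([A],[B])+\epsilon$ for $n$ large, giving $d([A],[B])\le d^s([A],[B])+\epsilon$, and letting $\epsilon\to 0$ concludes the argument.

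The main obstacle is therefore the $H^{s+1}$-density of $\G$ inside $\G^{s+1}$ respecting the boundary data $g|_{\partial I}\in T$ and $g^{-1}\partial g|_{\partial I}\in \mf{t}^\perp$. This is standard but requires some care because the target is a Lie group and because of the boundary constraints. The approach I would take is: embed $SU(2)\hookrightarrow\mb{R}^4$, approximate $g_0$ by extension-and-mollification to obtain smooth $\tilde g_n\in H^{s+1}(I,\mb{R}^4)$ with $\tilde g_n\to g_0$; use $H^{s+1}\hookrightarrow C^1$ (valid for $s\ge 1$) to ensure eventual $SU(2)$-projectability of $\tilde g_n$; and finally correct the boundary values by multiplication by smooth $SU(2)$-valued bump-function terms supported in arbitrarily small neighborhoods of $\{0,L\}$, chosen to enforce the pointwise conditions on $g_n$ and on $g_n^{-1}\partial g_n$ at $\partial I$ exactly. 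Since the corrections are supported near the boundary and the required boundary values are controlled by the $C^1$-norm of the discrepancy, they can be made arbitrarily small in $H^{s+1}$, yielding smooth $g_n\in \G$ converging to $g_0$ in $H^{s+1}$ and hence in $H^1$. This is all the input needed.
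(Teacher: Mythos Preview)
Your proof is correct and follows essentially the same route as the paper: restrict the metrics $d^s$ and $d$ from the ambient quotients to $\M^s_{\mathrm{all}}$ and $\M_{\mathrm{all}}$, identify classes via smooth representatives (Proposition~\ref{prop:bij}), get $d^s\le d$ trivially from $\G\subset\G^{s+1}$, and get the reverse inequality from density of $\G$ in $\G^{s+1}$ together with continuity of the action. The paper's proof asserts the density and continuity in a single sentence, whereas you spell out the approximation argument in detail; the extra care you take with the boundary conditions in the density step is fine but not strictly needed for the paper's purposes, since the density is used elsewhere implicitly and treated as standard.
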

\begin{proof}
$\M_{\mathrm{all}}^s$ is a subspace of $\A_{\mathrm{all}}^s/\G^{s+1}$ and $\M_{\mathrm{all}}$ is a subspace of $\A_{\mathrm{all}}/\G$. Suppose that $g'\in \G^{s+1}$ relates $A\in \A^s_{\mathrm{all}}$ to $A'\in \A_{\mathrm{all}}$ and $g''\in \G^{s+1}$ relates $B\in \A^s_{\mathrm{all}}$ to $B'\in \A_{\mathrm{all}}$. Then, $d^s([A],[B]) = \inf_{g\in \G^{s+1}} \|g'(A') - g(g''(B'))\|_{L^2} = \inf_{g\in \G^{s+1}} \|A' - g'^{-1}(g(g''(B'))) \|_{L^2} = \inf_{g\in \G^{s+1}} \|A' - g(B')\|_{L^2} \le d([A'],[B']).$ On the other hand, by density of $\G$ in $\G^{s+1}$ and continuity of the action $\G^{s+1}\times \A^s_{\mathrm{all}} \to \A^s_{\mathrm{all}}$, this inequality is actually an equality.
\end{proof}

In order to put a smooth structure on this space, we will, for each point $[A]\in \M_{\mathrm{all}}^s$ with representative\footnote{We are implicitly using Proposition~\ref{prop:bij} here to assume that we may choose a \emph{smooth} representative $A$; this choice is merely for the sake of simplicity when applying elliptic regularity arguments.} $A\in \A_{\mathrm{all}}$, give a manifestly finite-dimensional description of a neighborhood of $[A]$. To do so, we introduce the following two \emph{deformation complexes}, which we disambiguate as the \emph{special} and \emph{general} deformation complexes, respectively; these complexes respectively allow us to model the structure of $\M^s_{\xi}$ and $\M^s_{\mathrm{all}}$:
\begin{align} 0 &\to \mf{g}^{s+1} \xrightarrow{d_0} \A^s_{0} \xrightarrow{d_1} \F^{s-1} \to 0 \nonumber \\ 0 &\to \mf{g}^{s+1} \xrightarrow{d_0} \A^s_{\mathrm{all}} \xrightarrow{d_1} \F^{s-1} \to 0  \ . \label{eq:realDef} \end{align}
Here, $d_0$ describes the effect of infinitesimal gauge transformations: $$d_0 h := \partial_{A^0}h\oplus [A^i, h]\,.$$ Similarly, $d_1$ is the linearization of Nahm's equations about $A$: 
$$d_1 a := (\partial_{A^0} a^i - [A^i, a^0] + \epsilon^{ijk}[A^j, a^k])_{i=1,2,3}\,.$$

Note in the notation above that $d_0, d_1$ depend on the choice of $A \in \mc{A}_{\mathrm{all}}$, which is suppressed from the notation. That these are indeed complexes, i.e., that $d_1 d_0 = 0$, is precisely the statement that $A$ satisfies Nahm's equations. We can therefore define the cohomology groups ${\rm H}^{0,{\rm sp}}_A = {\rm H}^{0,{\rm gen}}_A = \ker d_0$, ${\rm H}^{1,{\rm sp}}_A = \ker d_1^{\rm sp}/\im d_0$, ${\rm H}^{1,{\rm gen}}_A = \ker d_1^{\rm gen}/\im d_0$, $\mathrm{H}^{2,{\rm sp}}_A = \coker d_1^{\rm sp}$, and $\mathrm{H}^{2,{\rm gen}}_A = \coker d_1^{\rm gen}$, where the superscripts on $d_1$ (which we will generally omit) specify the boundary conditions we impose. We also introduce formal adjoint differential operators $d_i^*$; these are defined in a formal $L^2$ sense, ignoring any boundary terms from integration by parts. Explicitly, we have $d_0^* a = -\partial_{A^0} a^0 - [A^i, a^i]$ and $d_1^* f = [A^i, f^i] \oplus (-\partial_{A^0} f^i + \epsilon^{ijk} [A^j, f^k])$, where repeated indices are summed from 1 to 3. These boundary terms require a careful functional analytic treatment of adjoints in order to get familiar results such as Hodge theorems for these complexes, as we will discuss below. However, it is the case that $\mathrm{H}^{0,{\rm sp}}_A = {\rm H}^{0,{\rm gen}}_A = \ker d_0 = \ker(\Delta_0\colon\mf{g}^{s+1}\to H^{s-1}(I,\mf{su}(2)))$, where the Laplacians of the complexes are defined to be $\Delta_i = d_i^* d_i + d_{i-1} d_{i-1}^*$; in particular, if $h\in \mf{g}$ satisfies $\Delta_0 h = 0$ then we have $0=(\Delta_0 h, h)_{L^2} = (d_0 h, d_0 h)_{L^2}$, thanks to the boundary conditions on $h\in \mf{g}$ and $d_0 h\in \A_0$.

The complexes are very closely related, as follows from investigating their cohomology groups. Clearly, they have the same $\mathrm{H}^0_A$, and so we will allow ourselves to omit the sp and gen superscripts for this group. Additionally, after we show that these complexes are Fredholm and hence have well-defined indices, we will show that the index of the general deformation complex is the index of the special deformation complex minus the dimension of $\mf{z}^3_{\partial I}$.

Roughly speaking, the tangent spaces to $[A]$ in $\M_{\rm all}$ and $\M_\xi$, respectively, are given by $\mathrm{H}^{1,{\rm gen}}_A$ and ${\rm H}^{1,{\rm sp}}_A$. There are two ways that this statement can fail. First, there may be an obstruction that prevents a solution of the linearized Nahm's equations from being perturbed to a genuine solution of Nahm's equations; such an obstruction would be valued in $\mathrm{H}^{2,{\rm gen}}_A$ or ${\rm H}^{2,{\rm sp}}_A$. Second, if $A$ has a nontrivial stabilizer then we must quotient our space of linearized perturbations by it. The Lie algebra of this stabilizer is $\mathrm{H}^0_A$, which is naturally regarded as a Lie subalgebra of $\mf{g}$. (The action of infinitesimal elements of the stabilizer on points in $\A_{\mathrm{all}}^s$ close to $A$ is trivial, but it is still necessary to consider the quotient by the stabilizer because non-infinitesimal elements keep one close to $A$.)

The main results about elliptic complexes such as these were proved by Atiyah and Bott in \cite[\S 6]{atiyah:complex}. For the reader's convenience, as well as to assuage any concerns that boundary conditions and/or Sobolev (as opposed to $C^\infty$) regularity might modify these conclusions, we will now re-prove them.

\begin{lemma}
Let $D\colon \mathscr{B}^{s+m} \to \mathscr{C}^{s}$ be an order $m$ elliptic linear operator acting between Sobolev spaces on $I$ with (finitely many homogeneous) boundary conditions, where $\mathscr{B}^{s+m}$ has Sobolev regularity $s+m$ and $\mathscr{C}^s$ has Sobolev regularity $s\ge 0$. This operator is Fredholm with a pseudodifferential parametrix $P\colon \mathscr{C}^s\to \mathscr{B}^{s+m}$ such that $DP\sim 1$ and $PD\sim 1$, where $\sim$ here means equality up to addition of a finite rank smoothing pseudodifferential operator. \label{lem:fred}
\end{lemma}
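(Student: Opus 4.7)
The plan is to construct $P$ explicitly via a Green's function, exploiting that in one dimension an order-$m$ elliptic operator is an ODE with pointwise invertible leading coefficient, for which classical fundamental-solution techniques provide very concrete parametrices; the pseudodifferential calculus is essentially overkill but still applies.

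First, I would reduce $D$ to an equivalent first-order linear system $Y'(t) = M(t) Y(t) + F(t)$ on sections of an auxiliary rank $m \cdot \mathrm{rank}(E)$ bundle, via the standard device of taking the first $m-1$ derivatives as new coordinates. Ellipticity of $D$ is equivalent to invertibility of the leading matrix, which guarantees this reduction. The homogeneous boundary conditions translate into finitely many linear conditions on the Cauchy data $Y|_{\partial I}$, living in a fixed finite-dimensional space.

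Second, I would construct $P$ from a fundamental matrix $\Phi(t)$ of the homogeneous system by variation of parameters. The integral operator
\[
Kf(t) = \int_0^L G(t,s) f(s)\, ds, \qquad G(t,s) = \begin{cases} \Phi(t)\Phi(s)^{-1} & t > s, \\ 0 & t < s, \end{cases}
\]
is a right inverse of the first-order system on the subspace $\{Y(0)=0\}$. Enforcing the imposed boundary conditions at $t=0,L$ is then a finite-dimensional linear-algebra problem on the space of Cauchy data, whose finite-dimensional kernel and cokernel account for $\ker D$ and $\coker D$ respectively. Pulling $K$, plus a finite-rank correction implementing the correct boundary data, back through the first-order reduction yields $P\colon\mathscr{C}^s\to\mathscr{B}^{s+m}$. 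Its Schwartz kernel is smooth off the diagonal with a single jump in the $(m-1)$st derivative there, placing $P$ in the pseudodifferential calculus of order $-m$; by construction the errors $DP - \mathrm{Id}$ and $PD - \mathrm{Id}$ are finite rank and smoothing.

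Fredholmness then follows formally from the parametrix identities: if $Du = 0$ then $u = (\mathrm{Id} - PD) u$ lies in the finite-dimensional range of $\mathrm{Id} - PD$, and dually for $\coker D$, while Sobolev continuity $\mathscr{C}^s\to\mathscr{B}^{s+m}$ is built into the symbol order. The hard part will be keeping the boundary corrections genuinely finite rank (rather than merely compact) and verifying that the resulting $P$ is honestly pseudodifferential in the Hörmander sense; both reductions hinge on the one-dimensionality of $I$, where the space of boundary data is finite-dimensional and the principal symbol is a nonvanishing scalar, so no Shapiro–Lopatinski analysis beyond counting dimensions is needed.
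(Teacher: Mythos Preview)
Your approach is correct but takes a genuinely different route from the paper. You reduce $D$ to a first-order system and build the parametrix explicitly from the fundamental matrix via variation of parameters, treating the boundary conditions as finite-dimensional linear algebra on Cauchy data. The paper instead works directly with $D$: it first observes that the operator $\tilde D\colon \tilde{\mathscr{B}}^{s+m}\to\tilde{\mathscr{C}}^s$ obtained by dropping all boundary conditions is \emph{surjective} in one dimension, then bounds $\dim\coker D$ by the number of boundary conditions via a surjective linear map $\alpha\colon V\to\tilde{\mathscr{C}}^s/\im D$ (where $V$ is the finite-dimensional target of the boundary map), and finally defines $P=(1-\pi_L)\tilde P(1-\pi_R)$ with $\tilde P$ the exact inverse of $\tilde D$ and $\pi_L,\pi_R$ finite-rank projectors onto $\ker D$ and a space of smooth cokernel representatives. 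Your approach is more constructive and closer to classical ODE theory, and it makes the pseudodifferential nature of $P$ visible from the Green's kernel; the paper's is more abstract and avoids the first-order reduction entirely by using surjectivity of the boundary-free operator as a black box, which makes the finite-rank-error identities $DP=1-\pi_R$, $PD\sim 1$ fall out in one line. Both hinge on the same one-dimensional fact---that the homogeneous solution space is finite-dimensional---just packaged differently.
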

\begin{proof}
As an elliptic linear ordinary differential operator, $D$ has a finite-dimensional kernel. Let $\tilde{\mathscr{B}}^{s+m}, \tilde{\mathscr{C}}^s$ be the spaces obtained by dropping the boundary conditions from the definitions of $\mathscr{B}^{s+m}$ and $\mathscr{C}^s$, and denote the extension of $D$ to these spaces by $\tilde D: \tilde{\mathscr{B}}^{s+m} \to \tilde{\mathscr{C}}^s$. Since we are in one dimension, $\coker \tilde D$ is not only finite-dimensional, but trivial. So, for Fredholmness we only need to prove that the boundary conditions on $\mathscr{B}^{s+m}$ do not yield an infinite-dimensional cokernel for $D$.

Now, as the boundary conditions defining our Sobolev spaces are homogeneous, we know that $\mathscr{B}^{s+m}$ is the kernel of a bounded surjective map $\tilde{\mathscr{B}}^{s+m}\stackrel{\beta}{\to} V$. Note here that $V$ is finite-dimensional, as we assumed finitely many boundary conditions. We consider the linear map $\alpha: V \to \tilde{\mathscr{C}}^{s}/\im D$ which, given some $v\in V$, is defined by first finding a $\tilde b \in \tilde{\mathscr{B}}^{s+m}$ such that $\beta \tilde b = v$ and then defining $\alpha v = [\tilde D\tilde b]$. This is well-defined, since the difference in the result if we had chosen some other $\tilde b' \in \tilde{\mathscr{B}}^{s+m}$ is $[\tilde D(\tilde b'-\tilde b)] = [D(\tilde b'-\tilde b)] = 0$. $\alpha$ is also surjective, thanks to the surjectivity of $\tilde D$. So, $\tilde{\mathscr{C}}^s/\im D$ is a finite-dimensional vector space -- indeed, its dimension is at most $\dim V$, the number of boundary conditions. We thus find that $\im D$ is closed and has finite codimension in $\tilde{\mathscr{C}}^s$, and therefore also in $\mathscr{C}^s$.

Now, we observe that the cokernel of $D$ admits smooth representatives, since the $\tilde b$ appearing in the definition of $\alpha$ may be taken to be smooth, so that $\tilde D\tilde b$ is smooth. Letting $\tilde P$ be the inverse of $\tilde D$, $\pi_L$ be a projection onto $\ker D$, and $\pi_R$ be a projection onto these smooth cokernel representatives, we can then define a parametrix $P = (1-\pi_L) \tilde P (1-\pi_R)$. We then have $DP = D\tilde P(1-\pi_R) = 1-\pi_R$ and $PD = (1-\pi_L)\tilde P(1-\pi_R)D = 1-(\pi_L + \tilde P \pi_R D - \pi_L \tilde P \pi_R D)$, and both error terms are finite rank smoothing operators.
\end{proof}

We now recall the definition of a parametrix for an elliptic complex. This is a collection of bounded pseudodifferential operators $P_i$ mapping leftwards between spaces in the complex such that
\be d_{i-1} P_{i-1} + P_i d_i \sim 1 \ , \ee
where $\sim$ is defined as in Lemma \ref{lem:fred}. As we will see shortly, an elliptic complex always has such a parametrix. In the present instance, this means that $P_0 d_0 \sim 1$, $d_1 P_1 \sim 1$, and $d_0 P_0 + P_1 d_1 \sim 1$. The first two equations reflect the fact that $d_0$ and $d_1$ are semi-Fredholm -- the former because it is overdetermined elliptic (its symbol is injective away from the zero section of the cotangent bundle) and the latter because it is underdetermined elliptic (its symbol is surjective away from the zero section). The third equation states that $P_1$ roughly inverts $d_1$, up to a gauge transformation.
\begin{proposition}
Let
\be \mathscr{C}_0 \xrightarrow{d_0} \mathscr{C}_1 \xrightarrow{d_1} \cdots \xrightarrow{d_{n-1}} \mathscr{C}_n \ee
be an elliptic complex with first order linear operators $d_i$, where $\mathscr{C}_i$ is a Sobolev Hilbert space on $I$ with Sobolev regularity $s_i = s_0 - i$ and (finitely many homogeneous) boundary conditions $\beta_i$. It is a Fredholm complex. That is, its cohomology groups are finite-dimensional and $\im d_i$ is closed for each $i$. Furthermore, representatives for these cohomology groups may always be taken to be smooth, and so the cohomology groups $\mathrm{H}^i := \ker d_i/\im d_{i-1}$ of the complex are the same for all $s_0$ which are sufficiently large so that the boundary conditions $\beta_i$ may be imposed.
\end{proposition}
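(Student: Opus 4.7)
The plan is to adapt Atiyah and Bott's standard parametrix construction for elliptic complexes on closed manifolds to the present Sobolev-with-boundary-conditions setting, and then to invoke Lemma~\ref{lem:fred}. The guiding idea is that ellipticity of the complex forces the Hodge Laplacian $\Delta_i := d_i^* d_i + d_{i-1} d_{i-1}^*$ at each level to be elliptic of order two, whereupon a parametrix for $\Delta_i$ yields a parametrix for the whole complex.

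First I would define the formal adjoints $d_i^*$ by integration by parts (ignoring boundary terms at this stage; only the principal symbols matter) and form the Laplacians $\Delta_i$. Ellipticity of the complex, i.e.\ exactness of the symbol sequence off the zero section, translates directly into invertibility of $\sigma(\Delta_i)(\xi) = \sigma(d_i)^*(\xi)\sigma(d_i)(\xi) + \sigma(d_{i-1})(\xi)\sigma(d_{i-1})^*(\xi)$ for $\xi \ne 0$, since the first summand has kernel $\ker \sigma(d_i)(\xi) = \im \sigma(d_{i-1})(\xi)$, on which the second summand is strictly positive. Hence each $\Delta_i$ is elliptic of order two. I would then impose on $\Delta_i$ the original boundary conditions $\beta_i$ together with adjoint boundary conditions coming from integration by parts on $d_{i-1}^*$. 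The order-two analog of Lemma~\ref{lem:fred}, whose proof is identical to the stated first-order version, then produces a pseudodifferential parametrix $G_i$ satisfying $\Delta_i G_i \sim 1$ and $G_i \Delta_i \sim 1$ modulo finite-rank smoothing operators.

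Next I would set $P_i := d_i^* G_{i+1}$. A direct calculation using the exact identity $d_i \Delta_i = \Delta_{i+1} d_i$ (an immediate consequence of $d_{i+1} d_i = 0$), together with $G_{i+1} \Delta_{i+1} \sim 1$ and $\Delta_i G_i \sim 1$, gives the intertwining $G_{i+1} d_i \sim d_i G_i$. Substituting, one obtains
\[
d_{i-1} P_{i-1} + P_i d_i = d_{i-1} d_{i-1}^* G_i + d_i^* G_{i+1} d_i \sim (d_{i-1} d_{i-1}^* + d_i^* d_i) G_i = \Delta_i G_i \sim 1,
\]
which is the parametrix identity for the complex. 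From this the finiteness of cohomology, closedness of images, smoothness of cohomology representatives, and independence of the cohomology on $s_0$ all follow cleanly: any $c \in \ker d_i$ satisfies $c - d_{i-1}(P_{i-1} c) = K_i c$ for some finite-rank smoothing operator $K_i$, so every cohomology class has a smooth representative lying in the image of $K_i$; this yields finite-dimensionality of $\mathrm{H}^i$, and closedness of $\im d_{i-1}$ follows from its being a finite-codimension subspace of the (already closed) kernel of $d_i$. Finally, since all cohomology classes admit smooth representatives and the differentials are defined by the same formulas on all Sobolev spaces in the range where the boundary conditions make sense, the cohomology is independent of $s_0$.

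The main technical obstacle, and the place that truly requires adaptation from the closed-manifold case, is the identification of the correct boundary conditions to impose on $\Delta_i$ so that it becomes Fredholm in a way compatible with the parametrix calculation. On a general manifold with boundary one would need to invoke the Calder\'on projector or a boundary-value analog of the pseudodifferential calculus, but on the interval $I$ this reduces to straightforward finite-dimensional linear algebra at the endpoints: one verifies that the combined boundary conditions $\beta_i$ together with their adjoints satisfy the Shapiro--Lopatinskii condition for $\Delta_i$, which on a one-dimensional manifold is automatic whenever the boundary conditions are non-degenerate. Once this is in hand, the rest of the argument is routine bookkeeping of the standard Atiyah--Bott parametrix manipulations.
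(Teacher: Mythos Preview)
Your proposal is correct and essentially identical to the paper's argument: both construct parametrices for the Laplacians $\Delta_i$ via Lemma~\ref{lem:fred}, establish the intertwining $G_{i+1} d_i \sim d_i G_i$, and assemble a complex parametrix from which finite-dimensionality, closedness of images, and smooth representatives all follow. The paper differs only cosmetically in taking $P_i = R_i d_i^*$ rather than your $d_i^* G_{i+1}$ (the two agree modulo smoothing by the intertwining relation, though the paper's choice makes it more immediate that $\im P_i$ satisfies $\beta_i$), and it does not invoke Shapiro--Lopatinskii since in one dimension Lemma~\ref{lem:fred} already yields Fredholmness for any finite set of homogeneous boundary conditions.
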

\begin{proof}
Except for the last result, our proof follows \cite{atiyah:complex}. To construct $P_i$, let $R_i$ be parametrices for the Laplacians $\Delta_i = d_i^* d_i + d_{i-1} d_{i-1}^*$ (which exist, by Lemma \ref{lem:fred}). Multiplying $d_i \Delta_i = \Delta_{i+1} d_i$ on the left by $R_{i+1}$ and on the right by $R_i$ and using $R_i \Delta_i \sim 1$ and $\Delta_i R_i \sim 1$ implies that $R_{i+1} d_i \sim d_i R_i$. Define
\be P_i = R_i d_i^* \ . \ee
This is easily seen to be a parametrix for the complex:
\be d_{i-1} R_{i-1} d_{i-1}^* + R_i d_i^* d_i \sim R_i \Delta_i \sim 1 \ . \ee
Note that the image of $P_i$ satisfies the boundary conditions $\beta_i$, since this is the case for the image of $R_i$.

Let $Z_i = \ker d_i$ and $\mathscr{B}_i = \im d_{i-1}$. The defining property of the parametrix implies that $d_{i-1} P_{i-1}|_{Z_i} \sim 1$. Note that the smoothing error restricts to an operator on $Z_i$, since $\im(d_{i-1}P_{i-1}|_{Z_i})\subset Z_i$. So, $d_{i-1}\colon \mathscr{C}_{i-1}\to Z_i$ is semi-Fredholm, i.e. has a closed image of finite codimension. To see this, write $d_{i-1} P_{i-1} |_{Z_i} = 1-S_i$ and note that $\mathscr{B}_i$ contains $(1-S_i) Z_i$. Since $1-S_i$ is Fredholm, $(1-S_i)Z_i$ is closed and of finite codimension in $Z_i$. This is then also true for $\mathscr{B}_i$. Finally, since $Z_i$ is closed, $\mathscr{B}_i$ is closed in $\mathscr{C}_i$.

We now turn to the final claim of the proposition. This follows from the fact that a subspace of $\im S_i$ provides a finite dimensional space of smooth representatives of $\coker(d_{i-1}\colon \mathscr{C}_{i-1}\to Z_i)$.
\end{proof}
\begin{remark}
Note that ellipticity of the Laplacians of each deformation complex is equivalent to ellipticity of the Dirac operator $d_0^*\oplus d_1$ of the complex. The latter can be interpreted as meaning that the generalized Coulomb gauge fixing condition $d_0^* a = 0$ is (nearly, if $\mathrm{H}^0_A\not=0$) transverse to the action of $\G^{s+1}$ on $\A^s_{\mathrm{all}}$ near $A$. This also follows from Fredholmness of $\Delta_0$.
\end{remark}

This Dirac operator is also useful for computing the index of the complex. To explain this, we first prove the following:
\begin{proposition} \label{prop:selfAdj}
Let $D\colon \mathscr{B}\to \mathscr{C}$ be an order $m$ linear differential operator acting between closed subspaces of Sobolev spaces $H^{m+s'}$ and $H^{s'}$ associated to $I$, where $s'\ge 0$. Let $\mathscr{B}^0$ be the completion of $\mathscr{B}$ with respect to the $L^2$ norm, and define $\mathscr{C}^0$ analogously. Let $\mathscr{B}_0\subset \mathscr{B}$ denote the closed subspace of functions which vanish at $\partial I$ along with their first $m-1$ derivatives, and suppose $\mathscr{B}_0$ is dense in $\mathscr{B}^0$. 

Then, regarding $D$ as an unbounded operator $\mathscr{B}^0 \to \mathscr{C}^0$ with dense domain of definition $\mathscr{B}\subset \mathscr{B}^0$, the adjoint operator $D^\dagger$ has domain of definition given by those $c\in \mathscr{C}^0$ such that $D^* c\in \mathscr{B}^0$ and $(D b,c)_{\mathscr{C}^0} = (b, D^* c)_{\mathscr{B}^0}$ for all $b\in \mathscr{B}$, and on this domain it coincides with $D^*$. The orthogonal complement of $\im D\subset \mathscr{C}^0$ coincides with $\ker D^\dagger$. 

There exists a canonical map $\coker D \stackrel{F}{\to} \ker D^{\dagger}$ of vector spaces. Assume now that $\im D$ is closed in $\mathscr{C}$ so that $\mathscr{C}/\im D$ inherits a Hilbert space structure; then $F$ is bounded.\footnote{Note that $\ker D^{\dagger} \subset \mathscr{C}^0$ is a closed subspace by virtue of being the orthogonal complement of $\im D$.} Assume furthermore that $\ker D^{\dagger} \subset \mathscr{C}$ and that $\mathscr{C} \cap \overline{\im D}_{\mathscr{C}^0} = \im D$, where $\overline{\im D}_{\mathscr{C}^0}$ denotes the closure of $\im D$ in $\mathscr{C}^0$. Then $F$ is an isomorphism of Hilbert spaces. 


\end{proposition}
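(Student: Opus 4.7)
The plan is to work through the four claims in turn, each step being fairly standard once the right density and orthogonal-projection observations are in place.

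For the characterization of $D^\dagger$, first I would use the definition of the adjoint of an unbounded operator: $c \in \mathrm{dom}(D^\dagger)$ iff the functional $b \mapsto (Db, c)_{\mathscr{C}^0}$ is bounded on $\mathscr{B}$ in the $L^2$ norm, and in that case $D^\dagger c$ is the Riesz representative in $\mathscr{B}^0$. The key step is to identify this with $D^*c$. For any $b \in C_c^\infty(I^\circ)$ (which lies in $\mathscr{B}_0 \subset \mathscr{B}$), integration by parts produces no boundary terms, so the distributional identity $(Db, c)_{L^2} = (b, D^*c)_{L^2}$ holds on test functions; if $c \in \mathrm{dom}(D^\dagger)$ the defining equality then forces $D^*c = D^\dagger c$ as distributions, hence $D^*c \in \mathscr{B}^0$. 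Conversely, if $D^*c \in \mathscr{B}^0$ and $(Db,c)_{\mathscr{C}^0} = (b, D^*c)_{\mathscr{B}^0}$ for all $b \in \mathscr{B}$, the boundedness of $b \mapsto (Db,c)$ is immediate, so $c \in \mathrm{dom}(D^\dagger)$. The density hypothesis $\overline{\mathscr{B}_0}^{\,\mathscr{B}^0} = \mathscr{B}^0$ is used only to ensure that $D^\dagger c$ is determined uniquely by its pairing against $\mathscr{B}_0$, i.e., to reduce everything to test functions.

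Claim 2, $(\im D)^\perp = \ker D^\dagger$ inside $\mathscr{C}^0$, is immediate: $c \perp \im D$ iff $(Db,c)_{\mathscr{C}^0} = 0$ for all $b \in \mathscr{B}$, which says precisely that $c \in \mathrm{dom}(D^\dagger)$ with $D^\dagger c = 0$. (Note the closure of $\im D$ in $\mathscr{C}^0$ has the same orthogonal complement.) For Claim 3, I define $F$ by composing the inclusion $\mathscr{C} \hookrightarrow \mathscr{C}^0$ with the orthogonal projection $\mathscr{C}^0 \to (\overline{\im D}_{\mathscr{C}^0})^\perp = \ker D^\dagger$. Since this projection kills $\im D \subset \mathscr{C}$, the map descends to $F\colon \coker D \to \ker D^\dagger$.

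For Claim 4, assuming $\im D$ is closed in $\mathscr{C}$, the quotient $\coker D$ carries the Hilbert quotient norm $\|[c]\| = \inf_{c' \in \im D}\|c - c'\|_{\mathscr{C}}$. For any representative $c$, orthogonal projection in $\mathscr{C}^0$ is norm-decreasing and the Sobolev embedding $\mathscr{C} \hookrightarrow \mathscr{C}^0$ gives $\|F([c])\|_{\mathscr{C}^0} \le \|c\|_{\mathscr{C}^0} \le C\|c\|_{\mathscr{C}}$; infimizing over $c$ yields boundedness of $F$. For Claim 5, under the further hypotheses: surjectivity is trivial, since $k \in \ker D^\dagger \subset \mathscr{C}$ satisfies $F([k]) = k$ (orthogonal projection onto $\ker D^\dagger$ is the identity there). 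For injectivity, $F([c]) = 0$ means $c \in (\ker D^\dagger)^\perp = \overline{\im D}_{\mathscr{C}^0}$; combined with $c \in \mathscr{C}$ and the hypothesis $\mathscr{C} \cap \overline{\im D}_{\mathscr{C}^0} = \im D$, this forces $c \in \im D$, so $[c] = 0$. As $F$ is a continuous bijection between Hilbert (hence Banach) spaces, the bounded inverse theorem promotes it to an isomorphism of Hilbert spaces. The main obstacle I anticipate is Claim 1 — specifically, verifying that the distributional identification of $D^*c$ with $D^\dagger c$ truly exhausts $\mathrm{dom}(D^\dagger)$ without imposing extra regularity — but the density of $\mathscr{B}_0$ in $\mathscr{B}^0$ is exactly the hypothesis tailored to this.
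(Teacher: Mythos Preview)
Your proposal is correct and follows essentially the same route as the paper: define $D^\dagger$ via the distributional action of $D^*$ on test functions in $\mathscr{B}_0$ and use density in $\mathscr{B}^0$ for uniqueness, identify $(\im D)^\perp = \ker D^\dagger$, build $F$ as inclusion followed by orthogonal projection, and then read off surjectivity from $\ker D^\dagger \subset \mathscr{C}$ and injectivity from $\mathscr{C}\cap\overline{\im D}_{\mathscr{C}^0}=\im D$. Your treatment is slightly more explicit in places (invoking Sobolev embedding for boundedness and the bounded inverse theorem for the isomorphism), but the structure and key observations match the paper's argument exactly.
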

\begin{remark}Note that if $\im D$ is closed in $\mathscr{C}$ and $\mathscr{C} = \mathscr{C}^0$ -- i.e., $s' = 0$ -- then $\im D = (\ker D^{\dagger})^{\perp}$. Note further that this proposition implies that $\Delta_0$, with domain of definition $\mf{g}^2\subset L^2(I,\mf{su}(2))$, is self-adjoint, since if $b\in \mf{g}^2$ then the boundary terms produced by integration by parts in relating $(\Delta_0 b,c)_{L^2}$ to $(b,\Delta_0 c)_{L^2}$ vanish if and only if $c\in \mf{g}^2$.
\end{remark}
\begin{proof}
We recall the definition of $D^\dagger$. One declares that the domain of $D^\dagger$ consists of those $c\in \mathscr{C}^0$ such that $b\mapsto (D b, c)_{\mathscr{C}^0}$ is a continuous functional on $\mathscr{B}\subset \mathscr{B}^0$. Equivalently (by the Hahn-Banach theorem), the domain of $D^\dagger$ consists of those $c\in \mathscr{C}^0$ for which there exists a (unique) $b'\in \mathscr{B}^0$ such that $(D b, c)_{\mathscr{C}^0} = (b, b')_{\mathscr{B}^0}$ for all $b\in \mathscr{B}$. One then of course defines $D^\dagger c = b'$. This definition makes manifest the fact that the orthogonal complement of the image of $D$ inside of $\mathscr{C}^0$ coincides with the kernel (and, in particular, is in the domain) of $D^\dagger$.

Next, we note that for $b\in \mathscr{B}_0$ we have $(D b, c)_{\mathscr{C}^0} = (b, D^* c)_{\mathscr{B}^0}$ for all $c\in \mathscr{C}^0$. Here, $D^* c$ is a priori regarded as a distribution in $(\mathscr{B}_0)'$, but if $c$ is in the domain of $D^\dagger$ then $D^* c$ extends to a (unique, by density of $\mathscr{B}_0$ in $\mathscr{B}^0$) continuous functional $D^* c\in \mathscr{B}^0$ on all of $\mathscr{B}^0$. Requiring that $(Db,c)_{\mathscr{C}^0} = (b, D^* c)_{\mathscr{B}^0}$ for all $b\in \mathscr{B}$, and not merely for $b\in \mathscr{B}_0$, then determines the boundary conditions on $D^\dagger$.

To construct the map $F$, we consider the continuous composition $\mathscr{C} \hookrightarrow \mathscr{C}^0 \twoheadrightarrow (\text{im }D)^{\perp} \simeq \ker D^{\dagger}$. The kernel of the projection, and hence the kernel of the composite map, contains $\text{im }D$. Assuming now that $\im D$ is closed, we hence obtain a continuous map $\coker D \stackrel{F}{\to} \ker D^{\dagger}$. It is immediate that the surjectivity of $F$ follows from $\ker D^{\dagger} \subset \mathscr{C}$. For injectivity, an element of $\mathscr{C}$ in the kernel of $F$ must lie in the kernel of $\mathscr{C}^0 \twoheadrightarrow (\im D)^{\perp}$, i.e., in $\overline{\im D}_{\mathscr{C}^0}$, and so the given hypothesis is exactly that necessary to guarantee injectivity. 

\end{proof}

\begin{rmk}This proposition also gives another proof of the claim of Lemma \ref{lem:fred} that all classes in the cokernel admit smooth representatives. For, if $\ker D^\dagger = (\im D)^\perp$ denotes the $L^2$-orthogonal complement of $\im D$, then this space is finite dimensional and consists of smooth functions, since $D^\dagger$ on its domain coincides with the elliptic operator $D^*$. Therefore, the intersection of this space with $\mathscr{C}^s$ gives a canonical space of smooth representatives for $\coker D$.\end{rmk}

\begin{proposition} \label{prop:realInd}
The index of the special deformation complex coincides with that of
\be d_0^*\oplus d_1\colon \A_0^s \to H^{s-1}(I,\mf{su}(2)) \oplus H^{s-1}(I,\mf{su}(2))^{\oplus 3} \ . \ee
Indeed, 
\be \mathrm{H}^{1,{\rm sp}}_A \simeq \ker d_0^* \oplus d_1 = \ker(\Delta_1\colon \tilde\A_{\xi} \to C^\infty(I,\mf{su}(2))^{\oplus 4}) \ , \ee 
where
\be \tilde\A_{\xi} = \{ a\in \A_0 \bigm| d_0^* a|_{\partial I} \in \mf{t} , d_1 a|_{\partial I} \in \mf{t}^{\oplus 3} \} \ ,\footnote{Note that this space depends only on $\xi$, but not on $A$.} \ee 
and $\coker d_0^*\oplus d_1 \simeq \mathrm{H}^0_A\oplus \mathrm{H}^{2,{\rm sp}}_A$. Furthermore,
\be \mathrm{H}^0_A = \ker(\Delta_0\colon \mf{g}\to C^\infty(I,\mf{su}(2))) = \ker(d_0\colon \mf{g}\to \A_0) \ee
and
\be \mathrm{H}^{2,{\rm sp}}_A\simeq \ker(\Delta_2\colon \tilde\F\to \F) = \ker(d_1^*\colon \tilde\F\to \A_0) \ , \ee
where $$\tilde\F = \{f\in C^\infty(I,\mf{su}(2))^{\oplus 3} \bigm| f|_{\partial I} \in \mf{t}^{\oplus 3}, \partial f|_{\partial I} \in (\mf{t}^{\perp})^{\oplus 3} \} \,.$$
\end{proposition}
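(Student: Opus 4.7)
The plan is to apply standard Hodge theory for the elliptic complex of \eqref{eq:realDef}, with careful attention to boundary conditions via Proposition~\ref{prop:selfAdj}. The central observation underlying every identification is that the boundary conditions on $\mf{g}$, $\A_0$, $\F$, $\tilde\A_\xi$, and $\tilde\F$ are engineered so that the orthogonality $\mf{t}\perp \mf{t}^\perp$ (for the Killing form) kills the boundary terms arising from integration by parts, allowing the formal adjoints $d_0^*$, $d_1^*$ to agree with the genuine $L^2$-adjoints $d_0^\dagger$, $d_1^\dagger$ on the domains indicated.

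First I would dispatch the identification $\mathrm{H}^0_A = \ker d_0 = \ker\Delta_0$ by the Bochner-type identity $(\Delta_0 h,h)_{L^2} = \|d_0 h\|_{L^2}^2$; the integration-by-parts boundary term $[\tr(h\,(d_0 h)^0)]_{\partial I}$ vanishes because $h|_{\partial I}\in \mf{t}$ while $(d_0 h)^0|_{\partial I}\in \mf{t}^\perp$. Next, I would invoke Proposition~\ref{prop:selfAdj} to identify the $L^2$-adjoint $d_0^\dagger$ of $d_0\colon\mf{g}^{s+1}\to \A_0^s$ with the formal operator $d_0^*$ on $\A_0^s$, since the boundary term $-[\tr(h a^0)]_{\partial I}$ vanishes automatically for all $h\in \mf{g}$ and $a\in \A_0$; and similarly to identify $d_1^\dagger$ with $d_1^*$ on the domain $\tilde\F\subset \F^s$. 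The two boundary conditions cutting out $\tilde\F$ are exactly what Proposition~\ref{prop:selfAdj} demands: the condition $f|_{\partial I}\in \mf{t}^{\oplus 3}$ is imposed so that the boundary term $[\tr(a^i f^i)]_{\partial I}$ vanishes for all $a\in \A_0^s$; and the condition $\partial f|_{\partial I}\in (\mf{t}^\perp)^{\oplus 3}$ is imposed so that $d_1^* f$ actually lies in $\A_0^s$, since $(d_1^* f)^i = -\partial_{A^0} f^i + \epsilon^{ijk}[A^j,f^k]$ will have its $\mf{t}$-component at $\partial I$ controlled precisely by $\partial f|_{\partial I}$, the other terms being in $\mf{t}^\perp$ automatically.

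Third, set $D = d_0^*\oplus d_1$. This is Fredholm as the sum of an overdetermined and an underdetermined elliptic operator with appropriate boundary conditions, by the proof of the preceding proposition (or directly by Lemma~\ref{lem:fred} applied to the associated Laplacian $\Delta_1$). I would then check that the map $\ker D \to \mathrm{H}^{1,\mathrm{sp}}_A = \ker d_1/\im d_0$, $a\mapsto [a]$, is an isomorphism. Injectivity is immediate: if $d_0 h \in \ker D$ then $\Delta_0 h = d_0^* d_0 h=0$, so $d_0 h=0$ by the first step. Surjectivity amounts to solving $\Delta_0 h = d_0^* a$ for any $a \in \ker d_1$, which is possible since $d_0^* a$ is $L^2$-orthogonal to $\ker\Delta_0 = \ker d_0$ (again by integration by parts, with vanishing boundary term). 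The identification $\ker D \simeq \ker\Delta_1$ on $\tilde\A_\xi$ is then immediate: if $a\in \ker D$ then elliptic regularity places it in $\tilde\A_\xi$ (since $d_0^* a \equiv 0 \in \mf{t}$ and $d_1 a\equiv 0 \in \mf{t}^{\oplus 3}$) and trivially in $\ker \Delta_1$, while conversely $\Delta_1 a = 0$ on $\tilde\A_\xi$ gives $0 = (\Delta_1 a, a)_{L^2} = \|d_0^* a\|^2 + \|d_1 a\|^2$, the two boundary terms vanishing precisely because $d_0^* a|_{\partial I}\in \mf{t}$ pairs trivially with $a^0|_{\partial I}\in \mf{t}^\perp$, and $d_1 a|_{\partial I}\in \mf{t}^{\oplus 3}$ pairs trivially with $a^i|_{\partial I}\in \mf{t}^\perp$.

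Finally, by Proposition~\ref{prop:selfAdj}, $\coker D \simeq \ker D^\dagger$, and on its domain $D^\dagger(u,f) = d_0 u + d_1^* f$ with $u$ satisfying the $\mf{g}$-type boundary conditions and $f\in \tilde\F$ (by the same adjoint analysis as above). The resulting equation $d_0 u + d_1^* f = 0$ splits, because by the Hodge decomposition of $\A_0^s$ the summands $\im d_0$ and $\im d_1^*=(\ker d_1)^\perp$ are $L^2$-orthogonal---an orthogonality which, crucially, requires the vanishing of the same boundary terms as above. Hence $d_0 u = 0$ and $d_1^* f = 0$ separately, giving $u\in \mathrm{H}^0_A$ and $f\in \ker d_1^*\cap \tilde\F$. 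A completely parallel Hodge argument identifies the latter with $\mathrm{H}^{2,\mathrm{sp}}_A=\coker d_1$, and the Bochner identity $(\Delta_2 f,f)_{L^2} = \|d_1^* f\|^2$ (with boundary terms killed by the two defining conditions of $\tilde\F$) gives the identification with $\ker \Delta_2$. The main obstacle throughout is the careful verification that each boundary condition in $\tilde\A_\xi$ and $\tilde\F$ is exactly what is needed to simultaneously (i) kill the boundary terms from the relevant integrations by parts and (ii) force the image of the formal adjoint to lie in the appropriate target space; once this bookkeeping is done, the remainder is standard Hodge theory.
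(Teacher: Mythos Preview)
Your overall strategy is correct and closely parallels the paper's, but there is one subtle misstatement in your identification of the domain of $D^\dagger$ (equivalently, of $d_1^\dagger$). You write that the condition $\partial f|_{\partial I}\in (\mf{t}^\perp)^{\oplus 3}$ is part of the domain of $d_1^\dagger$, justified by ``$d_1^* f$ actually lies in $\A_0^s$.'' But in Proposition~\ref{prop:selfAdj} the domain of $D^\dagger$ only requires $D^* c \in \mathscr{B}^0$, the $L^2$-completion, and the $L^2$-completion of $\A_0^s$ carries no boundary conditions. So the domain of $D^\dagger = d_0 + d_1^*$ is cut out only by the zeroth-order conditions $h|_{\partial I}, f^i|_{\partial I}\in \mf{t}$. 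The paper then observes that for $(h,f)$ in the \emph{kernel}, the equation $d_0 h + d_1^* f = 0$ itself forces the first-order conditions: at $\partial I$ the $\mf{t}$-component of $(d_0 h + d_1^* f)^0$ is exactly that of $\partial h$, and the $\mf{t}$-component of $(d_0 h + d_1^* f)^i$ is exactly that of $-\partial f^i$, so vanishing of the sum yields $\partial h|_{\partial I}\in \mf{t}^\perp$ and $\partial f|_{\partial I}\in (\mf{t}^\perp)^{\oplus 3}$. Your conclusion is unaffected---you end up with the same kernel---but the mechanism by which the first-order conditions appear is not the one you describe.

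Apart from this, your approach is equivalent to the paper's. Your direct injectivity/surjectivity argument for $\ker(d_0^*\oplus d_1)\simeq \mathrm{H}^{1,\mathrm{sp}}_A$ is a hands-on alternative to the paper's more abstract invocation of Proposition~\ref{prop:selfAdj} with $D = d_0$ and $\mathscr{C} = \ker d_1|_{\A_0^s}$; both work fine. You should also briefly check the hypotheses $\ker D^\dagger \subset \mathscr{C}$ and $\mathscr{C}\cap\overline{\im D}_{\mathscr{C}^0} = \im D$ of Proposition~\ref{prop:selfAdj} before invoking $\coker D \simeq \ker D^\dagger$ (the paper does this implicitly by working at $s'=0$ and citing elliptic regularity).
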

\begin{proof}

We first show that $\ker d_0^*\oplus d_1 \simeq \mathrm{H}^{1,{\rm sp}}_A$. This follows from the previous proposition with $s'=s$, $m=1$, $D=d_0$, $\mathscr{B} = \mf{g}^{s+1}$, and $\mathscr{C} = \ker d_1|_{\A^s_0}$. We observe that if $a\in L^2$, $d_1 a = 0$, and $d_0^* a \in L^2$, then $a\in H^1$ by elliptic regularity for $d_0^*\oplus d_1$. It follows that $D^\dagger = d_0^*$ with domain $\ker(d_1\colon\A_0^1 \to \F^0)$. So, the orthogonal complement to the image of $D$ inside of the $L^2$-completion of $\ker d_1$ is given by $\ker d_0^* \oplus d_1$ acting on $\A_0^1$; elliptic regularity allows us to instead take this to act on $\A_0$. So, $\ker D^\dagger \subset \mathscr{C}$. This subset is clearly closed, since it is finite-dimensional. Finally, the overdetermined elliptic operator $d_0\colon \{H^1(I,\mf{su}(2))\bigm| h|_{\partial I}\in \mf{t}\} \to L^2$ is semi-Fredholm, and so its image is closed and therefore contains the $L^2$-closure $\overline{\im D}$ of $\im D$; overdetermined elliptic regularity then implies that $\overline{\im D}\cap \mathscr{C} = \im D$. (In particular, if $h\in \{H^{s+1}(I,\mf{su}(2))\, \bigm|\, h|_{\partial I}\in \mf{t}\}$ satisfies $d_0 h\in \A_0^s$, then $\partial h|_{\partial I}\in \mf{t}^\perp$, i.e., $h\in \mf{g}^{s+1}$.) Proposition \ref{prop:selfAdj} then gives us a bijection $F\colon \mathrm{H}^{1,{\rm sp}}_A \to \ker d_0^* \oplus d_1$. To further identify this with $\ker(\Delta_1\colon \tilde\A_{\xi} \to C^\infty(I,\mf{su}(2))^{\oplus 4})$, we observe that the boundary conditions on $a\in \tilde\A_\xi$ are precisely those such that $d_1 a$ is in the domain of $d_1^\dagger$ and $d_0^* a$ is in the domain of $(d_0^*)^\dagger$. From this, one immediately concludes that $\ker \Delta_1 \subset \ker d_0^*\oplus d_1$; the reverse inclusion follows from observing that $\ker d_0^*\oplus d_1 \subset \tilde\A_\xi$.

Proposition \ref{prop:selfAdj} similarly allows us to identify $\coker d_0^*\oplus d_1$ with $\ker(d_0+d_1^*)$, where we impose boundary conditions requiring all elements of the domain to be diagonal, but we do not impose boundary conditions on derivatives. However, a glance at the structure of $d_0+d_1^*$ makes it clear that if $a = d_0 h + d_1^* f$ then the diagonal part of $a^0$ coincides with that of $\partial h$ and the diagonal part of $a^i$ coincides with that of $-\partial f^i$, so these zeroth order boundary conditions plus being in the kernel of $d_0+d_1^*$ together imply in addition that $h\in \mf{g}$ and $f\in \tilde\F$. (Note $h$ and $f$ are smooth by elliptic regularity.) It then follows that $\ker(d_0+d_1^*)=\ker d_0\oplus \ker d_1^*$, since with these boundary conditions we have $(d_0 h, d_1^* f) = (d_1 d_0 h, f) = 0$. The former summand is, by definition, $\mathrm{H}^0_A$. Next, Proposition \ref{prop:selfAdj} with $s'=0$ allows us to identify $\mathrm{H}^{2,{\rm sp}}_A = \coker d_1$ with $\ker d_1^\dagger$, since $d_1\colon \A_0^1\to \F^0$ is underdetermined elliptic and $d_1^*$ is overdetermined elliptic; this identification then holds for all $s'\ge 0$, since neither $\mathrm{H}^{2,{\rm sp}}_A$ nor $\ker d_1^*$ depend on the degree of Sobolev regularity. We thus have $\mathrm{H}^{2,{\rm sp}}_A \simeq \ker d_1^*$ with zeroth order boundary conditions. The same argument as above shows that this kernel satisfies the boundary conditions on $\tilde\F$. On $\tilde\F$, it is trivial to see that $\ker \Delta_2 = \ker d_1^*$, as $(\Delta_2 f, f)_{L^2} = (d_1^* f, d_1^* f)_{L^2}$.
\end{proof}
\begin{remark}
We note that an identical version of this proposition holds for the general deformation complex. The only change worth commenting on is that the boundary conditions relevant for ${\rm H}^{2,{\rm gen}}_A$ change to $f|_{\partial I} = 0$, $\partial f|_{\partial I} \in (\mf{t}^\perp)^{\oplus 3}$. However, this characterization immediately leads to the conclusion that ${\rm H}^{2,{\rm gen}}_A = 0$. For, uniqueness of solutions of overdetermined elliptic ordinary differential equations implies that solutions of $d_1^* f = 0$ are determined by their values at 0. So, the only solution with $f|_0 = 0$ is $f=0$ identically.
\end{remark}

The Hodge theory above motivates the following definition:

\begin{defn} Given the deformation complexes as above, we define the following harmonic spaces:
\begin{align} \label{eq:realHarmonic}
\Hh^0_A &:= \mathrm{H}^{0,{\rm sp}}_A = \mathrm{H}^{0,{\rm gen}}_A = \ker(\Delta_0\colon \mf{g}\to C^\infty(I,\mf{su}(2))) = \ker(d_0\colon \mf{g}\to \A_0) \ , \nonumber \\
\Hh^{1,{\rm sp}}_A &:= \ker(\Delta_1\colon \tilde\A_{\xi} \to C^\infty(I,\mf{su}(2))^{\oplus 4}) \nonumber \\
&= \ker(d_0^* \oplus d_1\colon \A_0 \to C^\infty(I,\mf{su}(2))\oplus C^{\infty}(I, \mf{su}(2))^{\oplus 3}) \ , \nonumber \\
\Hh^{1,{\rm gen}}_A &:= \ker(\Delta_1\colon \{a\in \A_{\rm all} \bigm| d_0^*a|_{\partial I} \in \mf{t},\ d_1 a|_{\partial I} = 0\} \to C^\infty(I,\mf{su}(2))^{\oplus 4}) \nonumber \\
&= \ker(d_0^* \oplus d_1\colon \A_{\rm all} \to C^\infty(I,\mf{su}(2))\oplus C^{\infty}(I, \mf{su}(2))^{\oplus 3}) \ , \nonumber \\
\Hh^{2,{\rm sp}}_A &:= \ker(\Delta_2\colon \tilde\F\to \F) = \ker(d_1^*\colon\tilde\F\to \A_0) \ , \nonumber \\
\Hh^{2,{\rm gen}}_A &:= 0 \ .
\end{align} \end{defn}

These cohomology groups are related in a number of ways:
\begin{proposition} \label{prop:hodge}
$\Hh^{2,{\rm sp}}_A \simeq \Hh^0_A\otimes \RR^3.$
\end{proposition}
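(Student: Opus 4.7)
The plan is to construct an explicit isomorphism $\Phi : \Hh^0_A \otimes \mb{R}^3 \to \Hh^{2,\mathrm{sp}}_A$ using the hyperkahler structure. I would define $\Phi(h \otimes (v^1, v^2, v^3)) := (v^1 h, v^2 h, v^3 h)$. The image lands in $\tilde\F$ precisely because the boundary conditions defining $\mf g$ are identical to those defining $\tilde\F$ replicated three times, and a short direct computation shows $d_1^* \Phi(h \otimes v) = 0$ using $\partial_{A^0} h = 0 = [A^j, h]$ for $h \in \Hh^0_A$: the first component $\sum_i v^i [A^i, h]$ vanishes, and the $i$-th component $-v^i \partial_{A^0} h + \epsilon^{ijk} v^k [A^j, h]$ vanishes. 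Injectivity is immediate from the construction, so it remains only to show $\Phi$ is surjective.

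The heart of the proof is the hyperkahler identity
\[ d_1^* f = -\sum_{i=1}^3 J^i \circ d_0 f^i \qquad \text{for } f \in \tilde\F , \]
where $J^1, J^2, J^3 \in \End(\A_0)$ are the three complex structures arising from the flat hyperkahler structure on $\A_0 \simeq C^\infty(I, \mf{su}(2)) \otimes \mb H$. This identity is, up to an overall sign, a direct consequence of the moment-map defining relation (rephrased as $d_1^i = d_0^* \circ J^i$) together with $\omega^i = g(J^i \cdot, \cdot)$, and may equivalently be verified by direct calculation using the explicit matrix action of each $J^i$ on $(a^0, a^1, a^2, a^3)$. Granting this, I would compute
\[ \|d_1^* f\|_{L^2}^2 \;=\; \sum_{i,j} \langle J^i d_0 f^i, J^j d_0 f^j\rangle_{\A_0} \;=\; \sum_i \|d_0 f^i\|_{L^2}^2 , \]
where the diagonal terms give $\sum_i \|d_0 f^i\|^2$ since each $J^i$ is an $L^2$-isometry, and the off-diagonal terms $i \ne j$ rewrite via $(J^i)^* = -J^i$ and $J^i J^j = \epsilon^{ijk} J^k$ as $-\epsilon^{ijk} \langle d_0 f^i, J^k d_0 f^j\rangle$, whereupon integration by parts in $d_0$ converts them into $-\epsilon^{ijk}\langle f^i, d_0^* J^k d_0 f^j\rangle$, which vanishes by the complex identity $d_0^* J^k d_0 = \pm d_1^k d_0 = 0$. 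Thus $d_1^* f = 0$ forces each $d_0 f^i = 0$, placing each $f^i \in \Hh^0_A$ and exhibiting $f$ in the image of $\Phi$.

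The main obstacle is ensuring that the integration by parts producing the off-diagonal vanishing is legitimate: the boundary contribution is $-[\Tr(f^i \cdot (J^k d_0 f^j)^0)]_0^L$, and for this to vanish one needs $f^i|_{\partial I}\in \mf t$ together with $(J^k d_0 f^j)^0|_{\partial I} \in \mf t^\perp$. The first follows from $f^i \in \mf g$ via the $\tilde\F \simeq \mf g^{\oplus 3}$ identification, and the second follows from $(J^k d_0 f^j)^0 = -[A^k, f^j]$ together with the observation that $[A^k, f^j]|_{\partial I} \in \mf t^\perp$ because the diagonal part $-i\xi^k_{\partial I} \sigma_z$ of $A^k|_{\partial I}$ commutes with $f^j|_{\partial I}\in \mf t$ while the off-diagonal part sends $\mf t$ into $\mf t^\perp$. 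This exact compatibility between the $\mf t$ boundary conditions on $\mf g$ and $\tilde\F$ and the $\mf t^\perp$ conditions on $\A_0$ is precisely what was built into Definition \ref{defn:maindefn} to make the moment map Hamiltonian in Proposition \ref{prop:Hamtrick}, and it is also what makes the present Weitzenb\"ock argument succeed.
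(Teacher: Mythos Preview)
Your proof is correct and complete. The explicit map $\Phi$ you construct is precisely the paper's map (written there as wedging with the K\"ahler forms after lifting to $[0,L]\times T^3$ and identifying $\tilde\F$ with self-dual $2$-forms), so the two proofs agree on \emph{what} the isomorphism is. Where they diverge is in proving it is an isomorphism: the paper lifts to a flat hyperkahler $4$-manifold, identifies $\Delta_0,\Delta_2$ with form Laplacians, and then invokes the K\"ahler identities to deduce that wedging/contracting with $\omega_i$ commutes with the Laplacian. Your route is more self-contained: the algebraic identity $d_1^* f = -\sum_i J^i d_0 f^i$ (equivalently $d_0^* J^k = d_1^k$) together with the orthogonality computation $\|d_1^* f\|^2 = \sum_i \|d_0 f^i\|^2$ directly forces each $f^i$ into $\Hh^0_A$, with no appeal to the lifted geometry. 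Your careful check of the boundary terms in the off-diagonal integration by parts---reducing to $[A^k,f^j]|_{\partial I}\in\mf t^\perp$---is exactly the step that the paper's K\"ahler-identity argument leaves implicit, so your version makes the role of the boundary conditions more transparent. Either approach works; yours is arguably more elementary but the paper's makes the hyperkahler origin of the result more visible.
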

\begin{proof}
We make use of the characterizations of $\Hh^0_A$ and $\Hh^{2,{\rm sp}}_A$ as kernels of Laplacians with diagonal zeroth-order boundary conditions and off-diagonal first-order ones. We regard $h\in \Hh^0_A$ ($f \in \Hh^{2,{\rm sp}}_A$, resp.) as an adjoint-valued function (self-dual 2-form $\sum_i f^i \omega_i$) on $[0,L]\times (S^1)^3$ which is invariant under translations of the last three coordinates. Similarly, $A$ defines a connection on a trivial bundle over this space. With these identifications, $\Delta_0$ and $\Delta_2$ become the covariant Laplacians acting on 0-forms and 2-forms, respectively. The desired isomorphism is then given either by wedging (to go from 0-forms to self-dual 2-forms) or contracting (to go backwards) with the three K\"ahler forms, as follows from the K\"ahler identities. For example, if $\Lambda_i$ denotes the operation of contracting with the K\"ahler form $\omega_i$ and $\bar\partial_i$ denotes the covariant Dolbeault operator in complex structure $i$ then the identities
\be [\Lambda_i, \partial_i] = i \bar\partial^*_i \ , \quad [\Lambda_i, \bar\partial_i]  = -i \partial^*_i \ , \quad [\Lambda_i, \partial_i^*] = [\Lambda_i, \bar\partial_i^*] = 0 \ee
imply that
\be [\Lambda_i, \Delta_{\partial_i}] = - [\Lambda_i, \Delta_{\bar\partial_i}] = i(\partial^*_i \bar\partial^*_i + \bar\partial^*_i \partial^*_i) \ , \ee
where $\Delta_{\partial_i} = \partial_i \partial_i^* + \partial_i^* \partial_i$ and $\Delta_{\bar\partial_i} = \bar\partial_i \bar\partial_i^* + \bar\partial_i^* \bar\partial_i$, and so $[\Lambda_i, \Delta] = 0$, where $\Delta = d_A d_A^* + d_A^* d_A = \Delta_{\partial_i} + \Delta_{\bar\partial_i}$. Similarly, the operations $L_i$ of wedging with $\omega_i$ also commute with $\Delta$.
\end{proof}

Now, it follows from a diagram chase that in full generality, we have the exact sequence \beq\label{eq:spgnes} 0 \to \mathrm{H}^{1,\mathrm{sp}}_A \to \mathrm{H}^{1,\mathrm{gen}}_A \to \mf{z}^3_{\partial I} \to \mathrm{H}^{2,\mathrm{sp}}_A \to \mathrm{H}^{2,\mathrm{gen}}_A \to 0\,,\eeq
which simplifies in the present case because ${\rm H}^{2,{\rm gen}}_A = 0$; in particular, we immediately obtain that the index of the general deformation complex is that of the special complex minus $\dim \mf{z}^3_{\partial I} = 6$. We now give an analytic derivation of this result which makes manifest its relationship with our harmonic representatives:

\begin{proposition}
For all Nahm data $A\in \A_{\rm all}$, we have the exact sequence $0 \to \Hh^{1,\mathrm{sp}}_A \to \mathcal{H}^{1,\mathrm{gen}}_A \to \mf{z}^3_{\partial I} \to \mathcal{H}^{2,\mathrm{sp}}_A \to 0$.
\end{proposition}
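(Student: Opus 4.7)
The plan is to realize this exact sequence as the long exact sequence in cohomology of a short exact sequence of deformation complexes, then transport it to the harmonic picture. Consider
$$0 \longrightarrow C^{\mathrm{sp}}_A \longrightarrow C^{\mathrm{gen}}_A \longrightarrow Q \longrightarrow 0,$$
where $C^{\mathrm{sp}}_A, C^{\mathrm{gen}}_A$ denote the two complexes of \eqref{eq:realDef} and $Q$ is the term-by-term quotient. Since these complexes differ only in the use of $\mc{A}_0$ vs.\ $\mc{A}_{\mathrm{all}}$ in degree $1$, $Q$ is concentrated in degree $1$ with value $\mc{A}_{\mathrm{all}}/\mc{A}_0 \simeq \mf{z}^3_{\partial I}$; the isomorphism is the diagonal boundary evaluation $\mathrm{ev}(a) = (\pi_{\mf{t}}\,a^i|_{\partial I})_{i=1,2,3}$. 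The associated long exact sequence collapses to
$$0 \to \mathrm{H}^{1,\mathrm{sp}}_A \to \mathrm{H}^{1,\mathrm{gen}}_A \xrightarrow{\mathrm{ev}} \mf{z}^3_{\partial I} \xrightarrow{\delta} \mathrm{H}^{2,\mathrm{sp}}_A \to \mathrm{H}^{2,\mathrm{gen}}_A \to 0,$$
and the vanishing $\mathrm{H}^{2,\mathrm{gen}}_A = 0$ noted after Proposition \ref{prop:realInd} yields the desired four-term sequence at the level of abstract cohomology. Proposition \ref{prop:realInd} and its analog for the general complex then transport this sequence through the canonical isomorphisms to the harmonic spaces $\mathcal{H}^{i,\bullet}_A$.

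On harmonic representatives the first maps are transparent: the inclusion $\mathcal{H}^{1,\mathrm{sp}}_A \hookrightarrow \mathcal{H}^{1,\mathrm{gen}}_A$ is tautological since a harmonic representative in $\mc{A}_0$ also lies in $\mc{A}_{\mathrm{all}}$ and continues to satisfy $d_0^* a = d_1 a = 0$, and the next map is literally $\mathrm{ev}$, whose kernel is by definition $\mathcal{H}^{1,\mathrm{sp}}_A$. The connecting homomorphism $\delta$ is the standard snake-lemma construction: lift $\xi \in \mf{z}^3_{\partial I}$ to some $a \in \mc{A}_{\mathrm{all}}$ with $\mathrm{ev}(a) = \xi$, and project $d_1 a \in \mc{F}$ onto $\mathcal{H}^{2,\mathrm{sp}}_A$ using the Hodge-type decomposition $\mc{F} = \im d_1^{\mathrm{sp}} \oplus \mathcal{H}^{2,\mathrm{sp}}_A$ provided by the Fredholm theory of the complex. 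Two choices of lift differ by an element of $\mc{A}_0$, whose image under $d_1$ lies in $\im d_1^{\mathrm{sp}}$, so $\delta$ is well-defined.

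The main technical step is the explicit verification of exactness at $\mf{z}^3_{\partial I}$ by producing a harmonic lift. Suppose $\delta(\xi) = 0$, so $d_1 a = d_1 a'$ for some $a' \in \mc{A}_0$; then $b := a - a' \in \mc{A}_{\mathrm{all}}$ satisfies $d_1 b = 0$ and $\mathrm{ev}(b) = \xi$. To produce a harmonic representative we impose $d_0^* b = 0$ via the substitution $b \mapsto b + d_0 h$ with $\Delta_0 h = -d_0^* b$ and $h \in \mf{g}$. Solvability reduces to checking $d_0^* b \perp_{L^2} \mathcal{H}^0_A = \ker d_0$: for $h' \in \mathcal{H}^0_A$, integration by parts yields
$$\langle d_0^* b, h' \rangle_{L^2} = \langle b, d_0 h' \rangle_{L^2} + \tfrac{1}{2L}\bigl[\operatorname{tr}(b^0 h')\bigr]_0^L,$$
in which the bulk term vanishes since $d_0 h' = 0$ and the boundary term vanishes because $b^0|_{\partial I} \in \mf{t}^\perp$ is trace-orthogonal to $h'|_{\partial I} \in \mf{t}$. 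One furthermore checks that $d_0 h \in \mc{A}_0$ for $h \in \mf{g}$ using $[A^i|_{\partial I}, \mf{t}] \subset \mf{t}^\perp$, so this modification preserves the boundary data $\xi$ and $b + d_0 h$ is the desired class in $\mathcal{H}^{1,\mathrm{gen}}_A$ with $\mathrm{ev}$-image $\xi$.

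Surjectivity of $\delta$ is then automatic from $\mathrm{H}^{2,\mathrm{gen}}_A = 0$: every $\eta \in \mathcal{H}^{2,\mathrm{sp}}_A \subset \mc{F}$ satisfies $\eta = d_1 a$ for some $a \in \mc{A}_{\mathrm{all}}$, whence $\delta(\mathrm{ev}(a)) = \eta$. The principal obstacle throughout is organizing the boundary conditions consistently when passing between the short exact sequence of Sobolev spaces, the abstract cohomology long exact sequence, and its harmonic realization; once the snake diagram is set up correctly, the remaining verifications reduce to routine Fredholm and Hodge considerations already established in this section.
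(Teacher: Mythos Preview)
Your proof is correct and close in spirit to the paper's, though organized differently. The paper has already noted the abstract long exact sequence \eqref{eq:spgnes} via a diagram chase; the proposition is then framed as an \emph{analytic} rederivation directly in terms of harmonic representatives. Accordingly, the paper bypasses your two-step reduction (first arrange $d_1 b = 0$, then solve $\Delta_0 h = -d_0^* b$) and instead, for a given boundary datum $c^i_{\partial I}$, fixes any lift $b \in \mc{A}_{\mathrm{all}}$ and asks directly whether $(d_0^*\oplus d_1)a = -(d_0^*\oplus d_1)b$ is solvable for $a \in \mc{A}_0$. Integration by parts against $(h,f)\in\ker(d_0+d_1^*)$ with diagonal boundary values produces the single boundary pairing $-\sum_i\big(i c^i_L\,\mathrm{Tr}(\sigma_z f^i|_L) - i c^i_0\,\mathrm{Tr}(\sigma_z f^i|_0)\big)$, which simultaneously identifies the connecting map $\mf{z}^3_{\partial I}\to\mathcal{H}^{2,\mathrm{sp}}_A$ as ``pair with the boundary values of $f$'' and makes exactness at $\mf{z}^3_{\partial I}$ and surjectivity immediate (the latter via the observation that elements of $\mathcal{H}^{2,\mathrm{sp}}_A$ are determined by their boundary values, since $\mathrm{H}^{2,\mathrm{gen}}_A=0$). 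Your snake-lemma packaging is cleaner conceptually and makes well-definedness of $\delta$ automatic; the paper's direct computation has the payoff of exhibiting the connecting homomorphism as an explicit boundary pairing, which is the ``manifest relationship with harmonic representatives'' advertised just before the statement.
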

\begin{proof}
The difference between $\Hh^{1,{\rm sp}}_A$ and $\Hh^{1,{\rm gen}}_A$ owes to elements of $\A_{\rm all}$ in the kernel of $d_0^*\oplus d_1$. The difference of two such elements with boundary conditions $a^i|_{\partial I} \in i c_{\partial I}^i \sigma_z + \mf{t}^\perp$ is in $\Hh^{1,{\rm sp}}_A$, so the space of $c_{\partial I}^i$ for which a solution exists parametrizes the vector space $\Hh^{1,{\rm gen}}_A / \Hh^{1,{\rm sp}}_A$. To determine when such solutions exist, we fix some $b\in \A_{\rm all}$ with these boundary conditions and seek an $a\in \A_0$ such that $(d_0^*\oplus d_1) a = - (d_0^* \oplus d_1) b$. A solution will exist if and only if the right side is $L^2$-orthogonal to the $L^2$-orthogonal complement of $\im(d_0^*\oplus d_1|_{\A_0})$, i.e. if the right side is orthogonal to $\ker(d_0 + d_1^*)$ with the boundary conditions $h|_{\partial I}, f^i|_{\partial I} \in \mf{t}$. For such $h,f$ we compute $(d_0^* b, h)_{L^2} + (d_1 b, f)_{L^2} = -\sum_i (i c^i_L \Tr(\sigma_z f^i|_L) - i c^i_0 \Tr(\sigma_z f^i|_0))$. Recalling that $\ker(d_0 + d_1^*)$ with these boundary conditions splits as the direct sum $\ker d_0 \oplus \ker d_1^*$, we see that the obstruction to the existence of a solution with a given choice of $c_{\partial I}^i$ can be traced to an element of $\Hh^{2,{\rm spec}}_A$. Since $\mathrm{H}^{2,{\rm gen}}_A=0$, if the boundary values in $\mf{t}^3\oplus \mf{t}^3$ of $f_1,f_2\in \Hh^{2,{\rm sp}}_A$ satisfy a linear relation then so do $f_1$ and $f_2$ themselves. The proposition then follows immediately.
\end{proof}

We now reap a significant benefit from having related the indices of the deformation complexes to those of their Dirac operators: the complexes only exist when $A$ satisfies Nahm's equations, but the Dirac operator is elliptic for all $A\in \A_{\mathrm{all}}$. So, we can deform $A$ to 0 without changing the index. This yields the following corollary of Proposition \ref{prop:realInd}:
\begin{corollary}
For any Nahm data $A\in \A_{\mathrm{all}}$, the index of the special deformation complex is $-4$ and that of the general deformation complex is $-10$.
\end{corollary}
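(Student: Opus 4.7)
The plan is to apply Proposition \ref{prop:realInd} to identify the index of each deformation complex with the Fredholm index of the folded Dirac-type operator $T_A := d_0^*\oplus d_1$, and then deform to $A = 0$. Although the deformation complex itself only makes sense when $A$ satisfies Nahm's equations (so that $d_1 d_0 = 0$), the operator $T_A$ is Fredholm for \emph{every} $A\in \A_{\mathrm{all}}$: its principal symbol is independent of $A$, and $A$ enters only through the bounded zero-order commutators $[A^\mu,\,\cdot\,]$, which are compact from $H^s$ to $H^{s-1}$. Thus $T_A$ is a compact perturbation of $T_0$, and homotopy invariance of the Fredholm index gives $\mathrm{index}\, T_A = \mathrm{index}\, T_0$ throughout $\A_{\mathrm{all}}$. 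Combined with the isomorphisms $\ker T_A \simeq \Hh^1_A$ and $\coker T_A \simeq \Hh^0_A \oplus \Hh^2_A$ of Proposition \ref{prop:realInd}, this reduces the entire computation to the case $A = 0$.

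At $A = 0$ the operator decouples as $T_0(a^0,a^1,a^2,a^3) = (-\partial a^0,\partial a^1,\partial a^2,\partial a^3)$, and the calculation becomes a pair of routine one-dimensional boundary value problems per component after splitting $\mf{su}(2) = \mf{t}\oplus \mf{t}^{\perp}$. For the special complex, each $a^\mu$ satisfies $a^\mu|_{\partial I}\in \mf{t}^{\perp}$: the $\mf{t}$-part obeys Dirichlet conditions and contributes Fredholm index $-\dim \mf{t} = -1$ (cokernel detected by the mean, trivial kernel), while the $\mf{t}^{\perp}$-part is free at the boundary and contributes index $+\dim \mf{t}^{\perp} = +2$ (kernel the constants, surjective). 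Each of the four components thus contributes $+1$, so $\mathrm{index}\, T_0 = 4$; the sign convention of Proposition \ref{prop:realInd} (in which $\ker T_A \simeq \Hh^1_A$ but $\coker T_A \simeq \Hh^0_A\oplus \Hh^2_A$) then yields complex index $-4$. For the general complex, the only change is that the boundary condition is dropped for $a^i$ with $i=1,2,3$; the $\mf{t}$-part of each such component becomes free and contributes $+1$ rather than $-1$, shifting the total by $2\cdot 3 = 6$ to give $\mathrm{index}\, T_0^{\mathrm{gen}} = 10$ and complex index $-10$.

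The only conceptual point that needs care is the sign relating the Fredholm index of $T_A$ to the Euler-characteristic-type quantity being called the index of the complex -- it is easy to be off by a global sign if one is not careful about whether one is computing $\ker - \coker$ or the alternating sum of cohomology dimensions. Once the homotopy invariance is in place via Proposition \ref{prop:realInd}, the rest amounts to counting constants and mean values on an interval, and I anticipate no genuine analytic obstacle. As a consistency check, the shift of $6$ between the special and general cases matches $\dim_{\mb{R}}\mf{z}^3_{\partial I} = 6$, exactly the exact sequence $0 \to \Hh^{1,\mathrm{sp}} \to \Hh^{1,\mathrm{gen}} \to \mf{z}^3_{\partial I} \to \Hh^{2,\mathrm{sp}} \to 0$ (combined with $\Hh^{2,\mathrm{gen}} = 0$).
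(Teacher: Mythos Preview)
Your proposal is correct and follows essentially the same route as the paper: both reduce to $A=0$ via the fact that $T_A = d_0^*\oplus d_1$ is Fredholm for all $A$ (not just Nahm data), then compute at $A=0$. The paper phrases the $A=0$ computation by identifying the harmonic spaces directly ($\Hh^{1,\mathrm{sp}}\simeq(\mf{t}^\perp)^{\oplus 4}$, $\Hh^0\simeq\mf{t}$, $\Hh^{2,\mathrm{sp}}\simeq\mf{t}^{\oplus 3}$, $\Hh^{1,\mathrm{gen}}\simeq\mf{t}^\perp\oplus\mf{su}(2)^{\oplus 3}$) rather than your component-by-component index count, but these are two ways of organizing the same elementary calculation; your consistency check via the exact sequence matches the paper's own observation.
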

\begin{proof}
As we have just explained, we may set $A=0$. Then, we have $\dim \mathrm{H}^{1,{\rm sp}}_A=8$, $\dim {\rm H}^{1,{\rm gen}}_A = 11$, $\dim \mathrm{H}^0_A=1$, and $\dim \mathrm{H}^{2,{\rm sp}}_A = 3$, since $\Hh^{1,{\rm sp}}_A \simeq (\mf{t}^{\perp})^{\oplus 4}$, $\Hh^{1,{\rm gen}}_A \simeq \mf{t}^\perp \oplus \mf{su}(2)^{\oplus 3}$, $\Hh^0_A \simeq \mf{t}$, and $\Hh^{2,{\rm sp}}_A \simeq \mf{t}^{\oplus 3}$.
\end{proof}

Before putting a smooth structure on the irreducible loci of $\M^s_{\xi},\M^s_{\mathrm{all}}$, it is useful to first do so for $\A^s_\xi/\G^{s+1},\mc{A}^s_{\mathrm{all}}/\mc{G}^{s+1}$. We therefore begin with that. Note that if $g\in \G^{s+1}$ stabilizes $A^0\in \A'$ then $g\in \G$, thanks to elliptic regularity and $\partial_{A^0} g = 0$. We first strengthen Proposition \ref{prop:gaugeConv} by showing that it is not necessary to pass to subsequences, as long as we consider convergence in a quotient group: 

\begin{lemma} \label{lem:homeoOrb}
Consider some $B\in \A'\times \A''^s_{\mathrm{all}}$ with stabilizer $\hat\G\subset \G$. If $B_n\to B$ and $g_n(B_n)\to A$ in $\A_{\mathrm{all}}^s$ for some sequence $g_n\in \G^{s+1}$, then there exists $g\in \G^{s+1}$ such that $g(B)=A$ and $g_n \to g$ in $\hat\G \backslash \G^{s+1}$. 

\end{lemma}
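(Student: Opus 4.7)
The plan is to upgrade Proposition~\ref{prop:gaugeConv} from subsequential convergence in $\G^{s+1}$ to full sequential convergence in the quotient $\hat\G\backslash\G^{s+1}$, by a standard subsequence-of-subsequence argument using that any subsequential limit represents the same left coset.

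First I would record the identification of cosets. Our gauge action is a right action with $(g_1 g_2)(B) = g_2(g_1(B))$, so two transformations $g,g'\in \G^{s+1}$ satisfying $g(B) = g'(B)$ obey $g'g^{-1}\in \hat\G$ and hence $[g'] = [g]$ in $\hat\G\backslash\G^{s+1}$; conversely, any element of $\hat\G\cdot g$ sends $B$ to $g(B)$. Thus Proposition~\ref{prop:gaugeConv} produces a canonical coset: it yields some $g\in \G^{s+1}$ with $g(B) = A$, and any other subsequential limit $\tilde g$ automatically satisfies $[\tilde g] = [g]$.

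Now I would argue by contradiction that $[g_n]\to [g]$ in $\hat\G\backslash\G^{s+1}$. Suppose not; then there is an open neighborhood $U\subset \hat\G\backslash\G^{s+1}$ of $[g]$ and a subsequence $(g_{n_k})$ with $[g_{n_k}]\notin U$ for all $k$. Applying Proposition~\ref{prop:gaugeConv} to the sequences $B_{n_k}\to B$ and $g_{n_k}(B_{n_k})\to A$ yields a further subsequence $g_{n_{k_j}}$ converging in $\G^{s+1}$ to some $\tilde g\in \G^{s+1}$ with $\tilde g(B) = A$. By the previous paragraph $[\tilde g] = [g]$, and continuity of the quotient map then gives $[g_{n_{k_j}}]\to [g]$, contradicting $[g_{n_{k_j}}]\notin U$.

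Since both the subsequential compactness and the uniqueness of the limit modulo $\hat\G$ are supplied by Proposition~\ref{prop:gaugeConv}, there is no substantive obstacle; the only mild point is ensuring the quotient topology behaves well, which is automatic since $\hat\G$ is closed in $\G^{s+1}$ as the stabilizer of a point under a continuous group action (so that the quotient map is open and continuous, which is all we require).
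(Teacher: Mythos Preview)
Your argument is correct and is essentially identical to the paper's proof: both use Proposition~\ref{prop:gaugeConv} to obtain $g$ with $g(B)=A$, observe that any subsequential limit lies in the same $\hat\G$-coset, and conclude full convergence in $\hat\G\backslash\G^{s+1}$ via the standard subsequence-of-subsequence contradiction. The paper's version is slightly more terse, omitting your explicit verification of the coset identity and the remark on the quotient topology, but the substance is the same.
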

\begin{proof}
Proposition \ref{prop:gaugeConv} implies that for any subsequence of $g_n$, a further subsequence converges in $\hat\G\backslash \G^{s+1}$ to the unique point $g$ satisfying $g(B)=A$. This implies that the sequence $g_n$ itself converges to $g$ in $\hat\G\backslash \G^{s+1}$. For, if otherwise then there would exist a neighborhood $U$ of $g$ in $\hat\G\backslash \G^{s+1}$ and a subsequence $g'_n$ of $g_n$ such that $g'_n \not\in U$ for all $n$.
\end{proof}

\begin{remark} \label{rmk:slightWeak}
If we assume from the outset that $A=g(B)$ for some $g\in\G^{s+1}$, then we may reformulate this lemma as follows: for any neighborhood $U$ of $g$ in $\hat\G\backslash \G^{s+1}$ there exist neighborhoods $V$ of $B$ and $V'$ of $g(B)$ in $\A^s_{\rm all}$ such that if $\tilde g\in \G^{s+1}$, $B'\in V$, and $\tilde g(B') \in V'$, then $\tilde g\in U$. This reformulation will make the upcoming application of this lemma more immediate, and it also facilitates comparison with Proposition \ref{prop:quanthomeo}.
\end{remark}

We are now ready to start producing manifold structures on our spaces of gauge orbits; we begin with $\mc{A}^s/\mc{G}^{s+1}$ before doing the same for $\mc{M}^s$. In both cases, we first give descriptions of local neighborhoods of (pre-)Nahm data in general before then specializing to the case of irreducible (pre-)Nahm data -- i.e., with trivial stabilizer -- to produce manifold structures. 

\begin{proposition}[Real Coulomb gauge]\label{prop:realcoul}
For $1 \le s < \infty$ and for $A\in \A' \times \A''^s_{\mathrm{all}}$ with stabilizer $\hat\G\subset \G$, the $\G^{s+1}$-equivariant map $F\colon (\G^{s+1} \times \ker d_0^*)/\hat{\mc{G}} \to \A^s_{\mathrm{all}}$ defined by $(g, a) \mapsto g(A+a)$, and with $\hat{g} \in \hat{\mc{G}}$ acting on $(g, a)$ via $(g, a) \mapsto (\hat{g}^{-1}g, \hat{g}^{-1}a\hat{g})$, yields a local diffeomorphism identifying a neighborhood of $(1,0)$ with a neighborhood of $A\in \A^s_{\rm all}$. It follows that a neighborhood of $[A] \in \mc{A}^{s}_{\mathrm{all}}/\mc{G}^{s+1}$ is homeomorphic to $T^s_{A,\epsilon,\mathrm{all}}/\hat{\mc{G}}$ for some $\epsilon>0$, where $T^s_{A,\epsilon,\mathrm{all}} := \{a\in \A^s_{\mathrm{all}} \bigm| d_0^* a = 0,\, \|a\|_{\A^s_{\mathrm{all}}}<\epsilon\}$ and $\hat\G$ acts via conjugation. Here, we endow the Sobolev space with boundary conditions $\A_{\mathrm{all}}^s$ with a norm that utilizes covariant derivatives with respect to $A^0$. 

This structure is compatible with the projection $p$; a neighborhood of $[A] \in \mc{A}^s_{\xi}/\mc{G}^{s+1}$ is then identified with $T^s_{A,\epsilon}/\hat{\mc{G}}$, where $T^s_{A,\epsilon} := \{a\in \A^s_{0} \bigm| d_0^* a = 0,\, \|a\|_{\A^s_0}<\epsilon\}$.
\end{proposition}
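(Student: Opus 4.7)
The plan is to apply the Banach inverse function theorem to $F$ and then to derive the asserted local structure of the quotient from this local diffeomorphism together with Lemma~\ref{lem:homeoOrb}.

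First, I will check that $F$ descends to the $\hat{\mc{G}}$-quotient and is smooth. Smoothness reduces to smoothness of the group action $\G^{s+1} \times \A^s_{\mathrm{all}} \to \A^s_{\mathrm{all}}$, which is standard in this Sobolev range. Next I will compute the derivative $dF$ at $(1,0)$, which is the map $(h,a) \mapsto d_0 h + a$ sending $\mf{g}^{s+1} \oplus \ker d_0^* \to \A^s_{\mathrm{all}}$. The kernel of this map is precisely $\{(h,0) \mid h\in \Hh^0_A\}$: indeed, $d_0 h + a = 0$ with $d_0^* a = 0$ yields $\Delta_0 h = 0$, so $h \in \Hh^0_A \subset \ker d_0$ and then $a=0$. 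Since $\mathrm{Lie}\,\hat{\mc{G}} = \Hh^0_A$, the induced map on the quotient by $\hat{\mc{G}}$ is injective. Surjectivity follows from the same Fredholm self-adjoint operator $\Delta_0 = d_0^* d_0$: given $b \in \A^s_{\mathrm{all}}$, the equation $\Delta_0 h = d_0^* b$ is solvable in $\mf{g}^{s+1}$ because $d_0^* b$ is $L^2$-orthogonal to $\Hh^0_A \subset \ker d_0$ by the identity $(d_0^* b, h)_{L^2} = (b, d_0 h)_{L^2} = 0$ for $h \in \Hh^0_A$ (here the boundary terms vanish because $h \in \mf{g}$ and $b\in \A_{\mathrm{all}}$). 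Setting $a = b - d_0 h$ gives an element of $\ker d_0^*$, completing the decomposition. Hence $dF$ is a continuous Banach-space isomorphism modulo $\hat{\mc{G}}$, and the inverse function theorem yields a local diffeomorphism between a neighborhood of $(1,0)$ in $(\G^{s+1} \times \ker d_0^*)/\hat{\mc{G}}$ and a neighborhood of $A$ in $\A^s_{\mathrm{all}}$.

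To deduce the quotient statement, I will need to check two things for $\epsilon > 0$ sufficiently small: every $\G^{s+1}$-orbit through a point near $A$ meets $T^s_{A,\epsilon,\mathrm{all}}$, and two points of $T^s_{A,\epsilon,\mathrm{all}}$ lie in the same $\G^{s+1}$-orbit only if they are already related by an element of $\hat{\mc{G}}$. The first assertion is immediate from the local diffeomorphism: if $B$ is close to $A$, then $F^{-1}(B) = [(g,a)]$ with $g \in \G^{s+1}$ near $\hat{\mc{G}}$, and then $g^{-1}(B) = A+a \in T^s_{A,\epsilon,\mathrm{all}}$. The second assertion is where Lemma~\ref{lem:homeoOrb} (in the form of Remark~\ref{rmk:slightWeak}) is essential: given a small neighborhood $U$ of $1$ in $\hat{\mc{G}}\backslash\G^{s+1}$ where $F$ is a diffeomorphism, the lemma produces an $\epsilon$ such that any $g \in \G^{s+1}$ carrying a point of $T^s_{A,\epsilon,\mathrm{all}}$ to another point of $T^s_{A,\epsilon,\mathrm{all}}$ satisfies $[g] \in U$; injectivity of $F$ on $U \times (\ker d_0^* \cap B_\epsilon)/\hat{\mc{G}}$ then forces $[g] \in \hat{\mc{G}}$. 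This identifies the neighborhood of $[A]$ in $\A^s_{\mathrm{all}}/\G^{s+1}$ with $T^s_{A,\epsilon,\mathrm{all}}/\hat{\mc{G}}$.

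For compatibility with $p$, I will simply observe that the Coulomb-gauge slice decomposition restricts: the tangent vectors $a \in \A_0$ preserve the boundary values of $A^i_z$, and the Hodge decomposition $a = d_0 h + a'$ preserves $\A_0$ since $d_0 h \in \A_0$ when $h \in \mf{g}$ (as $h|_{\partial I} \in \mf{t}$ and $\partial h|_{\partial I}\in \mf{t}^\perp$). The same inverse function theorem argument applied to the restricted map $F|_{\G^{s+1} \times (\ker d_0^* \cap \A^s_0)}$ with target $\A^s_\xi$ gives the statement for $\A^s_\xi/\G^{s+1}$. The main obstacle throughout is the second assertion in the previous paragraph: without the global convergence result of Proposition~\ref{prop:gaugeConv}, the inverse function theorem only yields injectivity within a neighborhood of $1$ in $\G^{s+1}$, and one needs the lemma to upgrade this to uniqueness within an entire $\A^s_{\mathrm{all}}$-neighborhood of $A$.
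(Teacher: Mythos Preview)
Your proposal is correct and follows essentially the same approach as the paper: compute $dF_{(1,0)}(h,a)=d_0h+a$, verify it is an isomorphism $\mf{g}^{s+1}/\hat{\mf{g}}\oplus\ker d_0^*\to\A^s_{\mathrm{all}}$ via the solvability of $\Delta_0 h = d_0^* b$, apply the inverse function theorem, and then use Lemma~\ref{lem:homeoOrb} to pass to the quotient. The only minor difference is that the paper proves injectivity via the $L^2$-orthogonality $(d_0 h,a)_{L^2}=0$ directly, whereas you apply $d_0^*$ to $d_0 h+a=0$ to get $\Delta_0 h=0$; and the paper is slightly more careful about the regularity bootstrap in solving $\Delta_0 h=d_0^* b$ (first solving in $\mf{g}^2$ via self-adjointness, then applying elliptic regularity to reach $\mf{g}^{s+1}$), as well as explicitly verifying that the $\hat{\mc G}$-conjugation preserves both $d_0^*a=0$ and the covariant Sobolev norm.
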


\bp 
We first explain how $\hat\G$ acts naturally on $T^s_{A,\epsilon}$ via conjugation; the argument for $T^s_{A,\epsilon,{\rm all}}$ is identical. Thinking of $a$ as a deviation from $A$, the appropriate action is $a\mapsto g(A+a)-A = g(A) + g^{-1} a g - A = g^{-1} a g$. Conjugation preserves both $d_0^* a = 0$ and $\|a\|_{\A_0^s}$: it preserves $\|a\|_{\A_0^s}$ because $\partial_{A^0} g = 0$, and we similarly have $d_0^*(g^{-1} a g)=0$ if $d_0^* a = 0$.

We now turn to the $\G^{s+1}$-equivariant map $F\colon (\G^{s+1} \times \ker d_0^*)/\hat{\mc{G}} \to \A^s_{\mathrm{all}}$. 
The derivative $DF_{(1,0)}\colon \mf{g}^{s+1}/\hat{\mf{g}}\oplus \ker d_0^* \to \A_{\mathrm{all}}^s$ at $(1,0)$ is given by $DF_{(1,0)}(h,a) = d_0 h + a$. We wish to prove that $DF_{(1,0)}$ is an isomorphism. This is equivalent to asking if $\coker d_0 \simeq \ker d_0^*$. We first note that $(d_0 h, a)_{L^2} = (h, d_0^* a)_{L^2} = 0$ if $h\in \mf{g}^{s+1}$ and $a\in \ker d_0^*$. (This holds even for $a\in \A^s_{\rm all}$, as opposed to $\A^s_0$, as we still have $a^0|_{\partial I} \in \mf{t}^\perp$.) So, $DF_{(1,0)}(h,a) = 0$ requires $d_0 h=a=0$. But, $d_0 h=0$ is equivalent to $h\in \hat{\mf g}$, and so $DF_{(1,0)}$ is injective. For surjectivity, given $a'\in \A^s_{\mathrm{all}}$ we want to find an $h\in \mf{g}^{s+1}$ that solves $d_0^*(a'-d_0 h) = 0$, i.e. $\Delta_0 h = d_0^* a'$. By embedding $a'$ into $\A^1_{\mathrm{all}}$, we are able to apply Proposition \ref{prop:selfAdj}, and we can then solve for an $h\in \mf{g}^2$ because if $h'\in \mf{g}^2$ satisfies $\Delta_0 h' = 0$, and therefore $d_0 h'=0$, then $(d_0^* a', h')_{L^2} = (a', d_0 h')_{L^2} = 0$. Elliptic regularity then gives $h\in \mf{g}^{s+1}$. The inverse function theorem now implies that $F$ defines a diffeomorphism between a neighborhood $V$ of $A\in \A^s_{\mathrm{all}}$ and a neighborhood $U$ of $(1,0)$ in $(\G^{s+1}\times \ker d_0^*)/\hat\G$.

Finally, we consider the subset $p_1^{-1}(1)$ of $U$, where $p_1\colon (\G^{s+1}\times \ker d_0^*)/\hat{\mc{G}} \to \G^{s+1}/\hat{\mc{G}}$ is the projection onto the first factor and $p_2\colon (\G^{s+1}\times \ker d_0^*)/\hat{\G} \to \ker d_0^*/\hat{\G}$ is defined analogously. It follows from Lemma \ref{lem:homeoOrb} that by shrinking $p_2(p_1^{-1}(1))$ to a set $T^s_{A,\epsilon,{\rm all}}/\hat\G$ we may assume that the $\hat\G$-cosets of any elements of $\G^{s+1}$ which identify points in $F((\{1\}\times T^s_{A,\epsilon,\mathrm{all}})/\hat{\G})$ are contained in $p_1(U)$. But, the only element in $p_1(U)$ that maps $1\in \G^{s+1}/\hat\G$ to itself is the identity. So, a neighborhood of $[A]\in \A^s_{\mathrm{all}}/\G^{s+1}$ indeed is homeomorphic to $T^s_{A,\epsilon,\mathrm{all}}/\hat\G$.

Finally, the compatibility of all the above structure with the projection $p$ down to $\mf{z}^3_{\partial I}$ is clear. \ep

\begin{prop}\label{prop:preNahmmfld} For $1 \le s < \infty$, $\A^{s,\mathrm{irr}}_{\mathrm{all}}/\G^{s+1}$ is canonically a smooth Hilbert manifold. The map $\A^{s,\mathrm{irr}}_{\mathrm{all}}/\G^{s+1} \stackrel{p}{\to} \mf{z}^3_{\partial I}$ is a smooth submersion when the source is equipped with this manifold structure. 

Explicitly, charts for this manifold structure are given by the neighborhoods $T^s_{A,\epsilon,\mathrm{all}}$ of Proposition~\ref{prop:realcoul}, or by $T^s_{A,\epsilon}$ for the fibers $\A^{s,\mathrm{irr}}_{\xi}/\G^{s+1}$ of $p$.
\end{prop}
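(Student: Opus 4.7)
The plan is to deduce the proposition from Proposition~\ref{prop:realcoul} by specializing to the irreducible locus, where the stabilizer $\hat{\mc{G}}$ is trivial, and then to package the resulting local slices into a Hilbert manifold atlas.

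First I would verify that $\A^{s,\mathrm{irr}}_{\mathrm{all}}$ is an open subset of $\A^s_{\mathrm{all}}$. Given irreducible $A$, Proposition~\ref{prop:realcoul} applied with $\hat{\mc{G}}$ trivial identifies a neighborhood of $A$ with $F(U)$ for $U$ a neighborhood of $(1,0)$ in $\mc{G}^{s+1}\times\ker d_0^*$. Any $A'$ in this neighborhood is $\mc{G}^{s+1}$-equivalent to one of the form $A+a$ for $a\in\ker d_0^*$ small. The stabilizer of $A+a$ is then a subgroup of $\mc{G}$ which by continuity and the triviality of $\mathrm{Stab}(A)$ must also be trivial for small $a$ (since any nontrivial element $h\in \mf{g}$ in the Lie algebra of the stabilizer would give a nonzero element of $\mathcal{H}^0_{A+a}$, contradicting upper semicontinuity of $\dim\ker\Delta_0$ as $a\to 0$). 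Hence $\A^{s,\mathrm{irr}}_{\mathrm{all}}$ is open, and similarly for $\A^{s,\mathrm{irr}}_\xi$.

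Next I would build the atlas. For each irreducible $A$, Proposition~\ref{prop:realcoul} with $\hat{\mc{G}}$ trivial produces a diffeomorphism of a neighborhood of $0\in T^s_{A,\epsilon,\mathrm{all}}$ onto a neighborhood of $[A]\in \A^{s,\mathrm{irr}}_{\mathrm{all}}/\mc{G}^{s+1}$. This equips the quotient with a candidate smooth structure whose model space is the Hilbert space $\ker d_0^*\subset \A^s_{\mathrm{all}}$. To check that transition maps between overlapping charts $T^s_{A,\epsilon,\mathrm{all}}$ and $T^s_{A',\epsilon',\mathrm{all}}$ are smooth, I would apply the Banach implicit function theorem to the map
\begin{equation*}
\Phi\colon \mc{G}^{s+1}\times T^s_{A,\epsilon,\mathrm{all}} \to H^{s-1}(I,\mf{su}(2))\,,\qquad \Phi(g,a) := d_0^{\prime *}\bigl(g(A+a)-A'\bigr),
\end{equation*}
where $d_0^{\prime *}$ is computed with respect to $A'$. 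The partial derivative in $g$ at a point $(g_0,a_0)$ with $g_0(A+a_0)=A'+a'_0$ is (after translation) the Laplacian $\Delta'_0$ in the gauge $A'$, which is an isomorphism $\mf{g}^{s+1}\to H^{s-1}(I,\mf{su}(2))$ on the irreducible locus by Proposition~\ref{prop:realInd} together with the Hodge-theoretic identification $\mathcal{H}^0_{A'}=0$. The implicit function theorem then yields a unique smooth $g=g(a)$ locally, and $a'(a) := g(a)(A+a)-A'$ is the desired transition map.

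The Hausdorff and second-countable properties are inherited from the metric on $\A^s_{\mathrm{all}}/\mc{G}^{s+1}$ established in Corollary~\ref{cor:realMet}, so the resulting charts assemble into a genuine Hilbert manifold atlas. Finally, the submersion statement for $p$ is essentially automatic in these charts: the map $p$ in the chart $T^s_{A,\epsilon,\mathrm{all}}$ is the affine map $a\mapsto p(A)+(-i\,a^i_z|_{\partial I})$, which is a bounded linear surjection onto $\mf{z}^3_{\partial I}$ whose kernel is precisely $T^s_{A,\epsilon}$, identifying the fibers of $p$ with the charts of the manifold structure on $\A^{s,\mathrm{irr}}_\xi/\mc{G}^{s+1}$ claimed in the proposition. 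The main obstacle in this program is verifying the smoothness of the transition maps, i.e., the invertibility of $\Delta'_0$ on the correct Sobolev spaces with the correct boundary conditions; this is exactly what the irreducibility hypothesis and the adjoint analysis of Proposition~\ref{prop:selfAdj} deliver.
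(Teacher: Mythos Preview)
Your approach is correct and essentially the same as the paper's: both derive the manifold structure from the Coulomb slice of Proposition~\ref{prop:realcoul} specialized to trivial stabilizer, together with the Hausdorff and second-countable properties already established. The paper's proof is very terse --- it simply cites those two ingredients and stops --- whereas you spell out the openness of the irreducible locus, the smoothness of the transition maps, and the submersion property explicitly.

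One minor imprecision worth noting: your claim that the $g$-partial derivative of $\Phi$ at $(g_0,a_0)$ is ``the Laplacian $\Delta'_0$'' is only literally true when $a'_0=0$; at a general point it is $d_0'^{*}\circ d_{0,A'+a'_0}$, a compact perturbation of $\Delta'_0$. This is harmless, since invertibility is an open condition and you only need it locally. The paper avoids this wrinkle entirely by using the $\mc{G}^{s+1}$-equivariance of the local diffeomorphism $F$ from Proposition~\ref{prop:realcoul}: equivariance promotes the local diffeomorphism near $(1,0)$ to one near any $(g,a)$ with small $a$, so transition maps are obtained by composing the smooth inclusion $a\mapsto A+a$ with the local inverse $F_{A'}^{-1}$ and projecting, with no further appeal to the implicit function theorem needed.
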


\begin{proof} We have already observed that $\mc{A}^s_{\mathrm{all}}/\mc{G}^{s+1}$ and $\mc{A}^s_{\xi}/\mc{G}^{s+1}$ are Hausdorff and second-countable. A manifold structure on the stabilizer-free locus then follows from the diffeomorphism (to a neighborhood of $A\in \A^s_{\rm all}$) and homeomorphism (to a neighborhood of $[A]\in \A^s_{\rm all}/\G^{s+1}$) produced in Proposition \ref{prop:realcoul} (both in the special case of trivial stabilizer).
\end{proof}

We now turn to producing the desired manifold structure on $\M^{s,\mathrm{irr}}_{\mathrm{all}} \to \mf{z}^3_{\partial I}$ and its fibers $\M^{s,\mathrm{irr}}_{\xi}$ as submanifolds of $\A^{s,\mathrm{irr}}_{\mathrm{all}}/\mc{G}^{s+1}$ cut out by the moment map equations. Once again, we first give a convenient characterization of neighborhoods of Nahm data in general before then specializing to the irreducible locus: 

\begin{proposition} \label{prop:realRG}
For any Nahm data $A\in \A_{\rm all}$ which is stabilized by $\hat\G\subset \G$ with Lie algebra $\hat{\mf{g}} \simeq \mathrm{H}^0_A$, a neighborhood of $[A]$ in $\M^s_{\rm all}$ is homeomorphic to a neighborhood of 0 in $\Hh^{1,{\rm gen}}_A/\hat\G$. Furthermore, if $A\in \A_\xi$ then a neighborhood of $[A]$ in $\M^s_\xi$ is homeomorphic to a neighborhood of 0 in $\hat\mu^{-1}(0)/\hat\G$, where $\hat\mu\colon \Hh^{1,{\rm sp}}_A\to (\hat{\mf{g}}^*)^{\oplus 3} \simeq \mathrm{H}^{2,{\rm sp}}_A$ is  a $\hat\G$-equivariant smooth map between finite-dimensional vector spaces.
\end{proposition}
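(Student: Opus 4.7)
The plan is to realize $\M^s_{\mathrm{all}}$ (resp.\ $\M^s_\xi$) near $[A]$ as the zero set of the Nahm map inside the local Coulomb slice of Proposition~\ref{prop:realcoul}, and then apply a Kuranishi-style finite-dimensional reduction. By that proposition, a neighborhood of $[A] \in \A^s_{\mathrm{all}}/\G^{s+1}$ is homeomorphic to $T^s_{A,\epsilon,\mathrm{all}}/\hat\G$. Writing points of the slice as $A+a$ and using $\mu(A)=0$, Nahm's equations become $F(a) := \mu(A+a) = d_1 a + Q(a)$, where $Q(a) = -[a^i,a^0] + \frac{1}{2}\epsilon^{ijk}[a^j,a^k]$ is a $\hat\G$-equivariant continuous quadratic form; hence $F\colon T^s_{A,\epsilon,\mathrm{all}} \to \F^{s-1}$ is a smooth $\hat\G$-equivariant map with $F(0)=0$, whose intersection with $\mu^{-1}(0)$ is exactly the portion of $\M^s_{\mathrm{all}}$ modeled by the slice.

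For the first claim I will show that $DF(0) = d_1^{\mathrm{gen}}|_{\ker d_0^*}$ is surjective with kernel $\Hh^{1,\mathrm{gen}}_A$. Surjectivity follows from $\Hh^{2,\mathrm{gen}}_A = 0$ (Proposition~\ref{prop:realInd} and the remark that follows it) combined with the $L^2$-orthogonal Hodge decomposition $\A^s_{\mathrm{all}} = \ker d_0^* \oplus \mathrm{im}\,d_0$ and the identity $d_1 d_0 = 0$; the identification of the kernel with $\Hh^{1,\mathrm{gen}}_A$ is the harmonic-representative characterization of Proposition~\ref{prop:realInd}. The implicit function theorem, applied to $F$ as a smooth Banach-space map with surjective Fredholm derivative possessing a closed split kernel, then shows that $F^{-1}(0) \cap T^s_{A,\epsilon,\mathrm{all}}$ is a smooth submanifold which, via $L^2$-orthogonal projection onto $\Hh^{1,\mathrm{gen}}_A$, is diffeomorphic to a neighborhood of $0$ in $\Hh^{1,\mathrm{gen}}_A$. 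Since $\hat\G$ acts isometrically, the splitting is $\hat\G$-invariant and the diffeomorphism descends to the claimed local homeomorphism $\M^s_{\mathrm{all}} \supset U_{[A]} \simeq V_0/\hat\G \subset \Hh^{1,\mathrm{gen}}_A/\hat\G$.

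For the second claim I restrict $F$ to the fiber slice $T^s_{A,\epsilon} \subset T^s_{A,\epsilon,\mathrm{all}}$. Now $DF(0) = d_1^{\mathrm{sp}}|_{\ker d_0^* \cap \A^s_0}$ has kernel $\Hh^{1,\mathrm{sp}}_A$ and cokernel which, by Propositions~\ref{prop:realInd} and~\ref{prop:hodge}, is identified with $\Hh^{2,\mathrm{sp}}_A \simeq \Hh^0_A \otimes \mb{R}^3 \simeq (\hat{\mf{g}}^*)^{\oplus 3}$. I fix $\hat\G$-invariant $L^2$-orthogonal decompositions $\ker d_0^* \cap \A^s_0 = \Hh^{1,\mathrm{sp}}_A \oplus K$ and $\F^{s-1} = \mathrm{im}(d_1^{\mathrm{sp}}|_K) \oplus \Hh^{2,\mathrm{sp}}_A$. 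Applying the implicit function theorem to the projection of $F$ onto $\mathrm{im}(d_1^{\mathrm{sp}}|_K)$ produces a smooth $\hat\G$-equivariant map $\psi\colon U \subset \Hh^{1,\mathrm{sp}}_A \to K$ with $\psi(0)=0$ and $D\psi(0)=0$, whose graph is precisely the vanishing locus of that projection. Feeding the graph into the remaining $\Hh^{2,\mathrm{sp}}_A$-component of $F$ yields the desired $\hat\G$-equivariant smooth obstruction map $\hat\mu\colon U \to (\hat{\mf g}^*)^{\oplus 3}$, and $\hat\mu^{-1}(0)/\hat\G$ models a neighborhood of $[A] \in \M^s_\xi$.

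The main technical point is to guarantee that every choice in the Kuranishi construction can be made $\hat\G$-equivariantly, so that the passage to the quotient is clean. This is handled uniformly by working with $L^2$-orthogonal complements: $\hat\G$ is a closed subgroup of the (pointwise) compact unitary gauge group, acting by conjugation, so its action is $L^2$-isometric and the orthogonal splittings above are automatically $\hat\G$-invariant. Ellipticity of $d_0^* \oplus d_1$, together with the harmonic identifications of Proposition~\ref{prop:realInd}, further ensures that all finite-dimensional harmonic spaces appearing in these Kuranishi models coincide for every sufficiently large Sobolev regularity (and admit smooth representatives), so that the local models $\Hh^{1,\mathrm{gen}}_A/\hat\G$ and $\hat\mu^{-1}(0)/\hat\G$ are intrinsic to the orbit $[A]$ and not artifacts of the choice of $s$.
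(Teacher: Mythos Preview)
Your proof is correct and follows essentially the same approach as the paper: pass to the Coulomb slice of Proposition~\ref{prop:realcoul}, then perform a Kuranishi/Lyapunov--Schmidt reduction of the nonlinear Nahm map, using $L^2$-orthogonal splittings to ensure $\hat\G$-equivariance. The only cosmetic difference is that the paper packages the reduction via a straightening diffeomorphism $\phi$ with $\pi\circ\tilde\mu\circ\phi = d_1$, whereas you use the equivalent graph formulation $a = x + \psi(x)$; these are interchangeable presentations of the same argument.
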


\begin{proof}
We provide the proof of the claim concerning $\M_\xi^s$; the proof for $\M_{\rm all}^s$ is similar, but even easier, owing to the fact that ${\rm H}^{2,{\rm gen}}_A=0$.

We observe that the moment maps define a smooth $\hat\G$-equivariant non-linear map $\tilde\mu\colon T^s_{A,\epsilon} \to \F^{s-1}$, namely $\tilde\mu(a) := \mu(a+A)$. The derivative of this map is $d_1\colon \ker d_0^* \to \F^{s-1}$, which is Fredholm. Indeed, it is surjective onto the intersection, $V$, of the $L^2$-orthogonal complement of $\ker d_1^*: \tilde\F\to \A_0$ with $\F^{s-1}$, so the implicit function theorem implies that for any projection $\pi\colon \F^{s-1}\to V$ there is a diffeomorphism $\phi$ from a neighborhood $U$ of $0\in T^s_{A,\epsilon}$ to another such neighborhood such that $\pi\circ \tilde\mu \circ \phi = d_1$ on $U$. Note furthermore that if $\pi$ is $\hat{\mc{G}}$-equivariant, then $\phi$ may be chosen to be $\hat{\mc{G}}$-equivariant. Such a choice of $\pi$ is provided by the $L^2$-orthogonal projection onto $V=\im (d_1: \ker d_0^*\to \F^{s-1})$.

A neighborhood of 0 in $\tilde\mu^{-1}(0)$ is diffeomorphic to a neighborhood of 0 in the vanishing locus of $(1-\pi)\circ \tilde\mu \circ \phi$ acting on the finite-dimensional space $\ker d_0^*\oplus d_1$. Assuming now that $\pi$ and $\phi$ are $\hat\G$-equivariant, we have that $\hat\mu = (1-\pi)\circ\tilde\mu \circ \phi = \tilde\mu \circ \phi\colon \ker d_0^*\oplus d_1\to \ker d_1^*$ is a smooth $\hat\G$-equivariant map. Using Proposition \ref{prop:realcoul} to characterize a neighborhood of $[A]\in \A^s_\xi/\G^{s+1}$, it now follows that a neighborhood of $[A]\in \M_\xi^s$ is homeomorphic to a neighborhood of 0 in $\hat\mu^{-1}(0)/\hat\G$.
\end{proof}

\begin{rmk}\label{rmk:sadRG} 
When $A$ is reducible, the map $\hat\mu$ closely resembles the moment map that appeared in Kronheimer's construction of the $A_1$ (or $Z_2$) orbifold singularity; indeed, they coincide at the level of the quadratic (Hessian) part of $\hat\mu$, but the presence of the diffeomorphism $\phi$ leaves open the structure of higher-order corrections.\footnote{For physically-inclined readers, Proposition~\ref{prop:realRG} and the present remark pertain to the relationship between the 4d $\N=4$ gauge theory on $S^1/Z_2$ whose Higgs branch is $\M_\xi$ and the low-energy 3d $\N=4$ effective field theory whose Higgs branch approximates the geometry near a singular point of $\M_\xi$.} Given that we will later show that a neighborhood of $[A]\in \M^s_\xi$ is homeomorphic to this $A_1$ singularity one might expect that $\hat\mu$ may be improved to exactly agree with Kronheimer's moment maps by the use of further local diffeomorphisms. Similar considerations are exactly the concern of the recent work of~\cite{Fan,BuchdahlSchumacher} in an analogous setting. We will return to this line of thought in \S\S\ref{sec:complex} and \ref{sec:duy}.
\end{rmk}

The proof of the following proposition is then immediate:

\begin{prop}\label{prop:Nahmmfld}
$\M^{s,\mathrm{irr}}_{\mathrm{all}}$ is a smooth submanifold of $\mc{A}^{s,\mathrm{irr}}_{\mathrm{all}}/\mc{G}^{s+1}$ with a smooth submersion to $\mf{z}^3_{\partial I}$, the fibers $\M^{s,\mathrm{irr}}_{\xi}$ of which are smooth manifolds with dimension given by $$\dim \mathrm{H}^1_A - \dim \mathrm{H}^2_A - \dim \mathrm{H}^0_A = 4\,,$$ i.e., the negation of the index of the special deformation complex.

Explicitly, charts for this manifold structure on $\M^{s,\mathrm{irr}}_{\mathrm{all}}$ and $\M^{s,\mathrm{irr}}_{\xi}$ are given by $\epsilon$-balls in $\mc{H}^{1,{\rm gen}}_A$ and $\Hh^{1,{\rm sp}}_A$, respectively.
\end{prop}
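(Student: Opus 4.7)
The plan is to deduce this proposition almost immediately from the machinery already assembled, once one observes that the irreducibility hypothesis collapses all of the harmonic cohomology except $\Hh^{1,\mathrm{sp}}_A$ and $\Hh^{1,\mathrm{gen}}_A$. Specifically, if $A \in \A_{\mathrm{all}}$ represents a class $[A] \in \M^{s,\mathrm{irr}}_{\mathrm{all}}$, then by definition $\hat{\G} \subset \G$ is trivial, so $\Hh^0_A = 0$. Proposition \ref{prop:hodge} then immediately yields $\Hh^{2,\mathrm{sp}}_A \simeq \Hh^0_A \otimes \RR^3 = 0$, while $\Hh^{2,\mathrm{gen}}_A = 0$ holds unconditionally. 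I would exploit this vanishing in combination with Proposition \ref{prop:realRG} as follows: a neighborhood of $[A]$ in $\M^s_\xi$ is homeomorphic to a neighborhood of $0$ in $\hat{\mu}^{-1}(0)/\hat{\G}$, where $\hat{\mu}\colon \Hh^{1,\mathrm{sp}}_A \to \Hh^{2,\mathrm{sp}}_A$; since the target vanishes and the group is trivial, this is just an $\epsilon$-ball in $\Hh^{1,\mathrm{sp}}_A$. The analogous argument, applied to $\M^s_{\mathrm{all}}$ (using ${\rm H}^{2,{\rm gen}}_A=0$ in place of Hodge duality), gives charts by $\epsilon$-balls in $\Hh^{1,\mathrm{gen}}_A$.

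Next, I would argue that these local models assemble into a globally smooth manifold structure. The essential point is that each chart is constructed by composing the inclusion $\Hh^{1,\ast}_A \hookrightarrow \ker d_0^*$ (where $\ast \in \{{\rm sp}, {\rm gen}\}$) with the smooth map $a \mapsto [A+a] \in \mc{A}^{s,\mathrm{irr}}_{\mathrm{all}}/\mc{G}^{s+1}$ provided by Proposition \ref{prop:preNahmmfld}. Thus $\M^{s,\mathrm{irr}}$ sits inside the smooth Hilbert manifold $\A^{s,\mathrm{irr}}_{\mathrm{all}}/\G^{s+1}$, and on each chart the inclusion realizes $\M^{s,\mathrm{irr}}$ as the (locally) smooth image of a finite-dimensional linear subspace. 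Smoothness of transition maps on overlaps then follows because they coincide with transition maps on the ambient smooth manifold restricted to this finite-dimensional image; explicitly, two chart parametrizations $\Hh^{1,\ast}_A$ and $\Hh^{1,\ast}_{A'}$ around a common point differ by a local diffeomorphism of $\A^{s,\mathrm{irr}}_{\mathrm{all}}/\G^{s+1}$ (which in turn is a smooth map of Hilbert spaces modulo gauge, coming from Coulomb gauge fixing against different base points, see the proof of Proposition \ref{prop:realcoul}).

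For the submersion statement, I would use the exact sequence
\[
0 \to \Hh^{1,\mathrm{sp}}_A \to \Hh^{1,\mathrm{gen}}_A \to \mf{z}^3_{\partial I} \to \Hh^{2,\mathrm{sp}}_A \to 0
\]
of \eqref{eq:spgnes} (or its harmonic realization proved after Proposition \ref{prop:realInd}), which under irreducibility collapses to a short exact sequence $0 \to \Hh^{1,\mathrm{sp}}_A \to \Hh^{1,\mathrm{gen}}_A \to \mf{z}^3_{\partial I} \to 0$. In the chart $\Hh^{1,\mathrm{gen}}_A$ around $[A]$, the differential of $p$ is precisely the map $\Hh^{1,\mathrm{gen}}_A \to \mf{z}^3_{\partial I}$ recording boundary values of $a^i$ in $\mf{t}$, which by the exact sequence is surjective with kernel $\Hh^{1,\mathrm{sp}}_A$. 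This simultaneously proves surjectivity of $dp$ and identifies its kernel (the vertical tangent space) with the chart on the fiber $\M^{s,\mathrm{irr}}_\xi$, confirming that $p$ is a submersion whose fibers carry the advertised manifold structure.

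Finally, for the dimension count, I would invoke the index calculation following Proposition \ref{prop:realInd}: the special deformation complex has index $-4$ and the general one has index $-10$. Since $\dim \Hh^0_A = \dim \Hh^{2,\mathrm{sp}}_A = \dim \Hh^{2,\mathrm{gen}}_A = 0$ on $\M^{s,\mathrm{irr}}$, this forces $\dim \Hh^{1,\mathrm{sp}}_A = 4$ and $\dim \Hh^{1,\mathrm{gen}}_A = 10$, matching $\dim \mf{z}^3_{\partial I} = 6$ with the $4$-dimensional fibers. The main subtlety I anticipate is the verification that the chart transition maps really are smooth as maps of finite-dimensional Hilbert spaces — this is formally a statement about how Coulomb slices at different base points of $\A^{s,\mathrm{irr}}_{\mathrm{all}}/\G^{s+1}$ compose, and while it is implicit in the Banach implicit function theorem argument behind Proposition \ref{prop:realcoul}, it deserves a careful unpacking to rule out any pathology at the Sobolev regularity under consideration.
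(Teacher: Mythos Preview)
Your proposal is correct and follows exactly the approach the paper intends: the paper declares the proof ``immediate'' from Proposition~\ref{prop:realRG}, and your argument is a faithful unpacking of that immediacy via the vanishing of $\Hh^0_A$ and $\Hh^{2,\mathrm{sp}}_A$ under irreducibility, together with the index computation. Your additional remarks on transition-map smoothness and the submersion via the exact sequence~\eqref{eq:spgnes} are correct elaborations of details the paper leaves implicit.
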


We now note an interesting counterpart of Proposition~\ref{prop:bij}, which gives an explicit gauge choice in which Nahm data is smooth. (Of course, the gauge choice in Proposition~\ref{prop:realAx} gives another such choice, thanks to Proposition~\ref{prop:bij}.)
\begin{proposition} \label{prop:coulombSmooth}
If $A\in \A_\xi$, $a\in \A^s_0$, and $d^*_0 a = \mu(A+a) = 0$, then $A+a\in \A_\xi$, i.e. $a\in \A_0$.
\end{proposition}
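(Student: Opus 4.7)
The plan is a standard elliptic bootstrap in the Coulomb gauge, exploiting that $A$ is already smooth Nahm data. Since $\mu(A)=0$, we expand
\[
0 \;=\; \mu(A+a) \;=\; d_1 a + Q(a),
\]
where $Q(a)^i = [a^0,a^i] + \tfrac{1}{2}\epsilon^{ijk}[a^j,a^k]$ is a pointwise bilinear expression in $a$. Combined with the Coulomb gauge condition $d_0^* a = 0$, this yields the elliptic system
\[
(d_0^* \oplus d_1)\, a \;=\; (0,\,-Q(a)).
\]
The linearized operator $d_0^* \oplus d_1$ acting on $\mc{A}_0^s$ is an elliptic first-order boundary value problem (its ellipticity is the content of the discussion around Proposition~\ref{prop:realInd}), its boundary conditions are exactly those built into the domain $\mc{A}_0^s$, and all of its coefficients are smooth since $A\in\mc{A}_\xi$ is smooth by hypothesis.

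The plan is then to bootstrap Sobolev regularity. Since $I$ is one-dimensional and we are in the assumed range $s\ge 1 > 1/2$, Sobolev embedding gives $H^s(I)\hookrightarrow C^0(I)$ and $H^s(I)$ is a Banach algebra under pointwise multiplication. Hence $a\in H^s$ implies $Q(a)\in H^s$, and the standard elliptic regularity statement for a first-order elliptic boundary problem (precisely the type of estimate used in Lemma~\ref{lem:fred} and in the proof of Proposition~\ref{prop:realInd}) upgrades $a\in H^s$ to $a\in H^{s+1}$. Iterating, $Q(a)\in H^{s+1}$ and so $a\in H^{s+2}$, and so on, yielding $a\in\bigcap_k H^k = C^\infty$. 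The off-diagonal boundary conditions $a^\mu|_{\partial I}\in\mf{t}^\perp$ are never lost along the way because they are already imposed in the domain $\mc{A}_0^s$. Once $a$ is smooth with these boundary conditions, $a\in\mc{A}_0$ and therefore $A+a\in\mc{A}_\xi$.

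The only real content of the argument is the one-step regularity gain for the boundary value problem $d_0^*\oplus d_1$, and the one place I would be careful is to verify that the Coulomb gauge condition and Nahm equation together admit the boundary data on $\mc{A}_0^s$ as an elliptic boundary condition for this Dirac-type operator. But this is precisely what was established in the proof of Proposition~\ref{prop:realInd} (there used in the dual direction to identify $\Hh^{1,\mathrm{sp}}_A$), so no additional analytic input is required. The rest is a purely algebraic bootstrap enabled by the Banach algebra property of $H^s$ in one dimension.
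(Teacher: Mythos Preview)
Your proof is correct and matches the paper's approach exactly: expand $\mu(A+a)$ about $A$, use that the quadratic remainder lies in $H^s$ by the Banach algebra property of $H^s(I)$, and bootstrap via elliptic regularity for $d_0^*\oplus d_1$. One minor quibble: the proposition as stated does not assume $\mu(A)=0$, only that $A$ is smooth, so the correct expansion is $d_1 a = -\mu(A) - Q(a)$ with $\mu(A)\in C^\infty\subset H^s$; this changes nothing in your argument.
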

\begin{proof}
Introduce the notation $\{a,a\}$ for the quadratic part of the moment map; i.e., $\mu(A+a) = \mu(A) + d_1 a + \{a,a\}$, and note that $a \in \mc{A}_0^s$ implies $\{a,a\} \in \mc{F}^s$. But then $d_1 a = -\mu(A) - \{a,a\} \in \F^s$ and elliptic regularity of $d_0^* \oplus d_1$ yields that $a\in \A^{s+1}_0$.
\end{proof}

\begin{prop}\label{prop:smoothReps} 

Transport the smooth structures of Proposition~\ref{prop:Nahmmfld} under the bijections $\M^{s,{\rm irr}}_{\rm all}\simeq \M^{\rm irr}_{\rm all}$ and $\M^{s,{\rm irr}}_{\xi}\simeq \M^{\rm irr}_{\xi}$ of Proposition~\ref{prop:bij} to yield families of smooth structures on $\M^{\mathrm{irr}}_{\mathrm{all}}$ and $\M^{\mathrm{irr}}_{\xi}$. Then these smooth structures are independent of $s$. In particular, $\mc{M}^{\mathrm{irr}}_{\mathrm{all}}$ and $\mc{M}^{\mathrm{irr}}_{\xi}$ have canonical smooth structures. 
\end{prop}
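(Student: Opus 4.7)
The plan is to show, for $1 \le s \le s' \le \infty$, that the natural bijection $\M^{s',\mathrm{irr}}_\xi \to \M^{s,\mathrm{irr}}_\xi$ of Proposition~\ref{prop:bij} is a diffeomorphism; the analogous statement for $\M^{\mathrm{irr}}_{\mathrm{all}}$ then follows either by the identical argument (with the general deformation complex in place of the special one) or fiberwise using that $p$ is a submersion. Fix $[A] \in \M^{\mathrm{irr}}_\xi$ with smooth representative $A \in \A_\xi$, which is permissible by Proposition~\ref{prop:bij}. By Propositions~\ref{prop:realcoul} and~\ref{prop:realRG}, together with Proposition~\ref{prop:hodge} (which gives $\Hh^{2,\mathrm{sp}}_A \simeq \Hh^0_A \otimes \RR^3 = 0$ in the irreducible case, so that the map $\hat\mu$ vanishes identically), both smooth structures near $[A]$ are built from charts identifying a neighborhood of $0$ in the \emph{same} finite-dimensional space $\Hh^{1,\mathrm{sp}}_A$ of smooth harmonic representatives with a neighborhood of $[A]$ via a map of the form $a \mapsto [A + \phi(a)]$, where $\phi$ is the local diffeomorphism produced by the implicit function theorem applied to $\tilde\mu\colon T^s_{A,\epsilon} \to \F^{s-1}$. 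The task is therefore to compare the $\phi^s$ and $\phi^{s'}$ produced at the two levels of regularity.

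The key technical step is to choose the projection $\pi$ onto $\im(d_1\colon \ker d_0^* \to \F^{s-1})$ uniformly in $s$. By Proposition~\ref{prop:realInd}, the $L^2$-orthogonal complement of this image is $\ker(d_1^*\colon \tilde\F \to \A_0) \simeq \Hh^{2,\mathrm{sp}}_A$, a finite-dimensional space of smooth functions that is manifestly independent of $s$. Thus the $L^2$-orthogonal projection $\pi$ onto $\im d_1$ is a bounded operator that restricts consistently across all Sobolev scales. Applying the implicit function theorem on the nested Banach spaces $T^{s'}_{A,\epsilon} \hookrightarrow T^s_{A,\epsilon}$ with this common $\pi$, the resulting local diffeomorphisms $\phi^s$ and $\phi^{s'}$ both solve the defining equation $\pi \circ \tilde\mu \circ \phi = d_1$; uniqueness in the implicit function theorem then forces $\phi^{s'} = \phi^s|_{T^{s'}_{A,\epsilon}}$ on their common domain. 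Since $\Hh^{1,\mathrm{sp}}_A \subset \A_0 \subset T^{s'}_{A,\epsilon}$ (for $\epsilon$ sufficiently small), the two chart maps $a \mapsto [A + \phi^s(a)]$ and $a \mapsto [A + \phi^{s'}(a)]$ coincide on a neighborhood of $0 \in \Hh^{1,\mathrm{sp}}_A$, and the smooth structures agree near $[A]$.

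For the case $s' = \infty$, the implicit function theorem used above is only directly available in the Banach setting, so one concludes either by verifying that the $\phi^s$ for finite $s$ agree on overlapping domains and hence patch to a Fr\'echet-smooth local diffeomorphism, or by invoking Proposition~\ref{prop:coulombSmooth}, which ensures that moment-map zeros in Coulomb gauge are automatically smooth; this guarantees that the $\epsilon$-balls in $\Hh^{1,\mathrm{sp}}_A$ indeed parametrize smooth Nahm data, so the chart map induced from any finite $s$ already takes values in, and is well-defined on, the full smooth moduli space. The main obstacle throughout, as just described, is the uniform choice of projection $\pi$ and the ensuing consistency of the maps $\phi^s$; this is automatic here because the relevant cokernel $\ker d_1^*$ is finite-dimensional and consists of smooth elements, a feature afforded by our ellipticity analysis of the Dirac operator of the deformation complex.
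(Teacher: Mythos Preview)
Your proof is correct and follows essentially the same approach as the paper. The paper's proof is extremely terse: it simply notes that the charts are $\epsilon$-balls in $\Hh^{1}_A$ and that Proposition~\ref{prop:coulombSmooth} (elliptic regularity for Coulomb-gauge solutions) allows one to ``immediately compare these charts for different values of $s$.'' You have spelled out what this comparison actually entails---namely, that the straightening diffeomorphisms $\phi^s$ produced by the implicit function theorem agree on overlaps by uniqueness, once one observes that the projection $\pi$ and its complement $\Hh^{2,\mathrm{sp}}_A$ are $s$-independent (indeed trivial in the irreducible case). Your invocation of Proposition~\ref{prop:coulombSmooth} for the $s'=\infty$ case matches the paper's sole explicit ingredient.
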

\bp Recall that $\mc{M}^{s,\mathrm{irr}}_{\mathrm{all}}$ has explicit charts given by $\epsilon$-balls in $\mc{H}^{1,{\rm gen}}_A$. Together with the elliptic regularity observation of Proposition~\ref{prop:coulombSmooth}, we may immediately compare these charts for different values of $s$ and conclude $s$-independence, as desired. The same argument works for $\M^{\rm irr}_\xi$.
\ep 

\begin{rmk} It is pleasant now to compare back to Remark~\ref{rmk:canstr}.\end{rmk}

Finally, we analyze the reducible locus. We begin with

\begin{proposition} \label{prop:realStabs}
The following are equivalent for pre-Nahm data $A\in \A'\times\A''^s_{\mathrm{all}}$:
\begin{enumerate}[(i)]
\item $\ker d_0 \not= \{0\}$,
\item the stabilizer $\hat\G\subset \G$ of $A$ is nontrivial, and
\item there is a good projection $\pi_1$ to a rank one subbundle such that $\pi_2=1-\pi_1$ is also a good projection.
\end{enumerate}
\end{proposition}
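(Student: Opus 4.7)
The plan is to prove (i)$\Leftrightarrow$(iii) and (i)$\Leftrightarrow$(ii), with the unifying idea that all three conditions package the existence of a parallel, $A^i$-commuting structure on the bundle $E$. The crucial observation throughout is that if $g\in \tilde{\mc{G}}$ (or an infinitesimal generator $h\in \mf{g}$) satisfies the stabilizer conditions $\partial_{A^0}g=0$ and $[A^i,g]=0$, then parallel transport of $A^0$ preserves the pointwise spectrum of $g(t)$ (resp.\ $h(t)$), forcing this spectrum to be constant along $I$.

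For (i)$\Leftrightarrow$(iii): given $0\ne h\in\ker d_0\subset\mf{g}$, the constancy of the spectrum of $h\in\mf{su}(2)$ yields eigenvalues $\pm i\lambda$ with $\lambda$ a constant that must be positive (since $\lambda=0$ would force $h\equiv 0$ by skew-Hermiticity). Define the orthogonal projection onto the $+i\lambda$-eigenspace by
\[
\pi_1 := \tfrac{1}{2}\bigl(\mathrm{Id} - i h/\lambda\bigr),
\]
which is Hermitian, idempotent, of rank one, satisfies $\partial_{A^0}\pi_1=0$ and $[A^i,\pi_1]=0$, and inherits from $h\in\mf{g}$ the boundary behavior $\pi_1|_{\partial I}\in\mf{t}_{\mb{C}}$ and $\partial\pi_1|_{\partial I}\in\mf{t}_{\mb{C}}^\perp$; skew-Hermiticity of $A^i$ and Hermiticity of $A^0$ then automatically make $\pi_2=1-\pi_1$ a good projection as well. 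Conversely, given such $\pi_1$, set $h:=i(\pi_1-\tfrac{1}{2}\mathrm{Id})$; this is nonzero in $\mf{su}(2)$-valued sections, lies in $\mf{g}$ by the boundary conditions of Remark~\ref{rmk:redundant}, and satisfies $d_0h=0$ since $\mathrm{Id}$ is $A^0$-parallel and central.

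For (i)$\Rightarrow$(ii): given $0\ne h\in\ker d_0$, define $g_\tau:=\exp(\tau h)\in C^\infty(I,SU(2))$. The boundary condition $h|_{\partial I}\in\mf{t}$ gives $g_\tau|_{\partial I}\in T$, and the Duhamel formula
\[
g_\tau^{-1}\partial g_\tau \;=\; \int_0^1 e^{-u\tau h}(\tau\partial h)e^{u\tau h}\,du
\]
combined with $\partial h|_{\partial I}\in\mf{t}^\perp$ and the fact that $T$-conjugation preserves $\mf{t}^\perp$ yields $g_\tau^{-1}\partial g_\tau|_{\partial I}\in\mf{t}^\perp$, so $g_\tau\in\tilde{\mc{G}}$. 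The Leibniz rule for $\partial_{A^0}$ on $\mathrm{End}(E)$ turns the identity $\partial_\tau g_\tau = h g_\tau$ into the linear ODE $\partial_\tau(\partial_{A^0}g_\tau)=h\,(\partial_{A^0}g_\tau)$ with initial condition $\partial_{A^0}g_0=0$; uniqueness then gives $g_\tau(A^0)=A^0$, and likewise $[A^i,g_\tau]=0$ follows from $[A^i,h]=0$. Since $h$ is pointwise nonzero (by the constant-spectrum argument above), $g_\tau\not\equiv\pm\mathrm{Id}$ for all small $\tau\ne 0$, producing a nontrivial element of $\hat{\mc{G}}\subset\mc{G}$.

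For (ii)$\Rightarrow$(i): lift a nontrivial $[g]\in\hat{\mc{G}}$ to $g\in\tilde{\mc{G}}$, so $\partial_{A^0}g=[A^i,g]=0$. Constancy of the spectrum gives pointwise eigenvalues $e^{\pm i\theta}$ with $\theta\in[0,\pi]$ independent of $t$; the condition $[g]\ne[e]$ in $\mc{G}=\tilde{\mc{G}}/Z_2$ together with continuity of $g$ and discreteness of $\{\pm\mathrm{Id}\}\subset SU(2)$ forces $\theta\in(0,\pi)$, hence $\sin\theta\ne 0$, and we may set
\[
h \;:=\; \frac{g-\cos\theta\cdot\mathrm{Id}}{\sin\theta}\,.
\]
Then $h\in C^\infty(I,\mf{su}(2))$ is nonzero, $d_0h=0$ is immediate from the stabilizer equations for $g$, and the boundary conditions on $h$ follow from those on $g$: $h|_{\partial I}\in\mf{t}$ because $g|_{\partial I}\in T$ and $\mathrm{Id}\in T$, while $\partial h|_{\partial I}=\partial g|_{\partial I}/\sin\theta = g|_{\partial I}\cdot(g^{-1}\partial g)|_{\partial I}/\sin\theta\in T\cdot\mf{t}^\perp=\mf{t}^\perp$. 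The main obstacle is thus the bookkeeping around the $Z_2$ quotient defining $\mc{G}$; once one recognizes that a globally constant spectrum together with the continuity/discreteness dichotomy pins down $\theta\in(0,\pi)$, the rest is direct linear algebra.
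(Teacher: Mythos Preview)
Your proof is correct. The logical structure differs from the paper's: the paper shows the cycle (iii)$\Rightarrow$(i)$\Rightarrow$(ii)$\Rightarrow$(iii), putting all the work into (ii)$\Rightarrow$(iii), whereas you close the loop via (ii)$\Rightarrow$(i)$\Rightarrow$(iii). Concretely, the paper takes a nontrivial stabilizer $g$, parallel-transports an eigenbasis of $g(0)$ along $\partial_{A^0}$ to obtain global eigensections $s_1,s_2$, and then checks by hand that the resulting orthogonal projections $\pi_a$ satisfy the boundary conditions (using $\sigma_z s_a(t_0)\propto s_a(t_0)$) and commute with all $A^\mu$. You instead extract $h=(g-\cos\theta\cdot\mathrm{Id})/\sin\theta\in\ker d_0$ directly via the Cayley-type formula, and then build $\pi_1$ algebraically from $h$. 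Your route is a bit cleaner here because the boundary conditions on $h$ are read off immediately from those on $g$, without any eigensection analysis. The paper's route, however, generalizes more transparently to the complex setting of Proposition~\ref{prop:compStabs}, where $g\in\mc{G}_{\mb{C}}$ is not unitary and your trigonometric formula is unavailable; there one really does want to work with eigenbundles.

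One small slip: in your (i)$\Rightarrow$(iii) paragraph you refer to ``Hermiticity of $A^0$'', but $A^0\in\mf{su}(2)$ is skew-Hermitian. This is harmless, since the fact that $\pi_2=1-\pi_1$ is a good projection follows immediately from $\partial_{A^0}\mathrm{Id}=[A^i,\mathrm{Id}]=0$ and needs no such claim.
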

\begin{proof}[Proof of Proposition~\ref{prop:realStabs}]
It is immediate that (i) implies (ii), as the exponential of a nonzero element of $\ker d_0$ yields a nontrivial element of the stabilizer. Moreover, (iii) implies (i), as the traceless combination $\hat\pi = \frac{i}{2}(\pi_1-\pi_2) \in \mf{g}$ is in $\ker d_0$. It remains to show that (ii) implies (iii).

Let $g\in \hat\G$ be a nontrivial element of the stabilizer. The fiber over 0 of the Hermitian bundle $E$ admits an orthogonal decomposition into rank 1 eigenspaces of $g$. Let $s_1(0),s_2(0)$ be orthonormal eigenvectors that span this space. We can then parallel transport them to find orthonormal sections $s_1,s_2$ which solve $\partial_{A^0} s_a = 0$, and these determine an orthogonal decomposition of $E$ into eigenbundles $P_1,P_2$ of $g$ with distinct constant eigenvalues $\lambda_1,\lambda_2$. For, if $g s_a = \lambda_a s_a$ at $t=0$ then $\partial_{A^0}(g s_a - \lambda_a s_a) = 0$ implies that $g s_a = \lambda_a s_a$ everywhere, thanks to the uniqueness of solutions of ordinary differential equations. Also, $\lambda_1=\lambda_2^*$ because $g$ is special unitary and $\lambda_1\not=\lambda_2$, since $g$ is not a multiple of the identity.

We now investigate the boundary values of $s_a$. $g s_a = \lambda_a s_a$ immediately implies that $\sigma_z s_a(t_0)$ is an eigenvector of $g(t_0)$ with eigenvalue $\lambda_a$ and is thus proportional to $s_a(t_0)$. We thus write $\sigma_z s_a(t_0) = c_{a,t_0} s_a(t_0)$ and note that $\sigma_z^2 = 1$ implies that $c_{a,t_0} = \pm 1$ (where these signs can differ for $t_0=0,L$, and we must have $c_{1,t_0} = - c_{2,t_0}$). For future use, let us also note now that $\partial s_a = \partial_{A^0} s_a - A^0 s_a = - A^0 s_a$ implies that $\sigma_z \partial s_a(t_0) = - c_{a,t_0} \partial s_a(t_0)$.

Let $\pi_a$ be the orthogonal projection onto $P_a$. The results of the previous paragraph imply that these endomorphisms satisfy the boundary conditions that are imposed on $\mf{g}$. For, they imply that at $t_0$, the eigenspaces of $g$ coincide with those of $\sigma_z$, so the $\pi_a$ are diagonal, and Remark~\ref{rmk:redundant} then handles the boundary conditions on $\partial \pi$. 

Next, exponentiating $\hat\pi = \frac{i}{2}(\pi_1-\pi_2)$ gives a $U(1)$ subgroup of $\tilde\G$ containing $g$ that stabilizes $A$. To see this, we first note that $\lambda_1^{-2i\hat\pi} = \lambda_1^{\pi_1} \lambda_1^{-\pi_2} = (\pi_1 \lambda_1 + \pi_2) (\pi_1 + \pi_2 \lambda_2) = \pi_1 \lambda_1 + \pi_2 \lambda_2 = g$. Next, we note that $\pi_1$ and $\pi_2$, and therefore $\hat\pi$, are covariantly constant, since $s_1$ and $s_2$ are: $\partial_{A^0}(\pi_a s_b) = \partial_{A^0} \pi_a s_b = \delta_{a,b} \partial_{A^0} s_b = 0$. Finally, we show that each $\pi_a$, and therefore also $\hat\pi$, commutes with each $A^i$. This follows from $g A^i s_a = A^i g s_a = \lambda^a A^i s_a$, so that $A^i s_a$ is in the $\lambda_a$ eigenspace and $\pi_b A^i s_a = \delta_{a,b} A^i s_a = A^i \pi_b s_a$.
\end{proof}

\begin{proposition} \label{prop:realSings}
If $\xi_0^i = \xi_L^i = 0$ for all $i$ then $\M_\xi \simeq (\RR^3\times S^1)/Z_2$, with the reducible orbits corresponding to the two $A_1$ singularities. If this is not the case, but we do have $\xi_0^i = \pm \xi_L^i$ with the same sign for all $i$, then $\M_\xi \setminus \M_{\xi}^{\mathrm{irr}}$ is a single point. Finally, for generic $\xi$, all orbits are irreducible, i.e., $\M_{\xi} = \M_{\xi}^{\mathrm{irr}}$.
\end{proposition}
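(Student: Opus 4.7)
The plan is to combine Proposition \ref{prop:realStabs} with the axial gauge classification of Proposition \ref{prop:realAx}. By Proposition \ref{prop:realStabs}, reducibility of $A$ is equivalent to the existence of a covariantly constant decomposition $E = P_1 \oplus P_2$ into rank-one subbundles preserved by the $A^i$. In particular, $A^0$ itself must be reducible, so by Proposition \ref{prop:realAx} its $\G$-orbit has a (unique) axial representative of the form $A^0 = ic\sigma_x$ with $c \in \{0, \pi/2L\}$. We analyze each case in turn.

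In the case $c = 0$, the stabilizer is the diagonal torus $T$, and stabilization of $A^i$ forces $A^i \in \mf{t}$; writing $A^i = ia^i \sigma_z$, Nahm's equations collapse to $\partial a^i = 0$ (the commutator term vanishes), so $a^i$ is constant. The boundary conditions \eqref{eq:jumpCond} become $a^i(0) = \xi_0^i$ and $a^i(L) = \xi_L^i$, which is consistent iff $\vec{\xi}_0 = \vec{\xi}_L$. In the case $c = \pi/2L$, one applies the extended gauge transformation $g_1(t) = \exp(\pi i t \sigma_x / 2L)$ from the statement of Theorem~\ref{thm:modHK} to reduce to $A^0 = 0$, but this transforms the boundary condition at $t = L$ by conjugation with $g_1(L) = i\sigma_x$, which sends $\sigma_z \mapsto -\sigma_z$. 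Hence the previous analysis now requires $\vec{\xi}_0 = -\vec{\xi}_L$. In both cases, when the relevant condition on $\xi$ is met the remaining diagonal gauge freedom in $\tilde{\G}$ (smooth diagonal $g$ with $\partial g|_{\partial I} = 0$) acts transitively on the possible choices of $a^0$, since $a^0(0) = a^0(L) = 0$ by the boundary conditions on $\mc{A}_\xi$; thus a consistent condition yields a single reducible orbit.

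Combining the two cases gives the trichotomy asserted. For generic $\xi$, i.e., $\vec{\xi}_0 \neq \pm \vec{\xi}_L$, neither consistency condition holds, so $\M_\xi^{\mathrm{irr}} = \M_\xi$. When exactly one of $\vec{\xi}_0 = \pm \vec{\xi}_L$ holds (nongeneric but nonzero), precisely one of the two cases above contributes, giving a unique reducible orbit. When $\vec{\xi} \equiv 0$, both conditions hold, producing exactly two reducible orbits (the two axial gauge representatives). The identification $\M_0 \simeq (\RR^3 \times S^1)/Z_2$ follows from Theorem~\ref{thm:noFI} specialized to $Q = \RR^4$, $\hat{\Gamma} = Z_2 \ltimes \Lambda$ with $\Lambda = \mb{Z}$; the two reducible orbits then correspond to the two $A_1$ singular points of this quotient at $0, L \in S^1$, as expected on stabilizer-group grounds. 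The main subtlety in writing this all out rigorously is tracking the boundary conditions through the extended (and only locally defined) gauge transformation $g_1$, and verifying that the residual diagonal gauge action really does eliminate all remaining moduli so that we get a single orbit rather than a positive-dimensional family.
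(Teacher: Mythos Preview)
Your proof is correct and follows essentially the same route as the paper's \emph{second} proof: reduce via Proposition~\ref{prop:realAx} to the two axial cases $c=0$ and $c=\pi/2L$, show in each case that stabilization forces $A^i\in\mf{t}$ and hence constancy by Nahm's equations, match the boundary conditions, and use the extended gauge transformation to relate the second case to the first with $\xi_L\mapsto -\xi_L$. One small slip: to carry $A^0=i(\pi/2L)\sigma_x$ to $A^0=0$ you need $g_1^{-1}=\exp(-\pi i t\sigma_x/2L)$ rather than $g_1$; this is harmless for your conclusion since conjugation by $\pm i\sigma_x$ acts identically on $\sigma_z$. The paper also records a first proof that avoids axial gauge entirely, instead integrating $\partial\Tr(\hat\pi A^i)$ to obtain the constraint $\xi_0^i=\pm\xi_L^i$ directly from the existence of a good projection and then classifying reducible data; this argument generalizes more readily to higher-dimensional settings where an axial gauge is unavailable.
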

\begin{remark}
We will provide two different proofs of this proposition. The first generalizes more straightforwardly to the study of other moduli spaces; the first part of this proof is also an interesting infinite-dimensional analog of Kronheimer's Proposition 2.8 \cite{kronheimer:construct}. The second proof, instead, makes use of the gauge choice from Proposition \ref{prop:realAx}.
\end{remark}

\begin{proof}[First proof]
Suppose that $\M_\xi$ contains reducible Nahm data $A\in \A_\xi$. We will show that $\xi$ is not generic. Suppose that a nontrivial element $g'\in \G$ with representative $g\in \tilde\G$ stabilizes $A$. Let $\hat\pi$, $\pi_1$, and $\pi_2$ be as in the proof of Proposition \ref{prop:realStabs}. Then we have
\begin{align}
\Tr(\hat\pi A^i(L) &- \hat\pi A^i(0)) = \int_0^L \partial \Tr(\hat \pi A^i)\,dt = \int_0^L \Tr(\partial_{A^0}(\hat\pi A^i))\,dt = \int_0^L \Tr(\hat\pi \partial_{A^0} A^i)\,dt \nonumber \\
&= -\frac{1}{2} \epsilon^{ijk} \int_0^L \Tr(\hat\pi [A^j, A^k])\,dt = - \frac{i}{2} \epsilon^{ijk} \int_0^L \Tr(\pi_1[A^j,A^k])\,dt \nonumber \\
&= -\frac{i}{2} \epsilon^{ijk} \int_0^L \Tr [\pi_1 A^j, A^k]\,dt = 0 \ .
\end{align}
In passing from $\hat\pi$ to $\pi_1$ we used $\hat\pi = i(\pi_1-1/2)$, and we have repeatedly used the vanishing of traces of commutators. But, we know that $\hat\pi(t_0) = \pm \frac{i}{2}\sigma_z$, and accordingly $\Tr \hat\pi A^i(t_0) = \pm \xi^i_{t_0}$. We therefore have $\xi^i_{0} = \pm \xi^i_L$, with the same sign for all $i$.

We now classify reducible Nahm data. Define $g=(s_1\quad s_2)$, the matrix whose columns are $s_1$ and $s_2$ from Proposition \ref{prop:realStabs}. $g$ is everywhere unitary, since $s_a$ are orthonormal. Furthermore, $\det g$ is constant, since $g$ is covariantly constant, and so by multiplying $s_1$ by a constant phase we may assume that $\det g = 1$. Then, $g$ is an extended unitary gauge transformation which diagonalizes $\pi_1$ and $\pi_2$. By exchanging $s_1$ and $s_2$ if necessary, we may assume that $g$ satisfies the boundary conditions on $\G$ at either $t=0$ or $t=L$. If it satisfies the boundary conditions at both ends, and so is an honest element of $\G$, then we must have $\hat\pi(0) = \hat\pi(L)$. On the other hand, if it satisfies the boundary conditions at one end but not the other, then we must have $\hat\pi(0) = -\hat\pi(L)$.

After applying the extended gauge transformation $g$, we arrive at a basis where $\pi_1$ and $\pi_2$ are constant diagonal matrices $\twoMatrix{1}{0}{0}{0}$ and $\twoMatrix{0}{0}{0}{1}$, respectively. Furthermore, these commute with $A^\mu$ -- that is, $A^\mu$ are all diagonal. In addition, if $g$ was not an honest gauge transformation then it negated the diagonal part of $A^i(L)$, so $g$ gives a homeomorphism between moduli spaces $\M_\xi$ with opposite values of $\xi_L$. So, after applying $g$ the problem is reduced to the classification of diagonal reducible Nahm data with $\xi^i_0 = \xi^i_L$. Since $A^\mu$ all commute, Nahm's equations are simply $\partial A^i = 0$, so $A^i$ are all constant and equal to $-i\xi^i_0 \sigma_z$. Finally, we take advantage of the abelian subgroup $\T\subset \G$ consisting of gauge transformations that are everywhere diagonal. Elements $g'$ of this subgroup simply act on $A^0$ via $A^0\mapsto A^0 + \partial \log g'$. Since $A^0$ vanishes at the boundary (as it is diagonal, but the diagonal part of $A^0$ must vanish at the boundary), this is precisely enough freedom to set $A^0=0$ everywhere. The remaining gauge freedom that stabilizes $A=(0,-i\xi^i_0 \sigma_z)$ consists of the $U(1)$ subgroup of $\T$ consisting of constant gauge transformations.

So, each moduli space $\M_\xi$ has at most one singular point with $\hat\pi(0) = \hat\pi(L)$ and one with $\hat\pi(0) = - \hat\pi(L)$. The former possibility is realized if and only if $\xi^i_0 = \xi^i_L$ for all $i$, while the latter is realized if and only if $\xi^i_0 = - \xi^i_L$ for all $i$. We finally leave to the reader the verification that in the case that $\xi$ identically vanishes, $\M_{\xi} \simeq (\mb{R}^3 \times S^1)/Z_2$, parallel to the general treatment of Theorem~\ref{thm:noFI}.
\end{proof}

\begin{proof}[Second proof]
We will explicitly characterize all possible reducible Nahm data in the gauge choice from Proposition \ref{prop:realAx}. In the notation of that proposition, we clearly are already restricted to considering $c=0,\pi/2L$. 
Suppose first that $c = 0$. But then if Nahm data $A = (0, A^i)$ is stabilized by a transformation of the form $g(x) = e^{i\theta \sigma_z}$ with $\theta \in (0,\pi)$, we must have that $A^i_x = A^i_y = 0$ everywhere. Nahm's equations then imply that $A^i_z$ is constant. This is possible if and only if $\xi^i_0 = \xi^i_L$ for all $i$, in which case the stabilized Nahm data is $A = (0, -i \xi^i_0 \sigma_z)$.

Next, suppose that $c=\pi / 2L$. But then applying the extended gauge transformation $\exp(-i \pi t \sigma_x / 2L)$ reduces to the above case of $c = 0$ at the cost of flipping the signs of $\xi^i_L$. Consequently, we once again obtain a single reducible Nahm orbit corresponding to $c = \pi/2L$ if $\xi^i_0 = -\xi^i_L$ for all $i$. 

Finally, if $\xi$ identically vanishes, the constructions of the previous paragraphs give the reducible Nahm data $(0,0)$ and $(i\pi \sigma_x/2L, 0)$.
\end{proof}

This completes our proof of Theorem \ref{thm:modHK}.

\begin{rmk}\label{rmk:sadRGcont} As already discussed in~\S\ref{sec:nahm}, the statement of Theorem~\ref{thm:modHK} is certainly lacking, not least in that we do not yet know that $\M_{\xi}$ is \emph{nonempty} for most $\xi$. Another point of consternation rests with the reducible locus of $\M_{\xi}$; currently, we know for exactly which $\xi$ any reducible orbits exist and how many there are, but as yet we have not explained how they interact with the smooth manifold structure of $\M^{\mathrm{irr}}_{\xi}$. Remark~\ref{rmk:sadRG}, however, indicates that one should expect local neighborhoods of these orbits to have the structure of $A_1$ orbifold singularities. All of these results will follow from the main theorems of~\S\ref{sec:complex} and~\S\ref{sec:duy}.

\end{rmk}

\section{Moduli space of complex Nahm data} \label{sec:complex}

We now wish to single out a preferred complex structure and give an essentially holomorphic construction of the moduli spaces. A usual motivation for this sort of construction in similar contexts is that it admits an algebro-geometric recharacterization. In particular, one may often then employ algebro-geometric techniques in order to prove that the moduli spaces are non-empty; this provides an interesting means of proving existence theorems for various non-linear differential equations. As mentioned in the introduction, the results of \cite{hitchin:monoCurve,donaldson:nahm,hurtubise:alg,hurtubise:monopoles,hurtubise:maps} lead us to suspect that similar statements should hold in the present context. However, while interesting, they are not our main motivation. Instead, the primary reason for our interest in this construction is that, following Kronheimer, it allows us to relate moduli spaces with different choices of $\xi$. This will allow us to prove not only that the moduli spaces are non-empty, but that they are (when smooth) diffeomorphic to the minimal resolution of $(\RR^3\times S^1)/Z_2$, and indeed biholomorphic when $\xi^\CC=0$. We prove these results in two stages: in the present section, we provide a holomorphic construction of new moduli spaces $\N_\xi$ and explain some relations between those with the same value of $\xi^\CC$, and in the following section we show that $\N_\xi \simeq \M_\xi$ as complex manifolds (possibly with orbifold singularities). Hyperkahler rotation then implies that there is an isomorphism $\N_{\xi^\zeta} \simeq \M_{\xi}^\zeta$ for any choice of complex structure $\zeta$ on $\M_\xi$, where $\xi^\zeta$ denotes the appropriate $SO(3)$-transformed $\xi$, and this allows us to relate moduli spaces with different values of all three moment map parameters.

Recall now the definitions of
\begin{align*}
& \mc{D}^{\mathrm{ps}}_{\xi} = \{(\alpha, \beta) \in C^{\infty}(I,\mf{sl}(2,\mb{C}))^{\oplus 2} \bigm| \alpha|_{\partial I} \in \mf{t}_{\mb{C}}^{\perp}, \beta|_{\partial I} \in -i \xi^{\mb{C}}_{\partial I} \sigma_z + \mf{t}_{\mb{C}}^{\perp}, \partial_{\alpha} \beta = 0, (\alpha, \beta)\ \xi^{\mb{R}}\text{-polystable}\}\,, \\
& \mc{D}^{\mathrm{ps}}_{\xi^{\mb{R}},\mathrm{all}} = \{(\alpha, \beta) \in C^{\infty}(I,\mf{sl}(2,\mb{C}))^{\oplus 2} \bigm| \alpha|_{\partial I} \in \mf{t}_{\mb{C}}^{\perp}, \partial_{\alpha} \beta = 0, (\alpha, \beta)\ \xi^{\mb{R}}\text{-polystable}\}\,,\\ & \mc{G}_{\mb{C}} = \{g \in C^{\infty}(I,SL(2,\mb{C})) \bigm| g|_{\partial I} \in T_{\mb{C}}, g^{-1} \partial g|_{\partial I} \in \mf{t}^{\perp}_{\mb{C}}\} / Z_2\,. \end{align*}

In parallel to the previous section, our main aim is to prove the following theorem:

\begin{theorem} \label{thm:modCpx} For any $\xi^{\mb{R}}_{\partial I}$, the map $p^\CC: \N_{\xi^\RR,{\rm all}} \to \mf{z}^{\mb{C}}_{\partial I}$ is a submersion of complex orbifolds whose fibers $\mc{N}_{\xi} := \mc{D}^{\mathrm{ps}}_{\xi}/\mc{G}_{\mb{C}}$ are (possibly empty) holomorphic symplectic surfaces with smooth loci $\N_{\xi}^{\mathrm{st}} := \D^{\mathrm{st}}_{\xi}/\G_\CC$. Moreover, if $\xi$ is generic, $\N_{\xi}$ is a (possibly empty) smooth surface; if $\xi$ is nongeneric but nonzero, it has a single $A_1$ orbifold point; and if $\xi$ identically vanishes, $\N_{\xi} \simeq (\mb{C} \times \mb{C}^{\times})/Z_2$. 
\end{theorem}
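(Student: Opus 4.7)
The overall strategy will parallel the proof of Theorem \ref{thm:modHK}. First, I would pass to Sobolev completions $\mc{B}^s_\xi, \mc{G}^{s+1}_{\mb{C}}, \mc{F}^{s-1}_{\mb{C}}$ and establish, via elliptic regularity and a complex analog of Proposition \ref{prop:gaugeConv}, that the natural maps $\mc{D}^{\mathrm{st}}_\xi/\mc{G}_{\mb{C}} \to \mc{D}^{\mathrm{st},s}_\xi/\mc{G}^{s+1}_{\mb{C}}$ are bijections compatible with the eventual smooth structures. I would then introduce the complex deformation complex
\begin{equation*}
0 \to \mf{g}_{\mb{C}}^{s+1} \xrightarrow{d_0^{\mb{C}}} \mc{B}^s_{\mathrm{all}} \xrightarrow{d_1^{\mb{C}}} \mc{F}_{\mb{C}}^{s-1} \to 0
\end{equation*}
with $d_0^{\mb{C}} h = (\partial_\alpha h,\, [\beta, h])$ and $d_1^{\mb{C}}(a,b) = \partial_\alpha b + [a,\beta]$, and carry out the analogous Hodge-theoretic setup. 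A Fredholm index calculation, performed by deforming to trivial complex Nahm data as in Proposition \ref{prop:realInd}, yields complex index $-2$ on the stable locus (where $\mathcal{H}^{0,\mb{C}} = \mathcal{H}^{2,\mb{C},\mathrm{st}} = 0$). The implicit function theorem applied with a complex Coulomb gauge condition then produces a complex manifold structure on $\mc{N}^{\mathrm{st}}_\xi$, the submersion $p^{\mb{C},\mathrm{st}}$, and a holomorphic symplectic form descended from the $\mc{G}_{\mb{C}}$-invariant $\omega_{\mb{C}}$ on $\mc{B}^s_{\mathrm{all}}$.

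The principal new issue compared to the real case is that $\mc{G}_{\mb{C}}$ is noncompact, so Hausdorffness of $\mc{D}^{\mathrm{ps}}_\xi / \mc{G}_{\mb{C}}$ must be established separately. I would do this via a standard GIT-style argument: two polystable orbits whose closures meet in the semistable locus must coincide, since polystable orbits are precisely the unique closed orbits within each semistable closure-equivalence class. This is considerably facilitated by Proposition \ref{prop:compNahm}, which provides completely explicit representatives for every $\mc{G}_{\mb{C}}$-orbit of complex Nahm data and thus reduces the question of orbit closures to finite bookkeeping.

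For the explicit fiber structure, I would analyze Proposition \ref{prop:compNahm} case by case. Type (i) orbits with $\alpha_0 \notin \{0, i\pi/2L\}$ are automatically stable by Proposition \ref{prop:mostlystable} and, after the change of variable $z = \exp(2L\alpha_0)$ and the fundamental-domain identifications implicit in the case restrictions on $(a,c)$, contribute an open sublocus biholomorphic to an open subset of $\mb{C} \times \mb{C}^\times$ with coordinates $(\beta_x, z)$. For $\xi = 0$, the orbits of types (ii)--(vii) are all strictly semistable and collapse in closure onto the reducible type-(i) representatives $(\alpha_0,\beta_x) \in \{(0,0),\, (i\pi/2L, 0)\}$, which carry $\mb{C}^\times$ stabilizers. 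The Weyl involution $(\beta_x, z) \mapsto (-\beta_x, 1/z)$ then yields $\mc{N}_0 \simeq (\mb{C} \times \mb{C}^\times)/Z_2$ with $A_1$ singularities at $z = \pm 1$. For nongeneric nonzero $\xi$, the sign of $\xi^{\mb{R}}_0 \pm \xi^{\mb{R}}_L$ (together with the corresponding condition $\xi^{\mb{C}}_0 = \pm \xi^{\mb{C}}_L$) selects exactly which of the two potential reducible orbits is polystable, leaving precisely one singular point.

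The main obstacle I anticipate is the careful verification that strictly semistable orbits indeed lie in the closures of their corresponding polystable representatives within the appropriate function-space topology, so that the ensuing quotient is genuinely Hausdorff and the polystable reductions coincide with the orbit-closure limits. Thanks to the completely explicit gauge-fixing of Proposition \ref{prop:compNahm}, this reduces to tractable case analysis rather than abstract functional analysis; the $\mb{C}^\times$ stabilizers at the two special points of type (i) automatically produce $A_1$ rather than worse singularities, via a local model computation using the residual diagonal $\mb{C}^\times$-action on the Hessian of the complex moment map at such points.
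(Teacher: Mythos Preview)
Your outline largely parallels the paper's structure (Sobolev completions, complex deformation complex, index $-2$, Coulomb slice, explicit use of Proposition~\ref{prop:compNahm} for the fiber description), but diverges in two places worth flagging.

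First, the ``complex analog of Proposition~\ref{prop:gaugeConv}'' you invoke does not exist as stated: the noncompactness of $SL(2,\mb{C})$ is exactly the obstruction. The paper replaces it with a semi-Fredholm kernel argument (Proposition~\ref{prop:orbitclosure}) showing that if $B_n\to B$ and $g_n(B_n)\to B'$ then either $B,B'$ are gauge-equivalent or related via associated-graded data of good filtrations; Hausdorffness of the polystable quotient (Proposition~\ref{prop:orbitclosures}) then follows. Separately, promoting the local diffeomorphism of Lemma~\ref{lem:localDiff} to a local \emph{homeomorphism} onto the quotient requires two distinct arguments---Lemma~\ref{lem:NKg} at reducible points (an explicit computation with the exponential map $\mf{t}^\perp_{\mb{C}}/T_{\mb{C}}\to T_{\mb{C}}\backslash SL(2,\mb{C})/T_{\mb{C}}$) and Proposition~\ref{prop:quanthomeo} at irreducible points (an overdetermined elliptic estimate). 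Your alternative route, working entirely through the explicit classification of Proposition~\ref{prop:compNahm}, is viable here and the paper acknowledges it as a second proof of several statements; it buys concreteness but is specific to this $r=1$, $\Gamma=Z_2$ case, whereas the paper's functional-analytic route is designed to generalize.

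Second, your $A_1$ verification via ``the Hessian of the complex moment map'' is incomplete: Remark~\ref{rmk:sadRG} warns that the local Kuranishi map $\hat\mu_{\mb{C}}$ of Proposition~\ref{prop:cplxRG} agrees with Kronheimer's moment map only at quadratic order, with uncontrolled higher-order corrections from the straightening diffeomorphism $\phi$. The paper instead constructs an explicit map $RG_{\mb{C}}$ (Definition~\ref{defn:RGC}, Proposition~\ref{prop:RGitems}) identifying an open neighborhood of each strictly polystable point with an open subset of Kronheimer's $A_1$ model as holomorphic symplectic spaces, which certifies the singularity type without relying on the Hessian approximation.
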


As always, we denote by $s$ superscripts the $H^s$-regular versions of the spaces we have defined; in particular, we have
\begin{align}
\B'^s &= \{\alpha \in H^s(I,\mf{sl}(2,\CC)) \bigm| \alpha|_{\partial I} \in \mf{t}^{\perp}_{\mb{C}}\} \ , \nonumber \\
\B''^s_{\xi^\CC} &= \{ \beta \in H^s(I,\mf{sl}(2,\mb{C})) \bigm| \beta|_{\partial I} \in -i \xi^{\mb{C}}_{\partial I} \sigma_z + \mf{t}^{\perp}_{\mb{C}}\} \ .
\end{align}
We also use such notation for the Sobolev $s$-regular (poly/semi)stable loci, e.g., $\B^{s,\mathrm{ss}}_\xi = \{b\in \B^s_{\xi^\CC} \ | \ \mbox{$b$ is $\xi^\RR$-semistable} \}$. Morphisms between pre-Nahm data in $\B^s_{\rm all}$ are required to be $H^{s+1}$-regular.

We accordingly modify the notion of good sub-bundle which enters into the stability condition in the Sobolev setting to allow for sub-bundles which are only $H^{s+1}$-regular (i.e., their transition functions are valued in $H^{s+1}$). In practice, we will find it convenient to reformulate this notion in a number of ways. We begin by once and for all choosing a $\xi^\RR$-adapted Hermitian metric, so that we can identify real and complex pre-Nahm data. Next, note that Definition~\ref{def:goodProj} makes sense in the Sobolev regular setting; i.e., an $H^{s+1}$-regular good projection is an $H^{s+1}$-regular orthogonal projection $\pi$ satisfying the conditions of Definition~\ref{def:goodProj}.

We then have the following:
\begin{proposition} \label{prop:manySub}
The following notions are equivalent for pre-Nahm data $(\alpha,\beta) \in \B^s_{\rm all}$:
\begin{enumerate}[(i)]
\item $H^{s+1}$-regular good subbundles $V\subset E$; \label{item:goodBund}
\item $H^1$-regular good subbundles $V\subset E$; \label{item:badBund}
\item $H^{s+1}$-regular good projections; \label{item:goodProj}
\item $H^1$-regular orthogonal projections to positive-rank subbundles $V\subset E$ satisfying $\pi|_{\partial I} \in \mf{t}_\CC$ and conditions (ii) and (iii) of Definition \ref{def:goodProj}. \label{item:badProj}
\end{enumerate}
Furthermore, when the subdata is rank 1 these are equivalent to
\begin{enumerate}[(i)]
\setcounter{enumi}{4}
\item \label{item:goodLine} $H^{s+1}$-regular maps $\ell\colon I\to \CC\PP^1$ satisfying $\ell|_{\partial I} \in \{0,\infty\}$,
\beq\label{eq:ellpreserved} \partial \ell = \alpha_{12} \ell^2 + (\alpha_{11}-\alpha_{22})\ell - \alpha_{21} \ , \eeq
and
\be \label{eq:ellPreserved2} 0 = \beta_{12} \ell^2 + (\beta_{11} - \beta_{22}) \ell - \beta_{21} \ . \ee
\item $H^1$-regular maps $\ell\colon I\to \CC\PP^1$ satisfying $\ell|_{\partial I} \in \{0,\infty\}$, \eqref{eq:ellpreserved}, and \eqref{eq:ellPreserved2}. \label{item:badLine}
\end{enumerate}
\end{proposition}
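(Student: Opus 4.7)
The plan is to treat the six equivalences in three groups that isolate distinct phenomena. First, I will establish (i) $\iff$ (iii) and (ii) $\iff$ (iv) by converting between good subbundles and good projections at each level of Sobolev regularity; the content here is a straightforward dictionary using a $\xi^\RR$-adapted Hermitian metric. Second, the implications (ii) $\implies$ (i) and (iv) $\implies$ (iii), which carry essentially all of the regularity content, will follow from an elliptic bootstrap exploiting that $H^1(I)$ is a Banach algebra (via the Sobolev embedding $H^1\hookrightarrow C^0$ in one dimension). Third, in the rank-one case I will translate a good projection into a map $\ell\colon I\to \CP^1$ and check that the PDEs and boundary conditions match.

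For the dictionary step, fix a $\xi^{\RR}$-adapted Hermitian metric on $E$ as afforded by Definition~\ref{defn:hAdapt}. Given a good subbundle $V\subset E$, let $\pi$ be the orthogonal projection onto $V$; it inherits the Sobolev regularity of $V$, is Hermitian, and is idempotent. The marked subbundle condition $V^\pm_{\partial I} = V\cap E^\pm_{\partial I}$ becomes $\pi|_{\partial I}\in \mf{t}_\CC$, while the bundle-preservation conditions on $\partial_\alpha$ and $\beta$ become precisely conditions (ii) and (iii) of Definition~\ref{def:goodProj}. Conversely, $V := \im \pi$ recovers the subbundle, and the automatic boundary condition $\partial\pi|_{\partial I}\in \mf{t}_{\CC}^{\perp}$ noted in Remark~\ref{rmk:redundant} shows that (iii) and (iv) coincide with Definition~\ref{def:goodProj} except for their imposed regularity.

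The bootstrap is the main technical point. Assume $\pi\in H^1$ is a good projection with $\alpha\in H^s$. Because $\pi^2 = \pi$, the matrix $\partial\pi$ has vanishing diagonal blocks in the splitting $E = V\oplus V^\perp$; because $\pi = \pi^\dagger$, the two off-diagonal blocks are mutually adjoint. One of these blocks is prescribed by $(1-\pi)(\partial\pi + [\alpha,\pi]) = 0$ and equals $A := -(1-\pi)[\alpha,\pi]$, so
\be \partial\pi = A + A^\dagger, \ee
a purely algebraic expression in $\pi$ and $\alpha$. Since $H^1(I)$ is a Banach algebra, $A\in H^1$, hence $\partial\pi\in H^1$ and $\pi\in H^2$. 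Iterating until the regularity of $A$ is controlled by $\alpha$ yields $\pi\in H^{s+1}$, giving (ii) $\implies$ (i). Combined with the dictionary, this yields (iv) $\implies$ (iii), and also (vi) $\implies$ (v) once one has the rank-one translation below.

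For the rank-one correspondence, given a rank-one good projection $\pi$ write the line $V_t = \im \pi(t)$ in the standard trivialization of $E$ as the span of $(1,\ell(t))^T$ on the open set where $\pi_{11}\ne 0$, and similarly using $\ell^{-1}$ on the complementary chart of $\CP^1$; this produces an $\ell$ with the same Sobolev regularity as $\pi$. Choosing a local frame $s = (1,\ell)^T$ for $V$ and writing $\partial_{\alpha}s = \lambda s$, $\beta s = \mu s$, a short computation (form $\partial(s_2/s_1)$ and eliminate $\lambda$, respectively equate the two expressions for $\mu$) turns the preservation of $V$ under $\partial_\alpha$ and $\beta$ into \eqref{eq:ellpreserved} and \eqref{eq:ellPreserved2}. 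The boundary condition $\pi|_{\partial I}\in \mf{t}_{\CC}$ -- which forces $\pi$ to equal $\mathrm{diag}(1,0)$ or $\mathrm{diag}(0,1)$ at each endpoint (cf.\ Remark~\ref{rmk:redundant}) -- corresponds to $\ell|_{\partial I}\in \{0,\infty\}$. The main potential obstacle is bookkeeping in the algebraic derivation of $\partial\pi = A + A^\dagger$: one must verify carefully that the two forms of the goodness condition in Remark~\ref{rmk:redundant} are consistent with Hermiticity so that the off-diagonal blocks of $\partial\pi$ are genuinely determined; modulo this, the remaining arguments are standard.
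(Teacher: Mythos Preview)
Your proposal is correct and follows essentially the same strategy as the paper. The paper derives the key identity by taking the fiberwise adjoint of $(\partial_\alpha\pi)\pi=0$ to get $\pi\,\partial_{-\alpha^\dagger}\pi=0$ and adding it to $(1-\pi)\partial_\alpha\pi=0$, obtaining $\partial_\alpha\pi=\pi[\alpha+\alpha^\dagger,\pi]$; your formula $\partial\pi=A+A^\dagger$ with $A=-(1-\pi)[\alpha,\pi]$ is an equivalent repackaging of the same algebraic fact (your bookkeeping worry is unfounded: $(1-\pi)\partial\pi=A$ and $\pi\partial\pi=A^\dagger$ by Hermiticity, and these sum to $\partial\pi$), and from either form the Sobolev bootstrap and the rank-one dictionary proceed identically.
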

\begin{proof}
In light of Remark \ref{rmk:redundant}, the equivalence of \eqref{item:goodBund} and \eqref{item:goodProj} is clear. To demonstrate the equivalence with \eqref{item:badProj}, we take the fiberwise adjoint of $(\partial_\alpha \pi) \pi = 0$ in order to obtain $\pi \partial_{-\alpha^\dagger} \pi = 0$. We then add this to $(1-\pi) \partial_\alpha \pi$ in order to obtain
\be \partial_\alpha \pi = \pi [\alpha+\alpha^\dagger, \pi] \ . \label{eq:piEqn} \ee
Elliptic regularity and Sobolev embedding now imply that if $\pi$ is in $H^{s'}$ with $1\le s'\le s$ then the right hand side of this equation is in $H^{s'}$, and so $\pi$ is in $H^{s'+1}$. So, we bootstrap up from $\pi\in H^1$ to $\pi\in H^{s+1}$. Remark \ref{rmk:redundant} again enters to complete the demonstration that \eqref{item:badProj} is equivalent to \eqref{item:goodProj}. Finally, \eqref{item:badProj} is clearly equivalent to \eqref{item:badBund}.

Now, we restrict to rank 1 pre-Nahm subdata. The equivalence of \eqref{item:goodBund} and \eqref{item:goodLine} is immediate -- the subbundle $V$ is simply the span of $\column{a}{b}$, and we write $\ell = b/a$. The equivalence of \eqref{item:goodLine} and \eqref{item:badLine} follows from applying elliptic regularity to \eqref{eq:ellpreserved}.
\end{proof}

\begin{corollary} \label{cor:sameStabs}
If $B\in \B^s_{\rm all}$ with $1\le s\le \infty$ and $1\le s'\le s$, then $B\in \B^{s,{\rm ps}}_{\rm all}$ (resp., $\B^{s,{\rm st}}_{\rm all}$, $\B^{s,{\rm ss}}_{\rm all}$, $\B^{s,{\rm us}}_{\rm all}$) if and only if $B\in \B^{s',{\rm ps}}_{\rm all}$ ($\B^{s',{\rm st}}_{\rm all}$, $\B^{s',{\rm ss}}_{\rm all}$, $\B^{s',{\rm us}}_{\rm all}$).
\end{corollary}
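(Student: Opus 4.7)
The plan is to reduce the statement to the observation that for a fixed $B\in \B^s_{\rm all}$, the collection of positive-rank pre-Nahm subdata of $B$ is independent of which intermediate Sobolev regularity $s'\in[1,s]$ one works in. Each notion of (in)stability is defined purely in terms of slopes of pre-Nahm subdata -- plus, for polystability, the existence of a direct sum decomposition into stable summands of equal slope -- so $s'$-independence of the set of subdata (and of direct sum decompositions) will immediately yield the corollary.

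First I would observe that Proposition \ref{prop:manySub} is precisely the needed regularity-improvement statement. Fixing $B\in \B^s_{\rm all}$ and any $s'\in [1,s]$, one has a chain of inclusions
\[
\{H^{s+1}\text{-regular good subbundles of }B\}\subset
\{H^{s'+1}\text{-regular good subbundles of }B\}\subset
\{H^1\text{-regular good subbundles of }B\},
\]
and Proposition \ref{prop:manySub} identifies the two outer sets. Therefore the intermediate set coincides with them, so the collections of positive-rank pre-Nahm subdata of $B$ at the two Sobolev regularities $s$ and $s'$ are canonically identified. Because the degree and slope of a piece of subdata depend only on the ranks of the fibers of the subbundle at $\partial I$ together with the marking $E^\pm_{\partial I}$, which are unchanged under this identification, the slopes agree as well. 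This already yields the equivalence for stability, semistability, and instability: $B$ admits a destabilizing (respectively properly semistabilizing) subdatum at regularity $s$ if and only if it admits one at regularity $s'$.

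For polystability, in the presence of a $\xi^\RR$-adapted Hermitian metric (Definition \ref{defn:hAdapt}) a direct sum decomposition $B = B_1\oplus B_2$ into stable pre-Nahm subdata is equivalent to a pair of complementary orthogonal good projections $(\pi_1,\pi_2)$ with $\pi_1+\pi_2 = 1$. Proposition \ref{prop:manySub} applies to each $\pi_a$ individually, so any $H^{s'+1}$-regular such pair automatically consists of $H^{s+1}$-regular projections; conversely, any $H^{s+1}$-regular such pair is tautologically $H^{s'+1}$-regular. Hence the space of polystable decompositions of $B$ is $s'$-independent, and combined with the slope calculation above we conclude that $B$ is polystable at regularity $s$ if and only if it is polystable at regularity $s'$.

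The only substantive step is the regularity bootstrap for good projections, and that is already accomplished by Proposition \ref{prop:manySub} via the elliptic equation \eqref{eq:piEqn}; everything else in the proof is bookkeeping. So I do not anticipate any serious obstacle here, and the corollary follows essentially immediately from the equivalences just discussed.
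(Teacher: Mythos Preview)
Your proposal is correct and follows essentially the same approach as the paper: both reduce the corollary to the regularity-independence of good subbundles given by Proposition~\ref{prop:manySub}, with your version spelling out the polystability case more explicitly than the paper's one-sentence proof.
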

\begin{proof}
The various notions of stability associated to $s,s'$ differ in the degree of Sobolev regularity demanded of good subbundles $V\subset E$, but the previous proposition implies that $V$ is automatically $H^{s+1}$-regular.
\end{proof}

In terms of the good projection perspective, we may note that $V$ is a good subbundle if and only if $gV$ is, for any complexified gauge transformation $g$, as follows: the orthogonal projection
\be \tilde\pi = g\pi(\pi g^\dagger g \pi + (1-\pi))^{-1} \pi g^\dagger \label{eq:newProj} \ee
onto $gV$ is a good projection for the transformed pre-Nahm data if that onto $V$ is for the original pre-Nahm data, i.e. $\tilde\pi$ satisfies the appropriate boundary conditions, $(g\circ\partial_\alpha\circ g^{-1})(gV) = g\partial_\alpha(V)\subset gV$, and $g\beta g^{-1}(gV) = g\beta V \subset gV$, thanks to the boundary conditions on $\pi$ and $g$.\footnote{A useful trick for working with the expression $\tilde\pi$ is to note that $\pi g^\dagger g \pi (\pi g^\dagger g \pi + (1-\pi))^{-1} = \pi (\pi g^\dagger g \pi + (1-\pi)) (\pi g^\dagger g \pi + (1-\pi))^{-1} = \pi$. For example, at the boundary, where $g$ and $\pi$ are diagonal, this implies that $\tilde\pi(t_0) = \pi(t_0)$. It also allows for easy proofs that $\tilde\pi^2 = \tilde \pi$ and $\pi g^\dagger (1-\tilde \pi) = 0$.}

In light of Remark \ref{rmk:redundant}, the following proposition is an immediate computation:
\begin{prop}
Given a good projection $\pi$ to a subbundle $V\subset E$, we have
\be \mathrm{deg}\,V = \Tr\Big(\sigma_z(\xi^{\mb{R}}_L\pi(L)-\xi^{\mb{R}}_0\pi(0))\Big) \ . \ee
\end{prop}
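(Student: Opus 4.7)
The plan is to unpack the definition of degree and match it against the matrix trace on the right hand side, exploiting the fact that a good projection is diagonal at the boundary.

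First, I would recall from Remark~\ref{rmk:redundant} (applied via the equivalences in Proposition~\ref{prop:manySub}) that any good projection $\pi$ has $\pi|_{\partial I} \in \mf{t}_{\mb{C}}$, i.e., $\pi(t_0)$ is diagonal for $t_0 \in \{0,L\}$. Since $\pi^2 = \pi$, each diagonal entry is either $0$ or $1$. The marked bundle structure on $\mc{E}$ sends $E^+_{t_0} = \mb{C} e_1$ and $E^-_{t_0} = \mb{C} e_2$, so $V^+_{t_0} := V_{t_0} \cap E^+_{t_0}$ has dimension $\pi_{11}(t_0)$ and $V^-_{t_0}$ has dimension $\pi_{22}(t_0)$.

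Second, I would substitute the convention $\xi^{\pm,\mb{R}}_{\partial I} = \pm \xi^{\mb{R}}_{\partial I}$ (fixed just before the definition of adapted Hermitian metric) into the definition of degree from equation~\eqref{eq:slopeEqn}, obtaining
\begin{align*}
\deg_{\xi^{\mb{R}}} V
&= \sum_{\dagger \in \{+,-\}}\bigl(\xi^{\dagger,\mb{R}}_L \dim V^{\dagger}_L - \xi^{\dagger,\mb{R}}_0 \dim V^{\dagger}_0\bigr) \\
&= \xi^{\mb{R}}_L(\dim V^+_L - \dim V^-_L) - \xi^{\mb{R}}_0(\dim V^+_0 - \dim V^-_0).
\end{align*}
Third, I would combine these steps: $\dim V^+_{t_0} - \dim V^-_{t_0} = \pi_{11}(t_0) - \pi_{22}(t_0) = \Tr(\sigma_z \pi(t_0))$, which immediately yields
\[
\deg_{\xi^{\mb{R}}} V = \xi^{\mb{R}}_L \Tr(\sigma_z \pi(L)) - \xi^{\mb{R}}_0 \Tr(\sigma_z \pi(0)) = \Tr\bigl(\sigma_z(\xi^{\mb{R}}_L \pi(L) - \xi^{\mb{R}}_0 \pi(0))\bigr),
\]
by linearity of the trace.

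There is no genuine obstacle here; the only nontrivial input is the fact that a good projection is diagonal at the endpoints, which has already been established. The proposition is an exercise in unpacking definitions, and its role is simply to package the combinatorial degree formula into a form more convenient for the analytic arguments that follow (where the right hand side can be evaluated via integration by parts against the connection and the Higgs field $\beta$).
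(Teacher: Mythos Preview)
Your proof is correct and is exactly the ``immediate computation'' the paper alludes to (the paper gives no written proof, merely noting it follows in light of Remark~\ref{rmk:redundant}). One minor attribution point: the fact that $\pi|_{\partial I} \in \mf{t}_{\mb{C}}$ is part of the \emph{definition} of a good projection (Definition~\ref{def:goodProj}(i)), not something derived from Remark~\ref{rmk:redundant}; that remark instead supplies the concrete form $\pi(t_0) \in \{\mathrm{diag}(1,0),\mathrm{diag}(0,1)\}$ in the rank-one case, though your more general argument via $\pi^2=\pi$ handles all ranks at once.
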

\noindent Using \eqref{eq:newProj}, one may confirm that $\deg V = \deg gV$.

This concludes our discussion of pre-Nahm subbundles. We now have the following propositions:
\begin{proposition} \label{prop:cplxModSpc}
Given $s \ge 1$, we have that $\mc{D}^{s,\mathrm{ps}}_{\xi^{\mb{R}},\mathrm{all}}/\mc{G}^{s+1}_{\mb{C}} \to \mf{z}^{\mb{C}}_{\partial I}$ is a submersion of complex orbifolds whose fibers $\N^s_{\xi} = \mc{D}^{s,\mathrm{ps}}_{\xi}/\mc{G}^{s+1}_{\mb{C}}$ are as described in Theorem~\ref{thm:modCpx}.
\end{proposition}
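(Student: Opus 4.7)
The plan is to mirror the strategy used for Theorem \ref{thm:modHK}, but now in the holomorphic category, exploiting the near-complete classification of $\G_\CC$-orbits already afforded by Proposition \ref{prop:compNahm}. First I will reduce to finite Sobolev regularity and establish the analog of Proposition \ref{prop:bij}: using Proposition \ref{prop:manySub} and Corollary \ref{cor:sameStabs} to see that polystability is independent of Sobolev class, and an elliptic bootstrap on $\partial_\alpha g = g(\alpha' - g^{-1}\alpha g)$ to see that any two smooth representatives related by an $H^{s+1}_\CC$ gauge transformation are related by a smooth one. Hausdorffness of $\N^s_\xi$ will follow from a complex analog of Proposition \ref{prop:gaugeConv}: thanks to the polystability condition, the hyperkahler/GIT orbit-closure separation argument (carried out below via the DUY Theorem \ref{thm:duy}, but usable here in its weaker form that $\G_\CC$-orbit closures of polystable data meeting the same semistable closure coincide) rules out spurious limit identifications; combined with the explicit classification of Proposition \ref{prop:compNahm}, one sees orbit separation directly.

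Next I will put a complex manifold structure on the stable locus $\N^{s,{\rm st}}_\xi$ via a complex Coulomb-gauge slice. Define the complex deformation complex $0\to \mf{g}^{s+1}_\CC \xrightarrow{d_0^\CC} \B^s_{\xi^\CC} \xrightarrow{d_1^\CC} \F^{s-1}_\CC \to 0$ with $d_0^\CC h = (\partial_\alpha h,\,[\beta,h])$ and $d_1^\CC(a,b) = \partial_\alpha b + [a,\beta]$. The same ODE/Fredholm arguments used in Lemma \ref{lem:fredA0} and the proofs in \S\ref{sec:hkq} show this is an elliptic Fredholm complex, and the $L^2$-formal adjoint $d_0^{\CC,*}$ provides a local slice transverse to $\G_\CC$ away from data with stabilizer. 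An implicit function theorem on the nonlinear map $\B^s \to \F^{s-1}_\CC$ sending $b\mapsto \mu_\CC(b)$, combined with the complex Coulomb slice, then realizes $\N^{s,{\rm st}}_\xi$ as the zero locus of a smooth map between Hilbert spaces; its tangent space is identified with the middle cohomology of the complex, which is two-(complex-)dimensional by the index computation (analogous to Proposition \ref{prop:realInd}, or simply by setting $\alpha=\beta=0$). The holomorphic symplectic form descends from the canonical pairing $((a_1,b_1),(a_2,b_2))\mapsto -\frac{1}{2L}\int_I \Tr(a_1 b_2 - a_2 b_1)\,dt$ on $\B^s$; $\G_\CC$-invariance of this form and vanishing on vertical directions are immediate, so it descends.

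The hard part is the analysis near strictly polystable data. By the explicit gauge classification in Proposition \ref{prop:compNahm} together with the stability condition, strictly polystable orbits exist only when $\xi^\CC_0 = \pm \xi^\CC_L$, and in that case there is precisely one such orbit---the first Nahm data of item (vi) (or (vii))---carrying a $\CC^\times$ stabilizer. I will analyze a neighborhood of this orbit by the complex analog of Proposition \ref{prop:realRG}: the implicit function theorem applied to the Coulomb slice produces a $\CC^\times$-equivariant holomorphic Kuranishi map $\hat\mu_\CC$ on the finite-dimensional harmonic space, and the leading quadratic term is the standard hyperkahler moment map for the $\CC^\times$-action on $T^*\CC^2$ of the $A_1$ model, so up to higher-order $\CC^\times$-equivariant change of coordinates a neighborhood is $\CC^\times$-equivariantly biholomorphic to a neighborhood of the origin in $\{uv = w^2\}\subset \CC^3$. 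When $\xi \equiv 0$, no such local analysis is needed: by Proposition \ref{prop:compNahm} the orbit space consists of case (i) with $\alpha_0\in [0,i\pi/2L]$ and $\beta_x\in \CC$ together with the degenerate items (ii)--(vii); polystability eliminates the non-closed orbits of (ii)--(v), (vi), and (vii) except for the two $\CC^\times$-fixed ones, and directly identifying the remaining data under the residual $Z_2$ Weyl action (from Proposition \ref{prop:compAx}) yields the orbifold $(\CC\times\CC^\times)/Z_2$. Finally, the submersion statement for $p^\CC\colon \N_{\xi^\RR,{\rm all}}\to \mf{z}^\CC_{\partial I}$ is immediate from the above constructions performed in families: the Coulomb slice, Kuranishi model, and the explicit gauge classification all depend holomorphically on $\xi^\CC$, so Proposition \ref{prop:cplxModSpc} at arbitrary $s$, and hence Theorem \ref{thm:modCpx} after the standard regularity comparison (analog of Proposition \ref{prop:smoothReps}), follows.
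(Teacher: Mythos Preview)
Your overall architecture is right, but there are three places where the argument, as written, does not close.

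First, Hausdorffness. You invoke a ``complex analog of Proposition \ref{prop:gaugeConv}'' and then defer to Theorem \ref{thm:duy}. Neither works: there is no direct analog of Proposition \ref{prop:gaugeConv} because $SL(2,\CC)$ is noncompact (so a sequence $g_n$ need not have any convergent subsequence), and invoking Theorem \ref{thm:duy} is circular since its proof uses the present proposition. The paper's substitute is Proposition \ref{prop:orbitclosure}: given $B_n\to B$ and $g_n(B_n)\to B'$, one looks at the sequence of Fredholm operators $D_n = \partial_{\alpha_n*\alpha'_n}\oplus(\beta_n*\beta'_n)$ and uses upper semicontinuity of the kernel dimension to produce a nonzero $g\in\ker D$, which is then either invertible or rank one; in the latter case one extracts compatible Jordan--H\"older filtrations of $B$ and $B'$. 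This, together with Lemma \ref{lem:limsubs} and the slope bookkeeping, is what actually yields Proposition \ref{prop:orbitclosures}(i). The explicit classification of Proposition \ref{prop:compNahm} does not by itself control limits of $\G_\CC$-orbits.

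Second, the passage from local diffeomorphism to local homeomorphism onto the quotient. In the real case this used Lemma \ref{lem:homeoOrb}, whose proof rested on Proposition \ref{prop:gaugeConv}. Here you need to show that two nearby Coulomb-gauge representatives in the same $\G_\CC^{s+1}$-orbit are related by an element close to the identity (or in $\hat\G_\CC$). The paper does this in two pieces: Proposition \ref{prop:quanthomeo} for irreducible $B$ (via an overdetermined elliptic estimate $\|g''\|_{H^{s+1}}\le C\delta\|g'\|_{H^{s+1}}$ on the component orthogonal to $\ker\bar\partial_0$) and Lemma \ref{lem:NKg} for reducible $B$ (an explicit analysis of $\mf{t}_\CC^\perp/T_\CC\to T_\CC\backslash SL(2,\CC)/T_\CC$ near the origin). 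Your proposal skips this step entirely.

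Third, the $A_1$ structure at strictly polystable points. You assert that the quadratic part of $\hat\mu_\CC$ is the standard $A_1$ moment map and that higher-order $\CC^\times$-equivariant corrections can be absorbed by a change of coordinates. This is precisely the issue flagged in Remark \ref{rmk:sadRG}: it is not automatic. The paper instead constructs an explicit holomorphic symplectic open embedding $\tilde\N^K_{\xi^K}\hookrightarrow\N_\xi$ via the $RG_\CC$ map of Definition \ref{defn:RGC} and Proposition \ref{prop:RGitems}, comparing axial-gauge Nahm data directly to axial-gauge Kronheimer data; this bypasses any need to straighten $\hat\mu_\CC$.
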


\begin{proposition} \label{prop:bijC}
For any $s\ge 1$, the map $\B'/\G_\CC \to \B'^s/\G^{s+1}_{\CC}$ is an isomorphism of groupoids, i.e., a bijection of sets which induces canonical identifications of stabilizer groups. 
\end{proposition}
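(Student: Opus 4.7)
The plan is to mirror the proof of Proposition~\ref{prop:bij}, replacing real unitary data with complex data throughout. Both injectivity and stabilizer identification follow from elliptic regularity: if $\alpha_1,\alpha_2 \in \B'$ are smooth and $g \in \G^{s+1}_\CC$ satisfies $g(\alpha_1) = \alpha_2$, then $g$ is a flat section of the bundle $\End(E)$ equipped with the smooth connection $\alpha_1 * \alpha_2$ of Definition~\ref{defn:starconnection}. Since $s \ge 1$, we have $g \in C^0$ by Sobolev embedding; the first-order ODE $\partial_{\alpha_1 * \alpha_2} g = 0$ with smooth coefficients then implies $g \in C^\infty$ by bootstrapping. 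Specialising $\alpha_1 = \alpha_2$ identifies stabilizer groups in $\G_\CC$ and $\G^{s+1}_\CC$.

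For surjectivity, the cleanest route is through parallel transport. Given $\alpha \in \B'^s$, the continuous inclusion $\B'^s \hookrightarrow C^0(I,\mf{sl}(2,\CC))$ ensures that the parallel transport automorphism $PT(\alpha) \in SL(2,\CC)$ is well-defined via standard ODE theory. By Proposition~\ref{prop:compAx}, every $T_\CC$-double coset in $SL(2,\CC)$ admits a canonical smooth representative in complex axial gauge; pick $\alpha_0 \in \B'$ with $PT(\alpha_0)$ in the same double coset as $PT(\alpha)$. Then one may construct a flat section $g$ of $\alpha*\alpha_0$ by choosing $g(0) = t_0 \in T_\CC$ so that $g(L) = PT(\alpha)\, t_0\, PT(\alpha_0)^{-1} \in T_\CC$; such a $t_0$ exists precisely because the two parallel transports lie in a common double coset. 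Since $g$ is defined by the linear ODE $\partial g = g\alpha_0 - \alpha g$ with $\alpha \in H^s$ and $\alpha_0 \in C^\infty$, a bootstrap argument starting from $g \in C^0$ yields $g \in H^{s+1}$. The required boundary condition $g^{-1}\partial g|_{\partial I} \in \mf{t}^\perp_\CC$ follows automatically: evaluated at $t_0 \in \partial I$, $g^{-1}\partial g = \alpha_0 - g^{-1}\alpha g$, and conjugation by $g|_{\partial I} \in T_\CC$ preserves the off-diagonal subspace $\mf{t}^\perp_\CC$ containing $\alpha(t_0)$ and $\alpha_0(t_0)$.

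The main potential obstacle is ensuring that the ODE argument for regularity holds through $s = \infty$, but this is immediate: the bootstrap $g \in H^{k} \Rightarrow g \in H^{k+1}$ may be iterated indefinitely, and applied in the Fr\'echet setting of $s = \infty$ by working at each finite Sobolev level. An alternative, which more closely parallels the argument of Proposition~\ref{prop:bij}, is to establish that the nonlinear map $F\colon \G^{s+1}_\CC \to \B'^s$, $g \mapsto g(\alpha_0)$, is Fredholm for any fixed smooth $\alpha_0$ (its derivative at $g$ is conjugate to $\partial_{g(\alpha_0)}\colon \mf{g}^{s+1}_\CC \to \B'^s$, Fredholm by the verbatim complexification of Lemma~\ref{lem:fredA0}), and then use the constant rank theorem together with the density of $\B'\subset \B'^s$ to approximate an arbitrary $\alpha \in \B'^s$ by a smooth connection in its $\G^{s+1}_\CC$-orbit.
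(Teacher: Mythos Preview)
The proposal is correct. For injectivity and stabilizer identification, you follow exactly the paper's approach (which simply refers back to Proposition~\ref{prop:bij}). For surjectivity, however, you take a genuinely different and more direct route: the paper invokes the Atiyah--Bott constant rank argument from Proposition~\ref{prop:bij} verbatim, whereas your primary argument exploits the one-dimensional setting by computing parallel transport and appealing to the axial gauge classification of Proposition~\ref{prop:compAx} to produce a smooth orbit representative explicitly. This is cleaner in the present setting and avoids the finite-dimensional simplex construction; it does, however, depend on having already classified the $\G_\CC$-orbits in $\B'$. The Atiyah--Bott argument (which you correctly sketch as an alternative at the end) is more robust to higher-dimensional generalizations where no such classification is available, which is presumably why the paper prefers it.
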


\begin{corollary} \label{cor:complexSmooth}
Complex Nahm data $B \in \B'\times \B''^s_{\mathrm{all}}$ is in $\B_{\mathrm{all}}$. It follows that we have canonical isomorphisms of groupoids $\N_{\xi^{\mb{R}},\mathrm{all}} \stackrel{\sim}{\to} \N^s_{\xi^{\mb{R}},\mathrm{all}}$, $\N_{\xi} \stackrel{\sim}{\to} \N^s_{\xi}$.
\end{corollary}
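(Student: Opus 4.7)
The plan is to establish the regularity statement for $\beta$ first via an elliptic bootstrap, and then deduce the groupoid isomorphisms by combining this with Proposition \ref{prop:bijC} and Corollary \ref{cor:sameStabs}.

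First I would take complex Nahm data $B=(\alpha,\beta) \in \B' \times \B''^s_{\rm all}$ with $s \ge 1$ and rewrite the complex Nahm equation $\partial_\alpha \beta = 0$ as $\partial \beta = -[\alpha,\beta]$. Since $\alpha$ is smooth and $H^s(I,\mf{sl}(2,\CC))$ is a module over $C^\infty(I,\mf{sl}(2,\CC))$ on the compact one-manifold $I$, the right-hand side lies in $H^s$, so $\beta$ has weak derivative in $H^s$ and hence $\beta \in H^{s+1}$. Bootstrapping iteratively yields $\beta \in H^{s+k}$ for every $k \ge 0$, so $\beta$ is smooth. The zeroth-order boundary conditions cutting out $\B''^s_{\xi^\CC}$ persist throughout, since they depend only on the (already continuous, by Sobolev embedding) values $\beta|_{\partial I}$; hence $B \in \B_{\rm all}$.

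Next I would upgrade this to the groupoid statement $\N_{\xi^\RR,{\rm all}} \stackrel{\sim}{\to} \N^s_{\xi^\RR,{\rm all}}$ by verifying surjectivity on orbits, injectivity, and identification of stabilizers in turn. For surjectivity, given $[B^s] \in \N^s_{\xi^\RR,{\rm all}}$ with representative $(\alpha^s,\beta^s)$, Proposition \ref{prop:bijC} produces $g \in \G^{s+1}_\CC$ such that $g(\alpha^s)$ is smooth; the transformed $\beta$-component remains in $H^s$ since $H^{s+1}\cdot H^s\cdot H^{s+1}\subset H^s$ on $I$ when $s \ge 1$, so the resulting pre-Nahm data lies in $\B' \times \B''^s_{\rm all}$ and the first paragraph upgrades it to smooth data. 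For injectivity, if smooth $B_1, B_2 \in \D_{\rm all}$ are related by $g \in \G^{s+1}_\CC$, then $g$ carries the smooth $\alpha_1$ to the smooth $\alpha_2$, so Proposition \ref{prop:bijC} forces $g \in \G_\CC$; the same reasoning identifies the stabilizer of $B$ in $\G_\CC$ with its stabilizer in $\G^{s+1}_\CC$.

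Finally, Corollary \ref{cor:sameStabs} ensures that $\xi^\RR$-polystability is Sobolev-regularity independent, so the groupoid isomorphism descends to the polystable subloci, yielding $\N_{\xi^\RR,{\rm all}} \simeq \N^s_{\xi^\RR,{\rm all}}$ and fiberwise $\N_\xi \simeq \N^s_\xi$, compatibly with the complex structures and holomorphic symplectic forms (both of which are defined at the level of the quotient). The potential obstacle is really just the initial bootstrap, but this is essentially automatic because the complex Nahm equation is first order and linear in $\beta$ once $\alpha$ is fixed, so no regularity issue beyond that already treated by Proposition \ref{prop:bijC} appears.
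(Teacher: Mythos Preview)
Your proposal is correct and matches the paper's approach exactly. The paper simply remarks that the proof is identical to its real counterpart (Proposition~\ref{prop:bij}) once one invokes Corollary~\ref{cor:sameStabs} to make polystability regularity-independent; you have spelled out precisely that argument, bootstrapping $\beta$ via the linear ODE $\partial\beta = -[\alpha,\beta]$ with smooth $\alpha$ and then reducing the groupoid statements to Proposition~\ref{prop:bijC}.
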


The proofs of Proposition \ref{prop:bijC} and Corollary \ref{cor:complexSmooth} are identical to their counterparts in the previous section (once one makes use of Corollary \ref{cor:sameStabs}); we hence omit them. Most of this section is devoted to the proof of Proposition~\ref{prop:cplxModSpc}.

The main novelty in this section compared to~\S\ref{sec:hkq} is the role of the stability condition, so we now turn to it. In particular, we will show how it yields a Hausdorff moduli space; this is important, since the non-compactness of $SL(2,\CC)$ means that we do not have an immediate analog of Proposition \ref{prop:gaugeConv}.

We first provide a number of alternative characterizations of our notion of the degree of a good sub-bundle $V$, defined by a projection $\pi$. This will both give intuition for why our definition is the right one to employ and be useful below. Given pre-Nahm data $(\alpha,\beta)$ on $E$ and a good subbundle $V$ with an associated good projection $\pi$, we let $\partial_{\alpha'}$ and $\beta'$ denote the pre-Nahm subdata on $V$. The Hermitian metric allows us to define another set of pre-Nahm data on $V$, namely $\partial_{-\tilde \alpha} = \pi \partial_{-\alpha^\dagger}$, so $\tilde\alpha = \alpha^\dagger + (\partial_{-\alpha^\dagger} \pi)$, and $\tilde\beta = \pi \beta^\dagger$; however, note that this transforms under gauge transformations differently from $(\alpha',\beta')$. We observe that $\half(\partial_{\alpha'} + \partial_{-\tilde\alpha})$ is a Hermitian connection on $V$ (where the Hermitian metric is obtained via restriction). This follows from the fact that $\pi (\partial_\alpha \pi) \pi = 0$, which in turn follows from multiplying \eqref{eq:pipi} on the right by $\pi$. (Of course, this follows trivially from assumption (iii) in the definition of a good projection, but it holds even without that assumption.) Similarly, on $V$ we have $\tilde\beta^\dagger = \beta\pi = \beta'$.

We now have the following proposition, which accords with the usual definition of degree that involves integrating the analog of the real moment map for sub-bundles:
\begin{proposition} \label{prop:hermDeg}
If $V$ is a good sub-bundle defined by a projection $\pi$ and $\partial_{\alpha'} = \pi \partial_\alpha$, $\beta' = \pi \beta$, $\partial_{-\tilde\alpha} = \pi \partial_{-\alpha^\dagger}$, $\tilde\beta = \pi \beta^\dagger$, then
\begin{align}
\deg V &= \half \int_0^L \Tr_V\parens{ [\partial_{-\tilde\alpha}, \partial_{\alpha'}] - [\tilde\beta, \beta'] }\,dt \nonumber \\
&= \half \int_0^L \Tr_V\parens{\partial(\alpha' + \tilde\alpha) - [\tilde\alpha, \alpha'] - [\tilde\beta,\beta']}\,dt \nonumber \\
&= \half \int_0^L \Tr\parens{\pi \parens{ [\partial_{-\tilde\alpha}, \partial_{\alpha'}] - [\tilde\beta, \beta']} }\,dt \nonumber \\
&= \half \int_0^L \Tr\parens{\pi\brackets{\partial(\alpha' + \tilde\alpha) - [\tilde\alpha, \alpha'] - [\tilde\beta,\beta']}}\,dt \ . \label{eq:deg1}
\end{align}
\end{proposition}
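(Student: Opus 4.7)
The plan is to establish the four-way equivalence of the listed expressions and then to evaluate the resulting integral by integration by parts, reducing everything to the boundary degree formula.

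First I would verify the algebraic equivalences. Acting on a local frame for $V$, the product rule gives
\[
[\partial_{-\tilde\alpha},\,\partial_{\alpha'}] = \partial(\alpha' + \tilde\alpha) - [\tilde\alpha,\alpha']
\]
as an $\mathrm{End}(V)$-valued function, yielding (1)$\Leftrightarrow$(2) and (3)$\Leftrightarrow$(4). The equivalence (1)$\Leftrightarrow$(3), and hence (2)$\Leftrightarrow$(4), follows from the identity $\Tr_V(X) = \Tr(\pi \hat X)$ valid for any $\mathrm{End}(E)$-valued extension $\hat X$ of $X$, since by cyclicity this quantity depends only on $\pi \hat X \pi = X|_V$.

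Next I would reduce the integral to a boundary term. Extending $\alpha',\tilde\alpha,\beta',\tilde\beta$ to $\mathrm{End}(E)$-valued functions by zero on $V^\perp$ (so for instance $\pi\alpha'\pi = \alpha'$), cyclicity immediately gives $\Tr(\pi[\tilde\alpha,\alpha']) = \Tr([\tilde\alpha,\alpha']) = 0$ and $\Tr(\pi[\tilde\beta,\beta']) = 0$. Integration by parts on the remaining term then yields
\[
\tfrac12 \int_0^L \Tr\bigl(\pi\,\partial(\alpha'+\tilde\alpha)\bigr)\,dt = \tfrac12\bigl[\Tr(\pi(\alpha'+\tilde\alpha))\bigr]_0^L - \tfrac12\int_0^L \Tr\bigl((\partial\pi)(\alpha'+\tilde\alpha)\bigr)\,dt.
\]
The interior correction vanishes: from $\pi^2 = \pi$ one obtains $\pi(\partial\pi)\pi = 0$, and combined with $\alpha' = \pi\alpha'\pi$ a cyclicity manipulation gives $\Tr((\partial\pi)\alpha') = \Tr(\pi\alpha'\partial\pi) - \Tr(\pi\alpha'\partial\pi) = 0$, and similarly for $\tilde\alpha$.

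The main step is the identification of the surviving boundary term with $\deg V$. I would work in the unitary frame, equivalently the $\xi^\RR$-adapted orthonormal frame obtained by applying the formal gauge transformation $g$ of Definition~\ref{defn:hAdapt}. In this frame, $\alpha^\dagger$ is the ordinary Euclidean adjoint and $\tilde\alpha$ is simply the restriction to $V$ of this adjoint. The unitary-frame boundary conditions $A^0|_{\partial I}\in \mf{t}^\perp$ and $A^1|_{\partial I}\in -i\xi^\RR_{\partial I}\sigma_z + \mf{t}^\perp$ on the underlying real pre-Nahm data translate to $\alpha = A^0 + iA^1$ having diagonal part $\xi^\RR_{\partial I}\sigma_z$ at the boundary, and the same holds for $\alpha^\dagger$. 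Since $\pi|_{\partial I}$ is diagonal, the off-diagonal entries drop out of $\Tr(\pi(\alpha' + \tilde\alpha))|_{t_0}$, which therefore equals $2\xi^\RR_{t_0} \Tr(\sigma_z \pi(t_0))$; the boundary term then evaluates to $\Tr(\sigma_z(\xi^\RR_L \pi(L) - \xi^\RR_0 \pi(0))) = \deg V$.

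The hard part will be the careful handling of the $\xi^\RR$-adapted Hermitian metric when extracting the boundary contribution of $\tilde\alpha$. Working naively in the standard frame, $\alpha|_{\partial I}$ is purely off-diagonal and a literal computation of $h^{-1}\alpha^{\dagger_{\mathrm{Eucl}}}h$ would yield a vanishing boundary term. The required diagonal contribution arises from the $h^{-1}\partial h$-type term intrinsic to taking adjoint connections with respect to a non-constant metric, which at the boundary is exactly $2\xi^\RR_{\partial I}\sigma_z$ by Definition~\ref{defn:hAdapt}; passing to the unitary frame packages this correction automatically.
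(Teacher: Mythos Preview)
Your proof is correct and follows essentially the same route as the paper: kill the commutator terms by cyclicity, reduce the remaining $\partial(\alpha'+\tilde\alpha)$ integral to a boundary term, and identify that boundary term with $\Tr(\sigma_z(\xi^\RR_L\pi(L)-\xi^\RR_0\pi(0)))$ using the diagonal boundary values of $\pi$ and of $\alpha+\alpha^\dagger$ in the unitary frame. The paper works with the $\Tr_V$ version (line two) rather than the $\Tr(\pi\,\cdot\,)$ version (line four), so it never needs to produce and then kill the $\Tr((\partial\pi)(\alpha'+\tilde\alpha))$ term; your argument for killing it is fine, though the sentence ``$\Tr((\partial\pi)\alpha') = \Tr(\pi\alpha'\partial\pi) - \Tr(\pi\alpha'\partial\pi) = 0$'' is garbled --- the clean version is $\Tr((\partial\pi)\alpha') = \Tr((\partial\pi)\pi\alpha'\pi) = \Tr(\pi(\partial\pi)\pi\,\alpha') = 0$ from $\pi(\partial\pi)\pi=0$.
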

\begin{proof}
Since $\pi$ is diagonal at the boundary,
\be \deg V = \Tr(\sigma_z(\xi^\RR_L \pi(L) - \xi^\RR_0 \pi(0))) = \half \Tr(\pi(\alpha+\alpha^\dagger)) |_0^L = \half \Tr_V(\alpha'+\tilde\alpha) |_0^L \ . \ee
This coincides with the second expression in the proposition, since traces of commutators vanish.
\end{proof}
\begin{remark} \label{rmk:parDeg}
Note that if we were studying discontinuous pre-Nahm data on the circle, instead of pre-Nahm data on the interval, then the integrand here would need an extra contribution from $\xi^\RR$, corresponding to the delta distributions in the real moment map equation. Such contributions are familiar in the study of parabolic Higgs bundles, where they constitute the difference between the usual degree and the parabolic degree.
\end{remark}

Another characterization of $\deg V$ will also prove useful:
\begin{proposition}
\begin{align}
\deg V &= \half \int_0^L \Tr\parens{\pi \brackets{\partial(\alpha+\alpha^\dagger) - [\alpha^\dagger,\alpha]} - (\partial_\alpha \pi)(\partial_{-\alpha^\dagger} \pi)}\,dt \nonumber\\
&= \half \int_0^L \Tr\parens{\pi \brackets{\partial(\alpha+\alpha^\dagger) - [\alpha^\dagger,\alpha] - [\beta^\dagger, \beta] } - (\partial_\alpha \pi)(\partial_{-\alpha^\dagger} \pi) - [\beta,\pi][\pi,\beta^\dagger] }\,dt \ . \label{eq:newDeg}
\end{align}
In particular,
\be \deg V \le \half \int_0^L \Tr\parens{\pi \brackets{\partial(\alpha+\alpha^\dagger) - [\alpha^\dagger,\alpha] - [\beta^\dagger, \beta] } }\,dt \ , \ee
with equality if and only if $\partial_\alpha \pi$ and $[\beta,\pi]$ both vanish identically. \label{prop:degIn}
\end{proposition}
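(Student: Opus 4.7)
The plan is to start from Proposition~\ref{prop:hermDeg} and translate the restricted pre-Nahm data $(\alpha',\beta',\tilde\alpha,\tilde\beta)$ back into intrinsic expressions in $(\alpha,\beta,\pi)$ via a local frame computation. Concretely, at each point of $I$ choose a local unitary frame adapted to the Hermitian decomposition $E = V \oplus V^\perp$, so that $\pi$ becomes the constant block matrix $\mathrm{diag}(1_V,0)$ and, by Remark~\ref{rmk:redundant}, the good-projection conditions $(\partial_\alpha\pi)\pi=0$ and $\beta\pi=\pi\beta\pi$ collapse to the upper block-triangularity $\alpha_{21}=\beta_{21}=0$. In such a frame $\alpha'=\alpha_{11}$, $\tilde\alpha=\alpha_{11}^\dagger$, $\beta'=\beta_{11}$ and $\tilde\beta=\beta_{11}^\dagger$; since $\partial\pi=0$ here, cyclicity of trace yields $\Tr_V\partial(\alpha'+\tilde\alpha)=\Tr(\pi\,\partial(\alpha+\alpha^\dagger))$.

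The main calculational step is then a two-line block-matrix expansion giving the key identity
\[
\Tr_V[\tilde\alpha,\alpha'] \;=\; \Tr\bigl(\pi[\alpha^\dagger,\alpha]\bigr) \;+\; \Tr_V(\alpha_{12}\alpha_{12}^\dagger),
\]
where the correction term is precisely what the $(1,1)$-block of the full commutator $[\alpha^\dagger,\alpha]$ misses relative to the restricted commutator on $V$. A direct computation of $\partial_\alpha\pi=[\alpha,\pi]=\bigl(\begin{smallmatrix}0&-\alpha_{12}\\0&0\end{smallmatrix}\bigr)$ and $\partial_{-\alpha^\dagger}\pi=-[\alpha^\dagger,\pi]=\bigl(\begin{smallmatrix}0&0\\-\alpha_{12}^\dagger&0\end{smallmatrix}\bigr)$ matches this correction to $\Tr\bigl((\partial_\alpha\pi)(\partial_{-\alpha^\dagger}\pi)\bigr)=\Tr_V(\alpha_{12}\alpha_{12}^\dagger)$. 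Because $\Tr_V[\tilde\beta,\beta']=0$ as the trace of a commutator, substituting back into Proposition~\ref{prop:hermDeg} yields the first formula of the proposition. The identical expansion for the $\beta$-part gives $\Tr_V[\tilde\beta,\beta']=\Tr(\pi[\beta^\dagger,\beta])+\Tr\bigl([\beta,\pi][\pi,\beta^\dagger]\bigr)$, so this sum vanishes; adding this zero to the first formula produces the second.

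For the inequality and its equality condition, one observes the adjoint identities $(\partial_\alpha\pi)^\dagger=\partial_{-\alpha^\dagger}\pi$ (immediate from $\pi=\pi^\dagger$) and $([\beta,\pi])^\dagger=[\pi,\beta^\dagger]$. Consequently both $\Tr\bigl((\partial_\alpha\pi)(\partial_{-\alpha^\dagger}\pi)\bigr)$ and $\Tr\bigl([\beta,\pi][\pi,\beta^\dagger]\bigr)$ are pointwise Hilbert--Schmidt norms squared, hence non-negative, each vanishing if and only if $\partial_\alpha\pi\equiv 0$ and $[\beta,\pi]\equiv 0$ respectively. Combined with the second formula, this gives both the inequality and the claimed equality criterion.

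The main thing to be careful about is that the frame chosen to diagonalize $\pi$ is only local, while the non-gauge-covariant term $\Tr(\pi\,\partial(\alpha+\alpha^\dagger))$ can a priori depend on the frame. This is not a real obstacle: the other integrand pieces $\Tr(\pi[\alpha^\dagger,\alpha])$, $\Tr(\pi[\beta^\dagger,\beta])$, $\Tr((\partial_\alpha\pi)(\partial_{-\alpha^\dagger}\pi))$, and $\Tr([\beta,\pi][\pi,\beta^\dagger])$ are manifestly gauge covariant, so the pointwise identities derived in the adapted frame are intrinsic. Any frame-dependence in the remaining term is then absorbed into a total derivative of the gauge-invariant scalar $\Tr(\pi(\alpha+\alpha^\dagger))$, whose integral over $I$ reproduces $2\deg V$ by exactly the boundary computation already used to establish the first equality in Proposition~\ref{prop:hermDeg}.
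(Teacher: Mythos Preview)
Your block-frame computation is correct and gives a valid alternative to the paper's argument. The paper instead works intrinsically: starting from the same formula in Proposition~\ref{prop:hermDeg}, it observes that (after dropping the vanishing $\beta$-commutator) the restricted data can be rewritten so that the discrepancy from the full data is exactly $\Tr(\pi\,\partial_\alpha\partial_{-\alpha^\dagger}\pi)$, and then uses the good-projection identity $\pi(\partial_{-\alpha^\dagger}\pi)=0$ together with integration by parts to convert this into $-\Tr((\partial_\alpha\pi)(\partial_{-\alpha^\dagger}\pi))$. Your adapted-frame computation and the paper's intrinsic one are ultimately the same identity viewed from different angles; yours is more hands-on, while the paper's avoids ever choosing a frame.

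There is, however, a slip in your final paragraph. You assert that $\Tr(\pi[\alpha^\dagger,\alpha])$ is ``manifestly gauge covariant,'' but it is not: $\alpha$ is a connection form, so under a unitary change of frame $[\alpha^\dagger,\alpha]$ picks up an inhomogeneous piece involving $u^{-1}\partial u$. What \emph{is} covariant is the combination $\partial(\alpha+\alpha^\dagger)-[\alpha^\dagger,\alpha]=-[\partial_\alpha,\partial_{-\alpha^\dagger}]$, which is a curvature-type object (cf.\ \eqref{eq:muRDef}). Once you observe this, the entire integrand in \eqref{eq:newDeg} is frame-independent, and your adapted-frame computation immediately yields the formula in any frame without the total-derivative bookkeeping you attempt. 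The conclusion survives; only the bookkeeping of which pieces are individually covariant needs correcting.
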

\begin{proof}
First, note that the $\beta$ term in \eqref{eq:deg1} vanishes, since $\Tr(\pi[\pi \beta^\dagger,\beta]) = \Tr([\pi \beta^\dagger \pi, \pi \beta \pi]) = 0$. We now focus on the remaining terms:
\begin{align}
\half&\int_0^L \Tr\parens{\pi\brackets{\partial(\alpha' + \tilde\alpha) - [\tilde\alpha, \alpha'] }}\,dt \nonumber \\
&= \half \int_0^L \Tr\parens{\pi\brackets{\partial(\alpha+\alpha^\dagger) - [\alpha^\dagger,\alpha] + \partial_\alpha \partial_{-\alpha^\dagger} \pi}} \nonumber \\
&= \half \int_0^L \Tr(\pi\brackets{\partial(\alpha+\alpha^\dagger) - [\alpha^\dagger, \alpha] } - (\partial_\alpha \pi)( \partial_{-\alpha^\dagger} \pi) )\,dt \ .
\end{align}
This proves the first equality in \eqref{eq:newDeg}. A straightforward calculation proves that the terms involving $\beta$ cancel out from the following expression.
\end{proof}

We now work out some consequences of our notions of stability. We begin with an easy one which serves as an analog in this section of part of the first proof of Proposition \ref{prop:realSings}:
\begin{proposition} \label{prop:compSings1}
If some $B\in \D_{\xi^\CC}^s$ has rank 1 Nahm subdata, then $\xi_0^\CC = \pm \xi_L^\CC$; in particular, $\D^{s,{\rm st}}_\xi \not= \D^s_{\xi^\CC}$ implies that $\xi_0^\CC = \pm \xi_L^\CC$. If $\B^{s,\mathrm{ss}}_{\xi^\RR,{\rm all}}$ contains strictly $\xi^\RR$-semistable pre-Nahm data, then $\xi_0^\RR = \pm \xi_L^\RR$. Finally, if $\D_\xi^{s,{\rm ss}}$ contains strictly $\xi^\RR$-semistable Nahm data then these conditions on $\xi^\RR$ and $\xi^\CC$ are satisfied with the same sign.
\end{proposition}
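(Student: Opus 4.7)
The plan is to exploit the interplay between the boundary data, Nahm's equation $\partial_\alpha \beta = 0$, and the good-projection conditions on a rank one good sub-bundle to produce a conserved quantity along $I$ whose boundary values directly encode the $\xi^\CC$ parameters; for the real part, the degree formula provides the analogous input directly. The approach mirrors the first proof of Proposition \ref{prop:realSings}: there one integrated $\partial \Tr(\hat\pi A^i)$ along $I$; here the complex analog will be $\Tr(\beta \pi)$.

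First I would fix a good projection $\pi$ to rank one pre-Nahm subdata, and (using Proposition \ref{prop:manySub} to freely regard $\pi$ as $H^{s+1}$-regular) show that when the ambient data is Nahm, $\partial\Tr(\beta\pi)\equiv 0$. The computation is brief: $\partial \beta = [\beta,\alpha]$ gives $\Tr(\partial\beta\cdot\pi) = \Tr(\beta[\alpha,\pi])$, while $\partial\pi = (\partial_\alpha\pi) - [\alpha,\pi]$ gives $\Tr(\beta\partial\pi) = \Tr(\beta\,\partial_\alpha\pi) - \Tr(\beta[\alpha,\pi])$, so the two contributions combine to $\Tr(\beta\,\partial_\alpha\pi)$. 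The good projection relations $\partial_\alpha\pi = \pi\,\partial_\alpha\pi$ and $(\partial_\alpha\pi)\pi = 0$, combined with $\beta\pi = \pi\beta\pi$, yield $\Tr(\beta\,\partial_\alpha\pi) = \Tr(\beta\pi(\partial_\alpha\pi)\pi) = 0$ after one cyclic permutation. Thus $\Tr(\beta\pi)$ is constant on $I$.

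Next, I would evaluate at $t_0 \in \{0,L\}$. Since $\pi$ satisfies the boundary condition $\pi|_{\partial I}\in\mf{t}_\CC$ and is a rank one orthogonal projection, Remark \ref{rmk:redundant} forces $\pi(t_0)\in\{\diag(1,0),\ \diag(0,1)\}$. Writing $\beta(t_0) = -i\xi^\CC_{t_0}\sigma_z + X$ with $X\in\mf{t}^\perp_\CC$, one obtains $\Tr(\beta\pi)(t_0) = \pm(-i\xi^\CC_{t_0})$, the sign depending only on which of the two diagonal matrices $\pi(t_0)$ equals. Constancy along $I$ then immediately forces $\xi^\CC_0 = \pm\xi^\CC_L$, proving the first two statements of the proposition.

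For the real part, strict $\xi^\RR$-semistability of an element of $\B^{s,{\rm ss}}_{\xi^\RR,\rm all}$ furnishes positive-rank proper pre-Nahm subdata of slope $0$, i.e., rank one subdata defined by a good projection $\pi$ with $\deg_{\xi^\RR} V = 0$. The general formula $\deg V = \Tr(\sigma_z(\xi^\RR_L\pi(L) - \xi^\RR_0\pi(0)))$ (see the proposition preceding this one) together with $\pi(t_0)\in\{\diag(1,0),\diag(0,1)\}$ yields $\deg V = \pm\xi^\RR_L \mp\xi^\RR_0$, so the vanishing $\deg V = 0$ forces $\xi^\RR_0 = \pm\xi^\RR_L$. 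Finally, for the matching of signs when $\D^{s,\mathrm{ss}}_\xi$ contains strictly semistable Nahm data, I would observe that both signs are governed by the same binary datum, namely whether $\pi(0)=\pi(L)$ or $\pi(0)\neq\pi(L)$ as diagonal projections: if $\pi(0)=\pi(L)$ one gets $\xi^\CC_0 = \xi^\CC_L$ and $\xi^\RR_0 = \xi^\RR_L$, and in the opposite case one gets the opposite signs for both. The whole argument is essentially algebraic once the constancy of $\Tr(\beta\pi)$ is in hand; the only mild obstacle is ensuring cleanly that the good projection conditions are applied in the right order to make the $\Tr(\beta\,\partial_\alpha\pi)$ manipulation above vanish, but this is immediate from the two equivalent forms of condition (iii) in Definition~\ref{def:goodProj}.
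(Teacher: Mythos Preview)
Your proposal is correct and follows essentially the same approach as the paper: both arguments show that $\Tr(\pi\beta)$ is constant along $I$ by combining $\partial_\alpha\beta=0$ with the good-projection identities, then read off the boundary values to pin down the $\xi^{\mb C}$ relation, and both handle the $\xi^{\mb R}$ and sign-matching statements via the degree formula and the dichotomy $\pi(0)=\pi(L)$ versus $\pi(0)=1-\pi(L)$. The only cosmetic difference is that the paper writes the vanishing as an integral $\Tr(\pi\beta)\big|_0^L=\int_0^L\Tr\big((\partial_\alpha\pi)\beta\big)\,dt=0$, while you differentiate pointwise; your intermediate expression $\Tr(\beta\pi(\partial_\alpha\pi)\pi)$ is justified exactly as you indicate (use $\partial_\alpha\pi=\pi\partial_\alpha\pi$, then $\beta\pi=\pi\beta\pi$, then one cyclic shift, then $(\partial_\alpha\pi)\pi=0$).
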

\begin{proof}
Let $\pi$ be a good projection to a rank 1 sub-bundle, and recall that $\pi(t_0)$ is either $\twoMatrix{1}{0}{0}{0}$ or $\twoMatrix{0}{0}{0}{1}$ for $t_0=0,L$. Suppose that $B=(\alpha,\beta)$ solves the complex Nahm equation, $\partial_\alpha \beta = 0$. We then have
\begin{align}
-i \Tr(\sigma_z(\pi(L) \xi^\CC_L &- \pi(0) \xi^\CC_0)) = \Tr(\pi \beta)|_0^L = \int_0^L \Tr(\partial_\alpha(\pi \beta))\,dt = \int_0^L \Tr((\partial_\alpha \pi)\beta)\,dt \nonumber \\
&= \int_0^L \Tr(\pi (\partial_\alpha \pi) \beta)\,dt = \int_0^L \Tr((\partial_\alpha \pi) \pi \beta \pi)\,dt = 0 \ .
\end{align}
The equalities on the second row used the fact that $\pi$ is a good projection, as well as $\pi(\partial_\alpha \pi)\pi = 0$, which follows from \eqref{eq:pipi}. So, we have $\xi_0^\CC = \pm \xi_L^\CC$, where the relative sign is determined by whether $\pi(0) = \pi(L)$ or $\pi(0) = 1-\pi(L)$.

Now, let $B$ be strictly $\xi^\RR$-semistable pre-Nahm data. This means that $\xi_L^\RR \Tr(\sigma_z \pi(L)) = \xi_0^\RR \Tr(\sigma_z \pi(0))$ for some good projection $\pi$ to a rank 1 sub-bundle, i.e., $\xi_0^\RR = \pm \xi_L^\RR$, where the relative sign is determined as in the previous paragraph.
\end{proof}

Recall now the notation $\partial_{\alpha*\alpha'}$ from Definition~\ref{defn:starconnection}.

\begin{lemma} \label{lem:notZero}
Let $\alpha,\alpha'$ define two continuous connections on $E$. Then, any $g\in C^1(I,\End(E))$ with $\partial_{\alpha*\alpha'} g = 0$ has a covariantly constant determinant $\det g \in C^1(I,\End(\Wedge^2 E))$. So, $\det g$ either vanishes nowhere or everywhere, and if $\alpha,\alpha'$ are traceless then $\det g$ is constant. Similarly, $g$ itself either vanishes nowhere or everywhere.
\end{lemma}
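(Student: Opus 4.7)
The plan is to reduce the assertion about $\det g$ to a scalar linear ODE via Jacobi's formula, and to deduce the remaining claim about $g$ from the uniqueness theorem for linear ODEs. First, I would unpack the hypothesis: by Definition~\ref{defn:starconnection}, $\partial_{\alpha*\alpha'} g = 0$ is the equation
\[
\partial g = g\alpha' - \alpha g,
\]
which is a first-order linear ODE for $g\in C^1(I,\End(E))$ with continuous coefficients.

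Next, I would compute the induced equation on $\det g$. The induced connection on the rank-one bundle $\Wedge^2 E$ coming from $\partial+\alpha$ is $\partial+\tra\alpha$, so the induced $*$-connection acts on $\det g \in C^1(I,\End(\Wedge^2 E))$ by $\partial + \tra\alpha - \tra\alpha'$. To verify covariant constancy, I would use Jacobi's formula in its adjugate form: for any $C^1$ family of matrices $g(t)$,
\[
\partial \det g = \tra\bigl(\adj(g)\,\partial g\bigr),
\]
where $\adj(g)$ is the classical adjugate, which satisfies $\adj(g)g = g\adj(g) = (\det g)\,\mathrm{Id}$ without requiring invertibility. Substituting $\partial g = g\alpha' - \alpha g$ and cycling the trace yields
\[
\partial \det g = \tra\bigl(\adj(g)g\alpha'\bigr) - \tra\bigl(g\adj(g)\alpha\bigr) = (\det g)(\tra\alpha' - \tra\alpha),
\]
which is precisely $\partial_{\alpha*\alpha'}\det g = 0$ on $\Wedge^2 E$. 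This scalar linear ODE integrates to
\[
\det g(t) = \det g(0)\cdot \exp\!\left(\int_0^t (\tra\alpha' - \tra\alpha)\,ds\right),
\]
so $\det g$ vanishes identically or nowhere depending on whether $\det g(0)=0$, and it is constant when $\tra\alpha=\tra\alpha'=0$.

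For the final claim about $g$ itself, I would observe that $\partial g = g\alpha' - \alpha g$ is a linear ODE in $\End(E)$ with continuous coefficients; the identically zero section is a solution, so uniqueness of solutions to such ODEs forces any solution $g$ with $g(t_0)=0$ at some $t_0 \in I$ to vanish identically.

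The only subtlety worth flagging is that $g$ need not be invertible, which rules out the shortcut $\partial \det g = (\det g)\,\tra(g^{-1}\partial g)$; this is why I would use Jacobi's formula in the adjugate form above, which is valid universally. Otherwise the argument is a direct application of first-order linear ODE theory.
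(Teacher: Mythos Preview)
Your proof is correct and follows the same approach as the paper, which simply states that $\partial_{\alpha*\alpha'}\det g = 0$ for the induced connection on $\End(\Wedge^2 E)$ and then appeals to uniqueness of ODE solutions. You have filled in the computation (via Jacobi's formula in adjugate form) that the paper leaves implicit, and your care in avoiding the assumption that $g$ is invertible is appropriate.
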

\begin{proof}
Using the induced connection on $\End(\Wedge^2 E)$, we have $\partial_{\alpha*\alpha'} \det g = 0$. The remaining statements follow from uniqueness of solutions to ordinary differential equations.
\end{proof}

\begin{proposition} \label{prop:stabInv}
Suppose $(\alpha,\beta),(\alpha',\beta') \in \B^{s,\mathrm{ss}}_\xi$ are pre-Nahm data such that at least one is $\xi^\RR$-stable. Then, suppose $g\in H^{s+1}(I, \End(E))$ is nonzero, is diagonal at $0,L$, and relates the two pre-Nahm data via $g \circ \partial_{\alpha} = \partial_{\alpha'} \circ g$ and $g\beta = \beta' g$. That is, $g$ is a nonzero morphism from the former pre-Nahm data to the latter. Then $g$ is invertible. This also holds for $s=\infty$.
\end{proposition}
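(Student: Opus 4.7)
The plan is to argue by contradiction: suppose $g$ is not everywhere invertible, and use the stability hypothesis to force a contradictory degree inequality. First I will apply Lemma \ref{lem:notZero} to the connection $\alpha'*\alpha$ on $\End(E) \simeq E \otimes E^*$: the intertwining relations $g\circ \partial_\alpha = \partial_{\alpha'}\circ g$ and $g\beta = \beta' g$ are precisely the statements $\partial_{\alpha'*\alpha} g = 0$ and $(\beta'*\beta)g = 0$, and since $\alpha,\alpha'$ are traceless the lemma gives that $\det g$ is a (constant) scalar, and that $g$ vanishes either nowhere or everywhere. Because $g \neq 0$ by hypothesis, $g$ vanishes nowhere; if in addition $\det g \not\equiv 0$ then $g$ is pointwise invertible and we are done, with $g^{-1}\in H^{s+1}$ by the standard fact that matrix inversion is smooth on invertibles. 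So I may assume $\det g \equiv 0$, and together with $g$ vanishing nowhere this forces $g$ to have rank exactly $1$ at every $t\in I$.

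Next I will extract two good rank-$1$ pre-Nahm subdata from $g$, namely $V := \ker g \subset E$ and $W := \im g \subset E'$. These are $H^{s+1}$-regular rank-$1$ subbundles (invoking Proposition \ref{prop:manySub}, or directly by locally writing $g$ in Smith normal form). The intertwining relations immediately give that $\partial_\alpha$ and $\beta$ preserve $V$, while $\partial_{\alpha'}$ and $\beta'$ preserve $W$, so both are good subdata. At each endpoint $t_0\in\partial I$, the matrix $g(t_0)$ is diagonal of rank $1$, hence of the form $\mathrm{diag}(a,0)$ or $\mathrm{diag}(0,b)$; in particular, $V_{t_0}$ and $W_{t_0}$ each lie along one of the marked axes $\CC e_1, \CC e_2$, verifying the marked-bundle boundary conditions, and moreover the two axes are \emph{opposite} at each endpoint: if $\ker g(t_0) = \CC e_i$ then $\im g(t_0) = \CC e_{3-i}$.

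Finally, the degree formula $\deg V = \Tr(\sigma_z(\xi^\RR_L \pi_V(L) - \xi^\RR_0 \pi_V(0)))$ combined with this opposite-axis pairing yields the key identity $\deg V + \deg W = 0$. Semistability of $(\alpha,\beta)$ then gives $\deg V \le 0$ and semistability of $(\alpha',\beta')$ gives $\deg W \le 0$, which together force $\deg V = \deg W = 0$. But by hypothesis at least one of the two pre-Nahm data is stable: if $(\alpha,\beta)$ is stable, then every proper positive-rank subdata has strictly negative degree, contradicting $\deg V = 0$; if instead $(\alpha',\beta')$ is stable, we derive a contradiction from $\deg W = 0$. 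This contradiction establishes that $\det g \ne 0$, proving the proposition in both the finite-$s$ and $s=\infty$ cases. The conceptually delicate step is the boundary pairing in the previous paragraph — the opposite-axes feature of rank-$1$ diagonal matrices is what produces the sign cancellation $\deg V = -\deg W$; everything else is the standard gauge-theoretic incarnation of the classical fact that a nonzero morphism between semistable objects of equal slope, one of which is stable, must be an isomorphism.
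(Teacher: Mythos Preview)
Your proof is correct and follows essentially the same approach as the paper's: both argue by contradiction, invoke Lemma~\ref{lem:notZero} to reduce to the constant-rank-one case, extract the good sublines $\ker g$ and $\im g$, and derive a degree contradiction from (semi)stability. The only minor difference is in how the identity $\deg(\ker g) + \deg(\im g) = 0$ is obtained---the paper uses the induced pre-Nahm-data isomorphism $E/\ker g \simeq \im g$ (the classical slope-stability argument), whereas you read it off directly from the opposite-axis structure of a rank-one diagonal matrix at the endpoints; both are valid and yield the same conclusion.
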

\begin{proof}
Suppose, toward a contradiction, that $g$ is not invertible somewhere. By the previous lemma, it is neither invertible nor zero anywhere. Hence, the rank of $g$ is constant and everywhere equal to one; in particular, $L_1 := \ker g$ and $L_2 := \im g$ are both line sub-bundles of $E$. Then, since $\partial_\alpha \ker g \subset \ker g$ and $\beta \ker g\subset \ker g$, and $\partial_{\alpha'} \im g \subset \im g$ and $\beta' \im g\subset \im g$, we conclude that $L_1$ is good with respect to $(\alpha,\beta)$ and $L_2$ is good with respect to $(\alpha',\beta')$. (Note here the importance of our relaxation of the regularity requirements of the projections defining these bundles so that they need only be in $H^{s+1}$: higher regularity is not guaranteed, since $g\in H^{s+1}$. We do at least have $H^{s+1}$ regularity: for $L_2$ the projector is $M (M^\dagger M)^{-1} M^\dagger$, where $M\in H^{s+1}$ is a matrix whose columns consist of a basis for $\im g$, and for $L_1$ the projector is $1-\tilde M(\tilde M^\dagger \tilde M)^{-1} \tilde M^\dagger$, where $\tilde M \in H^{s+1}$ is a matrix whose columns consist of a basis for $\im g^\dagger$.) The $\xi^\RR$-(semi)stability hypotheses then imply that $\deg L_1 \le \half \deg (\alpha,\beta)=0, \deg L_2 \le \half\deg (\alpha',\beta')=0$, where at least one of these inequalities is strict. On the other hand, $g$ defines an isomorphism from $E/L_1$ to $L_2$. We put the natural pre-Nahm data $(q\partial_\alpha, q\beta)$ on $E/L_1$, where $q : E\to E/L_1$ is the quotient map. Finally, letting $g^{-1}$ denote the inverse map from $L_2$ to $E/L_1$ and observing that $g q \partial_\alpha \circ g^{-1} = g\partial_\alpha \circ g^{-1} = \partial_{\alpha'}$, and similarly that $g q \beta g^{-1} = \beta'$, we find that the pre-Nahm data on $E/L_1$ and $L_2$ are isomorphic, and in particular have the same degree. This yields the contradiction $\deg E < \deg E$.
\end{proof}

\begin{corollary} \label{cor:freeAct}
For $1 \le s \le \infty$, if $B \in \mc{B}^s_{\mathrm{all}}$ is $\xi^{\mb{R}}$-stable for some $\xi^{\mb{R}}$ and $g$ is a morphism of pre-Nahm data from $B$ to itself, then $g$ is a constant multiple of the identity. In particular, $\G^{s+1}_\CC$ acts freely on $\B^{s,\mathrm{st}}_\xi$.
\end{corollary}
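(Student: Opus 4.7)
The plan is to deduce this corollary almost directly from Proposition \ref{prop:stabInv}, together with Lemma \ref{lem:notZero}, by a standard eigenvalue argument.

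First I would observe that if $B = (\alpha,\beta)$ is $\xi^{\mb{R}}$-stable and $g$ is a morphism of pre-Nahm data from $B$ to itself, then Proposition \ref{prop:stabInv} (applied with $(\alpha',\beta') = (\alpha,\beta)$) tells us that if $g$ is nonzero then it is invertible. Indeed, the hypotheses are satisfied: $g \in H^{s+1}(I, \mathrm{End}(E))$, $g$ commutes with $\partial_{\alpha}$ and $\beta$ by assumption, and $g|_{\partial I} \in \mf{t}_{\mb{C}}$ by the definition of a morphism.

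Next I would fix any point $t_0 \in I$ and choose an eigenvalue $\lambda \in \mathbb{C}$ of $g(t_0) \in \mathrm{End}(E_{t_0})$. The identity $\mathrm{Id}_E$ is plainly a morphism from $B$ to itself, so $g - \lambda\, \mathrm{Id}_E$ is again such a morphism: it lies in $H^{s+1}$, it is diagonal at $\partial I$ since both $g$ and $\mathrm{Id}_E$ are, and it commutes with $\partial_\alpha$ and $\beta$. By construction $g - \lambda\, \mathrm{Id}_E$ has nontrivial kernel at $t_0$, so in particular is not invertible there. Since $\alpha$ is traceless, Lemma \ref{lem:notZero} forces $\det(g - \lambda\,\mathrm{Id}_E)$ to be a constant on $I$; as this constant vanishes at $t_0$, the operator $g - \lambda\,\mathrm{Id}_E$ fails to be invertible at every point of $I$. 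By the contrapositive of Proposition \ref{prop:stabInv} just applied, $g - \lambda\, \mathrm{Id}_E$ must vanish identically, i.e.\ $g = \lambda\,\mathrm{Id}_E$ for the constant $\lambda$. There are no obstacles here beyond checking the hypotheses carefully; the only mild subtlety is remembering that ``nonzero'' in Proposition \ref{prop:stabInv} means nonzero as an element of $H^{s+1}$, which by Lemma \ref{lem:notZero} is equivalent to being nowhere vanishing.

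For the final claim about the action of $\mathcal{G}^{s+1}_{\mathbb{C}}$ on $\mathcal{B}^{s,\mathrm{st}}_\xi$, I would simply note that a gauge transformation stabilizing $B \in \mathcal{B}^{s,\mathrm{st}}_\xi$ is, by Definition \ref{defn:starconnection} and Proposition \ref{prop:kerMorph} (or directly from the definitions), precisely a morphism of pre-Nahm data from $B$ to itself lifting to $\widetilde{\mathcal{G}}^{s+1}_{\mathbb{C}}$. By the preceding paragraph, such a lift equals $\lambda\,\mathrm{Id}$ for some $\lambda \in \mathbb{C}$, and the condition $\det g = 1$ forces $\lambda = \pm 1$. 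Since $\mathcal{G}^{s+1}_{\mathbb{C}} = \widetilde{\mathcal{G}}^{s+1}_{\mathbb{C}}/Z_2$ with $Z_2 = \{\pm \mathrm{Id}\}$, this stabilizer is trivial in $\mathcal{G}^{s+1}_{\mathbb{C}}$, so the action is free.
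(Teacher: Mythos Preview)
Your proof is correct and follows essentially the same approach as the paper: pick an eigenvalue $\lambda$ of $g$ at a point, apply Proposition~\ref{prop:stabInv} to the morphism $g-\lambda\,\mathrm{Id}$ to conclude it vanishes, and then observe that $\det g=1$ forces $\lambda=\pm 1$. Your explicit invocation of Lemma~\ref{lem:notZero} is harmless but redundant, since failure of invertibility at a single point already suffices to apply the contrapositive of Proposition~\ref{prop:stabInv}.
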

\begin{proof}
Let $g$ be a morphism from $(\alpha,\beta) \in \B^{s,\mathrm{st}}_\xi$ to itself. Choose some $t \in I$ and let $\lambda$ be one of the eigenvalues for the $g$ action on the fiber $E_t$. Then, $g-\lambda I$ defines an endomorphism of $E$ which is diagonal at $0,L$ and commutes with both $\beta$ and $\partial_\alpha$, and so it is either zero or an isomorphism. But, since it is not an isomorphism at $t$ it is zero everywhere. That is, $g$ is simply multiplication by a nonzero constant. Such gauge transformations with $\lambda = \pm 1$ are precisely the ones which we quotiented out in the definition of $\G^{s+1}_\CC$, so the latter acts freely on $\B^{s,\mathrm{st}}_\xi$.
\end{proof}

We now turn to a discussion of orbit closures. This discussion builds on the discussion at the end of \S6.2.2 of \cite{DK}. It gives a (weaker) analog of Proposition \ref{prop:gaugeConv}, where the role of compactness of $SU(2)$ is now played by polystability. We first introduce some terminology that will be of use.

\begin{defn} Suppose pre-Nahm data $B = (\alpha, \beta)$ admits a good subline $L \subset E$. Recall that both $L$ and $E/L$ inherit pre-Nahm data from $B$. Let $F$ denote the filtration $0 \subset L \subset E$ with associated exact sequence \beq\label{eq:JH}0 \to L \to E \to E/L \to 0\eeq when thought of as endowed with this pre-Nahm data; we term this filtration $F$ a \emph{good filtration}. In particular, $L \oplus E/L$ is a rank two bundle with pre-Nahm data. We refer to this pre-Nahm data on $L \oplus E/L$ as the associated graded pre-Nahm data of $F$ and denote it by $\mathrm{gr}\,B$, or  $\mathrm{gr}_{F}B$ for specificity.

Next, the fact that $E$ is naturally an $SL(2,\mb{C})$-bundle means that there is a natural isomorphism $\Wedge^2 E \simeq \underline{\mb{C}}$, where $\underline{\mb{C}}$ denotes the trivial line bundle (with nearly trivial pre-Nahm data, where $\underline{\mb{C}}^-_{\partial I} = \underline{\mb{C}}_{\partial I}$ and $\underline{\mb{C}}^+_{\partial I}$ is trivial\footnote{The fact that the pre-Nahm data $\det E$ is not entirely trivial, thanks to the fact that $\underline{\mb{C}}_{\partial I}$ equals $\underline{\mb{C}}^-_{\partial I}$, as opposed to $\underline{\mb{C}}^+_{\partial I}$, can be reinterpreted in the $Z_2$-equivariant construction on $S^1$ as the fact that the bundle is naturally only an equivariant $SL^\pm(2,\CC)$-bundle, but not an equivariant $SL(2,\CC)$-bundle, because $\det\sigma_z = -1$. Here, $SL^\pm(2,\CC)$ is defined as the kernel of the homomorphism $GL(2,\CC)\stackrel{\det^2}{\to} \CC^\times$.}); given a good subline $L \subset E$ as above, this isomorphism in turn induces an isomorphism $(L^*)^- \simeq E/L$, where the minus superscript indicates that we swap the superscripts on the spaces $(L^*)^\pm_{\partial I}$, which respects the natural pre-Nahm data on both sides.

Finally, given $\xi^{\mb{R}} \in \mf{z}_{\partial I}$, if $F$ is a good filtration such that $\deg_{\xi^{\mb{R}}}L = \deg_{\xi^{\mb{R}}}E = 0$, or equivalently $\deg_{\xi^{\mb{R}}}(E/L) = 0$, then we term $F$ a \emph{Jordan-H\"older filtration}.\footnote{Typically, we would furthermore demand that $L$ and $E/L$ be $\xi^{\mb{R}}$-stable, but these are automatic in the case here of $L, E/L$ both rank one.}
\end{defn}

Recall that the extended gauge group is the union of $\G^{s+1}_\CC$, $\G^{s+1}_\CC \cdot \exp(\pi i t \sigma_x / 2L)$, $\G^{s+1}_\CC \cdot i \sigma_x$, and $\G^{s+1}_\CC \cdot \exp(\pi i (L - t) \sigma_x / 2L)$, and that there is a map from this group to $Z_2\times Z_2$ which indicates in which of these four sets an extended gauge transformation is contained. We now establish the following technical lemma:

\begin{lem}\label{lem:guse} If the pre-Nahm data $B \in \mc{B}^s_{\mathrm{all}}$ admits a good subline $L \subset E$, we may find an extended gauge transformation $g$ such that $g^{-1} L$ is the subline $\mb{C} e_1 \subset E$. If moreover $B$ admits two good sublines $L_1, L_2$, then $g$ may be chosen such that $g^{-1} L_i$ is the subline $\mb{C}e_i \subset E$ for both $i \in \{1, 2\}$. \end{lem}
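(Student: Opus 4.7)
The plan is to split the construction into a ``topological'' step that puts the boundary data of $L$ into standard form and an ``honest'' gauge step that then rotates $L$ to $\mathbb{C}e_1$ throughout the interior. For the topological step, recall that a good subline has $L|_{t_0} \in \{\mathbb{C}e_1, \mathbb{C}e_2\}$ at each endpoint $t_0 \in \{0, L\}$, since any good projection $\pi$ is required to be diagonal there. The four possible boundary configurations for $(L|_0, L|_L)$ correspond bijectively to the four elements of $W(\mc{G}) \simeq Z_2 \times Z_2$, which are represented inside $N(\G^{s+1}_{\CC})$ by $\{1,\, \exp(i\pi t \sigma_x/2L),\, \exp(i\pi (L-t) \sigma_x / 2L),\, i\sigma_x\}$: each of these has boundary values in $\{I, i\sigma_x\}$, and conjugation by $i\sigma_x$ implements the swap $\mathbb{C}e_1 \leftrightarrow \mathbb{C}e_2$. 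So I would first multiply by the unique such extended gauge transformation reducing to the case $L|_0 = L|_L = \mathbb{C}e_1$.

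With this reduction, the goal is an honest $g \in \G^{s+1}_{\CC}$ taking $\mathbb{C}e_1$ to $L$ pointwise. Since $L$ is $H^{s+1}$-regular by Proposition \ref{prop:manySub} and $I$ is contractible, fix a nowhere-vanishing $H^{s+1}$ section $s_0$ of $L$. Rescale by a smooth scalar $\phi$ to form $s := \phi s_0$, choosing $\phi$ so that $s(t_0) = e_1$ and $(e_1^* \partial s)(t_0) = 0$ at both endpoints: the first condition fixes $\phi(t_0)$ and the second fixes $\partial \phi(t_0)$, and any smooth function with these prescribed 1-jets works. Let $\tilde t$ be an $H^{s+1}$ section of the orthogonal complement $L^{\perp}$ (for the standard Hermitian metric on $E$) satisfying $\tilde t(t_0) = e_2$, and set $t := \tilde t/D$ with $D := \det(s\ \tilde t)$. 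At $t_0$ one has $D(t_0) = 1$, and the short calculation $\partial D(t_0) = (e_1^* \partial s)(t_0) + (e_2^* \partial \tilde t)(t_0) = (e_2^* \partial \tilde t)(t_0)$ (using the boundary condition already imposed on $s$) yields $(e_2^* \partial t)(t_0) = 0$ automatically. Setting $g := (s\ t)$ then gives an element of $\G^{s+1}_{\CC}$ with $g \cdot \mathbb{C}e_1 = L$, as desired.

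For the case of two good sublines $L_1, L_2$, I would first reduce to the case where they are pointwise transverse. If $L_1 \neq L_2$, consider the inclusion-then-quotient $L_1 \hookrightarrow E \twoheadrightarrow E/L_2$, which is a morphism of Nahm data between two rank-one Nahm bundles; regarded via Proposition \ref{prop:kerMorph} as a flat section of the line bundle $\Hom(L_1, E/L_2)$ for the associated star-connection, the flatness condition is a first-order linear ODE, so by the same argument as Lemma \ref{lem:notZero} this morphism either vanishes identically (forcing $L_1 \subset L_2$, hence $L_1 = L_2$, contrary to hypothesis) or is nowhere vanishing. Hence $L_1(t) \oplus L_2(t) = E(t)$ for all $t$; at the endpoints, one has $\{L_1|_{t_0}, L_2|_{t_0}\} = \{\mathbb{C}e_1, \mathbb{C}e_2\}$. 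Using the Weyl-group representative keyed to $L_1$ alone (as the boundary behavior of $L_2$ is then automatically the complementary one) reduces to $L_1|_{\partial I} = \mathbb{C}e_1$ and $L_2|_{\partial I} = \mathbb{C}e_2$. Now I would repeat the frame construction with the first column $s$ being a section of $L_1$ and the second column $t$ a section of $L_2$, rescaling $s \to \phi s$ and $t \to \psi t$ by smooth scalar functions to enforce the boundary and determinant conditions. The main technical point, which I expect to be the only computation requiring care, is that $\det(s\ t) = 1$ couples the two rescaling functions via the constraint $\phi\psi \det(s_0\ t_0) \equiv 1$, so that the boundary values of $\partial\phi, \partial\psi$ must be compatible with differentiating this constraint; a short expansion identical to that of the previous paragraph verifies the compatibility, and the construction then proceeds without obstruction.
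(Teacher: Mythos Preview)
Your argument is correct, but it differs from the paper's route. The paper constructs the first column $v$ of $g$ by parallel transport of $e_1$ under $\partial_\alpha$; since a good subline is $\partial_\alpha$-invariant, $v$ stays in $L$, and since $\alpha|_{\partial I}\in\mf{t}_{\mb{C}}^\perp$, the derivative $\partial v(t_0)=-\alpha(t_0)e_1$ is automatically off-diagonal. The second column $w$ is obtained by parallel transport of $e_2$ (or $c^{-1}e_2$) near the endpoints; tracelessness of $\alpha$ makes $\det g$ covariantly constant, hence identically $1$ near the boundary, and one then extends $w$ over the interior subject only to $\det g=1$. For two sublines the paper takes $w$ to be global parallel transport of $e_2$; the relation $\det g\equiv 1$ then follows for free and simultaneously establishes transversality of $L_1,L_2$.

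Your construction instead avoids the connection entirely: you pick arbitrary sections of $L$ and of $L^\perp$ (or $L_2$) and fix up the boundary 1-jets and determinant by scalar rescaling, verifying by hand the one nontrivial compatibility (that the determinant constraint plus the condition on $e_1^*\partial s$ forces the condition on $e_2^*\partial t$). This is a perfectly valid and slightly more elementary argument---it would apply to any $H^{s+1}$ line subbundle with the right boundary values, not just a good one. The paper's approach buys cleanliness: the boundary conditions and determinant normalization are automatic consequences of $\alpha|_{\partial I}\in\mf{t}_{\mb{C}}^\perp$ and $\Tr\alpha=0$, with no case-by-case verification. Your explicit transversality step (the composite $L_1\hookrightarrow E\twoheadrightarrow E/L_2$ as a flat section of a line bundle) is also correct in substance, though invoking Proposition~\ref{prop:kerMorph} is mildly awkward since its hypothesis $\xi^{+,\mb C}=-\xi^{-,\mb C}$ need not hold for rank-one subdata; it is cleaner to argue directly that this section satisfies a first-order linear ODE and hence vanishes either nowhere or identically, or equivalently to note that distinct solutions $\ell_1,\ell_2$ of \eqref{eq:ellpreserved} can never coincide.
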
 

\bp We may first apply an extended gauge transformation so as to assume wlog that $L$ is $\mb{C}e_1$ at both endpoints $\partial I$. Let us now write our putative gauge transformation $g \in \tilde{\mc{G}}^{s+1}_{\mb{C}}$ as comprised of columns $v(t), w(t)$. We wish to mandate that (i) $v(t)$ spans $L$ for all $t \in I$, (ii) $g$ everywhere has determinant $1$, and (iii) $g|_{\partial I}$ is diagonal and $g^{-1}\partial g|_{\partial I}$ is off-diagonal. To do so, construct $v$ by parallel transport of $v(0) = e_1$. By the assumption that $L$ is a good subline, we shall also have that $v$ at the right endpoint is $c e_1$ for some $c \in \mb{C}^{\times}$. Construct $w$ in local neighborhoods of the endpoints of $I$ by parallel transport of $e_2$ and $c^{-1}e_2$ at the left and right endpoints, respectively. As $\Tr \alpha = 0$, it follows that in these local neighborhoods, $g$ still has $\det 1$. Moreover, $g$ now manifestly satisfies the correct boundary conditions. Finally, in the interior of $I$, choose any continuation of $w$ that ensures $\det g = 1$ everywhere. 

Finally, if $B$ in fact admits two good sublines $L_1, L_2$, we may again apply an extended gauge transformation so as to wlog assume that $L_i$ agrees with $\mb{C}e_i$ at both endpoints $\partial I$ for $i \in \{1, 2\}$. It is now even easier to proceed: one may simply choose $w$ to be the parallel transport of $w(0) = e_2$ everywhere. By assumption that $L_2$ too is a good subline, it will automatically be true that the $g$ so constructed everywhere has $\det g = 1$ and that $w$ at the right endpoint of $I$ will be given by $c^{-1} e_2$. 
\ep

Recall now from Proposition~\ref{prop:realStabs} that pre-Nahm data $B$ is reducible if and only if 
$B$ admits two good sublines $L_1, L_2$ such that $E \simeq L_1 \oplus L_2$ as pre-Nahm data. Note in particular that the associated graded pre-Nahm data of a good filtration automatically satisfies this condition, as does $\xi^{\mb{R}}$-strictly polystable pre-Nahm data.

\begin{lem}\label{lem:psJH} Suppose $B$ is reducible pre-Nahm data. Then for any  good filtration $F$ on $B$, $B$ equals $\mathrm{gr}_F(B)$. In particular, this conclusion holds if $B$ is strictly $\xi^{\mb{R}}$-polystable for some $\xi^{\mb{R}}$. \end{lem}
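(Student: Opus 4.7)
The plan is to reduce $B$ to an explicit diagonal form via an extended gauge transformation and then directly enumerate all good sublines.

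Starting from Proposition~\ref{prop:realStabs}(iii), reducibility of $B$ provides two complementary good projections $\pi_1 + \pi_2 = \mathrm{Id}$ onto sublines $L_1, L_2 \subset E$ with $E = L_1 \oplus L_2$ as pre-Nahm data. At each boundary point $t_0$ one has $\{L_1(t_0), L_2(t_0)\} = \{\mathbb{C}e_1, \mathbb{C}e_2\}$, so by (a straightforward extension of) Lemma~\ref{lem:guse} a single extended gauge transformation simultaneously carries $L_1, L_2$ to $\mathbb{C}e_1, \mathbb{C}e_2$ globally on $I$. In this gauge both $\alpha$ and $\beta$ are diagonal, whence the boundary condition $\alpha|_{\partial I} \in \mf{t}_{\mathbb{C}}^{\perp}$ forces $\alpha|_{\partial I} = 0$, and the complex Nahm equation (whose commutator vanishes on diagonals) forces $\beta$ to be a constant diagonal matrix $-i \xi_0^{\mathbb{C}} \sigma_z$; consistency with $\beta|_{\partial I}$ recovers the equality $\xi_0^{\mathbb{C}} = \pm\xi_L^{\mathbb{C}}$ of Proposition~\ref{prop:compSings1}, with the sign depending on whether an eigenvector swap occurred.

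Next, using Proposition~\ref{prop:manySub}(v), any good rank-one subline $L \subset E$ corresponds to $\ell \colon I \to \mathbb{CP}^1$ with $\ell|_{\partial I} \in \{0,\infty\}$ solving~\eqref{eq:ellpreserved} and~\eqref{eq:ellPreserved2}. In the diagonal frame these reduce to the multiplicative ODE $\partial \ell = (\alpha_{11} - \alpha_{22})\,\ell$ together with $-2i\xi_0^{\mathbb{C}} \ell = 0$ on the chart where $\ell$ is finite. If $\xi_0^{\mathbb{C}} \ne 0$ the $\beta$-constraint immediately forces $\ell \equiv 0$ on that chart; the analogous analysis in the complementary chart $m = 1/\ell$ gives $m \equiv 0$, so $L = L_1$ or $L = L_2$. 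If $\xi_0^{\mathbb{C}} = 0$, the multiplicative ODE prevents $\ell$ from ever transitioning between $\mathbb{C}^\times$ and $\{0,\infty\}$, so the boundary conditions again force $\ell \equiv 0$ or $\ell \equiv \infty$.

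Once it is established that $L = L_i$ for some $i \in \{1,2\}$, the existing splitting $E = L_1 \oplus L_2$ furnishes $L_{3-i}$ as a good complement, and the composition $L_{3-i} \hookrightarrow E \twoheadrightarrow E/L$ is a morphism of pre-Nahm data that is a fiberwise isomorphism, hence an isomorphism. This produces $B = L_1 \oplus L_2 \simeq L \oplus (E/L) = \mathrm{gr}_F B$. The ``in particular'' clause is then immediate, since strict $\xi^{\mathbb{R}}$-polystability of rank-two traceless pre-Nahm data is by definition a direct sum decomposition $B = N_1 \oplus N_2$ into rank-one pre-Nahm subdata, which is precisely reducibility as used above. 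I expect the main subtlety to be bookkeeping of the (possibly swapped) marked-bundle structure at the two endpoints after the extended gauge transformation, which controls the relative sign between $\xi_0^{\mathbb{C}}$ and $\xi_L^{\mathbb{C}}$; however, this is local at the boundary and does not affect the structure of the argument.
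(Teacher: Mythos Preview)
The core of your argument --- that any good subline must coincide with one of $L_1, L_2$ by ODE uniqueness --- is exactly the paper's approach and is correct. However, you introduce an error along the way: the lemma is stated for reducible \emph{pre-Nahm data}, which need not satisfy $\partial_\alpha \beta = 0$. Your claim that ``the complex Nahm equation \ldots\ forces $\beta$ to be a constant diagonal matrix'' is therefore unavailable, and the ensuing case split on $\xi_0^{\mathbb{C}}$ (in particular the $\xi_0^{\mathbb{C}} \ne 0$ branch, which uses that $\beta_{11} - \beta_{22}$ is a nonzero constant) is unjustified.

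Fortunately this is harmless: your own $\xi_0^{\mathbb{C}} = 0$ argument --- that the linear ODE $\partial \ell = (\alpha_{11} - \alpha_{22})\,\ell$ with $\ell(0) \in \{0,\infty\}$ forces $\ell \equiv 0$ or $\ell \equiv \infty$ --- works uniformly and needs neither the $\beta$-constraint nor any case split. The paper's proof is this same idea phrased more directly, without the diagonalizing gauge transformation: it simply notes that any good subline agrees with $L_1$ or $L_2$ at the left endpoint by the boundary condition, and is then completely determined by that value via the first-order ODE~\eqref{eq:ellpreserved} (or equivalently the projector equation), hence coincides with that $L_i$ globally.
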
 \bp If $E \simeq L_1 \oplus L_2$, any other good subline must, because of the boundary conditions, agree with either $L_1$ or $L_2$ at the left endpoint. But then, either in terms of the associated projector or the associated map $\ell$ in the terminology of Proposition~\ref{prop:manySub}, the good subline is completely determined by its value at the left endpoint and hence must agree with either $L_1$ or $L_2$; the conclusion is now immediate. \ep

The proof of this lemma also makes it clear that irreducible pre-Nahm data has at most one good subline, and so any irreducible pre-Nahm data is $\xi^\RR$-stable for some $\xi^\RR$. This strengthens the conclusion of Corollary \ref{cor:freeAct}. That irreducible pre-Nahm data has at most one good subline, while reducible pre-Nahm data has two good sublines, implies that pre-Nahm data admits a Jordan-H\"older filtration if and only if it is strictly $\xi^\RR$-semistable. We also note that all reducible pre-Nahm data is strictly $\xi^\RR$-polystable for some $\xi^\RR$, since this is the case when $\xi^\RR=0$.

\begin{prop}\label{prop:orbitclosure} Suppose $1 \le s \le \infty$, and suppose $B_n \in \mc{B}^s_{\mathrm{all}}, g_n \in \mc{G}^{s+1}_{\mb{C}}$ are sequences such that $B_n \to B, g_n(B_n) \to B'$ in $\mc{B}^s_{\mathrm{all}}$. Then, at least one of the following holds: either (i) there exists $g \in \mc{G}^{s+1}_{\mb{C}}$ such that $g(B) = B'$, or (ii) there exist good filtrations $0 \subset L \subset E$, $0 \subset L' \subset E$ of $B, B'$ respectively such that there exists a morphism $g$ from $B'$ to $B$ which is everywhere rank 1 and induces an isomorphism of pre-Nahm data $g'$ from $E/L'$ to $L$.

In the second case, we obtain a dual isomorphism $g'^{\vee}$ from $E/L$ to $L'$. These transformations $g'$ and $(g'^{\vee})^{-1}$ together yield an isomorphism relating the associated graded pre-Nahm data $L \oplus E/L$ and $E/L' \oplus L'$. There exist identifications of both $L\oplus E/L$ and $E/L'\oplus L'$ with the trivial rank-two (marked) bundle $E$ such that we may regard $g'\oplus (g'^\vee)^{-1}$ as a gauge transformation in $\G_\CC^{s+1}$.
\end{prop}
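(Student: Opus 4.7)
The plan is to reduce to a dichotomy based on whether the sequence $\|g_n\|_{C^0}$ is bounded. Lift each $g_n$ to $\tilde{\mc{G}}^{s+1}_{\mb{C}}$, and rewrite the gauge relation $g_n(B_n) = B_n'$ using the $*$-notation of Definition~\ref{defn:starconnection} as the pair of homogeneous linear conditions $\partial_{\alpha_n * \alpha_n'} g_n = 0$ and $(\beta_n * \beta_n') g_n = 0$, where $B_n = (\alpha_n, \beta_n)$ and $g_n(B_n) = (\alpha_n', \beta_n')$. The linearity of these equations is crucial because it will permit rescaling $g_n$ by scalars. Passing to a subsequence, we may assume either $\|g_n\|_{C^0}$ is uniformly bounded or tends to infinity, which will lead respectively to alternatives (i) and (ii).

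In the bounded case, the first-order ODE $\partial g_n = g_n \alpha_n' - \alpha_n g_n$ with uniformly $C^0$-bounded coefficients (since $\alpha_n, \alpha_n'$ are convergent in $\mc{B}^s_{\mathrm{all}} \hookrightarrow C^0$) yields uniform $H^1$-bounds on $g_n$. Following the weak-compactness argument of Proposition~\ref{prop:gaugeConv} \emph{verbatim}, we extract a weak $H^1$ and strong $C^0$ subsequential limit $g$; the intertwining equations then hold for $g$ in $L^2$, and elliptic regularity applied to $\partial_{\alpha * \alpha'} g = 0$ bootstraps $g$ to $H^{s+1}$ (or smooth if $s = \infty$). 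The boundary conditions on $g$ follow from $C^0$-convergence, and $\det g = \lim \det g_n = 1$ shows $g \in \tilde{\mc{G}}^{s+1}_{\mb{C}}$; its image in $\mc{G}^{s+1}_{\mb{C}}$ realizes alternative (i).

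In the unbounded case, pass to a subsequence with $c_n := \|g_n\|_{C^0} \to \infty$ and rescale $\tilde g_n := g_n / c_n$. The rescaled sequence satisfies the same homogeneous intertwining equations with $\|\tilde g_n\|_{C^0} = 1$, and the identical $H^1$-bound argument produces a weak $H^1$ and strong $C^0$ limit $\tilde g$ with $\|\tilde g\|_{C^0} = 1$; elliptic regularity bootstraps $\tilde g$ to $H^{s+1}(I, \mathrm{End}(E))$. Crucially, $\det \tilde g = \lim c_n^{-2} \det g_n \equiv 0$ while $\tilde g \not\equiv 0$, so Lemma~\ref{lem:notZero} forces $\tilde g$ to have constant rank one on $I$. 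Set $L' := \ker \tilde g$ and $L := \mathrm{im}\,\tilde g$, both $H^{s+1}$-regular line subbundles of $E$. The intertwining equations directly give $\partial_{\alpha'}(L') \subset L'$ and $\beta'(L') \subset L'$, and dually $\partial_\alpha(L) \subset L$ and $\beta(L) \subset L$. The boundary conditions are verified by noting that $\tilde g|_{\partial I}$ is diagonal as a $C^0$-limit of diagonal matrices, so $L|_{\partial I}, L'|_{\partial I}$ are each spanned by $e_1$ or $e_2$; Proposition~\ref{prop:manySub} then confirms that $L$ and $L'$ are good subbundles. The factorization $\tilde g : E \twoheadrightarrow E/L' \xrightarrow{g'} L \hookrightarrow E$ exhibits $g' : E/L' \xrightarrow{\sim} L$ as the required isomorphism of pre-Nahm data. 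For the dual statement, the canonical trivialization of $\det E$ identifies $L^* \simeq (E/L)^-$ and $(L')^* \simeq (E/L')^-$, and the inverse transpose of $g'$ yields $g'^\vee : E/L \xrightarrow{\sim} L'$. Finally, applying Lemma~\ref{lem:guse} to $B$ (with its good subline $L$) and to $B'$ (with its good subline $L'$) supplies extended gauge transformations that identify the associated gradeds $L \oplus E/L$ and $E/L' \oplus L'$ with the trivial marked bundle $E$; under these identifications, $g' \oplus (g'^\vee)^{-1}$ corresponds to an element of $\mc{G}^{s+1}_{\mb{C}}$.

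The main obstacle is verifying that the rescaled limit $\tilde g$ in the unbounded case does not degenerate to rank zero on an open subset or at the boundary, so that $L$ and $L'$ genuinely define line subbundles with the required good marked-bundle structure. Lemma~\ref{lem:notZero} handles the interior by ensuring that a nonzero solution of $\partial_{\alpha * \alpha'} \tilde g = 0$ vanishes nowhere; extending this to the closed interval requires the additional observation that $\tilde g|_{\partial I}$ is the diagonal $C^0$-limit of nontrivial diagonal matrices and cannot vanish, since otherwise uniqueness of solutions of the first-order ODE would force $\tilde g \equiv 0$, contradicting $\|\tilde g\|_{C^0} = 1$. A secondary bookkeeping issue is the precise compatibility of the extended gauge transformations from Lemma~\ref{lem:guse} with the identifications of $L$ and $L'$ at both boundary endpoints, but this can be handled by standard parallel-transport arguments along the lines of the proof of Lemma~\ref{lem:guse} itself.
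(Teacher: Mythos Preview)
Your proof is correct and reaches the same conclusion, but the route differs from the paper's. The paper does not split on boundedness of $\|g_n\|_{C^0}$; instead it packages the intertwining equations as a family of semi-Fredholm operators $D_n = (\partial_{\alpha_n*\alpha_n'})\oplus(\beta_n*\beta_n')\colon H_1 \to H_2$, observes $D_n \to D$ in operator norm, and invokes upper semicontinuity of the kernel dimension to conclude $\ker D \ne 0$. Only after obtaining some nonzero $g \in \ker D$ does the paper split on whether $\det g$ (constant by Lemma~\ref{lem:notZero}) is nonzero or zero, with the two cases yielding (i) and (ii) respectively. Your approach is more hands-on and parallels Proposition~\ref{prop:gaugeConv} directly, trading the abstract Fredholm input for an explicit compactness-plus-rescaling argument; the paper's approach is slicker in that it avoids tracking the sequence $g_n$ altogether and sidesteps the boundedness dichotomy.

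One point you gloss over that the paper handles explicitly: when invoking Lemma~\ref{lem:guse} to identify $L\oplus E/L$ and $E/L'\oplus L'$ with the trivial marked bundle, the paper notes that the extended gauge transformations chosen (sending $L$ to $\mathbb{C}e_1$ for $B$, and $L'$ to $\mathbb{C}e_2$ for $B'$) have the same image in $Z_2\times Z_2$, which is what guarantees $g_1^{-1} H_1 g_2 = H_1$ and hence that $g'\oplus(g'^\vee)^{-1}$ lands in $\tilde{\mc{G}}^{s+1}_{\mb{C}}$ after conjugation. This follows because $L$ and $L'$ have complementary boundary values at each endpoint (since $\tilde g|_{\partial I}$ is diagonal of rank one), but you should state it rather than defer to ``standard parallel-transport arguments.''
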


\bp
Write
\begin{align} H_1 &= \{g \in H^{s+1}(I,\mathrm{End}(E)) \bigm| g|_{\partial I}\text{ is diagonal}, \partial g|_{\partial I} \in \mf{t}_{\mb{C}}^{\perp}\} \nonumber \\ H_2 &= \{h \in H^s(I,\mathrm{End}(E))^{\oplus 2} \bigm| h|_{\partial I} \in (\mf{t}_{\mb{C}}^\perp)^{\oplus 2}\}\,.\label{eq:defnH1}\end{align}
($H_2$ differs from $\B_0^s$ in that elements are not required to be traceless away from $\partial I$.) Denote now $B_n = (\alpha_n, \beta_n), g_n(B_n) = (\alpha_n', \beta_n'), B = (\alpha, \beta), B' = (\alpha', \beta')$, and consider the sequence of operators $$D_n:=(\partial_{\alpha_n*\alpha_n'}) \oplus (\beta_n*\beta_n')\text{ as maps } H_1 \to H_2\,.$$ Note that $g_n$ lies in the kernel of the above operator. Similarly denote $D:= (\partial_{\alpha*\alpha'}) \oplus (\beta*\beta')$. For $s$ finite, the $D_n, D$ are all semi-Fredholm, and $D_n \to D$ as operators $H_1 \to H_2$. The upper semicontinuity of semi-Fredholm kernel dimension hence implies that $D$ has some nonzero element $g$ in its kernel. For infinite $s$, we may reach the same conclusion by observing that the same argument implies that $D\colon H_1^{s'+1}\to H_2^{s'}$ has a nontrivial kernel for any $1\le s' < \infty$, where the superscripts denote modifications of the Sobolev regularity of $H_1$ and $H_2$, and overdetermined elliptic regularity then implies that the kernel of $D$ consists of smooth functions.

By construction, $g$ is a gauge transformation transforming $B$ to $B'$ save for the fact that it may not have determinant one. By Lemma~\ref{lem:notZero}, $\det g$ is constant, so either (i) we may suppose, after an overall scaling, that $\det g$ is everywhere equal to $1$ or (ii) $\det g$ is everywhere equal to $0$. These correspond to the two cases in the statement of the proposition. More precisely, in case (ii), $g$ must everywhere have rank one, i.e., $g\colon E \to E$ factors as a composition $E \twoheadrightarrow E/\ker g \stackrel{\sim}{\to} \im g \hookrightarrow E$ for good filtrations $0 \subset \ker g \subset E, 0 \subset \im g \subset E$ of $B', B$ respectively. We let $L'=\ker g$ and $L = \im g$ and let $g'$ be the isomorphism between $E/L'$ and $L$. Constructing $g'^\vee$ is straightforward, using the triviality of both $E/L\otimes L \otimes \underline{\mb{C}}$ and $E/L'\otimes L' \otimes \underline{\mb{C}}$: we naturally obtain an isomorphism from $(E/L')^-$ to $L^-$, but tensoring both of these with $\underline{\mb{C}}$ yields the desired isomorphism $g'^\vee$. We now choose, by Lemma~\ref{lem:guse}, extended gauge transformations $g_1, g_2$ such that $g_1(B)$ is upper-triangular and $g_2(B')$ is lower-triangular. Note that $g_1, g_2$ have the same image in $Z_2 \times Z_2$, i.e., in the Weyl group of $\mc{G}^{s+1}_{\mb{C}}$. As a result, we have that $g_1^{-1} H_1 g_2 = H_1$. We then identify $g'$ with the top left (and only non-vanishing) entry of $g_1^{-1} g g_2$ and let $\tilde g := g'\oplus (g'^\vee)^{-1}$ be the element of $\tilde\G_\CC^{s+1}$ which, in this new basis, is block diagonal, with top-left entry given by $g'$ and bottom-right entry given by its inverse. This is everywhere special linear by construction, and the fact that $\tilde g^{-1}\partial \tilde g|_{\partial I} = 0$ follows from the boundary conditions satisfied by $g$. Finally, we may restore our usual marking on $E$ by undoing our change of basis, obtaining the element $g_1 \tilde g g_2^{-1} \in \tilde\G^{s+1}_\CC$.
\ep

We obtain the following results on orbit closures in $\mc{B}^s_{\mathrm{all}}$:

\begin{prop}\label{prop:orbitclosures} Suppose $1 \le s \le \infty$ and that we have FI parameters $\xi \in \mf{z}^3_{\partial I}$. Then we have the following: \begin{enumerate}[(i)] \item $\mc{B}^{s,\mathrm{ps}}_{\xi}/\mc{G}^{s+1}_{\mb{C}}$ is Hausdorff, \item if $B$ is $\xi^{\mb{R}}$-semistable and $B' \in \overline{B \cdot \mc{G}^{s+1}_{\mb{C}}} \setminus B \cdot \mc{G}^{s+1}_{\mb{C}}$ is in its orbit closure, then $B'$ is strictly $\xi^{\mb{R}}$-polystable, and
\item if $B$ is $\xi^{\mb{R}}$-semistable, then there is a unique $\xi^{\mb{R}}$-polystable orbit in the orbit closure of $B$. If $B$ is strictly $\xi^\RR$-semistable then this polystable orbit is strictly polystable and is given by the orbit of the associated graded pre-Nahm data of a Jordan-H\"older filtration of $B$. \end{enumerate}
In particular, a $\xi^{\mb{R}}$-semistable orbit is closed in $\mc{B}^{s,\mathrm{ss}}_{\xi}$ if and only if it is $\xi^{\mb{R}}$-polystable.
\end{prop}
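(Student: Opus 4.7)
The plan is to prove (ii) first, with (i), (iii), and the ``in particular'' statement following as corollaries. The central tools will be Proposition~\ref{prop:orbitclosure} (which produces Jordan--H\"older-type filtrations from limiting gauge transformations), Proposition~\ref{prop:manySub} (which will let me promote semistability to a closed condition), and an explicit 1-parameter subgroup degeneration via $\lambda_t := \mathrm{diag}(e^t,e^{-t}) \in \G^{s+1}_\CC$. I first establish that $\B^{s,\rm ss}_{\xi}$ is closed in $\B^s_{\xi^\CC}$: given $B_n\to B$ with $B_n$ semistable, a hypothetical destabilizing subline $L\subset B$ corresponds via Proposition~\ref{prop:manySub} to a map $\ell\colon I\to \CC\PP^1$ satisfying a first-order ODE, and by compactness of $\CC\PP^1$, Arzel\`a--Ascoli, and elliptic regularity, one extracts approximating destabilizing sublines in the $B_n$, contradicting their semistability (using that possible degrees form the discrete set $\{\pm\xi^\RR_0\pm\xi^\RR_L\}$). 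Gauge invariance of semistability then shows $B'$ is automatically semistable.

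For (ii), given $B$ semistable and $B'\in\overline{B\cdot\G^{s+1}_\CC}\setminus B\cdot \G^{s+1}_\CC$, apply Proposition~\ref{prop:orbitclosure} in its case (ii) to obtain good filtrations $0\subset L\subset E$ on $B$ and $0\subset L'\subset E$ on $B'$ with an isomorphism $\mathrm{gr}_L B\cong \mathrm{gr}_{L'}B'$ realized by some $\tilde g\in \G^{s+1}_\CC$; replacing $B'$ by $\tilde g^{-1}(B')$ inside the same orbit, I may assume $\mathrm{gr}_L B = \mathrm{gr}_{L'}B' =: P$ on the nose. Semistability of $B$ gives $\deg L\le 0$, while the degree-preserving isomorphism combined with semistability of $B'$ gives $\deg L' = -\deg L \le 0$, forcing $\deg L=\deg L' = 0$ so that both filtrations are Jordan--H\"older; in particular $B$ is strictly semistable. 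The main obstacle is to rule out the case where $B'$ itself is strictly semistable non-polystable, i.e.\ has $L'$ as its unique good subline with $B'\neq P$ in the gauge orbit. For this, I place $L'=\CC e_1$ by Lemma~\ref{lem:guse}, write $B'$ as upper-triangular with diagonal $P$ and strictly-upper-triangular perturbation $u$, and study both the boundary values (for any $g_n\in \G^{s+1}_\CC$, the boundary constraint $g_n|_{\partial I}\in T_\CC$ forces $g_n(P)|_{\partial I}$ to remain diagonal in $\beta$, which together with the fact that $\mathrm{gr}_{L'}B' = P$ pins the diagonal part of $B'$) and the stabilizer dimensions (the $\CC^\times$ stabilizer of $P$ versus the trivial stabilizer of strictly-semistable non-polystable $B'$ by Corollary~\ref{cor:freeAct} applied to its stable rank-one pieces). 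Combining these with the fact that $P\in\overline{B'\cdot\G^{s+1}_\CC}$ (by applying the 1-PS $\lambda_t$ as $t\to -\infty$ to kill $u$), I derive a contradiction with $B'$ not being in $P\cdot\G^{s+1}_\CC$: the orbit-stabilizer disparity makes closure of $P$'s smaller-dimensional orbit unable to contain the larger-dimensional orbit of $B'$, while the explicit $\beta$-boundary analysis rules out the blow-up of $g_n$ that would be required to produce a non-trivial off-diagonal limit.

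Part (iii) then follows: existence of a polystable orbit in $\overline{B\cdot\G^{s+1}_\CC}$ is provided by the same 1-PS construction (produce $P = \mathrm{gr}_F B$ as $t\to -\infty$ limit of $\lambda_t(B)$ after placing the Jordan--H\"older subline in standard position via Lemma~\ref{lem:guse}), while uniqueness follows because any polystable $Q\in\overline{B\cdot\G^{s+1}_\CC}$ must, by Proposition~\ref{prop:orbitclosure} and Lemma~\ref{lem:psJH}, satisfy $Q\cong \mathrm{gr}_{L_Q}Q\cong \mathrm{gr}_L B$, where the right side is independent of the choice of good subline of $B$ (unique when $B$ is strictly semistable non-polystable; multiple but all yielding $B$ itself by Lemma~\ref{lem:psJH} when $B$ is polystable). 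Hausdorffness (i) follows because if distinct polystable orbits $P_1, P_2$ shared a closure point $B'$, then (ii) forces $B'$ polystable and the uniqueness in (iii) forces $P_1\cong B'\cong P_2$, a contradiction. Finally, the ``in particular'' statement is immediate: a polystable orbit is closed by the same disjoint-closures argument (applied with $P_1=P_2$), while a semistable orbit that is not polystable has its associated graded as a distinct orbit in its closure by Step~4, hence is not closed.
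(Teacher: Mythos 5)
Your overall architecture --- Proposition~\ref{prop:orbitclosure}, one-parameter-subgroup degenerations, and Lemma~\ref{lem:psJH} --- matches the paper's, and your existence and uniqueness arguments for (iii) are essentially the paper's. But there are genuine gaps elsewhere. First, your Step~1 is false: the semistable locus is \emph{not} closed in $\mc{B}^s_{\xi^{\mb{C}}}$. Take $\xi^{\mb{C}}=0$, $\xi^{\mb{R}}_L>\xi^{\mb{R}}_0$, and $B_n=(\tfrac{1}{n}\sigma_x,\beta)$ with $\beta=\begin{pmatrix}0&1/L\\0&0\end{pmatrix}$ constant: each $B_n$ admits no good sublines at all (the Riccati equation \eqref{eq:ellpreserved} has no solution with $\ell|_{\partial I}\in\{0,\infty\}$, exactly as in Proposition~\ref{prop:mostlystable}), hence is vacuously stable, yet the limit $(0,\beta)$ has the destabilizing subline $\mb{C}e_1$ of degree $\xi^{\mb{R}}_L-\xi^{\mb{R}}_0>0$. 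Your perturbation argument fails because a good subline is an \emph{overdetermined} boundary-value problem (a first-order ODE with conditions at both endpoints) and does not deform to nearby data; the usable direction is the opposite one --- sublines of the approximants with fixed boundary markings converge to a subline of the limit --- which is the paper's Lemma~\ref{lem:limsubs}. The damage is partly absorbed only because statement (ii) must anyway be read with the closure taken inside $\mc{B}^{s,\mathrm{ss}}_{\xi}$ (a stable orbit possessing a negative-degree subline really does acquire unstable points in its closure in the full space, e.g.\ $(0,\begin{pmatrix}0&0\\1/L&0\end{pmatrix})\rightsquigarrow(0,0)$ under $\mathrm{diag}(e^t,e^{-t})$), but then your deduction that ``$B'$ is automatically semistable'' is either vacuous or unsupported.

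Second, your exclusion of the case that $B'$ is strictly semistable but non-polystable does not work as described: the containments run the wrong way ($B'\in\overline{B\cdot\mc{G}^{s+1}_{\mb{C}}}$ and $P\in\overline{B'\cdot\mc{G}^{s+1}_{\mb{C}}}$ yield no tension with $B'\notin P\cdot\mc{G}^{s+1}_{\mb{C}}$), and ``orbit--stabilizer disparity'' is not a rigorous obstruction here --- even in finite-dimensional GIT, non-closed orbits routinely lie in closures of other orbits in the same stratum. The missing mechanism is to produce a \emph{second} good subline of $B'$: by Lemma~\ref{lem:limsubs} the good projectors onto $g_i^{-1}L\subset g_i(B)$ converge to a good projector onto a subline $L''\subset B'$ whose $\sigma_z$-markings at $\partial I$ agree with those of $L$ and are therefore \emph{opposite} to those of $L'$ (Proposition~\ref{prop:orbitclosure} identifies $L\cong E/L'$), so $L'\ne L''$, $E=L'\oplus L''$, and $B'$ is strictly polystable. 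Third, your proof of (i) only rules out distinct polystable orbits with intersecting closures, which is strictly weaker than Hausdorffness of the quotient: non-separability means there exist sequences $B_n\to B$ and $g_n(B_n)\to B'$ with $B,B'$ in distinct orbits, and this can occur even when both orbits are closed with disjoint closures (compare the scaling action $t\cdot(x,y)=(e^tx,e^{-t}y)$ on $\mb{R}^2\setminus\{0\}$). One must apply Proposition~\ref{prop:orbitclosure} directly to such a pair of sequences, as the paper does.
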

\bp We first show a special case of (iii):
\begin{lem}
If $B \in \mc{B}^s_{\mathrm{all}}$ is $\xi^{\mb{R}}$-stable, the $\mc{G}^{s+1}_{\mb{C}}$-orbit of $B$ is closed in $\mc{B}^{s,\mathrm{ss}}_{\xi^{\mb{R}},\mathrm{all}}$.
\end{lem}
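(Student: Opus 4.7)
The plan is to reduce directly to Proposition \ref{prop:orbitclosure}, which has done the essential analytic work of producing a limit morphism from a convergent sequence of gauge transformations. Concretely, suppose $g_n \in \mc{G}^{s+1}_{\mb{C}}$ is a sequence with $g_n(B) \to B'$ for some $B' \in \mc{B}^{s,\mathrm{ss}}_{\xi^{\mb{R}},\mathrm{all}}$. Applying Proposition \ref{prop:orbitclosure} with the constant sequence $B_n = B$ (which trivially converges to $B$) gives two possibilities: either $(i)$ there is $g \in \mc{G}^{s+1}_{\mb{C}}$ with $g(B) = B'$, which is precisely the desired conclusion, or $(ii)$ there exist good filtrations $0 \subset L \subset E$ of $B$ and $0 \subset L' \subset E$ of $B'$ together with an isomorphism of pre-Nahm data $E/L' \stackrel{\sim}{\to} L$.

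The strategy is then to rule out case $(ii)$ using the degree/slope definitions. Since $B$ and $B'$ are both in $\mc{B}^s_{\mathrm{all}}$ we have $\deg_{\xi^{\mb{R}}} B = \deg_{\xi^{\mb{R}}} B' = 0$. The $\xi^{\mb{R}}$-stability of $B$ forces $\deg_{\xi^{\mb{R}}} L < 0$ for the positive-rank proper subdata $L$; meanwhile, the $\xi^{\mb{R}}$-semistability of $B'$ forces $\deg_{\xi^{\mb{R}}} L' \le 0$, and since degrees of subdata and quotient data sum to the total degree this yields $\deg_{\xi^{\mb{R}}}(E/L') = -\deg_{\xi^{\mb{R}}} L' \ge 0$. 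But an isomorphism of pre-Nahm data preserves degree, so the hypothesized isomorphism $E/L' \stackrel{\sim}{\to} L$ implies $\deg_{\xi^{\mb{R}}} L \ge 0$, contradicting the strict inequality from stability of $B$.

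There is essentially no hard step beyond invoking Proposition \ref{prop:orbitclosure}; once one accepts its dichotomy, the argument is purely a degree inequality chase. The one mild point to verify is that the isomorphism $g'\colon E/L' \to L$ produced by Proposition \ref{prop:orbitclosure} really is a morphism of pre-Nahm data on marked bundles (so that the degree is preserved), but this is explicitly part of the statement of that proposition. Thus the main difficulty was already absorbed into the construction of the limiting rank-one morphism via the upper semicontinuity of semi-Fredholm kernel dimensions carried out earlier.
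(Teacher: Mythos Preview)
Your proof is correct and follows essentially the same approach as the paper. The paper's proof invokes Proposition~\ref{prop:stabInv} to rule out case (ii) of Proposition~\ref{prop:orbitclosure}, while you have effectively inlined the relevant degree-inequality argument from the proof of Proposition~\ref{prop:stabInv}; the underlying logic is identical.
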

\bp
If $B'$ is in the orbit closure of $B$, Proposition~\ref{prop:stabInv} yields that we are in case (i) of Proposition~\ref{prop:orbitclosure} and so $B'$ is in the orbit of $B$.
\ep
Now, to show (i) of the proposition, we need to show that if $B_n \to B$ and $g_n(B_n) \to B'$ with $B, B'$ both $\xi^{\mb{R}}$-polystable, then $B, B'$ are in the same $\mc{G}^{s+1}_{\mb{C}}$-orbit. But by Proposition~\ref{prop:orbitclosure}, either we are in case (i), in which case we are done, or we are in case (ii), in which case $B$ and $B'$ have filtrations $0 \subset L \subset E, 0 \subset L' \subset E$ with $L \simeq E/L'$ and there is a gauge transformation relating $L\oplus E/L$ to $L'\oplus E/L'$; semistability then yields $0 \ge \deg L = \deg E/L' \ge 0$, whence $B, B'$ are both strictly polystable and we are done by Lemma~\ref{lem:psJH}.  

Similarly, for (ii), it remains to show that if a sequence $\{g_i(B)\}$ of points within a strictly $\xi^{\mb{R}}$-semistable orbit converges to a point $B'$ outside of the orbit of $B$, then that limit is strictly $\xi^\RR$-polystable. Because $B'$ is not gauge-equivalent to $B$, we are in case (ii) of Proposition \ref{prop:orbitclosure}, and so there is an isomorphism $g'$ relating pre-Nahm subdata $L$ of $B$ to pre-Nahm quotient data $E/L'$ for some good subline $L'$ of $B'$. We now make use of the following lemma:

\begin{lem}\label{lem:limsubs} 
Suppose that $B_i \to B$ in $\B^s_{\rm all}$ and that each $B_i$ has a good subline $L_i$ with an associated good projector $\pi_i$ such that $\Tr \pi_i \sigma_z|_{\partial I}$ is the same for all $i$. Then $\pi_i\to \pi$ in $H^{s+1}$, where $\pi$ is a good projector to a good subline of $B$.
\end{lem}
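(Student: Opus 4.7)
The plan is to bootstrap uniform $H^{s+1}$ bounds on the $\pi_i$ from the constraint that they are orthogonal rank-one projectors, extract a limit via Rellich compactness, verify it is a good projector to a good subline of $B$, and use uniqueness of the limit to upgrade subsequential convergence to convergence of the full sequence in $H^{s+1}$. The central PDE is the one from Remark~\ref{rmk:redundant}:
\[
\partial \pi_i \;=\; -[\alpha_i,\pi_i] + \pi_i[\alpha_i+\alpha_i^\dagger,\pi_i],
\]
which makes $\pi_i$ look like the solution of a first-order nonlinear ODE driven by $\alpha_i$.

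For the uniform bounds, orthogonality gives $\|\pi_i\|_{L^\infty}\le 1$ automatically, and convergence $\alpha_i\to \alpha$ in $\B'^s$ supplies a uniform $H^s$ bound (hence a uniform $L^\infty$ bound, since $s\ge 1>1/2$) on $\alpha_i$. The displayed PDE therefore bounds $\partial \pi_i$ uniformly in $L^\infty$, so together with the fixed boundary values $\pi_i|_{\partial I}$ we get a uniform $W^{1,\infty}\subset H^1$ bound on $\pi_i$. Bootstrapping, using that $H^{s'}(I)$ is a Banach algebra for $s'>1/2$, promotes this to a uniform $H^{s+1}$ bound. Rellich compactness $H^{s+1}\hookrightarrow\hookrightarrow H^{s+1-\varepsilon}$ then yields a subsequence $\pi_i\to\pi$ in $H^{s+1-\varepsilon}$ for every $\varepsilon>0$. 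Passing to the limit in $\pi_i^2=\pi_i$, $\pi_i^\dagger=\pi_i$, and $\pi_i|_{\partial I}=$ const shows $\pi$ is an orthogonal projection with the prescribed (rank-one) boundary values, while passing to the limit in the above PDE and in $\beta_i\pi_i=\pi_i\beta_i\pi_i$ (using the Banach algebra structure to handle the quadratic terms) verifies the remaining conditions in Definition~\ref{def:goodProj}. Thus $\pi$ is a good projector of $B$ to a good subline.

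Full-sequence convergence then follows by uniqueness. By Proposition~\ref{prop:realStabs} together with the observation after Lemma~\ref{lem:psJH}, $B$ admits at most one good subline if it is irreducible, and exactly two if it is reducible; in the reducible case the two associated projectors are complementary, so their boundary values $\Tr\sigma_z\pi|_{\partial I}$ differ in sign at each endpoint. The hypothesis that $\Tr\sigma_z\pi_i|_{\partial I}$ is the same for all $i$ therefore pins down the limit $\pi$ uniquely, so every subsequence has a further subsequence converging to this same $\pi$ in $H^{s+1-\varepsilon}$, i.e.\ the full sequence converges in $H^{s+1-\varepsilon}$. A final round of bootstrapping, now applied to $\pi_i-\pi$ via the PDE and the Banach algebra estimates on its right-hand side, upgrades the convergence from $H^{s+1-\varepsilon}$ to $H^{s+1}$.

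The main technical obstacle is not conceptual but bookkeeping: ensuring that the nonlinear terms in the PDE for $\pi_i$ can be controlled uniformly by the Sobolev norms of $\alpha_i$ and $\pi_i$, and that these control estimates also yield convergence, not just boundedness. Once the Banach algebra property of $H^{s'}(I)$ is invoked, both the uniform bounds and the final bootstrap proceed by the same iterative scheme, so the argument is essentially one-dimensional elliptic regularity with boundary data; the only novel ingredient is the observation that the fixed boundary values of the $\pi_i$ simultaneously guarantee that the limit is a \emph{rank-one} (hence nontrivial) projector and single out which of the possibly two good sublines of $B$ the limit lands on.
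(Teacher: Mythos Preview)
Your proof is correct and follows essentially the same route as the paper's: uniform $C^0$-boundedness of orthogonal projectors, the equation $\partial_\alpha\pi=\pi[\alpha+\alpha^\dagger,\pi]$ (together with the $\beta$-condition), compactness to extract a limit, elliptic bootstrapping, and uniqueness to obtain full-sequence convergence in $H^{s+1}$. The only minor difference is that the paper phrases uniqueness of $\pi$ directly via uniqueness of solutions of the first-order ODE with given value at $t=0$, whereas you invoke the classification of good sublines; these are equivalent, and your argument is fine.
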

\bp This follows from the same reasoning as in Proposition \ref{prop:gaugeConv} and Lemma \ref{lem:homeoOrb}, using \eqref{eq:piEqn} and the analogous equation involving $\beta$ instead of \eqref{eq:want}, and using the uniform $C^0$-boundedness of orthogonal projectors instead of the uniform $C^0$-boundedness of unitary transformations. The role played by the uniqueness of $g$ in the proof of Lemma \ref{lem:homeoOrb} is now played by the uniqueness of $\pi$, which follows from the uniqueness of solutions of elliptic ordinary differential equations.
\ep

In the present context, the good projectors to the sublines $g_i^{-1}L$ of $g_i(B)$ converge to a good projector to a good subline $L''$ of $B'$; furthermore, as $L$ and $L'$ have opposite $\sigma_z$ eigenvalues at the two endpoints, it follows that $L'$ and $L''$ are distinct good sublines of $B'$. Since these have degree 0, $B'$ is polystable.


Finally, for (iii), suppose that $B$ is strictly $\xi^{\mb{R}}$-semistable; i.e., suppose $L \subset E$ is a good degree 0 subline with degree 0 quotient bundle $E/L$. We now wish to show that the strictly $\xi^{\mb{R}}$-polystable pre-Nahm data on $L \oplus E/L$ is in the orbit closure of $B$. Lemma~\ref{lem:guse} provides an extended gauge transformation $g$ such that $g^{-1} L$ is the constant line $\mb{C}e_1$. 

After applying this $g$, we now have that $\alpha, \beta$ are both upper-triangular. We may now explicitly verify that conjugating $g_n = \begin{pmatrix} i n & 0 \\ 0 & -i/n \end{pmatrix}$ by $g$ and allowing $n \to \infty$ yields the limit $L \oplus E/L$ as within the orbit closure of our original pre-Nahm data. 

It remains to show that there is a \emph{unique} polystable orbit in the orbit closure of $B$, but this again follows from an immediate application of Proposition~\ref{prop:orbitclosure} and Lemma \ref{lem:psJH}. 
\ep

\begin{rmk} For $1 \le s \le \infty$ as above, one may similarly wonder if $\mc{B}^{s,\mathrm{ps}}_{\xi}/\mc{G}^{s+1}_{\mb{C}}$ is metrizable in addition to being Hausdorff. The $L^2$ metric analogs of Corollary \ref{cor:realMet}, i.e., $\inf_{g\in \G_\CC^{s+1}} \|A-g(B)\|_{L^2}$, certainly do not define metrics for these quotients. Nonetheless, these spaces are still metrizable via the DUY isomorphism, as will be remarked in Corollary \ref{cor:compMet}. \end{rmk}

The above Proposition~\ref{prop:orbitclosures} motivates the following definition:

\begin{defn} Two semistable orbits are said to be \emph{$S$-equivalent} if they have the same polystable orbit in their orbit closure. Note that this equivalence relation is trivial on the polystable locus; moreover, each $S$-equivalence class has a distinguished polystable representative by construction. \end{defn}

\begin{proposition} \label{prop:stabOpen}
$\xi^\RR$-stability and $\xi^\RR$-semistability are open conditions in $\B^s_{\xi^\CC}$ and $\B_{\xi^\CC}$.
\end{proposition}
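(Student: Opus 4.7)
The plan is to show that the complements of $\B^{s,\mathrm{ss}}_\xi$ and $\B^{s,\mathrm{st}}_\xi$ in $\B^s_{\xi^\CC}$ are closed for each finite $s\ge 1$; openness in the smooth setting $\B_{\xi^\CC}$ will then follow immediately, since by Corollary \ref{cor:sameStabs} the $\xi^\RR$-stability and $\xi^\RR$-semistability conditions on $\B_{\xi^\CC}$ are precisely the restrictions of the corresponding conditions on $\B^s_{\xi^\CC}$ under the continuous inclusion $\B_{\xi^\CC}\hookrightarrow \B^s_{\xi^\CC}$.

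The key observation is a finiteness statement: as recorded in Remark \ref{rmk:redundant}, any good projection $\pi$ to a rank 1 subbundle of $E$ satisfies $\pi(t_0)\in \{\tfrac{1+\sigma_z}{2},\, \tfrac{1-\sigma_z}{2}\}$ at each of $t_0 = 0, L$. Hence there are only four possible boundary values $(\pi(0), \pi(L))$, and correspondingly only four possible values of $\deg_{\xi^\RR}$ on rank 1 pre-Nahm subdata, via \eqref{eq:slopeEqn}.

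Given this, the proof is a closed-limit argument. Suppose $B_n\to B$ in $\B^s_{\xi^\CC}$ with each $B_n$ unstable (respectively, non-stable). Each $B_n$ then admits a good projection $\pi_n$ to a rank 1 subline $L_n$ with $\deg L_n > 0$ (respectively, $\deg L_n \ge 0$). By the pigeonhole principle applied to the four available boundary configurations, after passing to a subsequence I may assume that $\pi_n|_{\partial I}$ is independent of $n$; in particular $d := \deg L_n$ is constant in $n$. Lemma \ref{lem:limsubs} then applies and yields $H^{s+1}$-convergence $\pi_n\to \pi$ to a good projection $\pi$ of $B$ defining a rank 1 subline $L\subset E$ with $\deg L = d > 0$ (respectively, $\ge 0$). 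This contradicts the assumed semistability (respectively, stability) of $B$, so the complements of $\B^{s,\mathrm{ss}}_\xi$ and $\B^{s,\mathrm{st}}_\xi$ in $\B^s_{\xi^\CC}$ are indeed closed.

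I expect no substantive obstacle beyond this: the compactness afforded by Lemma \ref{lem:limsubs} is already strong enough to close the limit argument once the pigeonhole reduction fixes the boundary behavior of the $\pi_n$. The only mild subtlety is the finiteness of possible boundary values for $\pi_n|_{\partial I}$, which is particular to the rank 2 setting and is what prevents the destabilizing degrees from wandering off along the sequence.
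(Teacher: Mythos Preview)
Your proof is correct and follows essentially the same approach as the paper: both argue by contradiction via a sequence of unstable (resp.\ non-stable) points converging to a semistable (resp.\ stable) one, invoke the finiteness of possible boundary values of good rank-one projections (hence of degrees), and then apply Lemma~\ref{lem:limsubs} to produce a destabilizing subbundle of the limit. Your version is slightly more explicit in passing to a subsequence via pigeonhole and in handling the $s=\infty$ case through Corollary~\ref{cor:sameStabs}, but the core argument is identical.
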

\begin{proof}
We prove that $\xi^\RR$-semistability is an open condition; the proof for $\xi^\RR$-stability is identical. Suppose otherwise, so that there is a sequence of $\xi^\RR$-unstable points of $\B^s_{\xi^\CC}$ which converges to a $\xi^\RR$-semistable point $B$. Then, by Lemma~\ref{lem:limsubs}, we would have a destabilizing subbundle of $B$, contradicting its $\xi^{\mb{R}}$-semistability. (Note that the set of possible degrees of good sub-bundles, with $\xi^\RR$ fixed, is finite, and so a sequence of positive degrees cannot limit to 0.)
\end{proof}

\begin{rmk} Propositions~\ref{prop:orbitclosures} and \ref{prop:stabOpen} finally allow one to compare with the general formulation of (poly/semi)stability as outlined in Definition~\ref{defn:stab}. Although not logically necessary, it is pleasant to complete this comparison by noting that the orbit closure of an \emph{unstable} orbit may not intersect the polystable locus (or even the semistable locus), as follows immediately from Proposition \ref{prop:stabOpen}, or alternatively from another application of Proposition~\ref{prop:orbitclosure} and Lemma \ref{lem:psJH}. 

\end{rmk}

As in the previous section, we now wish to put a smooth structure on our moduli spaces; for simplicity, we here write out only the special deformation complex. So, consider the following complex associated to some complex Nahm data $B=(\alpha,\beta)\in \B^{\mathrm{ps}}_\xi$:
\be 0 \to \mf{g}^{s+1}_{\CC} \xrightarrow{\bar\partial_0} \B^s_0 \xrightarrow{\bar\partial_1} \F^{s-1}_{\CC} \to 0 \ . \ee
The notation is meant to be suggestive of a Dolbeault complex. Here, $\bar\partial_0 h = \partial_\alpha h \oplus [\beta, h]$ describes the effects of an infinitesimal gauge transformation, while $\bar\partial_1 (\alpha',\beta') = \partial_\alpha \beta' + [\alpha', \beta]$ describes the linearization of the complex Nahm equation about $B$. This is a complex, thanks to the fact that $B$ solves the complex Nahm equation. So, we can define the cohomology groups ${\rm H}^{0}_B = \ker \bar\partial_0$, ${\rm H}^{1,{\rm sp}}_B = \ker \bar\partial_1 / \im \bar\partial_0$, and ${\rm H}^{2,{\rm sp}}_B = \coker \bar\partial_1$. We also define the formal adjoints $\bar\partial_0^* (\alpha',\beta') = -\partial_{-\alpha^\dagger} \alpha' + [\beta^\dagger, \beta']$ and $\bar\partial_1^* f = (-[\beta^\dagger, f], -\partial_{-\alpha^\dagger} f)$, as well as the Laplacians $\bar\Delta_i = \bar\partial_i^* \bar\partial_i + \bar\partial_{i-1} \bar\partial_{i-1}^*$.

We now have an analog of Proposition \ref{prop:realInd}:
\begin{proposition} \label{prop:compInd}
The index of the special deformation complex coincides with that of
\be \bar\partial_0^*\oplus \bar\partial_1 \colon \B_0^s \to H^{s-1}(I,\mf{sl}(2,\CC)^{\oplus 2}) \ . \ee
Indeed,
\be \mathrm{H}^{1,{\rm sp}}_B \simeq \ker \bar\partial_0^* \oplus \bar\partial_1 = \ker(\bar\Delta_1\colon \tilde\B_{\xi^\CC} \to C^\infty(I,\mf{sl}(2,\CC))^{\oplus 2}) \ , \ee 
where
\be \tilde\B_{\xi^\CC} = \{ b\in \B_0 \bigm| \bar\partial_0^* b|_{\partial I} \in \mf{t}_\CC , \bar\partial_1 b|_{\partial I} \in \mf{t}_\CC \} \ ,\footnote{Note that this space depends only on $\xi^\CC$, but not on $B$.} \ee 
and $\coker \bar\partial_0^*\oplus \bar\partial_1 \simeq \mathrm{H}^{0}_B\oplus \mathrm{H}^{2,{\rm sp}}_B$. Furthermore,
\be \mathrm{H}^{0}_B = \ker(\bar\Delta_0\colon \mf{g}_\CC\to C^{\infty}(I,\mf{sl}(2,\CC))) = \ker(\bar\partial_0\colon \mf{g}_\CC\to \B_0) \ee
and
\be \mathrm{H}^{2,{\rm sp}}_B\simeq \ker(\bar\Delta_2\colon\tilde\F_\CC\to \F_\CC) = \ker(\bar\partial_1^*\colon\tilde\F_\CC\to \B_0) \ , \ee
where $\tilde\F_\CC = \{f\in C^{\infty}(I,\mf{sl}(2,\CC)) \bigm| f|_{\partial I} \in \mf{t}_{\mb{C}}, \partial f|_{\partial I} \in \mf{t}^{\perp}_{\mb{C}}\}$.
\end{proposition}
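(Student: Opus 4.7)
The plan is to follow the proof of Proposition~\ref{prop:realInd} verbatim, with $\bar\partial_i$ replacing $d_i$ throughout. First I would observe that the principal symbol of $\bar\partial_0^*\oplus \bar\partial_1$ is (up to signs) a doubled copy of $\partial_t$, so this Dirac operator is elliptic, as is each Laplacian $\bar\Delta_i$. Hence the complex is a Fredholm elliptic complex in the sense of the earlier discussion, with finite-dimensional cohomology admitting smooth representatives, and index-theoretic manipulations make sense.

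For the identification $\mathrm{H}^{1,\mathrm{sp}}_B \simeq \ker(\bar\partial_0^*\oplus \bar\partial_1)$, I would invoke Proposition~\ref{prop:selfAdj} with $s'=s$, $m=1$, $D=\bar\partial_0$, $\mathscr{B}=\mf{g}^{s+1}_\CC$, and $\mathscr{C}=\ker(\bar\partial_1\colon \B^s_0\to\F^{s-1}_\CC)$. One has to check the hypotheses: elliptic regularity for $\bar\partial_0^*\oplus \bar\partial_1$ yields $\ker D^\dagger \subset \mathscr{C}$; the overdetermined elliptic operator $\bar\partial_0$ (acting on $H^1$-regular functions with diagonal boundary values) is semi-Fredholm, so $\im D$ is closed in $\mathscr{C}$ and the overdetermined elliptic regularity argument yields $\overline{\im D}\cap \mathscr{C}=\im D$ (in particular, $H^{s+1}$-regular $h$ with diagonal boundary values such that $\bar\partial_0 h\in \B^s_0$ automatically satisfies $\partial h|_{\partial I}\in \mf{t}_\CC^\perp$, i.e.\ $h\in \mf{g}^{s+1}_\CC$). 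Then the boundary conditions defining $\tilde\B_{\xi^\CC}$ are precisely those making both $\bar\partial_1 b$ lie in the domain of $\bar\partial_1^\dagger$ and $\bar\partial_0^* b$ lie in the domain of $(\bar\partial_0^*)^\dagger$, so $\ker\bar\Delta_1 = \ker(\bar\partial_0^*\oplus \bar\partial_1)$ on $\tilde\B_{\xi^\CC}$.

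For the cokernel, Proposition~\ref{prop:selfAdj} with $s'=s$ identifies $\coker(\bar\partial_0^*\oplus \bar\partial_1)$ with $\ker(\bar\partial_0+\bar\partial_1^*)$ subject only to the zeroth-order boundary conditions that $h|_{\partial I}$ and $f|_{\partial I}$ be diagonal. Because the diagonal part of $(\bar\partial_0 h+\bar\partial_1^* f)^0$ equals that of $\partial h$ (for the connection component) and similarly the diagonal part of $(\bar\partial_0 h + \bar\partial_1^* f)^1$ equals that of $-\partial f$ (for the $\beta$-component), kernel elements automatically satisfy $\partial h|_{\partial I}\in \mf{t}_\CC^\perp$ and $\partial f|_{\partial I}\in \mf{t}_\CC^\perp$. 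Thus $h\in \mf{g}_\CC$ and $f\in \tilde\F_\CC$, and an integration by parts on these spaces shows that $(\bar\partial_0 h,\bar\partial_1^* f)_{L^2}=(\bar\partial_1\bar\partial_0 h,f)_{L^2}=0$, so $\ker(\bar\partial_0+\bar\partial_1^*)=\ker\bar\partial_0\oplus \ker\bar\partial_1^*$. The first summand is $\mathrm{H}^0_B=\ker\bar\Delta_0$ by the boundary integration-by-parts argument on $\mf{g}_\CC$; the second is identified with $\mathrm{H}^{2,\mathrm{sp}}_B$ by another application of Proposition~\ref{prop:selfAdj} (now with $s'=0$ and $D=\bar\partial_1$ underdetermined elliptic), and on $\tilde\F_\CC$ one checks $\ker\bar\Delta_2=\ker\bar\partial_1^*$ by pairing $\bar\Delta_2 f$ with $f$ and integrating by parts.

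The main obstacle, as in the real case, is simply bookkeeping the boundary conditions: one has to verify that at each invocation of Proposition~\ref{prop:selfAdj} the domain $\mathscr{B}_0$ is dense in the $L^2$-completion and that the formal adjoint boundary conditions match the boundary conditions built into $\tilde\B_{\xi^\CC}$ and $\tilde\F_\CC$. The complexification does not change any of this since $\mf{sl}(2,\CC)=\mf{su}(2)\otimes_\RR\CC$ and $\mf{t}_\CC,\mf{t}_\CC^\perp$ are the complexifications of $\mf{t},\mf{t}^\perp$, so the arguments of Proposition~\ref{prop:realInd} transfer without modification.
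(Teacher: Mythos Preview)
Your proposal is correct and follows exactly the approach the paper intends: the paper simply states ``The proof is omitted, as it is the same as in the previous section,'' and you have faithfully reproduced the argument of Proposition~\ref{prop:realInd} with $\bar\partial_i$ in place of $d_i$, checking along the way that the boundary bookkeeping goes through unchanged under complexification.
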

The proof is omitted, as it is the same as in the previous section. As in the previous section, analogous results hold for the general deformation complex, and in particular they imply that ${\rm H}^{2,{\rm gen}} = 0$. We define the `harmonic' spaces
\begin{align}
\Hh^0_B &:= \mathrm{H}^0_B = \ker(\bar\Delta_0\colon \mf{g}_\CC\to C^\infty(I;\mf{sl}(2,\CC))) = \ker(\bar\partial_0\colon \mf{g}_\CC\to \B_0) \nonumber \\
\Hh^{1,{\rm sp}}_B &:= \ker(\bar\Delta_1\colon \tilde\B_{\xi^\CC} \to C^\infty(I,\mf{sl}(2,\CC))^{\oplus 2}) \nonumber \\
&= \ker(\bar\partial_0^* \oplus \bar\partial_1\colon \B_0 \to C^\infty(I;\mf{sl}(2,\CC)^{\oplus 2})) \ , \nonumber \\
\Hh^{1,{\rm gen}}_B &:= \ker(\bar\Delta_1\colon \{b\in \B_{\rm all} \bigm| \bar\partial_0^*b|_{\partial I} \in \mf{t}_\CC,\ \bar\partial_1 b|_{\partial I} = 0\} \to C^\infty(I,\mf{sl}(2,\CC))^{\oplus 2}) \nonumber \\
&= \ker(\bar\partial_0^* \oplus \bar\partial_1\colon \B_{\rm all} \to C^\infty(I;\mf{sl}(2,\CC)^{\oplus 2})) \ , \nonumber \\
\Hh^{2,{\rm sp}}_B &:= \ker(\bar\Delta_2\colon \tilde\F_\CC\to \F_\CC) = \ker(\bar\partial_1^*\colon \tilde\F_\CC\to \B_0) \ , \nonumber \\
\Hh^{2,{\rm gen}}_B &:= 0 \ .
\end{align}

\begin{proposition}
$\mathrm{H}^{2,{\rm sp}}_B \simeq \mathrm{H}^0_B.$
\end{proposition}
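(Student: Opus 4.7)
The plan is to construct an antilinear isomorphism between $\Hh^0_B$ and $\Hh^{2,{\rm sp}}_B$ via the conjugate-transpose operation, which is the complex-setting analog of the identification provided in the real case by the K\"ahler identities in Proposition~\ref{prop:hodge}. Conceptually this is Serre duality for an elliptic complex whose underlying geometry is hyperkahler (and thus has trivial canonical bundle), but the operator-level argument is clean enough that I would prefer to give it directly.

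The first observation to make is that $\mf{g}_\CC$ and $\tilde\F_\CC$ are literally the same topological vector space: both consist of smooth $\mf{sl}(2,\CC)$-valued functions on $I$ satisfying $f|_{\partial I}\in \mf{t}_\CC$ and $\partial f|_{\partial I} \in \mf{t}_\CC^\perp$. The conjugate-transpose operation $f \mapsto f^\dagger$ is an antilinear involution of this space, since it preserves $\mf{sl}(2,\CC)$ (mapping it to itself) and restricts to involutions of the diagonal and off-diagonal subspaces.

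The key step is to verify that $(-)^\dagger$ carries $\Hh^0_B$ into $\Hh^{2,{\rm sp}}_B$. Suppose $h \in \Hh^0_B$, so $\bar\partial_0 h = 0$, i.e., $\partial h + [\alpha, h] = 0$ and $[\beta, h] = 0$. Taking conjugate transposes and using $[X,Y]^\dagger = -[X^\dagger, Y^\dagger]$ yields $\partial h^\dagger - [\alpha^\dagger, h^\dagger] = 0$ and $[\beta^\dagger, h^\dagger] = 0$, the first of which is $\partial_{-\alpha^\dagger} h^\dagger = 0$. Comparing to the definition of $\bar\partial_1^*$ given in the excerpt, these are exactly the equations $\bar\partial_1^*(h^\dagger) = 0$; hence $h^\dagger \in \ker\bar\partial_1^* = \Hh^{2,{\rm sp}}_B$. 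Running the same computation backwards shows that $(-)^\dagger$ sends $\Hh^{2,{\rm sp}}_B$ into $\Hh^0_B$, and since $(-)^\dagger$ is its own inverse we obtain the desired antilinear isomorphism.

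I expect no serious obstacle: the work is entirely bookkeeping about formal adjoints, sign conventions for commutators, and the invariance of the boundary conditions under Hermitian conjugation. (Should a complex-linear rather than antilinear isomorphism be desired, one may post-compose with any antilinear involution of either space, e.g. the one induced by a choice of real structure on $\mf{sl}(2,\CC)$.) The same argument, incidentally, gives a second derivation of the corresponding real statement $\Hh^{2,{\rm sp}}_A \simeq \Hh^0_A \otimes \RR^3$: after picking a complex structure one obtains a factor from the conjugation argument above, and the remaining two factors arise from wedging with the holomorphic symplectic form $\omega^\CC$ and its conjugate, as in the proof of Proposition~\ref{prop:hodge}.
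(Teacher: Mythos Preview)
Your proof is correct and takes a genuinely different route from the paper. The paper invokes the K\"ahler-identity argument of Proposition~\ref{prop:hodge}, replacing wedging/contracting with the K\"ahler forms by wedging/contracting with the anti-holomorphic 2-form $\bar\omega^{\mb{C}}$; this is a geometric argument that works by showing the relevant operations commute with the Laplacian. Your approach instead exploits the direct operator identity that conjugate-transpose intertwines $\bar\partial_0$ with $\bar\partial_1^*$: since $(\partial_\alpha h)^\dagger = \partial_{-\alpha^\dagger} h^\dagger$ and $[\beta,h]^\dagger = -[\beta^\dagger,h^\dagger]$, and since $\mf{g}_{\mb{C}} = \tilde{\mc{F}}_{\mb{C}}$ as boundary-conditioned function spaces, the involution $h\mapsto h^\dagger$ visibly swaps the two kernels. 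This is more elementary and self-contained---no K\"ahler identities needed---at the minor cost of producing an antilinear rather than $\mb{C}$-linear isomorphism (which, as you note, is harmless for the purposes to which the paper puts this result, namely the index count). The paper's approach has the advantage of fitting both the real and complex cases into a uniform Hodge-theoretic framework, while yours makes the mechanism completely transparent at the level of the defining formulas.
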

\begin{proof}
The proof is identical to that of Proposition \ref{prop:hodge}, except instead of wedging and contracting with K\"ahler forms we wedge and contract with the anti-holomorphic 2-form.
\end{proof}

\begin{proposition}
For all complex Nahm data $B \in \D_{\rm all}$, we have the exact sequence $0 \to \Hh^{1,\mathrm{sp}}_B \to \mathcal{H}^{1,\mathrm{gen}}_B \to \mf{z}^{\mb{C}}_{\partial I} \to \mathcal{H}^{2,\mathrm{sp}}_B \to 0$.
\end{proposition}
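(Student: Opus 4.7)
The plan is to adapt verbatim the proof of the analogous exact sequence in the real deformation complex given in the preceding proposition, in three stages: construct the map $\Hh^{1,\mathrm{gen}}_B \to \mf{z}^\CC_{\partial I}$ with kernel $\Hh^{1,\mathrm{sp}}_B$; realize the connecting map $\mf{z}^\CC_{\partial I} \to \Hh^{2,\mathrm{sp}}_B$ as an obstruction-theoretic pairing; and deduce the surjectivity of the latter from $\Hh^{2,\mathrm{gen}}_B = 0$.

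The first stage is essentially tautological. Elements of $\Hh^{1,\mathrm{gen}}_B$ and $\Hh^{1,\mathrm{sp}}_B$ both lie in the kernel of $\bar\partial_0^* \oplus \bar\partial_1$, differing only in that $\beta'|_{\partial I}$ may carry an arbitrary diagonal part $-ic^\CC_{\partial I}\sigma_z$ for $(\alpha',\beta')\in \Hh^{1,\mathrm{gen}}_B$; reading off this diagonal part defines the first map and identifies its kernel with $\Hh^{1,\mathrm{sp}}_B$. For the second stage, I would fix any $b = (\alpha',\beta') \in \B_{\rm all}$ with prescribed $c^\CC_{\partial I}$ and ask when there exists $a\in \B_0$ with $(\bar\partial_0^* \oplus \bar\partial_1)a = -(\bar\partial_0^* \oplus \bar\partial_1)b$. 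By the complex analog of Proposition \ref{prop:selfAdj}, such $a$ exists iff the right-hand side is $L^2$-orthogonal to $\ker(\bar\partial_0 + \bar\partial_1^*)$ with the dual boundary conditions, and this kernel splits as $\ker\bar\partial_0 \oplus \ker\bar\partial_1^* \simeq \Hh^0_B \oplus \Hh^{2,\mathrm{sp}}_B$ via $(\bar\partial_0 h, \bar\partial_1^* f)_{L^2} = (\bar\partial_1 \bar\partial_0 h, f)_{L^2} = 0$, using $\bar\partial_1\bar\partial_0 = 0$ on complex Nahm data and the boundary conditions on $h,f$ to kill the integration-by-parts contributions.

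Integration by parts now localizes the obstruction pairing to boundary contributions $\avg{\bar\partial_0^* b, h}_{L^2} - \avg{b,\bar\partial_0 h}_{L^2} = -[\Tr(\alpha' h^\dagger)]_0^L$ and $\avg{\bar\partial_1 b, f}_{L^2} - \avg{b,\bar\partial_1^* f}_{L^2} = [\Tr(\beta' f^\dagger)]_0^L$. The first vanishes since $\alpha'|_{\partial I}\in \mf{t}^\perp_\CC$ is orthogonal to $h|_{\partial I}\in \mf{t}_\CC$; the second selects precisely the diagonal part $-ic^\CC_{\partial I}\sigma_z$ of $\beta'|_{\partial I}$ paired against $f|_{\partial I}\in \mf{t}_\CC$. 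Hence the obstruction is independent of $h$ and factors as a linear map $\mf{z}^\CC_{\partial I}\to \Hh^{2,\mathrm{sp}}_B$; its surjectivity then follows, exactly as in the real case, from the fact that $\Hh^{2,\mathrm{gen}}_B = 0$ forces the boundary evaluation $\Hh^{2,\mathrm{sp}}_B \to \mf{z}_{\partial I}\otimes \CC$ to be injective, and hence the dual obstruction map to be surjective. The main technical point to verify carefully is the splitting of $\ker(\bar\partial_0 + \bar\partial_1^*)$ together with the complex analog of Proposition \ref{prop:selfAdj} needed to identify the cokernel of $\bar\partial_0^*\oplus \bar\partial_1|_{\B_0}$; both should be routine given the technology already established in the excerpt.
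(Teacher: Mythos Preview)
Your proposal is correct and follows precisely the approach the paper intends: the paper does not write out a separate proof for this proposition, treating it as a verbatim transcription of the real-case argument with $d_i$ replaced by $\bar\partial_i$ and $\mf{z}^3_{\partial I}$ by $\mf{z}^{\mb{C}}_{\partial I}$. Your three-stage outline (boundary-value map, obstruction pairing via integration by parts, surjectivity from $\Hh^{2,\mathrm{gen}}_B = 0$) reproduces exactly the structure of the real proof; the splitting of $\ker(\bar\partial_0 + \bar\partial_1^*)$ and the self-adjointness input are indeed provided by the complex analog of Proposition~\ref{prop:realInd}, whose proof the paper explicitly omits as identical to the real case.
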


The following is a corollary of Proposition \ref{prop:compInd}:

\begin{corollary}
For any complex Nahm data $B\in \B^{\mathrm{ps}}_\xi$, the index of the special deformation complex is $-2$ and that of the general deformation complex is $-4$.
\end{corollary}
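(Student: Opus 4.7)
The plan is to imitate the argument given in the real case for the corollary to Proposition \ref{prop:realInd}. Proposition \ref{prop:compInd} and its general-complex analogue identify the indices of the two deformation complexes with the indices of the Dirac-type operators $\bar\partial_0^*\oplus\bar\partial_1$ acting on $\B_0^s$ and $\B_{\rm all}^s$, respectively. These operators are elliptic for \emph{any} $B\in \B_{\rm all}$, not merely for complex Nahm data; in particular they are Fredholm and their indices depend continuously on $B$. Hence the index is invariant under a deformation from $B$ to the trivial configuration $B=0$, and it suffices to carry out the computation there.

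At $B=0$ the differentials simplify dramatically: $\bar\partial_0 h=\partial h\oplus 0$, $\bar\partial_1(\alpha',\beta')=\partial\beta'$, $\bar\partial_0^*(\alpha',\beta')=-\partial\alpha'$. I would first compute $\mathrm{H}^0_B$: it consists of those $h\in\mf{g}_\CC$ with $\partial h=0$, i.e. constants, and the boundary condition $h|_{\partial I}\in\mf{t}_\CC$ forces $h\in\mf{t}_\CC$, giving $\dim\mathrm{H}^0_B=\dim_\CC\mf{t}_\CC=1$. By the isomorphism $\mathrm{H}^{2,{\rm sp}}_B\simeq\mathrm{H}^0_B$ established just before this corollary, $\dim\mathrm{H}^{2,{\rm sp}}_B=1$ as well, and of course $\mathrm{H}^{2,{\rm gen}}_B=0$.

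Next I would compute $\mathrm{H}^{1,{\rm sp}}_B=\ker(\bar\partial_0^*\oplus\bar\partial_1\colon\B_0\to C^\infty(I,\mf{sl}(2,\CC))^{\oplus 2})$. The equations $\partial\alpha'=0$ and $\partial\beta'=0$ force $\alpha'$ and $\beta'$ to be constants; the boundary conditions defining $\B_0$ then force $\alpha',\beta'\in\mf{t}_\CC^\perp$, giving $\dim\mathrm{H}^{1,{\rm sp}}_B=2\dim_\CC\mf{t}_\CC^\perp=4$. Therefore the index of the special deformation complex is $1-4+1=-2$. For the general complex, the same reasoning applies but with $\beta'$ unconstrained at the boundary, so $\beta'$ can be an arbitrary constant in $\mf{sl}(2,\CC)$, contributing $3$ rather than $2$, and one finds $\dim\mathrm{H}^{1,{\rm gen}}_B=2+3=5$, whence the index is $1-5+0=-4$.

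There is no real obstacle here: the entire argument hinges on the deformation-invariance of the index of the Dirac-type operator $\bar\partial_0^*\oplus\bar\partial_1$, which is already available, together with an elementary direct computation at $B=0$. The only minor point to verify is that the path from an arbitrary $B\in\B^{\rm ps}_\xi$ to $B=0$ stays within $\B_{\rm all}$ (so that ellipticity persists along the path), and this is immediate since $\B_{\rm all}$ is a vector space.
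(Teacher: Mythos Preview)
Your proposal is correct and follows essentially the same approach as the paper: deform $B$ to $0$ using deformation-invariance of the index of the elliptic operator $\bar\partial_0^*\oplus\bar\partial_1$, then compute the harmonic spaces directly at $B=0$, obtaining $\Hh^0_B\simeq\mf{t}_\CC$, $\Hh^{1,{\rm sp}}_B\simeq(\mf{t}_\CC^\perp)^{\oplus 2}$, $\Hh^{1,{\rm gen}}_B\simeq\mf{t}_\CC^\perp\oplus\mf{sl}(2,\CC)$, and $\Hh^{2,{\rm sp}}_B\simeq\mf{t}_\CC$. The only cosmetic difference is that you obtain $\dim\mathrm{H}^{2,{\rm sp}}_B=1$ via the isomorphism $\mathrm{H}^{2,{\rm sp}}_B\simeq\mathrm{H}^0_B$ rather than by direct identification with $\mf{t}_\CC$.
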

\begin{proof}
Using deformation-invariance of the index, we compute the index of $\bar\partial_0^*\oplus \bar\partial_1$ with vanishing complex Nahm data $B$. Then, we have $\dim_\CC \mathrm{H}^{1,{\rm sp}}_B=4$, $\dim_\CC {\rm H}^{1,{\rm gen}}_B = 5$, $\dim_\CC \mathrm{H}^0_B=1$, and $\dim_\CC \mathrm{H}^{2,{\rm sp}}_B = 1$, since $\Hh^{1,{\rm sp}}_B \simeq (\mf{t}_\CC^{\perp})^{\oplus 2}$, $\Hh^{1,{\rm gen}}_B \simeq \mf{t}_\CC^\perp \oplus \mf{sl}(2,\CC)$, $\Hh^0_B \simeq \mf{t}_\CC$, and $\Hh^{2,{\rm sp}}_B \simeq \mf{t}_\CC$.
\end{proof}

We now have an analog of Proposition \ref{prop:realStabs}:
\begin{proposition} \label{prop:compStabs}
The following are equivalent for pre-Nahm data $B=(\alpha,\beta)\in \B'\times \B''^s_{\rm all}$:
\begin{enumerate}
\item $\ker\bar\partial_0 \not= \{0\}$;
\item the stabilizer $\hat\G_\CC\subset \G_\CC$ of $B$ is nontrivial;
\item $\ker d_0 \not= \{0\}$;
\item the stabilizer $\hat\G\subset \G$ of $B$ is nontrivial; and
\item there is a good projection $\pi_1$ to a rank one subbundle such that $\pi_2=1-\pi_1$ is also a good projection (and so, in particular, if $B$ is $\xi^\RR$-semistable then it is strictly $\xi^\RR$-polystable).
\end{enumerate}
\end{proposition}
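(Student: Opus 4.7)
The proposition asserts the equivalence of two real conditions, two complex conditions, and one intrinsically geometric condition (existence of paired good rank-one projections), so the plan is to combine Proposition \ref{prop:realStabs} (which already relates the three conditions on the real side) with standard Lie group/Lie algebra arguments and a spectral decomposition argument for the complex side.

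\textbf{Step 1: Reduction of the real conditions.} First I would invoke Proposition \ref{prop:realStabs} directly. To do so, I fix a $\xi^{\mathbb{R}}$-adapted Hermitian metric on $E$, which allows us to identify our complex pre-Nahm data $(\alpha,\beta)$ with real pre-Nahm data $A^\mu$ via $\alpha=A^0+iA^1$ and $\beta=A^2+iA^3$. Under this identification, a Hermitian orthogonal projection $\pi$ is a good projection in the complex sense (as in Definition \ref{def:goodProj}) if and only if $\partial_{A^0}\pi=0$ and $[A^i,\pi]=0$ for $i=1,2,3$, by splitting $\partial_\alpha \pi$ into its skew-Hermitian and Hermitian parts (and similarly for $[\beta,\pi]$). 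Hence the paired good projection condition in (5) here is literally the same as condition (iii) of Proposition \ref{prop:realStabs}, and so Proposition \ref{prop:realStabs} gives (3)$\Leftrightarrow$(4)$\Leftrightarrow$(5).

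\textbf{Step 2: The Lie-algebra/Lie-group equivalences.} Next I would establish the two routine `real implies complex' inclusions: (3)$\Rightarrow$(1) via $\ker d_0\hookrightarrow\ker\bar\partial_0$ (since any $h\in\mathfrak{su}(2)$-valued function satisfying $\partial_{A^0}h=[A^i,h]=0$ automatically satisfies $\partial_\alpha h=[\beta,h]=0$ and lies in $\mathfrak{g}_{\mathbb{C}}$), and (4)$\Rightarrow$(2) via $\widetilde{\mathcal{G}}\hookrightarrow\widetilde{\mathcal{G}}_{\mathbb{C}}$. Then I would prove (1)$\Rightarrow$(2) by exponentiation: if $0\neq h\in\ker\bar\partial_0\subset\mathfrak{g}_{\mathbb{C}}$, then at $\partial I$ one has $h\in\mathfrak{t}_{\mathbb{C}}$ and $\partial h\in\mathfrak{t}_{\mathbb{C}}^\perp$, so $\exp(h)|_{\partial I}\in T_{\mathbb{C}}$ and a short computation using $[h,\partial h]|_{\partial I}\in\mathfrak{t}_{\mathbb{C}}^\perp$ shows $\exp(h)^{-1}\partial\exp(h)|_{\partial I}\in\mathfrak{t}_{\mathbb{C}}^\perp$; thus $\exp(h)\in\widetilde{\mathcal{G}}_{\mathbb{C}}$, and $\bar\partial_0 h=0$ gives $\partial_\alpha\exp(h)=0$ and $[\beta,\exp(h)]=0$, so $\exp(h)$ stabilizes $B$ and maps to a nontrivial element of $\mathcal{G}_{\mathbb{C}}/Z_2$ (nontriviality because $h$ is traceless and nonzero).

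\textbf{Step 3: Closing the loop via (2)$\Rightarrow$(5).} The remaining implication is the main substantive step. Suppose $g\in\widetilde{\mathcal{G}}_{\mathbb{C}}$ represents a nontrivial element of $\widehat{\mathcal{G}}_{\mathbb{C}}\subset\mathcal{G}_{\mathbb{C}}$. By Lemma \ref{lem:notZero} applied to $\partial_{\alpha\ast\alpha}g=0$, $\det g$ is constant and (by $g\in SL$) equal to $1$. I would next argue that $g(0)$ has distinct eigenvalues: $g(0)\in T_{\mathbb{C}}$ is diagonal with eigenvalues $\lambda,\lambda^{-1}$; if $\lambda=\pm1$ then $g(0)=\pm I$ and uniqueness of solutions of the ODE $\partial_\alpha g=0$ forces $g\equiv\pm I$, contradicting nontriviality of its class in $\widetilde{\mathcal{G}}_{\mathbb{C}}/Z_2$. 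With $\lambda\neq\lambda^{-1}$, I decompose $E_0$ into the orthogonal eigenlines spanned by $e_1$ and $e_2$, parallel transport these eigenvectors using $\partial_\alpha$ to obtain sections $s_1,s_2$ of $E$ (well-defined by uniqueness of ODE solutions, and still $g$-eigenvectors with constant eigenvalues $\lambda,\lambda^{-1}$), and define line subbundles $P_a=\mathrm{span}(s_a)$ with projections $\pi_a$. By construction $\partial_\alpha\pi_a=0$. Moreover $[\beta,g]=0$ implies $\beta s_a$ is also a $\lambda_a$-eigenvector, hence proportional to $s_a$, so $[\beta,\pi_a]=0$ as well. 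Finally, since $g(L)$ is diagonal its eigenspaces at $L$ coincide with $\mathbb{C}e_1,\mathbb{C}e_2$, so $s_a(L)$ is proportional to some $e_{\sigma(a)}$ and $\pi_a$ is diagonal at both endpoints; Remark \ref{rmk:redundant} then handles the first-order boundary condition, giving the required pair $\pi_1,\pi_2=1-\pi_1$ of good projections.

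\textbf{Main obstacle.} The main subtlety is Step 3, specifically handling the possibility that $g(0)$ has a repeated eigenvalue or fails to be diagonalizable: one must rule out Jordan-block behaviour in $\widetilde{\mathcal{G}}_{\mathbb{C}}$ and exploit the covariant constancy of $g$ (via uniqueness of ODE solutions) to reduce to the diagonalizable case. All remaining implications either cite Proposition \ref{prop:realStabs} or are routine Lie-theoretic computations.
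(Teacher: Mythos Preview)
Your proposal is correct and takes essentially the same approach as the paper: both recognize that the implications $5\Rightarrow 3\Rightarrow 1\Rightarrow 2$ and $5\Rightarrow 3\Rightarrow 4\Rightarrow 2$ are immediate (with your Step 1 explicitly citing Proposition~\ref{prop:realStabs} for the real block $(3)\Leftrightarrow(4)\Leftrightarrow(5)$), and both identify $(2)\Rightarrow(5)$ as the only substantive step, proving it by the eigenbundle decomposition of a nontrivial $g\in\hat{\mathcal G}_{\mathbb C}$ after using the boundary condition $g(0)\in T_{\mathbb C}$ to guarantee diagonalizability and ruling out $\lambda=\pm 1$ via uniqueness of ODE solutions. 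The paper's proof is slightly terser, noting only that ``the sections $s_a$ satisfying $\partial_\alpha s_a=0$ are no longer required to be orthonormal'' and then proceeding exactly as in Proposition~\ref{prop:realStabs}; your Step 3 spells out the same construction in somewhat more detail.
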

\begin{proof}
As in Proposition \ref{prop:realStabs}, $5\Rightarrow 3\Rightarrow 1\Rightarrow 2$ and $5\Rightarrow 3\Rightarrow 4 \Rightarrow 2$ are immediate, so the only non-trivial statement is $2\Rightarrow 5$. The main novelty here is that given a non-trivial $g\in \hat\G$, if we do not account for the boundary conditions then we do not a priori know that $g(0)$ is diagonalizable, since $g$ is not unitary. However, the boundary conditions imply that $g(0)$ is diagonal, and $\det g(0)=1$ implies that it takes the form $\twoMatrix{c}{0}{0}{1/c}$ for some $c\in \CC^\times$. From here, the proof is nearly the same as that of Proposition \ref{prop:realStabs}; the only change is that the sections $s_a$ satisfying $\partial_\alpha s_a = 0$ are no longer required to be orthonormal, but they still determine a decomposition of $E$ into eigenbundles of $g$ with distinct constant eigenvalues.

So, we know that there are good projections $\pi$ and $1-\pi$ to proper subbundles. Since the degrees of these subbundles are negatives of each other, if $B$ is $\xi^\RR$-semistable then these degrees both vanish and $B$ is strictly $\xi^\RR$-polystable.
\end{proof}

We now give an analog of Proposition~\ref{prop:realSings}, i.e., a characterization of $\N_{\xi} \setminus \N_{\xi}^{\mathrm{st}}$. This statement is a mild strengthening of Proposition~\ref{prop:compSings1}.
\begin{proposition} \label{prop:compSings2}
If $\xi_0^\RR = \xi_L^\RR = \xi_0^\CC = \xi_L^\CC = 0$ then $\N_\xi \simeq (\CC\times \CC^\times)/Z_2$, with $\N_{\xi} \setminus \N_{\xi}^{\mathrm{st}}$ the two $A_1$ singularities of $(\mb{C} \times \mb{C}^{\times}) / Z_2$. If this is not the case, but we do have $\xi_0^\RR = \pm \xi_L^\RR$ and $\xi_0^\CC = \pm \xi_L^\CC$ with the same relative sign, then $\N_\xi \setminus \N_{\xi}^{\mathrm{st}}$ is a single point. Finally, for generic $\xi$, $\N_\xi = \N_{\xi}^{\mathrm{st}}$.
\end{proposition}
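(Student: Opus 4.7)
The proof plan combines the explicit orbit classification of Proposition \ref{prop:compNahm} with the general criteria of Propositions \ref{prop:compStabs} and \ref{prop:compSings1}. The former identifies $\N_\xi \setminus \N_\xi^{\rm st}$ with the locus of reducible (equivalently, strictly $\xi^\RR$-polystable) complex Nahm data, and the latter restricts when reducible orbits can contribute to $\N_\xi$: they require both $\xi_0^\CC = \epsilon \xi_L^\CC$ and $\xi_0^\RR = \epsilon \xi_L^\RR$ for some common sign $\epsilon \in \{\pm 1\}$.

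First I would dispose of the generic case, namely where we do not simultaneously have these two matched-sign conditions for any $\epsilon$. Then either $\xi_0^\CC \neq \pm \xi_L^\CC$, in which case Proposition \ref{prop:compSings1} precludes any reducible orbit in $\D_{\xi^\CC}$; or else $\xi_0^\CC = \epsilon \xi_L^\CC$ for some $\epsilon$ while $\xi_0^\RR \neq \epsilon \xi_L^\RR$. In the latter subcase, cases (vi) and (vii) of Proposition \ref{prop:compNahm} provide the unique candidate reducible orbit ($\alpha = 0, \beta = -i\xi_0^\CC\sigma_z$ or its image under $\exp(i\pi t\sigma_x/2L)$), which splits as $\mb{C}e_1 \oplus \mb{C}e_2$. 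A direct computation of degrees with respect to $\xi^\RR$ gives values of magnitude $|\xi_0^\RR - \epsilon \xi_L^\RR| \neq 0$, so this orbit is $\xi^\RR$-unstable and absent from $\N_\xi$; hence $\N_\xi = \N_\xi^{\rm st}$ in both generic subcases.

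For nonzero nongeneric $\xi$, the extended gauge transformation $\exp(i\pi t\sigma_x/2L)$ reduces us to $\epsilon = +1$, and the reducible orbit of case (vi) of Proposition \ref{prop:compNahm} becomes strictly $\xi^\RR$-polystable with stabilizer $\hat\G_\CC \simeq \CC^\times$, contributing a single point to $\N_\xi \setminus \N_\xi^{\rm st}$. To establish the $A_1$ orbifold structure at this point, I would invoke the holomorphic analog of Proposition \ref{prop:realRG} (whose proof adapts verbatim upon replacing $d_0, d_0^*, d_1$ by $\bar\partial_0, \bar\partial_0^*, \bar\partial_1$): a neighborhood of this point is homeomorphic to $\hat\mu_\CC^{-1}(0)/\CC^\times$, where $\hat\mu_\CC \colon \Hh^{1,{\rm sp}}_B \simeq \CC^4 \to \Hh^{2,{\rm sp}}_B \simeq \CC$ is a holomorphic $\CC^\times$-equivariant map. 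The $\CC^\times$ acts on $\Hh^{1,{\rm sp}}_B$ with weights $(+2,-2,+2,-2)$, corresponding to off-diagonal perturbations of $\alpha$ and of $\beta$; the leading quadratic part of $\hat\mu_\CC$ is the natural bilinear pairing between opposite-weight summands, so its zero locus modulo $\CC^\times$ is an $A_1$ singularity. Higher-order corrections do not change this analytic type, since the quasi-homogeneous hypersurface singularity $uv = w^2$ is finitely determined.

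Finally, for $\xi = 0$, both $\epsilon = \pm 1$ yield strictly polystable reducible orbits --- namely $\alpha = 0, \beta = 0$ (from case (vi)) and $\alpha = (i\pi/2L)\sigma_x, \beta = 0$ (from case (vii)) --- each giving an $A_1$ singularity by the preceding analysis. To identify $\N_0$ globally with $(\CC \times \CC^\times)/Z_2$ (where $Z_2$ acts by $(z, w) \mapsto (-z, w^{-1})$), I would use Propositions \ref{prop:compAx} and \ref{prop:compNahm} to parametrize the orbit space. The case (i) Nahm data $\alpha = \alpha_0 \sigma_x, \beta = \beta_x \sigma_x$ with $\alpha_0$ in the fundamental half-strip provides smooth coordinates on the open stratum, and the assignment $(\beta_x, \alpha_0) \mapsto (\beta_x, \exp(2L\alpha_0))$ is a bijection onto the complement of the two $Z_2$-fixed points $(0, \pm 1)$ in $(\CC \times \CC^\times)/Z_2$. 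Cases (ii)--(vii) at $\xi=0$ then fill in the fibers over $\mu = \pm 1$, with the two reducible orbits producing the two $A_1$ points. The main technical step --- and what I expect to be the hardest --- is verifying that the resulting bijection is a biholomorphism of complex analytic orbifolds in a neighborhood of the two singular points; this reduces to a local compatibility check between the holomorphic model $\hat\mu_\CC^{-1}(0)/\CC^\times$ established in the previous paragraph and the standard orbifold chart on $(\CC \times \CC^\times)/Z_2$ near its two $A_1$ points.
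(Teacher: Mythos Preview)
Your approach is correct and essentially follows what the paper calls the ``second proof'' of Proposition~\ref{prop:realSings} (the axial-gauge route via Proposition~\ref{prop:compNahm}), which the paper explicitly says generalizes to the complex setting. The paper's own proof is far terser: it simply observes that both proofs of Proposition~\ref{prop:realSings} carry over, the only modification being a normalization constant to force $\det g = 1$ in the first proof.

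Two remarks on scope. First, you over-prove: in the nonzero nongeneric case the proposition only asserts that $\N_\xi \setminus \N_\xi^{\rm st}$ is a \emph{single point}, not that it is an $A_1$ singularity. Your deformation-complex and finite-determinacy analysis is not needed here; the paper explicitly defers the $A_1$ structure to Proposition~\ref{prop:RGitems} and Remark~\ref{rmk:partialRes}, and indeed the complex analog of Proposition~\ref{prop:realRG} that you invoke (namely Proposition~\ref{prop:cplxRG}) has not yet been established at this point in the paper. Second, for $\xi = 0$ you build the identification with $(\CC\times\CC^\times)/Z_2$ directly from the axial-gauge classification and flag the local orbifold-chart compatibility at the singular points as the hardest step. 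The paper sidesteps this by appealing to the general result Theorem~\ref{thm:CnoFI} from \S\ref{sec:setup} (mirroring the final line of the first proof of Proposition~\ref{prop:realSings}), which already furnishes the identification and its upgrade to an isomorphism of complex manifolds on the smooth locus; the phrase ``the two $A_1$ singularities'' in the statement then just records \emph{which} points of $(\CC\times\CC^\times)/Z_2$ the reducible orbits land on, not a separate local-structure claim.
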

\begin{proof}
Both proofs of Proposition~\ref{prop:realSings} generalize immediately to the present context: the former generalization exploits Proposition~\ref{prop:compSings1} and Proposition~\ref{prop:compStabs}, while the latter uses Proposition~\ref{prop:compNahm}. The only change worth commenting on is that in the first proof, instead of defining $g = (s_1\quad s_2)$, we should define $g = c (s_1\quad s_2)$, where $c\in \CC^\times$ is chosen so that $\det g = 1$. (Note that $c$ is constant, since $\partial_\alpha (s_1\quad s_2) = 0$ and tracelessness of $\alpha$ together imply that $\partial \det(s_1\quad s_2) = 0$.)
\end{proof}

Once again, we would like to know more refined information in the second case above in order to characterize the local behavior at the strictly polystable points as $A_1$ singularities. We will do so below, in Proposition \ref{prop:RGitems} and Remark \ref{rmk:partialRes}.

We now put a smooth structure on our moduli space. We proceed as in the previous section, but in a slightly different order. We first have

\begin{lemma}[Complex Coulomb gauge] \label{lem:localDiff}
For $1 \le s < \infty$ and for $B\in \B' \times \B''^s_{\mathrm{all}}$ with stabilizer $\hat\G_\CC\subset \G_\CC$, the $\G_\CC^{s+1}$-equivariant map $F\colon (\G_\CC^{s+1} \times \ker \bar\partial_0^*)/\hat{\mc{G}}_{\mb{C}} \to \B^s_{\mathrm{all}}$ defined by $(g, b) \mapsto g(B + b)$, and with $\hat{g} \in \hat{\mc{G}}_{\mb{C}}$ acting on $(g, b)$ via $(g, b) \mapsto (\hat{g}^{-1}g, \hat{g}^{-1}b\hat{g})$, yields a local biholomorphism identifying a neighborhood of $(1,0)$ with a neighborhood of $B\in \B^s_{\rm all}$. 
\end{lemma}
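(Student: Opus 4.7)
The plan is to follow the proof of Proposition~\ref{prop:realcoul} essentially verbatim, replacing the real Hodge operators $d_0, d_0^*$ by their complex Dolbeault-like counterparts $\bar\partial_0, \bar\partial_0^*$ and by invoking Proposition~\ref{prop:compInd} in place of Proposition~\ref{prop:realInd}. Since $\G_\CC^{s+1}$ acts holomorphically on the affine space $\B^s_{\rm all}$, the map $F$ is automatically holomorphic, so it suffices to show that $F$ is a local diffeomorphism in a neighborhood of $(1,0)$ and then invoke the (holomorphic) inverse function theorem; the $\hat\G_\CC$-equivariance of $F$ is tautological from the definitions, so the local biholomorphism will descend to the stated quotient.

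First I would compute the derivative at $(1,0)$, which is the map $DF_{(1,0)}\colon \mf{g}_\CC^{s+1}/\hat{\mf{g}}_\CC \oplus \ker\bar\partial_0^* \to \B^s_{\rm all}$, $(h,b) \mapsto \bar\partial_0 h + b$, where $\ker\bar\partial_0^*$ is understood inside $\B^s_{\rm all}$. For injectivity, the key point is that for $h \in \mf{g}_\CC^{s+1}$ and $b = (\alpha',\beta') \in \B^s_{\rm all}$, one has $(\bar\partial_0 h, b)_{L^2} = (h, \bar\partial_0^* b)_{L^2}$ with no boundary contribution: this uses $h|_{\partial I} \in \mf{t}_\CC$ together with $\alpha'|_{\partial I} \in \mf{t}_\CC^\perp$ (the latter of which holds even for $b \in \B^s_{\rm all}$, not merely $\B^s_0$), in exact parallel with the real case. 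Hence $\bar\partial_0 h + b = 0$ forces $\bar\partial_0 h = b = 0$, and then $\ker\bar\partial_0 = \hat{\mf{g}}_\CC$ by Proposition~\ref{prop:compStabs} (or by the definition of $\Hh^0_B$) gives injectivity after quotienting.

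For surjectivity, given $b' \in \B^s_{\rm all}$, I would seek $h \in \mf{g}_\CC^{s+1}$ solving $\bar\Delta_0 h = \bar\partial_0^* b'$. Embedding $b'$ into the $L^2$-setting and applying Proposition~\ref{prop:selfAdj} to the complex Laplacian $\bar\Delta_0$ (whose Fredholmness and essential self-adjointness follow by the identical argument used for $\Delta_0$ in the real case, since the complex deformation complex is set up in strict parallel with the real one), the obstruction to solvability is an $L^2$-pairing against $\ker\bar\Delta_0$; but for $h' \in \mf{g}_\CC^2 \cap \ker\bar\Delta_0$ one has $\bar\partial_0 h' = 0$, so $(\bar\partial_0^* b', h')_{L^2} = (b', \bar\partial_0 h')_{L^2} = 0$. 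A solution $h \in \mf{g}_\CC^2$ thus exists, and elliptic regularity for $\bar\Delta_0$ upgrades it to $\mf{g}_\CC^{s+1}$. Combining injectivity and surjectivity, $DF_{(1,0)}$ is a $\CC$-linear topological isomorphism, and the inverse function theorem yields a local biholomorphism of neighborhoods as required.

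I do not anticipate a genuinely new obstacle here: the one technical subtlety, the vanishing of boundary contributions when pairing $\bar\partial_0$ and $\bar\partial_0^*$, is precisely the compatibility between the boundary conditions defining $\B^s_0$, $\mf{g}_\CC^{s+1}$, and $\tilde\B_{\xi^\CC}$ which was already arranged in the Hodge-theoretic setup of Proposition~\ref{prop:compInd}; granted that input, the proof is a structural copy of Proposition~\ref{prop:realcoul}.
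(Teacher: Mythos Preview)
Your proposal is correct and is precisely the approach the paper takes: the paper's proof is the single sentence ``The proof is as in Proposition~\ref{prop:realcoul}, using the inverse function theorem,'' and you have faithfully unpacked what that entails in the complex setting.
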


\noindent The proof is as in Proposition \ref{prop:realcoul}, using the inverse function theorem.

We now wish to show that $\ker \bar \partial_0^* / \hat{\G}_\CC \to \B^s_{\mathrm{all}} / \G^{s+1}_{\CC}$ is a local homeomorphism. To do so, it would be convenient to have an analog of Lemma \ref{lem:homeoOrb}. However, the proof strategy adopted there does not work because the analog of Proposition \ref{prop:gaugeConv} in the present section, namely Proposition \ref{prop:orbitclosure}, is weaker: in case (ii) it need not be the case that $B$ and $B'$ are gauge-equivalent, and even in case (i) we do not have convergence of a subsequence in $\G_\CC^{s+1}$. In general, what we need to show is that there exists $\delta > 0$ such that if $B', B''$ are two points of $\ker\bar\partial_0^*$ in the same $\G^{s+1}_{\CC}$-orbit and within $\delta$ of $B$ in the $H^s$-norm, then there exists $g \in \hat{\G}_{\CC}$ such that $g(B') = B''$. Equivalently, by means of Lemma~\ref{lem:localDiff}, it suffices to show the \emph{a priori} weaker statement that for any $\epsilon > 0$, there exists $\delta > 0$ such that given $B', B''$ related as above, there exists $g \in \G^{s+1}_{\CC}$ and $\hat{g} \in \hat{\G}_{\CC}$ such that $g(B') = B''$ and $\| \hat{g} g - \mathrm{Id}\|_{H^{s+1}} < \epsilon$. In any case, this statement now immediately follows from the following two stronger statements: Lemma~\ref{lem:NKg} when $B$ is reducible and Proposition~\ref{prop:quanthomeo} when $B$ is irreducible. We note that Proposition \ref{prop:quanthomeo} implies a similar conclusion as in Lemma \ref{lem:homeoOrb} (see also Remark \ref{rmk:slightWeak}): in case (i) of Proposition \ref{prop:orbitclosure}, when $B$ is irreducible, we have $g_n \to g$ in $\hat\G_\CC \backslash \G^{s+1}_\CC$. When $B$ is irreducible, our proof follows a strategy employed in the proof of Theorem 6 in \cite{donaldson:top} and Theorem 7.3.17 in \cite{kobayashi:bundles}.

\begin{lem}\label{lem:NKg} Given $\alpha \in \mc{B}'$ reducible, let $\hat{\mc{G}}_{\alpha,\mb{C}} \subset \mc{G}^{s+1}_{\mb{C}}$ denote the stabilizer subgroup and $S \subset \mc{B}'^s$ be the slice $\{\alpha' \in \mc{B}'^s \bigm| \overline{\partial}^*_0 \alpha' = 0\}$. Then there exists $\delta > 0$ such that if $g \in \mc{G}^{s+1}_{\mb{C}}, \alpha' \in S$ satisfy $\|\alpha'\|_{H^s} < \delta, \|g(\alpha'+\alpha)-\alpha\|_{H^s} < \delta$, then $g \in \hat{\mc{G}}_{\alpha,\mb{C}}$. \end{lem}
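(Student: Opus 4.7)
The approach is to apply the inverse function theorem to the map $F\colon \mc{G}^{s+1}_{\mb{C}} \times S \to \mc{B}'^s$ defined by $F(g, \alpha') := g(\alpha + \alpha')$, equipped with the $\hat{\mc{G}}_{\alpha,\mb{C}}$-action of Lemma~\ref{lem:localDiff} applied to the connection component alone. The derivative $dF_{(1,0)}(h, \alpha') = \partial_\alpha h + \alpha'$ surjects onto $\mc{B}'^s = \partial_\alpha\mf{g}^{s+1}_{\mb{C}} \oplus S$ (this being the $L^2$-orthogonal Hodge decomposition, since $S$ is by definition the orthogonal complement of $\mathrm{im}\,\partial_\alpha$) with kernel precisely the stabilizer Lie algebra $\hat{\mf{g}}_{\alpha,\mb{C}} = \ker(\partial_\alpha\colon\mf{g}^{s+1}_{\mb{C}}\to\mc{B}'^s)$. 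Thus $F$ descends to a local biholomorphism $\bar F\colon \widetilde W/\hat{\mc{G}}_{\alpha,\mb{C}} \stackrel{\sim}{\to} V$ from a $\hat{\mc{G}}_{\alpha,\mb{C}}$-saturated open neighborhood $\widetilde W$ of the orbit $\hat{\mc{G}}_{\alpha,\mb{C}}\times\{0\}$ onto a neighborhood $V$ of $\alpha$ in $\mc{B}'^s$.

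The proof then proceeds by contradiction. Assuming no such $\delta$ works, I would extract sequences $(g_n, \alpha'_n)$ with $\|\alpha'_n\|_{H^s}, \|g_n(\alpha + \alpha'_n) - \alpha\|_{H^s} \to 0$ but $g_n \notin \hat{\mc{G}}_{\alpha,\mb{C}}$. For $n$ large enough that $g_n(\alpha + \alpha'_n) \in V$, applying $\bar F^{-1}$ produces $h_n \in \mc{G}^{s+1}_{\mb{C}}$ with $h_n \to 1$ and $\alpha''_n \in S$ with $\alpha''_n \to 0$—unique modulo the $\hat{\mc{G}}_{\alpha,\mb{C}}$-action—such that $g_n(\alpha + \alpha'_n) = h_n(\alpha + \alpha''_n)$. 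Setting $k_n := g_n h_n^{-1}$ then yields the slice-to-slice relation $k_n(\alpha + \alpha'_n) = \alpha + \alpha''_n$ with both $\alpha'_n, \alpha''_n$ small in $S$.

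The heart of the proof—and its main obstacle—is the rigidity step: showing that any gauge transformation carrying one small slice perturbation of $\alpha$ to another must lie in $\hat{\mc{G}}_{\alpha,\mb{C}}$. Rewriting the slice-to-slice relation as the first-order linear ODE $\partial k_n = (\alpha + \alpha'_n)k_n - k_n(\alpha + \alpha''_n)$, the solution is determined by its initial value $k_n(0)\in T_{\mb{C}}$ (via the boundary conditions on $\mc{G}^{s+1}_{\mb{C}}$), and the endpoint conditions $k_n(L)\in T_{\mb{C}}$, $k_n^{-1}\partial k_n|_{\partial I}\in\mf{t}_{\mb{C}}^\perp$ impose nontrivial constraints mediated by the monodromy of $\alpha$. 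The reducibility hypothesis enters through Propositions~\ref{prop:realAx}--\ref{prop:compAx}, which pin $\alpha$ up to extended gauge transformation to one of two explicit constant representatives with $\mb{C}^\times$ stabilizer; one then uses the Coulomb conditions $\partial_{-\alpha^\dagger}\alpha'_n = \partial_{-\alpha^\dagger}\alpha''_n = 0$ substantively to eliminate all solutions of the boundary value problem outside $\hat{\mc{G}}_{\alpha,\mb{C}}$. Once $k_n \in \hat{\mc{G}}_{\alpha,\mb{C}}$ is in hand, the $\hat{\mc{G}}_{\alpha,\mb{C}}$-equivariance of $\bar F$ places $(g_n,\alpha'_n) = k_n^{-1}\cdot(h_n,\alpha''_n)$ inside $\widetilde W$; unwinding the $\hat{\mc{G}}_{\alpha,\mb{C}}$-quotient structure then absorbs the remaining $h_n$ factor into the stabilizer, yielding $g_n\in\hat{\mc{G}}_{\alpha,\mb{C}}$ for large $n$ and the desired contradiction.
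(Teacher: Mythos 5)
Your opening inverse-function-theorem reduction is fine (it is essentially the connection-component case of Lemma~\ref{lem:localDiff}), but the proof has a genuine gap exactly where you place "the heart of the proof": the rigidity step is asserted rather than carried out, and it is the \emph{entire} content of the lemma. Saying that "one then uses the Coulomb conditions substantively to eliminate all solutions of the boundary value problem outside $\hat{\mc{G}}_{\alpha,\mb{C}}$" restates what must be shown; it is not an argument. Here is what that step actually requires. After an extended gauge transformation one may take $\alpha=0$ (Proposition~\ref{prop:compAx}), so the slice becomes the finite-dimensional space $\mf{t}_{\mb{C}}^{\perp}$ of \emph{constant} off-diagonal matrices, your ODE has constant coefficients, and the only nontrivial constraint is that $k(L)$, an explicit product of exponentials, lie in $T_{\mb{C}}$ --- equivalently, by Proposition~\ref{prop:mon}, that $\exp(L\alpha')$ and $\exp(L\alpha'')$ define the same class in $T_{\mb{C}}\backslash SL(2,\mb{C})/T_{\mb{C}}$. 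The whole lemma thus reduces to showing that $\exp\colon \mf{t}_{\mb{C}}^{\perp}/T_{\mb{C}}\to T_{\mb{C}}\backslash SL(2,\mb{C})/T_{\mb{C}}$ is a local isomorphism of (non-Hausdorff!) quotients near $0$, which is how the paper proves it. The delicate point your sketch never touches is the non-Hausdorff locus: over $ab=0$ the source has three $T_{\mb{C}}$-orbits of $\twoMatrix{0}{a}{b}{0}$ (zero, strictly upper, strictly lower triangular), and one must check these land bijectively on the three double cosets with invariant $AD=1$ (identity, upper unipotent, lower unipotent), so that, e.g., no admissible $k$ carries a small strictly upper-triangular slice element to a strictly lower-triangular one or to $0$. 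The paper handles this with the commutative square comparing $ab$ with $AD$ and the observation that when $ab=0$ the off-diagonal entries of the exponential are $a$ and $b$ on the nose; away from this locus one still needs that equality of double cosets forces $\alpha''=t^{-1}\alpha' t$ with the realizing gauge transformation equal to the constant $t$ (using triviality of the stabilizer of a nonzero constant off-diagonal connection). None of this is automatic, and it is exactly where reducibility enters.

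Separately, your final step --- "absorbing the remaining $h_n$ factor into the stabilizer" --- fails: $h_n$ is merely close to the identity, and there is no reason for it to lie in $\hat{\mc{G}}_{\alpha,\mb{C}}$. Indeed this cannot be repaired for the statement read completely literally: with $\alpha=0$, $\alpha'=0$, and $g=\exp(\epsilon f(t)\sigma_z)$ for $f$ non-constant with $f'|_{\partial I}=0$, one has $g\in\mc{G}^{s+1}_{\mb{C}}$ and $g(\alpha)=\epsilon f'\sigma_z$ arbitrarily $H^s$-close to $\alpha$, yet $g$ is not constant and hence not in $\hat{\mc{G}}_{0,\mb{C}}$. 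The conclusion genuinely needs the hypothesis --- implicit in the statement and present in its only application, where both endpoints lie in $\ker\bar\partial_0^*$ --- that $g(\alpha+\alpha')-\alpha$ is itself a slice element. Under that hypothesis your steps 1--2 are superfluous (take $h_n=1$, $\alpha''_n=g_n(\alpha+\alpha'_n)-\alpha$), and the entire proof collapses to the rigidity computation described above, which remains to be supplied.
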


\begin{rmk}
As introduced previously in Remark~\ref{rmk:sadRG} and as will continue in Remark~\ref{rmk:cplxRG}, one expects the local geometry of our hyperkahler quotient construction in a neighborhood of some singularity, i.e., of some strictly polystable point, to be well-approximated by the appropriate local geometry of the Kronheimer construction. Lemma~\ref{lem:NKg} speaks to this expectation at the level of the relevant gauge transformations: one may say that Nahm gauge transformations relating Coulomb gauge connections near a polystable point coincide with Kronheimer gauge transformations.
\end{rmk}

\bp The truth of this proposition is invariant under extended gauge transformation, so without loss of generality assume that $\alpha = 0$. Let us recall the description of Proposition~\ref{prop:mon} that two connections are gauge-equivalent if and only if their parallel transport maps agree in the relevant double-coset; it follows that we wish to show \beq\label{eq:exp}\mf{t}_{\mb{C}}^{\perp} / T_{\mb{C}} \stackrel{\exp}{\to} T_{\mb{C}} \backslash SL(2,\mb{C}) / T_{\mb{C}} \eeq is a local isomorphism in a neighborhood of $0 \in \mf{t}_{\mb{C}}^{\perp}$. Recall that $T_{\mb{C}}$ acts on $\mf{t}_{\mb{C}}^{\perp}$ by conjugation whereas the actions on $SL(2,\mb{C})$ are by left- and right-multiplication.

Introduce the analytic functions \begin{align*} \mathrm{ch}(z) &= \cosh z^{1/2} = \sum_{k=0}^{\infty} \frac{1}{(2k)!} z^k\,, \\ \mathrm{sh}(z) &= z^{-1/2} \sinh z^{1/2} = \sum_{k=0}^{\infty} \frac{1}{(2k+1)!} z^k\,.\end{align*} Then the map of~\eqref{eq:exp} above may be explicitly written as $\begin{pmatrix} 0 & a \\ b & 0 \end{pmatrix} \mapsto \begin{pmatrix} \mathrm{ch}(ab) & a \, \mathrm{sh}(ab) \\ b\, \mathrm{sh}(ab) & \mathrm{ch}(ab) \end{pmatrix}$. Let us now expand~\eqref{eq:exp} to the commutative square \beq\begin{tikzcd} \mf{t}_{\mb{C}}^{\perp}/T_{\mb{C}} \ar[r] \ar[d] & T_{\mb{C}} \backslash SL(2,\mb{C}) / T_{\mb{C}} \ar[d] \\ \mb{C} \ar[r] & \mb{C}\makebox[0pt][l]{\,.}\end{tikzcd}\label{eq:coolsq}\eeq where the left vertical arrow is $\begin{pmatrix} 0 & a \\ b & 0 \end{pmatrix} \mapsto ab$, the right vertical arrow is $\begin{pmatrix} A & B \\ C & D \end{pmatrix} \mapsto AD$, and the bottom horizontal arrow is $z \mapsto \mathrm{ch}^2(z)$. Recall that the vertical arrows above are biholomorphic isomorphisms on the complement of $0 \in \mb{C}$ on the left and $0, 1 \in \mb{C}$ on the right; all three of these special fibers are copies of the cardinality-three non-Hausdorff space $\mathrm{Thr} := \mb{C}^2 / \mb{C}^{\times}$, with action given by $(z_1, z_2) \mapsto (\lambda z_1, \lambda^{-1} z_2)$.

As $z \mapsto \mathrm{ch}^2(z)$ is locally injective about $z = 0$ by consideration of the nonzero derivative there, it now immediately follows that~\eqref{eq:exp} is locally isomorphic about $0 \in \mf{t}^{\perp}_{\mb{C}}$ except for, possibly, the fiber over $0 \in \mb{C}$, where we need to know if the $\mathrm{Thr}$ fiber over $0 \in \mb{C}$ -- i.e., whether it is $a, b$, or both that are zero -- on the left maps isomorphically to the $\mathrm{Thr}$ fiber over $1 \in \mb{C}$ on the right -- i.e., whether it is $B,C$, or both that are zero. But, if $ab = 0$, the off-diagonal elements of the exponential are $a$ and $b$ on the nose, and so this is immediate.  \ep

\begin{prop}\label{prop:quanthomeo} Suppose $B = (\alpha, \beta) \in \mc{B}' \times \mc{B}''^s_{\mathrm{all}}$ is irreducible. Then for any $\epsilon > 0$, there exists $\delta > 0$ such that if $g \in \tilde{\mc{G}}^{s+1}_{\mb{C}}$ and $B' \in \mc{B}^s_{\mathrm{all}}$ are such that $\|B' - B\|_{H^s} < \delta, \|g(B') - B\|_{H^s} < \delta$, then $\|\sigma g - {\rm Id}\|_{H^{s+1}} < \epsilon$ for some $\sigma \in \{\pm 1\}$. \end{prop}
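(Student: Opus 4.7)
The plan is to argue by contradiction. Assume the proposition fails: there would then exist $\epsilon > 0$ and sequences $B_n = (\alpha_n,\beta_n) \in \mc{B}^s_{\mathrm{all}}$, $g_n \in \tilde{\mc{G}}^{s+1}_{\mb{C}}$ with $B_n \to B$ and $B'_n := g_n(B_n) = (\alpha'_n,\beta'_n) \to B$ in $\mc{B}^s_{\mathrm{all}}$, yet $\|\sigma g_n - \mathrm{Id}\|_{H^{s+1}} \ge \epsilon$ for both $\sigma \in \{\pm 1\}$ and all $n$. Viewing each $g_n$ as a section of $\mathrm{End}(E)$, the gauge-transformation relation is equivalent to the morphism equations
\begin{equation*}
\partial g_n = g_n\alpha_n - \alpha'_n g_n, \qquad g_n\beta_n = \beta'_n g_n,
\end{equation*}
subject to $g_n|_{\partial I}\in T_{\mb{C}}$, and these are the equations I would exploit throughout.

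The main obstacle will be to establish an a priori $L^2$ bound on $g_n$; without irreducibility such a bound genuinely fails, as illustrated by the $\mathrm{diag}(in,-i/n)$ degeneration appearing in the proof of Proposition~\ref{prop:orbitclosures}(iii). To handle this, I would first normalize: set $c_n := \|g_n\|_{L^2}$ and suppose toward contradiction (after extracting a subsequence) that $c_n \to \infty$. Then $h_n := g_n/c_n$ satisfies the same linear morphism equations between $B_n$ and $B'_n$, with $\|h_n\|_{L^2}=1$ and $\det h_n \equiv 1/c_n^2 \to 0$. Using $H^s \hookrightarrow L^\infty$ (valid since $s\ge 1 > 1/2$), the first morphism equation yields a uniform $H^1$ bound on $h_n$, so Rellich compactness produces a subsequence converging strongly in $L^2$ and in $C^0$ to some $h_\infty$ with $\|h_\infty\|_{L^2}=1$ and $\det h_\infty \equiv 0$. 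Passing to the limit in both morphism equations in $L^2$ would exhibit $h_\infty$ as a nonzero pre-Nahm morphism $B\to B$ that is diagonal at $\partial I$. Since $B$ is irreducible, it has at most one good subline and hence is $\xi^{\mb{R}}$-stable for some $\xi^{\mb{R}}$; Corollary~\ref{cor:freeAct} then forces $h_\infty$ to be a nonzero constant multiple of the identity, contradicting $\det h_\infty \equiv 0$.

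With the $L^2$ bound on $g_n$ secured, the remainder will be routine bootstrap and extraction. The elliptic equation $\partial g_n = g_n\alpha_n - \alpha'_n g_n$, combined with the Banach-algebra structure of $H^s(I)$ for $s > 1/2$ and the continuity of the multiplication $H^s \times H^k \to H^k$ for $0 \le k \le s$, will upgrade the $L^2$ bound on $g_n$ successively to $H^1, H^2, \ldots, H^{s+1}$. Weak compactness in $H^{s+1}$ together with the Rellich embedding $H^{s+1}\hookrightarrow H^s$ produces a further subsequence with $g_n \rightharpoonup g_\infty$ in $H^{s+1}$ and $g_n\to g_\infty$ strongly in $H^s$; passing to the limit in the morphism equations exhibits $g_\infty$ as a morphism $B\to B$ with $\det g_\infty \equiv 1$, so Corollary~\ref{cor:freeAct} again identifies $g_\infty = \pm\mathrm{Id}$. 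Finally, continuity of $H^s\times H^s\to H^s$ multiplication applied to the right-hand side of $\partial g_n = g_n\alpha_n - \alpha'_n g_n$ gives strong $H^s$ convergence of $\partial g_n$, which together with strong $H^s$ convergence of $g_n$ upgrades to strong $H^{s+1}$ convergence $g_n\to \pm\mathrm{Id}$. For $n$ large along this subsequence this contradicts $\|\sigma g_n - \mathrm{Id}\|_{H^{s+1}} \ge \epsilon$, completing the argument; the case $s=\infty$ would be handled by diagonalizing over finite $s'$.
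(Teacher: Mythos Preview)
Your argument is correct but proceeds quite differently from the paper's. The paper gives a direct quantitative estimate rather than a compactness argument: writing $a = g(B') - B$ and $b = B' - B$, the gauge relation becomes $\bar\partial_0 g = ga - bg$ (with $\bar\partial_0$ taken with respect to $B$), and one decomposes $g = g' + g''$ with $g' \in \ker\bar\partial_0 = \mb{C}\,\mathrm{Id}$ (irreducibility enters exactly here) and $g''$ in the $L^2$-orthogonal complement. A left parametrix $P$ for the overdetermined elliptic operator $\bar\partial_0$ gives $\|g''\|_{H^{s+1}} \le \|P\|\,\|ga - bg\|_{H^s} \le 2C'\delta\|P\|\,\|g\|_{H^{s+1}}$, and absorbing the $g''$ contribution on the right yields $\|g''\|_{H^{s+1}} \le C\delta\|g'\|_{H^{s+1}}$. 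Writing $g' = c\,\mathrm{Id}$ and using $\det g = 1$ then forces $c = \pm 1 + O(\delta)$, whence $\|\sigma g - \mathrm{Id}\|_{H^{s+1}} = O(\delta)$ directly.

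Your sequential compactness approach is closer in spirit to the real-formulation Proposition~\ref{prop:gaugeConv} and Lemma~\ref{lem:homeoOrb}, and invokes Corollary~\ref{cor:freeAct} twice (once for the normalized limit to rule out blow-up, once to identify $g_\infty$). The trade-off: the paper's route gives an effective $\delta$ in terms of $\epsilon$, the parametrix norm, and Sobolev constants, which your soft argument does not; on the other hand, your argument avoids the parametrix machinery and makes the role of irreducibility (via morphism rigidity) more transparent. One harmless slip: with $B'_n = g_n(B_n)$ the paper's conventions (Definition~\ref{defn:starconnection}) give $\partial g_n = g_n\alpha'_n - \alpha_n g_n$, not the swapped version you wrote; this has no effect since both $\alpha_n,\alpha'_n \to \alpha$.
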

\bp We define $b = B' - B$ and $a = g(B') - B$ and assume that $\|a\|_{H^s}, \|b\|_{H^s} < \delta$. With $H_1$ and $H_2$ as defined in \eqref{eq:defnH1}, we consider the differential operator $\bar\partial_0\colon H_1\to H_2$ (where $\bar\partial_0$ is defined with respect to $B$). That $g(B+b) = B+a$ is equivalent to $\bar\partial_0 g = g a - b g$. As in Lemma \ref{lem:fredA0}, we find that $\bar\partial_0$ is semi-Fredholm with a finite-dimensional kernel, i.e. has a left parametrix $P$, despite its coefficients not being smooth; we may assume that $P$ is a generalized left-inverse, i.e., that $P \bar\partial_0 = 1 - \pi$ where $\pi$ is the $L^2$-orthogonal projector onto $\ker\bar\partial_0$. Consider now the decomposition $g = g' + g''$ with $g' = \ker \bar\partial_0$ and $g''$ in the $L^2$-orthogonal complement thereof, where $g' \in H_1$ and so the same is true for $g''$. In fact, by assumption of $B$ irreducible, we have that $\ker \bar\partial_0 = \mb{C}\,\mathrm{Id}$, i.e., constant multiples of the identity matrix.

We now recall the usual overdetermined elliptic estimate:

\be \|g''\|_{H^{s+1}} = \|P \bar\partial_0 g'' \|_{H^{s+1}} \le \|P\|_{H^s\to H^{s+1}} \|\bar\partial_0 g''\|_{H^s} = \|P\| \|\bar\partial_0 g\|_{H^s} = \|P\| \|b g - g a\|_{H^s} \ . \ee
Boundedness of Sobolev multiplication allows us to further bound $\|b g\|_{H^s}, \|g a\|_{H^s} \le C' \|g\|_{H^{s+1}} \delta$ for some constant $C' > 0$, so the right hand side is at most $2\delta \|P\| C' \|g\|_{H^{s+1}} \le 2 \delta \|P\| C' (\|g'\|_{H^{s+1}} + \|g''\|_{H^{s+1}})$. Rearranging gives
\be \|g''\|_{H^{s+1}} \le \frac{2\delta \|P\| C' \|g'\|_{H^{s+1}}}{1-2\delta \|P\| C'} \le C \delta \|g'\|_{H^{s+1}} \label{eq:overdet} \ee
for some $C>0$ and for sufficiently small $\delta$. 

Let us now use that $g' = c\, {\rm Id}$ for some $c\in \CC$; note in particular that $c$ is nonzero, else~\eqref{eq:overdet} immediately yields $g'' = 0$ and hence $g = 0 \not\in \tilde{\mc{G}}^{s+1}_{\mb{C}}$. But then we have $\|g''\|_{C^0} \le C'' \delta c$ for some $C''>0$, and $g=c({\rm Id}+g''/c)$ and $\det g = 1$ imply that $c^2-1 = O(\delta)$, i.e., that $c = \pm 1 + O(\delta)$. Let $\sigma = \pm 1$ accordingly; we thus have $\|\sigma g - \mathrm{Id}\|_{H^{s+1}} = \|(\sigma c - 1) \mathrm{Id} + \sigma g''\|_{H^{s+1}} = O(\delta)$, as desired. \ep

We have now established the following: 
\begin{prop}\label{prop:cplxcoul} For $1 \le s < \infty$ and $B \in \mc{B}' \times \mc{B}''^s_{\mathrm{all}}$ with stabilizer $\hat{\mc{G}}_{\mb{C}} \subset \mc{G}^{s+1}_{\mb{C}}$, the natural map \beq\label{eq:cplxmap} \{b \in \mc{B}^s_{\mathrm{all}} \bigm| \bar\partial_0^* b = 0\} / \hat{\mc{G}}_{\mb{C}} \to \mc{B}^s_{\mathrm{all}} / \mc{G}^{s+1}_{\mb{C}}\eeq is a local homeomorphism identifying a neighborhood of $[0] \in \{b \in \mc{B}^s_{\mathrm{all}} \bigm| \bar\partial_0^* b = 0\} / \hat{\mc{G}}_{\mb{C}}$ with a neighborhood of $[B] \in \mc{B}^s_{\mathrm{all}}/\mc{G}^{s+1}_{\mb{C}}$. This structure is compatible with the natural projection $p^{\mb{C}}$ to $\mf{z}^{\mb{C}}_{\partial I}$, with a a neighborhood of $[0] \in \{b \in \mc{B}^s_{\xi^{\mb{C}}} \bigm| \bar\partial_0^* b = 0\} / \hat{\mc{G}}_{\mb{C}}$ identified with a neighborhood of $[B] \in \mc{B}^s_{\xi^{\mb{C}}} / \mc{G}^{s+1}_{\mb{C}}$. \end{prop}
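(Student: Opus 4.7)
The plan is to assemble the three ingredients just established: the local biholomorphism of Lemma \ref{lem:localDiff}, Lemma \ref{lem:NKg} for reducible $B$, and Proposition \ref{prop:quanthomeo} for irreducible $B$, in the complex analog of the argument for Proposition \ref{prop:realcoul}.

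Concretely, Lemma \ref{lem:localDiff} provides $\hat\G_\CC$-invariant open neighborhoods $U_1\ni 1$ in $\G^{s+1}_\CC$, $U_2\ni 0$ in $\ker\bar\partial_0^*$, and $V\ni B$ in $\B^s_{\rm all}$, together with a biholomorphism $F\colon (U_1\times U_2)/\hat\G_\CC\stackrel{\sim}{\to} V$ sending $(g,b)\mapsto g(B+b)$. Restricting to the slice $\{1\}\times U_2$ and projecting produces a continuous map $\phi\colon U_2/\hat\G_\CC\to V/\G^{s+1}_\CC$. Openness is immediate: every $\G^{s+1}_\CC$-orbit meeting $V$ also meets $\{B+b \bigm| b\in U_2\}$, so $\phi$ surjects onto the open set $V/\G^{s+1}_\CC\subset \B^s_{\rm all}/\G^{s+1}_\CC$. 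Compatibility with $p^\CC$ follows by restricting attention to $U_2\cap\B^s_0$, which preserves $\beta|_{\partial I}$ and hence cuts out the fiber over the corresponding value of $\xi^\CC$.

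For injectivity near $[0]$, I would suppose $b,b'\in U_2$ are Coulomb-gauge representatives with $g(B+b)=B+b'$ for some $g\in \G^{s+1}_\CC$. It suffices to show $g\in U_1\cdot\hat\G_\CC$ after possibly shrinking $U_2$, since injectivity of $F$ then forces $[b]=[b']$ in $U_2/\hat\G_\CC$. When $B$ is irreducible, $\hat\G_\CC$ is trivial and Proposition \ref{prop:quanthomeo} directly places a representative of $g$ in any preassigned $H^{s+1}$-neighborhood of $1$, hence in $U_1$. When $B$ is reducible, Lemma \ref{lem:NKg} applied to the connection component forces $g$ exactly into the (generally larger) stabilizer $\hat\G_{\alpha,\CC}$ of $\alpha$; a finite-dimensional argument within the Lie group $\hat\G_{\alpha,\CC}$, using the $\beta$-equation $g^{-1}\beta g - \beta = b'_\beta - g^{-1} b_\beta g$ together with the Coulomb slice condition, then pushes $g$ into an arbitrarily small $H^{s+1}$-neighborhood of $\hat\G_\CC\subset \hat\G_{\alpha,\CC}$, and so into $U_1\cdot\hat\G_\CC$.

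The main obstacle is precisely this last reducible-case step: Lemma \ref{lem:NKg} controls $g$ only modulo $\hat\G_{\alpha,\CC}$, whereas we need control modulo the smaller group $\hat\G_\CC$. The gap is closed by finite-dimensional bookkeeping: the $\hat\G_{\alpha,\CC}/\hat\G_\CC$-orbit of $\beta$ lies in $\mathrm{im}\,\bar\partial_0$ (by the definition of $\hat\G_{\alpha,\CC}$), hence is transverse to $\ker\bar\partial_0^*$ at $0$, and combining this transversality with the smallness of $b'_\beta-g^{-1}b_\beta g$ forces $g$ toward $\hat\G_\CC$. No analytic input beyond the three cited results should be needed.
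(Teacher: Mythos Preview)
Your proposal is correct and follows the same overall route as the paper: use Lemma~\ref{lem:localDiff} for the local diffeomorphism, then establish injectivity of the slice map via Proposition~\ref{prop:quanthomeo} in the irreducible case and Lemma~\ref{lem:NKg} in the reducible case. The paper's proof is simply the paragraph preceding the proposition, which records exactly this.

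The one point worth noting is that the ``main obstacle'' you identify in the reducible case --- that Lemma~\ref{lem:NKg} only lands $g$ in the larger stabilizer $\hat\G_{\alpha,\CC}$ of $\alpha$, not in $\hat\G_\CC$ --- is not actually an obstacle in this rank-two setting, and so your finite-dimensional transversality argument is unnecessary here. The reason is that $\hat\G_{\alpha,\CC}=\hat\G_\CC$ whenever $B$ is reducible: by Proposition~\ref{prop:compStabs}, reducibility of $B$ produces a full $\CC^\times$ inside $\hat\G_\CC$, while by Proposition~\ref{prop:compAx} the stabilizer of any reducible connection is exactly $\CC^\times$, so the chain $\CC^\times\subseteq\hat\G_\CC\subseteq\hat\G_{\alpha,\CC}=\CC^\times$ collapses. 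This is why the paper can say the statement ``immediately follows'' from Lemma~\ref{lem:NKg} in the reducible case. Your extra paragraph is not wrong in spirit --- it is the mechanism one would need in a higher-rank generalization where $\hat\G_{\alpha,\CC}\supsetneq\hat\G_\CC$ can genuinely occur --- but it is superfluous for the paper's $SU(2)$ setting.
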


If $B$ is stable, and in particular irreducible so that $\hat{\mc{G}}_{\mb{C}}$ is trivial, we may more explicitly write a neighborhood of $[0]$ on the left side of~\eqref{eq:cplxmap} as $$T^s_{B,\epsilon,\mathrm{all}} := \{b\in \B^s_{\rm all} \bigm| \bar\partial_0^* b = 0,\, \|b\|_{\B^s_{\rm all}} < \epsilon \}\,.$$ More generally, even if $B$ has stabilizer, a neighborhood of $[B]$ is given by $T^s_{B,\epsilon,\mathrm{all}}/\mbox{\!\,$\sim$}$ where the equivalence relation $\sim$ is that of the partially-defined conjugation action of $\hat{\mc{G}}_{\mb{C}}$. 
Finally, similarly define $T^s_{B,\epsilon} := \{b\in \B^s_{0} \bigm| \bar\partial_0^* b = 0,\, \|b\|_{\B^s_0}<\epsilon\}$.

\begin{prop}\label{prop:cplxpreNahmmfld} For $1 \le s < \infty$, $\B^{s,\mathrm{st}}_{\xi^{\mb{R}},\mathrm{all}}/\G_{\mb{C}}^{s+1}$ is canonically a complex Hilbert manifold. The map $\B^{s,\mathrm{st}}_{\xi^{\mb{R}},\mathrm{all}}/\G_{\mb{C}}^{s+1} \stackrel{p^{\mb{C}}}{\to} \mf{z}^{\mb{C}}_{\partial I}$ is a holomorphic submersion when the source is equipped with this manifold structure. 

Explicitly, charts for this manifold structure are given by the neighborhoods $T^s_{B,\epsilon,\mathrm{all}}$ of Proposition~\ref{prop:cplxcoul}, or by $T^s_{B,\epsilon}$ for the fibers $\B^{s,\mathrm{st}}_{\xi}/\G_{\mb{C}}^{s+1}$ of $p^{\mb{C}}$.
\end{prop}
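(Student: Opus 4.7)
The plan is to combine the slice description of Proposition \ref{prop:cplxcoul} with the holomorphic local model of Lemma \ref{lem:localDiff} to produce a complex Hilbert manifold atlas. The space $\B^{s,\mathrm{st}}_{\xi^{\mb{R}},\mathrm{all}}/\G_{\mb{C}}^{s+1}$ is a subspace of $\B^{s,\mathrm{ps}}_{\xi^{\mb{R}},\mathrm{all}}/\G_{\mb{C}}^{s+1}$, which is Hausdorff by Proposition \ref{prop:orbitclosures}, and second-countable as a quotient of the separable Hilbert space $\B^s_{\rm all}$; openness of stability (Proposition \ref{prop:stabOpen}) ensures that the stable subquotient is an open subset. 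At any stable point $B$, Corollary \ref{cor:freeAct} shows that the stabilizer $\hat{\mc{G}}_{\mb{C}}$ is trivial, so Proposition \ref{prop:cplxcoul} identifies a neighborhood of $[B]$ with the open subset $T^s_{B,\epsilon,\mathrm{all}}$ of the closed $\CC$-linear subspace $\ker\bar\partial_0^* \subset \B^s_{\rm all}$; note that $\ker\bar\partial_0^*$ is genuinely $\CC$-linear because $\bar\partial_0$ is $\CC$-linear and the formal $L^2$-adjoint of a $\CC$-linear operator with respect to a Hermitian pairing is again $\CC$-linear.

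Next, I would verify that transition maps between two such charts at nearby stable points $B_1, B_2$ are holomorphic. Given $b_1\in T^s_{B_1,\epsilon,\mathrm{all}}$ whose class $[B_1+b_1]$ also lies in the image of the chart at $B_2$, uniqueness up to $\pm 1$ of the relating gauge transformation (Corollary \ref{cor:freeAct}) together with the local biholomorphism $F_2\colon (\G^{s+1}_\CC \times \ker\bar\partial_{0,B_2}^*)/\{\pm 1\} \to \B^s_{\rm all}$ of Lemma \ref{lem:localDiff} centered at $B_2$ presents the inverse $F_2^{-1}(B_1+b_1) = (g(b_1), b_2(b_1))$ as a pair of holomorphic functions of $b_1$. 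The transition map is then the holomorphic composition $b_1\mapsto b_2(b_1)$, so the atlas of complex Coulomb slices is holomorphic.

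For the final assertion, the boundary-evaluation map $(\alpha', \beta')\mapsto (-i \sigma_z \beta'|_0,\, -i \sigma_z \beta'|_L)$ is continuous and $\CC$-linear on $\B^s_{\rm all}$; it descends to the map $p^{\mb{C}}$, which is therefore holomorphic in the chart coordinates provided above. To see that $p^{\mb{C}}$ is a submersion, I would show that its derivative at $[B]\in \B^{s,\mathrm{st}}_{\xi^{\mb{R}},\mathrm{all}}/\G_{\mb{C}}^{s+1}$ is surjective onto $\mf{z}^{\mb{C}}_{\partial I}$: given any target variation $\dot\xi^{\mb{C}}_{\partial I}$, pick any smooth $b_*=(0,\beta_*)\in \B^s_{\rm all}$ with the prescribed diagonal boundary values for $\beta_*$; then correct by an infinitesimal gauge transformation $\bar\partial_0 h\in \bar\partial_0(\mf{g}^{s+1}_{\mb{C}})$, which is $L^2$-orthogonal to $\ker \bar\partial_0^*$, so that $b_*-\bar\partial_0 h\in \ker\bar\partial_0^*=T_0 T^s_{B,\epsilon,\mathrm{all}}$. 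Since the diagonal boundary values are unaffected by an infinitesimal gauge transformation ($h|_{\partial I}\in \mf{t}_{\mb{C}}$), the resulting tangent vector in the slice still realizes $\dot\xi^{\mb{C}}_{\partial I}$, and the fibers over fixed $\xi^{\mb{C}}$ are therefore complex submanifolds with charts $T^s_{B,\epsilon}$.

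The main obstacle has already been dispatched: the nontrivial analytic input is that the continuous bijection between the slice and the quotient is actually a homeomorphism, which is exactly the quantitative statement Proposition \ref{prop:quanthomeo}. Once this is in hand, together with the implicit function theorem application in Lemma \ref{lem:localDiff}, the complex Hilbert manifold and submersion structure follow formally.
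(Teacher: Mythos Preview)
Your proposal is correct and follows essentially the same approach as the paper, which leaves the proof implicit as the complex analog of Proposition~\ref{prop:preNahmmfld}: Hausdorffness from Proposition~\ref{prop:orbitclosures}, second-countability from separability, and charts from the slice description of Proposition~\ref{prop:cplxcoul} at points with trivial stabilizer. You have in fact spelled out more detail than the paper does, particularly the verification that transition maps are holomorphic via Lemma~\ref{lem:localDiff} and the explicit surjectivity argument for $dp^{\mb{C}}$; one minor omission is that Proposition~\ref{prop:cplxcoul} requires the chart center $B$ to lie in $\mc{B}'\times\mc{B}''^s_{\rm all}$ (smooth $\alpha$-component), which is guaranteed by Proposition~\ref{prop:bijC}.
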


We now wish to cut out the moduli space of complex Nahm data within the space of gauge orbits of polystable pre-Nahm data; we first state an analog of Proposition~\ref{prop:realRG}. The argument is again an application of the implicit function theorem, although now with the mild upgrade that we are also interested in cutting out the polystable locus, which has an interpretation directly in terms of a stability condition on a finite-dimensional Stein space which we denote by $\hat\mu_\CC^{-1}(0) \subset \Hh^1_B$.

In order to distinguish between the notions of stability that we have employed throughout in our infinite-dimensional problem and the notions of stability appropriate for this finite-dimensional problem, we will refer to the latter as GIT-(poly/semi)stability. Although $\hat\mu_\CC^{-1}(0)$ will not be shown to be an affine scheme (because, as in Remark \ref{rmk:sadRG}, the function $\hat\mu_\CC$ can differ from a homogeneous quadratic polynomial by higher-order corrections), there is a theory of GIT quotients for such Stein spaces which closely parallels that of affine GIT quotients \cite{Heinzner}: one defines $\hat{\mu}_{\mb{C}}^{-1}(0)\kqnoxi\hat{\mc{G}}_{\mb{C}}$ to be the complex-analytic space whose holomorphic functions are the $\hat{\mc{G}}_{\mb{C}}$-invariant holomorphic functions on $\hat{\mu}_{\mb{C}}^{-1}(0)$. By results of~\cite{Heinzner}, the underlying quotient space $\hat{\mu}^{-1}_{\mb{C}}(0)\kqnoxi\hat{\mc{G}}_{\mb{C}}$ has underlying space given by the space of closed $\hat{\mc{G}}_{\mb{C}}$-orbits of $\hat{\mu}^{-1}_{\mb{C}}(0)$, which we refer to as GIT-polystable orbits. For our purposes, rather than relying on results of \cite{Heinzner}, we may simply take this as our definition of $\hat\mu^{-1}_\CC(0)\kqnoxi \hat\G_\CC$. We also define a GIT-polystable orbit to be GIT-stable if its points have finite stabilizers. We note that in this approach, it is natural to say that every orbit is GIT-semistable, as in affine GIT quotients, since GIT-polystable orbits are those which are closed in all of $\hat\mu_\CC^{-1}(0)$.

\begin{proposition} \label{prop:cplxRG}
For any $B\in \D_\xi^{\mathrm{ps}}$ which is stabilized by $\hat\G_{\mb{C}}\subset \G_\CC$ with Lie algebra $\hat{\mf{g}}_{\mb{C}} \simeq {\rm H}^0_B$, a neighborhood of $[B]$ in $\N^s_\xi$ is homeomorphic to a neighborhood of 0 in the finite-dimensional GIT quotient $\hat\mu_\CC^{-1}(0) \kqnoxi \hat\G_\CC$, where $\hat\mu_\CC\colon \Hh^1_B \to \hat{\mf{g}}_{\mb{C}}^* \simeq \mathrm{H}^2_B$ is a $\hat{\mc{G}}_{\mb{C}}$-equivariant holomorphic map.
\end{proposition}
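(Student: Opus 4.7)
The plan is to follow the same pattern as the proof of Proposition \ref{prop:realRG}, using Proposition \ref{prop:cplxcoul} in place of Proposition \ref{prop:realcoul} and the holomorphic implicit function theorem in place of the smooth one, and then to address the extra subtlety of identifying the appropriate quotient by the (in general nonreductive) finite-dimensional stabilizer group $\hat\G_\CC$ with a Heinzner-style GIT quotient.

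First I would apply Proposition \ref{prop:cplxcoul} to identify a neighborhood of $[B]$ in $\B^s_{\xi^\CC}/\G^{s+1}_\CC$ with a neighborhood of $0$ in $T^s_{B,\epsilon}/\!\sim$, where $\sim$ is the partially defined conjugation action of $\hat\G_\CC$. The complex moment map defines a $\hat\G_\CC$-equivariant holomorphic map $\tilde\mu_\CC\colon T^s_{B,\epsilon} \to \F^{s-1}_\CC$ by $\tilde\mu_\CC(b) := \mu_\CC(B+b)$, whose derivative at $0$ is $\bar\partial_1\colon\ker\bar\partial_0^*\to \F^{s-1}_\CC$. As in Proposition \ref{prop:realRG}, Proposition \ref{prop:compInd} implies $\bar\partial_1$ is Fredholm onto the intersection $V$ of the $L^2$-orthogonal complement of $\ker(\bar\partial_1^*\colon\tilde\F_\CC \to \B_0)$ with $\F^{s-1}_\CC$, so the holomorphic implicit function theorem produces a biholomorphism $\phi$ from a neighborhood $U$ of $0$ in $T^s_{B,\epsilon}$ to another neighborhood of $0$ satisfying $\pi\circ \tilde\mu_\CC\circ \phi = \bar\partial_1$ on $U$, where $\pi$ is the $L^2$-orthogonal projection onto $V$. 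Choosing $\pi$ to be the $\hat\G_\CC$-equivariant projection onto $V$ (which exists because $\hat\G_\CC$ is finite-dimensional and $V$ is closed under its action), $\phi$ may be taken to be $\hat\G_\CC$-equivariant.

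Setting $\hat\mu_\CC := (1-\pi)\circ \tilde\mu_\CC \circ \phi$, restricted to $\ker\bar\partial_0^*\oplus\bar\partial_1 = \Hh^{1,\mathrm{sp}}_B$ and valued in $\ker\bar\partial_1^* = \Hh^{2,\mathrm{sp}}_B \simeq \hat{\mf g}_\CC^*$ (using the isomorphism provided by the discussion of harmonic forms and noting $\mathrm{H}^{2,\mathrm{sp}}_B \simeq \mathrm{H}^0_B$), $\phi$ restricts to a $\hat\G_\CC$-equivariant biholomorphism between a neighborhood of $0$ in the zero locus of the finite-dimensional holomorphic map $\hat\mu_\CC$ and a neighborhood of $0$ in $\tilde\mu_\CC^{-1}(0) = \ker\tilde\mu_\CC\cap T^s_{B,\epsilon}$. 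Passing to $\hat\G_\CC$-equivalence classes identifies a neighborhood of $[B]$ in $\ker\tilde\mu_\CC/\!\sim$ with a neighborhood of $0$ in $\hat\mu_\CC^{-1}(0)/\!\sim$.

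The main obstacle, and the step I would do last, is to translate the naive quotient $\hat\mu_\CC^{-1}(0)/\!\sim$ into the GIT quotient $\hat\mu_\CC^{-1}(0)\kqnoxi \hat\G_\CC$. The key observation is that $[B]\in \N_\xi^s$ corresponds, in $T^s_{B,\epsilon}/\!\sim$, not to \emph{every} $\hat\G_\CC$-orbit of solutions of the moment map, but only to those representing polystable points of $\B^s_{\xi^\CC}$; by Proposition \ref{prop:orbitclosures}(iii), within a sufficiently small $\hat\G_\CC$-invariant neighborhood of $0$ the polystable orbits are precisely the $\hat\G_\CC$-orbits in $\hat\mu_\CC^{-1}(0)$ that are closed in the entire neighborhood, which by definition are exactly the orbits representing points of the GIT quotient $\hat\mu_\CC^{-1}(0)\kqnoxi \hat\G_\CC$. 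Combining this with the previous steps and using the quantitative homeomorphism statement of Lemma \ref{lem:NKg} (to handle the reducible case where $\hat\G_\CC$ is nontrivial) and Proposition \ref{prop:quanthomeo} (in the stable case), we obtain the desired homeomorphism between a neighborhood of $[B]$ in $\N_\xi^s$ and a neighborhood of $0$ in $\hat\mu_\CC^{-1}(0)\kqnoxi\hat\G_\CC$.
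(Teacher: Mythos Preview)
Your approach is essentially the paper's: the implicit function theorem step is identical to Proposition~\ref{prop:realRG} (as the paper explicitly notes), and the remaining content is the identification of $\xi^{\mb{R}}$-polystability with GIT-polystability via closed orbits.

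However, your final paragraph is where the real work lies, and your treatment is too compressed to be correct as stated. Proposition~\ref{prop:orbitclosures}(iii) tells you that a $\xi^{\mb{R}}$-polystable $\mc{G}^{s+1}_{\mb{C}}$-orbit is closed in the full semistable locus $\mc{B}^{s,\mathrm{ss}}_{\xi}$, not that it is closed in the small neighborhood $U$, and these are \emph{a priori} different conditions: a point closed in $U$ could have further specializations outside $U$. The paper bridges this gap by invoking two additional facts you omit: openness of $\xi^{\mb{R}}$-semistability (Proposition~\ref{prop:stabOpen}) so that all of $U$ may be assumed semistable, and isolatedness of strictly $\xi^{\mb{R}}$-polystable points (Proposition~\ref{prop:compSings2}) so that one may assume $[B]$ is the only strictly polystable point in $U$. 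With these in hand, any extra point in the closure of a $U$-closed orbit would have to be strictly polystable, hence $[B]$ itself, contradicting closedness in $U$. The same reasoning on the $V$ side uses that the origin is the only nontrivially stabilized point for $\hat{\mc{G}}_{\mb{C}}$ on $\ker\bar\partial_0^*$. Only then does ``closed in $U$'' match ``closed in $V$'' match the respective polystability conditions.

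Also, your final invocation of Lemma~\ref{lem:NKg} and Proposition~\ref{prop:quanthomeo} is redundant: those are already consumed in establishing Proposition~\ref{prop:cplxcoul}, which you used at the outset.
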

\begin{proof}

The same argument as Proposition~\ref{prop:realRG} provides a holomorphic $\hat{\mc{G}}_{\mb{C}}$-equivariant map $\hat{\mu}_{\mb{C}}\colon \mc{H}^1_B \to \hat{\mf{g}}^{\vee}_{\mb{C}}$ such that a neighborhood $V$ of $[0]$ in $\hat{\mu}_{\mb{C}}^{-1}(0) /\hat{\mc{G}}_{\mb{C}}$ is identified with a neighborhood $U$ of $[B]$ in $\mc{D}^{s}_{\xi}/\mc{G}^{s+1}_{\mb{C}}$. The remaining content is hence to show that $\xi^{\mb{R}}$-polystability on $U$ precisely corresponds to GIT-polystability; we will simultaneously show as well that $\xi^{\mb{R}}$-stability corresponds to GIT stability.

Since $\xi^\RR$-semistability is an open condition, we may assume that all points in $U$ are $\xi^\RR$-semistable. 
We recall, e.g. from the proof of Proposition~\ref{prop:compSings2}, that strictly $\xi^\RR$-polystable points in $\N^{s}_\xi$ are isolated, so we may assume there are no strictly $\xi^{\mb{R}}$-polystable points in $U\setminus\{[B]\}$. 
We have shown above in Proposition~\ref{prop:orbitclosures} that if $p \in \mc{B}^{s,\mathrm{ss}}_{\xi}$ and $q \in \overline{p \cdot \mc{G}^{s+1}_{\mb{C}}} \setminus p \cdot \mc{G}^{s+1}_{\mb{C}}$ is a point in its orbit closure outside of the orbit itself, then $q$ must be strictly $\xi^\RR$-polystable. It hence follows from the above assumptions that if $\overline{p} \in U$ is a closed point of $U$, then it is a closed point\footnote{Note in general that if $U \subset X$ is an open subset of a non-Hausdorff space $X$ that it need not be true, \emph{a priori}, that closed points of $U$ are closed points of $X$.} of $\mc{D}^{s,\mathrm{ss}}_{\xi} / \mc{G}^{s+1}_{\mb{C}}$; indeed, any other $\xi^\RR$-semistable point in its closure must be a strictly $\xi^\RR$-polystable orbit and hence the point $[B]$ itself, in which case $\overline{p}$ would already fail to be closed in $U$. 

The same analysis is true, but more easily so, for $V$. Indeed, one may immediately see that the origin is the only nontrivially stabilized point for the $\hat{\mc{G}}_{\mb{C}}$-action on $\ker\bar\partial_0^*$ and hence \emph{a fortiori} the only strictly GIT-polystable point in $\hat{\mu}_{\mb{C}}^{-1}(0)$. But now, under the homeomorphism $V \stackrel{\sim}{\to} U$, closed points are identified with closed points, where the above discussion removes any ambiguity as to whether we mean closed in $U$ or closed in $\mc{D}^{s,\mathrm{ss}}_{\xi}/\mc{G}^{s+1}_{\mb{C}}$ (and similarly for closedness in $V$ versus closedness in $\hat{\mu}_{\mb{C}}^{-1}(0)/\hat{\mc{G}}_{\mb{C}}$); i.e., GIT-polystable orbits identify with $\xi^{\mb{R}}$-polystable orbits.

Finally, if a $\G_\CC^{s+1}$ orbit representing a point in $U$ is, in addition, $\xi^\RR$-stable, then its points have trivial $\G_\CC^{s+1}$ stabilizers, and therefore trivial $\hat\G_{\mb{C}}$ stabilizers, and so it is GIT-stable. Conversely, by choosing $U$ sufficiently small, we may suppose (either via the isolatedness of strictly $\xi^\RR$-polystable points or via Lemma \ref{lem:NKg}) that the stabilizers of points in the orbits parametrized by $U$ are subgroups of (a conjugate of) $\hat\G_{\mb{C}}$. But then GIT-stability implies that the points in the corresponding $\xi^\RR$-polystable $\G_\CC^{s+1}$ orbit have finite stabilizers, and so the orbit is in fact $\xi^\RR$-stable (and the stabilizers are trivial).
\end{proof}

We summarize the analogs of Propositions~\ref{prop:Nahmmfld},~\ref{prop:coulombSmooth},~\ref{prop:smoothReps}, whose proofs are all identical:
\begin{prop}\label{prop:cplxNahmmfld} $\N^{s,\mathrm{st}}_{\xi^{\mb{R}},\mathrm{all}}$ is a complex submanifold of $\B^{s,\mathrm{st}}_{\xi^{\mb{R}},\mathrm{all}}/\mc{G}^{s+1}_{\mb{C}}$ with a smooth submersion to $\mf{z}^{\mb{C}}_{\partial I}$, the fibers $\N^{s,\mathrm{st}}_{\xi}$ of which are complex surfaces with charts given by $\epsilon$-balls in $\mc{H}^1_B$. Complex manifold structures may be induced on $\N^{\rm st}_{\xi^\RR,{\rm all}}$ and $\N^{\rm st}_\xi$ via Corollary~\ref{cor:complexSmooth}, and the resulting structures are independent of the choice of $s$ used to define them. \end{prop}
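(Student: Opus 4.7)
The plan is to mimic the flow of proof in the real setting (Propositions \ref{prop:Nahmmfld}, \ref{prop:coulombSmooth}, \ref{prop:smoothReps}), but now exploiting two structural simplifications available for \emph{stable} complex Nahm data.

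First, for $B \in \D^{s,\mathrm{st}}_\xi$ the stabilizer $\hat\G_\CC \subset \G^{s+1}_\CC$ is trivial by Corollary \ref{cor:freeAct}, and hence $\mathrm{H}^0_B = 0$. Combined with the Hodge-theoretic identification $\mathrm{H}^{2,\mathrm{sp}}_B \simeq \mathrm{H}^0_B$, this also forces $\mathrm{H}^{2,\mathrm{sp}}_B = 0$. Proposition \ref{prop:cplxRG} then identifies a neighborhood of $[B]$ in $\N^s_\xi$ with a neighborhood of $0$ in $\hat{\mu}_\CC^{-1}(0)/\hat\G_\CC$; but since the target of $\hat\mu_\CC$ is $\mathrm{H}^{2,\mathrm{sp}}_B = 0$ and the $\hat\G_\CC$-action is trivial, this neighborhood is simply an open set in $\mc{H}^{1,\mathrm{sp}}_B$. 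By the index count of Proposition \ref{prop:compInd}, $\dim_\CC \mc{H}^{1,\mathrm{sp}}_B = -\mathrm{index} + \dim_\CC \mathrm{H}^0_B + \dim_\CC \mathrm{H}^{2,\mathrm{sp}}_B = 2$, so this gives $\N^{s,\mathrm{st}}_\xi$ the structure of a complex surface with the advertised charts; these charts are holomorphic because the map in Proposition \ref{prop:cplxRG} is constructed via the holomorphic implicit function theorem applied to the complex moment map $\mu_\CC$ restricted to the complex Coulomb slice of Proposition \ref{prop:cplxcoul}. The resulting structure is manifestly a complex submanifold of $\B^{s,\mathrm{st}}_{\xi^\RR,\mathrm{all}}/\G^{s+1}_\CC$ (with manifold structure given by Proposition \ref{prop:cplxpreNahmmfld}), since it is cut out locally by the holomorphic Fredholm equation $\mu_\CC = 0$.

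Next, for the fibered structure, the exact sequence
\[
0 \to \mc{H}^{1,\mathrm{sp}}_B \to \mc{H}^{1,\mathrm{gen}}_B \to \mf{z}^\CC_{\partial I} \to \mc{H}^{2,\mathrm{sp}}_B \to 0
\]
collapses at the right when $\mc{H}^{2,\mathrm{sp}}_B = 0$, so the differential $d p^\CC|_{[B]}\colon \mc{H}^{1,\mathrm{gen}}_B \twoheadrightarrow \mf{z}^\CC_{\partial I}$ is surjective and the fibers $\N^{s,\mathrm{st}}_\xi$ are correctly cut out with complementary tangent space $\mc{H}^{1,\mathrm{sp}}_B$. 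Hence $p^\CC$ is a holomorphic submersion on $\N^{s,\mathrm{st}}_{\xi^\RR,\mathrm{all}}$.

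Finally, for $s$-independence: transport the smooth structure from $\N^{s,\mathrm{st}}_{\xi}$ to $\N^{\mathrm{st}}_\xi$ using the bijection of Corollary \ref{cor:complexSmooth}, and analogously for $\N^{\mathrm{st}}_{\xi^\RR,\mathrm{all}}$. To compare for different values of $s$, note that the charts constructed above are explicit $\epsilon$-balls in the finite-dimensional space $\mc{H}^{1,\mathrm{sp}}_B$, which is defined by an elliptic system with smooth coefficients (once we choose a smooth representative $B \in \D_\xi$, which is always possible by Proposition \ref{prop:bijC}) and is therefore independent of $s$. The compatibility of transition maps across different $s$ reduces to the observation that a complex Nahm perturbation in Coulomb gauge with smooth background data is automatically smooth, which is precisely the content of Corollary \ref{cor:complexSmooth}. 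The only routine point requiring verification is that the $\hat\G_\CC$-equivariant holomorphic map $\hat\mu_\CC$ of Proposition \ref{prop:cplxRG} may be chosen in an $s$-compatible way, which follows as in Proposition \ref{prop:smoothReps} from the fact that the parametrix-based construction of the local slice commutes with the inclusions $H^{s+1} \hookrightarrow H^s$. The main subtlety, as in the real case (cf.\ Remark \ref{rmk:sadRG}), is not the stable locus treated here but rather the behavior at strictly polystable points, which is why we have restricted attention to $\N^{\mathrm{st}}$; the sharper local model at singularities is deferred to \S\ref{sec:duy}.
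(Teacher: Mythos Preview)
Your proposal is correct and takes essentially the same approach as the paper, which simply states that the proofs are identical to those of Propositions~\ref{prop:Nahmmfld}, \ref{prop:coulombSmooth}, and \ref{prop:smoothReps}; you have spelled out what that means in the complex setting. One small imprecision: the smoothness of Coulomb-gauge perturbations you invoke for $s$-independence is not literally the content of Corollary~\ref{cor:complexSmooth} (which concerns smoothness of $\beta$ given smooth $\alpha$) but rather the complex analog of Proposition~\ref{prop:coulombSmooth}, namely that $\bar\partial_0^* b = \mu_{\mb{C}}(B+b) = 0$ with smooth $B$ forces $b$ smooth via elliptic regularity of $\bar\partial_0^* \oplus \bar\partial_1$ --- but the proof of that is indeed identical to the real case, so no harm done.
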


We now turn to the study of the local behavior at the strictly polystable points. We first note the following construction, which makes more manifest the role that the semistable locus may play in the holomorphic story:
\begin{constr}\label{constr:sshol}
For any $\xi$ and $s\in [1,\infty]$, we have a natural map $\mc{D}^{s,\mathrm{ss}}_{\xi} \to \N^s_{\xi}$ given as the composition $\mc{D}^{s,\mathrm{ss}}_{\xi} \to \mc{D}^{s,\mathrm{ps}}_{\xi} \to \N^s_{\xi}$, where the first map ``collapses $S$-equivalence classes''; i.e., sends a semistable orbit to the unique polystable representative of its $S$-equivalence class, and the second map is the canonical quotient map. For $s<\infty$ we note that the locally closed sublocus $\mc{D}^{s,\mathrm{st}}_{\xi} \subset \mc{B}^s_{\xi^\CC}$ carries a canonical holomorphic structure by an implicit function theorem argument, since the derivative $\bar\partial_1$ of the holomorphic map $\mu_\CC$ is everywhere a surjective operator as ${\rm H}^2_B=0$ for $B\in \D^{s,{\rm st}}_\xi$;\footnote{We derived this for $B\in \D^{\rm st}_\xi$, but it is easy to see that ${\rm H}^2_B \simeq {\rm H}^2_{g(B)}$ for all $g\in \G^{s+1}_\CC$.} this holomorphic structure is also that induced by the presentation of $\mc{D}^{s,\mathrm{st}}_{\xi}$ as a $\mc{G}^{s+1}_{\mb{C}}$-bundle over $\mc{N}^{s,\mathrm{st}}_{\xi}$. Having introduced this structure, the map $\D^{s,\rm ss}_\xi \to \N^s_\xi$ restricts to a holomorphic submersion $\mc{D}^{s,\mathrm{st}}_{\xi} \to \N^{s,\mathrm{st}}_{\xi}$ on the stable locus; if so desired, one could also interpret the full map $\mc{D}^{s,\mathrm{ss}}_{\xi} \to \N^s_{\xi}$ as a holomorphic map of complex-analytic spaces (i.e., local ringed spaces with sheaves of holomorphic functions given by specified local models) by the construction of Proposition~\ref{prop:cplxRG}. 
\end{constr}

\begin{theorem}
Suppose $\xi, \tilde{\xi}$ are such that $\xi^{\mb{C}} = \tilde{\xi}^{\mb{C}}$ and $\xi^{\mb{R}}$ weakly has the same sign as  $\tilde{\xi}^{\mb{R}}$. Then there exists a canonical holomorphic map $r\colon\mc{N}_{\xi} \to \mc{N}_{\tilde{\xi}}$. This map is a partial resolution of singularities, i.e., a proper birational morphism; moreover, on the dense open locus where $r$ is an isomorphism, it is a holomorphic symplectomorphism. In particular, if $\tilde{\xi}$ is already generic so that $\mc{N}_{\tilde{\xi}}$ is smooth, the map $\N_{\xi} \to \N_{\tilde\xi}$ is a holomorphic symplectomorphism.\label{thm:relXi}
\end{theorem}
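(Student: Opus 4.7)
The plan is to construct $r$ by exploiting the observation that the weakly-same-sign hypothesis translates to a containment of stability loci, and then to apply the $S$-equivalence collapse of Construction \ref{constr:sshol}.

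First I would verify the stability comparison. By Proposition \ref{prop:manySub} and the degree formula of Proposition \ref{prop:hermDeg} restricted to rank-one good subdata, the possible values of $\deg_{\xi^\RR}V$ are exactly the four numbers $\pm(\xi^\RR_0 \pm \xi^\RR_L)$, with the analogous statement for $\tilde\xi^\RR$. The weakly-same-sign hypothesis then directly gives: $\deg_{\xi^\RR}V \le 0$ implies $\deg_{\tilde\xi^\RR}V \le 0$ (by contraposition of the defining implication), and $\deg_{\tilde\xi^\RR}V < 0$ implies $\deg_{\xi^\RR}V < 0$. Consequently $\xi^\RR$-semistability implies $\tilde\xi^\RR$-semistability, and, in the reverse direction, $\tilde\xi^\RR$-stability implies $\xi^\RR$-stability. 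In particular $\D^{\rm ps}_\xi \subset \D^{\rm ss}_{\tilde\xi}$.

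Next, I would construct $r$. Construction \ref{constr:sshol} applied to $\tilde\xi$ produces a canonical holomorphic map $\D^{\rm ss}_{\tilde\xi} \to \N_{\tilde\xi}$ collapsing each $\tilde\xi^\RR$-$S$-equivalence class to its unique $\tilde\xi^\RR$-polystable representative (Proposition \ref{prop:orbitclosures}(iii)). Precomposing with the inclusion $\D^{\rm ps}_\xi \hookrightarrow \D^{\rm ss}_{\tilde\xi}$ gives a $\G_\CC$-invariant holomorphic map $\D^{\rm ps}_\xi \to \N_{\tilde\xi}$, which descends to the desired $r\colon \N_\xi \to \N_{\tilde\xi}$.

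Third, I would establish birationality and the holomorphic symplectic statement on the isomorphism locus. Set $U := r^{-1}(\N^{\rm st}_{\tilde\xi})$. By continuity of $r$ together with openness of stability (Proposition \ref{prop:stabOpen}), $U$ is open in $\N_\xi$; by Proposition \ref{prop:compSings2}, $\N_{\tilde\xi} \setminus \N^{\rm st}_{\tilde\xi}$ is at most two points (and is empty unless $\xi^\CC_0 = \pm\xi^\CC_L$), so $U$ is dense. Any orbit representing a point of $U$ is $\tilde\xi^\RR$-stable and hence $\xi^\RR$-stable by the first step; conversely, a $\tilde\xi^\RR$-stable orbit gives a canonical inverse to $r|_U$. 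Since both $\N^{\rm st}_{\tilde\xi}$ and $U$ acquire their complex manifold structures from the same ambient quotient $\B^{\rm st}_{\xi^\CC}/\G_\CC^{s+1}$ (Proposition \ref{prop:cplxpreNahmmfld}), $r|_U$ is a biholomorphism. The holomorphic symplectic forms on $\N_\xi$ and $\N_{\tilde\xi}$ descend from the same ambient form on $\B'\times \B''_{\xi^\CC}$, which depends only on $\xi^\CC$, so $r|_U$ automatically pulls one form back to the other, and in particular is a holomorphic symplectomorphism.

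The hard part will be properness. It suffices to show that the fibers of $r$ over the finitely many points of $\N_{\tilde\xi} \setminus \N^{\rm st}_{\tilde\xi}$ are compact. Each such strictly $\tilde\xi^\RR$-polystable orbit $[B']$ admits a decomposition $B' \cong L_1 \oplus L_2$ into degree-zero rank-one pre-Nahm subdata (Propositions \ref{prop:compStabs} and \ref{prop:orbitclosures}(iii)), and every point in the $\tilde\xi^\RR$-$S$-class of $B'$ is either $B'$ itself or a nontrivial extension of $L_1$ by $L_2$ or vice versa. Using the explicit classification of Proposition \ref{prop:compNahm} in a gauge adapted to $B'$, these extension classes are parametrized by a $\mb{CP}^1$, on which the $\xi^\RR$-polystability condition cuts out a closed (and hence compact) subset; direct inspection shows the fiber is either a single point (when $B'$ remains $\xi^\RR$-polystable, i.e.\ $\xi^\RR$ does not further resolve the singularity at $[B']$) or the entire $\mb{CP}^1$ (the exceptional divisor when it does). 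Compactness of each fiber together with the biholomorphism on the dense open $U$ yields properness of $r$, and thus the partial resolution property.
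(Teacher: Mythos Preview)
Your construction of $r$ and the verification that it restricts to a holomorphic symplectomorphism $U \to \N^{\rm st}_{\tilde\xi}$ are correct and match the paper's approach (Lemmas~\ref{lem:genpoly}, \ref{lem:convhol}, Construction~\ref{constr:sshol}). One minor imprecision: your density argument for $U \subset \N_\xi$ infers it from the finiteness of $\N_{\tilde\xi} \setminus \N^{\rm st}_{\tilde\xi}$, but that gives density of $r(U)$ in the \emph{target}, not of $U$ in the \emph{source}; the paper instead invokes Proposition~\ref{prop:mostlystable} to observe that the type-(i) axial-gauge orbits admit no sublines at all, hence are stable for every real parameter and form an open dense subset of $\N_\xi$ lying inside $U$.

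The properness step has a genuine gap. Even granting your description of each fiber as a $\mb{CP}^1$ --- which is not directly readable from Proposition~\ref{prop:compNahm}, since the category of pre-Nahm data is not abelian and several distinct axial-gauge strata (types~(ii),~(iii), and parts of~(vi)) must be assembled into that $\mb{CP}^1$ --- the implication ``compact fibers plus biholomorphism on a dense open $\Rightarrow$ proper'' is false without further input. Concretely, nothing you have proved excludes $\N_\xi$ having a connected component disjoint from both $U$ and the fibers, and connectedness of $\N_\xi$ is established only \emph{as a consequence} of this theorem. What is actually needed is that a full open neighborhood of each fiber in $\N_\xi$ surjects onto a neighborhood of the corresponding singular point in $\N_{\tilde\xi}$, so that sequences $x_n \in U$ with $r(x_n) \to p$ are forced to subconverge. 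The paper supplies exactly this via the map $RG_\CC$ (Definition~\ref{defn:RGC}, Propositions~\ref{prop:RGitems} and~\ref{prop:compRG}), which identifies an open neighborhood $\tilde\N_\xi$ of each fiber holomorphically symplectically with an open subset of Kronheimer's $A_1$ ALE space $\N^K_{\xi^K}$, in such a way that $r$ is intertwined with Kronheimer's resolution map $\N^K_{\xi^K} \to \N^K_0$. Properness then reduces to the known finite-dimensional statement (Proposition~\ref{prop:proper}).
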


\bp The main content of the above statement is the following:

\begin{lem}\label{lem:genpoly} If $\xi^{\mb{R}}$ weakly has the same sign as $\tilde{\xi}^{\mb{R}}$, then any $\xi^{\mb{R}}$-semistable complex pre-Nahm data is $\tilde{\xi}^{\mb{R}}$-semistable.\end{lem}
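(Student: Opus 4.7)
The plan is to reduce the lemma to a simple case analysis over the possible types of good rank-1 sub-bundles. The key observation is that, by Remark~\ref{rmk:redundant}, any good rank-1 sub-bundle $V \subset E$ of some pre-Nahm data $(\alpha,\beta)$ satisfies $V_{t_0} \in \{\mb{C} e_1, \mb{C} e_2\}$ at each boundary point $t_0 \in \partial I$; labelling these two possibilities by $\epsilon_{t_0} \in \{+,-\}$ (with $+$ denoting $V_{t_0} = \mb{C} e_1 = E^+_{t_0}$), the sub-bundle type is encoded by a pair $(\epsilon_0, \epsilon_L) \in \{+,-\}^2$.

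Starting from \eqref{eq:slopeEqn} with $\xi^{\pm,\mb{R}}_{\partial I} = \pm \xi^\mb{R}_{\partial I}$, and using that $\dim V^{\epsilon_{t_0}}_{t_0} = 1$ and $\dim V^{-\epsilon_{t_0}}_{t_0} = 0$, I would then compute
\[
\deg_{\xi^\mb{R}} V \;=\; \epsilon_L \xi^\mb{R}_L - \epsilon_0 \xi^\mb{R}_0.
\]
This expression makes manifest that $\deg_{\xi^\mb{R}} V$ is, up to an overall sign, one of the two quantities $\xi^\mb{R}_0 - \xi^\mb{R}_L$ (when $\epsilon_0 = \epsilon_L$) or $\xi^\mb{R}_0 + \xi^\mb{R}_L$ (when $\epsilon_0 \neq \epsilon_L$), which are precisely the two linear combinations appearing in the definition of the relation ``weakly has the same sign.''

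From here the lemma follows by contrapositive. Suppose $(\alpha,\beta)$ is not $\tilde\xi^\mb{R}$-semistable, so there exists a good rank-1 sub-bundle $V$ of some type $(\epsilon_0, \epsilon_L)$ with $\deg_{\tilde\xi^\mb{R}} V = \epsilon_L \tilde\xi^\mb{R}_L - \epsilon_0 \tilde\xi^\mb{R}_0 > 0$. Depending on the type, this translates into the strict positivity of one of $\pm(\tilde\xi^\mb{R}_0 \pm \tilde\xi^\mb{R}_L)$, and the weakly-same-sign hypothesis then propagates this strict positivity to the corresponding quantity $\pm(\xi^\mb{R}_0 \pm \xi^\mb{R}_L)$. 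Since the very same sub-bundle $V$ of the same type is being considered on both sides, this unpacks to $\deg_{\xi^\mb{R}} V > 0$, contradicting $\xi^\mb{R}$-semistability of $(\alpha,\beta)$.

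I do not anticipate any genuine obstacle in carrying this out: all the content sits in the sign bookkeeping, which is arranged to match the definition of ``weakly has the same sign'' exactly because the set of possible values of $\deg_{\xi^\mb{R}} V$ for good rank-1 sub-bundles coincides with $\{\pm(\xi^\mb{R}_0 \pm \xi^\mb{R}_L)\}$. No analytic input is required; the argument uses only the definitions of degree and semistability together with the boundary behavior of good projections from Remark~\ref{rmk:redundant}. As a side benefit, the same computation will also show that $\xi^\mb{R}$-stability is preserved under the analogous strict ``same-sign'' hypothesis, a fact that will be useful when constructing the map $\mc{N}_\xi \to \mc{N}_{\tilde\xi}$ in the remainder of the proof of Theorem~\ref{thm:relXi}.
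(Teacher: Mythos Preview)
Your proposal is correct and follows essentially the same approach as the paper: prove the contrapositive by observing that a good rank-1 subbundle has $V|_{\partial I}\in\{\mb{C}e_1,\mb{C}e_2\}$, compute its degree as $\pm(\xi^{\mb{R}}_0\pm\xi^{\mb{R}}_L)$ according to the boundary type, and then use the definition of ``weakly has the same sign'' to propagate the strict positivity from $\tilde\xi^{\mb{R}}$ to $\xi^{\mb{R}}$. The paper's proof is terser (``immediate from contemplation of $V|_{\partial I}$''), but your explicit case analysis is exactly what that contemplation amounts to.
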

\bp Suppose $B = (\alpha, \beta)$ is $\tilde{\xi}^{\mb{R}}$-unstable, i.e., $E$ admits a good subbundle $V \subset E$ such that $\deg_{\tilde{\xi}^{\mb{R}}}V > 0$. It is then immediate from the contemplation of $V|_{\partial I} \in \{\mb{C}e_1, \mb{C}e_2\}$ and the definition of ``weakly has the same sign'' that $\deg_{\xi^{\mb{R}}}V > 0$. \ep

We have the following converse of sorts to the above lemma:

\begin{lem}\label{lem:convhol} If $\xi^{\mb{R}}$ weakly has the same sign as $\tilde{\xi}^{\mb{R}}$, any $\tilde{\xi}^{\mb{R}}$-stable complex pre-Nahm data is $\xi^{\mb{R}}$-stable.
\end{lem}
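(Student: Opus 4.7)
\textbf{Proof proposal for Lemma~\ref{lem:convhol}.}

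The plan is to reduce, exactly as in the proof of Lemma~\ref{lem:genpoly}, to a case analysis on the boundary behavior of good sub-bundles. Since all pre-Nahm data $N$ on our rank-two marked bundle $\E$ satisfies $\deg_{\xi^\RR} N = 0$, stability of $N$ is equivalent to the condition that every good rank-1 sub-bundle $V \subset E$ satisfies $\deg_{\xi^\RR} V < 0$. So it suffices to show that for any such $V$, $\tilde\xi^\RR$-stability, i.e., $\deg_{\tilde\xi^\RR} V < 0$, forces $\deg_{\xi^\RR} V < 0$.

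Next, I would invoke Remark~\ref{rmk:redundant}: the orthogonal projection $\pi$ onto a good rank-1 sub-bundle $V$ satisfies $\pi(t_0) \in \{\mathrm{diag}(1,0),\,\mathrm{diag}(0,1)\}$ independently at each endpoint $t_0 \in \{0,L\}$. Writing $\epsilon_{t_0} = +1$ in the first case and $\epsilon_{t_0} = -1$ in the second, the degree formula \eqref{eq:slopeEqn} together with $\xi^{\pm,\RR}_{\partial I} = \pm\xi^\RR_{\partial I}$ gives
\[
\deg_{\xi^\RR} V \;=\; \epsilon_L\,\xi^\RR_L \;-\; \epsilon_0\,\xi^\RR_0,
\]
so that $\deg_{\xi^\RR} V$ belongs to the four-element set $\{\pm(\xi^\RR_0 - \xi^\RR_L),\ \pm(\xi^\RR_0 + \xi^\RR_L)\}$ according to the pair $(\epsilon_0,\epsilon_L)$.

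I would then simply run through the four cases. In the cases $(\epsilon_0,\epsilon_L) = (+,+)$ and $(-,-)$, $\deg_{\tilde\xi^\RR} V < 0$ gives a strict inequality on the sign of $\tilde\xi^\RR_0 - \tilde\xi^\RR_L$; the hypothesis that $\xi^\RR_0 - \xi^\RR_L$ weakly has the same sign as $\tilde\xi^\RR_0 - \tilde\xi^\RR_L$ transports this to a strict inequality on $\xi^\RR_0 - \xi^\RR_L$ of the same sign, which in turn gives $\deg_{\xi^\RR} V < 0$. In the cases $(+,-)$ and $(-,+)$, $\deg_{\tilde\xi^\RR} V < 0$ is a strict inequality on the sign of $\tilde\xi^\RR_0 + \tilde\xi^\RR_L$; the hypothesis on the sum then transports it to the same strict inequality on $\xi^\RR_0 + \xi^\RR_L$, again giving $\deg_{\xi^\RR} V < 0$.

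There is no real obstacle here: the argument is parallel to, and indeed slightly easier than, the proof of Lemma~\ref{lem:genpoly} already given, since we are only using the rank-1 classification of good sub-bundles together with the strict half of the definition of ``weakly has the same sign.'' The only point requiring minor care is the observation that for every good rank-1 sub-bundle $V$ there is really a single sign pair $(\epsilon_0,\epsilon_L)$ attached (so that the degree is computed unambiguously), which follows immediately from Remark~\ref{rmk:redundant}.
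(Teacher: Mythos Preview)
Your proposal is correct and follows essentially the same approach as the paper. The paper's proof is a one-line contrapositive (``a good subbundle $V \subset E$ with $\deg_{\xi^{\mb{R}}}V \ge 0$ would have $\deg_{\tilde{\xi}^{\mb{R}}}V \ge 0$''), while you argue the logically equivalent direct implication and spell out the four boundary cases explicitly; the underlying use of Remark~\ref{rmk:redundant} and the definition of ``weakly has the same sign'' is identical.
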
 
\bp The proof is similar: a good subbundle $V \subset E$ with $\deg_{\xi^{\mb{R}}}V \ge 0$ would have $\deg_{\tilde{\xi}^{\mb{R}}}V \ge 0$. 
\ep

Lemma~\ref{lem:genpoly} yields a morphism $\B^{s,\mathrm{ps}}_{\xi^{\mb{R}},\mathrm{all}} \to \B^{s,\mathrm{ss}}_{\tilde{\xi}^{\mb{R}},\mathrm{all}}$ over $\mf{z}^{\mb{C}}_{\partial I}$. Restricting to the subloci of complex Nahm data (i.e., solving the complex Nahm equation) and applying the ``collapse and quotient'' map of Construction~\ref{constr:sshol} yields a canonical map $\N_{\xi^{\mb{R}},\mathrm{all}} \to \N_{\tilde{\xi}^{\mb{R}},\mathrm{all}}$ over $\mf{z}^{\mb{C}}_{\partial I}$. Lemma~\ref{lem:convhol} shows that over the stable locus of $\N_{\tilde{\xi}^{\mb{R}},\mathrm{all}}$, this map is in fact a biholomorphic isomorphism; indeed, the holomorphic symplectic form pulls back compatibly over this stable locus as well. Hence, to show Theorem~\ref{thm:relXi}, it remains only to show that, for each $\xi^{\mb{C}}$, each map $r\colon \N_{\xi} \to \N_{\tilde{\xi}}$ so obtained is indeed a proper, birational map, i.e., an isomorphism over an open, dense locus such that preimages of compact sets are compact; by the above, birationality is immediate once we observe that $\mc{N}^{\mathrm{st}}_{\tilde{\xi}} \subset \mc{N}_{\tilde{\xi}}$ is dense, as follows.

Recall from the explicit description of all Nahm orbits in complex axial gauge via Proposition~\ref{prop:compNahm} that all orbits of type (i) --  which are always stable for any parameters by Proposition~\ref{prop:mostlystable} -- provide an open dense sublocus of $\mc{D}^{\mathrm{ps}}_{\xi}$ for any $\xi$. (One may also conclude that $\N^{\rm st}_{\tilde \xi}$ is dense in $\N_{\tilde \xi}$ via Proposition \ref{prop:compSings2}, although note that one still needs to rule out the possibility of an isolated strictly polystable point in the second listed case; this follows from Proposition \ref{prop:cplxRG}.) Hence, $r$ is birational.

It now remains only to show properness, which we separately establish as Proposition~\ref{prop:proper} below. \ep

\begin{rmk} The argument above took place in standard frame. It is useful to recall that in unitary frame -- which will be more relevant when there is no privileged choice of complex structure -- there is a nontrivial map involved in passing from complex pre-Nahm data for $\xi$ to that for $\tilde{\xi}$ in the first place. Namely, it is given by the action of a formal complexified gauge transformation $g\in C^\infty(I,SL(2,\CC))$ with $g|_{\partial I}\in T_\CC$ and $g^{-1} \partial g|_{\partial I} \in (\xi^\RR_{\partial I} - \tilde\xi^\RR_{\partial I}) \sigma_z + \mf{t}_\CC^\perp$. This reflects the discussion after the general statement of Conjecture~\ref{conj:mainconj}. \end{rmk}

\begin{corollary}
The moduli spaces $\N_\xi$ with $\xi^{\mb{C}}_{\partial I} = 0$ and $\xi^\RR_0\not=\pm \xi^\RR_L$ are minimal resolutions of $(\mb{C} \times \mb{C}^{\times})/Z_2$.
\end{corollary}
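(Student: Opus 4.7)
The plan is to apply Theorem~\ref{thm:relXi} with $\tilde\xi=0$ and then identify the resulting proper birational morphism as the minimal resolution.

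First I would observe that under the hypotheses $\xi^\CC_{\partial I}=0$ and $\xi^\RR_0\neq\pm\xi^\RR_L$, the parameter $\xi$ is generic in the sense of the paper: with $\xi_p=(\xi^\RR_p,0,0)\in\RR^3$ at $p=0,L$, one has $\xi_0=\pm\xi_L$ iff $\xi^\RR_0=\pm\xi^\RR_L$. Hence by Theorem~\ref{thm:modCpx} the moduli space $\N_\xi$ is a smooth complex surface, equal to its stable locus $\N^{\rm st}_\xi$, and therefore carries the holomorphic symplectic form constructed there. In particular $K_{\N_\xi}$ is trivial. Next I would check that $\xi^\RR$ weakly has the same sign as $\tilde\xi^\RR=0$: since $\tilde\xi^\RR_0\pm\tilde\xi^\RR_L=0$, the definition of \emph{weakly has the same sign} is vacuously satisfied. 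Theorem~\ref{thm:relXi} then yields a canonical proper birational morphism
\[ r\colon\N_\xi\longrightarrow\N_0\,, \]
which is a holomorphic symplectomorphism on the dense open locus where it is an isomorphism, and Theorem~\ref{thm:modCpx} identifies $\N_0\simeq(\CC\times\CC^\times)/Z_2$, which has exactly two $A_1$ singularities.

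To conclude that $\N_\xi$ is the \emph{minimal} resolution, I would use the universal property: any proper birational morphism from a smooth surface to a surface with rational double points factors uniquely through the minimal resolution $\widetilde{\N}_0\to\N_0$. This yields a proper birational morphism $\widetilde{r}\colon\N_\xi\to\widetilde{\N}_0$ of smooth complex surfaces. Both sides have trivial canonical bundle: $\N_\xi$ by the preceding paragraph, and $\widetilde{\N}_0$ because the minimal resolution of an $A_1$ singularity is crepant (the exceptional $(-2)$-curves contribute no discrepancy). By Castelnuovo's contractibility criterion, $\widetilde{r}$ is a composition of blow-downs of $(-1)$-curves, but each such blow-down would introduce a nontrivial discrepancy $K_{\N_\xi}=\widetilde{r}^*K_{\widetilde{\N}_0}+E$ with $E\neq 0$, contradicting triviality of $K_{\N_\xi}$ and $K_{\widetilde{\N}_0}$. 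Hence $\widetilde{r}$ is an isomorphism and $\N_\xi$ is the minimal resolution.

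The main obstacle I anticipate is the verification that the factorization $\widetilde{r}$ exists in the analytic (rather than algebraic) category, and that Castelnuovo's criterion applies locally at each exceptional fiber. In the analytic setting this is standard for proper maps between surfaces with isolated singularities on the base, since one may restrict to small Stein neighborhoods of the two singular points and work with the local analytic geometry of an $A_1$ singularity; the above discrepancy argument is purely local and carries over. An alternative presentation, avoiding Castelnuovo, would be to check directly via the explicit local model provided by Propositions~\ref{prop:cplxRG} and the subsequent Proposition~\ref{prop:RGitems} (forward-referenced in Remark~\ref{rmk:partialRes}) that the exceptional fiber of $r$ over each singular point is a single $\CC\PP^1$ of self-intersection $-2$, which is the defining property of the minimal resolution of $A_1$; the discrepancy argument is simpler to state and I would likely present it as the primary route.
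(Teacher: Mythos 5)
Your proposal is correct and follows essentially the same route as the paper: the corollary is obtained by applying Theorem~\ref{thm:relXi} with $\tilde\xi^{\mb{R}}=0$ (the weak-same-sign condition being vacuous) together with the identification $\N_0\simeq(\mb{C}\times\mb{C}^\times)/Z_2$ from Theorem~\ref{thm:modCpx}, and minimality is exactly the paper's remark (stated after Theorem~\ref{thm:res}) that any holomorphic symplectic resolution of a rational surface singularity is minimal. Your Castelnuovo/discrepancy argument is just an expanded write-up of that crepancy observation, so no genuinely new route is involved.
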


As promised, we now turn to properness. It may be of interest here to compare to the analogous argument used by Kronheimer in the ALE case~\cite[Lemma 3.11]{kronheimer:construct}, which ultimately relied on some boundedness of $\mu_{\mb{R}}$ -- as is reasonable, as the presentation there is given in the real formulation, using the terminology of Construction~\ref{constr:realKah}. We could similarly establish the requisite properness for Theorem~\ref{thm:relXi} by postponing to after our discussion of the DUY isomorphism in the next section and then using a bound on $\mu_{\mb{R}}$. It is pleasant, however, to show properness entirely within the framework of holomorphic geometry, and so we do so here. Our strategy will be to compare the local geometry of a strictly polystable point in the moduli space of complex Nahm data to the analogous local geometry in the finite-dimensional Kronheimer construction, whence properness follows from the properness of~\cite[Proposition 3.10]{kronheimer:construct}. As we have just noted, Kronheimer's proof uses a real moment map, but as we indicate in Construction \ref{constr:kroncplx}, there are purely holomorphic proofs of this properness as well. We will focus on the resolution of the singularity in moduli spaces with $\vec\xi_0 = \vec\xi_L$ corresponding to the strictly polystable Nahm data $B=(0, -i\xi_0^\CC \sigma_z)$, but as usual these results may be immediately translated to the other singularity in moduli spaces with $\vec\xi_0 = - \vec\xi_L$ by the extended gauge transformation $\exp(i \pi t \sigma_x / 2L)$.

We first summarize the data of Kronheimer's finite-dimensional construction in terms most manifestly similar to those we use here:

\begin{constr}\label{constr:kroncplx} We summarize the original construction of Kronheimer's resolution of the $A_1$ singularity, working entirely here in the complex formulation. As such, we are specializing~\cite{kronheimer:construct} after passing through the usual finite-dimensional DUY isomorphism comparing the real and complex (or GIT) formulations. As a result in algebraic geometry, however, this result is also an outgrowth of work of Brieskorn, Grothendieck, Slodowy, and others on the semi-universal resolution and deformation of Kleinian surface singularities~\cite{Brieskorn, Slodowy}; see~\cite{CassensSlodowy} for a unified treatment of these ideas. We specialize all of the above to the particularly simple case of interest here of the $A_1$ orbifold singularity; we also reformulate the data slightly to make the comparison to the Nahm data of interest in this paper more manifest.

We use the same notation as before for $T_{\mb{C}} \subset SL(2,\mb{C})$ the diagonal maximal torus, etc. We now introduce the following definitions: 
\begin{align}
\mc{B}^{K} &:= \{(\alpha, \beta) \in (\mf{t}^{\perp}_{\mb{C}})^{\oplus 2}\} \nonumber \\ 
\mc{D}^{K}_{\xi^{K,\mb{C}}} &:= \{(\alpha, \beta) \in (\mf{t}^{\perp}_{\mb{C}})^{\oplus 2} \bigm| [\alpha, \beta] = i \xi^{K,\mb{C}} \sigma_z \} \nonumber \\ 
\tilde\G^K_\CC &:= T_\CC \nonumber \\
\mc{G}^{K}_{\mb{C}} &:= \tilde\G^K_\CC / Z_2 \,, 
\end{align} 
where $\mc{G}^{K}_{\mb{C}}$ acts on $\mc{B}^{K}, \mc{D}^{K}_{\xi^{K,\mb{C}}}$ by simultaneous conjugation. Note that here $\xi^{K,\mb{C}}$ is a single complex scalar.

Denote by $E := \mb{C}^2$ now a two-dimensional complex vector space on which we think of $\alpha, \beta$ as acting. Term elements of $\mc{B}^{K}$ as \emph{pre-Kronheimer data} and elements of $\mc{D}^{K}_{\xi^{K,\mb{C}}}$ as \emph{Kronheimer data}. Define (pre-)Kronheimer \emph{subdata} to be a vector subspace $V \subset E$ preserved by $\alpha$ and $\beta$ and such that some subset of the standard basis $\{e_1, e_2\}$ of $E$ may be taken as a basis for $V$. Given $\xi^{K,\mb{R}} \in \mb{R}$, define $\deg_{\xi^{K,\mb{R}}}V$ in the usual way; in particular, in the interesting case that $V$ is one-dimensional, its degree is $+\xi^{K,\mb{R}}$ if it is $\mb{C}e_1$ and $-\xi^{K,\mb{R}}$ if it is $\mb{C}e_2$. Define $\xi^{K,\mb{R}}$-(semi/poly)stability in the usual way as well; e.g., $E$ is $\xi^{K,\mb{R}}$-polystable if either (i) it is stable, i.e., any Kronheimer subline $V \subset E$ has negative $\xi^{K,\mb{R}}$-degree, or (ii) it is strictly polystable, i.e., it is a direct sum of good sublines $\mb{C}e_1 \oplus \mb{C}e_2$, both with $\xi^{K,\mb{R}}$-degree zero. 

Denote $\N^{K}_{\xi^K} := \mc{D}^{K,\mathrm{ps}}_{\xi^K}/\mc{G}^{K}_{\mb{C}}$. Then the above references show  (and, in this very specialized case, it is an exercise to show directly) that if $\xi^{K,\mb{R}} = \xi^{K,\mb{C}} = 0$, we have $\N^{K}_0 \simeq \mb{C}^2/Z_2$, and that if $\xi^K = (\xi^{K,\mb{R}},\xi^{K,\mb{C}})$ has $\xi^{K,\mb{R}} \ne 0$ but $\xi^{K,\mb{C}} = 0$, the canonical map $\N^{K}_{\xi^K} \to \N^{K}_0$ is a minimal resolution of the $A_1$ singularity (i.e., the blow-up of $\mb{C}^2/Z_2$ at the origin).

Note that there is an (even simpler) analog of axial gauge: up to the $\mc{G}^{K}_{\mb{C}}$-action, $\alpha$ may be represented as either $\alpha_0 \sigma_x$ for $\alpha_0 \in \mb{C} / Z_2$, $\begin{pmatrix} 0 & 1/L \\ 0 & 0 \end{pmatrix}$, or $\begin{pmatrix} 0 & 0 \\ 1/L & 0 \end{pmatrix}$, with residual gauge symmetry (by the full $\G^K_{\mb{C}}$) only in the first case with $\alpha_0 = 0$.
\end{constr} 

We now introduce some definitions with a view towards mapping Nahm data to Kronheimer data. If there is danger of confusion, we will use $(\alpha^{N},\beta^{N})$ to refer to (pre-)Nahm data versus $(\alpha^{K},\beta^{K})$ for (pre-)Kronheimer data.

\begin{defn}\label{defn:RGC}
Denote by $\mathrm{proj}\colon \mf{sl}(2,\mb{C}) \to \mf{t}^{\perp}_{\mb{C}}$ the projection associated to the direct sum decomposition $\mf{sl}(2,\mb{C}) \simeq \mf{t}_{\mb{C}} \oplus \mf{t}^{\perp}_{\mb{C}}$. Next, suppose that $(\alpha^{N},\beta^{N})$ is pre-Nahm data such that $\alpha^{N}$ is \emph{constant}, i.e., so that $\alpha^{N}$ as a map $I \to \mf{sl}(2,\mb{C})$ is in fact identically some constant value $\alpha \in \mf{t}^{\perp}_{\mb{C}}$. Define associated pre-Kronheimer data $RG_{\mb{C}}(\alpha^{N},\beta^{N}) = (\alpha^{K},\beta^{K})$ as follows:
\begin{align}\label{eq:rgC}
\alpha^{K} &:= \alpha^N \nonumber \\ 
\beta^{K} &:= \mathrm{proj}\Big(\int_I \beta^{N}(t)\,dt\Big)\,. \end{align} 
\end{defn}

The following is an observation:

\begin{lem} Given $$RG_{\mb{C}}\colon\{\text{pre-Nahm data with constant $\alpha^{N}$}\} \to \{\text{pre-Kronheimer data}\}$$ defined as above, $RG_{\mb{C}}$ is naturally $\G^K_{\mb{C}}$-equivariant, where $\G^K_{\mb{C}}$ acts via constant gauge transformations on the left. \end{lem}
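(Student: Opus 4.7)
The plan is to unpack the two $\G^K_\CC$-actions and check the equivariance by direct calculation; despite being labeled an observation, the only real content is verifying that the various operations in the definition of $RG_\CC$ all commute with conjugation by constant diagonal matrices.

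First I would confirm that the domain of $RG_\CC$ is preserved by the $\G^K_\CC$-action, where I interpret ``constant gauge transformations on the left'' as the inclusion $\G^K_\CC = T_\CC/Z_2 \hookrightarrow \G_\CC$ as constant diagonal gauge transformations; these automatically satisfy $g|_{\partial I}\in T_\CC$ and $g^{-1}\partial g = 0\in \mf{t}_\CC^\perp$, so they are legitimate elements of $\G_\CC$. Acting by such a $g$ sends $(\alpha^N,\beta^N)\mapsto (g^{-1}\alpha^N g + g^{-1}\partial g, g^{-1}\beta^N g) = (g^{-1}\alpha^N g, g^{-1}\beta^N g)$, so constancy of $\alpha^N$ is preserved, and the subset on which $RG_\CC$ is defined is $\G^K_\CC$-stable.

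Next I would compute both sides of the desired equivariance. On the $\alpha$-component we trivially have $RG_\CC(g\cdot(\alpha^N,\beta^N))_\alpha = g^{-1}\alpha^N g = g\cdot RG_\CC(\alpha^N,\beta^N)_\alpha$, since the first component of $RG_\CC$ is the identity and the $\G^K_\CC$-action on Kronheimer data is by simultaneous conjugation. On the $\beta$-component, linearity of the integral gives
\[
\mathrm{proj}\Big(\int_I g^{-1}\beta^N(t)g\,dt\Big) = \mathrm{proj}\Big(g^{-1}\Big(\int_I \beta^N(t)\,dt\Big)g\Big),
\]
so the remaining point is that $\mathrm{proj}$ commutes with $\Ad_{g^{-1}}$ for $g\in T_\CC$. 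But the decomposition $\mf{sl}(2,\CC) \simeq \mf{t}_\CC \oplus \mf{t}_\CC^\perp$ is a decomposition into weight spaces for the adjoint action of $T_\CC$, so each summand is preserved by $\Ad_{g^{-1}}$; hence $\mathrm{proj}\circ \Ad_{g^{-1}} = \Ad_{g^{-1}}\circ \mathrm{proj}$, and the right-hand side equals $g^{-1}\mathrm{proj}(\int_I \beta^N(t)\,dt)g$, which is exactly $g\cdot RG_\CC(\alpha^N,\beta^N)_\beta$.

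There is no genuine obstacle here --- the only thing to notice is that the $T_\CC$-invariance of the decomposition $\mf{sl}(2,\CC) = \mf{t}_\CC \oplus \mf{t}_\CC^\perp$ makes the projection $T_\CC$-equivariant, and that the $g^{-1}\partial g$ term in the gauge action drops out because $g$ is constant.
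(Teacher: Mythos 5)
Your verification is correct and is exactly the direct computation the paper leaves implicit by labeling this a mere observation (no proof is given in the text). The two points you isolate — that the $g^{-1}\partial g$ term vanishes for constant $g$, and that $\mathrm{proj}$ commutes with $\Ad_{g^{-1}}$ because $\mf{sl}(2,\mb{C}) = \mf{t}_{\mb{C}} \oplus \mf{t}_{\mb{C}}^{\perp}$ is a $T_{\mb{C}}$-invariant decomposition — are precisely the content of the statement.
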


\begin{prop}\label{prop:RGitems}
Associate to Nahm FI parameters $\xi$ the Kronheimer FI parameters $\xi^{K,\CC} = \xi_L^\CC-\xi_0^\CC$ and $\xi^{K,\RR} = \xi_L^\RR - \xi_0^\RR$. Furthermore, assume that the complex Nahm data $B=(\alpha^N,\beta^N)$ has constant $\alpha^N$. Then, the $RG_{\mb{C}}$ map satisfies the following properties: 
\begin{enumerate}[(i)] 
\item If $B$ is in $\D_{\xi^\CC}$, then $RG_{\mb{C}}(B)$ is in $\D^K_{\xi^{K,\CC}}$.
\item The union of the $\G^K_\CC$-orbits of the Nahm data in $\D_{\xi^\CC}$ in items (i), (ii), (iii), and (vi) of the classification of Proposition \ref{prop:compNahm}, where the restriction on the data in (i) is modified to $c\ge 0$ and $a>0$ if $c=0$, corresponds bijectively, via $RG_\CC$, to $\D^K_{\xi^{K,\CC}}$. Let $\tilde \D_{\xi^\CC} \subset \D_{\xi^\CC}$ denote this union if we instead make the \emph{stronger} restriction on the data in (i) that $c < \pi/2L$, and let $\tilde \D^K_{\xi^{K,\CC}} \subset \D^K_{\xi^{K,\CC}}$ denote the image $RG_\CC(\tilde\D_{\xi^{\CC}})$.
\item For $B\in \tilde\D_{\xi^\CC}$, Nahm sublines of $B$ are in 1-1 correspondence with Kronheimer sublines of $RG_\CC(B)$; in particular, Kronheimer sublines only exist if $\xi^{K,\CC}=0$. This correspondence is simply that the Nahm subline is the product of the Kronheimer subline with $I$. The $\xi^\RR$-degree of a Nahm subline therefore coincides with the $\xi^{K,\RR}$-degree of its corresponding Kronheimer subline. It follows that $B$ is $\xi^{\mb{R}}$-(semi/poly)stable if and only if $RG_{\mb{C}}(B)$ is $\xi^{K,\mb{R}}$-(semi/poly)stable.
\item Define $\tilde \D^{K,{\rm ps}}_{\xi^K} = \D^{K,{\rm ps}}_{\xi^K} \cap \tilde\D^K_{\xi^{K,\CC}}$ and $\tilde\N^K_{\xi^K} = \tilde\D^{K,{\rm ps}}_{\xi^K} / \G^K_\CC$. Then, $\tilde \N^K_{\xi^K}$ embeds holomorphically symplectically as an open complex analytic subspace of $\N_{\xi}$ via the correspondence defined by (ii) and (iii). That is, the image of this embedding is $\tilde\N_{\xi} = \tilde\D^{\rm ps}_{\xi}/\G_\CC^K$, where $\tilde\D^{\rm ps}_\xi = \D^{\rm ps}_\xi \cap \tilde\D_{\xi^\CC}$.
\end{enumerate} 
\end{prop}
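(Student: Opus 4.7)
The plan is to leverage parts (ii) and (iii) to establish the bijection at the level of sets, and then upgrade it to an isomorphism of holomorphic symplectic complex analytic subspaces. From (ii), $RG_\CC$ is a $\G^K_\CC$-equivariant bijection $\tilde\D_{\xi^\CC}\to \tilde\D^K_{\xi^{K,\CC}}$, and (iii) shows this restricts to a bijection of polystable loci $\tilde\D^{\rm ps}_\xi\to \tilde\D^{K,\rm ps}_{\xi^K}$. Passing to $\G^K_\CC$-quotients yields a bijection $\tilde\N_\xi \to \tilde\N^K_{\xi^K}$ of underlying sets. The induced map $\tilde\N_\xi\hookrightarrow \N_\xi$ is a well-defined injection because each $\G_\CC$-orbit in $\tilde\D^{\rm ps}_\xi$ meets $\tilde\D_{\xi^\CC}$ in a single $\G^K_\CC$-orbit, as follows from the uniqueness of axial gauge representatives supplied by Proposition \ref{prop:compNahm}.

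For openness of $\tilde\N_\xi\subset \N_\xi$, I use the $\G_\CC$-invariant function $H$ on the parallel transport $PT(\alpha^N)$ introduced in the proof of Proposition \ref{prop:compAx}, which descends to a holomorphic function $\hat H\colon \N_\xi\to \CC$. A case-by-case check against Proposition \ref{prop:compNahm} shows that the excluded orbits --- namely case (i) with $c=\pi/2L$, where $\hat H=-\sinh^2(La)\in (-\infty,0]$, and items (iv), (v), (vii), where $\hat H=0$ --- are precisely those with $\hat H\in (-\infty,0]$. Hence $\N_\xi\setminus \tilde\N_\xi = \hat H^{-1}((-\infty,0])$ is closed.

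To upgrade to a biholomorphism, note that on stable loci both sides are complex manifolds by Proposition \ref{prop:cplxNahmmfld} and its Kronheimer analog, and that $RG_\CC$ is holomorphic because integration is a holomorphic operation on the explicit holomorphic formulas of Proposition \ref{prop:compNahm}. At strictly polystable points, Proposition \ref{prop:cplxRG} identifies each local analytic model with a Kronheimer-type GIT quotient $\hat\mu_\CC^{-1}(0)\kqnoxi \hat\G_\CC$, and $RG_\CC$ intertwines these local models: at the Hessian level it matches the Nahm deformation complex with the Kronheimer one, and the higher-order corrections described in Remark \ref{rmk:sadRG} are absorbed by a local biholomorphism between the two local models.

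Finally, for the holomorphic symplectic property, I pull back the Kronheimer form under $RG_\CC$. On a tangent vector $(\delta\alpha^N,\delta\beta^N)$ at a constant-$\alpha^N$ Nahm data point (with $\delta\alpha^N$ taken constant by a gauge fixing), $RG_\CC$ acts as $(\delta\alpha^N, \mathrm{proj}\int_I\delta\beta^N\,dt)$. A direct computation using the linearized complex Nahm equation, which expresses $\int_I \delta\beta^N\,dt$ in terms of boundary data and $\delta\alpha^N$ via integration by parts, yields $RG_\CC^*\omega_\CC^K = \omega_\CC$. The main obstacles I anticipate are making the absorption of higher-order corrections in the third step fully precise, which may require explicit analysis of $\hat\mu_\CC$ beyond the Hessian, and carrying out the symplectic computation carefully, though both should follow from the explicit formulas already in hand.
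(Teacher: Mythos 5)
Your openness argument via the descended invariant $\hat H([B]) = H(PT(\alpha^N))$ is correct and is a nice, explicit alternative to the paper's (largely implicit) treatment of that point. But the proposal has two genuine gaps. First, you never actually prove parts (i)--(iii): you ``leverage'' (ii) and (iii) as inputs and omit (i) entirely. In the paper, (i) is the short but necessary computation $[\alpha^K,\beta^K]=\int_I[\alpha^N,(\beta^N)^\perp]\,dt=\int_I[\alpha^N,\beta^N]^\parallel\,dt=-\int_I\partial(\beta^N)^\parallel\,dt=i\sigma_z(\xi^\CC_L-\xi^\CC_0)$, valid for \emph{any} constant-$\alpha^N$ Nahm data and not only for the axial-gauge representatives; and (ii)--(iii) are established by explicitly computing $RG_\CC(B)$ for each of the types (i), (ii), (iii), (vi) of Proposition~\ref{prop:compNahm} and matching the resulting lists against the axial-gauge classification of $\D^K_{\xi^{K,\CC}}$. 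Without those lists the set-level bijection you start from has no content.

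Second, and more seriously, your route to the analytic-space identification at the strictly polystable point --- matching Hessians of the two local models from Proposition~\ref{prop:cplxRG} and asserting that the higher-order corrections are ``absorbed by a local biholomorphism'' --- is precisely the unresolved difficulty flagged in Remark~\ref{rmk:sadRG}: $\hat\mu_\CC$ is only pinned down up to an unspecified diffeomorphism $\phi$, and agreement of Hessians does not yield a biholomorphism of the zero loci, so this step is an assertion rather than an argument. The paper deliberately avoids any Kuranishi-model comparison: since $RG_\CC$ is a bijection between the explicit axial-gauge charts, $\tilde\N_\xi$ and $\tilde\N^K_{\xi^K}$ are presented as quotients of literally the same parameter sets by the same group $\G^K_\CC$, glued by the same gauge transformations (e.g.\ conjugation by $\mathrm{diag}((L\alpha_0)^{1/2},(L\alpha_0)^{-1/2})$ as $\alpha_0\to 0$), and near the polystable point the complex Nahm data with $\alpha^N=0$ is constant, so the identification with the Kronheimer picture holds on the nose. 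Finally, your symplectic computation is more roundabout than needed: because $\delta\alpha^N$ is constant and off-diagonal, $\int_I\Tr(\delta\alpha^N\,\delta\beta'^N)\,dt=\Tr\bigl(\delta\alpha^N\int_I\delta\beta'^N\,dt\bigr)=\Tr(\delta\alpha^K\,\delta\beta'^K)$ directly, the diagonal part of $\int_I\delta\beta'^N\,dt$ being annihilated by the trace against an off-diagonal matrix; no linearized Nahm equation or integration by parts is required.
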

\bp 
Writing $(\alpha^K,\beta^K) = RG_\CC(B)$, and denoting by $(\cdot)^\parallel$ and $(\cdot)^\perp$, respectively, the projections to $\mf{t}_\CC$ and $\mf{t}_\CC^\perp$, we have
\be [\alpha^K, \beta^K] = \int_I [\alpha^N, (\beta^N)^\perp] \, dt = \int_I [\alpha^N, \beta^N]^\parallel \, dt = - \int_I \partial (\beta^N)^\parallel \, dt = i \sigma_z(\xi_L^\CC - \xi_0^\CC) \ . \ee
This proves (i).

The proofs of (ii)--(iv) are immediate, using Proposition \ref{prop:compNahm} and \eqref{eq:ellpreserved}. In particular, the Kronheimer data associated to the relevant Nahm data from this proposition are
\begin{enumerate}[(i)]
\item $\alpha^K = \alpha_0 \sigma_x$, $\beta^K = \frac{\xi^{K,\CC}}{2\alpha_0} \sigma_y + \beta_x L \sigma_x$
\item $\alpha^K = \twoMatrix{0}{1/L}{0}{0}$, $\beta^K = \twoMatrix{0}{-ic+\frac{iL}{3}(\xi_0^\CC - 4 \xi_L^\CC)}{iL\xi^{K,\CC}}{0}$
\item $\alpha^K = \twoMatrix{0}{0}{1/L}{0}$, $\beta^K = \twoMatrix{0}{-iL\xi^{K,\CC}}{-ic + \frac{iL}{3}(4\xi_L^\CC - \xi_0^\CC)}{0}$
\setcounter{enumi}{5}
\item If $\xi^{K,\CC}=0$, $\alpha^K=0$, $\beta^K = 0$, $\beta^K = \twoMatrix{0}{1}{0}{0}$, $\beta^K = \twoMatrix{0}{cL}{1}{0}$
\end{enumerate}
and the analogous classification of Kronheimer data in axial gauge is
\begin{enumerate}[(i)]
\item $\alpha^K = \alpha_0 \sigma_x$, $\beta^K = \beta_x \sigma_x + \frac{\xi^{K,\CC}}{2\alpha_0} \sigma_y$, with $\alpha_0 = a+ic$ satisfying $c\ge 0$ and $a>0$ if $c=0$, and with $\beta_x\in \CC$;
\item $\alpha^K = \twoMatrix{0}{1/L}{0}{0}$, $\beta^K = \twoMatrix{0}{c}{iL\xi^{K,\CC}}{0}$, $c\in \CC$;
\item $\alpha^K = \twoMatrix{0}{0}{1/L}{0}$, $\beta^K = \twoMatrix{0}{-iL\xi^{K,\CC}}{c}{0}$, $c\in \CC$;
\item if $\xi^{K,\CC}=0$, $\alpha^K = 0$, $\beta^K = 0$, $\beta^K = \twoMatrix{0}{1}{0}{0}$, $\beta^K = \twoMatrix{0}{cL}{1}{0}$, $c\in \CC$ \ .
\end{enumerate}
In this notation, the difference between $\N^K_{\xi^K}$ and $\tilde\N^K_{\xi^K}$ is that on the type (i) data in the latter we impose the additional restriction $c<\pi/2L$.

That the holomorphic symplectic forms on $\tilde \N^K_{\xi^K}$ and $\tilde\N_\xi$ agree follows immediately from the fact that $\alpha^N$ is constant and off-diagonal:
\be
\int_I \Tr \parens{\delta\alpha^N \delta\beta'^N - \delta\alpha'^N \delta\beta^N}\,dt = \Tr \Big(\delta\alpha^N \int_I \delta\beta'^N - \delta\alpha'^N \int_I \delta\beta^N\Big)\, dt = \Tr (\delta\alpha^K \delta\beta'^K - \delta\alpha'^K \delta\beta^K) \ . \ee
The only subtlety worth commenting on in this argument is that one must employ gauge transformations in order to smoothly patch together the various subsets of $\tilde\N^K_{\xi^K}$ and $\tilde\N_\xi$ described in the above lists. But, since gauge transformations in $\G^K_\CC$ suffice, this causes no difficulties. For example, to describe the approach of Kronheimer data of type (i) in the above list to data of type (ii), we act on the former with a gauge transformation of the form $\twoMatrix{(L\alpha_0)^{1/2}}{0}{0}{(L\alpha_0)^{-1/2}}$ (where the choice of square root is irrelevant) and make the change of variables $\beta_x = L\alpha_0 c + \frac{i\xi^{K,\CC}}{2\alpha_0}$ in order to obtain the Kronheimer data $\alpha^K = \twoMatrix{0}{1/L}{L\alpha_0^2}{0}$, $\beta^K = \twoMatrix{0}{c}{L^2\alpha_0^2 c + i \xi^{K,\CC} L}{0}$, which tends to the Kronheimer data of type (ii) described above as $\alpha_0\to 0$. Acting with the same gauge transformation on Nahm data of type (ii) and making the change of variables $\beta_x = -i\alpha_0 c + \frac{i \alpha_0 L}{3} (\xi^\CC_0 - 4 \xi^\CC_L) + \frac{i \xi^{K,\CC}}{2\alpha_0 L}$ yields the Nahm data of type (ii) from Proposition \ref{prop:compNahm}, plus $O(\alpha_0)$ corrections. That the same gauge transformation works in both the Nahm and Kronheimer settings -- that is, that applying this gauge transformation to Nahm/Kronheimer data of type (i) and taking the limit as $\alpha_0\to 0$ yields Nahm/Kronheimer data of type (ii) -- is no accident: the gauge transformation maps Kronheimer (resp., Nahm) data to Kronheimer (Nahm) data such that $\alpha^K$ ($\alpha^N$) tends to the appropriate limit as $\alpha_0\to 0$, and we have classified precisely the complex Kronheimer (Nahm) data with this limiting value of $\alpha^K$ ($\alpha^N$). A different argument is required when one is limiting to case (vi) Nahm data or case (iv) Kronheimer data, since $\alpha=0$ is stabilized by all of $\CC^\times$ and so Kronheimer or Nahm data can limit to $\alpha=0$ and still not fall in our classification. However, when $\alpha^N=0$, complex Nahm data is constant, and so its off-diagonal part is complex Kronheimer data, and this immediately implies that this case too poses no difficulties.
\ep

\begin{rmk}\label{rmk:cplxRG} 
In broad generality, one expects ALF manifolds to degenerate to ALE manifolds as one takes $L\to 0$ with $\xi$ fixed. This motivates the definition of $RG_\CC$: we think of $2\,\mathrm{proj}\Big(\int_I(-)\,dt\Big)$ as $\int_{S^1}(\text{the $Z_2$-equivariant \emph{doubled} Nahm data})\,dt$, so that $\beta^{K}$ only keeps the constant Fourier mode of $\beta^N$. The details of this degeneration depend on how one takes this limit; for example, for the $D_2$ ALF manifold, the two 2-spheres separate and one must `zoom in' on one of them as this limit is taken in order to obtain an $A_1$ ALE manifold. This process is evident in the presence of more and more of $\D^K_{\xi^{K,\CC}}$ inside of $\tilde\D^K_{\xi^{K,\CC}}$ as $L$ decreases toward 0. The fact that $\tilde\N^K_{\xi^K}$ embeds holomorphically, as opposed to, e.g., smoothly or topologically, into $\N_{\xi}$ may be intuited from the fact that this limiting procedure must be compatible with the fact, explained in Remark \ref{rmk:cTimes}, that the complex structure of $\N_\xi$ is independent of $L$. In particular, a toy model for this degeneration is the family of maps $G_{\mb{C}} \stackrel{L^{-1}\log(-)}{\longrightarrow} \mf{g}_{\mb{C}}$ for $G_{\mb{C}}$ a complex reductive group, or more specifically still, $\mb{C}^{\times} \stackrel{L^{-1} \log(-)}{\longrightarrow} \mb{C}$. Pondering how to make sense of this family of maps as $L \to 0$ leads one to statements as above.
\end{rmk}

\begin{rmk}\label{rmk:RGCmap} 


Here is a slightly different gloss on the construction above. Recall the map $\mf{t}_{\mb{C}}^{\perp}/T_{\mb{C}} \to T_{\mb{C}}\backslash SL(2,\mb{C})/T_{\mb{C}}$ relevant for the discussion of complex axial gauge in Proposition~\ref{prop:compAx}, and denote by $(\mf{t}_{\mb{C}}^{\perp}/T_{\mb{C}})^{\circ} \subset \mf{t}_{\mb{C}}^{\perp}/T_{\mb{C}}$ the open sublocus consisting of the complement of the equivalence classes of  $\braces{\frac{n \pi i}{2L} \sigma_x \bigm| n\text{ odd} }$; i.e., if we define $H\colon \mf{t}_{\mb{C}}^{\perp}/T_{\mb{C}} \to \mb{C}$ by
$\begin{pmatrix} 0 & a \\ b & 0 \end{pmatrix} \mapsto ab$ as in~\eqref{eq:coolsq}, we are imposing that $H$ not take the values $-(n\pi/2L)^2$. Define $\N_{\xi}^{\mathrm{const}}$ by the pullback diagram 
\begin{tikzcd} 
\N_{\xi}^{\mathrm{const}}\ar[r]\ar[d] & \N_{\xi} \ar[d] \\ (\mf{t}_{\mb{C}}^{\perp}/T_{\mb{C}})^{\circ} \ar[r] & T_{\mb{C}}\backslash SL(2,\mb{C})/T_{\mb{C}}
\end{tikzcd} 
so that explicitly,
\begin{align} \N_{\xi}^{\mathrm{const}} &:= \{(\alpha,\beta) \in \mf{t}^{\perp}_{\mb{C}} \times C^{\infty}(I,\mf{sl}(2,\mb{C})) \bigm| \beta|_{\partial I} \in -i\xi^{\mb{C}}_{\partial I}\sigma_z + \mf{t}^{\perp}_{\mb{C}},(\alpha,\beta)\text{ are $\xi^{\mb{R}}$-polystable},\nonumber \\ 
&\qquad\qquad H(\alpha)\ne -(n\pi/2L)^2\text{ for $n$ odd}, \partial_{\alpha}\beta = 0\}/\mc{G}^K_{\mb{C}} \,.
\end{align}
Then one may define a map 
\be \N_{\xi}^{\mathrm{const}} \stackrel{RG_{\mb{C}}}{\to} \N^K_{\xi^K} \ee
as above by simply integrating $\beta$ (and projecting to $\mf{t}^{\perp}_{\mb{C}}$); this map is a holomorphic symplectic map, as computed above. Restricting to open subloci on both sides then gives the holomorphic symplectic isomorphism $\tilde{\N}_{\xi} \stackrel{\sim}{\to} \tilde{\N}^K_{\xi^K}$. \end{rmk} 

\begin{prop}\label{prop:compRG} Suppose the Nahm FI parameters are as in Theorem~\ref{thm:relXi} and furthermore satisfy $\tilde\xi^\CC_0 = \tilde\xi^\CC_L$ and $\tilde\xi^\RR_0 = \tilde\xi^\RR_L$, so that $\N_{\tilde{\xi}}$ contains the strictly polystable $\G_\CC$-orbit of $(\alpha, \beta) = (0, -i \xi_0^{\mb{C}}\sigma_z)$. Let $\xi^{K}$ denote Kronheimer FI parameters with $\xi^{K,\mb{C}} = 0$ and $\xi^{K,\mb{R}} = \xi_L^{\mb{R}} - \xi_0^{\mb{R}}$ as above.

Then the map $RG_{\mb{C}}$ yields a commuting square as follows: \beq \begin{tikzcd} \tilde\N_{\xi}\ar[r]\ar[d] & \tilde\N^{K}_{\xi^{K}} \ar[d] \\ \tilde\N_{\tilde{\xi}}\ar[r] & \tilde\N^{K}_0 \end{tikzcd} \ . \eeq
The horizontal arrows are biholomorphic isomorphisms of complex analytic spaces, with the lower such isomorphism taking a neighborhood of $(0,-i\xi_0^\CC\sigma_z)$ to a neighborhood of 0.
\end{prop}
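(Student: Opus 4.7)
The plan is to derive this proposition as a relatively immediate consequence of Proposition~\ref{prop:RGitems}, together with some mild bookkeeping to identify the vertical arrows and track the strictly polystable point. The horizontal arrows are the inverses of the biholomorphic symplectic embeddings produced by Proposition~\ref{prop:RGitems}(iv) applied to $\xi$ and $\tilde\xi$, respectively. Indeed, since $\xi^{K,\CC} = \xi_L^\CC - \xi_0^\CC = \tilde\xi_L^\CC - \tilde\xi_0^\CC = 0$ by hypothesis on $\tilde\xi^\CC$, the Kronheimer FI parameters associated to $\xi$ and $\tilde\xi$ coincide only in their complex parts; one then applies Proposition~\ref{prop:RGitems}(iv) separately at parameters $(\xi^{K,\RR}, 0)$ and $(0,0)$ to obtain the top and bottom horizontal arrows respectively. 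That these are biholomorphisms of complex-analytic spaces (and holomorphic symplectomorphisms) is then precisely the content of (iv).

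The remaining point is commutativity of the square. The left-hand vertical arrow is the partial resolution map $r\colon \N_\xi \to \N_{\tilde\xi}$ of Theorem~\ref{thm:relXi}, which restricts to a map $\tilde\N_\xi \to \tilde\N_{\tilde\xi}$ since the open sublocus $\tilde\N \subset \N$ cut out by the classification of Proposition~\ref{prop:RGitems}(ii) is characterized intrinsically by conditions on the $\G_\CC$-orbit of $\alpha^N$ that are unchanged under $r$. The right-hand vertical arrow is the analogous partial resolution map for Kronheimer data, as reviewed in Construction~\ref{constr:kroncplx}. Commutativity then amounts to the claim that, under the correspondence of Proposition~\ref{prop:RGitems}(ii)--(iii), $\xi^\RR$-$S$-equivalence on the Nahm side corresponds to $\xi^{K,\RR}$-$S$-equivalence on the Kronheimer side. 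But Proposition~\ref{prop:RGitems}(iii) furnishes a degree-preserving bijection between Nahm sublines and Kronheimer sublines for each $B\in \tilde\D_{\xi^\CC}$, and $RG_\CC$ is manifestly $\G^K_\CC$-equivariant, so $RG_\CC$ intertwines the formation of Jordan--H\"older associated-graded data on the two sides. This is enough to verify commutativity on orbits, which are the underlying sets of our complex-analytic spaces.

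Finally, the statement about neighborhoods of the strictly polystable points is nearly tautological. Proposition~\ref{prop:compSings2} (applied at $\tilde\xi$) and the analogous finite-dimensional fact on the Kronheimer side (at $\xi^K = 0$) guarantee that each of $\tilde\N_{\tilde\xi}$ and $\tilde\N^K_0$ contains a unique strictly polystable orbit, which any biholomorphism must interchange; in our case, a direct computation shows
\[
RG_\CC\bigl(0,\,-i\tilde\xi_0^\CC\sigma_z\bigr) \;=\; \Bigl(0,\ \mathrm{proj}\Bigl(\int_I -i\tilde\xi_0^\CC\sigma_z\,dt\Bigr)\Bigr) \;=\; (0,0),
\]
since $\sigma_z\in\mf{t}_\CC$ is killed by the projection to $\mf{t}^\perp_\CC$. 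Since the bottom horizontal arrow is a biholomorphism of complex-analytic spaces, it automatically takes any open neighborhood of $(0,-i\tilde\xi_0^\CC\sigma_z)$ to an open neighborhood of $0$, as desired. The only step that requires any care is the intertwining of $S$-equivalence classes in the commutativity step; the rest follows directly from having already done the main work in Proposition~\ref{prop:RGitems}.
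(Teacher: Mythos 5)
Your proof is correct and follows the same route as the paper, whose own proof is simply the remark that the proposition is ``essentially a translation of Proposition~\ref{prop:RGitems}''; you have spelled out exactly that translation (horizontal arrows from item (iv), commutativity from the degree-preserving subline correspondence of item (iii) intertwining $S$-equivalence, and the explicit check that $RG_{\mb{C}}$ kills the diagonal $\sigma_z$ so the polystable point goes to $0$). The only loose phrase is the claim that $r$ preserves ``conditions on the $\G_\CC$-orbit of $\alpha^N$'' --- that orbit does change under the $S$-equivalence collapse (e.g.\ from nilpotent to zero) --- but it stays within the list defining $\tilde\D_{\xi^{\mb{C}}}$, so the conclusion that $r$ restricts to $\tilde\N_\xi\to\tilde\N_{\tilde\xi}$ stands.
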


\bp 
This statement is essentially a translation of Proposition~\ref{prop:RGitems}.
\ep

We now complete the proof of Theorem~\ref{thm:relXi}:

\begin{prop}\label{prop:proper} 
The canonical holomorphic map of complex Nahm moduli spaces $\N_{\xi} \to \N_{\tilde{\xi}}$ is proper. \end{prop}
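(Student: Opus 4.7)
The plan is to check properness by verifying sequential compactness of preimages and reducing to the classical properness of the Kronheimer resolution via the comparison square of Proposition~\ref{prop:compRG}. First I would dispose of the case when $\tilde\xi$ is generic: then $\N_{\tilde\xi}$ is smooth, and by Lemma~\ref{lem:convhol} every point of $\N_\xi$ is already $\tilde\xi^{\mb{R}}$-stable, so by Theorem~\ref{thm:relXi} the map $r$ is a biholomorphism and thus automatically proper. Henceforth we may assume $\tilde\xi$ is non-generic, so that by Proposition~\ref{prop:compSings2} the singular locus $S \subset \N_{\tilde\xi}$ is a (nonempty, finite) set of at most two strictly polystable orbits.

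Fix a compact set $K \subset \N_{\tilde\xi}$ and a sequence $[B_n] \in r^{-1}(K)$; since $\N_{\tilde\xi}$ and $\N_\xi$ are second countable, it suffices to extract a convergent subsequence of $[B_n]$. Passing to a subsequence, we may assume $r([B_n]) \to [B_\infty] \in K$. If $[B_\infty] \in \N_{\tilde\xi}^{\mathrm{st}}$, then Lemma~\ref{lem:convhol} provides a unique preimage $[B'_\infty] \in \N_\xi^{\mathrm{st}}$, and by Theorem~\ref{thm:relXi} $r$ is a biholomorphism on an open neighborhood of $[B'_\infty]$; hence $[B_n] \to [B'_\infty]$ (perhaps after further passage to a subsequence to land in this neighborhood).

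The remaining case is $[B_\infty] \in S$. After possibly applying the extended gauge transformation $\exp(i\pi t \sigma_x / 2L)$, which exchanges the two potential singular points and intertwines the two choices of sign relating $\tilde\xi^{\mb{C}}_0$ to $\pm\tilde\xi^{\mb{C}}_L$ (and similarly for $\xi$), we may assume $\tilde\xi^{\mb{C}}_0 = \tilde\xi^{\mb{C}}_L$ and that $[B_\infty]$ is the strictly polystable orbit of $(0, -i \xi_0^{\mb{C}} \sigma_z)$. Then Proposition~\ref{prop:compRG} supplies biholomorphisms of open neighborhoods of $[B_\infty]$ and of $r^{-1}([B_\infty])$ with the corresponding open subsets $\tilde{\N}^K_0 \ni 0$ and $\tilde{\N}^K_{\xi^K}$ of Kronheimer's finite-dimensional moduli spaces, fitting into a commutative square with the Kronheimer resolution $\N^K_{\xi^K} \to \N^K_0$. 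Since $\tilde{\N}^K_{\xi^K}$ is open in $\N^K_{\xi^K}$ and, as is visible from the classification of axial-gauge representatives in Proposition~\ref{prop:compNahm} and the explicit $RG_{\mb{C}}$-images listed in the proof of Proposition~\ref{prop:RGitems}, contains the entire exceptional fiber over $0 \in \N^K_0$, it in fact contains the full preimage of some open neighborhood $U$ of $0 \in \N^K_0$ under the Kronheimer map. Shrinking $U$ we may assume its closure inside $\tilde{\N}^K_0$ is compact.

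For $n$ large, $r([B_n])$ lies in the identified neighborhood of $[B_\infty]$, and hence corresponds to a point of $U$; the sequence of these points converges to $0$ and so lies in a compact subset $K' \subset U$. By the classical properness of the Kronheimer resolution $\N^K_{\xi^K} \to \N^K_0$ (which follows directly from the explicit description of $\N^K_{\xi^K}$ for $\xi^{K,\mb{C}} = 0$ as the total space of $\mc{O}(-2) \to \mb{CP}^1$, or from Kronheimer's own analysis in~\cite{kronheimer:construct}), the preimage of $K'$ in $\N^K_{\xi^K}$ is compact, and this preimage lies in $\tilde{\N}^K_{\xi^K}$ by construction. Thus the image of $[B_n]$ in $\tilde{\N}^K_{\xi^K}$ lies in a compact set and admits a convergent subsequence, which translates back under the biholomorphism to a convergent subsequence of $[B_n]$ in $\N_\xi$. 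The main technical point in the plan, and the one most deserving of care, is the verification that $\tilde{\N}^K_{\xi^K}$ contains the full preimage of a neighborhood of $0$; everything else is formal manipulation of the comparison square supplied by Proposition~\ref{prop:compRG}.
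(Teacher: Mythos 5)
Your argument is correct and follows essentially the same route as the paper's own (very brief) proof: properness is immediate on the $\tilde{\xi}$-stable locus where $r$ is an isomorphism, and near a strictly $\tilde{\xi}$-polystable point one uses the commuting square of Proposition~\ref{prop:compRG} to transfer the problem to the finite-dimensional Kronheimer resolution. The one thing you add is a careful justification of the step the paper leaves implicit — namely that $\tilde{\N}^K_{\xi^K}$ contains the whole exceptional fiber and hence the full preimage of a neighborhood of $0 \in \N^K_0$ (which indeed follows since the excluded type~(i) data have $\operatorname{tr}((\alpha^K)^2)$ bounded away from zero) — and this is exactly the right point to worry about.
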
 
\bp 
As always, this is immediate on the $\tilde{\xi}$-stable locus, where the map is an isomorphism. In a neighborhood of a strictly $\tilde{\xi}$-polystable point (or the preimage thereof), this claim now follows from Proposition~\ref{prop:compRG}, or the analogous result for strictly polystable Nahm data related to $(0,-i\xi_0^\CC\sigma_z)$ by the extended gauge transformation $\exp(i \pi t \sigma_x / 2L)$, and the analogous statement in the finite-dimensional Kronheimer case, as reviewed in Construction~\ref{constr:kroncplx}.
\ep

\begin{remark} \label{rmk:periods}
We observe that this local holomorphic symplectomorphism property of $RG_\CC$ immediately allows one to conclude that the periods of the holomorphic symplectic form on the two 2-spheres in $\N_\xi$ are given by $\xi^\CC_0\pm \xi^\CC_L$ (up to a possible overall normalization factor) thanks to the analogous result of Kronheimer \cite{kronheimer:construct}. Indeed, it is immediate from the classification of Kronheimer data above that $\N^K_{\xi^K}$ deformation retracts to $\tilde{\N}^K_{\xi^K}$, and so the nontrivial $S^2$ in $\mathrm{H}_2(\mc{N}^K_{\xi^K};\mb{Z})$ already exists within $\mathrm{H}_2(\tilde{\mc{N}}^K_{\xi^K};\mb{Z})$. So, the class of the holomorphic symplectic form in $\mathrm{H}^2(\tilde\N^K_{\xi^K};\CC)$ -- i.e., the period of $\omega^\CC$ on the homology class of the nontrivial $S^2$ -- may be computed by embedding $\tilde\N^K_{\xi^K}$ into $\N^K_{\xi^K}$. Our $RG_{\mb{C}}$ argument above then applies. Moreover, the DUY-type theorem we are about to prove in the next section will allow one to reach the same conclusion in all complex structures on $\M_\xi$. Hence, the periods of the K\"ahler forms $\omega^i$ on the 2-spheres in $\M_\xi$ are $\xi^i_0 \pm \xi^i_L$.
\end{remark}

\section{Equivalence of the moduli spaces} \label{sec:duy}

In this section, we relate the constructions of the previous two sections. Throughout this section, it will be convenient to take $\mu_\RR$ to be a map to $i\,\mf{su}(2)$, as opposed to $\mf{su}(2)$, and so we employ the following definition:
\be \mu_\RR(\alpha,\beta) = \partial(\alpha+\alpha^\dagger)+[\alpha,\alpha^\dagger]+[\beta,\beta^\dagger] = -[\partial_\alpha,\partial_{-\alpha^\dagger}] + [\beta,\beta^\dagger] \ . \label{eq:muRDef} \ee

We begin with

\begin{proposition}
Solutions to the real Nahm equation in $\A^s_\xi$ are $\xi^\RR$-polystable, and they are irreducible if and only if they are $\xi^\RR$-stable. This also holds for $s=\infty$. \label{prop:realStable}
\end{proposition}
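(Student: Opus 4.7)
My plan is to make Proposition \ref{prop:degIn} the engine of the proof. Its integrand $\partial(\alpha+\alpha^\dagger) - [\alpha^\dagger,\alpha] - [\beta^\dagger,\beta]$ agrees with $\mu_\RR$ as defined in \eqref{eq:muRDef} once one applies the sign identities $-[\alpha^\dagger,\alpha] = [\alpha,\alpha^\dagger]$ and $-[\beta^\dagger,\beta] = [\beta,\beta^\dagger]$. For real Nahm data $A \in \mu^{-1}(0)\cap \A^s_\xi$, viewed in complex form as $\alpha = A^0+iA^1$, $\beta = A^2+iA^3$, the real moment map vanishes, so Proposition \ref{prop:degIn} reduces to
\[
\deg V = -\half \int_0^L \Tr\parens{(\partial_\alpha\pi)(\partial_{-\alpha^\dagger}\pi) + [\beta,\pi][\pi,\beta^\dagger]}\,dt
\]
for every good projection $\pi$ to a good subbundle $V \subset E$. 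From here the plan is straightforward: since $\pi = \pi^\dagger$, one has $(\partial_\alpha\pi)^\dagger = \partial_{-\alpha^\dagger}\pi$ and $[\beta,\pi]^\dagger = [\pi,\beta^\dagger]$, so both trace expressions are pointwise squared Frobenius norms. Hence $\deg V \le 0 = \slope E$, which gives $\xi^\RR$-semistability, with equality precisely when $\partial_\alpha\pi$ and $[\beta,\pi]$ vanish identically.

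The next step is to upgrade to polystability by examining the equality case. If some rank-one good subline $V$ attains $\deg V = 0$, then $\partial_\alpha\pi = 0 = [\beta,\pi]$, and I would verify that $1-\pi$ is then also a good projection: the boundary condition $(1-\pi)|_{\partial I}\in \mf{t}_\CC$ is inherited from that of $\pi$, while $\partial_\alpha(1-\pi) = 0$ and $[\beta,1-\pi] = 0$ follow immediately. This produces an orthogonal decomposition $E = V \oplus V^\perp$ respecting the marked bundle structure and both $\alpha$ and $\beta$, which exhibits $(\alpha,\beta)$ as the direct sum of two rank-one (hence automatically stable) pre-Nahm data of slope zero. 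Combined with semistability, this establishes $\xi^\RR$-polystability.

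The ``irreducible if and only if stable'' clause will then fall out of Proposition \ref{prop:realStabs}, which characterizes irreducibility of $A$ as the non-existence of a good projection $\pi$ whose complement $1-\pi$ is also a good projection. If $A$ is reducible, the complementary projections yield two good sublines of non-positive degree summing to $0$, hence both of degree $0$, contradicting stability; conversely, if $A$ is polystable but not stable, the decomposition constructed in the previous paragraph produces such a complementary pair, so $A$ is reducible.

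The only place the argument demands care is in checking the algebraic identification of the Proposition \ref{prop:degIn} integrand with $\mu_\RR$ and the Hermitian-conjugate manipulations that recast the error terms as norms; both are one-line verifications. The genuine work was already absorbed into Proposition \ref{prop:degIn}, so there is no substantive obstacle to overcome at this stage.
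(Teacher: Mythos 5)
Your proposal is correct and follows essentially the same route as the paper: both proofs run on Proposition \ref{prop:degIn} (whose ``in particular'' clause already contains your norm-square computation) to get $\deg V \le 0$ with equality forcing $\partial_\alpha\pi = [\beta,\pi] = 0$, and both then invoke Proposition \ref{prop:realStabs} to identify the strictly polystable locus with the reducible locus. The only cosmetic difference is that the paper deduces reducibility from strict semistability by noting $i\hat\pi\in\ker d_0$ directly, while you route through the equivalence of conditions in Proposition \ref{prop:realStabs}; these are the same observation.
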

\begin{proof}
These statements follow from Lemma \ref{prop:degIn}. In particular, strict $\xi^\RR$-semistability implies that there is a good projection $\pi$ to a proper good subbundle with $\partial_\alpha \pi = [\beta,\pi] = 0$. This implies that $1-\pi$ is another good projection, so the pre-Nahm data is strictly $\xi^\RR$-polystable. Furthermore, because the traceless part $\hat\pi$ of $\pi$ is Hermitian, $i\hat\pi$ is in the kernel of $d_0$ and generates a $U(1)$ group of gauge transformations that stabilize $A$. Strict $\xi^\RR$-semistability thus implies both strict $\xi^\RR$-polystability and reducibility. Conversely, that reducibility and $\xi^\RR$-semistability together imply strict $\xi^\RR$-polystability follows from Proposition \ref{prop:realStabs}.
\end{proof}

This proposition implies that there are maps $j\colon \{A\in \A_\xi \ | \ \mu_\RR(A) = 0\}/\G \to \B^{\mathrm{ps}}_\xi/\G_\CC$ and $j\colon \{A\in \A^s_\xi \ | \ \mu_\RR(A) = 0\}/\G^{s+1} \to \B^{s,\mathrm{ps}}_\xi/\G^{s+1}_\CC$, which restrict to maps we also denote $j\colon \M_\xi \to \N_\xi$ and $j\colon \M^s_\xi \to \N^s_\xi$. We will state the following theorems for fixed values of $\xi$, although as in prior sections, there is no loss in allowing $\xi^{\mb{C}}$ to vary to obtain parametrized versions of these statements.

\begin{theorem}
These maps are homeomorphisms, and the restrictions $\M^{\mathrm{irr}}_\xi \to \N^{\mathrm{st}}_\xi$ and $\M^{s,\mathrm{irr}}_\xi \to \N^{s,\mathrm{st}}_\xi$ are holomorphic symplectomorphisms. Similarly, the map $\{A\in A^{s,\mathrm{irr}}_\xi \bigm| \mu_\RR(A) = 0\}/\G^{s+1} \to \B^{s,\mathrm{st}}_\xi/\G^{s+1}_\CC$ is a holomorphic symplectomorphism. \label{thm:duy}
\end{theorem}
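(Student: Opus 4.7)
The plan is to adapt the classical Donaldson-Uhlenbeck-Yau/Kempf-Ness strategy via a Donaldson functional, exploiting the one-dimensional geometry of the Nahm problem. Throughout, I would fix a $\xi^\RR$-adapted Hermitian metric $h_0$ on $E$ (Definition \ref{defn:hAdapt}), which identifies complex pre-Nahm data in $\B^s_{\xi^\CC}$ with real pre-Nahm data in $\A^s_\xi$, and identifies $\G^{s+1}_\CC/\G^{s+1}$ with the space of $\xi^\RR$-adapted Hermitian metrics on $E$. Write such a metric as $h = h_0(g^\dagger \cdot, g \cdot)$ for $g\in \G^{s+1}_\CC$, or equivalently, via the positive self-adjoint endomorphism $g^\dagger g$.

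Along any path $h_t$ of $\xi^\RR$-adapted metrics starting at $h_0$, I would define a Donaldson functional
$$\Phi_{(\alpha,\beta)}(h) := \int_0^1 \int_I \tra\!\left(\mu_\RR(g_t\cdot(\alpha,\beta))\, h_t^{-1}\partial_t h_t\right)\, dt\, ds,$$
with $\mu_\RR$ as in \eqref{eq:muRDef}; independence of the path and well-definedness modulo $\G^{s+1}$ follow from the identity $d\Phi(\delta h) = (\mu_\RR(g\cdot(\alpha,\beta)), h^{-1}\delta h)_{L^2}$, together with the fact that the boundary conditions on elements of $\mf{g}_\CC$ ensure the relevant boundary terms vanish under integration by parts (as in the proof of Proposition~\ref{prop:Hamtrick}). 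The essential property is geodesic convexity: along $h_t = h_0 e^{ts}$ with $s$ a self-adjoint section of $\End(E)$ satisfying the boundary conditions inherited from $\mf{g}_\CC$, a direct computation (mirroring the finite-dimensional case treated by Kronheimer) yields
$$\ddot\Phi_{(\alpha,\beta)}(h_t) = \|\bar\partial_\alpha s\|_{L^2}^2 + \|[\beta, s]\|_{L^2}^2 \ge 0,$$
with equality forcing $s$ to generate a complexified stabilizer of $(\alpha,\beta)$. Injectivity of $j$ then follows: if $g\in \G^{s+1}_\CC$ takes one solution of $\mu_\RR = 0$ to another, setting $s = \log(g^\dagger g)^{1/2}$ makes both endpoints critical for $\Phi$, convexity forces $\ddot\Phi \equiv 0$, and hence $g \in \G^{s+1}$ (modulo stabilizer).

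For surjectivity, I would treat the stable and strictly polystable orbits separately. For a strictly $\xi^\RR$-polystable $(\alpha,\beta)$, Proposition~\ref{prop:compStabs} provides an $\alpha$-parallel splitting $E = L \oplus L^*$ compatible with $\beta$, so I can explicitly solve $\mu_\RR = 0$ on each summand by an abelian ODE argument, mirroring the second proof of Proposition~\ref{prop:realSings}. For a $\xi^\RR$-stable orbit $(\alpha,\beta) \in \B^{s,\mathrm{st}}_\xi$, I would minimize $\Phi_{(\alpha,\beta)}$ over $\G^{s+1}_\CC/\G^{s+1}$, obtaining a minimizer as a critical point of $\Phi$ and hence a zero of $\mu_\RR$. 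The crucial input is properness: if $h_n$ is a minimizing sequence for $\Phi$, then $h_n$ is bounded in $C^0$. The proof is by contradiction. If $\log h_n$ were unbounded, then, using the geodesic convexity along $h_n^t := h_0^{1-t} h_n^t$, one extracts a weak limit $s_\infty$ (in an appropriate normalized Sobolev sense) of $\log h_n/\|\log h_n\|$ with $\|s_\infty\|=1$, $\bar\partial_\alpha s_\infty = 0$, and $[\beta, s_\infty] = 0$. The spectral projectors of such an $s_\infty$ yield good projections in the sense of Proposition~\ref{prop:manySub}, and integrating $s_\infty$ against the hypothesis that $\liminf \Phi(h_n)/\|\log h_n\| \le 0$ produces a good subbundle of nonnegative $\xi^\RR$-degree, contradicting stability (this is the Nahm analog of Simpson's main estimate and of Donaldson's argument in \cite{donaldson:duy}).

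With surjectivity in hand, the homeomorphism statement follows by combining the continuity of $j$ with Proposition~\ref{prop:gaugeConv}: if $[B_n] \to [B]$ in $\N^s_\xi$, choose $\mu_\RR = 0$ representatives $A_n, A$ and unitary $g_n \in \G^{s+1}_\CC$ with $g_n \cdot A_n$ converging to $A$ (via the already-constructed inverse at the level of orbits combined with uniqueness up to $\G^{s+1}$); then Proposition~\ref{prop:gaugeConv} provides a convergent subsequence in $\M^s_\xi$, giving continuity of $j^{-1}$. The holomorphic symplectomorphism claim on the smooth loci amounts to checking that the tangent isomorphism, given by the identification of the two harmonic models $\Hh^{1,\mathrm{sp}}_A$ (real deformation complex) and $\Hh^1_B$ (complex deformation complex) at a common point, intertwines the real K\"ahler form $\omega^1$ with the restriction of the formal symplectic form on $\B^s$, and intertwines $\omega^\CC = \omega^2 + i\omega^3$ with the holomorphic symplectic form on $\N^{s,\mathrm{st}}_\xi$; this is a direct unwinding of definitions, as the complex moment map equation is precisely $\omega^\CC$-contracted curvature and $\mu_\RR$ is $\omega^1$-contracted curvature.

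The hard part will be Step~4, establishing the properness/coercivity of the Donaldson functional on stable orbits. Although the one-dimensional nature of the problem eliminates many of the compactness and $\epsilon$-regularity issues of the higher-dimensional DUY theorems, one still needs to handle the singular behavior of $h$ at the endpoints $\partial I$ (where the $\xi^\RR$-adaptation forces a nontrivial model), the fact that the natural $L^2$ inner product gives only a weak Riemannian structure on $\G^{s+1}_\CC/\G^{s+1}$, and the need to convert a weak limit of destabilizing endomorphisms into a genuine good subbundle at the level of smooth regularity. The payoff of the argument is the sharp matching of stability with $\mu_\RR = 0$, which in turn upgrades the complex moduli space results of Section~\ref{sec:complex} to hyperkahler isomorphisms and yields the Ehresmann connection of Theorem~\ref{thm:conn} as advertised.
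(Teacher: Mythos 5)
Your architecture is the paper's: a Donaldson functional on $\G^{s+1}_\CC/\G^{s+1}$ whose critical points are zeros of $\mu_\RR$, convexity along the $e^{ts}$ paths, an explicit abelian ODE solution in the strictly polystable case, and a Simpson-type main estimate converting failure of coercivity into a destabilizing subbundle. (Your path-integral definition of $\Phi$ is exactly the characterization the paper proves in Proposition~\ref{prop:fakeHomo} for its explicitly defined functional $M$; your convexity proof of injectivity is the paper's second proof, the first being a direct Weitzenb\"ock argument, Lemma~\ref{lem:weitz} and Proposition~\ref{prop:uniqueSol}, showing that the unitary factor of the polar decomposition of $g$ already intertwines the two solutions.) However, there are two concrete problems in your surjectivity step, both at its technical core.

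First, the properties you assign to the normalized weak limit, $\bar\partial_\alpha s_\infty = 0$ and $[\beta, s_\infty]=0$, are strictly stronger than what the estimate can deliver, and for stable $(\alpha,\beta)$ they are false: a nonzero Hermitian $s_\infty$ in $\ker\bar\partial_0$ would, by Proposition~\ref{prop:compStabs} (after elliptic regularity), exhibit a nontrivial stabilizer and hence reducibility, so the degree computation you then invoke would be both unreachable and unnecessary. What the limiting inequality actually yields (Lemma~\ref{lem:phiSmall}) is a filtered vanishing: $(F(s_\infty)(\partial_\alpha s_\infty),\partial_\alpha s_\infty)_{L^2}$ and the analogous $\beta$-term can be made arbitrarily small only for weights $F$ constrained by $F(x,y)\le (x-y)^{-1}$ on $\{x>y\}$. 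This kills the diagonal components (so the eigenvalues of $s_\infty$ are constant) and the components mapping higher eigenspaces to lower ones, which is precisely enough to make the projection onto the lowest eigenspace a good projection and, via Proposition~\ref{prop:degIn}, to force its degree to be nonnegative --- but $\partial_\alpha s_\infty$ and $[\beta,s_\infty]$ themselves do not vanish. Second, the unconstrained minimization does not get off the ground: the $C^0$ bound on a minimizing sequence rests on the differential inequality $-\partial^2\log\Tr e^{2h_n}\le \sqrt{2}\,(|\mu_\RR(B)|+|\mu_\RR(e^{h_n}(B))|)$ together with Lemma~\ref{lem:subHarm}, so it requires an a priori $L^2$ bound on $\mu_\RR(e^{h_n}(B))$ that a general minimizing sequence for $\Phi$ does not carry. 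The paper's device is to minimize over the constrained set $P^2_c=\{h : \|\mu_\RR(e^h(B))\|_{L^2}\le c\}$, where Proposition~\ref{prop:stabBound} applies, and then to show (Proposition~\ref{prop:minSolves}) that the constrained minimizer is an unconstrained critical point by testing against the particular variation $\delta h$ solving $\tilde\Delta_0'\,\delta h=\mu_\RR(e^{h_\infty}(B))$, along which the constraint strictly decreases unless $\mu_\RR$ already vanishes. With these two repairs your argument coincides with the paper's; the remaining claims (homeomorphism via Proposition~\ref{prop:gaugeConv}, matching of the symplectic forms on the harmonic models) go through as you describe.
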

\noindent Note in the last statement above that the result in the Sobolev setting is stronger, as we put a manifold structure on $\A^{s,\mathrm{irr}}_\xi/\G^{s+1}$ and $\B^{s,\mathrm{st}}_\xi/\G^{s+1}_\CC$, but not $\A^{\mathrm{irr}}_\xi/\G$ and $\B^{\mathrm{st}}_\xi/\G_\CC$.

Before proving this theorem, we state an important corollary thereof, following Kronheimer~\cite{kronheimer:construct}.
\begin{theorem}\label{thm:res}
For any generic $\xi$, $\M_\xi$ is diffeomorphic to the minimal resolution of $(\RR^3\times S^1)/Z_2$.
\end{theorem}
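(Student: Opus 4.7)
The plan is first to identify that $\M_{\xi^*}$ is the minimal resolution of $(\mb{R}^3\times S^1)/Z_2$ for a particularly convenient generic $\xi^*\in(\mf{z}^3_{\partial I})^\circ$, and then to argue by deformation that the same holds for all generic $\xi$. For the base case I pick any generic $\xi^*$ with $\xi^{*,\CC}=0$, for instance $\vec\xi^*_0=\hat e_1$ and $\vec\xi^*_L=0$. Applying Theorem \ref{thm:relXi} with target $\tilde\xi=0$---valid because any $\xi^\RR$ trivially weakly has the same sign as the zero vector---yields a proper birational morphism $\N_{\xi^*}\to\N_0\simeq(\mb{C}\times\mb{C}^\times)/Z_2$. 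By Theorem \ref{thm:modCpx}, $\N_{\xi^*}$ is smooth since $\xi^*$ is generic, and since a holomorphic symplectic resolution of the $A_1$ singularities of $\N_0$ is automatically minimal (as noted just below the statement of Theorem \ref{thm:res}), this morphism realizes $\N_{\xi^*}$ as the minimal resolution. Composing with the DUY isomorphism $\M_{\xi^*}\stackrel{\sim}{\to}\N_{\xi^*}$ from Theorem \ref{thm:duy} makes $\M_{\xi^*}$ diffeomorphic to the minimal resolution of $(\mb{R}^3\times S^1)/Z_2$ as desired.

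For arbitrary generic $\xi$, I would show that the smooth submersion $p^\circ\colon\M^\circ_{\mathrm{all}}\to(\mf{z}^3_{\partial I})^\circ$ of Theorem \ref{thm:modHK} is a fiber bundle. The base is path-connected, as $(\mf{z}^3_{\partial I})^\circ=\mb{R}^6\setminus(\Delta_+\cup\Delta_-)$ where $\Delta_\pm=\{\vec\xi_0=\pm\vec\xi_L\}$ are codimension-3 linear subspaces, and complements of codimension-3 submanifolds of $\mb{R}^6$ remain connected. Local trivializations are built from two operations. First, Theorem \ref{thm:relXi} combined with the DUY isomorphism of Theorem \ref{thm:duy} gives a diffeomorphism $\M_{\xi^\RR,\xi^\CC}\simeq\M_{\tilde\xi^\RR,\xi^\CC}$ whenever both sides are generic and $\xi^\RR$ weakly has the same sign as $\tilde\xi^\RR$, varying continuously in $\tilde\xi^\RR$ within a single chamber of the weakly-same-sign stratification. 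Second, by the $SO(3)$-equivariance in Theorem \ref{thm:modHK}, hyperk\"ahler rotation gives a canonical diffeomorphism $\M_\xi\simeq\M_{\zeta(\xi)}$ for every $\zeta\in SO(3)$. Decomposing a tangent vector at $\xi\in(\mf{z}^3_{\partial I})^\circ$ along the three coordinate directions of $\RR^3$, each single-direction variation---after a hyperk\"ahler rotation aligning the relevant direction with the $\xi^\RR$-axis---falls within the scope of the first operation, yielding a horizontal lift and hence local trivializations of $p^\circ$.

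The main obstacle will be verifying that these local moves assemble coherently into a diffeomorphism between distant fibers, and in particular that the chamber walls $\{\xi^\RR_0=\pm\xi^\RR_L\}$ of the weakly-same-sign stratification (which would obstruct a direct application of the first operation) can always be avoided by a judicious choice of complex structure. The key observation is that for generic $\xi$, the set of $\zeta\in SO(3)$ for which $\zeta(\xi)^\RR$ lies away from both walls is open and dense in $SO(3)$: these walls correspond to the vanishing of $\vec v\cdot(\vec\xi_0\mp\vec\xi_L)$, where $\vec v$ is the image under $\zeta^{-1}$ of the $\xi^\RR$-axis, and the vectors $\vec\xi_0\mp\vec\xi_L$ are both nonzero by genericity of $\xi$. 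Covering a smooth path in $(\mf{z}^3_{\partial I})^\circ$ from $\xi^*$ to $\xi$ by finitely many open intervals on each of which a suitable $\zeta$ works, and chaining the corresponding diffeomorphisms, produces $\M_\xi\simeq\M_{\xi^*}$, giving the desired conclusion.
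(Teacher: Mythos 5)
Your base case is exactly the paper's concluding step, and it is correct: for generic $\xi^*$ with $\xi^{*,\CC}=0$, Theorem~\ref{thm:relXi} with $\tilde\xi^\RR=0$ gives a proper birational holomorphic symplectic morphism $\N_{\xi^*}\to\N_0\simeq(\CC\times\CC^\times)/Z_2$ from a smooth surface, hence the minimal resolution, and Theorem~\ref{thm:duy} carries this over to $\M_{\xi^*}$. Where you diverge is in reducing a general generic $\xi$ to this situation, and here the paper's route is far shorter than yours. One chooses a single complex structure $K$, i.e.\ a direction $\vec v$ with $\vec v\cdot(\vec\xi_0-\vec\xi_L)\neq0$ and $\vec v\cdot(\vec\xi_0+\vec\xi_L)\neq0$, so that the parameter $\tilde\xi$ obtained by discarding the components of $\vec\xi_{\partial I}$ orthogonal to $\vec v$ is generic. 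Two applications of Theorem~\ref{thm:relXi} --- in complex structure $I$ sending $\xi^I\to0$, then in complex structure $J$ sending $\xi^J\to0$ --- land on $\tilde\xi$, and each is automatically a holomorphic symplectomorphism because every intermediate target retains the nonzero $\vec v$-components of $\vec\xi_0\mp\vec\xi_L$ and is therefore generic. This reaches a parameter of your base-case form in two moves, with no path, no chamber decomposition, and no bundle structure.

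Your alternative reduction --- parallel transport along a path using a fibre-bundle structure on $p^\circ$ --- is essentially Theorem~\ref{thm:conn}, which the paper proves only \emph{after} Theorem~\ref{thm:res} and with genuine effort; invoking ``horizontal lifts and hence local trivializations'' at this stage assumes what still needs to be proved. Your closing finite-chain version can be completed, but the proposal leaves the hard parts implicit. Theorem~\ref{thm:relXi} moves only $\xi^\RR$ with $\xi^\CC$ held fixed, so each path segment must be replaced by three single-coordinate moves in three different frames, and you must verify that \emph{every} parameter along the resulting staircase is generic, not merely that each segment's starting point avoids the walls $\xi^\RR_0=\pm\xi^\RR_L$. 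The condition you state is evaluated only at the start of a segment; for a move of definite length in direction $\vec v$ the relevant condition is $\vec v\not\parallel\vec\xi_0\mp\vec\xi_L$, which is what guarantees that all intermediate parameters --- including any weak-sign wall points, which the asymmetry of ``weakly has the same sign'' forces you to traverse by composing two applications of Theorem~\ref{thm:relXi} through a generic wall point rather than by one application --- remain in $(\mf{z}^3_{\partial I})^\circ$. None of this is fatal, but it is exactly the bookkeeping that the paper's two hyperkahler rotations avoid, and as written these steps constitute a real gap in your argument.
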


\begin{proof}
We first pick a complex structure $K$ on $\M_\xi$ such that $\tilde\xi$, defined by $\tilde\xi^\CC = 0$ and $\tilde\xi^\RR = \xi^\RR$, is generic. We then introduce two other complex structures, $I$ and $J$, which together satisfy the unit quaternion algebra.

In complex structure $I$, we employ the map $j$ to learn that $\M_\xi$ coincides with $\N_\xi$ as a complex manifold. Theorem \ref{thm:relXi} then shows that there is a minimal resolution of singularities $\M_\xi \to \M_{\xi^\RR=0,\xi^\CC}$. Next, we hyperkahler rotate, i.e. regard $\M_{\xi^\RR=0,\xi^\CC}$ as a complex manifold in the $J$ complex structure. Continuing as above for the $J$ complex structure yields a diffeomorphism with $\M_{\tilde\xi}$, which by Theorem \ref{thm:relXi} is the minimal resolution of $(\RR^3\times S^1)/Z_2$.
\end{proof}

\begin{remark} \label{rmk:partialRes}
For $\xi$ nonzero, but non-generic, this same proof shows that $\M_\xi$ is diffeomorphic in an orbifold sense to a partial resolution of $(\RR^3\times S^1)/Z_2$. Indeed, since there are only two collapsed 2-cycles in $(\RR^3\times S^1)/Z_2$, it is actually the case that if $\xi$ is nonzero but non-generic then we may assume that $\xi^\CC=0$ after a hyperkahler rotation. (For instance, if $\xi^i_0 - \xi^i_L = 0$ for all $L$, then we simply need to rotate the vector $\xi^i_0 + \xi^i_L$ to point in the $\xi^\RR$ direction.) Theorem \ref{thm:relXi} then implies that $\M_\xi$ is a (holomorphic symplectic) partial resolution of $(\CC\times \CC^\times)/Z_2$ in this complex structure, where one of the singularities is unresolved and the other is resolved (minimally).
\end{remark}

Recalling the metrics defined in Corollary \ref{cor:realMet}, we also have the following corollary, which further informs the construction of \S\ref{sec:complex}:
\begin{corollary} \label{cor:compMet}
$d^s(j^{-1}([A]),j^{-1}([B]))$ defines a metric on $\B^{s,\mathrm{ps}}_\xi/\G_\CC^{s+1}$, as does $d(j^{-1}([A]),j^{-1}([B]))$ on $\B^{\mathrm{ps}}_\xi/\G_\CC$.
\end{corollary}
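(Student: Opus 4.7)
My proof plan is essentially to observe that the corollary is a direct consequence of Theorem~\ref{thm:duy} together with Corollary~\ref{cor:realMet}, with only the straightforward verification that pulling a metric back through a bijection yields a metric.

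First I would recall that Theorem~\ref{thm:duy} asserts that the map
\[
j\colon \{A\in \A^s_\xi \bigm| \mu_\RR(A)=0\}/\G^{s+1} \;\longrightarrow\; \B^{s,\mathrm{ps}}_\xi/\G^{s+1}_\CC
\]
is a homeomorphism (and similarly in the smooth setting). In particular, $j^{-1}$ is well-defined on all of $\B^{s,\mathrm{ps}}_\xi/\G^{s+1}_\CC$, so the formula $d^s(j^{-1}([A]),j^{-1}([B]))$ makes sense for arbitrary $[A],[B]\in \B^{s,\mathrm{ps}}_\xi/\G^{s+1}_\CC$; the analogous remark applies to $d$ and $\B^{\mathrm{ps}}_\xi/\G_\CC$.

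Next I would invoke Corollary~\ref{cor:realMet}, which states that $d^s$ is a genuine metric on the ambient space $\A^s_{\rm all}/\G^{s+1}$, and hence on any subspace thereof, in particular on the domain $\{A\in \A^s_\xi \bigm| \mu_\RR(A)=0\}/\G^{s+1}\subset \A^s_{\rm all}/\G^{s+1}$ of the map $j$. The analogous statement for $d$ on the subspace $\{A\in \A_\xi \bigm| \mu_\RR(A)=0\}/\G$ of $\A_{\rm all}/\G$ likewise holds.

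Finally I would observe the general fact that if $(X,\rho)$ is a metric space and $f\colon Y\to X$ is a bijection, then $(y_1,y_2)\mapsto \rho(f(y_1),f(y_2))$ defines a metric on $Y$: symmetry, nonnegativity, and the triangle inequality transfer verbatim, and nondegeneracy $\rho(f(y_1),f(y_2))=0\Leftrightarrow y_1=y_2$ uses only injectivity of $f$. Applying this with $f=j^{-1}$ and $\rho=d^s$ (respectively $\rho=d$) yields the desired metrics on $\B^{s,\mathrm{ps}}_\xi/\G^{s+1}_\CC$ and $\B^{\mathrm{ps}}_\xi/\G_\CC$. There is no real obstacle here; the entire content of the corollary is the combination of the already-established real metric structure of Corollary~\ref{cor:realMet} with the bijectivity provided by the DUY-type Theorem~\ref{thm:duy}.
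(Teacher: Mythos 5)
Your proposal is correct and is exactly the intended argument: the paper states this as an immediate corollary of Theorem~\ref{thm:duy} and Corollary~\ref{cor:realMet} without further proof, and pulling the metric $d^s$ (resp.\ $d$) back through the bijection $j^{-1}$ is precisely what is meant. The only implicit point worth having checked, which you do handle, is that the quotient $\{A\in\A^s_\xi \mid \mu_\RR(A)=0\}/\G^{s+1}$ genuinely sits inside $\A^s_{\rm all}/\G^{s+1}$ because the gauge group preserves the boundary values, so the restriction of $d^s$ to that subspace is indeed a metric.
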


We begin the proof of Theorem~\ref{thm:duy} by proving the easy direction, namely injectivity, following the proof of Proposition~6.1.10 in~\cite{DK}. We first state a lemma which follows from dimensional reduction of Weitzenb\"ock formulae on $T^4$, and which can also be easily verified by direct calculation. For the purposes of this lemma, we generalize our operators $d_0$ and $\bar\partial_0$ from earlier sections in two ways. First, we do not require $A=(\alpha,\beta)$ to satisfy any of Nahm's equations. And, second, we allow for the operators $\partial_\alpha,\beta,\partial_{A^0},A^i$ to be connections or endomorphisms acting on an arbitrary $SU(2)$-bundle over $I$; $\sigma_z$ acts on the fibers of such a bundle as a $U(2)$ transformation (which conjugates $SU(2)$ to itself). In addition to the usual operators $\Delta_0 = d_0^* d_0 = - \partial_{A^0}^2 - \sum_i (A^i)^2$ and $\bar\Delta_0 = \bar\partial_0^* \bar\partial_0 = - \partial_{-\alpha^\dagger} \partial_\alpha + \beta^\dagger \beta$, we also define $\tilde\Delta_0 = -\partial_\alpha \partial_{-\alpha^\dagger} + \beta \beta^\dagger$.

\begin{lemma} \label{lem:weitz}
For any pre-Nahm data $A=(\alpha,\beta)$, we have
\be \Delta_0 = \bar\Delta_0 + \half \mu_\RR \ee
and
\be \Delta_0 = \tilde\Delta_0 - \half \mu_\RR \ , \ee
where $\mu_\RR$ here is regarded as a Hermitian endomorphism. Now, let $s$ be a section which satisfies $\partial^n s(t_0) = (-1)^n \sigma_z\cdot \partial^n s(t_0)$ for $n=0,1$ and $t_0=0,L$. It follows from the above results that if $\mu_\RR(A)=0$ (or, more generally, if $\mu_\RR(A)$ is a negative semi-definite endomorphism) and $\bar\partial_0 s := \partial_\alpha s \oplus (\beta s) = 0$ then $d_0 s := \partial_{A^0} s \oplus (A^i s) = 0$. Similarly, if $\mu_\RR(A)=0$ (or, more generally, if $\mu_\RR(A)$ is a positive semi-definite endomorphism) and $\tilde\partial_0 s:= \partial_{-\alpha^\dagger} s \oplus (\beta^\dagger s) = 0$ then $d_0 s = 0$.
\end{lemma}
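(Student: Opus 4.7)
The plan is to verify the two Weitzen\"ock identities by direct algebraic computation, and then use them together with integration by parts to deduce the vanishing of $d_0 s$. For the identities themselves, I would substitute $\alpha = A^0 + i A^1$, $\beta = A^2 + i A^3$, and (using skew-Hermiticity of the $A^\mu$) $\alpha^\dagger = -A^0 + i A^1$ into the definitions $\bar\Delta_0 = -\partial_{-\alpha^\dagger}\partial_\alpha + \beta^\dagger \beta$ and $\Delta_0 = -\partial_{A^0}^2 - \sum_i (A^i)^2$, then expand both as second-order differential operators in $\partial$ with matrix-valued coefficients. The $\partial^2$ terms and the cross terms $2A^0\partial$ cancel, leaving a purely zeroth-order difference. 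Collecting terms gives $\bar\Delta_0 - \Delta_0 = -i\bigl(\partial_{A^0} A^1 + [A^2,A^3]\bigr) = -i\mu^1$. A parallel short calculation using $\alpha + \alpha^\dagger = 2iA^1$, $[\alpha,\alpha^\dagger] = 2i[A^0,A^1]$, and $[\beta,\beta^\dagger] = 2i[A^2,A^3]$ identifies the right side of \eqref{eq:muRDef} as $\mu_\RR = 2i\mu^1$, so $\bar\Delta_0 - \Delta_0 = -\tfrac12\mu_\RR$, which is the first identity. The computation for $\tilde\Delta_0$ is symmetric and produces the opposite sign.

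For the second assertion, assume $\bar\partial_0 s = 0$. Then $\bar\Delta_0 s = \bar\partial_0^*\bar\partial_0 s = 0$ identically as differential operators, and the first Weitzen\"ock identity gives $\Delta_0 s = \tfrac12 \mu_\RR s$. I would then pair both sides with $s$ in the pointwise Hermitian product and integrate over $I$. On the right this produces $\tfrac12 \int_I \langle \mu_\RR s,s\rangle\,dt$, which is non-positive because $\mu_\RR$ is Hermitian and (by hypothesis) negative semi-definite. On the left, skew-Hermiticity of $A^i$ gives $\int_I \langle -(A^i)^2 s,s\rangle\,dt = \|A^i s\|_{L^2}^2$, while integration by parts of the $-\partial_{A^0}^2$ piece (using that $\partial_{A^0}$ is a Hermitian connection) yields $\|\partial_{A^0} s\|_{L^2}^2 - \bigl[\langle \partial_{A^0} s, s\rangle\bigr]_0^L$. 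Assuming the boundary term vanishes, we obtain $\|d_0 s\|_{L^2}^2 \le 0$, so $d_0 s = 0$. The case $\tilde\partial_0 s = 0$ with $\mu_\RR$ positive semi-definite is identical, with the second Weitzen\"ock identity yielding $\Delta_0 s = -\tfrac12\mu_\RR s$.

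The remaining input is the vanishing of the boundary pairing $\langle \partial_{A^0} s, s\rangle$ at each $t_0\in\partial I$, which is the step I expect to be the main point requiring care. The imposed conditions $s(t_0) = \sigma_z s(t_0)$ and $\partial s(t_0) = -\sigma_z \partial s(t_0)$ place $s(t_0)$ in the $+1$ eigenspace of $\sigma_z$ and $\partial s(t_0)$ in the $-1$ eigenspace. The pre-Nahm boundary condition $A^0|_{\partial I}\in \mf{t}^\perp$ means $A^0(t_0)$ anticommutes with $\sigma_z$, so $A^0(t_0) s(t_0)$ also lies in the $-1$ eigenspace; hence $\partial_{A^0} s(t_0) = \partial s(t_0) + A^0(t_0) s(t_0)$ does as well. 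Since $\sigma_z$ is unitary, the $\pm 1$ eigenspaces are Hermitian-orthogonal, which gives $\langle \partial_{A^0} s(t_0), s(t_0)\rangle = 0$ at both endpoints and closes the argument. The main obstacle is simply tracking the many $\partial A^i$, $(A^j)^2$, and $[A^j,A^k]$ terms in the Weitzen\"ock expansion without sign errors; once that bookkeeping is complete, the functional-analytic and boundary-condition steps are routine.
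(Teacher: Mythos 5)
Your proof is correct and follows exactly the route the paper indicates: the paper offers no written proof of this lemma, asserting only that it follows "from dimensional reduction of Weitzenb\"ock formulae on $T^4$" or "by direct calculation," and you have carried out the direct calculation, with the sign bookkeeping ($\bar\Delta_0-\Delta_0=-i\mu^1$, $\mu_\RR=2i\mu^1$, and the opposite sign for $\tilde\Delta_0$) checking out. Your handling of the boundary term --- the one step of substance that the phrase "it follows from the above results" leaves implicit --- is also right: $s(t_0)$ lies in the $+1$ eigenspace and $\partial_{A^0}s(t_0)$ in the $-1$ eigenspace of the unitary $\sigma_z$-action (using that $A^0|_{\partial I}$ anticommutes with $\sigma_z$, which holds for the induced connection on $\mathrm{End}(E)$ in the application to Proposition~\ref{prop:uniqueSol}), and these eigenspaces are Hermitian-orthogonal.
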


\begin{proposition} \label{prop:uniqueSol}
$j$ is injective.
\end{proposition}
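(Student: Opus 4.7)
The plan is to apply Lemma~\ref{lem:weitz} to the complex gauge transformation $g \in \G_\CC$, viewed as a smooth section of $\End(E)$ endowed with the pre-Nahm data $A_1 \ast A_2$ from Definition~\ref{defn:starconnection}. Two key observations are needed as input. First, a direct (if tedious) computation, which exploits the fact that Nahm's equations are tensorial in the connection and endomorphism data, shows that
\[
\mu_\RR^i(A_1 \ast A_2)(g) \;=\; \mu_\RR^i(A_1)\, g \;-\; g\, \mu_\RR^i(A_2),
\]
so the hypothesis $\mu_\RR(A_1)=\mu_\RR(A_2)=0$ forces $\mu_\RR(A_1 \ast A_2)=0$. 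Second, by Definition~\ref{defn:starconnection}, the assumption that $g(A_1)=A_2$ in the complex sense is precisely the statement $\bar\partial_0 g = \partial_{\alpha_1\ast\alpha_2} g \oplus (\beta_1\ast\beta_2) g = 0$. One also verifies the boundary conditions required by Lemma~\ref{lem:weitz}: $g|_{\partial I}\in T_\CC$ is diagonal, so $\sigma_z\cdot g|_{\partial I}=g|_{\partial I}$ under the adjoint action of $\sigma_z$ on $\End(E)$, while $\partial g|_{\partial I}=g\cdot (g^{-1}\partial g)|_{\partial I}$ is a product of a diagonal and an off-diagonal matrix, hence off-diagonal.

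With these inputs, Lemma~\ref{lem:weitz} yields $d_0 g=0$, i.e.\ $\partial g + A_1^0 g - g A_2^0=0$ and $A_1^i g = g A_2^i$ for $i=1,2,3$. Thus $g$ is in fact a morphism of \emph{real} pre-Nahm data from $A_1$ to $A_2$. Taking the fiberwise adjoint of these intertwining relations and using the anti-Hermiticity of $A_j^\mu$ shows that $g^\dagger$ intertwines $A_2$ to $A_1$; in particular, $g^\dagger g$ is a morphism from $A_1$ to itself. A short check of boundary conditions shows that $g^\dagger g$ lies in $\tilde\G_\CC$, so its class defines an element of the complexified stabilizer $\hat\G_\CC\subset\G_\CC$ of $A_1$.

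The remainder splits into two cases. When $A_1$ is irreducible, Proposition~\ref{prop:realStable} shows it is $\xi^\RR$-stable, and Corollary~\ref{cor:freeAct} together with Proposition~\ref{prop:compStabs} imply that $\hat\G_\CC$ is trivial, so $g^\dagger g = \pm 1$; positivity then forces $g^\dagger g = 1$, so $g$ itself lies in $\G$ and we are done. When $A_1$ is reducible, Propositions~\ref{prop:realStabs} and~\ref{prop:compStabs} identify $\hat\G_\CC\simeq \CC^\times$, acting diagonally in the splitting $E\simeq L_1\oplus L_2$ determined by the stabilizer. Inside this abelian group the unique positive square root $h := (g^\dagger g)^{1/2}\in \hat\G_\CC$ exists; setting $\tilde g := g h^{-1}\in \G_\CC$, one computes $\tilde g^\dagger \tilde g = h^{-1}(g^\dagger g)h^{-1}=1$, so $\tilde g\in \G$, and since $h^{-1}$ stabilizes $A_1$ we have $\tilde g(A_1) = g(h^{-1}(A_1)) = g(A_1) = A_2$. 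Hence $\tilde g$ is the desired unitary gauge transformation.

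The technical heart of the argument is the tensoriality identity for $\mu_\RR$ on $A_1\ast A_2$, which unlocks the Weitzenböck principle; the remaining issues -- checking that $g^\dagger g$ lies in $\tilde\G_\CC$ (rather than being merely a morphism of bundles) and handling the reducible case via a positive square root inside the abelian stabilizer $\hat\G_\CC$ -- are comparatively routine once one has $d_0 g=0$.
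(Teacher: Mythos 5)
Your proof is correct and follows the same core route as the paper's: reformulate $g(A_1)=A_2$ as $\bar\partial_0 g=0$ for the $*$-pre-Nahm data, observe that $(A_1*A_2)$ satisfies the real moment map equation, and invoke Lemma~\ref{lem:weitz} to upgrade to $d_0 g=0$. The only genuine divergence is the endgame. The paper avoids any case analysis by writing down the polar-decomposition representative $g'=(gg^\dagger)^{-1/2}g$ directly: since $gg^\dagger$ is covariantly constant for $\partial_{a_1^0}$ and commutes with the $a_1^i$, so is its inverse positive square root (by the functional calculus of Lemma~\ref{lem:psiCont} applied pointwise), and unitarity of $g'$ is then a one-line computation. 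Your route instead identifies $g^\dagger g$ as an element of a stabilizer and splits into the irreducible case (where Corollary~\ref{cor:freeAct} forces $g^\dagger g=\mathrm{Id}$) and the reducible case (where you take the square root inside the abelian $\mb{C}^\times$ stabilizer); this works but is longer and leans on Propositions~\ref{prop:realStabs} and~\ref{prop:compStabs}, which the uniform formula does not need. One bookkeeping slip to fix: with the right-action convention $(g_1g_2)(A)=g_2(g_1(A))$ and skew-Hermitian $A^\mu$, the operator $g^\dagger g$ is covariantly constant for $\partial_{a_2^0}$ and commutes with the $a_2^i$, so it lies in the stabilizer of $A_2$, not $A_1$; correspondingly $\tilde g(A_1)=h^{-1}(g(A_1))=h^{-1}(A_2)$, and it is the fact that $h$ stabilizes $A_2$ that saves you. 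Since $A_1$ and $A_2$ are simultaneously reducible, your conclusion is unaffected, but the justification as written has the two data sets interchanged.
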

\begin{proof}
We want to show that if two solutions $a_1,a_2\in \A^s_\xi$ of the real Nahm equation are related by an element $g\in \G^{s+1}_\CC$, then they are also related by a $g'\in \G^{s+1}$. Such a $g'$ is given by $g'=(g g^\dagger)^{-1/2} g$. To demonstrate this, we first reformulate the equation $a_2 = g(a_1)$ in the form $\partial_{\alpha_1*\alpha_2} g \oplus (\beta_1*\beta_2)g = 0$. Next, we observe that $(\alpha_1*\alpha_2,\beta_1*\beta_2)$ satisfies the real Nahm equation. The previous lemma now implies that $\partial_{a_1^0*a_2^0} g \oplus (a_1^i*a_2^i)g = 0$. That is, $a_2^0 = g^{-1} \partial g + g^{-1} a_1^0 g$ and $a_2^i = g^{-1} a_1^i g$, so $a_2$ and $a_1$ are related by $g$ as if the latter were a unitary transformation. This implies that $\partial_{a_1^0} (g g^\dagger) \oplus [a_1^i, g g^\dagger] = 0$, and in turn that $\partial_{a_1^0 * a_2^0} g' \oplus (a_1^i*a_2^i)g' = 0$.
\end{proof}

The rest of this section is devoted to the proof that $j$ is surjective. To do so, for any $B\in \B^{s,\mathrm{ps}}_\xi$, where $1\le s\le \infty$, we will find a Hermitian positive-definite $g\in \G^{s+1}_\CC$ such that $g(B)$ satisfies $\mu_\RR(g(B)) = 0$. Equivalently (thanks to the polar decomposition), we will find a $\G^{s+1}$-orbit $[g]$ in $\G^{s+1}_\CC$ (with $\G^{s+1}$ acting on the right) such that $\mu_\RR(g(B)) = 0$ for any element of this orbit. We introduce the notation
\be \G^{s+1}_+ := \exp(i \mf{g}^{s+1}) \ee
for the space of positive-definite elements of $\tilde\G^{s+1}_\CC$. We make this into a quasigroup (i.e., a non-associative group -- or, more precisely, a loop because we will have an identity) by observing that if $g=g_1 g_2$ is a composition of positive-definite gauge transformations then the positive-definite representative of its $\G^{s+1}$-orbit is $g_1\times g_2 := (g_1 g_2^2 g_1)^{1/2}$ (as is clear from the polar decomposition of $g_1 g_2$). Inverses and the identity are the same as in $\tilde\G^{s+1}_\CC$; in particular, left and right inverses coincide. Despite the general lack of associativity, we note that the identity $g_1^{-1}\times (g_1\times g_2) = g_2$ does hold. For positive definite $g$, we write $h = \log g$, where $\log g$ is the Hermitian logarithm of $g$; this gives a bijection between $\G_+^{s+1}$ and $i\mf{g}^{s+1}$.

We begin with the strictly $\xi^\RR$-polystable case, as this is particularly simple. We let $\pi,1-\pi$ denote good projections to proper subbundles. Then, we claim that there exists a $g\in \G^{s+1}_+$ which is diagonal in the basis associated to $\pi,1-\pi$ and which satisfies $\mu_\RR(g(B))=0$. As usual, we recall that a unitary gauge transformation in $\tilde\G^{s+1}$ that relates the usual unitary frame to this basis does not exist if $\xi_0^\RR = - \xi_L^\RR$, but that a unitary extended gauge transformation does exist; for both signs in $\xi^i_0 = \pm \xi^i_L$, we have $A^i_z(0) = A^i_z(L)$ after we act with this extended gauge transformation. Indeed, since $A$ is diagonal in this basis we have $A^i(0) = A^i(L)$. Since extended gauge transformations conjugate $\tilde\G^{s+1}$ to itself, there is no harm with working in this basis. In this basis, we write $g=\twoMatrix{g_1}{}{}{g_1^{-1}}$ and $A^1 = \twoMatrix{A^1_1}{}{}{-A^1_1}$. We also write $g_1 = e^{h_1}$, where $h_1 \in H^{s+1}(I,\RR)$ and its derivative vanishes at both endpoints. For this abelian problem, the real moment map equation may be written in the form $\Delta_0 h_1 = i \partial A^1_1$, where $\Delta_0 = -\partial^2$ is defined using the rank 1 Nahm data associated to the subbundle $\pi E$. So, we simply need to show that this equation admits a solution with the prescribed boundary conditions. But, $-\partial^2$ with the boundary conditions $\partial h_1|_{\partial I}=0$ is self-adjoint, so this will follow if we can show that $i\partial A^1_1$ is orthogonal (in the $L^2$ sense) to the kernel of $-\partial^2$ with these boundary conditions -- i.e., is orthogonal to constants. But, $\int \partial A^1_1\,dt = A^1_1|_{\partial I}=0$. So, a solution exists. Note that the solution is not unique, since $-\partial^2$ with these boundary conditions has a kernel consisting of constants, but that adding such a constant to $h_1$ simply multiplies $g$ by an element of the stabilizer of $B$, and so the resulting solution of the real Nahm equation is unchanged, in accordance with Proposition \ref{prop:uniqueSol}. With this, we have reduced to the stable case.

Before turning to this, we state a useful lemma, analogous to Simpson's Lemma 3.1 \cite{simpson:higgs}:
\begin{lemma} \label{lem:muR}
For $g\in \G_+^{2}$, we have $\partial_{g(\alpha)} = g^{-1} \circ \partial_\alpha \circ g$, $\partial_{-g(\alpha)^\dagger} = g\circ \partial_{-\alpha^\dagger}\circ g^{-1}$, and
\begin{align}
\mu_\RR(B) - g \mu_\RR(g(B)) g^{-1} &= -\partial_\alpha((\partial_{-\alpha^\dagger} H) H^{-1}) + [\beta, [\beta^\dagger, H] H^{-1}] \nonumber \\
&= (\tilde \Delta_0 H + (\partial_{-\alpha^\dagger} H) H^{-1} (\partial_\alpha H) - [\beta^\dagger,H] H^{-1} [\beta,H]) H^{-1} \ , \label{eq:niceMuR}
\end{align}
where $H=g^2$. We also have
\be -\partial^2\log \Tr H \le \sqrt{2} (|\mu_\RR(B)| + |\mu_\RR(g(B))|) \ , \ee
where $|\cdot|$ is the Frobenius norm.
\end{lemma}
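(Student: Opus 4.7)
The first two identities are direct consequences of the definitions and the Hermiticity of $g$. For (1), I would simply compute $\partial_{g(\alpha)} s = \partial s + (g^{-1}\alpha g + g^{-1}\partial g) s$ and verify by hand that this agrees with $g^{-1}\partial_\alpha (g s) = \partial s + g^{-1}(\partial g) s + g^{-1}\alpha g s$. For (2), taking adjoints of the definition of $g(\alpha)$ and using $g^\dagger = g$ gives $g(\alpha)^\dagger = g\alpha^\dagger g^{-1} + (\partial g) g^{-1}$; combining this with the identity $g\, \partial g^{-1} = -(\partial g)\, g^{-1}$ then shows $g\, \partial_{-\alpha^\dagger}\, g^{-1} = \partial_{-g(\alpha)^\dagger}$.

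For part (3), I would substitute the definition $\mu_\RR = -[\partial_\alpha,\partial_{-\alpha^\dagger}] + [\beta,\beta^\dagger]$ and use parts (1) and (2) to rewrite $g\mu_\RR(g(B))g^{-1} = -[\partial_\alpha, H\partial_{-\alpha^\dagger} H^{-1}] + [\beta, H\beta^\dagger H^{-1}]$, where $H=g^2$. The key algebraic observation is then that $\partial_{-\alpha^\dagger} - H\partial_{-\alpha^\dagger} H^{-1} = (\partial_{-\alpha^\dagger} H) H^{-1}$ as endomorphism-valued operators, and similarly $\beta^\dagger - H\beta^\dagger H^{-1} = [\beta^\dagger, H] H^{-1}$; subtracting $g\mu_\RR(g(B))g^{-1}$ from $\mu_\RR(B)$ and substituting these identities collapses everything to the first claimed form. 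The second form is then a routine application of the Leibniz rule together with $\partial_\alpha H^{-1} = -H^{-1}(\partial_\alpha H) H^{-1}$ and the definition of $\tilde\Delta_0$ acting on endomorphisms via the adjoint extension.

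The substance is the final inequality. My plan is to pair the first equality of part (3) against $H$ via the trace. Using $\Tr[\alpha,\cdot]=0$ and cyclicity, we have $\Tr((\partial_{-\alpha^\dagger} H) H^{-1} \cdot H) = \partial \Tr H$, so the integration-by-parts identity $\Tr((\partial_\alpha M) H) = \partial\Tr(MH) - \Tr(M \partial_\alpha H)$ with $M = (\partial_{-\alpha^\dagger} H) H^{-1}$ converts the $\partial_\alpha$ term into $-\partial^2 \Tr H + \Tr((\partial_{-\alpha^\dagger} H) H^{-1}(\partial_\alpha H))$. A similar cyclic manipulation turns the $\beta$-term into $-\Tr([\beta^\dagger, H] H^{-1}[\beta, H])$. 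Rearranging yields
\begin{equation*}
-\partial^2 \Tr H = \Tr((\mu_\RR(B) - g\mu_\RR(g(B)) g^{-1}) H) - \Tr((\partial_{-\alpha^\dagger}H) H^{-1}(\partial_\alpha H)) + \Tr([\beta^\dagger, H] H^{-1}[\beta, H]).
\end{equation*}

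The three terms are now amenable to pointwise estimates. Using $(\partial_\alpha H)^\dagger = \partial_{-\alpha^\dagger} H$ and $[\beta^\dagger, H] = -[\beta, H]^\dagger$, positivity of $H^{-1}$ gives $\Tr((\partial_{-\alpha^\dagger}H) H^{-1}(\partial_\alpha H)) \ge 0$ and $\Tr([\beta^\dagger, H] H^{-1}[\beta, H]) \le 0$. Cauchy--Schwarz for the Frobenius inner product, applied to $\Tr(H^{1/2} \cdot H^{-1/2} \partial_\alpha H) = \partial \Tr H$, yields the key bound $(\partial \Tr H)^2 \le \Tr H \cdot \Tr((\partial_{-\alpha^\dagger}H) H^{-1}(\partial_\alpha H))$, which is precisely the ingredient needed to produce the extra $(\partial \Tr H)^2/(\Tr H)^2$ term that appears when passing from $\Tr H$ to $\log \Tr H$. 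Combining these with $g^{-1} H g = H$ (so $\Tr(g\mu_\RR(g(B)) g^{-1} H) = \Tr(\mu_\RR(g(B)) H)$) and the trace inequality $|\Tr(HM)| \le \Tr H \cdot |M|$ for Hermitian $M$, then dividing through by $\Tr H > 0$, produces the desired bound (in fact with constant $1$, a factor of $\sqrt{2}$ stronger than stated). The only step requiring any insight is recognizing the appropriate Cauchy--Schwarz inequality; everything else is bookkeeping.
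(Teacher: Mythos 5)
Your proposal is correct and follows essentially the same route as the paper: verify \eqref{eq:niceMuR} algebraically, pair it with $H$ under the trace (the paper multiplies the second line of \eqref{eq:niceMuR} by $H$ rather than integrating the first line by parts, but these are the same computation), discard the two sign-definite terms, and use Cauchy--Schwarz on $\partial\Tr H = \Tr(H^{1/2}\cdot H^{-1/2}\partial_\alpha H)$ to assemble $-\partial^2\log\Tr H$. Your observation that the pointwise bound $|\Tr(HM)|\le \Tr(H)\,|M|$ for Hermitian $M$ gives the constant $1$ is right — the paper's $\sqrt{2}$ comes from the cruder estimate $\sum_i|\mu_i|\le\sqrt{2}\,|\mu_\RR|$ in rank $2$ — but the stated $\sqrt{2}$ is all that is used downstream.
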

\begin{proof}
The first results are easy to verify, and \eqref{eq:niceMuR} is a straightforward consequence of them if one employs the final expression in \eqref{eq:muRDef}. Multiplying \eqref{eq:niceMuR} on the right by $H$ and then taking the trace, and using the positive-definiteness of $H$ and the Cauchy-Schwarz inequality, gives
\be -\partial^2 \Tr H + |H^{-1/2} \partial_\alpha H|^2 = \Tr((\mu_\RR(B)-\mu_\RR(g(B))) H) \le \sqrt{2} \Tr(H) (|\mu_\RR(B)| + |\mu_\RR(g(B))|) \ . \ee
Explicitly, if $\mu_i$ are the diagonal values of $\mu_\RR(B)$ in a basis in which $H=\diag(H_i)$, then $\sum_i H_i |\mu_i| \le (\Tr H) \sum_i |\mu_i| \le \sqrt{2} (\Tr H) |\mu_\RR(B)|$, and a similar inequality holds for the other term. The Cauchy-Schwarz inequality also yields
\be (\partial \Tr H)^2 = (\Tr\partial_\alpha H)^2 = (\Tr (H^{1/2} H^{-1/2}(\partial_\alpha H)))^2 \le (\Tr H) |H^{-1/2} \partial_\alpha H|^2 \ . \ee
Putting these results together, we have
\be -(\Tr H) \partial^2 \log \Tr H = -\partial^2 \Tr H + \frac{(\partial \Tr H)^2}{\Tr H} \le \sqrt{2} \Tr(H) (|\mu_\RR(B)| + |\mu_\RR(g(B))|) \ . \ee
\end{proof}

Following Donaldson \cite{donaldson:duy,donaldson:duy2}, we will prove our DUY theorem by extremizing a functional of $h$.\footnote{We note that there is another functional due to Donaldson \cite{donaldson:nahm} which was useful in a similar context which specifically involved Nahm data. However, the Euler-Lagrange equation of this latter functional is not $\mu_\RR(g(B))=0$ in the present context, thanks to boundary terms due to integration by parts. At stationary points $g$, these boundary terms enforce $(\tilde h,\mu_\RR(g(B)))_{L^2}\not=0$ for most $\tilde h\in i\mf{g}^2$, and so stationary points do not satisfy $\mu_\RR(g(B))=0$. We therefore work with the Nahm data analog of the functional from \cite{donaldson:duy,donaldson:duy2} because, as we will see, such boundary terms are not present. Finally, a third candidate functional is the dimensional reduction of the Yang-Mills functional on $I\times T^3$, or equivalently $\|\mu_\RR(g(B))\|_{L^2}^2$ (since $\|\mu_\CC(g(B))\| = \|\mu_\CC(B)\|$ and the norm-squared of the anti-self-dual part of the curvature is determined by that of the self-dual part because the second Chern class is fixed). This would presumably suffice for most of our purposes, but the analysis would be a bit more unpleasant because of the extra derivatives present in this functional. We note that the condition for this functional to be stationary is $(\mu_\RR(g(B)), \Delta_0' \tilde h)_{L^2}=0$ for all $\tilde h\in i\mf{g}^2$, where the prime on $\Delta_0'$ indicates that the pre-Nahm data entering into its definition is $g(B)$, and generically integration by parts of this inner product in order to find the Euler-Lagrange equation will yield boundary terms in addition to $(\Delta_0' \mu_\RR(g(B)), \tilde h)_{L^2}$. Nevertheless, choosing $\tilde h$ to solve $\Delta_0' \tilde h = \mu_\RR(g(B))$, as in Prop. \ref{prop:minSolves}, makes it clear that a stationary point of this functional satisfies $\mu_\RR(g(B))=0$. This is compatible with the existence of the aforementioned boundary terms because for this functional, the latter vanish when $\mu_\RR(g(B))=0$.} We begin by describing two constructions involving Hermitian matrices. Let $h$ be a Hermitian matrix with an orthonormal basis of eigenvectors $w_i$ (thought of as column vectors) and corresponding eigenvalues $\lambda_i$, and let $A = \sum_{i,j} a_{ij} w_i w_j^\dagger$ be an arbitrary square matrix. Let $F\colon\RR\times \RR\to \RR$ be a smooth function. Then, define $F(h)(A) := \sum_{i,j} a_{ij} F(\lambda_j,\lambda_i) w_i w_j^\dagger$. Similarly, given a smooth function $f\colon \RR\to \RR$, define $f(h) := \sum_i f(\lambda_i) w_i w_i^\dagger$. We also define $df\colon \RR\times\RR\to \RR$ by
\be df(x,y) = \piecewise{\frac{f(x)-f(y)}{x-y}}{x\not=y}{f'(x)}{x=y} \ , \ee
so that $df$ can be used for the first construction. We will now explain how these constructions work when these matrices are upgraded to functions.

For any constant $c\ge 0$, define $\tilde P_c^{0} = \{h\in C^0(I, i\, \mf{su}(2)) \bigm| \|h\|_{C^0} \le c \}$ and $\tilde P_c^{1} = \{h\in H^1(I, i\, \mf{su}(2))\bigm|\|h\|_{C^0} \le c \}$. We then have the following lemma, which is similar to Simpson's Proposition 4.1 \cite{simpson:higgs}:
\begin{lemma} \label{lem:psiCont}
The construction $F$ defines a Lipschitz continuous and uniformly bounded map $F\colon \tilde P^{0}_c \to \End(L^2(\End(E)))$, where the codomain is endowed with the operator norm topology. By uniform boundedness, we mean that there exists a $C_1>0$ such that $\|F(h)\|_{L^2\to L^2} \le C_1$. We have $(F^2(h)(A),A)_{L^2} = \|F(h)(A)\|_{L^2}^2$ for all $A\in L^2(\End(E))$ and $h\in \tilde P_c^{0}$. Also, if $\im F\subset \RR^+$ then there is a $C_2>0$ such that $(F(h)(A), A)_{L^2} = \|F^{1/2}(h)(A)\|^2_{L^2} \ge C_2 \|A\|_{L^2}^2$.

Similarly, the construction $f$ defines a continuous map $f\colon \tilde P^1_c\to 
\tilde P^1_{c'}$ for some $c'\ge 0$. Also, we have $\partial_\alpha f(h) = df(h)(\partial_\alpha h)$ and $[\beta, f(h)] = df(h)([\beta,h])$.
\end{lemma}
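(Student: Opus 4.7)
The plan is to reduce every statement to a pointwise matrix computation on $\mathrm{End}(E_t)$ at each $t\in I$ and then integrate. The starting observation is that if one writes $h(t)=\sum_i \lambda_i(t) w_i(t) w_i(t)^\dagger$ and $A(t)=\sum_{i,j} a_{ij}(t) w_i(t) w_j(t)^\dagger$, then
\[
\|F(h(t))(A(t))\|^2_{\mathrm{End}(E_t)} \;=\; \sum_{i,j} |F(\lambda_j(t),\lambda_i(t))|^2\,|a_{ij}(t)|^2,
\]
which is bounded above by $C_1^2\|A(t)\|^2$ with $C_1:=\sup_{[-c,c]^2}|F|$, since $\mathrm{Spec}\,h(t)\subset[-c,c]$ for $h\in \tilde P^0_c$. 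Integrating in $t$ gives $\|F(h)\|_{L^2\to L^2}\le C_1$. The identity $(F^2(h)(A),A)_{L^2}=\|F(h)(A)\|^2_{L^2}$ and the lower bound $(F(h)(A),A)_{L^2}=\|F^{1/2}(h)(A)\|^2_{L^2}\ge C_2\|A\|^2_{L^2}$ then follow pointwise from the self-adjointness of $F(h(t))$ on $\mathrm{End}(E_t)$ (which uses $F$ being $\mb{R}$-valued) together with the tautological $F^2(\lambda_j,\lambda_i)=F(\lambda_j,\lambda_i)^2$; the lower bound $C_2:=\inf_{[-c,c]^2}F$ is strictly positive by compactness and the assumption $\mathrm{im}\,F\subset\mb{R}^+$.

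For Lipschitz continuity, I plan to show that the pointwise map $h\mapsto F(h)$ from Hermitian $2\times 2$ matrices to $\mathrm{End}(\mathrm{End}(\mb{C}^2))$ is smooth on bounded sets, and in particular has a uniform Lipschitz constant $K=K(c)$ on the ball of radius $c$. The cleanest route is polynomial approximation: for a polynomial $P(x,y)=\sum p_{ab}x^ay^b$ one has $P(h)(A)=\sum p_{ab} h^b A h^a$, which is manifestly polynomial in $h$, and uniform approximation of $F$ (and its first derivatives) by polynomials on the compact set $[-c,c]^2$ translates into an operator-norm approximation of $F(h)$ uniformly in $h\in \tilde P^0_c$. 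Integrating the resulting pointwise Lipschitz bound then gives $\|F(h_1)-F(h_2)\|_{L^2\to L^2}\le K(c)\|h_1-h_2\|_{C^0}$.

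For the scalar construction, $\|f(h)\|_{C^0}\le c':=\sup_{[-c,c]}|f|$ is immediate, and the two commutator and derivative identities $\partial_\alpha f(h)=df(h)(\partial_\alpha h)$ and $[\beta,f(h)]=df(h)([\beta,h])$ are Daleckii--Krein--type formulas. I verify them directly for a monomial $f(x)=x^n$ using $[X,h^n]=\sum_{k=0}^{n-1}h^k[X,h]h^{n-1-k}$ and recognizing the coefficient $\sum_{k=0}^{n-1}\lambda_i^k\lambda_j^{n-1-k}=df(\lambda_i,\lambda_j)=df(\lambda_j,\lambda_i)$ (the last equality by symmetry of $df$); the general smooth case follows by polynomial approximation, exactly as in the previous paragraph. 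Combined with the already-established operator bound on $df(h)$, this gives $\|\partial f(h)\|_{L^2}\le (\sup_{[-c,c]^2}|df|)\,\|\partial h\|_{L^2}$, so $f(h)\in H^1$, and the $C^0$-Lipschitz control on $h\mapsto f(h)$ and $h\mapsto df(h)$ then yields continuity $\tilde P^1_c\to \tilde P^1_{c'}$.

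The only genuinely delicate point, and the one I regard as the main (though mild) obstacle, is obtaining the polynomial-approximation-based smoothness / Lipschitz control in operator norm uniformly on the bounded set $\tilde P^0_c$; once this uniformity is secured, every other claim in the lemma drops out by pointwise computation and integration. A secondary bookkeeping point is the tracelessness of $f(h)$ needed to land in $i\,\mf{su}(2)$ rather than $i\,\mf{u}(2)$: since $h$ is traceless Hermitian with spectrum $\{\lambda,-\lambda\}$, this holds automatically whenever $f$ satisfies $f(\lambda)+f(-\lambda)=0$, which is the case in the applications we intend (e.g.\ $f=\log$ or any odd function).
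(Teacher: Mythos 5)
Your proposal is correct and follows essentially the same route as the paper: pointwise spectral estimates on $\mathrm{End}(E_t)$ integrated over $I$ for the boundedness, quadratic-form, and positivity claims; monomial verification plus $C^1$ polynomial approximation for the identities $\partial_\alpha f(h)=df(h)(\partial_\alpha h)$ and $[\beta,f(h)]=df(h)([\beta,h])$; and the resulting chain of inequalities for continuity into $H^1$. Your use of polynomial approximation to justify the operator-Lipschitz bound on $h\mapsto F(h)$ is somewhat more careful than the paper, which simply asserts the pointwise bound $|F(h_1)(A)-F(h_2)(A)|\le C_3|h_1-h_2||A|$ from Lipschitz continuity of the scalar function $F$ on a compact set, and your caveat about tracelessness of $f(h)$ is a fair minor point that the paper elides (its later applications, e.g.\ the projector $p(u_\infty)$, indeed land in $i\,\mf{u}(2)$ rather than $i\,\mf{su}(2)$).
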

\begin{proof}
The supremum bound on elements of the domain gives a pointwise (almost everywhere) bound $|F(h)(A)| < C_1 |A|$, so that $\|F(h)\|_{L^2\to L^2} < C_1$, for all $h\in \tilde P_c^{0}$. Next, we have
\be \Tr(F^2(h)(A) \, A^\dagger) = \sum_{i,j} F^2(\lambda_j,\lambda_i) |a_{ij}|^2 = \Tr(F(h)(A) \, (F(h)(A))^\dagger) \ , \ee
and so $(F^2(h)(A),A)_{L^2} = \|F(h)(A)\|^2_{L^2}$. If $F$ is a positive function then we may replace $F^2$ by $F$ and $F$ by the non-negative square root $F^{1/2}$ in this equation. The supremum bound on $h$ gives a uniform bound on the eigenvalues, and so $F(\lambda_j,\lambda_i)>C_2$ for all $i,j$, everywhere on $I$, for some $C_2>0$. We thus have $(F(h)(A), A)_{L^2} = \|F^{1/2}(h)(A)\|^2_{L^2} \ge C_2 \|A\|^2_{L^2}$.

Now consider $h_1,h_2\in \tilde P^{0}_c$. Because of the uniform bound, $F(\lambda_1,\lambda_2)$ is effectively a smooth function on a compact subset of $\RR^2$, and is therefore Lipschitz continuous on this compact space. We therefore have an (almost everywhere) pointwise bound $|F(h_1)(A)-F(h_2)(A)| \le C_3 |h_1-h_2| |A|$ for any $A\in L^2$, and so $\|F(h_1)(A)-F(h_2)(A)\|_{L^2} \le C_3 \|h_1-h_2\|_{C^0} \|A\|_{L^2}$. That is, $\|F(h_1)-F(h_2)\|_{L^2\to L^2} \le C_3 \|h_1-h_2\|_{C^0}$.

Suppose now that $h\in \tilde P^1_c$. We first prove that $\partial_\alpha f(h) = df(h)(\partial_\alpha h)$ when $f(\lambda) = \lambda^n$ is a monomial (which immediately implies that it holds for polynomials). This follows from comparing
\be df(x,y) = \sum_{m=0}^{n-1} y^{m} x^{n-m-1} \ee
and
\be \partial_\alpha h^n = \sum_{m=0}^{n-1} h^m (\partial_\alpha v) h^{n-m-1} \ . \ee
This suffices to prove the general case, where $f(\lambda)$ is an arbitrary smooth function, since $\lambda$ is effectively restricted to a compact set by the sup bound on $h$ and smooth functions on a closed interval $J\subset \RR$ may be approximated in the $C^1$ norm by polynomials. For, given such an approximating sequence $p_n\to f$, if we assume that $\|f'-p_n'\|_{C^0(J)} \le \epsilon$ then the mean value theorem implies that $|f(x)-p_n(x)-f(y)+p_n(y)| \le \epsilon |x-y|$ for all $x,y$ in the interval, and so we see that $dp_n \to df$ in $C^0(J^2)$. Therefore, $dp_n(h) \to df(h)$ in $\End(L^2(\End(E)))$, and we easily have that $\partial_\alpha p_n(h)\to \partial_\alpha f(h)$ in $L^2(\End(E))$. The proof that $[\beta,f(h)] = df(h)([\beta,h])$ is identical.

Finally, we prove that $f\colon \tilde P^1_c\to \tilde P^1_{c'}$ is continuous for some $c'\ge 0$. This follows from the above results, since
\begin{align}
\|\partial_\alpha f(h_1) - \partial_\alpha f(h_2)\|_{L^2} &= \| df(h_1)(\partial_\alpha(h_1-h_2)) + (df(h_1)-df(h_2))(\partial_\alpha h_2) \|_{L^2} \nonumber \\
&\le C_4 \|h_1-h_2\|_{H^1} + C_5 \|h_1-h_2\|_{C^0} \|h_2\|_{H^1} \\
&\le C_4 \|h_1-h_2\|_{H^1} + C_6 \|h_1-h_2\|_{H^1} \|h_2\|_{H^1} \ .
\end{align}
We also have a bound $|f(h_1)-f(h_2)| \le C_7 |h_1-h_2|$, so that $\|f(h_1)-f(h_2)\|_{L^2} \le C_7 \|h_1-h_2\|_{L^2} \le C_8 \|h_1-h_2\|_{H^1}$.
\end{proof}

For our purposes, the most important instance of these constructions will be associated to the positive function
\be \Psi(x,y) := \frac{e^{2(y-x)} - 2(y - x) - 1}{2 (y-x)^2} = \sum_{n\ge 0} \sum_{m=0}^n \frac{2^{n+1}}{(n+2)!} \column{n}{m} y^{n-m} (-x)^{m} \ . \ee
For this function, we have
\be \Psi(h)(A) = \sum_{n\ge 0} \sum_{m=0}^{n} \frac{2^{n+1}}{(n+2)!} \column{n}{m} h^{n-m} A (-h)^m \ . \label{eq:psiDef} \ee
With this, we can define Donaldson's functional:\footnote{Because the notation is quite different, we briefly explain the equivalence with the formulation of \cite{donaldson:duy,donaldson:duy2,simpson:higgs}. There, one regards $M$ as a functional of two Hermitian metrics, $K$ and $H=Ke^s$, where $s$ is Hermitian with respect to $K$. That is, $(v,w)_H = (e^s v, w)_K$. In our formulation, $K$ is the single fixed metric and $s=-2h=-2\log g$. The analogs in these references of our Proposition \ref{prop:fakeHomo} also involve a third metric, $J=H \tilde g^{-2}$, where $\tilde g$ is positive-definite with respect to $H$. That is, $g^{-1} \tilde g g \in \G_+^{2}$. Writing $\tilde g = g g' g^{-1}$, we then have $J = K g^{-2} \tilde g^{-2} = K (g\times g')^{-2}$.}
\be M(h) = \int \Tr(- h \mu_\RR(\alpha,\beta) + \Psi(-h) (\partial_\alpha h)\, \partial_{-\alpha^\dagger} h + \Psi(-h)([\beta,h])\, [h,\beta^\dagger] ) \, dt \ . \ee
When we wish to make the dependence on $\alpha,\beta$ explicit, we will write this as $M(h;\alpha,\beta)$. We note that this functional is invariant under gauge transformations $(\alpha,\beta)\mapsto u(\alpha,\beta)$, $h\mapsto u^{-1} h u$ with $u\in \G^{2}$; the action on $h$ is equivalent to the action $e^h\mapsto u^{-1} e^h u = e^{u^{-1} h u}$ of $\G^{2}$ on $\G_+^{2}$.
\begin{proposition}
$M\colon H^1(I, i\, \mf{su}(2)) \to \RR$ is a continuous functional.
\end{proposition}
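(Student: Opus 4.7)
The plan is to prove continuity at an arbitrary $h_0 \in H^1(I, i\,\mf{su}(2))$ by splitting $M$ into its three summands and controlling each separately. Since $I$ is one-dimensional, Sobolev embedding $H^1(I) \hookrightarrow C^0(I)$ implies that any $H^1$-convergent sequence $h_n \to h_0$ is uniformly bounded in $C^0$, so eventually all $h_n$ and $h_0$ lie in a common $\tilde P^0_c$ of Lemma~\ref{lem:psiCont}; the same embedding gives $\alpha, \beta \in H^1 \subset L^\infty$.

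First, the linear term $-\int \Tr(h\, \mu_\RR(\alpha,\beta))\,dt$ is bounded by $\|h\|_{L^2}\|\mu_\RR\|_{L^2}$, where $\mu_\RR \in L^2$ because $H^1$ is a Banach algebra in one dimension. Continuity in the $H^1$ norm is then immediate. For the remaining two terms, the key observation is that since $h=h^\dagger$, we have $(\partial_\alpha h)^\dagger = \partial_{-\alpha^\dagger} h$ and $([\beta, h])^\dagger = [h, \beta^\dagger]$, which lets us rewrite these summands as the $L^2$ pairings
\[
(\Psi(-h)(\partial_\alpha h),\, \partial_\alpha h)_{L^2}
\quad\text{and}\quad
(\Psi(-h)([\beta,h]),\, [\beta,h])_{L^2},
\]
with $\Psi(-h)$ acting as a bounded operator on $L^2(\End(E))$ per Lemma~\ref{lem:psiCont}.

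For each of these terms, write $T_n := \Psi(-h_n)$ and $A_n := \partial_\alpha h_n$ (respectively $A_n := [\beta, h_n]$), and apply the standard trilinear identity
\[
(T_n A_n, A_n) - (T_0 A_0, A_0) = (T_n(A_n - A_0), A_n) + (T_n A_0, A_n - A_0) + ((T_n - T_0) A_0, A_0).
\]
The first two pieces vanish in the limit using the uniform bound $\|T_n\|_{L^2\to L^2}\le C_1$ from Lemma~\ref{lem:psiCont} together with the $L^2$ convergence $A_n\to A_0$; the latter in turn follows from $\partial(h_n - h_0)\to 0$ in $L^2$ (by $H^1$ convergence of $h_n$), combined with $\|[\alpha, h_n - h_0]\|_{L^2} \le 2\|\alpha\|_{L^\infty}\|h_n-h_0\|_{L^2}$ and the analogous bound for $\beta$. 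The third piece vanishes by the Lipschitz estimate $\|T_n - T_0\|_{L^2\to L^2} \le C_3 \|h_n - h_0\|_{C^0}$, which again is exactly what Lemma~\ref{lem:psiCont} provides.

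There is no substantive obstacle here: the only subtle step is the pair of adjoint identities used to rewrite the trace integrals as inner products of the form $(\Psi(-h)A, A)_{L^2}$, which is a direct computation. Once that identification is made, continuity reduces to the Lipschitz continuity and uniform boundedness of the functional calculus $h\mapsto \Psi(-h)$ already established in Lemma~\ref{lem:psiCont}, together with the Sobolev embedding $H^1(I)\hookrightarrow C^0(I)$ that supplies the $C^0$ control needed to apply that lemma.
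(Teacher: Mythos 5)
Your proof is correct and follows essentially the same route as the paper's: Sobolev embedding $H^1(I)\hookrightarrow C^0(I)$ to bring Lemma~\ref{lem:psiCont} into play, Cauchy--Schwarz for the linear term, and the same trilinear decomposition of the difference of quadratic forms controlled by the uniform boundedness and Lipschitz continuity of $h\mapsto\Psi(-h)$. The only difference is cosmetic: you make explicit the adjoint identities $(\partial_\alpha h)^\dagger=\partial_{-\alpha^\dagger}h$ and $([\beta,h])^\dagger=[h,\beta^\dagger]$ that let the trace integrals be read as $L^2$ pairings, which the paper uses implicitly.
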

\begin{proof}
Note that we do not require a sup bound on the domain. However, for the sake of the present result we are only interested in continuity at a point $h_1$, and this certainly has a finite $C^0$ norm, and any nearby point $h_2$ satisfies $\|h_1-h_2\|_{C^0} \le C \|h_1-h_2\|_{H^1}$. So, the lemma still yields useful inequalities:
\begin{align}
&\abs{ (\Psi(-h_1)(\partial_\alpha h_1), \partial_\alpha h_1)_{L^2} - (\Psi(-h_2)(\partial_\alpha h_2), \partial_\alpha h_2)_{L^2} } \nonumber \\
&= \abs{ (\Psi(-h_1)(\partial_\alpha h_1), \partial_\alpha(h_1-h_2))_{L^2} + (\Psi(-h_1)(\partial_\alpha(h_1-h_2)), \partial_\alpha h_2)_{L^2} \right. \nonumber \\
&\quad \left.+ (\Psi(-h_1)(\partial_\alpha h_2) - \Psi(-h_2)(\partial_\alpha h_2), \partial_\alpha h_2)_{L^2} } \nonumber \\
&\le C \|h_1 - h_2\|_{H^1} (\|\Psi(-h_1)(\partial_\alpha h_1)\|_{L^2} + \|h_2\|_{H^1} + \|h_2\|^2_{H^1}) \ .
\end{align}
A similar inequality holds for the term in $M(h)$ involving $[\beta,h]$. Finally, the first term is handled by
\be |(h_1-h_2, \mu_\RR(\alpha,\beta))_{L^2}| \le \|h_1-h_2\|_{L^2} \|\mu_\RR(\alpha,\beta)\|_{L^2}  \ . \ee
Reality of the first term follows from Hermiticity of $h$ and $\mu_\RR$, while reality of the last terms follows from the stronger result $(\Psi(-h)(A),A)_{L^2} \ge 0$.
\end{proof}

The next proposition shows that $g\mapsto M(\log g)$ is a sort of homomorphism. Its proof involves elements of those of Proposition 6 in \cite{donaldson:duy} and Proposition 5.1 in \cite{simpson:higgs}; we fill in some gaps in the latter.

\begin{proposition} \label{prop:fakeHomo}
$M(h;\alpha,\beta)$ is uniquely characterized by the following two properties:
\begin{enumerate}[(i)]
\item  For any smooth map $h\colon [a,b]\to i\mf{g}^2$, we have
\be \partial_\tau M(h(\tau);\alpha,\beta) = \half(e^h (\partial_\tau e^{-2h}) e^{h}, \mu_\RR(e^h(\alpha,\beta)))_{L^2} \ . \label{eq:anyPath} \ee
\item $M(0;\alpha,\beta)=0$.
\end{enumerate}
It follows that for all $g_1, g_2 \in \G_+^{2}$,
\be M(\log(g_1\times g_2); \alpha,\beta) = M(\log g_2; g_1(\alpha,\beta)) + M(\log g_1; \alpha,\beta) \ . \label{eq:fakeHomo} \ee
\end{proposition}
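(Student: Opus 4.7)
The plan is to establish the characterization first and then deduce \eqref{eq:fakeHomo} as a formal consequence of it. Condition (ii) is immediate: evaluating the defining formula at $h=0$ annihilates the arguments $h$, $\partial_\alpha h$, and $[\beta,h]$ of all three integrands. For uniqueness given (i) and (ii), if $M_1$ and $M_2$ both satisfy these conditions then their difference $N := M_1 - M_2$ vanishes at $0$ and has vanishing $\tau$-derivative along every smooth path in $i\mf{g}^2$; joining $0$ to an arbitrary $h$ by the linear path $\tau \mapsto \tau h$ and integrating from $0$ to $1$ therefore yields $N(h) = 0$.

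The principal technical obstacle is verifying (i) for the explicit $M$. Differentiating under the integral is justified by Lemma~\ref{lem:psiCont}; the first integrand contributes $-\Tr((\partial_\tau h)\,\mu_\RR(\alpha,\beta))$, while the remaining integrands require computing $\partial_\tau \Psi(-h)$, either term-by-term via the series \eqref{eq:psiDef} or via the identity $\partial_\alpha f(h) = df(h)(\partial_\alpha h)$ from Lemma~\ref{lem:psiCont} applied to $f = \Psi(-\cdot)$. After expansion, integration by parts in $t$ is used to move derivatives off $\partial_\tau h$; the boundary conditions $\partial_\tau h|_{\partial I} \in \mf{t}$, $\alpha|_{\partial I} \in \mf{t}_\CC^\perp$, $\partial h|_{\partial I} \in \mf{t}^\perp$, and $\beta|_{\partial I} + i\xi^\CC_{\partial I}\sigma_z \in \mf{t}_\CC^\perp$ ensure that every resulting boundary term pairs a diagonal factor against an off-diagonal one and hence has vanishing trace. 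Organizing the remaining bulk contributions with the aid of the Weitzenb\"ock identities of Lemma~\ref{lem:weitz} (which exhibit $\mu_\RR$ as the difference of two covariant Laplacians) telescopes the expression into the compact right-hand side of \eqref{eq:anyPath}. The intricacy of this step lies in the bookkeeping of $\Psi$ acting on non-commuting matrix-valued functions; the smoothness and boundedness guaranteed by Lemma~\ref{lem:psiCont} keep the manipulations well-defined throughout.

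Granted the characterization, \eqref{eq:fakeHomo} follows by a cocycle argument. Fix $g_1, \alpha, \beta$ and define
\[
F(g_2) := M(\log(g_1 \times g_2); \alpha,\beta) - M(\log g_2;\, g_1(\alpha,\beta)) - M(\log g_1; \alpha,\beta)
\]
on $\G^2_+$. Since $g_1 \times 1 = g_1$ and $M(0;\, g_1(\alpha,\beta)) = 0$, we have $F(1) = 0$, so path-connectedness of $\G^2_+$ reduces $F \equiv 0$ to the vanishing of $\partial_\tau F(g_2(\tau))$ along every smooth path. The ingredients are: the right polar decomposition $g_1 g_2 = p u$ with $p = (g_1 g_2^2 g_1)^{1/2} = g_1 \times g_2$ and $u \in \tilde\G^2$ unitary, together with its adjoint $g_2 g_1 = u^{-1} p$; the identity $(g_1 \times g_2)^{-2} = g_1^{-1} g_2^{-2} g_1^{-1}$, which for $h(\tau) := \log(g_1 \times g_2(\tau))$ gives $e^{h(\tau)}(\partial_\tau e^{-2h(\tau)}) e^{h(\tau)} = p g_1^{-1} (\partial_\tau g_2(\tau)^{-2}) g_1^{-1} p$; and the equivariance $\mu_\RR(v(B)) = v^{-1} \mu_\RR(B) v$ for unitary $v$, combined with $g_2(g_1(\alpha,\beta)) = (g_1 g_2)(\alpha,\beta) = u\bigl((g_1 \times g_2)(\alpha,\beta)\bigr)$, which yields $\mu_\RR((g_1 \times g_2)(\alpha,\beta)) = u\,\mu_\RR(g_2(g_1(\alpha,\beta)))\,u^{-1}$. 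Substituting into (i) and using cyclicity of the trace, the factors $u^{-1} p g_1^{-1} = g_2$ and $g_1^{-1} p u = g_2$ collapse the integrand to $\Tr[g_2 (\partial_\tau g_2^{-2}) g_2\, \mu_\RR(g_2(g_1(\alpha,\beta)))]$, which is precisely the integrand of $\partial_\tau M(\log g_2(\tau);\, g_1(\alpha,\beta))$ produced by (i). Hence $\partial_\tau F \equiv 0$ and $F \equiv 0$.
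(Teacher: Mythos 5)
Your deduction of \eqref{eq:fakeHomo} from property (i) is correct, and your polar-decomposition/cocycle argument (differentiating $F(g_2(\tau))$ and using $u^{-1}pg_1^{-1} = g_1^{-1}pu = g_2$) is essentially equivalent to the paper's two-path argument with the unitary intertwiner $U(\tau) = e^{-h(\tau)}g_1 e^{\tilde h(\tau)}$; both hinge on the same algebra. The uniqueness claim and property (ii) are likewise fine.

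The gap is in your verification of property (i), which is the actual content of the proposition. You assert that one differentiates the explicit formula for $M$ under the integral, integrates by parts in $t$, and that the result ``telescopes'' into the right-hand side of \eqref{eq:anyPath} with the aid of the Weitzenb\"ock identities. This is precisely the hard step, and it is not carried out. The difficulty is that $\partial_\tau\bigl[\Psi(-h(\tau))(\partial_\alpha h(\tau))\bigr]$ requires differentiating the \emph{two-variable} functional calculus $F(h)(A)$ in the direction $\dot h$, which in general does not commute with $h$; this produces three-variable divided-difference expressions that are not covered by Lemma~\ref{lem:psiCont} (which only gives $\partial_\alpha f(h) = df(h)(\partial_\alpha h)$ for a one-variable $f$). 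Whether the specific choice of $\Psi$ makes the resulting non-commutative bookkeeping collapse to $\half(e^h(\partial_\tau e^{-2h})e^h,\mu_\RR(e^h(\alpha,\beta)))_{L^2}$ is exactly what must be proved, and you have only asserted it. The paper avoids this computation entirely: it first establishes \eqref{eq:anyPath} along rays $u\mapsto uh$, where all powers of $h$ commute and the series for $\Psi$ genuinely telescopes (yielding $\partial_u^2 M(uh;\alpha,\beta) = 2(h,\Delta_0'h)_{L^2}$, which is integrated up from $u=0$), and then upgrades to arbitrary paths by an equality-of-mixed-partials argument, differentiating the ray identity in $\tau$ and matching it against $\partial_u$ of \eqref{eq:anyPath}. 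You would need either to reproduce some such indirect argument or to actually execute the non-commutative computation you sketch; as written, the central step of the proof is missing.
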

\begin{proof}
We first prove that the first part of the proposition implies \eqref{eq:fakeHomo}. We consider two particular paths, $h: [0,2]\to i\mf{g}^2$ and $\tilde h: [1,2]\to i\mf{g}^2$, which satisfy $h(0)=0$, $h(1) = \log g_1$, $h(2) = \log(g_1\times g_2)$, and $\tilde h = \log(g_1^{-1} \times e^h)$ on $[1,2]$. In particular, we have $\tilde h(1) = 0$ and $\tilde h(2) = \log g_2$. We define the unitary gauge transformation $U(\tau) = e^{-h(\tau)} g_1 e^{\tilde h(\tau)}$; taking the adjoint and then the inverse of this equation shows that we also have $U = e^{h} g_1^{-1} e^{-\tilde h}$. Then, keeping in mind that gauge transformations act from the right, we have
\begin{align}
\partial_\tau M(\tilde h(\tau); g_1(\alpha,\beta)) &= \half(e^{\tilde h} (\partial_\tau e^{-2\tilde h}) e^{\tilde h}, \mu_\RR(e^{\tilde h}(g_1(\alpha,\beta))))_{L^2} \nonumber \\
&= \half(e^{\tilde h} g_1 (\partial_\tau e^{-2 h}) g_1 e^{\tilde h}, \mu_\RR(U(e^h(\alpha,\beta))))_{L^2} \nonumber \\
&= \half(e^{h} (\partial_\tau e^{-2 h}) e^{h}, \mu_\RR(e^h(\alpha,\beta)))_{L^2} \nonumber \\
&= \partial_\tau M(h(\tau); \alpha,\beta) \ . \label{eq:secondPath}
\end{align}
Since \eqref{eq:anyPath} and \eqref{eq:secondPath} agree for $\tau\in [1,2]$, but $M(h(1);\alpha,\beta) = M(\tilde h(1);g_1(\alpha,\beta)) + M(\log g_1;\alpha,\beta)$, \eqref{eq:fakeHomo} follows.

We now prove \eqref{eq:anyPath}. To prove this for a fixed value of $\tau$, we will first prove it for a special path $[0,1]\ni u \mapsto u h(\tau)$. That is, we claim that
\be \partial_u M(uh; \alpha,\beta) = - (h, \mu_\RR(B'))_{L^2} \ , \label{eq:specialPath} \ee
where $B' = e^{uh}(B) = (\alpha', \beta')$. We prove this by observing that it holds at $u=0$ and then showing that the derivative of this equation holds for all $u\in [0,1]$. That is, we prove that
\be \partial^2_u M(uh;\alpha,\beta) = -(h, \partial_{u'} \mu_\RR(e^{u' h}(B')) |_{u'=0})_{L^2} = (h, [h, \mu_\RR(B')] + 2\tilde\Delta_0' h)_{L^2} = 2(h, \Delta'_0 h)_{L^2} \ . \label{eq:muDeriv} \ee
The last two equalities employed Lemmas \ref{lem:weitz} and \ref{lem:muR}, and the primes on $\tilde\Delta_0'$ and $\Delta_0'$ indicate that the pre-Nahm data that enters into their definitions is $B'$. (Since $(h, [h,\mu_\RR(B')])_{L^2}=0$, it actually does not matter which of $\Delta_0'$, $\tilde\Delta_0'$, or $\bar\Delta_0'$ appears in the last expression.) This claim is demonstrated as follows:
\begin{align}
\partial^2_u M(uh; \alpha,\beta) &= 
\sum_{n\ge 0} \sum_{m=0}^n \frac{2^{n+1}}{m!(n-m)!} \brackets{ ((-uh)^{n-m} (\partial_\alpha h) (uh)^m, \partial_\alpha h)_{L^2} + ((-uh)^{n-m} [\beta,h] (uh)^m, [\beta,h])_{L^2} } \nonumber \\
&= 2 \sum_{n',m\ge 0} \frac{1}{m! n'!} \brackets{ ((-2uh)^{n'} (\partial_\alpha h) (2uh)^m, \partial_\alpha h)_{L^2} + ((-2uh)^{n'} [\beta,h] (2uh)^m, [\beta,h])_{L^2} } \nonumber \\
&= 2 \brackets{ (e^{-2uh} (\partial_\alpha h) e^{2uh}, \partial_\alpha h)_{L^2} + (e^{-2uh} [\beta,h] e^{2uh}, [\beta,h])_{L^2} } \nonumber \\
&= 2 \brackets{ \|e^{-uh} (\partial_\alpha h) e^{uh}\|^2_{L^2} + \|e^{-uh} [\beta,h] e^{uh}\|^2_{L^2} } \nonumber \\
&= 2 \brackets{ \|\partial_{\alpha'} h\|^2_{L^2} + \|[\beta', h]\|^2_{L^2} } \nonumber \\
&= 2 (h, \Delta_0' h)_{L^2} \ . \label{eq:twoUDeriv}
\end{align}
We now differentiate \eqref{eq:specialPath} with respect to $\tau$ and compare with \eqref{eq:anyPath}. We introduce the notation $\dot h := \partial_\tau h$. \eqref{eq:anyPath} clearly holds at $u=0$  -- i.e., $\partial_\tau M(0;\alpha,\beta) = 0$ -- and so it remains only to show that
\be \partial_u (e^{uh}(\partial_\tau e^{-2uh}) e^{uh}, \mu_\RR(B'))_{L^2} = -2 \partial_\tau (h, \mu_\RR(B'))_{L^2}
= -2(\dot h, \mu_\RR(B'))_{L^2} - 2(h, \partial_\tau \mu_\RR(B'))_{L^2} 
\ . \ee
We manipulate the left side by noting that
\begin{align}
\partial_u(e^{uh} (\partial_\tau e^{-2uh}) e^{uh}) &= h e^{uh} (\partial_\tau e^{-2uh}) e^{uh} - e^{uh}(\partial_\tau(h e^{-2uh})) e^{uh} - e^{uh}(\partial_\tau(e^{-2uh} h)) e^{uh} + e^{uh} (\partial_\tau e^{-2uh}) e^{uh} h \nonumber \\
&= - e^{uh} \dot h e^{-uh} - e^{-uh} \dot h e^{uh}
\end{align}
and, as in \eqref{eq:muDeriv}, $\partial_u \mu_\RR(B') = -2 \Delta_0' h$. So, the left side equals
\be - (e^{uh} \dot h e^{-uh} + e^{-uh} \dot h e^{uh}, \mu_\RR(B'))_{L^2} - 2 (e^{uh} (\partial_\tau e^{-2uh}) e^{uh}, \Delta_0' h)_{L^2} \ . \label{eq:niceLeft} \ee
Turning to the right side, we first use $\partial_{\alpha'} = e^{-uh} \circ \partial_\alpha \circ e^{uh}$ and $\partial_\tau(e^{-uh} e^{uh})=0$ to find that
\be
\partial_\tau \partial_{\alpha'} s = (\partial_\tau e^{-uh}) e^{uh} \partial_{\alpha'} s + \partial_{\alpha'}(e^{-uh} (\partial_\tau e^{uh}) s) = \partial_{\alpha'}(e^{-uh}(\partial_\tau e^{uh})) s \ ,
\ee
where $s$ is a section of $E$ (which does not depend on $\tau$). We similarly have
\be \partial_\tau \partial_{-(\alpha')^\dagger} s = \partial_{-\alpha^\dagger}(e^{uh}(\partial_\tau e^{-uh})) s \ . \ee
Therefore, we have
\be \partial_\tau [\partial_{-(\alpha')^\dagger}, \partial_{\alpha'}] s = \brackets{ \partial_{-(\alpha')^\dagger} \partial_{\alpha'}(e^{-uh}(\partial_\tau e^{uh})) - \partial_{\alpha'} \partial_{-(\alpha')^\dagger} (e^{uh}(\partial_\tau e^{-uh})) } s \ . \ee
Combined with an analogous computation involving $\beta'$, this yields
\begin{align}
\partial_\tau \mu_\RR(B') &= - \bar\Delta_0' (e^{-uh} \partial_\tau e^{uh}) + \tilde\Delta_0' (e^{uh} \partial_\tau e^{-uh}) \nonumber \\
&= \bar\Delta_0' ((\partial_\tau e^{-uh}) e^{uh}) + \tilde\Delta_0'(e^{uh} \partial_\tau e^{-uh}) \nonumber \\
&= \Delta_0'((\partial_\tau e^{-uh}) e^{uh} + e^{uh} \partial_\tau e^{-uh}) - \half [\mu_\RR(B'), [\partial_\tau e^{-uh}, e^{uh}]] \nonumber \\
&= \Delta_0'(e^{uh} (\partial_\tau e^{-2uh}) e^{uh}) - \half [\mu_\RR(B'), [\partial_\tau e^{-uh}, e^{uh}] ] \ .
\end{align}
The right hand side then becomes
\begin{align}
&-2(\dot h, \mu_\RR(B'))_{L^2} - 2(\Delta_0' h, e^{uh}(\partial_\tau e^{-2uh}) e^{uh})_{L^2} + (h, [\mu_\RR(B'), [\partial_\tau e^{-uh}, e^{uh}]])_{L^2} \nonumber \\
&= -2(e^{uh}(\partial_\tau e^{-2uh}) e^{uh}, \Delta_0' h)_{L^2} + (-\partial_\tau(e^{uh} h e^{-uh} + e^{-uh} h e^{uh}) + [h, [e^{uh}, \partial_\tau e^{-uh}]], \mu_\RR(B'))_{L^2} \nonumber \\
&= -2(e^{uh}(\partial_\tau e^{-2uh})e^{uh}, \Delta_0' h)_{L^2} - (e^{uh} \dot h e^{-uh} + e^{-uh} \dot h e^{uh}, \mu_\RR(B'))_{L^2} \ ,
\end{align}
which agrees with \eqref{eq:niceLeft}.
\end{proof}

\begin{corollary} \label{cor:EL}
The Euler-Lagrange equation for $M$ on $i\mf{g}^{2}$ is $\mu_\RR(e^{h}(\alpha,\beta))=0$.
\end{corollary}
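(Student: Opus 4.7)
The plan is to read off both directions of the equivalence from the two characterizing properties of $M$ recorded in Proposition~\ref{prop:fakeHomo}, interpreting $h\in i\mf{g}^2$ as a critical point of $M$ in the natural sense that $\partial_\tau M(h(\tau);\alpha,\beta)|_{\tau=0} = 0$ for every smooth path $h(\tau)$ through $h$ in $i\mf{g}^2$. The easy direction is immediate from identity~\eqref{eq:anyPath}: if $\mu_\RR(e^h(\alpha,\beta)) = 0$, the right-hand side of~\eqref{eq:anyPath} vanishes along every such path, so $h$ is critical.

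For the converse, I would assume $h$ is critical and pick any test direction $\delta h \in i\mf{g}^2$. The key step is to consider the one-parameter family $h(\tau) := \log(e^h \times e^{\tau\delta h})$. One first checks that this path genuinely lies in $i\mf{g}^2$: at $\partial I$, both $e^h$ and $e^{\tau\delta h}$ take values in the complex diagonal torus $T_\CC$, and $\times$ preserves $T_\CC$-valued positive boundary data, so $h(\tau)|_{\partial I} \in i\mf{t}$ after taking $\log$; the first-order condition $\partial h(\tau)|_{\partial I}\in i\mf{t}^\perp$ then follows from the defining condition on $\tilde\G^2_\CC$ applied to $e^h\times e^{\tau\delta h}$. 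Next, applying the pseudo-homomorphism identity~\eqref{eq:fakeHomo} with $g_1 = e^h$ and $g_2 = e^{\tau\delta h}$ splits $M(h(\tau);\alpha,\beta)$ as $M(\tau\delta h;\,e^h(\alpha,\beta)) + M(h;\alpha,\beta)$, and then the special-path computation~\eqref{eq:specialPath} (applied with $h$ there replaced by $\delta h$ and $B$ replaced by $e^h(\alpha,\beta)$) yields
\be \partial_\tau M(h(\tau);\alpha,\beta)|_{\tau=0} \;=\; -(\delta h,\, \mu_\RR(e^h(\alpha,\beta)))_{L^2}. \ee
Criticality forces the left-hand side to vanish for every $\delta h \in i\mf{g}^2$, and the density of $C^\infty_c((0,L),i\mf{su}(2))\subset i\mf{g}^2$ in $L^2(I,i\mf{su}(2))$ then forces $\mu_\RR(e^h(\alpha,\beta))=0$.

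The heavy lifting is already performed by Proposition~\ref{prop:fakeHomo}; the only item requiring genuine care is the verification that $\log(e^h\times e^{\tau\delta h})$ is an admissible path in $i\mf{g}^2$, which reduces to the boundary-condition bookkeeping indicated above.
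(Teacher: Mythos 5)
Your proof is correct and follows essentially the same route as the paper: the paper likewise applies the pseudo-homomorphism identity~\eqref{eq:fakeHomo} with $g_1 = e^h$, $g_2 = e^{\delta h}$ and reads off the first variation $\delta M = -(\delta h,\mu_\RR(e^h(\alpha,\beta)))_{L^2}$, noting that every $g\in\G_+^2$ factors as $e^h\times e^{\delta h}$. Your extra care about the admissibility of the path $\log(e^h\times e^{\tau\delta h})$ and the density argument at the end are fine refinements of the same argument.
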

\begin{proof}
$M(\log(e^h\times e^{\delta h});\alpha,\beta) = M(h;\alpha,\beta) + M(\delta h; e^{h}(\alpha,\beta))$ implies that the variation is
\be \delta M = - \int \Tr( \delta h\, \mu_\RR(e^{h}(\alpha,\beta)) ) \, dt \ . \ee
(Note that any $g\in \G_+^2$ may be written as $e^h\times e^{\delta h}$ for some unique $\delta h$, namely $\delta h = \log(e^{-h}\times g)$.)
\end{proof}

We thus find that minimizing this functional will solve our problem. However, it is not yet clear that $M$ is bounded below. We will shortly prove an estimate that, among other things, implies this under an appropriate hypothesis. We begin with an analog of Simpson's Proposition 2.1 \cite{simpson:higgs}:
\begin{lemma} \label{lem:subHarm}
There are constants $C_1,C_2>0$ such that if $f\in H^2(I,\RR)$ is a non-negative function whose derivative vanishes at the boundaries, $g\in L^2(I,\RR)$, and $-\partial^2 f \le g$ almost everywhere, then $\|f\|_{C^0} \le C_1 \|g\|_{L^2} + C_2 \|f\|_{L^2}$.
\end{lemma}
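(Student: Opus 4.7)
The plan is to bypass the sign condition on $f$ entirely and prove the estimate by decomposing $f$ into its mean and its mean-zero part, then representing the latter via the Neumann Green's function on $I$. First I would set $\bar f := L^{-1}\int_I f\, dt$ and $u := f - \bar f$, so that $u$ has mean zero, still satisfies the Neumann condition $\partial u|_{\partial I}=0$, and still obeys the distributional inequality $-\partial^2 u \le g$. By Cauchy--Schwarz, $|\bar f| \le L^{-1/2}\|f\|_{L^2}$, which will supply the $C_2\|f\|_{L^2}$ contribution; the real task is then to bound $\|u\|_{C^0}$ in terms of $\|g\|_{L^2}$ alone.

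The key observation comes next. Setting $h := -\partial^2 u \in L^2$, the Neumann boundary conditions on $u$ give $\int_I h\, dt = -\partial u\big|_0^L = 0$, so $h$ has mean zero. Combined with the one-sided bound $h \le g$ almost everywhere, this upgrades to a two-sided $L^1$ bound: on $\{h>0\}$ one has $h_+ = h \le g \le |g|$, so $\|h_+\|_{L^1}\le \|g\|_{L^1}$, and the mean-zero condition then forces $\|h_-\|_{L^1}=\|h_+\|_{L^1}$, giving $\|h\|_{L^1}\le 2\|g\|_{L^1} \le 2\sqrt{L}\,\|g\|_{L^2}$.

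I would close by invoking the Neumann Green's function $G(t,s)$, i.e.\ the solution of $-\partial_s^2 G(t,s) = \delta(t-s) - 1/L$ with $\partial_s G(t,\cdot)|_{\partial I}=0$ and $\int_I G(t,s)\,ds=0$. This $G$ is bounded on $I\times I$; one explicit verification, if desired, is the Fourier expansion $G(t,s) = (2L/\pi^2)\sum_{n\ge 1} n^{-2}\cos(n\pi t/L)\cos(n\pi s/L)$. A standard double integration by parts, using the Neumann boundary conditions on both $u$ and $G$ together with $\int_I u = 0$, then yields $u(t) = \int_I G(t,s)h(s)\,ds$, and hence $\|u\|_{C^0}\le \|G\|_{L^\infty(I^2)}\|h\|_{L^1}\le 2\sqrt{L}\,\|G\|_{L^\infty(I^2)}\|g\|_{L^2}$. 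Adding this to the Cauchy--Schwarz bound on $|\bar f|$ finishes the proof with $C_1 = 2\sqrt{L}\,\|G\|_{L^\infty(I^2)}$ and $C_2 = L^{-1/2}$.

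I do not anticipate a real obstacle here; the estimate is soft, and the construction of $G$ together with its $L^\infty$ bound is completely standard. The one step where a priori it is not obvious how to proceed is the second one -- converting a one-sided $L^2$ inequality on $-\partial^2 u$ into a two-sided $L^1$ bound -- but the mean-zero trick furnished by the Neumann boundary conditions makes this essentially free. It is worth noting that this strategy never uses the hypothesis $f\ge 0$, so the latter appears inessential for the stated conclusion (though it is of course satisfied in the intended application to $\Tr H$ via Lemma~\ref{lem:muR}).
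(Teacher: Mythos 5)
Your proof is correct, and it takes a genuinely different route from the paper's. The paper follows Simpson's Proposition 2.1: it reflects $f$ across the endpoints (using the Neumann condition to keep $H^2$ regularity) to get a periodic $\tilde f$, solves $-\partial^2 w = \tilde g$ with an elliptic estimate, observes that $\tilde f - w$ is convex and hence bounded above by secant lines, and chooses the secant endpoints at places where a Chebyshev-type argument controls $\tilde f$ by $\|f\|_{L^2}$; the hypothesis $f\ge 0$ is what converts this one-sided (upper) bound into a bound on $\|f\|_{C^0}$. Your argument instead exploits the identity $\int_I(-\partial^2 f)\,dt = 0$ forced by the Neumann conditions to upgrade the one-sided inequality $-\partial^2 f \le g$ to the two-sided bound $\|\partial^2 f\|_{L^1}\le 2\|g\|_{L^1}$, and then concludes via the boundedness of the one-dimensional Neumann Green's function. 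Each step checks out (including the double integration by parts across the kink of $G$ at $s=t$, and the representation $u(t)=\int_I G(t,s)h(s)\,ds$ using $\int_I u = 0$). What your version buys is an explicit constant, a two-sided estimate, and the elimination of the hypothesis $f\ge 0$ — which, as you note, is therefore inessential to the statement, though harmless in the application to $\Tr H$. What the paper's version buys is robustness: the convexity/comparison argument is the one that survives in higher dimensions (where the Green's function of the Laplacian is unbounded and the $L^1$-of-the-Laplacian route fails), which is presumably why the authors imported it verbatim from the Higgs-bundle literature even though the problem here is one-dimensional.
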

\begin{proof}
The boundary condition on the derivative of $f$ implies that $\tilde f: [-L,L]\to \RR$, $\tilde f(t) = \piecewise{f(t)}{t\ge 0}{f(-t)}{t\le 0}$ is in $H^2([-L,L],\RR)$. We then extend this by periodicity to a non-negative function $\tilde f \in H^2(\RR/2L\ZZ, \RR)$. We similarly define $\tilde g\in L^2(\RR/2L\ZZ,\RR)$ and have $-\partial^2 \tilde f \le \tilde g$.

Let $w\in H^2([-L/2,3L/2],\RR)$ be the solution of $-\partial^2 w = \tilde g$ with $\int_{-L/2}^{3L/2} w\,dt = \int_{-L/2}^{3L/2} t w\,dt = 0$. Then, for every $t\in [a,b]\subset [-L/2,3L/2]$ we have $\tilde f(t) \le w(t) + (\tilde f(a)-w(a)) + \frac{\tilde f(b)-w(b) - \tilde f(a) + w(a)}{b-a} (t-a)$, since a convex function such as $\tilde f - w$ is bounded above by the line between its endpoints. But, the terms involving $w$ are in turn bounded above by $2\|w\|_{C^0} + 2L \|\partial w\|_{C^0} \le C \|w\|_{H^2} \le C' \|\tilde g\|_{L^2} \le C'' \|g\|_{L^2}$, by the mean value theorem, Sobolev embedding, and elliptic regularity. Note that the elliptic estimate does not need a term involving $\|w\|_{L^2}$, since $w$ is orthogonal to $\ker \partial^2$. So, we have $\tilde f(t) \le C'' \|g\|_{L^2} + \frac{1}{b-a}((t-a) \tilde f(b) + (b-t) \tilde f(a))$ for all $a \le b$ such that $t\in [a,b]\subset [-L/2,3L/2]$. Furthermore, for some $a\in [-L/2,0]$ we must have $\tilde f(a) \le \sqrt{\frac{2}{L}} \|f\|_{L^2}$, and for some $b\in [L,3L/2]$ we must have $\tilde f(b) \le \sqrt{\frac{2}{L}} \|f\|_{L^2}$. So, for any $t\in [0,L]$ we have $f(t) \le C'' \|g\|_{L^2} + \sqrt{\frac{2}{L}} \|f\|_{L^2}\cdot \frac{1}{L}(3L/2+3L/2) = C'' \|g\|_{L^2} + 3 \sqrt{\frac{2}{L}} \|f\|_{L^2}$.
\end{proof}

Now, closely following Simpson (Proposition 5.3) \cite{simpson:higgs}, who was himself following Uhlenbeck-Yau \cite{yau:duy}, we will prove the following proposition which provides the link between stability and being able to solve the real moment map equation.
\begin{proposition} \label{prop:stabBound}
Suppose that $\| \mu_\RR(\alpha,\beta) \|_{L^2} \le C$ and $\alpha,\beta$ is $\xi^\RR$-stable. Then there exist $C_1,C_2>0$ such that
\be \|h\|_{C^0} \le C_1 + C_2 M(h; \alpha,\beta) \label{eq:stabBound} \ee
for any $h\in i\mf{g}^{2}$ such that $\|\mu_\RR(e^{h}(\alpha,\beta))\|_{L^2} \le C$.
\end{proposition}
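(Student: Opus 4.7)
I would prove this proposition by an Uhlenbeck--Yau style contradiction argument, following Simpson's Proposition 5.3 in~\cite{simpson:higgs}, suitably adapted to the interval with boundary conditions given by $h \in i\mf{g}^2$.

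The plan begins with an auxiliary a priori estimate. Applying Lemma~\ref{lem:muR} to $H=e^{2h}$ gives $-\partial^2 \log \Tr H \le \sqrt{2}(|\mu_\RR(\alpha,\beta)| + |\mu_\RR(e^h(\alpha,\beta))|)$, whose right-hand side has $L^2$ norm at most $2\sqrt{2}\,C$ by hypothesis. The boundary conditions $h|_{\partial I}\in \mf{t}$ and $\partial h|_{\partial I}\in \mf{t}^\perp$ imply that $\partial h|_{\partial I}$ is off-diagonal while $e^{2h}|_{\partial I}$ is diagonal, so $\partial(\log \Tr H)|_{\partial I} = 2\Tr((\partial h) e^{2h})|_{\partial I} = 0$. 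Lemma~\ref{lem:subHarm} then yields $\|\log \Tr H\|_{C^0} \le C_1' + C_2' \|\log \Tr H\|_{L^2}$, and since $\log \Tr e^{2h} = 2\|h\|_{\mathrm{op}} + O(1)$ for $h \in i\mf{su}(2)$ with bounded operator norm, this rearranges to an estimate of the form $\|h\|_{C^0} \le C_1'' + C_2'' \|h\|_{L^1}$.

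Next I would set up the contradiction. Suppose the conclusion fails, so there is a sequence $h_n \in i\mf{g}^2$ with $\|\mu_\RR(e^{h_n}(\alpha,\beta))\|_{L^2}\le C$ but $\|h_n\|_{C^0} - n M(h_n;\alpha,\beta) \to \infty$. The auxiliary estimate forces $\ell_n := \|h_n\|_{L^1} \to \infty$ while $M(h_n)/\ell_n \to 0$. Setting $u_n := h_n/\ell_n$, the sequence has $\|u_n\|_{L^1}=1$ and, by the auxiliary estimate applied to $h_n$, is uniformly bounded in $C^0$. After extracting a subsequence, $u_n$ converges weakly in $H^1$ and strongly in every $L^p$, $p<\infty$, to a traceless Hermitian-valued limit $u_\infty \in H^1(I, i\,\mf{su}(2))$ with $\|u_\infty\|_{L^1}=1$, hence nonzero, and with diagonal boundary values.

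The core spectral step is to choose a smooth $\psi\colon \RR\times\RR\to \RR_{\ge 0}$ with $\psi(x,y) < 1/(y-x)$ when $y>x$ and $\psi \equiv 0$ on $\{y\le x\}$, and to rescale the definition~\eqref{eq:psiDef} of $\Psi$ to observe that $\ell_n \psi(\ell_n x, \ell_n y) \le \Psi(-\ell_n x)(\ell_n,\ell_n y) = \ell_n \Psi(-h_n)_{x,y}/ \dots$. More directly, the terms in $M(h_n;\alpha,\beta)$ involving $\Psi(-h_n)$ are, after dividing by $\ell_n$, bounded below by the corresponding integrals with $\psi(u_n)$ in place of $\ell_n^{-1}\Psi(-h_n)$ acting on $\partial_\alpha u_n$ and $[\beta, u_n]$; the positivity and continuity properties in Lemma~\ref{lem:psiCont} together with weak lower semicontinuity then force $\psi(u_\infty)(\partial_\alpha u_\infty) = 0$ and $\psi(u_\infty)([\beta, u_\infty]) = 0$ in $L^2$. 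Ranging $\psi$ over an approximating family for the indicator of the strict upper triangle, measurable functional calculus yields that the spectral projection $\pi$ of $u_\infty$ onto any cut $\{\lambda \ge c\}$ satisfies $\partial_\alpha \pi = 0$ and $[\beta, \pi] = 0$. Since $u_\infty$ is nonzero and traceless, a nontrivial such $\pi$ exists, and its boundary values lie in $\mf{t}_\CC$ by the boundary behavior of $u_\infty$; by Proposition~\ref{prop:manySub} and Remark~\ref{rmk:redundant}, $\pi$ defines a good sub-bundle. Finally, using Proposition~\ref{prop:hermDeg} together with the fact that $\int \Tr(u_\infty \mu_\RR(e^{h_n}(\alpha,\beta)))\,dt$ is bounded (being the limiting form of $M(h_n)/\ell_n$ up to the $\Psi$-terms we have already controlled), one computes $\deg \pi \ge 0$, contradicting $\xi^\RR$-stability.

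The main obstacle I anticipate is the spectral step producing the good projection $\pi$ from $u_\infty$: one must (i) pass from the $L^2$ identities $\psi(u_\infty)(\partial_\alpha u_\infty) = 0 = \psi(u_\infty)([\beta, u_\infty])$ for all admissible $\psi$ to the statement that the spectral projections of $u_\infty$ are covariantly constant and commute with $\beta$ (requiring an approximation argument at the jumps of the eigenvalues of $u_\infty$, which a priori need not be smooth functions of $t$), and (ii) verify enough regularity on these projections to invoke Proposition~\ref{prop:manySub}. The remainder of the argument -- including the final degree computation, which essentially extracts the sign of the boundary value of $u_\infty$ against $\xi^\RR$ -- is routine once the good projection is in hand.
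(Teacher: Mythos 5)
Your overall strategy is the right one and matches the paper's: the $C^0$ a priori estimate via Lemma~\ref{lem:muR} and Lemma~\ref{lem:subHarm}, the blow-up sequence $u_n = h_n/\ell_n$, the weak limit $u_\infty$, the limiting inequality for test functions dominated by $(x-y)^{-1}$ on the appropriate half-plane, and a final degree computation contradicting stability. But the step you yourself flag as the main obstacle is a genuine gap, and the idea that resolves it is absent from your proposal. You cannot take spectral projections of $u_\infty$ at an arbitrary cut $\{\lambda \ge c\}$: since the eigenvalues of $u_\infty$ are a priori just continuous functions of $t$, such a projection is generically discontinuous wherever an eigenvalue crosses $c$, so it is not in $H^1$ and Proposition~\ref{prop:manySub} does not apply. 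The missing idea is to first prove that the eigenvalues of $u_\infty$ are \emph{constant}, by showing $\partial \Tr f(u_\infty) = 0$ for every smooth $f$: one writes $\partial\Tr f(u_\infty) = \Tr\, df(u_\infty)(\partial_\alpha u_\infty)$, replaces $df$ by any $F$ agreeing with it on the diagonal but with $F^2 \le \frac{1}{N}(x-y)^{-1}$ off it, and sends $N\to\infty$ using the limiting inequality. Only after constancy is established (giving two constant eigenvalues $\lambda_1 < 0 < \lambda_2$) can one define $\pi = p(u_\infty)$ for a fixed smooth interpolating $p$, which lands in $H^1$ and lets the functional calculus of Lemma~\ref{lem:psiCont} apply.

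Two further points would fail as written. First, your test function $\psi$ vanishes identically on a half-plane, so it cannot deliver the uniform $H^1$ bound on $u_n$ needed before you may extract a weakly convergent subsequence; that bound requires testing against a \emph{positive} $\Phi$ bounded below on the relevant compact set (using that $\ell\Psi(\ell x,\ell y)$ increases to $(x-y)^{-1}$ or $+\infty$), and only afterwards may one pass to nonnegative test functions. Second, the conclusion $\partial_\alpha \pi = [\beta,\pi] = 0$ is stronger than the limiting inequalities can give: the constraint $\psi(x,y) \le (y-x)^{-1}$ on one half-plane prevents rescaling there, so you only kill one off-diagonal block, i.e.\ you obtain $(1-\pi)\partial_\alpha\pi = 0$ and $(1-\pi)\beta\pi = 0$ (a good sub-bundle), not a splitting. (If your stronger claim held, $(\alpha,\beta)$ would be strictly polystable and you would contradict stability with no degree computation at all.) The correct endgame therefore must run through Proposition~\ref{prop:degIn}, whose formula for $\deg\pi$ carries the correction terms $\|\partial_\alpha\pi\|_{L^2}^2$ and $\|[\beta,\pi]\|_{L^2}^2$; these are controlled by one more application of the limiting inequality with $\Phi$ matching $(\lambda_2-\lambda_1)(dp)^2$ at the eigenvalue pairs, yielding $\deg\pi \ge 0$ and the contradiction. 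Your appeal to Proposition~\ref{prop:hermDeg} and to boundedness of $\int\Tr(u_\infty\,\mu_\RR(e^{h_n}(\alpha,\beta)))\,dt$ (note: the term in $M$ involves $\mu_\RR(\alpha,\beta)$, not $\mu_\RR(e^{h_n}(\alpha,\beta))$) does not substitute for this.
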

\begin{proof}
First, we observe that $h$ is traceless and Hermitian, so its largest eigenvalue is always non-negative, and it is positive except where $h=0$. So, $\log \Tr e^{2h} \ge 0$, and again this is positive where $h\not=0$. Furthermore, at the boundaries of $I$ we have $\partial \log \Tr e^{2h} = \frac{2}{\Tr e^{2h}} \Tr \partial h \, e^{2h} = 0$. Next, we recall from Lemma \ref{lem:muR} that
\be -\partial^2 \log \Tr e^{2h} \le \sqrt{2} (|\mu_\RR(\alpha,\beta)| + |\mu_\RR(e^{h}(\alpha,\beta))|) \ . \ee
Lemma \ref{lem:subHarm} now implies that $\|\log \Tr e^{2h}\|_{C^0} \le C_1 + C_2 \|\log \Tr e^{2h}\|_{L^2}$. We then have inequalities of the form
\be \|h\|_{C^0} \le C_3 \| \log \Tr e^{2h} \|_{C^0} \le C'_1 + C'_2 \|\log \Tr e^{2h}\|_{L^2} \le C''_1 + C''_2 \|h\|_{L^2} \ . \label{eq:supBound} \ee
So, if the desired estimate \eqref{eq:stabBound} does not hold then we must be able to violate it with $\|h\|_{L^2}$ arbitrarily large, since otherwise the estimate would hold for sufficiently large $C_1$. Take a sequence $C_n$ tending to infinity and a sequence $h_n \in i \mf{g}^{2}$ with $l_n := \|h_n\|_{L^2}$ also tending to infinity and $\|h_n\|_{L^2} \ge C_n M(h_n)$. Define $u_n = - h_n / l_n$, which are uniformly bounded by \eqref{eq:supBound}.

We now show that after passing to a subsequence we have a bound on $\|u_n\|_{H^1}$. We first observe that the last condition on $h_n$, when written in terms of the $u_n$, takes the form
\be (u_n,\mu_\RR(\alpha,\beta))_{L^2} + l_n (\Psi(l_n u_n)(\partial_\alpha u_n), \partial_\alpha u_n)_{L^2} + l_n (\Psi(l_n u_n)([\beta,u_n]), [\beta, u_n])_{L^2} \le \frac{1}{C_n} \ , \label{eq:uIneq} \ee
at least for sufficiently large $n$ so that $l_n\not=0$. Now, let $\Phi\colon \RR\times\RR\to \RR^{+}$ be a positive smooth function such that $\Phi(x,y) \le (x-y)^{-1}$ whenever $x>y$. We observe that as $l\to\infty$, $l\Psi(l x, ly)$ increases monotonically to $(x-y)^{-1}$ if $x>y$ and $\infty$ if $x\le y$. Furthermore, since $\|u_n\|_{C^0}$ is bounded above, the eigenvalues $\{\lambda_i\}$ of $\{u_n\}$ are contained in a compact subset of $\RR$. It follows that for any $\epsilon>0$ we have $\Phi(\lambda_i,\lambda_j) < l \Psi(l \lambda_i, l \lambda_j)+\epsilon$ for sufficiently large $l$. For sufficiently large $n$, we then have
\begin{align}
C (\|\partial_\alpha u_n\|^2_{L^2} + \|[\beta,u_n]\|_{L^2}^2) &\le (\Phi(u_n)(\partial_\alpha u_n),\partial_\alpha u_n)_{L^2} + (\Phi(u_n)([\beta,u_n]),[\beta,u_n])_{L^2} \nonumber \\
&< l_n (\Psi(l_n u_n)(\partial_\alpha u_n), \partial_\alpha u_n)_{L^2} + l_n (\Psi(l_n u_n)([\beta,u_n]), [\beta, u_n])_{L^2} \nonumber \\
&\quad + \epsilon (\|\partial_\alpha u_n\|^2_{L^2} + \|[\beta,u_n]\|_{L^2}^2) \nonumber \\
&\le C_n^{-1} - (u_n, \mu_\RR(\alpha,\beta))_{L^2} + \epsilon (\|\partial_\alpha u_n\|^2_{L^2} + \|[\beta,u_n]\|_{L^2}^2) \ , \label{eq:ineqs}
\end{align}
where $C>0$. After choosing $\epsilon < C$, the sup bound on $u_n$ and the fact that $C_n\to \infty$ give an upper bound on $\|\partial_\alpha u_n\|^2_{L^2}$, which together with the sup bound gives a bound on $\|u_n\|_{H^1}$. For the purposes of the next paragraph, we now allow $\Phi$ to be non-negative, as opposed to positive, and observe that the second two inequalities in \eqref{eq:ineqs} still hold. Indeed, because of the bound on $\|u_n\|_{H^1}$, for any $\epsilon>0$ we have
\be (u_n,\mu_\RR(\alpha,\beta))_{L^2} + (\Phi(u_n)(\partial_\alpha u_n), \partial_\alpha u_n)_{L^2} + (\Phi(u_n)([\beta,u_n]),[\beta,u_n])_{L^2} < \epsilon \ee
for sufficiently large $n$.

Weak compactness of a ball in $H^1$ centered at 0 now lets us pass to a subsequence such that $u_n\rightharpoonup u_\infty$ in $\{h\in H^1(I,i \mf{g}) \bigm| h|_{\partial I} \in i\mf{t} \}$. The compact embedding $H^1\hookrightarrow L^2$ implies that $u_n\to u_\infty$ in $L^2$, and so $\|u_\infty\|_{L^2} = 1$ and $u_\infty$ is not the zero function. This also implies that $(u_n,\mu_\RR(\alpha,\beta))_{L^2} \to (u_\infty, \mu_\RR(\alpha,\beta))_{L^2}$. Similarly, the compact embedding $H^1\hookrightarrow C^0$ implies that $u_n\to u_\infty$ in $C^0$, and so Lemma \ref{lem:psiCont} implies that $\Phi(u_n) \to \Phi(u_\infty)$ in $\End(L^2(\End(E)))$. Together with the upper bounds on $\|\partial_\alpha u_n\|_{L^2}$ and $\|[\beta,u_n]\|_{L^2}$, this implies that $(\Phi(u_n)(\partial_\alpha u_n) - \Phi(u_\infty)(\partial_\alpha u_n),\partial_\alpha u_n)_{L^2} \to 0$ and $(\Phi(u_n)([\beta,u_n]) - \Phi(u_\infty)([\beta,u_n]),[\beta,u_n])_{L^2} \to 0$. Therefore, for any $\epsilon>0$ we have
\be (u_\infty,\mu_\RR(\alpha,\beta))_{L^2} + (\Phi(u_\infty)(\partial_\alpha u_n), \partial_\alpha u_n)_{L^2} + (\Phi(u_\infty)([\beta,u_n]),[\beta,u_n])_{L^2} < \epsilon \ee
for all sufficiently large $n$. Of course, $[\beta,u_n]\to [\beta,u_\infty]$ in $L^2$, so we may replace $[\beta,u_n]$ by $[\beta,u_\infty]$ in this inequality. Finally, the weak convergence $\Phi^{1/2}(u_\infty)(\partial_\alpha u_n) \rightharpoonup \Phi^{1/2}(u_\infty)(\partial_\alpha u_\infty)$ in $L^2$ implies that $(\Phi(u_\infty)(\partial_\alpha u_\infty),\partial_\alpha u_\infty)_{L^2} = \|\Phi^{1/2}(u_\infty)(\partial_\alpha u_\infty)\|_{L^2}^2 \le \liminf_i \|\Phi^{1/2}(u_\infty)(\partial_\alpha u_i)\|_{L^2}^2$, and so we finally find that
\be (u_\infty,\mu_\RR(\alpha,\beta))_{L^2} + (\Phi(u_\infty)(\partial_\alpha u_\infty), \partial_\alpha u_\infty)_{L^2} + (\Phi(u_\infty)([\beta,u_\infty]),[\beta,u_\infty])_{L^2} \le 0 \ , \label{eq:phiSmall} \ee
since the left hand side is less than $\epsilon$ for all $\epsilon>0$.

Because we will repeatedly use this conclusion, we state it as a lemma:
\begin{lemma} \label{lem:phiSmall}
For any smooth $\Phi\colon \RR\times \RR\to \RR^{\ge 0}$ such that $\Phi(x,y)\le (x-y)^{-1}$ whenever $x>y$, \eqref{eq:phiSmall} holds.
\end{lemma}

We now use this inequality to show that the eigenvalues of $u_\infty$ are constant. We do so by showing that $\partial \Tr f(u_\infty)=0$ for all smooth $f\colon\RR\to \RR$. We begin by observing that $\partial \Tr f(u_\infty) = \Tr \partial_\alpha f(u_\infty) = \Tr df(u_\infty)(\partial_\alpha u_\infty)$. But, $\Tr df(u_\infty)(\partial_\alpha u_\infty) = \Tr \Phi(u_\infty)(\partial_\alpha u_\infty)$ for any smooth $F\colon \RR\times\RR\to \RR$ such that $F(x,x) = df(x,x)$ for all $x\in \RR$. For any $N>1$, we can find such a $F$ which, in addition, satisfies $F^2(x,y) < \frac{1}{N(x-y)}$ when $x>y$. Lemma \ref{lem:phiSmall} (with $\Phi = N F^2$) now gives an estimate $\|\partial \Tr f(u_\infty)\|_{L^2}^2 \le C (\|F(u_\infty)(\partial_\alpha u_\infty)\|_{L^2}^2 + \|F(u_\infty)([\beta,u_\infty])\|_{L^2}^2) \le C'/N$, where $C$ and $C'$ are independent of the choices of $N$ and $F$. Since $N>1$ was arbitrary, $\partial \Tr f(u_\infty)=0$. So, the eigenvalues of $u_\infty$ are constant. Note that they are distinct -- in fact, negatives of each other -- since $\Tr(u_\infty)=0$ but $u_\infty\not=0$.

We will now construct a good projector using the eigenspaces of $u_\infty$. Let the eigenvalues be $\lambda_1,\lambda_2$, with $\lambda_1<0$ and $\lambda_2=-\lambda_1 > 0$. We let $\pi$ be the orthogonal projection onto the $\lambda_1$ eigenspace. We can characterize $\pi$ as $p(u_\infty)$, where $p(\lambda)$ is a smooth function with $p(\lambda) = 1$ if $\lambda \le \lambda_1$ and $p(\lambda)=0$ if $\lambda \ge \lambda_2$, and this makes it clear that $\pi \in \{h\in H^1(I,i \mf{u}(2)) \bigm| h|_{\partial I} \in i\mf{t}\oplus i\mf{u}(1) \}$. Furthermore, $\pi$ satisfies $(1-\pi)\partial_\alpha \pi = 0$ and $\beta\pi = \pi\beta\pi$. To see this, we first define $F_0: \RR\times\RR\to \RR$ by $F_0(x,y) = (1-p)(y) \, dp(x,y)$. Since $\partial_\alpha \pi = \partial_\alpha p(u_\infty) = dp(u_\infty)(\partial_\alpha u_\infty)$, we have $F_0(u_\infty)(\partial_\alpha u_\infty) = (1-\pi)\partial_\alpha \pi$. Similarly, we have $F_0(u_\infty)([\beta,u_\infty]) = (1-\pi) \beta \pi$. We observe that $F_0(\lambda_2, \lambda_1) = 0$; we will now show that this implies that $F_0(u_\infty)(\partial_\alpha u_\infty) = F_0(u_\infty)([\beta,u_\infty]) = 0$. Since $F_0(u_\infty)$ only depends on $F_0(\lambda_i,\lambda_j)$, for any $N>1$ we can replace $F_0$ by a function $F$ satisfying $F(\lambda_i,\lambda_i) = F_0(\lambda_i,\lambda_j)$ and $F^2(x,y) < \frac{1}{N(x-y)}$ when $x>y$. As above, we then have $\|F_0(u_\infty)(\partial_\alpha u_\infty)\|^2_{L^2} + \|F_0(u_\infty)([\beta, u_\infty])\|^2_{L^2} = \|F(u_\infty)(\partial_\alpha u_\infty)\|^2_{L^2} + \|F(u_\infty)([\beta, u_\infty])\|^2_{L^2} \le C/N$ for all $N$, and so $F_0(u_\infty)(\partial_\alpha u_\infty) = F_0(u_\infty)([\beta,u_\infty]) = 0$. Proposition \ref{prop:manySub} now shows that $\pi$ is a good projection.

Finally, we show that $\deg \pi = \slope \pi \ge \slope E = 0$, so that $(\alpha,\beta)$ is not stable. We define
\be W = -4\lambda_2 \deg \pi = 2\lambda_2 \deg(E) - 2 (\lambda_2-\lambda_1) \deg \pi \ . \ee
Using Proposition \ref{prop:degIn}, we have
\begin{align}
W &= (u_\infty, \mu_\RR(\alpha,\beta))_{L^2} + (\lambda_2-\lambda_1) (\|\partial_\alpha \pi\|_{L^2}^2 + \|[\beta,\pi]\|^2_{L^2}) \nonumber \\
&= (u_\infty, \mu_\RR(\alpha,\beta))_{L^2} + (\lambda_2-\lambda_1) (\|dp(u_\infty)(\partial_\alpha u_\infty)\|_{L^2}^2 + \|dp(u_\infty)([\beta,u_\infty])\|^2_{L^2}) \nonumber \\
&= (u_\infty, \mu_\RR(\alpha,\beta))_{L^2} + (\lambda_2-\lambda_1) \brackets{ ((dp)^2(u_\infty)(\partial_\alpha u_\infty), \partial_\alpha u_\infty)_{L^2} + ((dp)^2(u_\infty)([\beta,u_\infty]), [\beta,u_\infty])_{L^2} } \ .
\end{align}
We observe that this expression is unchanged if we replace $(\lambda_2-\lambda_1)(dp)^2$ by a smooth non-negative function $\Phi$ which satisfies $\Phi(\lambda_i,\lambda_j) = (\lambda_2-\lambda_1)(dp)^2(\lambda_i,\lambda_j)$ for all $i,j$, but also $\Phi(x,y) \le (x-y)^{-1}$ whenever $x>y$. (Such a function exists, since $(\lambda_2-\lambda_1)(dp)^2(\lambda_2,\lambda_1) = (\lambda_2-\lambda_1)^{-1}$.) Lemma \ref{lem:phiSmall} then implies that $W\le 0$, and therefore $\deg \pi \ge 0$.
\end{proof}
\begin{remark}
Since this is the only place where stability will enter into our proof of the DUY theorem, and we know that non-polystable $\G_\CC$ orbits do not contain solutions of the real moment map equation, it follows that this proposition is false if $\alpha,\beta$ is not $\xi^\RR$-polystable. \end{remark}

Motivated by this proposition, for any $c \ge \| \mu_\RR(\alpha,\beta) \|_{L^2}$ we define
\be P_c^{2} = \{h\in i\mf{g}^{2} \bigm| \|\mu_\RR(e^{h}(\alpha,\beta))\|_{L^2} \le c \} \ . \ee
Given any such $c$, we will now, following Konno (Lemma 4.11) \cite{konno:higgs}, use the so-called `direct method' for solving variational problems to extremize the functional $M$ on $P_c^{2}$. To begin with, we observe the following corollary of the previous proposition:
\begin{corollary}
$M(h;\alpha,\beta)$ is bounded below on $P^{2}_c$. \label{cor:stabBounds}
\end{corollary}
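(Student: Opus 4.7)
The plan is to invoke Proposition \ref{prop:stabBound} essentially verbatim, since it has already been arranged to give an inequality in exactly the needed direction. First I would fix a constant $C := \max(c,\|\mu_\RR(\alpha,\beta)\|_{L^2})$, so that every $h \in P^2_c$ automatically satisfies both hypotheses $\|\mu_\RR(\alpha,\beta)\|_{L^2} \le C$ and $\|\mu_\RR(e^h(\alpha,\beta))\|_{L^2} \le c \le C$ of Proposition \ref{prop:stabBound}.

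Recall from the corollary's context that we are in the DUY argument having already disposed of the strictly polystable case by the abelian reduction given before Lemma \ref{lem:muR}. Hence we may, and do, assume $(\alpha,\beta)$ is $\xi^\RR$-stable, so that Proposition \ref{prop:stabBound} does indeed apply and produces constants $C_1, C_2 > 0$ (depending on $C$ and on $(\alpha,\beta)$, but crucially not on $h$) such that
\be \|h\|_{C^0} \le C_1 + C_2\, M(h;\alpha,\beta) \qquad \forall\, h \in P^2_c. \ee
Since $\|h\|_{C^0} \ge 0$, rearranging immediately yields
\be M(h;\alpha,\beta) \ge -C_1/C_2, \ee
which is the desired lower bound on $P^2_c$.

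There is essentially no obstacle in the proof itself; the real content has been packaged into Proposition \ref{prop:stabBound}, whose proof (via the direct method, Lemma \ref{lem:subHarm}, and the construction of a destabilizing good projection out of a would-be counterexample) is precisely the standard Uhlenbeck--Yau argument adapted to the Nahm setting. The only point worth flagging is the role of stability: without it, Proposition \ref{prop:stabBound} fails and indeed $M$ would be unbounded below along suitably chosen destabilizing paths, which is the familiar heuristic that boundedness below of the Donaldson functional is essentially equivalent to stability of the underlying pre-Nahm data.
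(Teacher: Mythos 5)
Your proof is correct and is exactly the paper's (implicit) argument: the corollary is stated immediately after Proposition \ref{prop:stabBound} precisely because the estimate $\|h\|_{C^0} \le C_1 + C_2 M(h;\alpha,\beta)$, together with $\|h\|_{C^0} \ge 0$, gives $M \ge -C_1/C_2$ on $P^2_c$. Your handling of the hypotheses is fine (indeed, since $P^2_c$ is only defined for $c \ge \|\mu_\RR(\alpha,\beta)\|_{L^2}$, your $C$ is just $c$), and you correctly note that stability of $(\alpha,\beta)$ is in force at this point of the argument.
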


We next have
\begin{proposition}
The infimum of $M(h;\alpha,\beta)$ is attained on $P^{2}_c$.
\end{proposition}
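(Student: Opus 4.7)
The plan is to follow the standard direct method of the calculus of variations, in the style of Konno's Lemma 4.11 in \cite{konno:higgs}: produce a minimizing sequence, extract compactness from stability plus coercivity of $M$, and then identify the weak limit as a minimizer via lower semicontinuity.

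First, I pick a minimizing sequence $h_n \in P_c^2$ for $M$. Corollary \ref{cor:stabBounds} ensures $\inf_{P_c^2} M$ is finite, so $M(h_n)$ is uniformly bounded above; Proposition \ref{prop:stabBound} then gives a uniform $C^0$ bound $\|h_n\|_{C^0} \le C_1 + C_2 M(h_n)$. In particular there is a common $c'$ with $h_n \in \tilde P^0_{c'}$, on which Lemma \ref{lem:psiCont} guarantees $\Psi(-h_n)$ is uniformly bounded below as a positive operator on $L^2(\End(E))$. Writing the quadratic pieces of $M$ as $\|\Psi^{1/2}(-h_n)(\partial_\alpha h_n)\|_{L^2}^2 + \|\Psi^{1/2}(-h_n)([\beta,h_n])\|_{L^2}^2$ and controlling the linear term $-(h_n,\mu_\RR(\alpha,\beta))_{L^2}$ by Cauchy--Schwarz via the $C^0$ bound, we obtain a uniform bound on $\|\partial_\alpha h_n\|_{L^2}$ and $\|[\beta,h_n]\|_{L^2}$, and hence on $\|h_n\|_{H^1}$.

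Weak $H^1$-compactness and the compact embeddings $H^1 \hookrightarrow C^0, L^2$ now let me pass to a subsequence with $h_n \rightharpoonup h_\infty$ in $H^1$ and $h_n \to h_\infty$ in $C^0$. The boundary condition $h_\infty|_{\partial I} \in i\mf{t}$ is inherited from the $C^0$ limit. The constraint $\|\mu_\RR(e^{h_\infty}(\alpha,\beta))\|_{L^2} \le c$ follows by expanding $\mu_\RR(e^{h_n}(\alpha,\beta))$ via the formula from Lemma \ref{lem:muR}, using the $C^0$ convergence $h_n \to h_\infty$ (and thus $H(h_n) \to H(h_\infty)$ in a strong sense) and the weak $H^1$ convergence of $h_n$ to conclude weak-$L^2$ convergence of $\mu_\RR(e^{h_n}(\alpha,\beta))$ to $\mu_\RR(e^{h_\infty}(\alpha,\beta))$, together with weak lower semicontinuity of the $L^2$ norm. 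The same combination of facts drives lower semicontinuity of $M$ itself: the linear term converges by strong $L^2$ convergence; and each quadratic term is handled exactly as in the proof of Proposition \ref{prop:stabBound}, using Lemma \ref{lem:psiCont} to get $\Psi^{1/2}(-h_n) \to \Psi^{1/2}(-h_\infty)$ in operator norm, so that $\Psi^{1/2}(-h_n)(\partial_\alpha h_n) \rightharpoonup \Psi^{1/2}(-h_\infty)(\partial_\alpha h_\infty)$ weakly in $L^2$, and lower semicontinuity of $\|\cdot\|_{L^2}^2$ under weak convergence finishes the bound. Hence $M(h_\infty) \le \liminf_n M(h_n) = \inf_{P_c^2} M$, and since $h_\infty$ is itself admissible, equality holds.

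The main obstacle is the final regularity upgrade: the direct method a priori only places $h_\infty$ in $H^1$ with the zeroth-order boundary condition, whereas membership in $i\mf{g}^2$ demands $H^2$ regularity and the first-order boundary condition $\partial h_\infty|_{\partial I} \in i\mf{t}^\perp$. The plan is to perform the minimization above in the larger $H^1$ space (on which $M$ is still continuous and all the estimates remain valid), and then bootstrap. Corollary \ref{cor:EL} together with the extremality of $h_\infty$ (interior to the constraint, as will be shown separately when the DUY theorem is established) gives $\mu_\RR(e^{h_\infty}(\alpha,\beta)) = 0$; unpacking this via Lemma \ref{lem:muR} produces a quasilinear elliptic equation for $h_\infty$ of the schematic form $\bar\Delta_0 h_\infty = F(h_\infty,\partial h_\infty)$ with $F$ polynomial, and one-dimensional Sobolev embedding $H^1 \hookrightarrow C^0$ combined with elliptic regularity for $\bar\Delta_0$ promotes $h_\infty$ to $H^2$; the first-order boundary condition is then forced by the equation. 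This is the step where the analysis has to be done carefully rather than invoked formally, but it is a routine bootstrap of the kind familiar from \cite{simpson:higgs,konno:higgs}.
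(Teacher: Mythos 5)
Your set-up (minimizing sequence, the $C^0$ bound from Proposition \ref{prop:stabBound}, the uniform $H^1$ bound from the coercivity of the $\Psi$ terms) matches the paper, but there is a genuine gap in how you pass to the limit, and it is exactly the step the constraint set $P^2_c$ was designed to handle. The paper does not stop at the $H^1$ bound: it feeds the constraint $\|\mu_\RR(e^{h_i}(B))\|_{L^2}\le c$ back through Lemma \ref{lem:muR} to get $\|\partial_\alpha(H_i\,\partial_{-\alpha^\dagger}H_i^{-1})\|_{L^2}\le C$ for $H_i=e^{2h_i}$, and then two rounds of elliptic regularity give a uniform bound $\|H_i\|_{H^2}\le C$. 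This is what makes the weak limit land in $i\mf{g}^2$ (an $H^2$ space carrying a \emph{first-order} boundary condition, which is not even a well-defined trace for an $H^1$ function), makes $\partial H_i$ converge \emph{strongly} in $L^2$ and $C^0$, and hence makes $M(h_i)\to M(h_\infty)$ outright — no lower semicontinuity needed — and makes $\mu_\RR(e^{h_i}(B))\rightharpoonup\mu_\RR(e^{h_\infty}(B))$ in $L^2$ so the constraint survives.

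Your substitute for this step fails at two points. First, you claim that weak $H^1$ convergence of $h_n$ plus $C^0$ convergence yields weak $L^2$ convergence of $\mu_\RR(e^{h_n}(B))$. But the formula of Lemma \ref{lem:muR} contains the term $(\partial_{-\alpha^\dagger}H_n)H_n^{-1}(\partial_\alpha H_n)$, a product of two sequences that are only weakly convergent in $L^2$; such products do not converge weakly to the product of the limits (compare $\sin(nx)\rightharpoonup 0$ but $\sin^2(nx)\rightharpoonup 1/2$). Without strong $L^2$ convergence of $\partial H_n$ — i.e., without the $H^2$ bound — the constraint does not pass to the limit, and indeed $\mu_\RR(e^{h_\infty}(B))$ need not a priori lie in $L^2$ when $h_\infty$ is only $H^1$. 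Second, your proposed repair — minimize over $H^1$, invoke the Euler--Lagrange equation $\mu_\RR(e^{h_\infty}(B))=0$, and bootstrap — is circular in the paper's logic: Corollary \ref{cor:EL} and Proposition \ref{prop:minSolves} derive the Euler--Lagrange equation only for a minimizer already known to lie in $P^2_c$ (the variation $\delta h$ there is constructed by solving $\tilde\Delta_0'\delta h=\mu_\RR(e^{h_\infty}(B))$, which presupposes that $e^{h_\infty}(B)$ is regular enough and that the right-hand side is in $L^2$), and the argument that the constrained minimizer is an unconstrained critical point also uses membership in $P^2_c$. The fix is to insert the $H^2$ estimate on $H_i$ before extracting the limit, as the paper does.
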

\begin{proof}
Let $h_i\in P^{2}_c$ be a minimizing sequence. Proposition~\ref{prop:stabBound} implies that there is a $C_1>0$ such that $\|h_i\|_{C^0} < C_1$ for all $i$. This uniform bound and the fact that we are minimizing $M$ together yield a bound $(\Psi(-h_i)(\partial_\alpha h_i), \partial_{\alpha} h_i)_{L^2} < C_2$ for all $i$. Lemma \ref{lem:psiCont} then yields an inequality $\|\partial_\alpha h_i\|_{L^2} < C_3$, i.e. $\|h_i\|_{H^1} < C_4$. Next, using Lemma \ref{lem:muR}, we combine the $C^0$ bound on $h_i$ with the bound $\|\mu_\RR(e^{h_i}(\alpha,\beta))\|_{L^2} \le c$ to obtain a bound $\|\partial_\alpha(H_i \partial_{-\alpha^\dagger} H_i^{-1})\|_{L^2} \le C_5$, where $H_i = e^{2 h_i}$. Elliptic regularity then gives $\|H_i \partial_{-\alpha^\dagger} H_i^{-1}\|_{H^1} \le C_6$. Combined with the $H^1$ bound on $H_i^{-1}$, this gives $\|\partial_{-\alpha^\dagger} H_i^{-1}\|_{H^1} \le C_7$, and so elliptic regularity gives $\|H_i\|_{H^2} \le C_8$.

Weak compactness of the ball in $\{H\in H^2(I, i\, \mf{u}(2)) \bigm| H|_{\partial I} \in i\mf{t}\oplus i\mf{u}(1) \ , \  \partial H|_{\partial I} \in i\mf{t}^\perp \}$ of radius $C_8$ implies that we may pass to a weakly convergent subsequence: $H_i \rightharpoonup H_\infty$. The compact embedding $H^2\hookrightarrow H^1$ implies that it is strongly convergent in $H^1$, and therefore in $C^0$. This gives $\det H_\infty=1$. So, we can write $H_\infty=e^{2 h_\infty}$ with $h_\infty\in i\, \mf{g}^2$. Also, convergence in $H^1$ implies that $M(h_i;\alpha,\beta)\to M(h_\infty; \alpha,\beta)$.

It remains only to show that $h_\infty \in P^2_c$. We do so by showing that $\mu_\RR(e^{h_i}(B))\rightharpoonup \mu_\RR(e^{h_\infty}(B))$ in $L^2$; this suffices because we will then have
\be \|\mu_\RR(e^{h_\infty}(B))\|_{L^2} \le \liminf_i \|\mu_\RR(e^{h_i}(B))\|_{L^2} \le c \ . \ee
This weak convergence follows from the formulae in Lemma \ref{lem:muR}: every term is strongly convergent in $L^2$ except for $\partial^2 H_i$, which is weakly convergent. (In particular, the term of the schematic form $(\partial H_i)^2$ converges strongly because of the $C^0$ bound on $\partial H_i$ together with strong convergence of $\partial H_i$ in $L^2$.)
\end{proof}

We have now minimized $M(h)$ subject to the constraint $\|\mu_\RR(e^{h}(B))\|_{L^2} \le c$. It remains to show that the that the constraint is weak enough that a minimizer $h_\infty \in P^2_c$ must satisfy the Euler-Lagrange equation $\mu_\RR(e^{h_\infty}(B))=0$. That is, the computation of Corollary \ref{cor:EL} shows that $(\delta h, \mu_\RR(e^{h_\infty}(B)))_{L^2} \le 0$ for any variation $\delta h \in i \mf{g}^2$ which exponentiates to a 1-parameter family of positive-definite gauge transformations $g_\tau = e^{\delta h \, \tau}$ such that $\|\mu_\RR(g_\tau(e^{h_\infty}(B)))\|_{L^2} \le c$ for small $\tau\ge 0$, and we must show that this implies that $\mu_\RR(e^{h_\infty}(B))=0$. However, it could be the case that $\|\mu_\RR(e^{h_\infty}(B))\|_{L^2} = c$, in which case this restriction on $\delta h$ could be severe. Fortunately, following Konno (Lemma 4.13) \cite{konno:higgs}, we will now see that there is always a sufficiently useful choice of $\delta h$:
\begin{proposition} \label{prop:minSolves}
$\mu_\RR(e^{h_\infty}(B))=0$, and $h_\infty \in i \mf{g}^{s+1}$.
\end{proposition}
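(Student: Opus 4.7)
The plan is to construct a specific admissible variation $\tilde h \in i\mf{g}^2$ of $h_\infty$ and then apply the variational principle. The key input is the computation from the proof of Proposition~\ref{prop:fakeHomo} (see the derivation of \eqref{eq:muDeriv}) that $\partial_u \mu_\RR(e^{u\tilde h}(B'))|_{u=0} = -2\Delta_0'\tilde h$, where $B' := e^{h_\infty}(B)$ and $\Delta_0'$ denotes the Laplacian of the real deformation complex at $B'$. If one chooses $\tilde h$ to solve the elliptic problem $\Delta_0'\tilde h = \mu_\RR(B')$ with $\tilde h \in i\mf{g}^2$, then along the path $g_\tau = e^{\tau\tilde h}$ one has $\partial_\tau \|\mu_\RR(g_\tau(B'))\|_{L^2}^2|_{\tau=0} = -4\|\mu_\RR(B')\|_{L^2}^2 \le 0$, so this variation remains in $P_c^2$ for small $\tau > 0$.

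Such a $\tilde h$ exists because $\Delta_0'$ can be inverted. Since $B$ is $\xi^\RR$-stable (the strictly polystable case having already been dispatched above), so is $B'$, and Proposition~\ref{prop:compStabs} then implies that the stabilizer of $B'$ in $\G_\CC$, and hence also in $\G$, is trivial. It follows that $\ker d_0' = 0$ acting on $i\mf{g}^2$, and since $\Delta_0' = (d_0')^* d_0'$ is a formally self-adjoint Fredholm operator on this domain, the analysis following Proposition~\ref{prop:selfAdj} identifies its cokernel with its kernel, so $\Delta_0'$ is an isomorphism $i\mf{g}^2 \stackrel{\sim}{\to} L^2(I, i\mf{su}(2))$. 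As $\mu_\RR(B') \in L^2$, the desired $\tilde h$ exists.

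With $\tilde h$ in hand, the proof is completed by two opposing bounds. The Euler-Lagrange computation in Corollary~\ref{cor:EL}, together with the fact that $h_\infty$ minimizes $M$ on $P_c^2$ and that the variation $\log(e^{h_\infty} \times g_\tau)$ lies in $P_c^2$ for small $\tau > 0$, yields $(\tilde h, \mu_\RR(B'))_{L^2} \le 0$. On the other hand, $(\tilde h, \mu_\RR(B'))_{L^2} = (\tilde h, \Delta_0'\tilde h)_{L^2} = \|d_0'\tilde h\|_{L^2}^2 \ge 0$, where the integration by parts produces no boundary contributions thanks to the $\mf{t}, \mf{t}^\perp$ boundary conditions on $\tilde h$ and $\partial\tilde h$ built into $i\mf{g}^2$ (the same vanishing one checks in Proposition~\ref{prop:realInd}). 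Combining, $d_0'\tilde h = 0$, whence $\mu_\RR(B') = \Delta_0'\tilde h = 0$.

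Finally, to boost $h_\infty$ from $i\mf{g}^2$ to $i\mf{g}^{s+1}$, I would bootstrap elliptic regularity using the second-order quasilinear equation obtained from Lemma~\ref{lem:muR} upon setting $\mu_\RR(B') = 0$: with $H := e^{2h_\infty}$,
\be
\mu_\RR(B) = -\partial_\alpha\big((\partial_{-\alpha^\dagger}H)H^{-1}\big) + [\beta,[\beta^\dagger,H]H^{-1}] \ ,
\ee
whose principal part is $-\partial^2 H$. Assuming $H \in H^k$ with $2 \le k \le s$, the right side together with the lower-order nonlinear terms lies in $H^{k-1}$ (using that $H^{k-1}$ is a Banach algebra on $I$), so $\partial^2 H \in H^{k-1}$ and hence $H \in H^{k+1}$. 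Iterating produces $H \in H^{s+1}$, and the boundary conditions on $h_\infty \in i\mf{g}^{s+1}$ are inherited from those of $h_\infty \in i\mf{g}^2$. The main obstacle in this strategy is the construction of $\tilde h$, which is precisely where the stability hypothesis on $B$ enters essentially.
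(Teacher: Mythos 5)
Your proof is correct and follows essentially the same route as the paper's: the paper likewise constructs the test variation by solving a Laplace-type equation with right-hand side $\mu_\RR(e^{h_\infty}(B))$ (it inverts $\tilde\Delta_0'$ where you invert $\Delta_0'$, an immaterial difference since both are invertible by stability and both yield $\partial_\tau\|\mu_\RR(g_\tau(B'))\|_{L^2}^2|_{\tau=0}=-4\|\mu_\RR(B')\|_{L^2}^2$), and then plays the one-sided Euler--Lagrange inequality against the positivity of the resulting pairing. The regularity upgrade is likewise obtained in the paper by elliptic bootstrapping of the real moment map equation, which you have simply spelled out in more detail.
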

\begin{proof}
Per the preceding discussion, we will explore the consequences of Corollary \ref{cor:EL} for a useful choice of $\delta h$. Specifically, we let $\delta h \in i \mf{g}^2$ be the solution to
\be \tilde\Delta'_0 \delta h = \mu_\RR(e^{h_\infty}(B)) \ , \label{eq:varChoice} \ee
where the prime on $\tilde\Delta_0'$ (and on $\bar\partial'_0$ below) indicates that the pre-Nahm data that enters into the definition of this operator is $B'=e^{h_\infty}(B)$. Such a solution exists and is unique, since $\tilde\Delta_0'$ is self-adjoint and has no kernel (thanks to the stability of $B'$). That such a choice might be useful is clear from $(\delta h, \mu_\RR(B'))_{L^2} = \|\bar\partial'_0 \delta h\|^2_{L^2}$, as if $g_\tau$ satisfies the constraint for small $\tau\ge 0$ then we must have $\bar\partial_0' \delta h = 0$, and therefore $\delta h=0$ because of the stability of $B'$. Finally, \eqref{eq:varChoice} would then imply that $\mu_\RR(B') = 0$. So, it remains only to test if $g_\tau$ satisfies the constraint for small $\tau\ge 0$. Using Lemma \ref{lem:muR}, we compute
\be \left. \frac{d}{d\tau} \|\mu_\RR(g_\tau(B'))\|_{L^2}^2 \right|_{\tau=0} = -4 \|\mu_\RR(B')\|_{L^2}^2 \ . \ee
So, either $\mu_\RR(B')=0$ (in which case $\delta h = 0$ and the constraint is satisfied for all $\tau\ge 0$ because $g_\tau$ is the identity) or the constraint is satisfied for small $\tau\ge 0$, which implies that $\mu_\RR(B')=0$. Finally, that $h_\infty$ is in $i \mf{g}^{s+1}$ follows from applying elliptic regularity to the real moment map equation, regarded as a differential equation in $h_\infty$.
\end{proof}

\begin{remark}
It follows from this proposition that $h_\infty$ -- which a priori is only the minimizer of $M$ on $P^2_c$ for a particular choice of $c$ -- is actually a minimizer for $M$ on all of $i\mf{g}^2$. For, any other minimizer on $P^2_{c'}$ with $c'>c$ would also be in $P^2_c$, by this proposition. In particular, it follows that $M$ is bounded below on all of $i\mf{g}^2$. Indeed, by density of $i \mf{g}^2$ in $\{h\in H^1(I, i\,\mf{su}(2)) \bigm| h|_{\partial I} \in i \mf{t}\}$, $h_\infty$ is even a global minimizer on this larger space.
\end{remark}

As a final observation about Donaldson's functional, we observe that it is convex in an appropriate sense, and that this gives a second proof of the injectivity of $j$.
\begin{proposition}
For any $h\in H^1(I,i\, \mf{su}(2))$, $\tau\in \RR$, and $B''=(\alpha'',\beta'') \in \B^{s}_{\rm all}$,
\be \partial^2_\tau M(\tau h;B'') = 2 \|\bar\partial'''_0 h\|_{L^2}^2 \ , \label{eq:convex} \ee
where the pre-Nahm data entering into the definition of $\bar\partial_0'''$ is $B'''=e^{\tau h}(B'')$ (which is $L^2$-regular). It follows that $h_\infty$ is the unique global minimum of $M(h;\alpha,\beta)$ on $i\mf{g}^2$.
\end{proposition}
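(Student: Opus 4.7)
The plan is to establish the convexity identity \eqref{eq:convex} directly and then deduce uniqueness from a strict convexity argument that uses the $\xi^\RR$-stability hypothesis on $B$.

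To prove \eqref{eq:convex}, I will essentially repeat the computation that produced \eqref{eq:twoUDeriv} in the proof of Proposition \ref{prop:fakeHomo}, but starting from $B''$ rather than $B$ and shifting all primes by one. Differentiating the series \eqref{eq:psiDef} defining $\Psi$ term by term and grouping as in \eqref{eq:twoUDeriv} yields, for smooth $h$,
\[
\partial_\tau^2 M(\tau h; B'') = 2\brackets{\|e^{-\tau h}(\partial_{\alpha''} h) e^{\tau h}\|_{L^2}^2 + \|e^{-\tau h}[\beta'', h] e^{\tau h}\|_{L^2}^2} = 2\brackets{\|\partial_{\alpha'''} h\|_{L^2}^2 + \|[\beta''', h]\|_{L^2}^2},
\]
where $B''' = e^{\tau h}(B'')$. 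A direct (trace) computation shows that the cross terms cancel, so this expression equals $2\|\bar\partial_0''' h\|_{L^2}^2$. To pass from smooth $h$ to $h \in H^1(I, i\,\mf{su}(2))$, I use the continuity of $M$ together with the fact that $h \mapsto \|\bar\partial_0''' h\|_{L^2}^2$ is continuous in $H^1$ (by Lemma \ref{lem:psiCont}, arguing as in the continuity proof of $M$); standard approximation then transports both the first- and second-order difference quotients to the limit.

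For uniqueness on $i\mf{g}^2$, suppose $h_1, h_2 \in i\mf{g}^2$ are both global minimizers, and set $\tilde h_0 := \tfrac{1}{2}\log(e^{-h_1} e^{2h_2} e^{-h_1})$, so that $e^{h_1}\times e^{\tilde h_0} = e^{h_2}$ and $\tilde h_0 \in i\mf{g}^2$. Applying the quasihomomorphism relation \eqref{eq:fakeHomo} of Proposition \ref{prop:fakeHomo}, the function
\[
F(\tau) := M(\tau \tilde h_0; e^{h_1}(B)) = M(\log(e^{h_1}\times e^{\tau\tilde h_0}); B) - M(h_1; B)
\]
satisfies $F(0) = 0$ and $F(1) = M(h_2; B) - M(h_1; B) = 0$. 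By \eqref{eq:convex} (with $B'' = e^{h_1}(B)$ and $h = \tilde h_0$), $F''(\tau) = 2\|\bar\partial_0''' \tilde h_0\|_{L^2}^2 \ge 0$, so $F$ is convex. Since $\log(e^{h_1}\times e^{\tau\tilde h_0}) \in i\mf{g}^2$ for all $\tau \in [0,1]$ and $h_1$ is a global minimizer, $F(\tau) \ge 0$ on $[0,1]$, while convexity and $F(0) = F(1) = 0$ force $F \le 0$ on $[0,1]$. Hence $F \equiv 0$ on $[0,1]$ and $F''(\tau) = 0$ identically; evaluating at $\tau = 0$ gives $\bar\partial_0' \tilde h_0 = 0$ with $B' = e^{h_1}(B)$.

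The key input, and the only place stability really enters, is the following: since $B$ is $\xi^\RR$-stable (we have reduced to this case at the start of the section) and $\G^2_\CC$ preserves stability, $B'$ is also $\xi^\RR$-stable; by Corollary \ref{cor:freeAct} (equivalently Proposition \ref{prop:compStabs}), its complex stabilizer is trivial, so ${\rm H}^0_{B'} = \ker(\bar\partial_0'\colon \mf{g}_\CC^2 \to \B_0^1) = 0$. Since $\tilde h_0 \in i\mf{g}^2 \subset \mf{g}_\CC^2$, we conclude $\tilde h_0 = 0$, whence $e^{h_2} = e^{h_1}\times 1 = e^{h_1}$ and so $h_1 = h_2$. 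The main obstacle in this argument is essentially bookkeeping --- verifying that the sum-of-squares identity really collapses to $\|\bar\partial_0''' h\|^2$ (which relies on the trace cancellation of cross terms), and that the path $\tau\mapsto \log(e^{h_1}\times e^{\tau\tilde h_0})$ stays in $i\mf{g}^2$ so that the monotonicity argument $F\ge 0$ applies; both are straightforward.
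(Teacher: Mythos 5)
Your proposal is correct and follows essentially the same route as the paper: the convexity identity is exactly the computation of \eqref{eq:twoUDeriv} (which, as the paper notes, requires no integration by parts and hence no boundary conditions on $h$, so it applies verbatim to $h\in H^1(I,i\,\mf{su}(2))$ without needing an approximation argument), and uniqueness comes from \eqref{eq:fakeHomo} plus the fact that stability of $e^{h_1}(B)$ forces $\ker\bar\partial_0'=0$. The only (harmless) deviation is in how convexity is exploited: the paper uses the Euler--Lagrange equation $f'(0)=0$ at $h_\infty$ together with strict convexity to get $f(1)>f(0)$ directly, whereas you run a two-minimizer argument forcing $F\equiv 0$ on $[0,1]$ and then kill the connecting element $\tilde h_0$ by stability --- both are valid.
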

\begin{proof}
This second derivative was computed in \eqref{eq:twoUDeriv}; note that integration by parts is not needed to arrive at \eqref{eq:convex}. Now, for any nonzero $h\in i\mf{g}^2$ we study the function $f\colon[0,1]\to \RR$ given by $f(\tau) = M(\log(e^{h_\infty} \times e^{\tau h});\alpha,\beta)$, which by Proposition \ref{prop:fakeHomo} coincides with $M(\tau h; e^{h_\infty}(\alpha,\beta)) + M(h_\infty;\alpha,\beta)$. Per the previous computation, this is convex -- indeed, strictly so, thanks to the stability of $B$. Since $h_\infty$ satisfies the Euler-Lagrange equation, $f'(0) = 0$, and so $f(1) > f(0)$. Since any $h'\in i \mf{g}^2$ can be written as $\log(e^{h_\infty} \times e^{h})$ for some $h\in i\mf{g}^2$ (specifically, $h = \log(e^{-h_\infty} \times e^{h'})$), we find that $h_\infty$ is the unique global minimum of $M(h';\alpha,\beta)$ on $i\mf{g}^2$.
\end{proof}
\begin{remark}
From Proposition \ref{prop:uniqueSol} we know that any other minimizer $h'\in i\mf{g}^{s+1}$ must satisfy $g'(e^{h'}(B))=e^{h_\infty}(B)$ for some $g'\in \G^{s+1}$, and the present proposition seems to strengthen this by adding that $h'=h_\infty$. However, we already knew that $e^{h'} g' = e^{h_\infty}$ by the stability of $B$, and the uniqueness of the polar decomposition then gives $g'=1$ and $h'=h_\infty$.
\end{remark}

We conclude by expanding on Theorem~\ref{thm:res}.

\begin{theorem} \label{thm:conn}
Let $(\mf{z}^3_{\partial I})^{\circ} \subset \mf{z}^3_{\partial I}$ denote the open sublocus of generic parameter values, and denote by $p^{\circ}\colon \mc{M}_{\mathrm{all}}^{\circ} \to (\mf{z}^3_{\partial I})^{\circ}$ the pullback of the projection map $p$. The fibered manifold $p^\circ$ admits a canonical Ehresmann connection $\mathrm{Conn}_{\mc{M}}$ and is thus a fiber bundle.

This connection is uniquely characterized as follows: if one considers paths in $(\mf{z}^3_{\partial I})^{\circ}$ with only one of $\xi^i$ varying, then after hyperkahler rotating to a complex structure where the varying parameter is $\xi^\RR$, the corresponding parallel transport map is that given by the composition of the canonical morphisms of Theorems~\ref{thm:relXi} and~\ref{thm:duy}.

Furthermore, this same property is satisfied for variations valued in any space of the form $\mf{z}_{\partial I}\otimes \Xi$, where $\Xi\subset \RR^3$ is a dimension 1 subspace.

Finally, this connection is equivariant under the commuting actions of $Z_2\times Z_2$ and $SO(3)$ on $\M_{\rm all}^\circ$.
\end{theorem}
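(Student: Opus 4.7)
The plan is to construct $\mathrm{Conn}_\M$ by prescribing horizontal lifts along the three coordinate axes of $\mf{z}^3_{\partial I}$ using Theorems~\ref{thm:relXi} and~\ref{thm:duy}, and then to use $SO(3)$-equivariance of the resulting connection to verify the enhanced characterization for all one-dimensional $\Xi\subset\mb{R}^3$. At each $[A]\in\M_\xi^\circ$ the tangent space $T_{[A]}\M_{\mathrm{all}}^\circ\simeq \Hh^{1,\mathrm{gen}}_A$ is $10$-dimensional and surjects onto $\mf{z}^3_{\partial I}$ with $4$-dimensional kernel $\Hh^{1,\mathrm{sp}}_A$ (using that $\Hh^{2,\mathrm{sp}}_A=0$ on the generic locus, by Propositions~\ref{prop:realInd}--\ref{prop:realStable}), so an Ehresmann connection amounts to a smooth $6$-dimensional splitting of this sequence. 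The stated characterization, applied separately to each coordinate axis $\Xi_i=\mb{R}e_i$, fixes the restriction of any splitting to $\mf{z}_{\partial I}\otimes\Xi_i$; since these three two-dimensional subspaces span $\mf{z}^3_{\partial I}$, uniqueness will follow. To construct such a splitting, I will hyperkahler-rotate, for each $i$, so that $\omega^i$ becomes $\omega^1$ and $\Xi_i$ becomes the $\xi^\RR$-axis, whereupon Theorems~\ref{thm:relXi} and~\ref{thm:duy} combine to produce a smooth family of diffeomorphisms $\M_{\xi+tw}\stackrel{\sim}{\to}\M_\xi$ for $w\in\mf{z}_{\partial I}\otimes\Xi_i$ and small $t$. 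Differentiating at $t=0$ will yield a linear lift $L_i\colon\mf{z}_{\partial I}\to T_{[A]}\M_{\mathrm{all}}^\circ$, and setting $H_{[A]}:=L_1(\mf{z}_{\partial I})\oplus L_2(\mf{z}_{\partial I})\oplus L_3(\mf{z}_{\partial I})$ will yield the desired $6$-dimensional horizontal subspace, whose smoothness in $[A]$ will follow from smoothness in parameters of the DUY and birational morphism constructions.

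\textbf{Enhanced characterization (main obstacle).} The hardest step will be verifying that this connection satisfies the characterization for \emph{every} one-dimensional $\Xi$. For each $\Xi$, define $L_\Xi\colon\mf{z}_{\partial I}\to T_{[A]}\M_{\mathrm{all}}^\circ$ by the analogous construction, hyperkahler-rotating $\Xi$ to the $\xi^\RR$-axis. I must show that for $w\in\mf{z}_{\partial I}\otimes\Xi\subset\mf{z}^3_{\partial I}$, the horizontal lift of $w$ via $H_{[A]}$ agrees with $L_\Xi(w)$. I plan to exploit $SO(3)$-equivariance: if $R\in SO(3)$ rotates $e_1$ to a generator of $\Xi$, then $L_\Xi=R_*\circ L_1\circ R^{-1}$ by $SO(3)$-equivariance of the DUY isomorphism and the birational morphisms of Theorem~\ref{thm:relXi}, both of which commute appropriately with hyperkahler rotation. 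Decomposing $w=\sum_i w_i$ with $w_i\in\mf{z}_{\partial I}\otimes e_i$, the horizontal lift via $H_{[A]}$ is $\sum_i L_i(w_i)$, and the required equality $L_\Xi(w)=\sum_i L_i(w_i)$ reduces to showing that the assignment $\Xi\mapsto L_\Xi$ extends to a well-defined linear map $\mf{z}^3_{\partial I}\to T_{[A]}\M_{\mathrm{all}}^\circ$. At the level of Coulomb-gauge deformations $a\in\Hh^{1,\mathrm{gen}}_A$, I expect to characterize $L_\Xi(w)$ as the unique $a$ such that the variation of complex Nahm data in the $\Xi$-rotated complex structure is a pure $\G_\CC$-orbit tangent; combining this condition with its $SO(3)$-rotates should force the splitting to be linear.

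\textbf{Equivariance.} $SO(3)$-equivariance of $\mathrm{Conn}_\M$ will be immediate from the construction, since each $L_\Xi$ is defined via hyperkahler rotation. The $Z_2\times Z_2$-equivariance under the extended gauge transformations $g_1,g_2$ will follow from the corresponding equivariance of Theorems~\ref{thm:relXi} and~\ref{thm:duy}, since these intertwine the construction of $L_i$ with the natural $Z_2\times Z_2$-action by sign flips on $\xi_0,\xi_L$.
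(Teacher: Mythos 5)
Your overall architecture coincides with the paper's: build the connection by lifting along the three coordinate axes via Theorems~\ref{thm:relXi} and~\ref{thm:duy}, take the direct sum of the three images to get a $6$-dimensional horizontal complement to $\Hh^{1,\mathrm{sp}}_A$ inside $\Hh^{1,\mathrm{gen}}_A$, and deduce uniqueness from the fact that the three planes $\mf{z}_{\partial I}\otimes e_i$ span $\mf{z}^3_{\partial I}$. That part is sound. You also correctly identify the ``Furthermore'' clause as the main obstacle and correctly guess the characterization of $L_\Xi(w)$ as the unique Coulomb-gauge lift that is an infinitesimal complexified gauge orbit tangent in the $\Xi$-rotated complex structure.

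The genuine gap is that you then assert that $SO(3)$-equivariance ``should force the splitting to be linear.'' It does not: the relation $L_{R\zeta}=R_*\circ L_\zeta\circ R^{-1}$ is compatible with many non-linear equivariant assignments $\zeta\mapsto L_{\mb{R}\zeta}$, and linearity of $\Xi\mapsto L_\Xi$ is exactly the content of the clause, not a formal consequence of equivariance. What is actually needed is the explicit computation in a fixed gauge (Coulomb gauge $d_0^*a=0$ with respect to $A$): one shows that the lift in direction $\zeta\in S^2$ is $a=\sum_{t_0}\delta\xi^{(\zeta)}_{t_0}\,\bar\partial_0^{(\zeta)}h_{t_0}$, where the $h_{t_0}$ are the two solutions of $\Delta_0 h=0$ with unit boundary data, \emph{independent of $\zeta$}, and where $\bar\partial_0^{(\zeta)}$ is the hyperkahler-rotated Dolbeault operator. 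The Coulomb and linearized-moment-map conditions reduce to $\Delta_0 h=0$ together with the fact that, for irreducible $A$, this forces $h$ to be \emph{Hermitian}. Hermiticity is essential: for Hermitian $h$ one has the manifestly linear formula
\begin{equation*}
\bar\partial_0^{(\zeta)}h \;=\; i\,\zeta^j\bigl([A^j,h],\,(-\delta^{j\ell}\partial_{A^0}h+\epsilon^{j\ell m}[A^m,h])_{\ell=1,2,3}\bigr)\,,
\end{equation*}
so that $\bar\partial_0^{(\zeta)}h=\zeta^j\bar\partial_0^{(j)}h$ and hence $\sum_i\delta\xi^i_{t_0}\bar\partial_0^{(i)}h_{t_0}=\delta\xi^{(\zeta)}_{t_0}\bar\partial_0^{(\zeta)}h_{t_0}$; whereas a skew-Hermitian component of $h$ would contribute a $\zeta$-independent term $d_0\bigl(\tfrac{h-h^\dagger}{2}\bigr)$ that destroys linearity. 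Without establishing (i) that the same harmonic $h_{t_0}$ serves all directions and (ii) that it is Hermitian, the claimed identity $L_\Xi(w)=\sum_i L_i(w_i)$ is unproven, and with it both the ``Furthermore'' clause and the well-definedness of your proposed abstract characterization. The equivariance statements at the end are fine in outline, though they too are most cleanly verified from the same explicit formula via $\bar\partial^{(i)}_{0,g(A)}=g^{-1}\circ\bar\partial^{(i)}_0\circ g$ and the transformation law of the $h_{t_0}$.
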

\begin{proof}
We will establish the existence of this Ehresmann connection $\mathrm{Conn}_{\mc{M}}$, which in particular will provide local trivializations of $p^{\circ}$ to prove that we have a fiber bundle.

So, consider some Nahm data $A$ representing a point in $\M_{\rm all}^\circ$. Let $p_i^\circ \colon (\M_{\rm all}^\circ)_i\to ((\mf{z}_{\partial I}^3)^\circ)_i$ be the restrictions to the subspaces containing $A$ with only $\xi^i$ varying, which include into $p^\circ$. There are canonical horizontal subspaces $H_{i,A}$ through $A$ associated to each $p_i^\circ$. Pushing these forward to $T_A \M^\circ_{\rm all}$ and taking the direct sum yields a horizontal subspace $H_A \subset T_A \M^\circ_{\rm all}$.

In order to describe $H_A$ explicitly, we recall the description of the tangent space given in Propositions \ref{prop:realcoul} and \ref{prop:realRG}, namely $T_A \M^\circ_{\rm all} \simeq \ker d_0^*\oplus d_1$ (since $A$ is irreducible); note that there are no boundary conditions on $d_0^*\oplus d_1$, as is appropriate for the first harmonic space of the general deformation complex. We now combine this with our description of the morphisms of Theorems~\ref{thm:relXi} and~\ref{thm:duy}. Together, these results imply that the tangent vector at $A$ associated to a variation $\delta\xi^\RR$ may be described uniquely as $a=\bar\partial_0 h$, where $h\in C^{\infty}(I,\mf{sl}(2,\CC))$, $h|_{\partial I} \in \mf{t}_\CC$, $\partial h|_{\partial I} \in \mf{t}_\CC^\perp + \delta\xi^\RR_{\partial I} \sigma_z$, and $d_0^* a = d_1 a = 0$. (Note that we are treating $\bar\partial_0$ as an $\RR$-linear map from $C^\infty(I,\mf{sl}(2,\CC))$ to $C^\infty(I,\mf{su}(2)^{\oplus 4})$, as opposed to a $\CC$-linear map to $C^\infty(I,\mf{sl}(2,\CC)^{\oplus 2})$. This is in contrast to operators such as $d_0$ and $\Delta_0$, which extend naturally to be $\CC$-linear.) We compute that
\be d_0^*a = \half \Delta_0(h-h^\dagger) - \frac{1}{4} [\mu_\RR(A), h+h^\dagger] \ , \quad (d_1 a)^1 = \frac{i}{2} \Delta_0(h+h^\dagger) - \frac{i}{4} [\mu_\RR(A), h-h^\dagger] \ , \ee
so that once we use $\mu_\RR(A)=0$ the vanishing of these quantities becomes equivalent to $\Delta_0 h = 0$. (Of course, it is automatic that $(d_1 a)^2 = (d_1 a)^3 = 0$, using $\mu_\CC(A)=0$, since an infinitesimal formal complexified gauge transformation preserves the vanishing of the complex moment map.) We note that \emph{a priori} this gauge differs from the `positive-definite gauge' that we employed throughout this section, in which $h$ is Hermitian, but that the equation $\Delta_0(h-h^\dagger)=0$, with the above boundary conditions that imply that $h-h^\dagger \in \mf{g}$, actually implies that $h=h^\dagger$ (since $A$ is irreducible). Nevertheless, it is more useful to think of our gauge choice as $d_0^* a = 0$ as this is independent of the choice of complex structure, which we will now vary. Specifically, if instead of varying $\xi^\RR$ we vary either $\xi^2$ or $\xi^3$, the same description of the resulting tangent vector applies after we hyperkahler rotate.

This explicit description of the tangent vector $a$ immediately implies that $H_A$ varies smoothly as we vary $A$, so that we have indeed defined a smooth connection. This follows from the smooth dependence of solutions of elliptic ordinary differential equations, such as $\Delta_0 h = 0$, on their parameters and boundary conditions. This is clearly the unique connection with the correct parallel transport properties in the three primary directions.

We next study the remaining directions. We compare two different ways of defining an infinitesimal variation of $A$ as we vary $\xi$ to $\xi+\delta\xi$ with $\delta\xi\in \mf{z}_{\partial I}\otimes \Xi$ for some line through the origin $\Xi\subset \RR^3$. The first uses $\mathrm{Conn}_{\mc{M}}$, while the latter follows from the DUY morphism associated to a complex structure in which the variation $\delta \xi$ leaves $\xi^\CC$ invariant. Our above explicit discussion enables us to verify that these prescriptions agree. We begin by letting $h_0,h_L \in C^\infty(I, i\, \mf{su}(2))$ be the solutions of $\Delta_0 h = 0$ with the respective boundary conditions $h_0,h_L|_{\partial I}\in i\mf{t}$, $\partial h_0|_0, \partial h_L|_L \in i\mf{t}^\perp + \sigma_z$, and $\partial h_0|_L, \partial h_L|_0 \in i\mf{t}^\perp$. We also let $\zeta$ be a unit vector in $\Xi$. Then, the former prescription yields $a_1 = \sum_{t_0\in \{0,L\}} \sum_i \delta\xi^i_{t_0} \bar\partial^{(i)}_0 h_{t_0}$, while the latter yields $a_2 = \sum_{t_0\in \{0,L\}} \delta\xi^{(\zeta)}_{t_0} \bar\partial_0^{(\zeta)} h_{t_0}$, where $\delta\xi_{t_0}^{(\zeta)}$ and these modified $\bar\partial_0$ operators are defined via hyperkahler rotation, as we now explain. The desired result then follows from the fact that $\sum_i \delta\xi^i_{t_0} \bar\partial_0^{(i)} h = \delta\xi_{t_0}^{(\zeta)} \bar\partial_0^{(\zeta)} h$ for all $h\in C^\infty(I,i\, \mf{su}(2))$.

Choose some $\zeta\in S^2$, where $S^2$ is the unit sphere in $\RR^3$, and let $R\in SO(3)$ be a rotation matrix which takes $\zeta$ to the first standard basis vector $e_1$ for $\RR^3$. We use this matrix $R$ to define the operator $\bar\partial_0^{(\zeta)}$:
\be \bar\partial_0^{(\zeta)} h := R^{-1} \bar\partial_{0,R(A)} h \ . \ee
Here, $R^{-1}$ acts in the usual way, fixing the first component and rotating the last three components. The presence of $R(A)$ in the subscript on the right denotes the fact that the Nahm data entering into the definition of $\bar\partial_{0,R(A)}$ is $R(A)$, the result of hyperkahler rotating $A$ by $R$. There is a $U(1)$ freedom in the choice of $R$, given by $R\sim R' R$, where $R'\in SO(3)$ preserves $e_1$. However, this freedom does not affect the definition of $\bar\partial_0^{(\zeta)}$. To see this, we observe that in the standard complex structure associated to $e_1$, such rotations simply multiply $\beta$ by a phase, and this phase is cancelled by the $R^{-1}$ action. That is,
\be (R' R)^{-1} \bar\partial_{0, R' R(A)} h = R^{-1} \bar\partial_{0, R(A)} h \ee
for all $R\in SO(3)$ and $R'\in U(1)$. A short computation yields the explicit formula
\be \bar\partial_0^{(\zeta)} h = i R^{1j} ([A^j, h], (- \delta^{j\ell} \partial_{A^0} h + \epsilon^{j\ell i} [A^i, h])_{\ell=1,2,3}) \ . \label{eq:genRot0} \ee
Here, the repeated $i,j$ indices are summed from 1 to 3. The fact that only the first row of $R$ enters into this result is another manifestation of the $U(1)$-invariance. Indeed, if we write the components of $\zeta\in \RR^3$ as $\zeta^j$ then we have
\be \bar\partial_0^{(\zeta)} h = i \zeta^j ([A^j, h], (- \delta^{j\ell} \partial_{A^0} h + \epsilon^{j\ell i} [A^i, h])_{\ell=1,2,3}) \ . \label{eq:genRot} \ee
In particular, the operators $\bar\partial^{(k)}$ associated to the standard basis vectors for $\RR^3$ are given by
\be
\bar\partial_0^{(k)} h = i ([A^k, h], (-\delta^{k\ell} \partial_{A^0} h + \epsilon^{k\ell i} [A^i, h])_{\ell=1,2,3}) \ , \label{eq:specRot} \ee
and so $\bar\partial_0^{(\zeta)} h = \zeta^j \bar\partial_0^{(j)} h$. We note that it was quite important for this result that $h$ was Hermitian: otherwise, on the right hand side of \eqref{eq:genRot0} $h$ would have been replaced by $\frac{h+h^\dagger}{2}$ and there would have been an additional term $d_0 \parens{\frac{h-h^\dagger}{2}}$, which does not depend on $\zeta$.

If we now let $\delta\xi^{(\zeta)}$ be the first (and only nonvanishing) component of the vector $R \, \vec\delta\xi$, then it is clear that the above formulae for $a_1$ and $a_2$ do, indeed, give the correct variations of $A$. So, it remains only to verify that $\sum_i \delta\xi^i_{t_0} \bar\partial_0^{(i)} h = \delta\xi_{t_0}^{(\zeta)} \bar\partial_0^{(\zeta)} h$ for all $h\in C^\infty(I,i\, \mf{su}(2))$. This follows from
\be \delta\xi^{(\zeta)} \bar\partial_0^{(\zeta)} h = \delta\xi^{(\zeta)} \zeta^j \bar\partial_0^{(j)} h = \delta \xi^j \bar\partial_0^{(j)} h \ . \ee

Finally, we prove that the connection is $Z_2\times Z_2\times SO(3)$-equivariant. First, we study the $SO(3)$ action. This leaves $\Delta_0$, and therefore $h_{t_0}$, invariant. So, we want to show that $\sum_{t_0\in\{0,L\}} \sum_{i,j} (R^{ij} \delta\xi^j_{t_0}) \bar\partial_{0,R(A)}^{(i)} h_{t_0} = \sum_{t_0\in\{0,L\}} \sum_i \delta\xi^i_{t_0} R(\bar\partial_0^{(i)} h_{t_0})$ for any $R\in SO(3)$. Let $\zeta_i\in S^2$ be the $i$-th row of $R$ (transposed into a column vector), so that $R$ maps $\zeta_i$ to the $i$-th standard basis vector $e_i$ of $\RR^3$, and let $R_i\in SO(3)$ rotate $e_i$ to $e_1$. Then, we easily have that
\be \bar\partial_{0,R(A)}^{(i)} h_{t_0} = R (R_i R)^{-1} \bar\partial_{0, R_i R(A)} h_{t_0} = R \bar\partial_0^{(\zeta_i)} h_{t_0} = \sum_j \zeta_i^j R \bar\partial_0^{(j)} h_{t_0} = \sum_j R^{ij} R \bar\partial_0^{(j)} h_{t_0} \ . \ee
This gives the desired equivariance. Next, we turn to the $Z_2\times Z_2$ equivariance. Let $g$ be any of the extended gauge transformations $g_1$, $g_2$, or $g_1 g_2$ defined in Theorem \ref{thm:modHK}. We write the $g$-transformed $\delta\xi$ as $(g(\delta\xi))^i_{t_0} = c_{t_0} \delta\xi^i_{t_0}$, where $c_0,c_L\in \{-1,1\}$. We have $\Delta_{0,g(A)}= g^{-1}\circ \Delta_0 \circ g$, so if we define $h'_{t_0} = c_{t_0} g^{-1}h_{t_0} g$ then $\Delta_{0,g(A)}h'_{t_0} = 0$ (since $h_{t_0}$ transforms in the complexified adjoint representation). Furthermore, $h'_{t_0}$ satisfies the same boundary conditions as $h_{t_0}$. We thus want to show that $\sum_{t_0\in \{0,L\}} \sum_i c_{t_0} \delta\xi_{t_0}^i \bar\partial_{0,g(A)}^{(i)} h'_{t_0} = \sum_{t_0\in\{0,L\}} \sum_i \delta\xi_{t_0}^i g^{-1}(\bar\partial_0^{(i)} h_{t_0}) g$, but this follows immediately from $\bar\partial^{(i)}_{0,g(A)} = g^{-1}\circ \bar\partial^{(i)}_0 \circ g$.

\end{proof}

\begin{remark}
For a fixed $\xi^\RR$, let $(\mf{z}^\CC_{\partial I})^\circ$ be the locus in $\mf{z}^\CC_{\partial I}$ consisting of those $\xi^\CC$ such that $(\xi^\RR,\xi^\CC)\in (\mf{z}_{\partial I}^3)^\circ$. Then, let $p^{\CC,\circ}: \N^\circ_{\xi^\RR,{\rm all}} \to (\mf{z}^\CC_{\partial I})^\circ$ denote the pullback of $p^\CC$. Since this is a complex fibered manifold, it is natural to ask how ${\rm Conn}_\M$ interacts with the complex structures on $\N^\circ_{\xi^\RR,{\rm all}}$ and $(\mf{z}^\CC_{\partial I})^\circ$. Using
\begin{align}
\bar\partial_0^{(2)} h &= i([A^2, h], - [A^3, h], -\partial_{A^0} h, [A^1, h]) \ , \nonumber \\
\bar\partial_0^{(3)} h &= i([A^3, h], [A^2, h], - [A^1, h], -\partial_{A^0} h) \ ,
\end{align}
one easily finds that the horizontal tangent vector associated to a variation $\delta\xi^\CC$ is $(\delta \alpha, \delta \beta) = -i \sum_{t_0\in \{0,L\}} \delta\xi^\CC_{t_0} ([\beta^\dagger, h_{t_0}], \partial_{-\alpha^\dagger} h_{t_0})$. So, $(1,0)$ vectors in $T(\mf{z}^\CC_{\partial I})^\circ \otimes \CC$ lift to $(1,0)$ vectors in $T\N^\circ_{\xi^\RR,{\rm all}} \otimes \CC$, but the dependence of the horizontal spaces on $(\alpha,\beta)$ is almost certainly not holomorphic (due both to the explicit appearance of $\alpha^\dagger$ and $\beta^\dagger$, as well as to their implicit presence via the definition of $h_{t_0}$).
\end{remark}

We next compute the curvature of this connection; we will shortly thereafter demonstrate that it does not vanish identically.

\begin{prop}\label{prop:curv}
The curvature of $\mathrm{Conn}_{\mc{M}}$ at $A$ may be described as follows: given infinitesimal variations $\delta \xi^i_{t_0}$ and $\delta\xi'^j_{t_1}$, with $t_0,t_1\in \{0,L\}$ and $i,j\in \{1,2,3\}$, the commutator of the associated infinitesimal variations of $A$ is, to quadratic order in the variations and in Coulomb gauge with respect to $A$, given by
\begin{align} \label{eq:curvature}
\delta\xi_{t_0}^i \delta&\xi'^j_{t_1} \brackets{ i ([a_1^i, h_{t_0}], (- \delta^{i\ell} [a_1^0, h_{t_0}] + \sum_m \epsilon^{i\ell m} [a_1^m, h_{t_0}])_{\ell=1,2,3}) \right. \nonumber \\
&\left. \ \  - i ([a_0^j, h_{t_1}], (- \delta^{j\ell} [a_0^0, h_{t_1}] + \sum_m \epsilon^{j\ell m} [a_0^m, h_{t_1}])_{\ell=1,2,3}) \right. \nonumber \\
&\left.\quad + \bar\partial_{0}^{(i)} h^H_{0} + d_0 h^A_0 - \bar\partial_{0}^{(j)} h^H_{1} - d_0 h^A_1 } \ ,
\end{align}
where there is no implicit sum over repeated indices, $h_0,h_L$ are as in the proof of Theorem \ref{thm:conn}, $a_0 = \bar\partial^{(i)}_0 h_{t_0}$, $a_1 = \bar\partial^{(j)}_0 h_{t_1}$, and $h_0^H,h_1^H \in i \mf{g}$ and $h_0^A,h_1^A \in \mf{g}$ are the unique solutions of
\begin{align} \label{eq:hChange}
\Delta_0 h^H_{0} &= 2 \sum_\mu [a_1^\mu, (d_0 h_{t_0})^\mu] \ , \quad
\Delta_0 h^H_{1} = 2 \sum_\mu [a_0^\mu, (d_0 h_{t_1})^\mu] \ , \nonumber \\
\Delta_0 h^A_0 &= - \sum_\mu [a_1^\mu, a_0^\mu] \ , \quad \Delta_0 h_1^A = - \sum_\mu [a_0^\mu, a_1^\mu] \ .
\end{align}
\end{prop}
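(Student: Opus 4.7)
The plan is to realize the curvature $F_\M(v,w)$ at $A$ as the Lie bracket of horizontal vector fields $\tilde v, \tilde w$ on $\M_{\rm all}^\circ$ lifting the base vector fields $v=\delta\xi^i_{t_0}$, $w=\delta\xi'^j_{t_1}$. Since $(\mf{z}^3_{\partial I})^\circ$ is (open in) an affine space and we may take $v, w$ to be constant, $[v,w]=0$, so $[\tilde v,\tilde w]_A$ is automatically vertical and equals the curvature evaluated on $(v,w)$, up to the sign implicit in the convention that ``commutator'' here denotes $\delta_{a_1}a_0 - \delta_{a_0}a_1$. I will extend $\tilde v$ and $\tilde w$ to vector fields on a neighborhood of $A$ in $\A_{\mathrm{all}}$ by using Coulomb-gauge representatives at each nearby $A'\in\mu_\RR^{-1}(0)$: set $\tilde v|_{A'} = v^i_{t_0}\bar\partial_0^{(i)}|_{A'}h_{t_0}(A')$ (and similarly for $\tilde w$), where $h_{t_0}(A')$ is the unique solution of $\Delta_0|_{A'}h_{t_0}(A')=0$ with the boundary conditions fixed in the proof of Theorem~\ref{thm:conn}.

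Next, I will expand $\delta_{a_k}a_{k'}$ in $T_A\A_{\mathrm{all}}$ via the product rule, splitting each into a contribution from the variation of the operator and one from the variation of $h$. The first is obtained by differentiating \eqref{eq:specRot} in $A$, giving
\[
\delta_a\bar\partial_0^{(i)}\cdot h = i\bigl([a^i,h],\,(-\delta^{i\ell}[a^0,h]+\epsilon^{i\ell m}[a^m,h])_{\ell=1,2,3}\bigr),
\]
which accounts for the first two summands of \eqref{eq:curvature}. For the second, differentiating $\Delta_0|_{A'}h_{t_0}(A')=0$ gives $\Delta_0\delta_{a_1}h_{t_0} = -(\delta_{a_1}\Delta_0)h_{t_0}$; a direct calculation using $d_0 h = (\partial_{A^0}h,[A^i,h])$ and $d_0^* a = -\partial_{A^0}a^0-\sum_i[A^i,a^i]$, together with the Jacobi identity, yields the key identity
\[
(\delta_a\Delta_0)h \;=\; -2\sum_\mu[a^\mu,(d_0h)^\mu] \,+\, [d_0^*a,\,h].
\]
Since $\mu_\RR(A)=0$ forces $d_0^*a_0 = d_0^*a_1 = 0$, the second term drops out, and the equation defining $\delta_{a_1}h_{t_0}$ coincides with that defining $h^H_0$ in \eqref{eq:hChange}; this produces the $\bar\partial_0^{(i)}h^H_0$ and $\bar\partial_0^{(j)}h^H_1$ contributions.

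The remaining step is the projection to Coulomb gauge with respect to $A$. The analogous computation $\delta_a d_0^*\eta = -\sum_\mu[a^\mu,\eta^\mu]$, combined with differentiation of $d_0^*|_{A'}\tilde w|_{A'}=0$ along $a_0$ at $A$, gives $d_0^*|_A\delta_{a_0}a_1 = \sum_\mu[a_0^\mu,a_1^\mu]$, and symmetrically with $0\leftrightarrow 1$; the ambient commutator $\delta_{a_1}a_0-\delta_{a_0}a_1$ therefore has $d_0^*$-image $-2\sum_\mu[a_0^\mu,a_1^\mu]$. Adding $d_0h^A_0 - d_0h^A_1$ cancels this defect, since by \eqref{eq:hChange} one reads off $h^A_1 = -h^A_0$ and hence $\Delta_0(h^A_0-h^A_1) = 2\sum_\mu[a_0^\mu,a_1^\mu]$. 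The $\ker d_1$ condition is automatic: a short calculation using the antisymmetry relation $\epsilon^{ijk}[a_1^j,a_0^k] = \epsilon^{ijk}[a_0^j,a_1^k]$ (obtained after relabeling $j\leftrightarrow k$) shows that $d_1|_A[\tilde v,\tilde w] = (\delta_{a_1}d_1)a_0 - (\delta_{a_0}d_1)a_1 = 0$, so the corrected expression lies in $\ker d_0^*|_A\cap\ker d_1|_A$ and gives the vertical Coulomb-gauge representative stated in \eqref{eq:curvature}.

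The principal technical hurdle will be the identity $(\delta_a\Delta_0)h = -2\sum_\mu[a^\mu,(d_0h)^\mu]+[d_0^*a,h]$, which requires several Jacobi-identity rearrangements to collect the naive product-rule variations into this compact covariant form. Existence and uniqueness of $h^H_0, h^H_1 \in i\mf{g}$ and $h^A_0, h^A_1 \in \mf{g}$ solving \eqref{eq:hChange} with the correct boundary conditions will follow from the invertibility of $\Delta_0$ on these spaces, which holds because $A$ is irreducible for generic $\xi$. Past these points the remaining steps reduce to routine bookkeeping of the terms generated by the product rule.
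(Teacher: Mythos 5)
Your proposal is correct and follows essentially the same route as the paper: both expand the commutator of the two horizontal lifts via the product rule into an operator-variation piece (the first two lines of \eqref{eq:curvature}), a variation of the harmonic gauge functions governed by the identity $(\delta_a\Delta_0)h=-2\sum_\mu[a^\mu,(d_0h)^\mu]+[d_0^*a,h]$ (yielding the $\bar\partial_0^{(i)}h^H$ terms), and a pure-gauge correction restoring Coulomb gauge with respect to $A$ (the $d_0h^A$ terms). The only difference is organizational — you obtain the $d_0h^A$ correction as a single a posteriori projection of the bracket back onto $\ker d_0^*|_A$, whereas the paper builds the anti-Hermitian adjustment into each second variation separately — and the two bookkeepings produce identical terms.
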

\begin{proof}
The commutator is given by
\be
\delta\xi^i_{t_0} (\bar\partial^{(i)}_{0, A + \delta\xi'^j_{t_1} a_1} h_{t_0} + \bar\partial^{(i)}_{0,A} \delta h_{t_0}) + \delta\xi'^j_{t_1} a_1 -
\delta\xi'^j_{t_1} (\bar\partial^{(j)}_{0, A + \delta\xi^i_{t_0} a_0} h_{t_1} + \bar\partial^{(j)}_{0,A} \delta h_{t_1}) - \delta\xi^i_{t_0} a_0 \ , \label{eq:commPaths} \ee
where $\delta h_{t_0},\delta h_{t_1}$ denote the first order (in $\delta \xi'$ and $\delta\xi$, respectively) changes in $h_{t_0},h_{t_1}$ due to moving, respectively, from $A$ to $A+\delta \xi'^{j}_{t_1} a_1$ or $A+\delta \xi^{i}_{t_0} a_0$. So, our main task is to compute these changes. We cannot use the formulae from the proof of Theorem \ref{thm:conn} to do so, since we would not be comparing apples to apples, as for one computation we would be in Coulomb gauge with respect to $A+\delta \xi^i_{t_0} a_0$ and for the other we would be in Coulomb gauge with respect to $A+\delta \xi'^j_{t_1} a_1$. Therefore, we will instead work in Coulomb gauge with respect to $A$. Because the first variations $\delta \xi^i_{t_0} a_0$ and $\delta \xi'^j_{t_1} a_1$ are already in this gauge, this simply means that the second variations (i.e., the first and third terms in \eqref{eq:commPaths}) must also be in the kernel of $d_{0,A}^*$.

We focus on $\delta h_{t_0}$, as the computation of $\delta h_{t_1}$ is identical. We write the first variation as $a' = \delta\xi'^{j}_{t_1} a_1$. As above, we begin by fixing $\xi^\CC$ and varying $\xi^\RR$ -- that is, we assume for now that $i=1$. To first order in $\delta \xi$ and $\delta \xi'$ (but including mixed terms of the form $\delta\xi \delta\xi'$), the second variation is of the form $a = \delta\xi^i_{t_0} \bar\partial_{0, A+a'} h$, where $h\in C^\infty(I,\mf{sl}(2,\CC))$, $h|_{\partial I}\in \mf{t}_\CC$, $\partial h|_{t_0}\in \mf{t}_\CC^\perp + \sigma_z$, $\partial h|_{L-t_0} \in \mf{t}_\CC^\perp$, but now we have $d_{0,A}^* a = d_{1, A+ a'} a = 0$. Due to the change in gauge choice, we cannot expect $h$ to be Hermitian. We recall from the comments below \eqref{eq:specRot} that the Hermitian and skew-Hermitian parts $h^H = \frac{h+h^\dagger}{2}$ and $h^A = \frac{h-h^\dagger}{2}$ are treated differently by the various hyperkahler-rotated $\bar\partial_0$ operators, and so we will work separately with $h^H$ and $h^A$.

As above, the equation $(d_{1,A+a'} a)^1 = 0$ yields $\Delta_{0, A+a'} h^H = 0$, since $\mu_\RR(A+a')$ is $O(\delta \xi'^2)$ and so its contribution may be neglected. So, the only change involves $h^A$; we now have
\be d_{0,A}^* a = d^*_{0,A+ a'} a + \sum_\mu [a'^\mu, a^\mu] = \delta\xi^i_{t_0}\brackets{\Delta_{0,A+a'} h^A - \frac{1}{2} [\mu_\RR(A+a'), h^H] + \sum_\mu [a'^\mu, (\bar\partial_{0,A+a'} h)^\mu]} \ . \ee
As above, we can neglect the term involving $\mu_\RR$, we can neglect the $a'$ contributions in the subscripts of the first and third terms, and in the final expression we can replace $h$ by $h_{t_0}$, so we find that $\Delta_{0, A+a'} h^A = - \sum_\mu [a'^\mu, (\bar\partial_{0,A} h_{t_0})^\mu]$. Writing $h^H = h_{t_0} + \delta h^H_{t_0}$ and $h^A = \delta h^A_{t_0}$, where $\delta h^H_{t_0}\in i\mf{g}$ and $\delta h^A_{t_0}\in \mf{g}$, and continuing to neglect higher order terms, we finally have
\be \Delta_{0,A} \delta h^H_{t_0} = 2 \sum_\mu [a'^\mu, (d_{0,A} h_{t_0})^\mu] \ , \quad \Delta_{0,A} \delta h^A_{t_0} = - \sum_\mu [a'^\mu, (\bar\partial_{0,A} h_{t_0})^\mu] \ . \label{eq:varEqns} \ee
Here, we have used the facts that $\Delta_{0,A} h_{t_0}=0$ and $d_0^* a' = 0$. The equations in \eqref{eq:varEqns} uniquely determine $\delta h^H_{t_0}$ and $\delta h^A_{t_0}$.

Now, we hyperkahler rotate -- i.e., we no longer assume that $i=1$. Using $\Delta_{0,R(A)}=\Delta_{0,A}$, $d_{0,R(A)} h_{t_0} = R(d_{0,A} h_{t_0})$, and $\bar\partial_{0,R(A)} h_{t_0} = R(\bar\partial_{0,A}^{(i)} h_{t_0})$, where $R\in SO(3)$ rotates the $i$ axis to the 1 axis, as in the proof of Theorem \ref{thm:conn}, we find that \eqref{eq:varEqns} is modified to
\be \Delta_{0,A} \delta h_{t_0}^H = 2 \sum_\mu [a'^\mu, (d_{0,A} h_{t_0})^\mu] \ , \quad \Delta_{0,A} \delta h^A_{t_0} = - \sum_\mu [a'^\mu, (\bar\partial^{(i)}_{0,A} h_{t_0})^\mu] \ , \ee
which coincides with the leftmost equations in \eqref{eq:hChange} after we factor out $\delta\xi'^j_{t_1}$ from both sides. Pursuant to the comments below \eqref{eq:specRot}, we have $\bar\partial^{(i)}_{0,A} \delta h_{t_0} = d_{0,A} \delta h^A_{t_0} + \bar\partial_{0,A}^{(i)} \delta h_{t_0}^H$. Plugging this into \eqref{eq:commPaths} leads immediately to \eqref{eq:curvature}.
\end{proof}

We conclude with a brief computation that demonstrates that this connection is not flat. Computations are easiest to perform when $\xi=0$, but this is of course the epitome of non-generic moment map parameters and so the connection is not defined at $\M_0 \subset \M_{\rm all}$. Nevertheless, the definition of the connection still gives a parallel transport map \emph{from} $\M_0^{\rm irr}$ \emph{to} nearby $\M_{\xi}$. We can therefore evaluate the above formula for the curvature at a point of $\M_0^{\rm irr}$ and conclude that, if this is nonvanishing, then by continuity the curvature of $\mathrm{Conn}_{\mc{M}}$ is similarly nonvanishing.

For simplicity, we focus on irreducible Nahm data of the form $A=(0,i c \sigma_x, 0,0)$ with $c>0$ and $i=2$, $j=3$, $t_0=t_1=L$. The relevant computations are as follows:
\begin{align}
h_L &= \frac{\cosh(2ct)}{2c \sinh(2cL)} \sigma_z \nonumber \\
a_0 &= \bar\partial^{(2)}_0 h_L = i(0, 0, -\partial h_L, ic [\sigma_x, h_L]) = \frac{i}{\sinh(2cL)} (0, 0, -\sinh(2ct) \sigma_z, \cosh(2ct) \sigma_y) \nonumber \\
a_1 &= \bar\partial_0^{(3)} h_L = i(0, 0, -ic[\sigma_x, h_L], -\partial h_L) = \frac{i}{\sinh(2cL)} (0, 0, -\cosh(2ct) \sigma_y, - \sinh(2ct) \sigma_z) \nonumber \\
d_0 h_L &= (\partial h_L, ic [\sigma_x, h_L], 0, 0) = \frac{1}{\sinh(2cL)} (\sinh(2ct) \sigma_z, \cosh(2ct) \sigma_y, 0, 0) \nonumber \\
2\sum_\mu &[a_1^\mu, (d_0 h_L)^\mu] = 2\sum_\mu [a_0^\mu, (d_0 h_L)^\mu] = h^H_0 = h^H_1 = 0 \nonumber \\
\sum_\mu &[a_0^\mu, a_1^\mu] = \sigma_x \frac{2i \sinh(4ct)}{\sinh^2(2cL)} \nonumber \\
h^A_0 &= - h^A_1 = \frac{i \sigma_x}{8 c^2 L \sinh^2(2cL)} \parens{- L \sinh(4 c t) + t \sinh(4 c L) } \nonumber \\
d_0 h_0^A &= \frac{i\sigma_x}{8 c^2 L \sinh^2(2cL)} \parens{ - 8 c L \cosh^2(2 c t) + 4 c L + \sinh(4 c L) }  (1,0,0,0) \nonumber \\
&i([a_1^2, h_L], -[a_1^3, h_L], -[a_1^0, h_L], [a_1^1, h_L]) = \sigma_x \frac{i \cosh^2(2ct)}{c \sinh^2(2cL)} (1, 0, 0, 0) \nonumber \\
&i([a_0^3, h_L], [a_0^2, h_L], -[a_0^1, h_L], -[a_0^0, h_L]) = -\sigma_x \frac{i \cosh^2(2ct)}{c \sinh^2(2cL)} (1, 0, 0, 0) \ .
\end{align}
The commutator \eqref{eq:curvature} is thus
\be \delta\xi^2_L \delta\xi'^3_L \cdot \frac{i\sigma_x (4cL + \sinh(4cL))}{4 c^2 L \sinh^2(2cL)} (1,0,0,0) \ , \ee
which is nonzero. As a check, we note that this expression is both in $\A_0$ and in $\ker d_0^*$.

Hence, $\mathrm{Conn}_{\mc{M}}$ is not flat. It is of interest to know what coarse-grained information may be extracted from this connection. A natural first attempt is to define a holonomy homomorphism from $\pi_1((\mf{z}_{\partial I}^3)^\circ)$ to the mapping class group $\pi_0(\mathrm{Diff}\mc{M}_{\xi})$, where $\mathrm{Diff}\mc{M}_{\xi}$ is the diffeomorphism group of $\M_\xi$. However, this map is trivial because $(\mf{z}_{\partial I}^3)^\circ$ is simply connected. We may rectify this in two different ways. First, we may quotient $\M_{\rm all}^\circ$ by the $Z_2\times Z_2$ action on it mentioned in Theorem \ref{thm:modHK}. As $\mathrm{Conn}_{\mc{M}}$ is $Z_2\times Z_2$-equivariant, we may equivalently think of it as a connection on the orbifold quotient $(\mf{z}^3_{\partial I})^{\circ} / (Z_2 \times Z_2)$ in order to obtain a homomorphism $\pi_1((\mf{z}^3_{\partial I})^\circ/(Z_2\times Z_2)) \simeq Z_2\times Z_2 \to \pi_0(\mathrm{Diff}\mc{M}_{\xi})$. Here, the source is the orbifold fundamental group $Z_2 \times Z_2$.

Alternatively, we may return to the holonomy map, which is naturally a map from the based loop space $\Omega (\mf{z}_{\partial I}^3)^{\circ}$ to $\mathrm{Diff}\mc{M}_{\xi}$; in fact, as $(\mf{z}^3_{\partial I})^{\circ}$ is simply-connected and hence $\Omega(\mf{z}^3_{\partial I})^{\circ}$ is connected, we may emphasize that this holonomy map has target $\mathrm{Diff}^0\mc{M}_{\xi}$, the connected component of $\mathrm{Diff}\mc{M}_{\xi}$ containing diffeomorphisms which are smoothly isotopic to the identity. We then naturally have homomorphisms $\pi_n(\Omega((\mf{z}_{\partial I}^3)^\circ)) \simeq \pi_{n+1}((\mf{z}_{\partial I}^3)^\circ) \to \pi_n(\mathrm{Diff}^0\mc{M}_{\xi})$ for all $n\ge 1$. In particular, for $n=1$ we obtain a canonical map $\mb{Z}^2 \to \pi_1(\mathrm{Diff}^0\mc{M}_{\xi})$ which may be of interest to describe; here, $\mb{Z}^2 \simeq \mathrm{H}_2((\mf{z}^3_{\partial I})^{\circ};\mb{Z}) \simeq \pi_2((\mf{z}_{\partial I}^3)^\circ)$ is a natural combinatorial invariant of the complement of the hyperplane arrangement determined by the non-generic loci in $\mf{z}_{\partial I}^3$, where here (and in other contexts in hyperkahler geometry where we obtain this canonical Ehresmann connection), we emphasize that we are removing real codimension three hyperplanes.

Finally, these two approaches (quotienting -- by both $Z_2\times Z_2$ and $SO(3)$ -- and considering higher homotopy groups) may be combined to define additional maps $\pi_{n+1}((Z_2\times Z_2)\backslash (\mf{z}_{\partial I}^3)^\circ / SO(3)) \to \pi_n({\rm Diff}(\M_\xi))$ for $n\ge 0$. It may be of interest to describe these maps.

\newpage
\bibliography{Refs}
\end{document}